\def\@tocline#1#2#3#4#5#6#7{\relax
  \ifnum #1>\c@tocdepth 
  \else
    \par \addpenalty\@secpenalty\addvspace{#2}%
    \begingroup \hyphenpenalty\@M
    \@ifempty{#4}{%
      \@tempdima\csname r@tocindent\number#1\endcsname\relax
    }{%
      \@tempdima#4\relax
    }%
    \parindent\z@ \leftskip#3\relax \advance\leftskip\@tempdima\relax
    \rightskip\@pnumwidth plus4em \parfillskip-\@pnumwidth
    #5\leavevmode\hskip-\@tempdima
      \ifcase #1
       \or\or \hskip 1em \or \hskip 2em \else \hskip 3em \fi%
      #6\nobreak\relax
    \dotfill\hbox to\@pnumwidth{\@tocpagenum{#7}}\par
    \nobreak
    \endgroup
  \fi}
 \numberwithin{equation}{section}
\def\bR{{\mathbb{R}}}
\def\caD{{\mathcal{D}}}
\def\Ch{{\mathrm{Ch}}}
\def\oomega{\texttt{w}}
\newcommand{\pom}{{\partial\Omega}}
\def\ve{\varepsilon}
\renewcommand{\d}{{\partial}}
\def\wt{\widetilde}
\def\DS{\mathop\textup{DS}}
\def\DL{\mathop\textup{DL}}
\def\DMO{\mathop\textup{DMO}}
\def\DDMO{\mathop\textup{DDMO}}
\def\Rn1{\mathbb{R}^{n+1}}
\def\ext{\mathop\mathrm{ext}} 	
\DeclareMathOperator{\diam}{diam}
\def\Cap{\textup{Cap}} 					
\def\Tan{\mathop\mathrm{Tan}} 					
\def\osc{\mathop\mathrm{osc}} 					
\def\VMO{\mathop\mathrm{VMO}} 					
\def\Lip{\mathrm{Lip}} 						
\def\dim{\mathop\mathrm{dim}} 					
\def\dist{\textup{dist}} 						
\def\supp{\mathop\mathrm{supp}}					
\def\loc{\mathop\mathrm{loc}}						
\renewcommand{\div}{\mathop\mathrm{div }}			
\def\warrow{\rightharpoonup}								
\def\Xint#1{\mathchoice
{\XXint\displaystyle\textstyle{#1}}%
{\XXint\textstyle\scriptstyle{#1}}%
{\XXint\scriptstyle\scriptscriptstyle{#1}}%
{\XXint\scriptscriptstyle\scriptscriptstyle{#1}}%
\!\int}
\def\XXint#1#2#3{{\setbox0=\hbox{$#1{#2#3}{\int}$ }
\vcenter{\hbox{$#2#3$ }}\kern-.58\wd0}}
\def\avint{\Xint-}
\theoremstyle{plain}
\newtheorem{theorem}{Theorem}
\newtheorem{lemma}[theorem]{Lemma}
\newtheorem{proposition}[theorem]{Proposition}
\theoremstyle{definition}
\newtheorem{definition}[theorem]{Definition}
\newtheorem{remark}[theorem]{Remark}
\numberwithin{equation}{section}
\numberwithin{theorem}{section}
\newtheorem{specialthm}{Theorem}
  \DeclareFontFamily{U}{mathb}{\hyphenchar\font45} 
\DeclareFontShape{U}{mathb}{m}{n}{
      <5> <6> <7> <8> <9> <10> gen * mathb
      <10.95> mathb10 <12> <14.4> <17.28> <20.74> <24.88> mathb12
      }{}
\DeclareSymbolFont{mathb}{U}{mathb}{m}{n}
\DeclareMathSymbol{\toitself}      {3}{mathb}{"FD}  
\newcommand{\vv}{\vspace{2mm}}
\newcommand{\vvv}{\vspace{4mm}}
\newcommand{\dv}{\mathop{\rm div}}
\def\R{\mathbb{R}}
\def\vphi{\varphi}
\def\om{\Omega}
\def\hm{\omega}
\begin{document}

\title[Layer potentials for elliptic operators with DMO-type coefficients]{Layer potentials for elliptic operators with DMO-type coefficients: big pieces $Tb$ theorem, quantitative rectifiability, \\ and free boundary problems}

\author[A. Merlo]{Andrea Merlo}
\address{Andrea Merlo\\ Departamento de Matem\'aticas, Universidad del Pa\' is Vasco (UPV/EHU), Barrio Sarriena s/n 48940 Leioa, Spain and\\
Ikerbasque, Basque Foundation for Science, Bilbao, Spain.}
\email{andrea.merlo@ehu.eus}

\author[M. Mourgoglou]{Mihalis Mourgoglou}
\address{Mihalis Mourgoglou\\ Departamento de Matem\'aticas, Universidad del Pa\' is Vasco (UPV/EHU), Barrio Sarriena s/n 48940 Leioa, Spain and\\
Ikerbasque, Basque Foundation for Science, Bilbao, Spain.}
\email{michail.mourgoglou@ehu.eus}

\author[C. Puliatti]{Carmelo Puliatti}
\address{Carmelo Puliatti \\Departament de Matemàtiques, Universitat Autònoma de Barcelona, 08193 Bellaterra (Barcelona), Catalonia.}
\email{carmelo.puliatti@uab.cat}

\subjclass[2020]{42B37, 42B20, 35J15, 28A75, 28A75, 33C55}
\thanks{A. M. was supported by IKERBASQUE and European Union’s Horizon 2020 research and innovation programme under the Marie Sklodowska-Curie grant agreement no 101065346. M.M. was supported by IKERBASQUE and partially supported by the grant PID2020-118986GB-I00 of the Ministerio de Economía y Competitividad (Spain), and by IT-1615-22 (Basque Government).
	C.P. was supported by the Research Council of Finland grant no.352649, and partially supported by the grant 2021-SGR-00071 (Catalonia) and PID2023-150984NB-I00 funded by MICIU/AEI/10.13039/501100011033/ FEDER, EU}
\keywords{Elliptic measure, harmonic measure, Riesz transform, single layer potential, free boundary problems, rectifiability, $Tb$ theorem.}

\newcommand{\mih}[1]{\marginpar{\color{red} \scriptsize \textbf{Mi:} #1}}
\newcommand{\car}[1]{\marginpar{\color{blue} \scriptsize \textbf{Carmelo:} #1}}
\maketitle

\begin{abstract}
	For $n \geq 2$, we consider the operator $L_A = -\mathrm{div }(A(\cdot)\nabla)$, where $A$ is a uniformly elliptic $(n+1)\times(n+1)$ matrix with variable coefficients, a Radon measure $\mu$ on $\mathbb{R}^{n+1}$, and the associated gradient of the single layer potential operator $T_\mu$. Under a Dini-type assumption on the mean oscillation of the matrix $A$, we establish the following results:
	\begin{enumerate}
	\item 
A rectifiability criterion for $\mu$  in terms of  $T_\mu$.  Under quantitative geometric and analytic assumptions within a ball $B$--including an upper $n$-growth condition on $\mu$ in $B$, a thin boundary condition, a scale-invariant decay condition expressed via a weighted sum of densities over dyadic dilations of $B$, and $L^2$ boundedness of the gradient of $T_\mu$--we show the following: if the support of $\mu$ lies very close to an $n$-plane in $B$, and $T_\mu 1$ is nearly constant on $B$ in the $L^2$ sense, then there exists a uniformly $n$-rectifiable set $\Gamma$ such that $\mu(B \cap \Gamma) \gtrsim \mu(B)$.
	\item A $Tb$ theorem for suppressed  $T_\mu$, which extends a well-known theorem of Nazarov, Treil, and Volberg, and holds also for a broader class of singular integral operators.
	\end{enumerate}
These results make it possible to prove both qualitative and quantitative one- and two-phase free boundary problems for elliptic measure, formulated in terms of (uniform) rectifiability, in bounded Wiener-regular domains.
\end{abstract}

\tableofcontents

\section{Introduction}

	\subsection{History and state of the art} In recent years, there has been significant progress in the study of the geometry of harmonic measure.
This has been made possible through the interplay of various techniques, ranging from geometric measure theory to partial differential equations. Harmonic analysis has also played a crucial role, particularly through the use of \textit{singular integral operators} (SIOs) and,  in particular, the Riesz transform, and  their deep connection to rectifiability.  A long-standing conjecture posed by Bishop in 1992 \cite[Conjecture 10]{Bis92}, and later resolved in \cite{AHMMMTV16}  by Azzam, Hofmann, Martell, Mayboroda, Tolsa, Volberg, and the second named author, concerns the following one-phase problem for harmonic measure:
\begin{specialthm}\label{theorem:Laplace-one-phase}
	Let $\Omega\subset\Rn1$, $n \geq 1$, be a bounded, open, connected set.  If $\omega^{p}_{\Omega}$ is the harmonic measure in $\Omega$ with pole at  $p\in\Omega$ and there exists $E\subset \partial \Omega$ such that $0<\mathcal H^n(E)<\infty$ and $\omega^{p}_{\Omega}|_E$ is absolutely continuous with respect to $\mathcal H^n|_E$, then $\omega^{p}_{\Omega}|_E$ is $n$-rectifiable.
\end{specialthm}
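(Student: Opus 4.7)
The plan is to reduce the rectifiability of $\omega^{p}_\Omega|_E$ to the $L^2$ boundedness of the $n$-dimensional Riesz transform with respect to a large piece of harmonic measure, and then invoke the Nazarov--Tolsa--Volberg characterization of rectifiability via the Riesz transform. First I would use inner regularity of $\omega^{p}_\Omega$ and $\mathcal{H}^n$ together with Egorov/Lusin to extract a compact set $F \subset E$ with $\omega^{p}_\Omega(F) > 0$ on which the Radon--Nikodym density $h = d\omega^{p}_\Omega/d\mathcal{H}^n$ satisfies $0 < c \le h \le C$. Then $\omega^{p}_\Omega|_F$ has $n$-dimensional polynomial growth, and by a standard exhaustion it suffices to establish $n$-rectifiability of $\omega^{p}_\Omega|_F$.

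Next, using Bourgain's lower bound for harmonic measure and a Besicovitch-type differentiation, at $\omega^{p}_\Omega$-a.e.\ $x \in F$ and at most scales $r$ the set $F \cap B(x,r)$ is close (in a Hausdorff/Carleson sense) to some $n$-plane; this is the flatness mechanism that feeds the ensuing corona decomposition. The heart of the argument is then to use the Green function representation of $\omega^{p}_\Omega$, coupled with an integration-by-parts (Dahlberg-type) trick in the Carleson region above $F$, to compare the truncated Riesz transform of $\omega^{p}_\Omega|_F$ to gradients of Green functions, which enjoy Caccioppoli and reverse H\"older bounds. Running a corona/stopping-time decomposition attuned to $h$ and applying a local $Tb$ theorem with suppressed kernels (in the spirit of Nazarov--Treil--Volberg) then produces a \emph{big piece} $F' \subset F$ with $\omega^{p}_\Omega(F') \gtrsim \omega^{p}_\Omega(F)$ on which the $n$-dimensional Riesz transform is bounded in $L^2(\omega^{p}_\Omega|_{F'})$.

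Finally, the Nazarov--Tolsa--Volberg theorem asserts that such $L^2$ boundedness of the $n$-dimensional Riesz transform on an $n$-growth measure forces the measure to be $n$-rectifiable; applied to $\omega^{p}_\Omega|_{F'}$ and iterated via exhaustion inside $F$, this concludes the proof. The principal obstacle is the $L^2$ boundedness step: constructing a suitable accretive testing function $b$ (from harmonic measure densities on stopping-time cubes) and controlling the error terms coming from the non-smoothness of $\partial\Omega$, the possible degeneration of the corkscrew geometry, and the fact that $\omega^{p}_\Omega$ need not be doubling, is precisely where the delicate interplay between PDE estimates (Green function / harmonic measure), geometric measure theory (corona and stopping-time), and the $Tb$ machinery becomes unavoidable.
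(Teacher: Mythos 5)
Your overall skeleton coincides with the strategy behind this theorem (it is the result of Azzam--Hofmann--Martell--Mayboroda--Mourgoglou--Tolsa--Volberg, whose scheme the paper recalls and adapts in Section 9 to the elliptic setting): restrict to a piece of $E$ on which the radial maximal density $\mathcal M_n\omega^p_\Omega$ is finite; show that the maximal truncated Riesz transform $\mathcal R_*\omega^p_\Omega$ is finite $\omega^p_\Omega$-a.e.\ on $E$ by comparing truncated transforms with the gradient of the Green function through the representation $G(p,\cdot)=\mathcal E(p-\cdot)-\int \mathcal E(z-\cdot)\,d\omega^p_\Omega(z)$ and Caccioppoli-type estimates; then apply the Nazarov--Treil--Volberg suppressed-kernel $Tb$ theorem to extract a subset of positive measure with $n$-growth on which the Riesz transform is bounded on $L^2$; and conclude by the (qualitative) Nazarov--Tolsa--Volberg theorem and exhaustion. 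Your identification of the Green-function comparison as the key analytic step is correct.

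However, the middle of your proposal inserts a step that is both unjustified and unnecessary, and as written it is where the argument would break. You claim that Bourgain's lower bound together with a Besicovitch-type differentiation yields that $F\cap B(x,r)$ is close to an $n$-plane at most scales, and you propose to feed this flatness into a corona/stopping-time decomposition and to manufacture an accretive function $b$ from harmonic-measure densities on stopping cubes. No such flatness is available a priori: Bourgain's estimate is a purely measure-theoretic lower bound for the harmonic measure of surface balls and carries no information about $\beta$-numbers, and approximate flatness of $F$ at a.e.\ point and most scales is essentially equivalent to the rectifiability you are trying to prove --- it is what the machinery inside the Nazarov--Tolsa--Volberg theorem produces, not an input to it. The actual proof needs no flatness and no corona decomposition at this level: the suppressed $Tb$ theorem in its ``big pieces'' form (Theorem 8.13 in Tolsa's book; Theorem \ref{theorem:thm_bd_op_T_G} here for the elliptic analogue) requires only the $\omega^p_\Omega$-a.e.\ finiteness of $\mathcal M_n\omega^p_\Omega$ and of the maximal truncations on $E$, the accretivity being essentially trivial (one can take $b$ bounded, in effect $b\equiv 1$, with non-Ahlfors balls and non-accretive cubes absorbed into the exceptional sets). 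Likewise the two-sided bound $c\le h\le C$ on the density is superfluous: only the upper control, in the form $\mathcal M_n\omega^p_\Omega<\infty$ a.e.\ (a consequence of $\omega^p_\Omega|_E\ll\mathcal H^n|_E$, $\mathcal H^n(E)<\infty$, and differentiation of measures), enters the proof. If you delete the flatness/corona paragraph and replace it with the a.e.\ finiteness statement followed by a direct appeal to the suppressed $Tb$ theorem (and, for $n=1$, use the Cauchy-transform/curvature version of the rectifiability criterion in place of NToV), your outline becomes the proof.
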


A quantitative version of Theorem~\ref{theorem:Laplace-one-phase}, for non-doubling measures on $\partial \om$ satisfying an upper growth condition, was established in \cite{MT20}  by Tolsa and the second named author  and reads as follows:
\begin{specialthm}\label{theorem:Laplace-one-phase_quant}
	Let \( n \geq 1 \), and let \( 0 < \kappa < 1 \) be a constant small enough and \( c_{db} > 1 \) another constant large enough, both depending only on \( n \). 
	Let \( \Omega \subset \mathbb{R}^{n+1} \) be an open set, and let \( \mu \) be a Radon measure supported on \( \partial \Omega \) that satisfies the growth condition \( \mu(B(x,r)) \leq c_0 r^n \) for all \( x \in \mathbb{R}^{n+1} \) and \( r > 0 \). 
	Suppose there exist \( \varepsilon, \varepsilon' \in (0,1) \) such that for every \( \mu \)-\( (2, c_{db}) \)-doubling ball \( B \) centered at \( \operatorname{supp} (\mu) \) with \( \operatorname{diam}(B) \leq \operatorname{diam}(\operatorname{supp} (\mu)) \), there exists a point \( x_B \in \kappa B \cap \Omega \) such that the following holds for any subset \( E \subset B \):
	\[
	\text{If} \quad \mu(E) \leq \varepsilon \, \mu(B), \quad \text{then} \quad \omega^{x_B}(E) \leq \varepsilon' \, \omega^{x_B}(B).
	\]
	Then the Riesz transform \( \mathcal{R}_\mu \colon  L^2(\mu) \to L^2(\mu) \) is bounded, and thus \( \mu \) is \( n \)-rectifiable. 
	
	Furthermore, if \( \mu(B(x,r)) \geq c_0^{-1} r^n \) for all \( x \in \mathbb{R}^{n+1} \) and \( r > 0 \), then \( \mu \) is uniformly \( n \)-rectifiable.
\end{specialthm}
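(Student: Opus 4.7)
The plan is to reduce the rectifiability conclusion to $L^{2}(\mu)$-boundedness of the Riesz transform $\mR_{\mu}$, and to deduce this boundedness from a non-homogeneous $Tb$ theorem of Nazarov--Treil--Volberg type applied with a test function coming from harmonic measure. Once the $L^{2}(\mu)$-boundedness of $\mR_{\mu}$ is established, the $n$-rectifiability of $\mu$ follows from the theorem of Nazarov--Tolsa--Volberg (combined with Eiderman--Nazarov--Volberg to pass from $L^{2}(\mu)$-boundedness plus growth to rectifiability without AD-regularity), and under the additional two-sided $n$-AD-regular bound the conclusion is upgraded to uniform $n$-rectifiability via David--Semmes.

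The first step is the construction of the test function. Work with the David--Mattila dyadic lattice $\mathcal{D}$ adapted to $\mu$, which provides a tree of generalised cubes and a distinguished subfamily $\mathcal{D}_{db}$ of $\mu$-doubling cubes. For each $Q\in\mathcal{D}_{db}$ with diameter at most $\diam\supp\mu$, the associated ball $B_{Q}$ is $(2,c_{db})$-doubling, so the hypothesis produces an interior corkscrew $x_{Q}\in\kappa B_{Q}\cap\om$ whose harmonic measure satisfies the scale-invariant implication
\[
\mu(E)\le\varepsilon\,\mu(B_{Q})\;\Longrightarrow\;\omega^{x_{Q}}(E)\le\varepsilon'\,\omega^{x_{Q}}(B_{Q}),\qquad E\subset B_{Q}.
\]
This is precisely a quantitative absolute continuity $\omega^{x_{Q}}|_{B_{Q}}\ll\mu$ from which one extracts, via standard good-$\lambda$ reasoning, a density $b_{Q}:=d\omega^{x_{Q}}|_{B_{Q}}/d\mu|_{B_{Q}}$ (truncated at scale $N\mu(B_{Q})r(B_{Q})^{-n}$) that satisfies the accretivity bound $\int_{B_{Q}}b_{Q}\,d\mu\gtrsim\omega^{x_{Q}}(B_{Q})\gtrsim 1$, where the lower bound for harmonic measure comes from a Bourgain-type estimate in Wiener-regular domains. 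The family $\{b_{Q}\}_{Q\in\mathcal{D}_{db}}$ is therefore a bounded accretive system in the sense required by the non-homogeneous $Tb$ theorem.

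The second step is the testing estimate. The key identity is that for $z\in\pom\cap B_{Q}$,
\[
\mR_{\mu}b_{Q}(z)\;=\;c_{n}\int\frac{z-y}{|z-y|^{n+1}}\,d\omega^{x_{Q}}(y)\;+\;\text{error}\;=\;c_{n}\,\nabla_{z}\mathcal{E}(z-x_{Q})\;+\;\text{error},
\]
because $y\mapsto\mathcal{E}(z-y)$ is harmonic in $\om$ for $z\notin\om$, so its $\omega^{x_{Q}}$-average equals $\mathcal{E}(z-x_{Q})$. Since $x_{Q}$ is a corkscrew, $|z-x_{Q}|\gtrsim r(B_{Q})$ uniformly, giving $|\mR_{\mu}b_{Q}(z)|\lesssim r(B_{Q})^{-n}$ on $B_{Q}$ and hence the suppressed testing bound $\int_{B_{Q}}|\mR_{\mu,\Phi}(\one_{B_{Q}}b_{Q})|^{2}\,d\mu\lesssim \mu(B_{Q})$ with a suppression function $\Phi$ dominating the distance to the interior corkscrew family. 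The Nazarov--Treil--Volberg suppressed non-homogeneous $Tb$ theorem, adapted to the David--Mattila lattice, then gives $L^{2}(\mu)$-boundedness of $\mR_{\mu,\Phi}$, and a standard removal-of-the-suppression argument upgrades this to $\mR_{\mu}\colon L^{2}(\mu)\to L^{2}(\mu)$.

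The main obstacle is the rigorous verification of the testing condition in the non-doubling setting. The harmonic-measure identity is clean only when $z$ is genuinely outside $\om$, so the contribution of cubes $Q$ where $\supp\mu$ comes close to the corkscrew $x_{Q}$ has to be absorbed by the suppression; keeping the parameters $(\kappa,c_{db},\varepsilon,\varepsilon')$ consistent with the lattice constants so that this absorption works is the delicate point. A subsidiary difficulty is that the hypothesis is stated for balls centred on $\supp\mu$, while the $Tb$ machinery requires testing against every David--Mattila cube, so one must iterate the implication along chains of doubling ancestors and invoke the thin-boundary property of the lattice to pass between balls and cubes without losing the scale-invariance. Once these points are handled, the two rectifiability conclusions in the theorem are immediate consequences of the established $L^{2}(\mu)$-boundedness of $\mR_{\mu}$.
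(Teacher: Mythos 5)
Your proposal reproduces the strategy of the \emph{qualitative} one-phase theorem (test functions built from harmonic measure, the Green-function/Riesz-transform identity, and a suppressed Nazarov--Treil--Volberg $Tb$ theorem), but it has a genuine gap at the step that makes the quantitative statement hard. The suppressed/non-homogeneous $Tb$ theorems you invoke (NTrV99, Tolsa's book, and the big pieces $Tb$ theorem in this paper) do \emph{not} conclude $L^2(\mu)$-boundedness of $\mathcal R_\mu$ on all of $\mu$: they take a single measure $\nu=b\,\mu$ (or $\nu=\omega^{x_B}|_B$), require accretivity outside an exceptional set of each randomly translated lattice, and conclude only the existence of a big piece $G$ with $\mu(G)\gtrsim\mu(B)$ on which $\mathcal R_{\mu|_G}$ is bounded. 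There is no non-homogeneous ``local $Tb$ with an accretive system $\{b_Q\}_{Q\in\mathcal D_{db}}$'' that would return the global operator bound you claim after ``removal of the suppression.'' This is precisely why the proof of this theorem in \cite{MT20} (whose scheme the present paper follows for its elliptic analogue) is structured differently: a Main Lemma establishes, for each doubling ball $B$ with thin boundary, a dichotomy --- either the measure concentrates in a much smaller concentric ball, or there is $G\subset B$ with $\mu(G)\gtrsim\mu(B)$ and $\mathcal R_{\mu|_G}$ bounded --- and these local conclusions are then assembled into the full bound $\|\mathcal R_\mu\|_{L^2(\mu)\to L^2(\mu)}<\infty$ via a corona decomposition of the David--Mattila lattice together with a good-$\lambda$ (``big pieces of big pieces'') argument. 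Your proposal omits this assembly step entirely, and without it the hypotheses only yield big pieces ball by ball, not boundedness of $\mathcal R_\mu$.

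There are also secondary problems worth flagging. The hypothesis does not give $\omega^{x_B}|_B\ll\mu$, so the density $b_Q=d\omega^{x_Q}|_{B_Q}/d\mu$ need not exist; one must test against the measure $\omega^{x_Q}|_{B_Q}$ itself and remove exceptional sets where the accretivity $|\nu(Q)|\gtrsim\mu(Q)$ or the growth fails (this is exactly the role of $H\cup\mathfrak T_{\mathcal D(w)}$ in the suppressed $Tb$ framework). The identity $\mathcal R\,\omega^{x_Q}(z)\approx c_n\nabla\mathcal E(z-x_Q)$ is not justified by saying $y\mapsto\mathcal E(z-y)$ is harmonic for ``$z\notin\Omega$'': the relevant points $z$ lie on $\partial\Omega$, the correct identity carries a gradient-of-Green's-function term $\nabla_2 G(x_Q,z)$, and controlling it requires smoothed kernels and Caccioppoli-type estimates (the Key Lemma of \cite{MT20}); moreover $\Omega$ is merely an open set here, so the Bourgain-type lower bound $\omega^{x_Q}(B_Q)\gtrsim1$ is not available off the shelf and must be extracted from the doubling/stopping-time construction. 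Finally, the implication ``$\mathcal R_\mu$ bounded $\Rightarrow$ $\mu$ $n$-rectifiable'' needs the additional density information furnished by the construction, not just the NToV theorem quoted abstractly.
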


A related two-phase variant of Bishop's conjecture, also formulated in \cite[Conjecture 10]{Bis92} in 1992 and resolved  in \cite{AMTV19}  by Azzam, Tolsa, Volberg, and the second named author (see also \cite{AMT17a}), asks whether mutual absolute continuity of harmonic measures for two disjoint domains with a common boundary portion implies rectifiability of their shared support.
\begin{specialthm}\label{theorem:Laplace-two-phase}
	Let $\Omega_1, \Omega_2 \subset \Rn1$, $n \geq 1$,  be two disjoint domains such that $\partial \Omega_1 \cap \partial \Omega_2\neq \varnothing$ and $E\subset \partial \Omega_1 \cap \partial \Omega_2$.  We denote  by $\omega_i=\omega^{x_i}_{\Omega_i}$ the harmonic measure in $\Omega_i$ with pole at  $x_i \in \Omega_i$, for $i=1,2$.  If $\omega_1|_E \ll \omega_2|_E\ll \omega_1|_E $,  then there exists a $n$-rectifiable set $F\subset E$ with  $\omega_1(E\setminus F)=0$ such that  $\omega_1|_F$ and $\omega_2|_F$ are mutually absolutely continuous with respect to $\mathcal H^n|_F$.
\end{specialthm}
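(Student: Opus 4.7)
The strategy is to reduce Theorem~\ref{theorem:Laplace-two-phase} to the one-phase result Theorem~\ref{theorem:Laplace-one-phase} by showing that the mutual absolute continuity hypothesis forces $\omega_1|_E \ll \mathcal{H}^n|_E$ on a set of full $\omega_1$-measure; the rectifiable set $F$ is then produced by Theorem~\ref{theorem:Laplace-one-phase}, and the mutual absolute continuity transfers everything to $\omega_2$.

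First, standard measure-theoretic reductions. By Radon-Nikodym, $d\omega_2 = h\,d\omega_1$ on $E$ with $0<h<\infty$ $\omega_1$-a.e. Decomposing $E=\bigcup_k E_k$ with $E_k\subset\{C_k^{-1}\le h\le C_k\}$ and using that countable unions of $n$-rectifiable sets are $n$-rectifiable, we may assume $\omega_1$ and $\omega_2$ are comparable Borel measures on $E$, i.e.\ $C^{-1}\omega_1|_E \le \omega_2|_E \le C\omega_1|_E$. After discarding a further $\omega_1$-null set we may also assume $x_1, x_2$ lie outside a fixed neighborhood of $\overline{E}$ and that every point of $E$ is regular for the Dirichlet problem in both $\Omega_i$.

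The heart of the proof is the two-phase product estimate
\begin{equation}\label{e:twophase-prod-plan}
\omega_1(B(x,r))\,\omega_2(B(x,r)) \lesssim r^{2n}\quad \text{for }\omega_1\text{-a.e.\ }x\in E\text{ and }0<r\le r_0(x).
\end{equation}
Let $u_i$ denote the Green's function of $\Omega_i$ with pole $x_i$, extended by $0$ outside $\Omega_i$. Each $u_i$ is non-negative, continuous on $E$, and subharmonic on $\mathbb{R}^{n+1}\setminus\{x_i\}$, and $u_1 u_2\equiv 0$ since $\Omega_1\cap\Omega_2=\varnothing$. The Alt-Caffarelli-Friedman monotonicity formula applied at each $x\in E$ then shows that
\[
\Phi(x,r) = \frac{1}{r^4}\prod_{i=1}^{2}\int_{B(x,r)}\frac{|\nabla u_i(y)|^2}{|y-x|^{n-1}}\,dy
\]
is non-decreasing in $r$, hence uniformly bounded for $r\le r_0(x)$ by $\Phi(x,r_0(x))$. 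A Caffarelli-Fabes-Mortola-Salsa–type lower bound relating each weighted Dirichlet-energy factor to the corresponding harmonic measure produces the power $\omega_i(B(x,r))^2 r^{2-2n}$ in each factor, and inserting this into the bound on $\Phi(x,r)$ yields \eqref{e:twophase-prod-plan}. Combining with the comparability $\omega_2(B(x,r))\simeq\omega_1(B(x,r))$ gives $\omega_1(B(x,r))\lesssim r^n$ $\omega_1$-a.e.\ on $E$, hence $\omega_1|_E\ll\mathcal{H}^n|_E$ by standard Besicovitch differentiation. Theorem~\ref{theorem:Laplace-one-phase} then furnishes the $n$-rectifiable set $F\subset E$ with $\omega_1(E\setminus F)=0$; mutual absolute continuity gives $\omega_2(E\setminus F)=0$, and the same density argument yields $\omega_i|_F \ll \mathcal{H}^n|_F$ for $i=1,2$.

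The main obstacle is the derivation of \eqref{e:twophase-prod-plan}. Since no quantitative boundary regularity (NTA, corkscrew, Ahlfors-regularity) is assumed on $\partial\Omega_1,\partial\Omega_2$, the passage from the Alt-Caffarelli-Friedman weighted energies to the harmonic measures is delicate: the classical Caffarelli-Fabes-Mortola-Salsa comparison is stated for well-behaved domains. To handle this one first localizes, via Bourgain's capacitary density lower bound for harmonic measure, to a subset of $E$ of positive $\omega_1$-measure on which both $\Omega_1$ and $\Omega_2$ satisfy a weak capacitary non-degeneracy at $\omega_1$-typical scales, and carries out the energy-to-measure comparison there using the Wiener criterion. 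Exhausting $E$ by countably many such subsets completes the reduction.
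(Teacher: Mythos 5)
Your reduction breaks down at the step ``$\Theta^{*,n}(\omega_1,x)<\infty$ for $\omega_1$-a.e.\ $x\in E$, hence $\omega_1|_E\ll\mathcal H^n|_E$, hence Theorem~\ref{theorem:Laplace-one-phase} applies.'' Theorem~\ref{theorem:Laplace-one-phase} requires a set with $0<\mathcal H^n(E)<\infty$ (or at least a decomposition into pieces that are $\sigma$-finite for $\mathcal H^n$), and the ACF product bound does not provide this. Finiteness of the upper $n$-density only gives $\omega_1|_{A_k}\leq C_k\,\mathcal H^n|_{A_k}$ on the pieces $A_k=\{\Theta^{*,n}\leq k\}$; it does not prevent $\mathcal H^n|_{A_k}$ from being infinite, or even non-$\sigma$-finite. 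Concretely, a finite measure with \emph{zero} $n$-density everywhere (e.g.\ a multiple of $\mathcal H^s$ on a compact set of dimension $s>n$) satisfies $\mu\ll\mathcal H^n$ vacuously, is carried by no $\sigma$-finite-for-$\mathcal H^n$ set, and is certainly not $n$-rectifiable. So at points where the density of $\omega_1$ vanishes, absolute continuity with respect to $\mathcal H^n$ gives you nothing, Theorem~\ref{theorem:Laplace-one-phase} cannot be invoked, and the final conclusion $\mathcal H^n|_F\ll\omega_i|_F$ (which is part of the statement) also fails to follow. This is precisely the known crux of the two-phase problem: the hard part is to show that the set of zero-density points is $\omega_1$-null, and your plan contains no mechanism for that.

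This is why the actual proof (in \cite{AMT17a}, \cite{AMTV19}, which the paper cites for Theorem~\ref{theorem:Laplace-two-phase}, and which the paper follows for its elliptic analogue, Theorem~\ref{theorem:main_two_phase_qualitative}) splits $E$ into points of positive and of zero density. The ACF/KPT09 product estimate (your step \eqref{e:twophase-prod-plan}, which is correct and does hold in general domains, cf.\ the paper's Lemma~\ref{lemma:bound_density_two_phase_problem}; no Bourgain-type capacitary localization is needed for that direction) only handles the positive-density part, where $\mathcal H^n$ becomes $\sigma$-finite and one-phase/Riesz-transform techniques (finiteness of $T_*\omega$, the suppressed $Tb$ theorem, David--Semmes) give rectifiability. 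The zero-density part is eliminated by a genuinely different argument: a blow-up/tangent-measure analysis \`a la Kenig--Preiss--Toro showing flat tangents and codimension one, combined with the quantitative rectifiability criterion of Girela-Sarri\'on--Tolsa type (Theorem~\ref{teo1} in the elliptic setting), which produces uniformly rectifiable big pieces in small flat doubling balls and leads to $\omega_1(\{\text{zero density}\})=0$. None of this machinery is present in your proposal, so the argument as written does not prove the theorem.
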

A quantitative version of Theorem~\ref{theorem:Laplace-two-phase} was proved in \cite{AMT20} by Azzam, Tolsa, and the second-named author, and is stated as follows:
\begin{specialthm}\label{theorem:quantitative_two_phase_Laplace_indexed}
	Let \( n \geq 2 \). Let \( \Omega_1, \Omega_2 \subset \mathbb{R}^{n+1} \) be two bounded, open, connected, disjoint sets satisfying the capacity density condition (CDC), and set \( F \coloneqq \partial \Omega_1 \cap \partial \Omega_2 \neq \varnothing \).  Assume there exist \( \varepsilon, \varepsilon' \in (0,1) \) such that, for any ball \( B \) centered at \( F \) with \( \operatorname{diam}(B) \leq \min(\operatorname{diam}(\Omega_1), \operatorname{diam}(\Omega_2)) \), there exist \( x_i \in \tfrac{1}{4}B \cap \Omega_i \) for \( i = 1, 2 \), satisfying the following: for every measurable set \( E \subset B \),
	\begin{equation}\label{eq:hypothesis_A_infinity_type_Laplace_indexed}
		\text{if } \quad \omega^{x_1}_{\Omega_1}(E) \leq \varepsilon \, \omega^{x_1}_{\Omega_1}(B), \quad \text{then} \quad \omega^{x_2}_{\Omega_2}(E) \leq \varepsilon' \, \omega^{x_2}_{\Omega_2}(2B),
	\end{equation}
	where \( \omega^{x_i}_{\Omega_i} \) denotes harmonic measure with pole at \( x_i \) in \( \Omega_i \), for \( i = 1, 2 \).
	
	If \( \varepsilon' \) is sufficiently small, depending only on \( n \) and the CDC constant, then there exist \( \theta_i \in (0,1) \) and a uniformly \( n \)-rectifiable set \( \Sigma_B \subset \mathbb{R}^{n+1} \) such that
	\begin{equation}\label{eq:thm13_a_Laplace_indexed}
		\omega^{x_i}_{\Omega_i}(\Sigma_B \cap F \cap B) \geq \theta_i, \quad \text{for } i = 1, 2.
	\end{equation}
	Moreover, if \( x_1 \) is a corkscrew point for \( \tfrac{1}{4}B \), then there exists \( c > 0 \) such that
	\[
	\mathcal{H}^n(\Sigma_B \cap F \cap B) \geq c \, r(B)^n.
	\]
\end{specialthm}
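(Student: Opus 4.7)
The plan is to reduce the two-phase absolute continuity hypothesis to a uniform rectifiability conclusion by the intermediate route of Riesz transform boundedness, following the philosophy that made Theorem~\ref{theorem:Laplace-two-phase} quantitative. Concretely, I would fix the ball $B$ centered at $F$ and construct an auxiliary Radon measure $\mu$ supported on $F\cap B$, essentially by restricting (or symmetrizing) the two harmonic measures after comparison. The goal is to show that $\mu$ has upper $n$-growth, satisfies a thin-boundary/corona-type decay, and that the Riesz transform $\mathcal{R}_\mu$ is bounded on $L^{2}(\mu)$; one then invokes the Nazarov--Tolsa--Volberg / David--Mattila type machinery to produce a uniformly $n$-rectifiable set $\Sigma_B$ carrying a big piece of $\mu$, and finally uses the joint $A_\infty$-type hypothesis \eqref{eq:hypothesis_A_infinity_type_Laplace_indexed} to transfer the mass bound back to both $\omega^{x_i}_{\Omega_i}$.

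First, I would record the analytic toolkit that the CDC provides: non-degenerate Green functions $G_i(x_i,\cdot)$ in $\Omega_i$, the Caffarelli--Fabes--Mortola--Salsa relation
\[
\omega^{x_i}_{\Omega_i}(B(y,r))\;\approx\; r^{\,n-1}\,G_i(x_i,A_r(y)),
\]
for boundary balls $B(y,r)\subset \tfrac12 B$ with a corkscrew point $A_r(y)$, together with H\"older continuity of $u/G_i$ at the boundary for nonnegative harmonic functions. From these and the two-phase hypothesis one deduces a quantitative mutual absolute continuity of $\omega^{x_1}_{\Omega_1}$ and $\omega^{x_2}_{\Omega_2}$ on large portions of $F\cap B$ and, via standard stopping-time/Carleson-packing arguments, a set $F_0\subset F\cap B$ of comparable harmonic measure on which the Radon--Nikodym derivative $d\omega^{x_1}_{\Omega_1}/d\omega^{x_2}_{\Omega_2}$ is bounded above and below. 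On $F_0$ I take $\mu:=\omega^{x_1}_{\Omega_1}|_{F_0}$; the polynomial upper $n$-growth follows from Bourgain's lemma applied to both sides, and $\mu$-doubling on the appropriate scales is obtained via the stopping construction.

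Next I would set up a $Tb$-type argument for the Riesz transform $\mathcal R_\mu$ in the spirit of NTV, with the test function built out of the normal derivatives of Green's functions. The crucial identity is that, morally, the gradient of the single layer potential $\mathcal S^{\mu}1$ behaves like $\nabla G_i(x_i,\cdot)$ modulo a controllable error near $F_0$; thus
\[
\int_{F_0} \mathcal R_\mu \,b\;d\mu \;\approx\; \int_{F_0} \nabla G_1(x_1,\cdot)\,b\;d\mu,
\]
which can be estimated using square function bounds for harmonic functions together with the comparability between $\omega_1$ and $\omega_2$ to produce an accretive system on big subcubes. Once the $Tb$ hypotheses are verified, the non-homogeneous $Tb$ theorem yields $L^2(\mu)$-boundedness of $\mathcal R_\mu$, and the Mattila--Melnikov--Verdera / NToV theorem upgrades this to a uniformly $n$-rectifiable set $\Sigma_B$ with $\mu(\Sigma_B\cap F\cap B)\gtrsim \mu(B)$. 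The lower bound $\mathcal H^n(\Sigma_B\cap F\cap B)\gtrsim r(B)^n$ then follows from the non-degeneracy at the corkscrew point $x_1$ through the CFMS estimate.

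The main obstacle is the $L^2(\mu)$ boundedness of $\mathcal R_\mu$: the measure $\mu$ is not doubling on all scales, the two phases interact only through an $A_\infty$-type comparison rather than a pointwise kernel bound, and the test function $b$ must be accretive on many cubes simultaneously. Handling this requires a careful corona decomposition adapted to the stopping regions where $d\omega^{x_1}_{\Omega_1}/d\omega^{x_2}_{\Omega_2}$ stays between two prescribed thresholds, combined with the good-$\lambda$ inequality linking the Riesz transform to the square function of $G_1$ near $\partial\Omega_1$. Assembling these ingredients coherently, rather than any single technical step, is the principal difficulty.
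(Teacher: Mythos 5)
Your outline captures only the broad philosophy (growth estimates, $L^2$ boundedness of the Riesz transform, NTV-type machinery, then transfer of mass back to $\omega^{x_1}_{\Omega_1}$ and $\omega^{x_2}_{\Omega_2}$), but the route you sketch has genuine gaps at exactly the points where the actual argument (the one of Azzam--Tolsa and the second named author, reproduced for elliptic operators in the proof of Theorem \ref{theorem:quantitative_two_phase}) has to work hardest. First, the upper $n$-growth of your measure $\mu$ does not follow from Bourgain's lemma: Lemma \ref{l:bourgain} gives a \emph{lower} bound for the measure of a ball when the pole is nearby, and an individual harmonic measure in a CDC domain need not have bounded $n$-density at all. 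The growth is obtained from the Alt--Caffarelli--Friedman monotonicity formula, which bounds the \emph{product} of the two densities as in \eqref{eq:bound_density_two_phase_problem}, and only after restricting to the set $G$ where $d\omega_1/d\omega_2$ is comparable to a constant (as in \eqref{eq:properties_of_G}) does each measure acquire $n$-growth. Second, your use of the CFMS comparison $\omega(B(y,r))\approx r^{n-1}G(x_i,A_r(y))$ and of boundary Harnack/H\"older continuity of ratios is not legitimate here: CDC domains carry no Harnack chain condition, and these tools are simply unavailable; the genuine proof never invokes them.

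Third, and most importantly, a single $Tb$ argument producing $L^2(\mu)$ boundedness of $\mathcal R_\mu$ on a big piece does not suffice, because the measure degenerates at points of small density and the NTV/NToV machinery alone gives nothing there at the scale of $B$. The actual proof splits the good set into $G^{bd}$ and $G^{sd}$: on the big-density part one builds an Ahlfors regular measure via the NToV ``dyadic carpet'' construction and applies the codimension-one David--Semmes theorem, whereas the small-density part requires two inputs your proposal does not supply, namely (a) a compactness/blow-up argument producing a ball $B'$ of comparable measure, far from both poles, on which $\omega_2$ is \emph{flat} ($\beta_{\omega_2,1}(B')$ small), and (b) the quantitative rectifiability criterion of Girela-Sarri\'on--Tolsa type (here Theorem \ref{teo1}), whose $L^2$-mean-oscillation hypothesis is verified not through an accretive system built from $\partial_\nu G$, but through the elementary observation that on small-density points $T\omega_2(x)$ equals $\nabla_1\Gamma(x,x_2)$ up to an error of size $\rho\,\Theta^n_{\omega_2}(B')$, and this function is nearly constant because $r(B')\ll|x-x_2|$. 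The $Tb$ input that is genuinely needed (to get boundedness of $T_{\omega_i|_G}$ and the maximal truncation bounds feeding Theorem \ref{teo1}) is the suppressed big-pieces $Tb$ theorem with hypotheses on $T_*\omega_i$ on the good set, not accretivity of normal derivatives of Green functions. Without the ACF growth bound, the flat ball produced by compactness, and the big/small density dichotomy, your scheme cannot be completed.
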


\vv

The techniques developed in all the aforementioned works rely heavily on deep  results in the theory of singular integral operators, which are closely linked to the theory of rectifiability and its quantitative counterpart, uniform rectifiability, first introduced by David and Semmes in \cite{DS91, DS93}.

Let us now introduce some important notions that are necessary for the statement of the main results. 	We say that a non-negative Borel measure has \textit{growth of degree $d$}, or, for brevity, \textit{$d$-growth}, if there exists a constant $c_0 > 0$ such that
\[
\mu\bigl(B(x, r)\bigr) \leq c_0 \,r^d, \qquad \text{for all } x \in \mathbb{R}^{n+1}, \, r > 0,
\]
and we write $\mu \in {M}^d_+(\mathbb{R}^{n+1})$.

Such a  measure  $\mu$ is said to be \textit{$d$-Ahlfors regular} if it holds that
\[
c_0^{-1}r^d \leq \mu\big(B(x,r)\big) \leq c_0\,r^d, \quad \text{ for all } x \in \supp(\mu), \, 0 < r < \diam(\supp (\mu)).
\]
If $\mathcal{H}^d$ stands for the $d$-dimensional Hausdorff measure in $\mathbb{R}^{n+1}$, we say that a set $E \subset \mathbb{R}^{n+1}$ is \textit{$d$-Ahlfors regular} if $\mathcal{H}^d|_E$ is a $d$-Ahlfors regular measure.

A set $E\subset \Rn1$ is called \textit{$d$-rectifiable} if there exists a countable family of Lipschitz maps $f_j\colon \mathbb R^d\to \Rn1$ such that
\[
\mathcal H^d\Bigl(E\setminus \bigcup_jf_j(\mathbb R^d) \Bigr)=0.
\]
A measure $\mu$ is \textit{$d$-rectifiable} if it vanishes outside a $d$-rectifiable set $E$ and it is absolutely continuous with respect to $\mathcal H^d|_E$.

We say that a set $E\subset \Rn1$ is \textit{uniformly $d$-rectifiable} if it is $d$-Ahlfors regular and there exist $\theta,M>0$ such that, for all $x\in E$ and all $r>0$, there is a Lipschitz mapping $g$ from the ball $B_d(0,r)\subset\mathbb R^d$ to $\Rn1$ with $\Lip(g)\leq M$, such that
\[
\mathcal H^d\big(E\cap B(x,r)\cap g(B_d(0,r))\big)\geq \theta r^d.
\]
We also say that a measure $\mu$ is \textit{uniformly $n$-rectifiable} if it is $d$-Ahlfors regular and vanishes outside of a uniformly $d$-rectifiable set.

In \cite{DS91}, David and Semmes proved that a $d$-Ahlfors regular measure $\mu$ on $\Rn1$ is uniformly $d$-rectifiable if and only if \textit{all} singular integral operators with smooth, antisymmetric convolution-type kernels are bounded on $L^2(\mu)$. They also asked whether the $L^2(\mu)$-boundedness of the $d$-Riesz transform $\mathcal R^d_\mu$, associated with a $d$-Ahlfors regular measure $\mu$, implies that $\mu$ is uniformly $d$-rectifiable. This question is commonly referred to as \textit{David and Semmes' problem}, and a first positive answer was provided for $d = 1$ by Mattila, Melnikov, and Verdera in \cite{MMV96}.
The Menger curvature techniques used for the Cauchy transform, however, fail in higher dimensions. In their celebrated paper \cite{NToV14}, Nazarov, Tolsa, and Volberg gave an affirmative solution to the codimension-one case of the David and Semmes problem, corresponding to $d = n$ for any integer $n \geq 1$. All intermediate cases remain open.

Let us now discuss the singular integral operator techniques that have been employed in the study of \textit{free boundary problems} (FBPs) for harmonic measure. The one-phase problems addressed in \cite{AHMMMTV16} and \cite{MT20} make substantial use of the resolution of the David–Semmes problem in codimension one by Nazarov, Tolsa, and Volberg \cite{NToV14}, and its qualitative analogue in \cite{NToV14b}, as well as the $Tb$ theorem for suppressed kernels, due to Nazarov, Treil, and Volberg (see \cite{NTrV99} for its formulation in the context of the Cauchy transform). 

The two-phase problems treated in \cite{AMT17a}, \cite{AMTV19}, and \cite{AMT20} required an even more delicate analysis. In particular, ruling out points of zero harmonic measure density necessitated a local, quantitative rectifiability criterion, expressed in terms of suitable boundedness conditions on the $L^2$-norm and the $L^2$-mean oscillation of the Riesz transform on a flat ball, developed by Girela-Sarrión and Tolsa in \cite{GT18}. Both the $Tb$ theorem for suppressed kernels and the techniques used in its proof played a crucial role in adapting the results of \cite{GT18}, making them suitable for these applications.

\vvv

It is natural to ask whether Theorems~\ref{theorem:Laplace-one-phase}, \ref{theorem:Laplace-one-phase_quant}, \ref{theorem:Laplace-two-phase}, and \ref{theorem:quantitative_two_phase_Laplace_indexed} admit elliptic analogues. In particular, one can consider second-order uniformly elliptic operators in divergence form and their associated \textit{elliptic measures}, and study the corresponding non-variational free boundary problems under mild regularity assumptions on the coefficients of the associated elliptic matrix.

In the case of operators with Hölder continuous coefficients, such results have been obtained by Prat, Tolsa, and the third-named author in \cite{PPT21} (one-phase), and by the third-named author in \cite{Pu19} (two-phase). The validity of the suppressed $Tb$ theorem for anti-symmetric Calderón-Zygmund singular integral operators with Hölder continuous kernels enables one to pursue a strategy similar to that used in the study of free boundary problems for harmonic measure. However, the main challenge lies in extending the results of \cite{NToV14}, \cite{NToV14b}, and \cite{GT18} to the gradient of the single layer potential associated with the elliptic operator—an operator that coincides with the Riesz transform in the case of the Laplacian. This technically demanding task was successfully addressed in \cite{CMT19} by Conde-Alonso, Tolsa, and the second-named author, providing a generalization of \cite{ENV12}, as well as in \cite{PPT21} and \cite{Pu19}. In all of these works, the authors followed the overall strategy of the original papers, but the extension to the elliptic setting required overcoming substantial and technically intricate obstacles, resulting in a highly nontrivial advancement.

Finally,  let us stress out that  one-phase free boundary problems for elliptic measure in terms of (uniform) $n$-rectifiability have also been studied using approaches that avoid singular integral techniques.  In \cite{AM19}, Azzam and the second-named author proved that in uniform domains satisfying the capacity density condition (CDC) and having surface measure with positive lower density, the absolute continuity of surface measure with respect to elliptic measure—when the associated operator has $\VMO$ coefficients—implies rectifiability of the boundary. Independently and concurrently, Toro and Zhao \cite{TZ21} reached the same conclusion under the assumptions that the coefficient matrix $A$ belongs to $W^{1,1}$ and the domain is uniform with Ahlfors regular boundary.

	There is a vast literature on the quantitative one-phase problem for harmonic and elliptic measures. 
	We highlight, for instance, that for uniform domains, it was shown in \cite{HMU14} that a quantitative absolute continuity of harmonic measure with respect to surface measure, the so-called \textit{weak-$A_\infty$ property}, implies uniform rectifiability of the boundary.
	This result was later extended in \cite{HLMN17} to open sets with Ahlfors regular boundaries that satisfy the corkscrew condition.
	The weak-$A_\infty$ property of harmonic measure is particularly significant because of its connection to the solvability of the Dirichlet problem with data in $L^p$. A complete geometric characterization of open sets having this property was given in \cite{AHMMT20}, to which we refer for a further discussion.
	
	As for elliptic measure, \cite{AGMT17} established that under $L^1$-type assumptions on the coefficient matrix, if $\Omega$ is a domain with Ahlfors regular boundary satisfying the corkscrew condition, then the weak-$A_\infty$ property of elliptic measure implies the uniform rectifiability of $\partial \Omega$, thus generalizing the result of \cite{HLMN17}.
	The connection between the weak-$A_\infty$ property of elliptic measure and uniform rectifiability was also achieved in \cite{HMMTZ21} for elliptic operators whose coefficients' gradients satisfy a suitable \(L^2\)-Carleson measure condition, the so-called \emph{Dahlberg--Kenig--Pipher coefficients}, with sufficiently small Carleson constant.
	More recently, this was further extended to open sets with Ahlfors regular boundary satisfying the corkscrew condition under an $L^1$-Carleson condition on the gradient of the coefficient matrix, as shown in \cite{CHM24}.
	For further references on two-phase FBPs for harmonic measure, we refer for instance to  \cite{KT06}, \cite{KPT09}, \cite{BH16}, \cite{En16}, \cite{PT20}, \cite{TT24}, \cite{To24}.

\vv

\subsection{Motivation and setting} 
The main motivation of the present manuscript is to generalize the results of \cite{AHMMMTV16}, \cite{MT20}, \cite{AMTV19}, and \cite{AMT20} to the setting of elliptic operators with coefficients more general than H\"older continuous, using an approach grounded in the theory of singular integral operators.

Let $A(\cdot)$ be an $(n+1)\times(n+1)$-matrix whose entries $(a_{ij})_{i,j\in \{1,\ldots,n+1\}}$ are measurable real-valued functions in $L^\infty(\Rn1)$. The matrix $A(\cdot)$ is said \textit{uniformly elliptic} if there exists $\Lambda>0$ such that
\begin{align}
	\langle A(x)\xi,\xi\rangle &\geq \Lambda^{-1}|\xi|^2,\quad \qquad \text{ for all } \xi\in \Rn1 \text{ and } \mathcal L^{n+1}\text{-a.e. }x\in \Rn1,\label{eq:ellip1_scalar}\\
	\langle A(x)\xi,\eta\rangle &\leq \Lambda |\xi||\eta|,\qquad \quad \text{ for all } \xi,\eta\in \Rn1 \text{ and } \mathcal L^{n+1}\text{-a.e. }x\in \Rn1,\label{eq:ellip2_scalar}
\end{align}
where $\mathcal L^{n+1}$ denotes the Lebesgue measure on $\Rn1$.
We consider the operator
\begin{equation}\label{eq:snd_order_diff_eq}
	L_A u(x)\coloneqq -\div(A(\cdot)\nabla u(\cdot))(x) = 0, \qquad x \in \mathbb{R}^{n+1},
\end{equation}
with the identity \eqref{eq:snd_order_diff_eq} to be interpreted in the sense of distributions, see \eqref{eq:def_snd_order_elliptic_sol}.
We say that a function $\Gamma_A$ is a \textit{fundamental solution} to \eqref{eq:snd_order_diff_eq} if \(L_{A}\Gamma_A(\cdot,y) = \delta_y\) in the sense of distributions.  
We remark that, if $A$ is a uniformly elliptic matrix with real coefficients in \(L^\infty(\mathbb{R}^{n+1})\), there exists a fundamental solution associated with \(L_A\), and we refer to \cite{HK07} for its construction, see also Section \ref{section:preliminaries_and_notation}. If $A_0$ is a uniformly elliptic matrix with constant coefficients, we denote the fundamental solution to  $L_{A_0}u=0$ as $\Theta(\cdot, \cdot; A_0)$.

The key object of study in this paper is the \textit{gradient of the single layer potential}, which serves as the natural elliptic counterpart to the Riesz transform. 	For a non-negative Radon measure $\mu$ on $\Rn1$,  we define it as follows:
\begin{equation}\label{eq:single layer}
	T_\mu f(x)\coloneqq \int \nabla_1 \Gamma_A(x,y)f(y)\, d\mu(y),\qquad \text{ for }f\in L^1_{\loc}(\mu), 
\end{equation}
that we understand in the sense of the $\varepsilon$-truncations
\[
T_{\mu,\varepsilon} f(x)\coloneqq \int_{|x-y|>\varepsilon} \nabla_1 \Gamma_A(x,y)f(y)\, d\mu(y),\qquad \text{ for }f\in L^1_{\loc}(\mu).
\]

We say that the operator $T_\mu$ is \textit{bounded on} $L^2(\mu)$ if $T_{\mu, \varepsilon}$ is bounded on $L^2(\mu)$ uniformly in $\varepsilon > 0$, and we write
\[
\|T_\mu\|_{L^2(\mu)\to L^2(\mu)} \coloneqq \sup_{\varepsilon > 0}\|T_{\mu,\varepsilon}\|_{L^2(\mu) \to L^2(\mu)}.
\]

We remark that, if $A \equiv \mathrm{Id}$, we have $L_A = -\Delta$, so $\nabla_1 \Gamma_{\mathrm{Id}}$ equals the Riesz kernel up to a dimensional multiplicative constant. 
Hence, $T_\mu$ is the natural elliptic generalization of the $n$-Riesz transform $\mathcal R_\mu$; we recall that, if $\mu$ is a Radon measure on $\Rn1$, $n\geq 1$, its associated ($d$-dimensional) Riesz transform is defined as
\begin{equation}\label{eq:definition_Riesz_transform}
	\mathcal R^d_\mu f(x)=\int \frac{x-y}{|x-y|^{d+1}}f(y)\, d\mu(y),\qquad \text{ for } f\in L^1_{\loc}(\mu),
\end{equation}
to be interpreted again in the sense of truncations. For $f\equiv 1$ on $\Rn1$, we also use the notation $\mathcal R^d\mu (x)= \mathcal R^d_\mu 1(x)$ and $T\mu(x)=T_{\mu}1(x)$. In the $1$-codimensional case $d=n$, we often write $\mathcal R_\mu f=\mathcal R^{n}_\mu f$.

In order to interpret $\nabla_1 \Gamma_A$ as a Calder\'on--Zygmund-type kernel, the mere assumption that the coefficients of $A$ belong to $L^\infty(\mathbb{R}^{n+1})$ is not sufficient, as this does not necessarily guarantee local $L^\infty$-estimates. This motivates the need to impose additional regularity assumptions on $A$.

A natural class of matrices to consider is those for which weak solutions to $L_Au=0$ are continuously differentiable and  have local $L^\infty$ and  regularity estimates for $\nabla u$. These properties were proved by Dong and Kim \cite{DK17} for matrices whose mean oscillation satisfy a Dini condition, whose  definitions we recall below.  	Elliptic operators associated with $\DMO$ and $\DMO$-type matrices are of significant interest in a broad variety of PDE problems, with a considerable amount of recent research activity in the field. We cite, for instance, \cite{CD19}, \cite{CDX22}, \cite{DEK18}, \cite{DJ25},\cite{DKL23}, \cite{DJ25}, \cite{DJV24}, \cite{DL20}, \cite{Le24}, and the references therein.

Let $\kappa \geq 1$.	We say that $\theta\colon [0,\infty] \to [0,\infty]$ is a  \textit{$\kappa$-doubling} function if it satisfies
\begin{equation}\label{eq:dini1_new}
	\theta(t) \leq \kappa\, \theta(s), \qquad \text{for all }\, t > 0  \text{ and }\frac{t}{2} \leq s \leq t .
\end{equation}

A $\kappa$-doubling function $\theta$ is said to belong to the class $\DS(\kappa)$ (\textit{Dini at small scales}) if it is $\mathcal{L}^1$-measurable and satisfies
\begin{equation}\label{eq:sDini}
	\int_0^1 \theta(t)\, \frac{dt}{t} < \infty
\end{equation}
and, given $d>0$, we say that $\theta$ belongs to $\DL_{d}(\kappa)$ ($d$-{\it Dini in large scales}) if it is $\mathcal L^1$-measurable and 
\[
\int_1^\infty \theta(t)\,\frac{dt}{t^{d+1}}<\infty.
\]
In particular, if $0<d_1\leq d_2$, then $\DL_{d_1}(\kappa)\subset \DL_{d_2}(\kappa)$. Moreover, for $\theta\in \DS(\kappa)$ we define
\begin{equation}\label{eq:def_mathfrak_I}
	\mathfrak I_{\theta}(r)\coloneqq \int_0^r \theta (t)\, \frac{dt}{t}, \qquad \text{ for } r>0,
\end{equation}
and, for $d>0$ and $\theta\in \DL_d(\kappa)$,
\begin{equation}\label{eq:def_mathfrak_L}
	\mathfrak L^d_{\theta}(r)\coloneqq r^{d}\int_r^\infty \theta(t)\, \frac{dt}{t^{d+1}} , \qquad\text{ for } r>0.
\end{equation}	

\vv
{		For $x \in \mathbb{R}^{n+1}$ and $r > 0$, we denote by $B(x, r)$ the open ball centered at $x$ of radius $r$.  For $x \in \mathbb{R}^{n+1}$, $r > 0$, and an $(n+1) \times (n+1)$ matrix $A$, we write
	\[
	\bar{A}_{x, r} \coloneqq \avint_{B(x, r)} A \coloneqq \frac{1}{|B(x, r)|} \int_{B(x, r)} A(y)\, dy,
	\]
	and define its \textit{mean oscillation} function $\oomega_{A} \colon [0, \infty) \to [0, \infty)$ as 
	\[
	\oomega_{A}(r) \coloneqq \sup_{x \in \mathbb{R}^{n+1}} \avint_{B(x, r)} \bigl|A(z) - \bar{A}_{x, r}\bigr|\, dz.
	\]
	
	We remark that mean oscillation functions are $\kappa$-doubling: by \cite[p.495]{Li17}, there exists a constant $\kappa > 0$, depending only on the dimension, such that $\oomega_A$ satisfies the doubling condition \eqref{eq:dini1_new}, see also the introduction of \cite{MMPT23}.

	We say that an $(n+1)\times (n+1)$-matrix $A \in \DMO_s$ (resp. $A \in \DMO_\ell(d)$)  if $ \oomega_A \in \DS(\kappa)$ (resp. $ \oomega_A \in \DL_{d}(\kappa)$ for $d\geq 0$).	We also say that $A \in \DDMO_s$ if $A \in \DMO_s$ and $\mathfrak I_{ \oomega_A}$ satisfies \eqref{eq:sDini}, namely,
	\begin{equation}\label{eq:logdini}
		\mathfrak I_{{\mathfrak I}_{\oomega_A}}(1)\coloneqq\int_0^1 \int_0^r  \oomega_A(t)\, \frac{dt}{t}\frac{dr}{r}=-\int_0^1  \oomega_A(t)\,\log{t} \,\frac{dt}{t}<+ \infty.	
	\end{equation}
	
	\vv
	
In light of Remark \ref{rem:continuous representative}, the class of $\DDMO_s$ functions can be viewed, up to a set of measure zero,  as a subclass of the class of uniformly continuous functions whose modulus of continuity satisfies a Dini condition. 	Note also that  $ \DMO_s \varsubsetneq \VMO(\Rn1)$.
	
	\vv
	
	Let us set 
	\begin{equation}\label{eq:logdini1}
		{\mathfrak F}_{\oomega_A}(r)\coloneqq \int_0^r  \oomega_A(t)\,\Bigl(1+\log\frac{1}{t}\Bigr) \,\frac{dt}{t}={\mathfrak I}_{\oomega_A}(r)+ \mathfrak I_{{\mathfrak I}_{\oomega_A}}(r), \quad \text{ for } r>0,
	\end{equation}
and  define the main class of matrices considered in this article as
	\[
	\widetilde{\DMO}_d \coloneqq \textup{DDMO}_s \cap \textup{DMO}_\ell(d), \qquad \text{ for } d\geq 0.
	\]
	The acronyms $\DMO$ and $\DDMO$ stand for \textit{Dini mean oscillation} and \textit{double Dini mean oscillation}, respectively. The subscripts in $\DMO_s$ and $\DMO_\ell$ indicate that the corresponding Dini condition is imposed at small and large scales, respectively.
	It is important to note that the class $\widetilde{\DMO}_d$ strictly contains the family of H\"older continuous matrices; see Remark \ref{rem:remarks_on_DMO}.
	
	\vvv
	Under the assumption that $A$ is a uniformly elliptic matrix whose oscillation is Dini integrable at small scales, the gradient of the single layer potential $\nabla_1 \Gamma_A(\cdot, \cdot)$ satisfies local Calder\'on--Zygmund-type estimates in terms of the Dini integral of $\oomega_A$. These estimates allow one to study $\nabla_1 \Gamma_A$ as the kernel of a singular integral operator; see Lemma~\ref{lem:estim_fund_sol}. However, an effective analysis of this operator requires integrability of the relevant quantities at both small and large scales, which is why we also impose the additional assumption $A \in \DMO_\ell(d)$ for the particular choice of $d$ made below and $A \in  \textup{DDMO}_s$.
	
	This PDE framework is not new in the study of the gradient of the single layer potential. 	In fact, motivated by the geometric applications of studying the gradient of single layer potentials,  Molero,  Tolsa, and the second and third named authors of the present manuscript, investigated in \cite{MMPT23} the link between the $L^2$-boundedness of $T_\mu$ and the $L^2$-boundedness of the Riesz transform $\mathcal{R}_\mu$ in the $\widetilde{\mathrm{DMO}}$ setting. 	In particular, they proved that if $n \geq 2$, $A$ is a uniformly elliptic matrix with coefficients in $\widetilde{\mathrm{DMO}}_{n-1}$  and $\mu \in {M}^n_+(\mathbb{R}^{n+1})$ is a measure with compact support in $\mathbb{R}^{n+1}$, then $\mathcal{R}_\mu$ is bounded on $L^2(\mu)$ if and only if $T_\mu$ is bounded on $L^2(\mu)$. More specifically, it holds that
	\begin{equation}\label{eq:maineq}
		1 + \|\mathcal{R}_\mu\|_{L^2(\mu)\to L^2(\mu)} \approx 1 + \|T_\mu\|_{L^2(\mu)\to L^2(\mu)},
	\end{equation}
	where the implicit constant depends on $n$, $\mathrm{diam}(\mathrm{supp}(\mu))$, the growth of $\mu$, and the uniform ellipticity of $A$.
	As proved in \cite{MMPT23}, this result readily gives a $\widetilde{\mathrm{DMO}}$ variant of \cite{ENV12},  \cite{NToV14},  and \cite{NToV14b} for elliptic operators whose coefficients belong to a subclass of the Dini mean oscillation class, which strictly contains the H\"older continuous matrices.

	\vvv
}

\subsection{Main results I: A quantitative rectifiability criterion for the gradient of the single layer}
Let $\mu$ be a Radon measure on $\Rn1$. For a ball $B\subset \Rn1$ of radius $r(B)$ and $\gamma\in(0,1)$, we denote
\begin{equation*}
	\Theta^n_\mu(B)\coloneqq \frac{\mu(B)}{r(B)^n},\qquad \text{ and }\qquad
	P_{\gamma, \mu}(B)\coloneqq  \sum_{j\geq 0} 2^{-{\gamma j}}\Theta^n_{\mu}(2^j B).
\end{equation*}

\vv
Given $C>0$, we say that the ball $B$ is \textit{$C$-$P_{\gamma,\mu}$-doubling} if
\begin{equation}\label{eq:P_doubling_balls}
	P_{\gamma,\mu}(B)\leq C\, \Theta^n_\mu(B).
\end{equation}
If the value of $C$ is clear from the context, we simply say that the cube is $P_{\gamma,\mu}$\textit{-doubling}.

\vv
Given $t>0$, we say that a ball $B \subset \Rn1$ has  {\it $t$-thin boundary} if 
\begin{equation}\label{eq:small boundary}
	\mu\bigl(\bigl\{x\in 2B: \dist(x,\partial B)\leq \lambda r(B)\bigr\}\bigr)\leq t\lambda \,\mu(2B), \qquad\text{ for all } \lambda>0.
\end{equation}

\vv

Given an $n$-dimensional plane $L$ in $\Rn1$ we denote
\[
\beta^L_{\mu,1}(B)=\frac{1}{r(B)^n}\int_B\frac{\dist(x,L)}{r(B)}\, d\mu(x)\qquad\text{ and }\qquad \beta_{\mu,1}(B)=\inf_L \beta^L_{\mu,1}(B),
\]
where the infimum is taken over all hyperplanes.
Finally, for a set $E\subset\Rn1$ with $\mu(E)>0$ and $f\in L^1_{\loc}(\mu)$ we write
\[
m_{E}(f,\mu)=\frac{1}{\mu(E)}\int_E f\, d\mu.
\]

Let \( M(\mathbb{R}^{n+1}) \) be the space of real Borel measures, equipped with the total variation norm \( \|\cdot\| \). For \( \mu \in M(\mathbb{R}^{n+1}) \), we denote its variation by \( |\mu| \) and define \( \|\mu\| \coloneqq |\mu|(\mathbb{R}^{n+1}) \).

\vvv
In the next statement we denote by $T_{\mu}$ the principal values of the gradient of the single layer potential, which exist due to \cite[Theorem 1.5]{MMPT23} and the argument in \cite[Section 2.4]{GT18} (see also \cite[Section 3]{Pu19}).

\begin{theorem}\label{theorem:elliptic_GSTo}
	Let	$A$ be a uniformly elliptic matrix satisfying  $A\in \widetilde \DMO_{1-\alpha}$ for some $\alpha \in (0,1)$,  $\mu$ be a Radon measure with compact support in $\Rn1$, $n\geq 2$, such that $\diam(\supp(\mu)) \leq R$ for some $R>0$,  and denote by $T_\mu$ the gradient of the single layer potential associated with $L_A$ and $\mu$.  Let also $B\subset \Rn1$ be a ball centered at $\supp(\mu)$ of radius $r(B)$.  Assume that for some positive real numbers $ \mathfrak C_1,  \mathfrak C_2,   \mathfrak C_3,  \mathfrak C_4, \tau, \delta$, and $\lambda$, the following properties hold:
	\begin{enumerate}
		\item $r(B)\leq\lambda$.
		\item $P_{\alpha, \mu}(B)\leq \mathfrak C_1 \,\Theta^n_\mu(B)$.
		\item $\Theta^n_\mu(B(x,r)) \leq \mathfrak C_2  \,\Theta^n_\mu(B)$,   for every  $x \in B$ and $r \in (0, 2\, r(B))$.
		\item $B$ has $\mathfrak C_3$-thin boundary, in the sense of \eqref{eq:small boundary}.
		\item $T_{\mu|_{B}}$ is bounded on $L^2(\mu|_{B})$ and it holds 
		\begin{equation}\label{eq:L^2_bd_2n_B}
			\|T_{\mu|_{B}}\|_{L^2(\mu|_{B})\to L^2(\mu|_{B})}\leq \mathfrak C_4 \,\Theta^n_\mu(B).
		\end{equation}
		\item There exists some $n$-plane $L$ passing through the center of $B$ such that 
		$$\beta^L_{\mu,1}(B)\leq \,\delta\,\Theta^n_\mu(B).$$
		\item We have
		\begin{equation}\label{eq:mean_osc_small_theorem}
			\int_B \biggl|T_\mu 1 - \avint_B T_\mu 1\, d\mu\biggr|^2\, d\mu\leq\,\tau\,\Theta^n_\mu(B)^2\mu(B).
		\end{equation}
	\end{enumerate}
	Then there exists $\theta \in (0,1)$ so that, if $\delta$ and $\tau$ are small enough, depending on $n$, $\Lambda$, $\mathfrak{C}_1$, $\mathfrak{C}_2$, $\mathfrak{C}_3$, $\mathfrak{C}_4$, and $\diam(\supp(\mu))$, and if $\lambda$ is small enough, depending on $n$, $\Lambda$, $\mathfrak{C}_1$, $\mathfrak{C}_2$, $\mathfrak{C}_3$, $\mathfrak{C}_4$, $\diam(\supp(\mu))$, and $\tau$, then there exists a uniformly $n$-rectifiable set $\Gamma$ such that
	\[
	\mu(B \cap \Gamma) \geq \theta \mu(B).
	\]
	The uniform rectifiability constants of $\Gamma$ depend on all the constants above.  The same result holds for the adjoint operator $T^*$\footnote{The same result  holds for the double layer potential operator as well.}. 
\end{theorem}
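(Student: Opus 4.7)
The approach is to follow the Girela-Sarri\'on--Tolsa strategy of \cite{GT18} for the Riesz transform, as generalized to H\"older continuous elliptic matrices in \cite{CMT19,PPT21,Pu19}, and to push it through under the considerably weaker hypothesis $A \in \widetilde{\DMO}_{1-\alpha}$ via a coefficient-freezing argument built on the sharp kernel estimates of \cite{MMPT23}. Let $A_0 \coloneqq \bar A_{\rho B}$ be the average of $A$ over a suitable dilation $\rho B$, and denote by $T^0_\mu$ the gradient of the single layer potential associated with the constant-coefficient operator $L_{A_0}$. By \cite{MMPT23} (together with the Schauder-type estimates of \cite{DK17,DEK18}), the $\widetilde{\DMO}$ hypothesis yields the pointwise kernel bound
\[
\bigl|\nabla_1 \Gamma_A(x,y) - \nabla_1 \Theta(x,y;A_0)\bigr| \lesssim \frac{\mathfrak{F}_{\oomega_A}(|x-y|)}{|x-y|^n}.
\]
Combined with assumptions (2) and (3), this is used to prove
\[
\biggl(\frac{1}{\mu(B)}\int_B \bigl|T_\mu 1 - T^0_\mu 1 - c\bigr|^2\, d\mu\biggr)^{1/2} \leq \eta(\lambda)\,\Theta^n_\mu(B),
\]
for some constant $c$ and some $\eta(\lambda) \to 0$ as $\lambda \to 0$, and similarly to propagate the $L^2$-boundedness (5) from $T_{\mu|_B}$ to $T^0_{\mu|_B}$. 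Consequently, assumptions (6) and (7) transfer to $T^0_\mu$ with small losses controlled by $\lambda$ and $\tau$.

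\textbf{Reduction to the classical case.} Once everything is set at the level of the constant-coefficient operator $T^0_\mu$, the linear change of variables $\tilde x = A_0^{-1/2} x$ converts $\nabla_1 \Theta(\cdot,\cdot;A_0)$ into the classical $n$-Riesz kernel evaluated on the pushforward measure $\tilde \mu \coloneqq (A_0^{-1/2})_\# \mu$, up to left multiplication by the constant invertible matrix $M_{A_0} = -c\, A_0^{-1/2}$. The $\beta_1$-flatness, thin boundary, $n$-growth, and $P_{\alpha,\mu}$-doubling hypotheses are preserved by this change of variables up to multiplicative constants depending only on the ellipticity $\Lambda$, so $\tilde \mu$ on the image ball $\widetilde B \coloneqq A_0^{-1/2}(B)$ satisfies the hypotheses of \cite[Theorem~1.1]{GT18}. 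That theorem yields a uniformly $n$-rectifiable set $\widetilde \Gamma$ with $\tilde \mu(\widetilde B \cap \widetilde \Gamma) \geq \theta\,\tilde \mu(\widetilde B)$; pulling back by $A_0^{1/2}$ provides the desired $\Gamma$ for $\mu$. The adjoint case follows from the same argument applied to $A^\top \in \widetilde{\DMO}_{1-\alpha}$, via the identity $\nabla_2 \Gamma_A(x,y) = \nabla_1 \Gamma_{A^\top}(y,x)$.

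\textbf{Main obstacle.} The crucial technical step is the tail estimate in the freezing argument. Decomposing the kernel difference dyadically about $B$,
\[
|T_\mu 1(x) - T^0_\mu 1(x)| \lesssim \sum_{j\geq 0} \mathfrak{F}_{\oomega_A}(2^j r(B))\,\Theta^n_\mu(2^j B).
\]
The near-field contribution vanishes with $r(B) \to 0$ because $\mathfrak{F}_{\oomega_A}(r(B)) \to 0$, which is the origin of the smallness requirement $r(B) \leq \lambda$. For the far field, expanding $\mathfrak{F}_{\oomega_A}$ via its integral representation \eqref{eq:logdini1} and interchanging the order of summation, the condition $\oomega_A \in \DL_{1-\alpha}(\kappa)$ produces a $2^{-j(1-\alpha)}$ decay at large scales that matches precisely the $2^{-j\alpha}$ weight of $P_{\alpha,\mu}(B)$ from hypothesis (2), yielding a summable $2^{-j}$-type series bounded by $\Theta^n_\mu(B)$ times a prefactor vanishing as $r(B) \to 0$. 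This delicate pairing of the large-scale Dini exponent of $\oomega_A$ with the growth weight in $P_{\alpha,\mu}$ is exactly why the classes $\widetilde{\DMO}_{1-\alpha}$ and $P_{\alpha,\mu}$ are coupled in the hypotheses of the theorem.
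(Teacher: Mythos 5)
Your overall architecture (freeze the coefficients on a ball, compare $T_\mu$ with the constant-coefficient operator, reduce to the Riesz transform by a linear change of variables, and invoke \cite[Theorem 1.1]{GT18}) is indeed the skeleton of the paper's proof, which likewise studies the perturbation kernel $\nabla_1\Gamma_A(x,y)-\nabla_1\Theta(x,y;\bar A_B)$ and transfers hypotheses (5) and (7) to the Riesz transform via \eqref{eq:maineq}. However, the technical core of your argument — the tail estimate in the ``Main obstacle'' paragraph — is wrong as stated, and this is precisely where the real difficulty of the theorem lies. You bound the \emph{pointwise} difference by
\[
|T_\mu 1(x)-T^0_\mu 1(x)|\lesssim \sum_{j\geq 0}\mathfrak{F}_{\oomega_A}\bigl(2^j r(B)\bigr)\,\Theta^n_\mu(2^jB),
\]
and claim that the $\DL_{1-\alpha}$ condition, acting on $\mathfrak{F}_{\oomega_A}$, produces a $2^{-j(1-\alpha)}$ decay that pairs with the $2^{-j\alpha}$ weight in $P_{\alpha,\mu}(B)$. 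But $\mathfrak{F}_{\oomega_A}(2^jr(B))$ is \emph{increasing} in $j$; the large-scale Dini condition controls weighted tails such as $\mathfrak{L}^{1-\alpha}_{\oomega_A}$, it does not make $\mathfrak{F}_{\oomega_A}$ decay. Consequently the sum above is only bounded by terms like $\mathfrak{F}_{\oomega_A}(R)\,\Theta^n_\mu(2^{\mathfrak j_0}B)\lesssim 2^{\mathfrak j_0\alpha}\,\Theta^n_\mu(B)$ with $2^{\mathfrak j_0}r(B)\approx R$, which blows up as $r(B)\to 0$ rather than becoming small. The missing decay factor $2^{-j}=2^{-j\alpha}\cdot2^{-j(1-\alpha)}$ can only be produced by exploiting cancellation: one must test the perturbation against mean-zero functions on $B$, which replaces $\mathfrak K(x,y)$ by the difference $\mathfrak K(x,y)-\mathfrak K(x,y_B)$ for $y,y_B\in B$ and an exterior point $x$, i.e.\ a H\"ormander-type smoothness estimate for the perturbation kernel (the paper's Lemma \ref{lemma:hormander_perturbation}, leading to \eqref{eq:hormander_intro}). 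Establishing that estimate is nontrivial: since $\nabla_1\Gamma_A$ has no usable modulus of continuity in this generality, it requires the representation formula of Lemma \ref{lem:MMPT_lemma_310}, interior $C^1$/gradient estimates for DMO coefficients, Caccioppoli, a case analysis over dyadic annuli, and only then does the $P_{\alpha,\mu}$-doubling pair with $\mathfrak L^{1-\alpha}_{\mathfrak I_{\oomega_A}}(r(B))$ to give smallness. Your proposal contains no substitute for this step, so the ``$\eta(\lambda)$-closeness'' of the mean oscillations is unproved.

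Two further symptoms of the same gap: you never use hypothesis (4) (the thin boundary of $B$), which in the paper is essential for the $L^1$ estimates in the buffer zone $2B\setminus B$ appearing in the duality argument (Lemmas \ref{lem:L1 buffer} and \ref{lem:mean oscillation perturb}) — once the pointwise route is abandoned, the region between $B$ and $2B$ must be handled separately, and that is where the thin boundary and the doubling of $B$ enter. Also, transferring the operator bound (5) to the frozen operator is not a routine consequence of your kernel bound: the piece of the perturbation coming from $\nabla_1\Theta(\cdot,\cdot;\mathcal A(x))-\nabla_1\Theta(\cdot,\cdot;\bar A_B)$ is estimated in the paper by $\mathfrak I_{\oomega_A}(r(B))^{1/2}\|\mathcal R_{\mu|_B}\|_{L^2(\mu|_B)\to L^2(\mu|_B)}$ (Lemma \ref{lem:estimate_norm_K3_new}), and closing this requires the comparability \eqref{eq:maineq} from \cite{MMPT23}, a substantial input you do not invoke.
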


\vvv

Let us point out that a  significantly weaker version of Theorem \ref{theorem:elliptic_GSTo} appeared in \cite[Corollary 1.6]{MMPT23}.  Below,  we highlight the crucial differences in the hypotheses:
\begin{itemize}
	\item Hypothesis (3) replaces the much stronger control on the density
	\[
	\Theta^n_\mu(B(x,r)) \lesssim \Theta^n_\mu(2^N B)
	\]
	for \textit{all} $x \in B$ and at \textit{all} scales $0 < r \leq 2^N r(B)$, for $N \gg 1$ large enough.
	
	\item Assumption (2) in Theorem \ref{theorem:elliptic_GSTo} is the natural $P$-doubling condition in this framework. A much more restrictive and somewhat artificial $P$-doubling assumption was formulated in \cite{MMPT23} by introducing the quantities
	\[
	\alpha_A(t) = t + t^\beta + \oomega_A(t), \qquad \text{and} \qquad \mathcal{P}^N_{\alpha_A,\mu}(B) = \sum_{j \geq N} \alpha_A(2^{-j}) \Theta_\mu(2^j B),
	\]
	for $\beta$ as in Lemma \ref{lem:estim_fund_sol}, and assuming that, for $N$ large enough,
	\begin{equation}\label{eq:P_doubling_old}
		\mathcal{P}^0_{\alpha_A,\mu}(B) \lesssim \Theta^n_\mu(B) \qquad \text{and} \qquad \mathcal{P}^N_{\alpha_A,\mu}(B) \lesssim \mathfrak{I}_{\alpha_A}(2^{-N}) \Theta^n_\mu(2^N B).
	\end{equation}
	Despite allowing recovery of the main result of \cite{Pu19} when $A$ is H\"older regular\footnote{See the discussion in \cite[pp. 12-13]{MMPT23}.}, conditions \eqref{eq:P_doubling_old} had the drawback of being unsuitable for the applications to FBPs for elliptic measure in a broader $\DMO$ setting. Indeed, in Section~\ref{section:main_lemma}, we apply Theorem \ref{theorem:elliptic_GSTo} to a sequence of arbitrarily small balls that are $P_{\alpha, \mu}$-doubling in the sense of \eqref{eq:P_doubling_balls}, which would not have been possible under the formulation of \cite[Corollary 1.6]{MMPT23}.
	
	\item We impose the thin boundary hypothesis~(4) for the ball $B$ for technical reasons, specifically to facilitate the analysis of error terms in the buffer zones arising in our perturbation argument. This is a mild assumption that poses no obstacle in applications since, by \cite[Lemma 9.43]{To14}, for any ball $B$, there exists a concentric ball $B'$ with thin boundary such that $B \subset B' \subset 1.1B$.
	
	\item The $L^2$-boundedness of $T_\mu$ was assumed to hold at the level of the ball $2^N B$, with $N \gg 1$, whereas now we only require it for $B$.
	
\end{itemize} 
\vvv

Theorem \ref{theorem:elliptic_GSTo} improves upon \cite[Corollary 1.6]{MMPT23}, whose proof involved bounding \eqref{eq:mean_osc_small_theorem} in terms of the $L^2$-mean oscillation of the Riesz transform at a local scale using a three-step perturbation argument. Specifically, it consisted of comparing $\nabla_1 \Gamma_A(x,y)$ with the gradient of the fundamental solution associated with the averaged matrices $\bar A_{x,|x-y|/2}$ and $\bar A_{x,\delta}$, where $\delta$ denotes the truncation level of $T_{\mu,\delta}$.

The proof of Theorem~\ref{theorem:elliptic_GSTo} relies on two crucial components. The first is the derivation of pointwise bounds for the perturbation kernel
\begin{equation}\label{eq:intro-perturb-kernel}
	\mathfrak{K}_A(x, y) \coloneqq \nabla_1 \Gamma_A(x, y) - \nabla_1 \Theta(x, y; \bar{A}_B),
\end{equation}
for all $x \neq y$ with $|x - y| < R$.

To obtain these bounds, we follow the approach developed in \cite{MMPT23}, where the authors employ a three-step perturbation argument. In the first step, they estimate the difference
\[
\nabla_1 \Gamma_A(x, y) - \nabla_1 \Theta\bigl(x, y; \bar{A}_{x, |x - y|/2}\bigr),
\]
as stated in \cite[Lemma 3.12]{MMPT23}. In the second step, they bound
\[
\nabla_1 \Theta\bigl(x, y; \bar{A}_{x, r/2}\bigr) - \nabla_1 \Theta\bigl(x, y; \bar{A}_{x, \delta}\bigr),
\]
for $|x - y| > \delta$, where $\delta$ denotes the truncation scale of $T_{\mu, \delta}$ (see \cite[Lemma 3.13]{MMPT23}). The third step, \cite[Lemma 3.14]{MMPT23},concerns the difference
\[
\nabla_1 \Theta\bigl(z, 0; \bar{A}_{x, \delta}\bigr) - \nabla_1 \Theta(z, 0; \bar{A}_B).
\]

A technical obstacle arises because $\lim_{\delta \to 0} \bar{A}_{x,\delta} = A(x)$ holds only for $\mathcal{L}^{n+1}$-almost every $x \in \mathbb{R}^{n+1}$, whereas the measures of interest may be supported on sets of zero Lebesgue measure. Consequently, obtaining estimates that hold for all $x \neq y$ with $|x - y| < R$ requires a different approach.

In the present paper, we avoid the comparison at the level of the truncations, which is one of the main technical challenges. Without doing so, it would be impossible to apply the $Tb$-theorem or take principal values of the gradient of the single layer potential. A key novelty of our approach is the replacement of $\bar{A}_{x,\delta}$ by the matrix $\mathcal{A}(x)$, the \emph{uniformly continuous representative} of $A$ with modulus of continuity $\mathfrak{I}_{\oomega_A}$ (see \eqref{eq:modulus_continuity_mathcal_A}). This substitution preserves the averages while enabling the application of the Lebesgue differentiation theorem at \emph{all} points in $\mathbb{R}^{n+1}$, not merely $\mathcal{L}^{n+1}$-almost everywhere.

The second crucial component is the perturbation argument for the $L^2$-mean oscillation of $T_\mu$, developed in Lemma~\ref{lem:mean oscillation perturb} via a duality argument. To carry it out, we split the perturbation operator according to three spatial regions: the \emph{local term} on the ball $B$, the \emph{far-away term} outside $2B$, and the \emph{intermediate buffer zone} $2B \setminus B$. As before, the ball $B$ is assumed to be $P_{\alpha, \mu}$-doubling with thin boundary, conditions that are essential for controlling the error terms.

\begin{enumerate}
	\item The local and main  terms can be bounded via the $L^2$-norm of the Riesz transform using Lemmas~\ref{lem:truncatedL2} and ~\ref{lem:estimate_norm_K3_new}.
	
	\item In the far-away region, $\mathbb{R}^{n+1} \setminus 2B$, neither a direct analogue of step~(1) nor a pointwise regularity estimate is available. Instead, we prove a Hörmander-type smoothness condition for the kernel \eqref{eq:intro-perturb-kernel}, as established in Lemma~\ref{lemma:hormander_perturbation}.
	A key part of the argument is a careful analysis that yields sufficiently small bounds on the right-hand side, achieved through delicate PDE estimates. These bounds form the core of the proof of Theorem~\ref{theorem:elliptic_GSTo} and critically rely on the $P_{\alpha, \mu}$-doubling property of the ball $B$, which is indispensable for our method. To be precise,  we obtain 
	\begin{equation}\label{eq:hormander_intro}
		\int_{\mathbb{R}^{n+1} \setminus 2B} \bigl| \mathfrak{K}_A(x,y) - \mathfrak{K}_A(x,y_B) \bigr| \, d\mu(x)
		\lesssim_\alpha \, \Theta^n_\mu(B)\bigl(\mathfrak{I}_{\oomega_A}(r(B)) + \mathfrak{L}^{1-\gamma}_{\oomega_A}(r(B))\bigr),
	\end{equation}
	where $y_B$ is the center of $B$. We remark that, assuming $A\in\widetilde \DMO_{1-\alpha}$, the quantity on the right-hand side of \eqref{eq:hormander_intro} is infinitesimal as $r(B) \to 0$.  A bound analogous to \eqref{eq:hormander_intro} can also be proven for the kernel $\mathfrak{K}_A^*$ of the adjoint of the perturbation operator.

	\item The study of the perturbation operator in $2B \setminus B$ is delicate as well, since pointwise bounds are unsuitable in this region. Hence,  in Lemma \ref{lem:L1 buffer},  we prove $L^1$ estimates, which are based on a dyadic decomposition of $\supp(\mu)$ at the scale of the ball $B$, together with a thin-boundary and doubling properties for $B$ with respect to $\mu$.
\end{enumerate}

Hence, via a duality argument, gathering all these estimates allows us to control the $L^2$ mean oscillation of the integral operators associated with the kernels of the perturbations in terms of $\mathfrak I_{\oomega_A}(r(B))$ times the $L^2$ norm of the Riesz transform at the scale of the ball $B$, along with an error term involving $\mathfrak F_{\oomega_A}$, $\mathfrak L_{\oomega_A}$, and the $P_{\gamma, \mu}$-doubling character of $B$; see Lemma~\ref{lem:mean oscillation perturb}.
This, in turn, yields a bound on the $L^2$ mean oscillation of $T$ and its adjoint $T^*$ in terms of the analogous quantity for the Riesz transform. Therefore, by invoking \eqref{eq:maineq} and the main result of \cite{GT18}, we conclude the proof of Theorem \ref{theorem:elliptic_GSTo}.

We note that the inequality \eqref{eq:hormander_intro} is also of independent interest, as it differs from the more typical scenario where one obtains a modulus of continuity for such kernels--which is not possible in our setting. A related condition was recently investigated in a different context in \cite{Su21}, as an endpoint case of the results in \cite{GS19}. Moreover, if the measure \(\mu\) has  $n$-growth,   then \eqref{eq:hormander_intro} yields the \(L^\infty(\mu)\) version of the condition in scales smaller than $1$, which corresponds to the classical Hörmander condition for the kernel \(\mathfrak K_A\) and the measure \(\mu\).

\vv

Let us say a few words about the proof of \cite[Corollary 1.6]{MMPT23}.  The goal in \cite{MMPT23} was again to show that the perturbation operator with kernel
\(
	\mathfrak{K}_A(x,y) 
\)
has small $L^2$-mean oscillation, and to combine this with \eqref{eq:L^2_bd_2n_B} to deduce the corresponding mean oscillation bound for the Riesz transform (under the assumption $\bar{A}_B = \mathrm{Id}$).  As mentioned above, a key ingredient in the proof of Theorem~\ref{lem:mean oscillation perturb} is having the regularity estimates for the kernel $\mathfrak{K}_A(\cdot, \cdot)$, which constitutes one of the main novelties of the present manuscript. In contrast, the authors of \cite{MMPT23} treated the terms $\nabla_1 \Gamma_A(\cdot, \cdot)$ and $\nabla_1 \Theta(\cdot, \cdot; \bar{A}_B)$ separately, thereby losing valuable information arising from the cancellation in the kernel $\mathfrak{K}_A$. This is the reason why it was important to impose the density and  the generalized Poisson-doubling (associated to $\oomega$) assumptions related to the ball $2^N B$ for $N$ large enough, so that they can create the desired smallness.

\vv

\subsection{Main results II: The big pieces $Tb$ theorem for the gradient of the single layer}
A crucial component in applications to free boundary problems is the {\textit{big pieces $Tb$ theorem}}, which ensures the $L^2$-boundedness of singular integral operators under suitable conditions on a class of testing functions $b$. More specifically, for applications to two-phase problems, the quantitative rectifiability theorem must be formulated, as in Theorem~\ref{teo1}, under an appropriate assumption on the \textit{maximal truncation} $T_*$ of the gradient of the single layer potential.

Deriving this from Theorem~\ref{theorem:elliptic_GSTo} requires a {big pieces $Tb$ theorem} and corresponding estimates for a suitable \textit{suppressed operator}. These results are known for singular integral operators satisfying the weak boundedness property, whose kernels are Hölder continuous—or even uniformly continuous with a modulus of continuity obeying a double Dini condition (see \cite{MV21} for the most general result in this direction). Nevertheless, although our operators satisfy the weak boundedness property, the modulus of continuity of their kernels is merely Dini integrable, rather than double Dini.

We also remark that proving a suitable $Tb$ theorem is essential to our approach. An alternative strategy to deduce Theorem~\ref{teo1} from Theorem~\ref{theorem:elliptic_GSTo}, avoiding reliance on the $Tb$ theorem, could involve a perturbation argument at the level of big pieces of $\mu|_B$, along the lines of \cite{MMPT23}. However, adapting the method of \cite{MMPT23} would require a two-weight analogue of the deep results established in \cite{DT24}, which would constitute a significant achievement in its own right. The suppression method plays a crucial role both in the proof of the big pieces $Tb$-theorem and in its application to Theorem~\ref{teo1}, as it enables the formulation of the relevant estimates over full balls. 	

\vv

For the sake of simplicity, we now state the big pieces $Tb$ theorem specifically for the gradient of single layer potentials. A more general version, applicable to a broader class of singular integral operators--including the gradients of single layer potentials associated with elliptic matrices in $\widetilde{\DMO}_{n-1}$--will be presented in Section~\ref{big piecesTb}.

In particular, we prove a version of the $Tb$ theorem for a broad class of operators $\widetilde{T}$, which we refer to as \emph{good singular integral operators} (\emph{good} SIOs). These operators satisfy the kernel bounds from Lemma~\ref{lem:estim_fund_sol}, and are \emph{almost antisymmetric} in the following sense: for any choice of a \emph{suppression function} $\Xi$ and any compactly supported measure $\mu \in M^n_+(\mathbb{R}^n)$, if we denote by $\widetilde{T}_{\Xi,\mu}$ the associated suppressed operator (see Section~\ref{sec:aux_operator}), then its adjoint can be expressed as $-\widetilde{T}_{\Xi,\mu}$ plus a remainder term $K$. This term $K$ is bounded on $L^2$, uniformly in $\Xi$, and corresponds to a suppressed kernel whose deviation from that of $\widetilde{T}_{\Xi,\mu}$ is controlled by a Dini-integrable modulus of continuity. For the precise definition of \emph{good} SIOs, we refer the reader to Definition~\ref{eq:good_SIO_definition}.

\vv

Let $\mathcal D_0$ be the lattice of (standard) dyadic cubes in $\R^d$ and for every \( w \in \mathbb{R}^{d} \), we consider the translated dyadic lattice  
\[
\mathcal D(w) = w + \mathcal  D_0.
\]
\begin{theorem}\label{thm:big piecesTb-aux_intro}
	Let $n\geq 2$, and let $A$ be a $(n+1)\times(n+1)$ uniformly elliptic matrix satisfying $A\in\widetilde \DMO_{n-1}$, and let $T$ denote its associated gradient of the single layer potential. Assume that $\mu$ is a finite Borel measure on $\R^d$, $d\geq 2$,  supported on a compact set $F \subset \R^d$ so that $\diam(\supp(\mu))\leq R$.  Suppose also that $\nu=b\, \mu$, for a function $b$ such that $\|b\|_{L^\infty(\mu)}\leq c_b$.  For every $w\in \R^d$,  let $\mathfrak T_{\caD(w)}\subset \R^d$ be the union of the maximal dyadic cubes in $\caD(w)$  for which 
	\[
	|\nu(Q)| \leq c_{acc}\,  \mu(Q). 
	\]
	We are also given a measurable set $H  \subset \R^d$ satisfying the following properties:
	\begin{itemize}
		\item 	If a ball $B(y,r)$ is such that $\mu(B(y,r))>c_0\, r^n$,  then $B(y,r)\subset H$.
		\item 	There exists $\delta_0>0$ such that $\mu(H \cup \mathfrak T_{\caD(w)})\leq \delta_0\, \mu(F)$, for all $w \in \R^d$.
		\item	We have that 
		\[
		\int_{\R^d \setminus H}  T_{ *}\nu\, d\mu\leq c_* \mu(F), \qquad \text{ for all }x\in \R^d,
		\]
		where
		\begin{equation}\label{eq:max_aux_truncated}
			T_{ *}\nu(x)=\sup_{\varepsilon>0}\bigl|T_{\nu, \varepsilon}1(x)\bigr|.
		\end{equation}
	\end{itemize}
	Then, there exist 
	$$G \subset F \setminus \bigcap_{w \in \R^d} \Big( H \cup \mathfrak T_{\mathcal D(w)} \Big)$$  and  $\hat  c>0$ depending on $n,$ $d,$ $c_0,$ $c_b,$ $c_*,$ $c_{acc},$ $c_K,$ $c_\Lambda,$ $\delta_0$ so that the following hold:
	\begin{enumerate}
		\item $\mu(G)\geq \hat c\, \mu(F)$.
		\item The measure $\mu|_G$ has $n$-growth with constant $c_0$, namely
		\begin{equation}\label{eq:growth_G_B-1}
			\mu(G \cap B(x,r))\leq c_0 \, r^n,\qquad \qquad \text{ for }x\in \R^d,  r>0.
		\end{equation}
		\item $ T_{\mu}$ is bounded on $L^2(\mu|_G)$ with constant depending only on $n$, $c_0,$ $c_b,$ $c_*,$ $c_{acc},$ $c_K,$ $c_\Lambda,$  $\delta_0$.
	\end{enumerate} 
\end{theorem}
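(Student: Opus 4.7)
The approach follows the Nazarov--Treil--Volberg big pieces $Tb$ strategy, adapted to the class of good SIOs introduced in Section~\ref{big piecesTb}. Fix a dyadic shift $w\in\R^d$ and consider the exceptional set
\[
E_w \coloneqq H\cup \mathfrak T_{\caD(w)},
\]
which by hypothesis satisfies $\mu(E_w)\leq \delta_0\mu(F)$. By the first defining property of $H$, on $F\setminus H$ the measure $\mu$ has $n$-growth with constant $c_0$, while off $\mathfrak T_{\caD(w)}$ the test function $b$ is quantitatively accretive: every dyadic cube $Q\in\caD(w)$ not contained in $\mathfrak T_{\caD(w)}$ satisfies $|\nu(Q)|>c_{acc}\mu(Q)$, by the maximality of the cubes defining $\mathfrak T_{\caD(w)}$.

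The central device is \emph{suppression}. I would define the $1$-Lipschitz function
\[
\Xi_w(x)\coloneqq \dist\bigl(x,\,\R^d\setminus E_w\bigr),
\]
so that $\Xi_w$ vanishes outside $E_w$ and grows linearly inside it. The associated suppressed operator $\widetilde T_{\Xi_w,\mu}$ agrees with $T_\mu$ outside $E_w$, while its kernel is tapered at scales below $\Xi_w$ inside $E_w$. Since $T$ is a good SIO in the sense of Definition~\ref{eq:good_SIO_definition}---a fact supplied by the Calder\'on--Zygmund estimates of Lemma~\ref{lem:estim_fund_sol} together with the almost antisymmetry of its kernel---$\widetilde T_{\Xi_w,\mu}$ is again a good SIO with constants independent of $w$.

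I then plan to apply the suppressed $Tb$ theorem of Section~\ref{big piecesTb} to $\widetilde T_{\Xi_w,\mu}$ with testing function $b$. Its accretivity hypothesis follows from the stopping-time property above. The required control on the maximal suppressed truncations on $F\setminus E_w$ is obtained by comparing, through a Cotlar-type inequality, the suppressed truncations with the standard truncations $T_*\nu$; the resulting error terms are absorbed using the $n$-growth of $\mu$ outside $H$, and the global testing integral is controlled by the hypothesis $\int_{\R^d\setminus H}T_*\nu\,d\mu\leq c_*\mu(F)$. The $Tb$ theorem then yields a subset $G_w\subset F\setminus E_w$ with $\mu(G_w)\geq c'\mu(F)$ on which $T_{\mu|_{G_w}}$ is $L^2$-bounded with constants depending only on the allowed parameters.

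To remove the dependence on $w$, I would invoke a random-shift argument, averaging over a finite collection $w_1,\dots,w_N$ of dyadic translations. Since the stopping cubes $\mathfrak T_{\caD(w_i)}$ are essentially disjoint for well-separated $w_i$, the intersection $\bigcap_i E_{w_i}$ can be made arbitrarily close to $H$ in $\mu$-measure, and a selection among the $G_{w_i}$ yields a single set $G\subset F\setminus\bigcap_{w\in\R^d}E_w$ with $\mu(G)\geq \hat c\mu(F)$ on which $T_\mu$ is $L^2$-bounded; the $n$-growth condition $\mu(G\cap B(x,r))\leq c_0 r^n$ is then inherited from $G\cap H=\varnothing$. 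The principal obstacle is that our kernels satisfy only a Dini-integrable modulus of continuity, rather than the H\"older or double-Dini regularity typical of the classical NTV setup. The good SIO framework of Section~\ref{big piecesTb}, together with the perturbative analysis underlying Theorem~\ref{theorem:elliptic_GSTo} and the kernel estimates of Lemma~\ref{lem:estim_fund_sol}, is precisely what is needed to push the suppressed $Tb$ machinery through at this lower regularity level.
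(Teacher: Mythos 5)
There is a genuine gap, and it sits exactly where the new content of the theorem lies. You justify the good-SIO property of $T$ by appealing to ``the Calder\'on--Zygmund estimates of Lemma~\ref{lem:estim_fund_sol} together with the almost antisymmetry of its kernel'', but the kernel $\nabla_1\Gamma_A(x,y)$ is \emph{not} almost antisymmetric in any usable sense; verifying Definition~\ref{eq:good_SIO_definition} is a substantive step and is where the $\widetilde\DMO_{n-1}$ hypothesis really enters. In the paper this is done by comparing $T$ with the frozen-coefficient operator $\widetilde{\mathcal T}$ whose kernel is $t(x,y)=\nabla_1\Theta\bigl(x-y,0;\mathcal A(x)\bigr)$, $\mathcal A$ being the uniformly continuous representative of $A$: Lemma~\ref{lemma:quasi_antisymm_suppressed} gives the almost antisymmetry of the \emph{suppressed} auxiliary operator, $\widetilde{\mathcal T}^*_{\Xi,\mu}=-\widetilde{\mathcal T}_{\Xi,\mu}+K_\mu$ with $\|K_\mu\|_{L^2(\mu)\to L^2(\mu)}\lesssim c_0\,\mathfrak I_{\oomega_A}(R)$, and Lemma~\ref{lem:aux_lambda_operators} controls the discrepancy $\Lambda_\mu=\mathcal T_{\Xi,\mu}-\widetilde{\mathcal T}_{\Xi,\mu}$ through the perturbation estimates of Lemmas~\ref{lem:main_pw_estimate}--\ref{lem:lem_2_prep_spherical_harmonics_new}. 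Without this construction your argument uses nothing of $A$ beyond kernel size, and the $Tb$ machinery for (almost) antisymmetric kernels simply does not apply to $T$ itself; asserting antisymmetry here begs the main question.

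Two further structural points would make the argument fail as written. First, your suppression $\Xi_w(x)=\dist\bigl(x,\R^d\setminus(H\cup\mathfrak T_{\caD(w)})\bigr)$ is too small: to bound the maximal suppressed truncations one needs $\Xi$ to dominate both the non-Ahlfors radii $\mathcal R(x)$ \emph{and} the radii $e(x)$ at which $|T_\varepsilon\nu(x)|>\alpha$; the latter are attached to the level set $S=\{T_*\nu>\alpha\}$, which is small by Chebyshev and the hypothesis $\int_{\R^d\setminus H}T_*\nu\,d\mu\leq c_*\mu(F)$ but is in no way contained in $H\cup\mathfrak T_{\caD(w)}$ (see Lemmas~\ref{lem:lemma_choice_alpha} and~\ref{lemma:lem_55_T}); a Cotlar-type comparison of truncations cannot replace enlarging the exceptional set by $S$. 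Second, the ``fix $w$, apply the suppressed $Tb$ theorem, then average over finitely many shifts and select among the $G_{w_i}$'' scheme does not reflect how the argument must run: the random translations live \emph{inside} the $Tb$ proof (two independent lattices, good/bad cubes, and a suppression built from the infimum over pairs of lattices as in \eqref{eq:Phiepsilon}), since the bilinear estimate of Lemma~\ref{lem:main_lemma_good_functions} is only available for functions that are good relative to the other lattice; and the big piece $G$ is then extracted by a probabilistic Chebyshev argument on the probability $p_0(x)$ of avoiding the exceptional sets, not from your claim that the stopping regions $\mathfrak T_{\caD(w_i)}$ are ``essentially disjoint for well-separated $w_i$'', which has no justification (they can essentially coincide, e.g.\ when $|\nu|$ is small on a fixed portion of $F$). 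So while the overall skeleton (suppression, $Tb$, good-SIO framework) matches the paper, the proposal omits precisely the two ingredients that carry the proof: the frozen-coefficient verification that $T$ is a good SIO, and the correct exceptional-set/probabilistic construction behind the suppressed $Tb$ theorem.
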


\vvv

For a general overview of $Tb$ theorems, we refer the reader to \cite{Vo03} and \cite{To14}. Nazarov, Treil, and Volberg first established an analogous result for the Cauchy transform in \cite{NTrV99}; their methods extend to higher dimensions and apply to more general Calderón–Zygmund operators that are antisymmetric (see also \cite{Vo03}). As mentioned above, the most general result to date was obtained by Martikainen and Vuorinen in \cite{MV21}, for operators satisfying the weak boundedness property whose kernels are uniformly continuous with a modulus of continuity obeying a double Dini condition. 

Our result is, on the one hand, more general in terms of the assumptions on the modulus of continuity of the kernels--we require only Dini continuity, whereas \cite{MV21} assumes the stronger double Dini condition. On the other hand, it is more restrictive, as the class of operators we consider is a strict subclass of those satisfying the weak boundedness condition and constitutes an $L^2$ perturbation of more ``typical'' singular integral operators, in the sense of standard kernel bounds. However, the results in \cite{MV21} do not apply to the type of problems we aim to address. This is why we had no other choice but to prove Theorem~\ref{thm:big piecesTb-aux_intro}. Our approach combines these techniques with structural properties of \emph{good} SIOs, along with careful estimates of the error terms arising from the lack of perfect antisymmetry and the $\DMO$-type regularity of the kernel of ${T}_{\mu}$.

\vv

\subsection{Main results III: Free boundary problems for elliptic measure}

Our final results, which constitute the main motivation for this paper, are the generalizations of Theorems~\ref{theorem:Laplace-one-phase}, \ref{theorem:Laplace-one-phase_quant}, \ref{theorem:Laplace-two-phase}, and \ref{theorem:quantitative_two_phase_Laplace_indexed} to the $\widetilde{\DMO}$ setting. Before stating these theorems, we recall some definitions.

\vv

Let  $\Omega\subset\Rn1$ be an  open set.  We say that a point $\xi \in \partial \om$ is {\it Wiener regular}, if 
\begin{equation}\label{eq:Wiener}
	\int_0^1 \frac{\Cap\bigl(\overline{B(x,r)}\cap \Omega^c, B(x,2r)\bigr)}{r^{n-1}}\,\frac{dt}{t}=\infty,
\end{equation}
where $\Cap(\cdot, \cdot)$ denotes the variational $2$-capacity of the condenser $(\cdot,\cdot)$, see \cite[p.~27]{HKM93}. We say that $\om$ is {\it Wiener regular}, if every $\xi \in \partial \om$ is {\it Wiener regular}.
\vv

We say that an open set $\Omega\subset\Rn1$ satisfies the \textit{capacity density condition}, abbreviated as \textit{CDC}, if, for any $x\in \partial \Omega$ and $0<r<\diam (\Omega)$, 
\begin{equation}\label{eq:CDC_definition}
	\Cap\bigl(\overline{B(x,r)}\cap \Omega^c, B(x,2r)\bigr)\gtrsim r^{n-1}.
\end{equation}
We remark that, by Wiener's criterion, if $\Omega\subset \Rn1$ satisfies the CDC, then it is Wiener regular.
\vv

We say that $\Omega\subset \Rn1$ satisfies the \textit{corkscrew condition} if there exists a uniform constant $c\in(0,1/2)$ such that, for every ball $B$ centered at $\partial \Omega$ with $0<r_B\leq \diam(\Omega)$, there exists a ball $B(x_B,c\, r_B)$ such that $B(x_B,c\, r_B)\subset \Omega\cap B$. The point $x_B$ is called a \textit{corkscrew point} relative to $B$.

\vv

The elliptic measure associated with $L_A$ in $\Omega$ with pole $p \in \Omega$ is denoted by $\omega^{L_A,p}_\Omega$, and its definition is given in subsection~\ref{sec:elliptic}. Let us state the generalization of Theorem \ref{theorem:Laplace-one-phase}.

\begin{theorem}\label{theorem:main_one_phase}
	Let $\Omega\subset\Rn1$, $n \geq 2$, be an open, connected Wiener regular set.  Let $A$ be a uniformly elliptic matrix satisfying $A\in\widetilde \DMO_{n-1}$. If $\omega^{L_A,p}_{\Omega}$ is the elliptic measure in $\Omega$ with pole at  $p\in\Omega$ and there exists a compact set $E\subset \partial \Omega$ such that $0<\mathcal H^n(E)<\infty$ and $\omega^{p}_{\Omega}|_E$ is absolutely continuous with respect to $\mathcal H^n|_E$, then $\omega^{p}_{\Omega}|_E$ is $n$-rectifiable.
\end{theorem}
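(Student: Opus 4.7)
The plan is to extend the argument of \cite{AHMMMTV16} from harmonic measure to the $\widetilde{\DMO}_{n-1}$ elliptic setting, replacing the Riesz transform by the gradient of the single layer potential $T_\mu$ associated with $L_A$ and using as key inputs the big pieces $Tb$ theorem (Theorem \ref{thm:big piecesTb-aux_intro}), the equivalence \eqref{eq:maineq} between $L^2$-boundedness of $T_\mu$ and $\mathcal R_\mu$ established in \cite{MMPT23}, and the qualitative codimension-one resolution of the David--Semmes problem from \cite{NToV14b}. By standard measure-theoretic reductions (inner regularity of $\omega^{p}_\Omega$, Lusin--Egoroff, and exhaustion), it suffices to prove rectifiability of $\mu \coloneqq \omega^{p}_\Omega|_{E'}$ where $E' \subset E$ is compact, $\dist(E',\{p\}) > 0$, the Radon--Nikodym derivative $d\mu/d\mathcal H^n|_{E'}$ is bounded above and below by positive constants, and, by further restriction to a big subset via a stopping-time argument, $\mu$ has $n$-growth outside a small exceptional set which will be absorbed into the set $H$ of Theorem \ref{thm:big piecesTb-aux_intro}.

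The second step is PDE-based and provides integral control on the maximal truncation $T_*\mu$. Differentiating the representation formula
\[
G_A(x,p) \;=\; \Gamma_A(x,p) \;-\; \int \Gamma_A(x,y)\, d\omega^{p}_\Omega(y), \qquad x \in \Omega\setminus\{p\},
\]
in the first variable gives $T_{\omega^{p}}(x) = \nabla_1\Gamma_A(x,p) - \nabla_x G_A(x,p)$. On $\Omega^c$ (away from $p$) the Green's function vanishes in the Wiener regular sense, so the Calder\'on--Zygmund bounds from Lemma \ref{lem:estim_fund_sol} together with interior regularity for $L_A$ in the $\widetilde{\DMO}_{n-1}$ setting yield uniform control on $T_{\omega^{p}}$ outside a small neighborhood of $p$. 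Combined with a Cotlar-type inequality for the good SIO $T$, whose validity follows from the kernel estimates of Lemma \ref{lem:estim_fund_sol}, this furnishes $\int_{\R^d\setminus H} T_*\mu\, d\mu \leq c_*\mu(E')$, which is precisely the third hypothesis of Theorem \ref{thm:big piecesTb-aux_intro}.

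Applying Theorem \ref{thm:big piecesTb-aux_intro} with $\nu = \mu$, $b\equiv 1$, and the set $H$ constructed above yields a subset $G\subset E'$ with $\mu(G) \geq \hat c\,\mu(E')$ such that $\mu|_G$ has $n$-growth and $T_{\mu|_G}$ is bounded on $L^2(\mu|_G)$. Invoking the equivalence \eqref{eq:maineq} from \cite{MMPT23} transfers the $L^2(\mu|_G)$-boundedness of $T_{\mu|_G}$ to $\mathcal R_{\mu|_G}$, and the qualitative resolution of the David--Semmes problem in codimension one \cite{NToV14b}, applicable since $\mathcal H^n(E)<\infty$ and $\mu|_G$ has $n$-growth, then gives that $\mu|_G$ is $n$-rectifiable. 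Exhausting $\mu$ by countably many such big pieces $G$ produces rectifiability of $\omega^{p}_\Omega|_E$.

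The main obstacle is the PDE input of the second paragraph: rigorously deriving the integral control of $T_*\mu$ at $\omega^{p}$-a.e. boundary point of a merely Wiener regular (not CDC) domain for $A\in\widetilde{\DMO}_{n-1}$. In the Laplace case this relies on sharp boundary regularity of the Green's function together with non-tangential limits of harmonic conjugates; in our setting those classical tools must be replaced by their Dini-type analogues, with the modulus $\oomega_A$ and the boundary geometry controlled simultaneously via Lemma \ref{lem:estim_fund_sol} and the good-SIO machinery developed earlier in the paper, while correctly handling the pole singularity of $\Gamma_A(\cdot,p)$ and the decomposition of $T\mu$ into local and non-local contributions.
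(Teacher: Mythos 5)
Your proposal follows essentially the same route as the paper's proof: control the maximal truncation of the gradient of the single layer potential at $\omega^{p}$-a.e.\ point of $E$ via the Green function representation \eqref{eq;green ident-Wiener} together with the Calder\'on--Zygmund/DMO estimates of Lemma \ref{lem:estim_fund_sol}, extract a big piece on which the operator is bounded on $L^2$ with $n$-growth via the suppressed/big-pieces $Tb$ machinery, transfer to the Riesz transform through \eqref{eq:maineq} and conclude rectifiability by the codimension-one David--Semmes theorem (the paper packages these last steps as Lemma \ref{label:key_lemma_qualit_one-phase}, Theorem \ref{theorem:thm_bd_op_T_G}, and \cite[Corollary 1.4]{MMPT23}), and finish by exhaustion.

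Two points in your second step need correction. First, with $\omega^{p}=\omega^{L_A,p}_\Omega$ the identity \eqref{eq;green ident-Wiener} reads $G_A(p,x)=\Gamma_A(p,x)-\int \Gamma_A(y,x)\,d\omega^{p}(y)$, so differentiating in $x$ controls the operator with kernel $\nabla_2\Gamma_A(y,x)$, i.e.\ the gradient of the single layer potential associated with $A^T$ (the adjoint-type operator), not $T_{\omega^{p}}$ with kernel $\nabla_1\Gamma_A(x,y)$; your displayed formula is only valid for symmetric $A$ (or with $\omega^{L_{A^T},p}$ in place of $\omega^{L_A,p}$). This is harmless, since \eqref{eq:maineq}, the $Tb$ theorems, and the rectifiability criterion all apply to the adjoint, and the paper itself works with $\int\nabla_2\Gamma_A(y,\cdot)\,d\omega(y)$, but it must be stated correctly. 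Second, the claim that the vanishing of $G$ on $\Omega^c$ plus interior regularity yields \emph{uniform} control of the potential outside a neighborhood of $p$ is false in general: $\nabla G_A(\cdot,p)$ need not be bounded up to the boundary of a merely Wiener regular domain, and such a uniform bound would essentially bypass the absolute continuity hypothesis, which is known to be necessary. What is actually true, and what the paper proves in Lemma \ref{label:key_lemma_qualit_one-phase} (following \cite[Section 12]{PPT21}), is only $\omega$-a.e.\ finiteness of $\mathcal M_n\omega+T_*\omega$ on $E$: the finiteness of the radial maximal density comes from $\omega|_E\ll\mathcal H^n|_E$ with $0<\mathcal H^n(E)<\infty$ (your sets $E_k$), the local part of the truncations is handled through this density bound, and $\nabla G$ is controlled only in an averaged Whitney sense via Caccioppoli and the DMO gradient estimate \eqref{eq:Linftyest-r} applied to regularized truncations. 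From this a.e.\ finiteness one restricts to a compact subset where both quantities are bounded and then verifies the hypotheses of Theorem \ref{thm:big piecesTb-aux_intro} (with $b\equiv 1$, so the non-accretivity set is empty once $c_{acc}<1$); this reduction is exactly Theorem \ref{theorem:thm_bd_op_T_G}, which your outline would otherwise have to reprove by hand.
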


The proof of Theorem \ref{theorem:main_one_phase} follows the general scheme developed in \cite{AHMMMTV16}, particularly the use of singular integral techniques and the elliptic version of the David–Semmes problem established in \cite{MMPT23}.  There are three crucial components to generalize this theorem from the Laplace to more general elliptic operators: i) The suppressed $Tb$ theorem for the gradient of the single layer potential,  which we prove in the present paper (see Theorem \ref{thm:big piecesTb-aux_intro}); ii) The fact that any $L_A$-solution is $C^1$ and satisfies  Cauchy-type estimates, which  is the reason why we choose $\DMO$-coefficients;  iii) Calder\'on-Zygmund type estimates for the gradient of the single layer potentials, which is why we choose $\widetilde \DMO_{n-1}$-coefficients.

\vv

A quantitative version of Theorem \ref{theorem:main_one_phase}, which applies to non-doubling measures supported on a compact subset of $\partial \Omega$, satisfying an upper growth condition, and extends Theorem \ref{theorem:Laplace-one-phase_quant}, is given below.

\begin{theorem}\label{theorem:elliptic-one-phase_quant}
	Let \( n \geq 2 \), and let \( 0 < \kappa < 1 \) and \( c_{db} > 1 \) be constants depending only on \( n \), with \( \kappa \) sufficiently small and \( c_{db} \) sufficiently large.   Let \( \Omega \subset \mathbb{R}^{n+1} \) be an open set, and let \( \mu \) be a Radon measure whose support is contained  in a compact subset of \( \partial \Omega \), satisfying the growth condition
	\[
	\mu(B(x,r)) \leq c_0 \, r^n \quad \text{for all } x \in \mathbb{R}^{n+1}, \; r > 0.
	\]
	Suppose there exist \( \varepsilon, \varepsilon' \in (0,1) \) such that for every \( \mu \)-(2, \( c_{db} \))-doubling ball \( B \) centered on \( \operatorname{supp} \mu \), with \( \operatorname{diam}(B) \leq \operatorname{diam}(\operatorname{supp} (\mu)) \), there exists a point \( x_B \in \kappa B \cap \Omega \) such that for every Borel set \( E \subset B \),  it holds
	\begin{equation}\label{eq:quant_mutual_ac}
		\mu(E) \leq \varepsilon \, \mu(B) \quad \Rightarrow \quad \omega^{x_B}(E) \leq \varepsilon' \, \omega^{x_B}(B).
	\end{equation}
	Then the operator \( T_\mu \), defined as the gradient of the single layer potential associated with \( \mu \), is bounded on \( L^2(\mu) \). In particular, \( \mu \) is \( n \)-rectifiable. 
	
	Furthermore, if \( \mu(B(x,r)) \geq c_0^{-1} r^n \) for all \( x \in \mathbb{R}^{n+1} \) and \( r > 0 \), then \( \mu \) is uniformly \( n \)-rectifiable.
\end{theorem}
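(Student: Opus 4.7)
\medskip

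\noindent\textbf{Proof proposal.}
My plan is to follow the scheme of \cite{MT20}, which treated the analogous statement for harmonic measure, replacing the Riesz transform throughout by $T_\mu$ and harmonic measure by elliptic measure, and using as new inputs the big pieces $Tb$ theorem (Theorem~\ref{thm:big piecesTb-aux_intro}), the quantitative flatness-to-rectifiability criterion (Theorem~\ref{theorem:elliptic_GSTo}), and the transference identity \eqref{eq:maineq} from \cite{MMPT23}. The overall goal is to prove that $T_\mu\colon L^2(\mu)\to L^2(\mu)$ is bounded; then \eqref{eq:maineq} transfers this to $\mathcal R_\mu$, and \cite{NToV14b} (or \cite{NToV14} under the lower Ahlfors bound) yields $n$-rectifiability (respectively, uniform $n$-rectifiability).

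To show $L^2(\mu)$-boundedness of $T_\mu$, it suffices, by a standard corona/iteration argument on $\mu$-doubling balls as in \cite{AHMMMTV16,MT20}, to produce for each $\mu$-$(2,c_{db})$-doubling ball $B$ centered on $\supp(\mu)$ a subset $G\subset B$ with $\mu(G)\gtrsim\mu(B)$ on which $T_{\mu|_G}$ is bounded on $L^2(\mu|_G)$, uniformly in $B$. I would apply Theorem~\ref{thm:big piecesTb-aux_intro} to $F:=\supp(\mu)\cap B$ with the testing measure
\[
\nu \;:=\; \frac{\mu(B)}{\omega_{\Omega}^{L_A,x_B}(B)}\, \omega_{\Omega}^{L_A,x_B}|_{B},
\]
where $x_B\in\kappa B\cap\Omega$ is the point furnished by the hypothesis. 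The exceptional set $H=\{y:\mu(B(y,r))>c_0 r^n \text{ for some }r\}$ is empty by the upper $n$-growth of $\mu$, and iterating the weak-$A_\infty$-type condition \eqref{eq:quant_mutual_ac} shows that the dyadic stopping cubes $\mathfrak{T}_{\mathcal{D}(w)}$, on which $|\nu(Q)|\leq c_{acc}\mu(Q)$, can be made to carry an arbitrarily small fraction of $\mu(F)$ by choosing $c_{acc}$ small enough depending on $\varepsilon,\varepsilon'$.

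The main obstacle, and the step I expect to be most delicate, is verifying the maximal truncation bound
\[
\int_{\R^{n+1}\setminus H} T_*\nu\, d\mu \;\lesssim\; \mu(F).
\]
Away from $\supp\nu$ the single layer potential $u(x)=\int \Gamma_A(x,y)\,d\nu(y)$ is an $L_A$-solution, so $T\nu(x)=\nabla u(x)$ can be compared, via Caffarelli--Fabes--Mortola--Salsa-type estimates, to the normal derivative of the Green function with pole $x_B$, and hence controlled pointwise by the density $\omega_{\Omega}^{L_A,x_B}(B(x,r))/r^n$ on dyadic shells centered at points of $\supp(\mu)$. The weak-$A_\infty$ hypothesis then converts this pointwise bound into the required $L^1(\mu)$ estimate on $T_*\nu$. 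The $\widetilde{\DMO}_{n-1}$ regularity of $A$ enters through the perturbation kernel \eqref{eq:intro-perturb-kernel}: the difference between $\nabla_1\Gamma_A$ and the constant-coefficient kernel $\nabla_1\Theta(\cdot,\cdot;\bar A_B)$ at the scale of $B$ is controlled by the Hörmander-type estimate \eqref{eq:hormander_intro}, providing the required smallness and allowing one to transfer the PDE computation to the variable-coefficient setting.

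Once Theorem~\ref{thm:big piecesTb-aux_intro} yields the big piece, Theorem~\ref{theorem:elliptic_GSTo} is used within the corona iteration to handle those scales on which $\mu$ is close to an $n$-plane and $T_\mu 1$ has small local $L^2$-oscillation, while the large-oscillation scales are absorbed through stopping-time estimates in the spirit of \cite{MT20}. Summing over the corona trees and their doubling ancestors yields the global $L^2(\mu)$-boundedness of $T_\mu$, from which $n$-rectifiability and, under the lower $n$-Ahlfors regularity assumption, uniform $n$-rectifiability, follow via the chain of results outlined in the first paragraph.
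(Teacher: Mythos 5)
Your top-level plan---run the scheme of \cite{MT20} with the gradient of the single layer potential in place of the Riesz transform, feeding in Theorem~\ref{thm:big piecesTb-aux_intro}, the Calder\'on--Zygmund bounds of Lemma~\ref{lem:estim_fund_sol}, and the transference \eqref{eq:maineq} to pass to $\mathcal R_\mu$ and conclude (uniform) rectifiability---is exactly the route the paper takes. The problems are in how you restructure the MT20 argument. Your reduction, ``it suffices to produce, for \emph{every} $\mu$-doubling ball $B$, a big piece $G\subset B$ with $T_{\mu|_G}$ uniformly bounded,'' is not available: the Main Lemma of \cite{MT20} is a \emph{dichotomy}---either such a big piece exists, or $\mu(B)$ concentrates on a ball of much smaller radius---precisely because when the mass of $\mu|_B$ piles up at scales much smaller than $r(B)$ (in particular near the pole $x_B\in\kappa B$), the hypotheses of the big pieces $Tb$ theorem, above all the bound $\int T_*\nu\,d\mu\lesssim\mu(B)$ for $\nu=\frac{\mu(B)}{\omega^{x_B}(B)}\,\omega^{x_B}|_B$, cannot be verified; the concentration alternative is then absorbed by the corona construction and the good-$\lambda$ argument of \cite[Section 9]{MT20}. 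You instead describe the corona as a flatness/small-oscillation stopping time and invoke Theorem~\ref{theorem:elliptic_GSTo}; that criterion is not an ingredient of the one-phase quantitative proof (it is needed for the two-phase theorems), and no $\beta$-numbers or oscillation of $T_\mu 1$ enter the one-phase corona, so the iteration as you describe it does not close the argument.

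Two further points. The hypothesis \eqref{eq:quant_mutual_ac} is one-directional ($\mu$-small implies $\omega$-small), so from $\omega^{x_B}(\mathfrak T_{\caD(w)}\cap B)\leq c_{acc}\,\omega^{x_B}(B)$ you cannot make $\mu(\mathfrak T_{\caD(w)})$ an arbitrarily small fraction of $\mu(B)$ by shrinking $c_{acc}$: applying \eqref{eq:quant_mutual_ac} to $E=B\setminus \mathfrak T_{\caD(w)}$ and using $c_{acc}+\varepsilon'<1$ only yields $\mu(\mathfrak T_{\caD(w)}\cap B)\leq (1-\varepsilon)\,\mu(B)$. This still suffices for Theorem~\ref{thm:big piecesTb-aux_intro}, which tolerates any $\delta_0<1$ with constants depending on it, but the claim as you state it is false and the quantifiers matter when the corona trees are summed. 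Finally, Caffarelli--Fabes--Mortola--Salsa-type comparisons are not at your disposal: $\Omega$ is merely an open set (no CDC, corkscrew, or Ahlfors regularity), so there is no comparison of $\omega^{x_B}$ with a ``normal derivative'' of the Green's function. The elliptic analogue of \cite[Key Lemma 7.1]{MT20} is instead proved from the representation of the single layer potential of $\omega^{x_B}$ through the Green's function (Lemma~\ref{eq:Green-repr-wiener}), the maximum principle, and the interior $C^1$/Cauchy-type estimates for $L_A$-solutions that the $\DMO$ assumption provides---which is precisely where Lemma~\ref{lem:estim_fund_sol} and Remark~\ref{remark:rem_grad_bound} are used, and is the content of item (1) in the paper's outline of the proof.
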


The strategy for proving Theorem \ref{theorem:elliptic-one-phase_quant} is based on the quantitative framework developed in \cite{MT20}, which refines the method introduced in \cite{AHMMMTV16}.  Although \cite{MT20} is substantially more technical than \cite{AHMMMTV16}, the additional tools required to extend its results to the class of elliptic operators considered in this paper are precisely those needed to generalize the approach of \cite{AHMMMTV16}.

\vvv

We now state a qualitative two-phase problem  under the additional condition \eqref{eq:extra_cond_A_DMO}, whose role is discussed below.

\begin{theorem}\label{theorem:main_two_phase_qualitative}
	Let $\Omega_1, \Omega_2 \subset \Rn1$, $n \geq 2$,  be two disjoint Wiener-regular domains such that $\partial \Omega_1 \cap \partial \Omega_2\neq \varnothing$ and let $E\subset \partial \Omega_1 \cap \partial \Omega_2$ be a compact set.  Let $A$ be a uniformly elliptic matrix in $\widetilde \DMO_{1-\alpha}$, for some $\alpha\in(0,1)$, and such that there exists $\gamma \in (0,1]$ for which
	\begin{equation}\label{eq:extra_cond_A_DMO-gen}
	m_{\oomega}\coloneqq	\limsup_{t\to 0^+}\frac{ \oomega_A(t)}{t^\gamma}<+\infty.
	\end{equation}
	We denote  by $\omega_i=\omega^{L_A, x_i}_{\Omega_i}$ the elliptic measure in $\Omega_i$ with pole at  $x_i \in \Omega_i$, for $i=1,2$.  If $\omega_1|_E \ll \omega_2|_E\ll \omega_1|_E $,  then there exists a $n$-rectifiable set $F\subset E$ with  $\omega_1(E\setminus F)=0$ such that  $\omega_1|_F$ and $\omega_2|_F$ are mutually absolutely continuous with respect to $\mathcal H^n|_F$.
	
	The theorem holds, in particular, when
		\begin{equation}\label{eq:extra_cond_A_DMO}
	M_{\oomega}\coloneqq	\limsup_{t\to 0^+}\frac{\mathfrak I_{\oomega_A}(t)}{\oomega_A(t)}<+\infty.
	\end{equation}
\end{theorem}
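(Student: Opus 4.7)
The plan is to follow the general strategy of the two-phase analysis of \cite{AMTV19}, replacing the Riesz transform by the gradient of the single layer potential $T_\mu$ associated with $L_A$, and feeding in the elliptic tools developed earlier in this paper: the local quantitative rectifiability criterion Theorem~\ref{theorem:elliptic_GSTo} and the big pieces $Tb$ theorem~\ref{thm:big piecesTb-aux_intro}. Regarding the implication \eqref{eq:extra_cond_A_DMO}$\Rightarrow$\eqref{eq:extra_cond_A_DMO-gen}: writing $f(t) := \mathfrak{I}_{\oomega_A}(t)$ one has $f'(t) = \oomega_A(t)/t$, so \eqref{eq:extra_cond_A_DMO} reads $f(t) \leq M\,t\,f'(t)$, which integrates to $f(t) \leq C\, t^{1/M}$ for small $t$; combined with the lower bound $\mathfrak{I}_{\oomega_A}(t) \gtrsim \oomega_A(t)$ coming from the doubling of $\oomega_A$, this gives $\oomega_A(t) \lesssim t^{1/M}$, i.e.\ \eqref{eq:extra_cond_A_DMO-gen} with $\gamma = 1/M$. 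In particular, under either hypothesis $A$ is essentially H\"older continuous at small scales, so that $L_A$-solutions enjoy quantitative $C^{1,\gamma}$ estimates and the rescaled matrices $A^r(y) := A(x + ry)$ converge to the constant matrix $A(x)$ at rate $r^\gamma$---an indispensable input for the blow-up step below.

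\textbf{Reduction and flat pseudo-tangents.} Using $\omega_1 \ll \omega_2 \ll \omega_1$ on $E$ together with standard Radon--Nikodym and Lusin--Egorov arguments, I first reduce to proving the conclusion on a compact set $E' \subset E$ contained in a small ball $B_0$, on which both Radon--Nikodym derivatives $d\omega_1/d\omega_2$ and $d\omega_2/d\omega_1$ are bounded, so that it suffices to show $n$-rectifiability of $\mu := \omega_1|_{E'}$. Next, I would establish that at $\omega_1$-a.e.\ $x \in E'$, the measure $\mu$ has finite positive upper $n$-density and admits pseudo-tangent measures supported in an $n$-plane through the origin. Finiteness of the upper $n$-density follows from Caffarelli--Fabes--Salsa-type estimates for the $\omega_i$ coupled with Wiener regularity, while positivity is classical. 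The flatness of pseudo-tangents is the key step and is obtained by a blow-up/compactness argument in the spirit of \cite{AMTV19}: the mutual absolute continuity forces any limit pair of rescaled measures to remain mutually absolutely continuous, while the jump/reflection relation for the gradient of the single layer potential (for the frozen constant-coefficient operator $L_{A(x)}$) forces their common support to lie in an $n$-plane; the quantitative convergence $A^r \to A(x)$ at rate $r^\gamma$ is precisely what reduces the limiting identity to the constant-coefficient case.

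\textbf{Local quantitative rectifiability and conclusion.} At each such $x$ and each sufficiently small scale, I would verify the seven hypotheses of Theorem~\ref{theorem:elliptic_GSTo} on a well-chosen ball $B$ centered near $x$. The $n$-growth~(3), $P_{\alpha,\mu}$-doubling~(2), and thin-boundary~(4) conditions are arranged by a standard stopping-time procedure built on the upper density finiteness; the flatness hypothesis~(6) and the smallness of the $L^2$-mean oscillation of $T\mu$ on $B$~(7) come from the flat pseudo-tangent output of the previous step together with the elliptic jump relations for the single layer, in the spirit of \cite{Pu19}; and the $L^2$-boundedness~(5) of $T_{\mu|_B}$ is supplied by Theorem~\ref{thm:big piecesTb-aux_intro}, applied to a suitable good portion of $\mu$ with a test function $b$ manufactured from the non-degeneracy of $\omega_1$ and $\omega_2$ at $x$. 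Theorem~\ref{theorem:elliptic_GSTo} then yields, at arbitrarily small scales around $\omega_1$-a.e.\ point of $E'$, a uniformly $n$-rectifiable set capturing a positive fraction of $\mu(B)$; a standard Vitali covering argument assembles these into a single $n$-rectifiable set $F$ with $\omega_1(E \setminus F) = 0$. The mutual absolute continuity of $\omega_1|_F$ and $\omega_2|_F$ with respect to $\mathcal H^n|_F$ then follows by applying Theorem~\ref{theorem:main_one_phase} in each of $\Omega_1$ and $\Omega_2$ and using $\omega_1 \ll \omega_2 \ll \omega_1$.

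The main obstacle I anticipate is the flat pseudo-tangent step. In the harmonic case \cite{AMTV19}, this leans heavily on explicit properties of the Riesz kernel and on identities specific to harmonic functions; transferring it to the variable-coefficient setting is only feasible because \eqref{eq:extra_cond_A_DMO-gen} supplies a genuinely polynomial rate $r^\gamma$ for the coefficient-freezing error. Under only $A \in \widetilde{\DMO}_{1-\alpha}$ that error would be of order $\mathfrak{I}_{\oomega_A}(r)$, which tends to $0$ but too slowly to survive the blow-up without destroying the cancellation used in the flatness argument; the H\"older-type condition is precisely what restores a usable rate and is why it is imposed as an extra hypothesis of this particular theorem.
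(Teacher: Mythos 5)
There are genuine gaps in your outline, and they concern precisely the points where the elliptic two-phase problem is hard. First, your source of upper $n$-growth is wrong: in a general Wiener-regular (or even CDC) domain with no corkscrew structure, Caffarelli--Fabes--Salsa-type bounds are not available, and they would not give $\omega(B(x,r))\lesssim r^n$ in any case. In the two-phase setting the finiteness of the upper density at $\omega_1$-a.e.\ point of $E$ comes from the Alt--Caffarelli--Friedman-type monotonicity formula (Theorem \ref{theorem:ACF_DMO}, via Lemma \ref{lemma:bound_density_two_phase_problem}, after replacing $A$ by its uniformly continuous representative as in Lemma \ref{lem:reduction_to_UC}), combined with mutual absolute continuity; you never invoke it, and without it the $P_{\gamma,\omega_1}$-doubling balls, the growth hypotheses of Theorems \ref{theorem:elliptic_GSTo}/\ref{teo1} and of the $Tb$ theorem all collapse. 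Worse, your claim that ``positivity is classical'' is false: points of zero density cannot be ruled out a priori, and showing that $\omega_1(G^{zd})=0$ is exactly the crux of the argument. The actual proof splits $E$ (after the $E_m$/$G_m$ stopping construction of Lemma \ref{lemma:control_density}) into positive-density points, handled as in the one-phase case via $T_*$-finiteness, the suppressed $Tb$ machinery and the elliptic David--Semmes result of \cite{MMPT23}, and zero-density points, which are killed by Theorem \ref{teo1} together with the oscillation estimate \eqref{eq:oscillation_zero_density}. Your plan to verify the hypotheses of Theorem \ref{theorem:elliptic_GSTo} ``at a.e.\ point'' and Vitali-cover conflates these two regimes: at positive-density points the mean-oscillation hypothesis is not available, and at zero-density points one is not building a set of full measure but deriving a contradiction.

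Second, you locate the extra hypothesis \eqref{eq:extra_cond_A_DMO-gen} in the wrong place. The flatness of (pseudo-)tangent measures and the identity $\dim\omega_i|_E=n$ do not need any rate: since $\DMO_s\subset\VMO$, they follow directly from the blow-up theorem of \cite{AM19} (Theorem \ref{theorem:AzzamMourgoglou_19}), so no ``polynomial coefficient-freezing rate'' is required there. Where the H\"older-type condition is genuinely needed is in proving the $L^2$-mean oscillation bound on the zero-density set: there $T\omega_1(x)$ is compared with $\nabla_1\Gamma_A(x,x_1)$, whose oscillation over a small ball $B=B(x_0,r_0)$ is of size $\bigl(\tau^\beta+\mathfrak I_{\oomega_A}(\tau)\bigr)\,\Theta^n_{\omega_1}\bigl(B(x_0,20|x_0-x_1|)\bigr)$ with $\tau=r_0/(20|x_0-x_1|)$, and one must absorb the density at the \emph{large} ball into $\Theta^n_{\omega_1}(B)$ through the $P_{\bar\gamma/2,\omega_1}$-doubling of $B$; this forces $\mathfrak I_{\oomega_A}(\tau)\lesssim\tau^{\bar\gamma}$, i.e.\ \eqref{eq:extra_cond_A_DMO-gen} (or \eqref{eq:extra_cond_A_DMO} via Lemma \ref{lem:holder_bound_I_omega} — your differential-inequality derivation of that implication is essentially the paper's, up to the exponent $1/(2M_\oomega)$). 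Your proposal is silent on this cross-scale density transfer, which is the only step that uses the extra hypothesis. Finally, the closing claim that mutual absolute continuity of $\omega_i|_F$ with respect to $\mathcal H^n|_F$ ``follows by applying Theorem \ref{theorem:main_one_phase}'' is backwards: that theorem deduces rectifiability \emph{from} absolute continuity; the absolute continuity statement on $F$ comes from the positive-and-finite density bounds on the positive-density set (Lemma \ref{lemma:key_lemma_pd_zd}(b)), not from the one-phase theorem.
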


The study of two-phase problems for elliptic measure is more intricate due to the need to analyze two distinct sets: the set of points with positive density, denoted by \( G_{pd} \), and the set of points with zero density, denoted by \( G_{zd} \). For \( G_{pd} \), we primarily employ methods from the one-phase problem. By utilizing the \( \widetilde{\DMO} \) version of the 1-co-dimensional David–Semmes problem \cite{NToV14}, as developed in \cite{MMPT23}, we establish the rectifiability of the elliptic measure on this set.

The more challenging aspect is demonstrating that the elliptic measure on \( G_{zd} \) is \( n \)-rectifiable, specifically showing that \( \omega_1(G_{zd}) = 0 \). The key advancement enabling us to address this is Theorem~\ref{teo1}, which provides the essential tool to formulate an elliptic analogue of Theorem~\ref{theorem:Laplace-two-phase} within the \( \widetilde{\DMO} \) framework. Our approach adapts strategies initially developed for harmonic measure in \cite{AMT17a, AMTV19}.

A critical component in applying Theorem~\ref{teo1} is the detailed blow-up analysis introduced by Azzam and the second-named author in \cite{AM19} for operators with \( \VMO \) coefficients\footnote{This analysis was initially established by Kenig, Preiss, and Toro \cite{KPT09} in NTA domains for the Laplacian and later extended to CDC and Wiener regular domains in \cite{AMT17a, AMTV19}.}. This method is vital for proving that all tangent measures of the elliptic measure are flat and that the measure on the set of mutual absolute continuity has co-dimension one. The former ensures that for \( \omega \)-almost every point \( \xi \), there exists a scale \( r_0 \) such that any ball \( B(\xi, r) \) with \( r < r_0 \) has small \( \beta_{\omega,1} \) coefficients. The latter allows for identifying a sequence of \( P \)-doubling balls with arbitrarily small radii that are also flat—precisely the geometric conditions required to apply Theorem~\ref{teo1} (see items (3) and (7)).

A particularly delicate step in the proof is establishing the $L^2$-mean oscillation estimate \eqref{eq:oscillation_zero_density}, which is the only part of the argument that requires the additional assumption \eqref{eq:extra_cond_A_DMO}. Since we work with sequences of balls whose radii shrink to zero while their centers remain far from the pole of the elliptic measure, it becomes essential to control the density at \emph{all} scales. This is precisely what the $P$-doubling condition ensures.

After several intermediate estimates, we arrive at the following key point: if $\xi \in G_{zd}$ and $r_0$ is the corresponding scale, then for any $r < r_0$, we would have to prove that
\[
	\mathfrak{I}_{\oomega_A}(r/r_0)\, \Theta^n_{\omega^p}(B(\xi, r_0)) \leq \tau\, \Theta^n_{\omega^p}(B(\xi, r)),
\]
for $\tau>0$ small enough.
When the coefficients are $\alpha$-Hölder continuous, this expression is controlled via a $P_{\alpha/2}$-doubling condition, yielding
\[
\mathfrak{I}_{\oomega_A}(r/r_0)\, \Theta^n_{\omega^p}(B(\xi, r_0)) \lesssim \left( {r}/{r_0} \right)^{\alpha/2} \Theta^n_{\omega^p}(B(\xi, r)),
\]
which, for sufficiently small $r$, gives the desired smallness for estimate (8). In the general case, we would require a bound of the form
\[
\mathfrak{I}_{\oomega_A}(r/r_0)\, \Theta^n_{\omega^p}(B(\xi, r_0)) \lesssim \theta(r/r_0) \, \Theta^n_{\omega^p}(B(\xi, r)),
\]
for some continuous and increasing function $\theta\colon (0,1) \to (0,\infty)$. However, obtaining such a bound when the modulus of continuity is not Hölder appears to be quite challenging,
 and it is the reason why we ask the additional condition \eqref{eq:extra_cond_A_DMO-gen}.
Interestingly, we observe that \eqref{eq:extra_cond_A_DMO-gen} is connected to the more restrictive condition \eqref{eq:extra_cond_A_DMO} on the Dini integral of the modulus. Indeed, in Lemma~\ref{lem:holder_bound_I_omega} we observe that \eqref{eq:extra_cond_A_DMO} allows us to show that for sufficiently small $t > 0$, we have that
\(
	\mathfrak{I}_{\oomega_A}(t) \lesssim t^\gamma,
\)
\textit{for any $\gamma \in \bigl(0, \tfrac{1}{2 M_\oomega} \bigr)$.} Therefore,  we may proceed as in the case of H\"older continuous coefficients and conclude.   Whether Theorem~\ref{theorem:main_two_phase_qualitative} holds without this assumption remains an open question. 

Conditions of the type $\mathfrak{I}_{\oomega_A}(t) \leq c\, \oomega_A(t)$ are common in the PDE literature on elliptic operators with coefficients satisfying conditions involving moduli of continuity or for generalized H\"older and Campanato spaces; see, for instance, \cite[Theorem 1.4]{MMM20}.  To our knowledge, Lemma~\ref{lem:holder_bound_I_omega} is new: it asserts that even when, a priori, the modulus decays more slowly than Hölder, a scale-invariant inequality of this form implies sub-Hölder growth of the Dini integral for some small exponent depending on $c$.  Moreover, by the same argument of Lemma~\ref{lem:holder_bound_I_omega}, we observe that the condition $\mathfrak I_{\oomega_A}(t)\lesssim t^\gamma$ is equivalent to $\oomega_A\lesssim t^\gamma$, so it corresponds to a sub-H\"older growth condition for $\oomega_A$.

We also note that proving upper $n$-growth of elliptic measure in the two-phase setting is accomplished using the elliptic Alt–Caffarelli–Friedman-type monotonicity formula from \cite{AGMT17}, see Theorem \ref{theorem:ACF_DMO}. To apply this result, we replace $A$ with its uniformly continuous representative $\mathcal{A}$, which induces the same elliptic measure and Green's function (see Lemma~\ref{lem:reduction_to_UC}) and has the Dini-integrable modulus of continuity $\mathfrak{I}_{\oomega_A}$.
We also remark that in Theorem \ref{theorem:ACF_DMO} we further assume that $\mathcal{A}$ coincides with the identity matrix at the center of the ball. This assumption on $\mathcal{A}$ is not an obstacle in our application: a standard change of variables argument analogous to \cite{AGMT17} and \cite[Section 12]{Pu19} allows us to reduce to this case.

\vvv

A quantitative version  of Theorem  \ref{theorem:quantitative_two_phase_Laplace_indexed} is stated below.

\begin{theorem}\label{theorem:quantitative_two_phase}
	Let \( n \geq 2 \), and let $A$ be a uniformly elliptic matrix satisfying $A\in\widetilde \DMO_{1-\alpha}$ for some $\alpha\in (0,1)$ satisfying also 
	\begin{equation}\label{eq:small_BMO_condition}
		\sup_{r \in (0,1)}	\sup_{x\in \Rn1}  \avint_{B(x,r)} \Big| A(y) - \avint_{B(x,r)} A \Big| \, dy  \leq \eta, 
	\end{equation}
	for $\eta$ sufficiently small.  Let \( \Omega_1, \Omega_2 \subset \mathbb{R}^{n+1} \) be two bounded, open, connected, disjoint sets satisfying the CDC, and set \( F \coloneqq \partial \Omega_1 \cap \partial \Omega_2 \neq \varnothing \).  Assume there exist \( \varepsilon, \varepsilon' \in (0,1) \) such that, for any ball \( B \) centered at \( F \) with \( \operatorname{diam}(B) \leq \min(\operatorname{diam}(\Omega_1), \operatorname{diam}(\Omega_2)) \), there exist \( x_i \in \tfrac{1}{4}B \cap \Omega_i \) for \( i = 1, 2 \), satisfying the following: for every measurable set \( E \subset B \),
	\begin{equation}\label{eq:hypothesis_A_infinity_type}
		\text{if } \quad \omega_1(E) \leq \varepsilon \, \omega_1(B), \quad \text{then} \quad \omega_2(E) \leq \varepsilon' \, \omega_2(2B),
	\end{equation}
	where \( \omega_i= \omega^{L_A,x_i}_{\Omega_i} \) denotes elliptic measure with pole at \( x_i \) in \( \Omega_i \), for \( i = 1, 2 \).
	
	If \( \varepsilon' \) is sufficiently small, depending only on \( n \) and the CDC constant, then there exist \( \theta_i \in (0,1) \) and a uniformly \( n \)-rectifiable set \( \Sigma_B \subset \mathbb{R}^{n+1} \) such that
	\begin{equation}\label{eq:thm13_a}
		\omega_i(\Sigma_B \cap F \cap B) \geq \theta_i, \quad \text{for } i = 1, 2.
	\end{equation}
	Moreover, if \( x_1 \) is a corkscrew point for \( \tfrac{1}{4}B \), then there exists \( c > 0 \) such that
	\[
	\mathcal{H}^n(\Sigma_B \cap F \cap B) \geq c \, r(B)^n.
	\]
\end{theorem}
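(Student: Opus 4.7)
The strategy is to adapt the scheme of \cite{AMT20} to the $\widetilde{\DMO}$ setting, substituting Theorems~\ref{theorem:elliptic_GSTo} and~\ref{thm:big piecesTb-aux_intro} for the Riesz-transform machinery of \cite{GT18, NTrV99} that drives the harmonic proof. The goal is to construct, inside $F\cap B$, a uniformly $n$-rectifiable set $\Sigma_B$ capturing a fixed proportion of both $\omega_1$ and $\omega_2$.

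I would begin by normalizing the matrix: replace $A$ by its uniformly continuous representative $\mathcal A$ via Lemma~\ref{lem:reduction_to_UC} (which preserves the elliptic measures and Green's functions), then perform a standard change of variables in the spirit of \cite{AGMT17, Pu19} to reduce to the case where $\mathcal A$ equals the identity at the center of a working ball; the small-BMO hypothesis \eqref{eq:small_BMO_condition} is precisely what guarantees that this change of variables produces a matrix to which the elliptic Alt--Caffarelli--Friedman monotonicity formula (Theorem~\ref{theorem:ACF_DMO}) applies. Invoking ACF for the Green functions $G_{\Omega_i}$ yields the upper $n$-growth $\omega_i(B(x,r))\lesssim r^n$ on $F$ up to the scale $r(B)$. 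Combining this with the mutual $A_\infty$-type assumption \eqref{eq:hypothesis_A_infinity_type}, Bourgain-type lower bounds for elliptic measure under CDC, and a stopping-time argument as in \cite[\S5--7]{AMT20}, I would locate a sub-ball $B_0\subset B$ on which $\omega_1$ has comparable upper and lower $n$-growth, $B_0$ has thin boundary in the sense of \eqref{eq:small boundary}, is $P_{\alpha,\omega_1}$-doubling in the sense of \eqref{eq:P_doubling_balls}, and satisfies the flatness condition $\beta_{\omega_1,1}(B_0)\leq\delta\,\Theta^n_{\omega_1}(B_0)$ with $\delta$ arbitrarily small; flatness is produced via the blow-up/tangent-measure analysis of \cite{AM19}, transplanted to $\widetilde{\DMO}$ coefficients via the regularity and David--Semmes-type results of \cite{MMPT23}.

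The central technical point, and also the main obstacle, is the verification of the small $L^2$-mean oscillation hypothesis \eqref{eq:mean_osc_small_theorem} of Theorem~\ref{theorem:elliptic_GSTo} for $\mu=\omega_1|_{B_0}$. Following the harmonic blueprint of \cite{AMTV19, AMT20}, one expresses $T\omega_i$ in terms of $\nabla G_{\Omega_i}(\cdot,x_i)$ through the jump relations for the gradient of the single layer potential, then exploits the comparison principle for $L_A$-solutions in CDC domains together with \eqref{eq:hypothesis_A_infinity_type} to extract cancellation between the two contributions. The passage from $L_A$ to its frozen-coefficient version at the center of $B_0$ is controlled by the H\"ormander-type bound \eqref{eq:hormander_intro} for the perturbation kernel $\mathfrak K_A$, whose right-hand side is rendered small precisely by the $P_{\alpha,\omega_1}$-doubling property of $B_0$ and by the decay of $\mathfrak I_{\oomega_A}(r(B_0))$ and $\mathfrak L^{1-\gamma}_{\oomega_A}(r(B_0))$ as $r(B_0)\to 0$. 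Compared with the harmonic setting one loses the algebraic antisymmetry of the Riesz kernel, which is why the small-BMO condition \eqref{eq:small_BMO_condition} is indispensable: it keeps the perturbative errors absorbable at every scale of the stopping procedure, analogously to the role played by \eqref{eq:extra_cond_A_DMO} in Theorem~\ref{theorem:main_two_phase_qualitative}.

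With the hypotheses of Theorem~\ref{theorem:elliptic_GSTo} verified on $B_0$ --- the $L^2$-boundedness condition (5) being supplied by the big pieces $Tb$ theorem (Theorem~\ref{thm:big piecesTb-aux_intro}) applied to the suppressed operator on a further subset of big $\omega_1$-measure, with testing function coming from the comparison with $\omega_2$ via \eqref{eq:hypothesis_A_infinity_type} --- one obtains a uniformly $n$-rectifiable set $\Sigma_{B_0}$ with $\omega_1(\Sigma_{B_0}\cap B_0)\gtrsim \omega_1(B_0)$. A standard covering argument, together with the quantitative mutual absolute continuity \eqref{eq:hypothesis_A_infinity_type}, transfers the lower bound to $\omega_2$, yielding $\theta_1,\theta_2>0$ and $\Sigma_B\coloneqq\Sigma_{B_0}$ satisfying \eqref{eq:thm13_a}. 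Finally, when $x_1$ is a corkscrew point for $\tfrac14 B$, a Bourgain-type lower estimate gives $\omega_i(B)\gtrsim 1$, and the combination of uniform $n$-rectifiability of $\Sigma_B$, the upper $n$-growth from ACF, and the $\omega_i$-lower bound on $\Sigma_B\cap F\cap B$ forces $\mathcal H^n(\Sigma_B\cap F\cap B)\gtrsim r(B)^n$, closing the proof.
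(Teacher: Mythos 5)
Your outline reproduces some correct ingredients (reduction to the continuous representative, ACF for the upper $n$-growth, the big pieces $Tb$ machinery, Theorem~\ref{teo1} as the rectifiability engine), but two central points do not survive scrutiny.

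First, the mechanism producing the flat working ball is wrong, and with it the role you assign to the small-BMO hypothesis \eqref{eq:small_BMO_condition}. You propose to obtain flatness and the $P$-doubling sub-ball $B_0$ via the tangent-measure blow-up of \cite{AM19}. That analysis is qualitative: it yields, for $\omega$-a.e.\ point, \emph{some} scale below which the $\beta_{\omega,1}$ coefficients are small, and it requires mutual absolute continuity of $\omega_1$ and $\omega_2$ -- neither of which is available here, where the hypothesis is only the $(\varepsilon,\varepsilon')$ condition \eqref{eq:hypothesis_A_infinity_type} and the conclusion must be scale-invariant with constants depending only on the stated parameters. The paper instead runs a \emph{compactness} argument: one rescales, uses Lemma~\ref{lem:BMOmatrix} (this is exactly where \eqref{eq:small_BMO_condition} enters -- the rescaled matrices have vanishing BMO norm and converge in $L^1_{\loc}$ to a constant matrix) together with Proposition~\ref{limlem} to pass to a limiting constant-coefficient two-phase configuration, and extracts a ball $B'\subset B$ with $r(B')\approx_{\eta,\tau} r(B)$, $\omega_i(B')\gtrsim 1$, poles far away ($r(B')\leq\tau r(B)\lesssim\tau\,|x_i-c_{B'}|$), and $\beta_{\omega_2,1}(B')<\eta\,\Theta^n_{\omega_2}(B')$. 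The small-BMO condition has nothing to do with the ACF change of variables (that only needs uniform continuity and $\mathcal A=\mathrm{Id}$ at a point) nor with ``absorbing perturbative errors at every scale''; it is the quantitative surrogate for the blow-up, full stop.

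Second, you verify the mean-oscillation hypothesis by a vague cancellation/jump-relation argument and apply the criterion once to $\omega_1|_{B_0}$; this skips the big-density/small-density dichotomy that the proof actually needs. After fixing $B'$ and the comparability set $G$ as in \eqref{eq:properties_of_G}, one splits $G=G^{bd}\cup G^{sd}$ according to whether $\Theta^n_{\omega_1}(x,r)\geq\rho\,\Theta^n_{\omega_2}(B')$ at all scales. On $G^{bd}$ the density hypothesis of Theorem~\ref{theorem:elliptic_GSTo}/\ref{teo1} fails, and rectifiability is obtained differently: via Theorem~\ref{theorem:aux_thm_NTV_pubmat}, i.e.\ \eqref{eq:maineq} plus the Ahlfors-regularization of \cite{NToV14b} and the elliptic David--Semmes result \cite[Corollary 1.3]{MMPT23}. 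Only on $G^{sd}$ is Theorem~\ref{teo1} applied, and there to $\omega_2$ (the measure for which flatness was produced); the smallness of the $L^2$-mean oscillation of $T\omega_2$ is not extracted from any delicate cancellation but from Lemma~\ref{lemma:estimates_T_two_phase}: at small-density points $T\omega_2(x)$ differs from $\nabla_1\Gamma_A(x,x_2)$ by at most $\mathfrak C'\rho\,\Theta^n_{\omega_2}(B')$, and $\nabla_1\Gamma_A(\cdot,x_2)$ oscillates by $O\bigl(\tau^\beta+\mathfrak I_{\oomega_A}(\tau)\bigr)\Theta^n_{\omega_2}(B')$ on $B'$ because the pole $x_2$ lies at distance $\gtrsim\tau^{-1}r(B')$, by \eqref{eq:continuityGamma}. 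Without the dichotomy and the far-pole mechanism, your verification of hypotheses (3), (6) and (8) of Theorem~\ref{teo1} does not go through, so the proposal as written has a genuine gap at its core.
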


\vvv

The proof of Theorem~\ref{theorem:quantitative_two_phase} builds upon the strategy developed in \cite{AMT20}, though, apart from Theorems \ref{teo1} and \ref{thm:big piecesTb-aux_intro},  it requires a highly non-trivial extension of results from potential theory for elliptic PDEs with merely bounded coefficients.  The method is a quantification of the one in Theorem  \ref{theorem:main_two_phase_qualitative} and a key aspect of the argument is a refined analysis of the mutual boundary $\partial \Omega_1 \cap \partial \Omega_2$ at the level of a ball $B$, which is necessary in order to apply Theorem~\ref{theorem:quantitative_two_phase}.

One of the core components of the proof is a \textit{compactness argument} for  elliptic measure in domains satisfying the CDC.  Although inspired by earlier work,  it is based on the following  proposition, whose proof is  significantly more delicate in the elliptic setting and is of independent interest.

\begin{proposition}\label{limlem}
	Let \( \{ \Omega_j \}_{j \geq 1} \subset \mathbb{R}^{n+1} \) be a sequence of CDC domains with the same CDC constants such that \( 0 \in \partial \Omega_j \) and \( \inf_j \operatorname{diam}(\partial \Omega_j) > 0 \). Suppose there exists a fixed ball \( B_0 = B(x_0, r_0) \subset \Omega_j \) so that $\delta_{\om_j}(x_0) \approx r_0$ for all \( j \geq 1 \). 
	
	Assume further that \( \{A_j\}_{j \in \mathbb{N}} \) is a sequence of Borel measurable, uniformly elliptic coefficient matrices, all with the same ellipticity constants,  for which  there exists a  uniformly elliptic matrix $A$ (with not necessarily constant coefficients) such that
	$$
	A_j \to A  \quad \text{ in } L^1_{\loc}(\Rn1) \text{ as } j \to \infty.
	$$
	Then there exists a connected open set \( \Omega_{\infty}^{x_0} \subset \mathbb{R}^{n+1} \) containing \( B_0 \),  such that, after passing to a subsequence, the following properties hold:
	
	\begin{enumerate}
		\item $\Omega_{\infty}^{x_{0}}$ is also a CDC domain with the same constants. 
		\item $G^{A^T_j}_{\Omega_{j}} (\cdot,x_0) \to G^{A^T}_{\Omega_{\infty}^{x_{0}}}(\cdot,x_0)$ locally uniformly in $\R^{n+1}\setminus \{x_{0}\}$ as $j \to \infty$.
		\item   $\displaystyle \omega_{\Omega_{j}}^{{A_j},x_{0}}\warrow \omega^{A,x_{0}}_{\Omega_{\infty}^{x_{0}}}$ as $j \to \infty$,
	\end{enumerate}
\end{proposition}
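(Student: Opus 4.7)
I would follow the classical compactness scheme for harmonic measure in CDC domains (cf.\ the proof of Theorem~\ref{theorem:quantitative_two_phase_Laplace_indexed} in \cite{AMT20}), adapted to the elliptic setting with coefficients converging only in $L^1_{\loc}$. First, using the Blaschke selection theorem on each ball $\overline{B(0,N)}$ and a diagonal extraction, I pass to a subsequence so that the closed sets $\Omega_j^c$ converge in the Hausdorff distance on compacta to some closed set $K \subset \R^{n+1}$ containing the origin. Define $\Omega_\infty^{x_0}$ as the connected component of $\R^{n+1} \setminus K$ containing $x_0$; this is nonempty, open and contains $B_0$ because $\dist(x_0,\Omega_j^c) \gec r_0$ uniformly. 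For any $\xi \in \partial \Omega_\infty^{x_0} \subset K$, one can approximate $\xi$ by a sequence $\xi_j \in \partial \Omega_j$ (from Hausdorff convergence), apply the uniform CDC bound at $(\xi_j,r)$, and use the lower semicontinuity of variational $2$-capacity under Hausdorff convergence of the relevant compact sets (as in \cite[Ch.~2]{HKM93}) to transfer CDC with the same constants to $\Omega_\infty^{x_0}$, giving item~(1).

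For item~(2), set $G_j(\cdot)\coloneqq G^{A_j^T}_{\Omega_j}(\cdot,x_0)$, extended by zero outside $\Omega_j$. The standard bounds for Green's functions of elliptic operators with merely bounded measurable coefficients (Section~\ref{section:preliminaries_and_notation}) combined with De Giorgi--Nash--Moser give uniform local $L^\infty$ and Hölder estimates in $\R^{n+1}\setminus\{x_0\}$, and the uniform CDC condition together with Wiener-regular boundary Hölder estimates (whose constants depend only on $n,\Lambda$ and the CDC constant) gives uniform Hölder regularity up to $\partial\Omega_j$ with vanishing boundary values. Hence along a further subsequence, $G_j \to G_\infty$ locally uniformly in $\R^{n+1}\setminus\{x_0\}$, with $G_\infty \equiv 0$ on $K$. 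To identify $G_\infty$ with $G^{A^T}_{\Omega_\infty^{x_0}}(\cdot,x_0)$, I pass to the limit in the weak formulation: Caccioppoli gives $\nabla G_j \rightharpoonup \nabla G_\infty$ in $L^2_{\loc}(\R^{n+1}\setminus\{x_0\})$, and because $A_j$ are uniformly bounded with $A_j\to A$ in $L^1_{\loc}$, interpolation gives $A_j\to A$ in every $L^p_{\loc}$, $p<\infty$. Splitting, for $\phi\in C_c^\infty(\Omega_\infty^{x_0}\setminus\{x_0\})$ (so $\supp\phi \subset \Omega_j$ eventually),
\[
\int A_j^T \nabla G_j \cdot \nabla \phi \, dz = \int (A_j-A)^T \nabla G_j \cdot \nabla \phi \, dz + \int A^T \nabla G_j \cdot \nabla \phi \, dz,
\]
the first term vanishes by Cauchy--Schwarz and $\|A_j-A\|_{L^2_{\loc}}\to 0$, and the second converges to $\int A^T \nabla G_\infty \cdot \nabla \phi$. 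A cut-off argument near $x_0$, using the uniform singularity estimate $G_j(y)\lec |y-x_0|^{1-n}$, identifies the distributional equation $-\div(A^T\nabla G_\infty)=\delta_{x_0}$ in $\Omega_\infty^{x_0}$. Together with continuous vanishing on $\partial \Omega_\infty^{x_0}$ (inherited from vanishing on $K$ and uniform boundary Hölder estimates), uniqueness of the Green function in Wiener-regular domains yields $G_\infty = G^{A^T}_{\Omega_\infty^{x_0}}(\cdot,x_0)$.

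Item~(3) follows by combining~(2) with a standard probabilistic/Radon argument. Since $\omega^{A_j,x_0}_{\Omega_j}$ are probability measures supported in a common compact set (by $\inf_j\diam(\partial\Omega_j)>0$ and local compactness), Prokhorov's theorem extracts a weak-$*$ limit $\omega_\infty$, necessarily supported in $K$. For any $\phi\in C_c^\infty(\R^{n+1})$, testing the weak formulation for $G_j$ against $\phi$ and integrating by parts gives the representation
\[
\phi(x_0) - \int \phi \, d\omega^{A_j,x_0}_{\Omega_j} = \int_{\Omega_j} A_j^T \nabla G_j \cdot \nabla \phi \, dz,
\]
and passing to the limit by the argument above shows that $\omega_\infty$ satisfies the same representation formula as $\omega^{A,x_0}_{\Omega_\infty^{x_0}}$ against every continuous test function, which identifies the two by uniqueness of the Perron solution in Wiener-regular domains.

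\textbf{Main obstacles.} The technically most delicate point is the uniform boundary regularity needed to conclude that $G_\infty$ vanishes continuously on $\partial \Omega_\infty^{x_0}$; this is where the uniform CDC constants enter essentially, via quantitative boundary Hölder estimates whose moduli are stable under Hausdorff convergence of complements and under $L^1_{\loc}$-convergence of the matrices. A second subtlety is transferring the CDC condition to $\Omega_\infty^{x_0}$ while ruling out phantom boundary points in $K$ that are not approximated by $\partial \Omega_j$: this is guaranteed by defining $\Omega_\infty^{x_0}$ as a connected component of $\R^{n+1}\setminus K$ together with the Hausdorff convergence $\Omega_j^c \to K$. A third, more technical point, specific to our setting, is passing to the limit in $A_j \nabla G_j$ with only $L^1_{\loc}$-convergence of $A_j$; this is resolved by the $L^p_{\loc}$ upgrade afforded by the uniform $L^\infty$ bound on $A_j$ together with Caccioppoli's $L^2_{\loc}$ control of $\nabla G_j$ away from $x_0$.
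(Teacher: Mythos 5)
Your construction of the limit domain is genuinely different from the paper's, and as written it leaves a real gap at the identification stage. You define $\Omega_\infty^{x_0}$ as the component of $\R^{n+1}\setminus K$ containing $x_0$, where $K$ is the local Hausdorff limit of the complements, whereas the paper defines it as the positivity set $\{u_\infty^{x_0}>0\}$ of the limiting Green's function. With your definition, the limit of the weak formulations is an integral of $A^T\nabla G_\infty\cdot\nabla\phi$ over all of $\R^{n+1}$ (the $G_j$ are extended by zero), so to conclude items (2) and (3) you must know that $G_\infty$ vanishes identically off your component — in particular on every other component $V$ of $\R^{n+1}\setminus K$, where a priori $G_\infty$ is only a nonnegative $L_{A^T}$-solution. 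This is true, but it requires an argument you never give: $G_\infty$ vanishes continuously on $\partial V\subset K$ (limit of zero boundary values plus uniform Hölder bounds), decays like $|x-x_0|^{1-n}$ at infinity, and hence is $\equiv 0$ on $V$ by the maximum principle; conversely a Harnack chain along a compact path in $\R^{n+1}\setminus K$ (which lies in $\Omega_j$ for large $j$) shows $G_\infty>0$ on the whole $x_0$-component, so the two definitions agree. Without this step your identification of $\omega_\infty$ with $\omega^{A,x_0}_{\Omega_\infty^{x_0}}$, and the global statement of (2), are incomplete; the paper's choice of definition makes the issue disappear, at the price of having to prove connectedness and $\supp\omega_\infty=\partial\Omega_\infty^{x_0}$ separately, and it identifies the limit Green's function via $G=\Gamma-H_{\Gamma|_{\pom}}$ (Lemmas \ref{eq:Green-repr-wiener}, \ref{eq:Green-repr}, \ref{lem:limits fundamental}) rather than through a uniqueness theorem, which spares you from verifying the Sobolev-sense boundary vanishing and property (3) of the definition of the Green's function for $G_\infty$, steps you currently gloss over.

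Two further points are stated incorrectly. First, tightness of $\{\omega_j\}$ does not follow from the measures being ``supported in a common compact set'': nothing in the hypotheses bounds $\partial\Omega_j$ or $\diam(\partial\Omega_j)$ from above. The correct argument is the uniform decay $\omega_j\bigl(B(\xi_j,2r)^c\bigr)\lesssim (r_0/r)^\alpha$ from the CDC boundary Hölder estimate (Lemma \ref{l:holdermeasure}), which is exactly how the paper proves tightness. Second, ``lower semicontinuity of variational $2$-capacity under Hausdorff convergence'' is false (finite sets of zero capacity Hausdorff-converge to sets of positive capacity), and it is also not the direction you need. What you need is that the capacity of the limit dominates $\limsup_j$ of the capacities of the approximating compacta, which does hold because the sets $\overline{B(\xi_j,r)}\cap\Omega_j^c$ are eventually contained in any neighborhood of $K\cap\overline{B(\xi,r')}$ and $\Cap$ is continuous along decreasing sequences of compact neighborhoods; alternatively, follow the paper, which first produces boundary points $\xi_j\in\partial\Omega_j$ with $\xi_j\to\xi$ from $\omega_j\warrow\omega_\infty$ and $\supp\omega_\infty=\partial\Omega_\infty^{x_0}$, and then invokes the argument of \cite[Lemma 2.9]{AMT20}. (Also note that Hausdorff convergence of $\Omega_j^c$ only hands you points of $\Omega_j^c$ near $\xi$; getting points of $\partial\Omega_j$ needs the extra observation that nearby points of your component lie in $\Omega_j$ for large $j$.) With these repairs your route goes through, but as written these are genuine gaps rather than omitted routine details.
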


\vv
Our strategy proceeds as follows. We begin by fixing a ball \( B \) centered at the boundary and apply a compactness argument to obtain a flat ball \( B' \) of comparable radius, located away from the poles. In the context of the two-phase problem, this step serves as the quantitative analogue of the blow-up analysis presented in \cite{AM19}. Importantly, we emphasize that the compactness argument does \textit{not} rely on the \(\mathrm{DMO}\)-type condition on \( A \); it requires only the small \(\mathrm{BMO}\) norm assumption, which is the quantitative analogue of $\VMO$. The $\widetilde{\DMO}$ hypothesis is required to carry out the remaining parts of the proof.

Next, fixing this ball \( B' \), we partition the set \(\partial \Omega_1 \cap \partial \Omega_2 \cap B'\) into points exhibiting \emph{small density} and points where the density of the elliptic measure \(\omega_1\) is \emph{bounded from below}. The latter set is handled similarly to the one-phase case, by employing the elliptic analogue of the David--Semmes problem in codimension~1, \cite[Corollary 1.3]{MMPT23}. For the former set, we apply Theorem~\ref{theorem:elliptic_GSTo}, once a suitable \( L^2 \)-mean oscillation estimate for the gradient of the single layer potential \( T\omega_1 \) is established.

We remark that a limiting argument of the type employed here was previously used in \cite{HMMTZ21} in quantitative one-phase free boundary problems for elliptic measure in the setting of uniform domains with Ahlfors regular boundaries and elliptic operators with Dahlberg--Kenig--Pipher coefficients with small Carleson constant.

\vv

\subsection{Outline of the paper}

In \textit{Section \ref{section:preliminaries_and_notation}}, we collect the general notation and recall basic properties of Dini-type functions and matrices. We also review the definitions and main properties of solutions to the equation $L_A u = 0$, of elliptic measure, of Green's functions, and we gather the main pointwise estimates for the gradient of fundamental solutions if $A$ has Dini mean oscillation.

In \textit{Section \ref{sec:aux_operator}}, we define the suppression of Calder\'on–Zygmund type kernels and study the main properties of the associated operator $T_{\Xi,\nu}$. If the suppression is adapted to the growth of the measure and to the truncations of the SIO, we establish pointwise estimates for the maximal truncations $T_{\Xi, *}\nu$.

Then, we begin \textit{Section \ref{big piecesTb}} by discussing the class of operators for which we prove the big pieces $Tb$ theorem, and we later present its proof. The choice of the kernels is motivated by our applications: in \textit{Section \ref{sec:gradient_SLP_good}} we first define the main kernels involved in the perturbation argument and their associated $L^2$-boundedness properties. Hence, we show that the gradient of the single layer potential satisfies the required assumptions if the operator $L_A$ is associated with a uniformly elliptic matrix in $\widetilde \DMO_{n-1}$.

In \textit{Section \ref{sec:mean_oscillation_perturbation}} we prove the estimates for the operators involved in the perturbation argument described earlier in the introduction. In particular, in Lemma \ref{lem:mean oscillation perturb}, we bound the $L^2$-mean oscillation of the perturbations at the level of a ball $B$ in terms of the $L^2$-norm of $\mathcal R_\mu$. We can readily apply this result to prove Theorem \ref{theorem:elliptic_GSTo}. Finally in  \textit{Section \ref{section:limits}} we demonstrate Proposition \ref{limlem} along with some potential theoretic results, and in \textit{Section \ref{section:main_lemma}}, we apply all our results to free boundary problems for elliptic measures.

\vv

\section{Preliminaries and notation}\label{section:preliminaries_and_notation}

\subsection{General notation}

\begin{itemize}
	\item For $\lambda>0$ and an open ball $B=B(x,r)$, we define its dilation $\lambda B\coloneqq B(x,\lambda r)$.
	\item Given $A\subset \R^d$, we denote by $\chi_A$ its characteristic function. 
	\item We say that $\Omega\subset\Rn1$ is a domain if it is an open and connected subset of $\Rn1$.
	\item We endow  the space of matrices $\mathbb R^{n_1\times n_2}$ with the norm $|A|\coloneqq \max_{i,j}|a_{ij}|,$ for $A=(a_{ij})_{i,j}\in \mathbb R^{n_1\times n_2}$.
	\item We write $a\lesssim b$ if there exists $C>0$ such that $a\leq Cb$, and $a\lesssim_t b$ to specify that the constant $C$ depends on
	the parameter $t$. We write $a\approx b$ to mean $a\lesssim b\lesssim a$, and define $a\approx_t b$ similarly. 
\end{itemize}

\vv

\subsection{Dini functions, integral operators, and Dini-type matrices}
Let $\theta$ be a $\kappa$-doubling function in the sense of \eqref{eq:dini1_new} for $\kappa>0$.
{For $\eta \in \left(0, \frac{1}{2}\right)$ denote by $N_\eta$ the positive integer such that $2^{-N_\eta-1} \leq  \eta < 2^{-N_\eta}$. 
	We now list some properties of Dini integrals; for the detailed proofs we refer to \cite[Section 2]{MMPT23}.	
	First, if $r>0$ and $\eta r\leq t\leq r$, we have that
	\begin{align*}
		\int_{\eta r}^r \theta(t) \,\frac{dt}{t}  \leq \kappa  \, \frac{\kappa^{N_\eta+1} -1}{\kappa-1}\, \theta(\eta r)\eqqcolon C(\kappa, \eta)\, \theta(\eta r),
	\end{align*}
	which yields
	\begin{equation*}
		\int_0^R \theta(t)\, \frac{dt}{t}=\sum_{j=0}^\infty \int_{\eta^{j+1} R}^{\eta^{j}R} \theta(t)\, \frac{dt}{t}\leq C(\kappa, \eta)\, \sum_{j=0}^\infty \theta (\eta^{j+1}R).
	\end{equation*}
	Conversely, for $R>0$ it holds that
	\begin{equation}\label{eq:mod_cont_sum}
		\sum_{j=0}^\infty \theta\bigl(\eta^{j}R\bigr) \lesssim  \max(1,(\kappa/2)^{N_\eta}) \int_0^R \theta(t)\, \frac{dt}{t}.
	\end{equation}
}

In particular, $\theta$ belongs to $\DS(\kappa)$ if and only if the doubling property \eqref{eq:dini1_new} holds and $\sum_{j=0}^\infty \theta(2^{-j})<+\infty$. 
One can analogously show that, if $\theta$ verifies  \eqref{eq:dini1_new}, and $0<d\leq n$, we have
\begin{equation}\label{eq:mod_cont_sum_2}
	\sum_{k=1}^\infty \frac{\theta(2^kR)}{(2^kR)^{d}}\lesssim \int_R^\infty \theta(t)\, \frac{dt}{t^{d+1}}, \qquad \qquad R>0.
\end{equation}
{Moreover, by the doubling property of $\theta$,
	\begin{equation}\label{eq:omega<dini}
		\theta(r) \leq \kappa \int_{r/2}^r \theta(t) \, \frac{dt}{t} \leq \kappa\, \mathfrak I_{\theta}(r),\qquad r>0.
\end{equation}}
\vvv

{
	\begin{lemma}\label{lem:mod_cont_large_DS1}
		Assuming that for  fixed $d>0$
		\[
		\mathfrak L^d_{\theta}(t)= t^{d}\int_t^\infty \theta(s)\, \frac{ds}{s^{d+1}} < +\infty \qquad \text{ for }\,\,t>0,
		\]
		then $\mathfrak L^d_\theta$ is  a $2^d$-doubling  function.
		
		Moreover, if $\mathfrak I_\theta(1)<\infty$ and $\mathfrak L^d_{\theta}(1)<\infty$, then $\mathfrak L^d_\theta\in\DS(2^d)$ and 
		\begin{equation}\label{eq:split_I_L_theta_merged}
			\mathfrak I_{\mathfrak L^d_\theta}(r)=\frac{1}{d} \bigl(\mathfrak I_\theta(r)+\mathfrak L^d_\theta(r)\bigr)=\mathfrak L^d_{\mathfrak I_\theta}(r), \qquad\text{ for }r>0.
		\end{equation}
	\end{lemma}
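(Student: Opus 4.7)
The plan is to handle the three claims in sequence, since each reduces to a short manipulation once the definitions are unpacked.

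For the doubling of $\mathfrak L^d_\theta$: fix $t > 0$ and $t/2 \le s \le t$. Since the integrand is nonnegative and $s \le t$, the tail satisfies $\int_t^\infty \theta(u)u^{-d-1}\, du \le \int_s^\infty \theta(u)u^{-d-1}\, du$. Combined with $t^d \le (2s)^d = 2^d s^d$, this gives $\mathfrak L^d_\theta(t) \le 2^d \mathfrak L^d_\theta(s)$. Note that no regularity of $\theta$ beyond nonnegativity is used here.

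For the identity $\mathfrak I_{\mathfrak L^d_\theta}(r) = \tfrac{1}{d}(\mathfrak I_\theta(r) + \mathfrak L^d_\theta(r)) = \mathfrak L^d_{\mathfrak I_\theta}(r)$, I would expand both sides and apply Tonelli's theorem (applicable since $\theta \ge 0$). For the left equality, $\mathfrak I_{\mathfrak L^d_\theta}(r) = \int_0^r t^{d-1}\int_t^\infty \theta(s) s^{-d-1}\, ds\, dt = \int_0^\infty \theta(s) s^{-d-1}\int_0^{\min(s,r)} t^{d-1}\, dt\, ds$, and the inner integral evaluates to $\tfrac{1}{d}\min(s,r)^d$; splitting the outer integral at $s = r$ and using $\min(s,r)^d / s^d = 1$ for $s \le r$ and $\min(s,r)^d = r^d$ for $s \ge r$ produces the two terms $\tfrac{1}{d}\mathfrak I_\theta(r)$ and $\tfrac{1}{d}\mathfrak L^d_\theta(r)$. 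For the right equality, I would compute symmetrically: $\mathfrak L^d_{\mathfrak I_\theta}(r) = r^d\int_r^\infty s^{-d-1}\int_0^s \theta(u)u^{-1}\, du\, ds = r^d\int_0^\infty \theta(u)u^{-1}\int_{\max(u,r)}^\infty s^{-d-1}\, ds\, du$, the inner integral evaluates to $\tfrac{1}{d}\max(u,r)^{-d}$, and splitting the outer integral at $u = r$ produces the same two terms.

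Finally, to conclude $\mathfrak L^d_\theta \in \DS(2^d)$ under the extra hypotheses $\mathfrak I_\theta(1) < \infty$ and $\mathfrak L^d_\theta(1) < \infty$, I would note that $\mathfrak L^d_\theta$ is $\mathcal L^1$-measurable (by Fubini applied to the defining integral in $s$) and $2^d$-doubling by the first step, and then invoke the identity at $r = 1$ to obtain $\mathfrak I_{\mathfrak L^d_\theta}(1) = \tfrac{1}{d}(\mathfrak I_\theta(1) + \mathfrak L^d_\theta(1)) < \infty$, which is exactly \eqref{eq:sDini} for $\mathfrak L^d_\theta$. There is no real obstacle; the only point requiring care is the Tonelli swap, which is justified by the nonnegativity of $\theta$ so that no a priori integrability check is needed beyond what the finiteness hypotheses already provide at $r = 1$.
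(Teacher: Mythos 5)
Your proposal is correct and follows essentially the same route as the paper: the doubling bound comes from the same direct comparison $t^d \le 2^d s^d$ together with monotonicity of the tail integral, and both identities in \eqref{eq:split_I_L_theta_merged} are obtained by the same Fubini/Tonelli interchange (the paper splits the inner integral at $1$, you split the outer integral at $r$, which is just a cosmetic difference), with membership in $\DS(2^d)$ deduced exactly as in the paper from the identity evaluated at $r=1$.
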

	\begin{proof}
		If $t/2 \leq s \leq t$, then 
		\begin{align*}
			\mathfrak L^d_{\theta}(t)& = t^d \int_t^\infty\theta(r)\, \frac{dr}{r^{d+1}} \leq t^d \int_s^\infty\theta(r)\, \frac{dr}{r^{d+1}}\leq  2^d s^d \int_s^\infty\theta(r)\, \frac{dr}{r^{d+1}}=2^d \mathfrak L^d_\theta(s).
		\end{align*}
		which proves that $\mathfrak L^d_\theta$ is $2^d$-doubling.
		To show \eqref{eq:split_I_L_theta_merged}, we use Fubini's theorem and, if  $\mathfrak I_\theta(1)<\infty$ and $\mathfrak L^d_{\theta}(1)<\infty$, we have
		\begin{equation*}
			\begin{split}
				\mathfrak I_{\mathfrak L^d_{\theta}}(r)&=\int_0^r \mathfrak L^d_\theta(t)\, \frac{dt}{t}=\int_0^r t^{d}\int_t^1\theta(s)\, \frac{ds}{s^{d+1}}\, \frac{dt}{t} + \int_0^1 t^{d}\int_1^\infty\theta(s)\, \frac{ds}{s^{d+1}}\, \frac{dt}{t}\\
				&=\frac{1}{d}\int_0^r \theta(s)\, \frac{ds}{s} + \frac{1}{d}\int_r^\infty \theta(s)\, \frac{ds}{s^{d+1}}=\frac{1}{d} \bigl(\mathfrak I_\theta(r)+\mathfrak L^d_\theta(r)\bigr),
			\end{split}
		\end{equation*}
		which implies that $\mathfrak L^d_\theta\in\DS(2^d)$.
		To prove the second equality in \eqref{eq:split_I_L_theta_merged}, we apply Fubini's theorem again, and we obtain
		\begin{equation}\label{eq:L_I_theta}
			\begin{split}
				\mathfrak L^d_{\mathfrak I_{\theta}}(r)=	r^d\int_r^\infty \mathfrak I_{\theta}(t)\, \frac{dt}{t^{d+1}}&=\frac{1}{d}\int_0^r \theta(t)\, \frac{dt}{t} + \frac{r^d}{d}\int_r^\infty \theta(t)\, \frac{dt}{t^{d+1}}=\frac{1}{d}\bigl(\mathfrak I_\theta(r) + \mathfrak L^d_\theta(r)\bigr).\qedhere
			\end{split}
		\end{equation}
	\end{proof}
	
}

\vv

\begin{remark}\label{rem:rem_right_cont}
	By the previous lemma,   if $\theta \in \DS(\kappa)\cap \DL_d(\kappa)$ and $\mathfrak I_{\theta} \in \DS(\kappa)$,  we have that
	$$
	\mathfrak F_\theta(R)  +	\mathfrak L^d_\theta(R)    \to    0, \quad \text{ as } R\to 0.
	$$
\end{remark}

\vv

\begin{definition}\label{def:theta_d_kernel}
	Let $\theta$ be a $\kappa$-doubling function and $0<d\leq n+1$.
	We say that a function $K\colon \Rn1\times \Rn1\setminus\{(0,0)\}\to \R$ is a \textit{$(\theta,d)$-kernel} if it is continuous  and there exists $C>0$ such that
	\[
	|K(x,y)|\leq C \frac{\theta(|x-y|)}{|x-y|^{d}} \quad \text{ for }x\neq y.
	\]
\end{definition}

The latter estimate for {$(\theta,d)$-kernels} is directly connected with the Dini integral.
\begin{lemma}\label{lem:lem_estim_dini_integr_growth}
	Let $\theta\in \DS(\kappa)$.
	Let $0<d\leq n+1$, and assume that $\mu\in M^d_+(\Rn1)$ with $d$-growth constant $c_0>0$.
	\begin{enumerate}
		\item [(a)]For $\rho>0$, we have
		\begin{equation}\label{eq:lem_estim_dini_integr}
			\int_{B(x,\rho)}\frac{\theta(|x-z|)}{|x-z|^{d}}\, d\mu(z) \lesssim_{c_0,\kappa} \int^\rho_0 \theta(t)\, \frac{dt}{t},
		\end{equation} 
		and the right hand side of \eqref{eq:lem_estim_dini_integr} tends to $0$ as $\rho\to 0$.
		\item [(b)] If $K$ is a $(\theta,d)$-kernel for {$\theta\in \DS(\kappa)$}, then its associated integral operator
		\[
		Tf(x)\coloneqq \int K(x,y)f(y)\, d\mu(y), \qquad x\in \Rn1
		\]
		is bounded on $L^2(\mu)$. More specifically, if $C>0$ is as in Definition \ref{def:theta_d_kernel} and $R\coloneqq \diam(\supp (\mu))$, we have
		\begin{equation}\label{eq:bound_Schurs_Test}
			\|T\|_{L^2(\mu)\to L^2(\mu)}\lesssim_{\kappa, C}c_0 \int_0^R \theta(t)\, \frac{dt}{t}.
		\end{equation}
	\end{enumerate}
\end{lemma}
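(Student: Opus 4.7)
\textbf{Proof plan for Lemma \ref{lem:lem_estim_dini_integr_growth}.}

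For part (a), the plan is a standard dyadic annular decomposition. I would split the ball into the disjoint union of annuli $A_j \coloneqq B(x, 2^{-j}\rho) \setminus B(x, 2^{-j-1}\rho)$ for $j \geq 0$. On each annulus $|x-z| \approx 2^{-j-1}\rho$, so by the $\kappa$-doubling property of $\theta$ we have $\theta(|x-z|) \leq \kappa\, \theta(2^{-j-1}\rho)$, and by the $d$-growth of $\mu$ we have $\mu(A_j) \leq \mu(B(x, 2^{-j}\rho)) \leq c_0 (2^{-j}\rho)^d$. Combining these, the integral over each $A_j$ is controlled by $C(\kappa)\, c_0\, \theta(2^{-j-1}\rho)$, so summing gives
\[
\int_{B(x,\rho)} \frac{\theta(|x-z|)}{|x-z|^d}\, d\mu(z) \lesssim_{\kappa} c_0 \sum_{j=0}^\infty \theta(2^{-j-1}\rho).
\]
Then I would invoke estimate \eqref{eq:mod_cont_sum} with $\eta = 1/2$ (which gives $N_\eta = 1$) to bound the geometric sum by $\mathfrak{I}_\theta(\rho)$, yielding \eqref{eq:lem_estim_dini_integr}. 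The fact that $\mathfrak{I}_\theta(\rho)\to 0$ as $\rho\to 0^+$ is immediate from $\theta\in \DS(\kappa)$, since it means $\theta(t)/t$ is integrable near $0$, so its integral from $0$ to $\rho$ vanishes as $\rho\to 0$.

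For part (b), the plan is to use the Schur test with weight $w\equiv 1$. The key observation is that the kernel bound from Definition \ref{def:theta_d_kernel} is symmetric in $x$ and $y$: $|K(x,y)| \leq C\, \theta(|x-y|)/|x-y|^d$. Since $\mu$ is supported in a set of diameter $R$, for any $x\in\supp(\mu)$ the integration in $y$ is effectively over $B(x,R)$. Therefore, applying part (a) with $\rho = R$,
\[
\int |K(x,y)|\, d\mu(y) \leq C \int_{B(x,R)} \frac{\theta(|x-y|)}{|x-y|^d}\, d\mu(y) \lesssim_{\kappa,C} c_0 \int_0^R \theta(t)\, \frac{dt}{t},
\]
uniformly in $x\in\supp(\mu)$. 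By the symmetry of the kernel bound, the same estimate holds for $\int |K(x,y)|\, d\mu(x)$, uniformly in $y$. Schur's test then delivers the $L^2(\mu)$-boundedness of $T$ with the stated operator norm bound \eqref{eq:bound_Schurs_Test}.

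Neither step presents any real obstacle: part (a) is a clean dyadic argument that uses only the two structural inputs ($d$-growth of $\mu$ and doubling of $\theta$), while part (b) is a direct corollary via Schur's test once the kernel bound is recognized as symmetric. The only mild care needed is in citing \eqref{eq:mod_cont_sum} with the correct choice of $\eta$ to convert the discrete sum into the continuous Dini integral; here $\eta = 1/2$ gives a constant depending only on $\kappa$.
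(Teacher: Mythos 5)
Your argument is correct and is essentially the standard proof that the paper delegates to \cite[Lemmas 2.4 and 2.5]{MMPT23}: dyadic annuli combined with the $\kappa$-doubling of $\theta$ and the $d$-growth of $\mu$ for (a), and Schur's test with the symmetric kernel bound and part (a) applied at scale $R$ for (b). The only nit is that \eqref{eq:mod_cont_sum} is stated for $\eta\in(0,\tfrac12)$, so $\eta=\tfrac12$ is not literally admissible (and would give $N_\eta=0$, not $1$); either take $\eta=\tfrac14$ or simply bound $\theta(2^{-j}\rho)\lesssim_\kappa \int_{2^{-j-1}\rho}^{2^{-j}\rho}\theta(t)\,\frac{dt}{t}$ as in \eqref{eq:omega<dini} and sum over $j$, which yields the same conclusion with a constant depending only on $\kappa$.
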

\begin{proof}
	For a proof, we refer to \cite[Lemmas 2.4 and 2.5]{MMPT23}.
\end{proof}

\vv

We now make two remarks concerning matrices in $\DS(\kappa)$ and $\widetilde{\mathrm{DMO}}$.

\begin{remark}\label{rem:continuous representative}
	If $A\in\DS(\kappa)$ then, by \cite[Appendix A]{HwK20}, it has a uniformly continuous representative $\mathcal A$ such that 
	\begin{equation}\label{eq:modulus_continuity_mathcal_A}
		|\mathcal A(x)- \mathcal A(y)|\lesssim_n \mathfrak I_{\oomega_A}(|x-y|), \qquad \text{ for }x,y\in\Rn1.
	\end{equation}
	Thus, if $\Omega\subset \Rn1$ with $|\Omega|>0$  and $x\in \Rn1$, then
	\begin{equation}\label{eq:pw_diff_average_unif_cont}
		\begin{split}
			|\mathcal A(x)-\bar A_\Omega|=|\mathcal A(x)-\bar {\mathcal A}_\Omega| 
			\lesssim \mathfrak I_{\oomega_A}\bigl(\diam (\Omega) + \dist(x, \Omega) \bigr).
		\end{split}
	\end{equation}
\end{remark}
\begin{remark}\label{rem:remarks_on_DMO}
	If a matrix $A$ is $\alpha$-H\"older continuous for some $\alpha \in (0,1)$, then $\oomega_A(t) \lesssim t^\alpha$. Hence, it belongs to $\widetilde{\DMO}$.
	We also observe that, if $A$ is uniformly continuous with a Dini modulus of continuity, then it is of Dini mean oscillation.
	
	Conversely, it can be shown that the class $\widetilde{\DMO}$ is strictly larger than that of Dini continuous continuous matrices, and hence of H\"older continuous matrices. As a variant of the example in \cite[p.~418]{DK17}, if we define $A$ such that $a_{ij}(x) = \delta_{ij}$ for $|x| > 1$, and
	\[
	a_{ij}(x) \coloneqq \delta_{ij} \left(1 + (-\log|x|)^{-\gamma - 1} \right), \qquad \text{for }\quad  0 < |x| \ll 1, \quad 0 < \gamma < \tfrac{1}{2},
	\]
	then it can be shown that $\oomega_A(r) \approx (-\log r)^{-\gamma - 2}$ for $r \ll 1$. {Thus, $A\in\DDMO_s$, although it is not H\"older continuous.}
\end{remark}

\vv

We conclude this subsection by showing that the condition \eqref{eq:extra_cond_A_DMO} translates to H\"older-type bounds for $\mathfrak I_{\oomega_A}$ in sufficiently small scales.

\begin{lemma}\label{lem:holder_bound_I_omega}
	Let $A\in\DS(\kappa)$, and assume that there exists $C>0$ such that
	\begin{equation}\label{eq:holder_bound_integra_oscillation_hp}
	M_\oomega\coloneqq	\limsup_{s\to 0^+}\frac{\mathfrak I_{\oomega_A}(s)}{\oomega_A(s)}<\infty.
	\end{equation}
	Hence,  there exists $s_0$ depending on $M_\oomega$ such that, for every $\gamma \in \bigl(0,\frac{1}{2M_\oomega} \bigr)$, we have that 
	\begin{equation}\label{eq:holder_bound_integra_oscillation}
	\oomega(s)\lesssim 	\mathfrak I_{\oomega_A}(s)\leq\,  (s_0^{-\gamma} 	\mathfrak I_{\oomega_A}(s_0))\, s^\gamma\eqqcolon  C_{s_0}\, s^\gamma,  \qquad \qquad \text{ for all } s\in (0,s_0).
	\end{equation}
\end{lemma}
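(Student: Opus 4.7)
The first inequality $\oomega_A(s) \lesssim \mathfrak{I}_{\oomega_A}(s)$ is immediate from \eqref{eq:omega<dini}, so I will focus on the power-law bound. The plan is to convert the hypothesis $\mathfrak{I}_{\oomega_A}(s) \leq M\,\oomega_A(s)$ into a Gronwall-type differential inequality for $\phi(s) \coloneqq \mathfrak{I}_{\oomega_A}(s)$ and integrate.

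First I would observe that, since $A \in \DS(\kappa)$, the function $t \mapsto \oomega_A(t)/t$ is integrable on $(0,1]$, so $\phi$ is absolutely continuous on every interval $[a,1]$ with $a>0$, and Lebesgue differentiation gives $\phi'(s) = \oomega_A(s)/s$ for a.e.\ $s \in (0,1]$. Fixing any $M > M_\oomega$, the definition of $\limsup$ produces $s_0 > 0$ (depending on $M$) such that
\[
\phi(s) \;\leq\; M\,\oomega_A(s) \;=\; M s\,\phi'(s) \qquad \text{for a.e. } s \in (0, s_0].
\]

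The main step is then standard: if $\phi\not\equiv 0$ near $0$, monotonicity of $\phi$ forces $\phi>0$ on $(0,s_0]$, so the previous display rewrites as $(\log\phi)'(s)\geq 1/(Ms)$ a.e.; integrating from $s$ to $s_0$ using the absolute continuity of $\log\phi$ yields
\[
\phi(s) \;\leq\; \phi(s_0)\,\bigl(s/s_0\bigr)^{1/M}.
\]
Given any $\gamma \in \bigl(0,\tfrac{1}{2M_\oomega}\bigr)$, I would pick $M \in (M_\oomega, 1/\gamma)$ and immediately obtain \eqref{eq:holder_bound_integra_oscillation} with $C_{s_0} \coloneqq s_0^{-\gamma}\mathfrak{I}_{\oomega_A}(s_0)$.

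The only delicate point, although technically minor, is justifying the passage from the integral hypothesis to the pointwise differential inequality: one has to check the absolute continuity of $\phi$ and the identity $\phi'(s) = \oomega_A(s)/s$ for a.e.\ $s$, both of which are routine consequences of the integrability of $\oomega_A(t)/t$ near the origin and Lebesgue's differentiation theorem. The factor $2$ in the exponent range $\gamma < 1/(2M_\oomega)$ is merely a convenient safety margin; the same argument in fact produces every exponent strictly below $1/M_\oomega$.
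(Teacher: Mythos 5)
Your proof is correct and follows essentially the same route as the paper: both exploit the absolute continuity of $\mathfrak I_{\oomega_A}$ and the a.e.\ identity $\mathfrak I_{\oomega_A}'(s)=\oomega_A(s)/s$ to convert the hypothesis into a first-order differential inequality, the paper by showing that $s\mapsto s^{-\gamma}\mathfrak I_{\oomega_A}(s)$ is increasing on $(0,s_0)$, you by integrating $(\log \mathfrak I_{\oomega_A})'(s)\geq 1/(Ms)$, which is the same computation in logarithmic form. The only cosmetic adjustment is to fix $M=2M_\oomega$ once and for all (as the paper effectively does), so that $s_0$ depends only on $M_\oomega$ rather than on $\gamma$, matching the quantifier order in the statement; your choice $M\in(M_\oomega,1/\gamma)$ makes $s_0$ depend on $\gamma$, though the exponent $1/(2M_\oomega)$ obtained with $M=2M_\oomega$ already covers every admissible $\gamma$.
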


\begin{proof}
	We define 
	\[
		F_\gamma(s)\coloneqq s^{-\gamma} \, \mathfrak I_{\oomega_A}(s), \qquad\text{ for } s\in (0,1].
	\]
	Note that as $\mathfrak I_{\oomega_A}(1)<\infty$, then $\mathfrak I_{\oomega_A}$ is absolutely continuous in $(0,1]$ and so differentiable $\mathcal L^1$-a.e.   in $s\in (0,1]$.  	If $s\in (0,1)$ is a point of differntiability of $\mathfrak I_{\oomega_A}$, we have that
	\begin{equation}\label{eq:derivative-dini}
		\begin{split}
			F'_\gamma(s)&= s^{-\gamma}\frac{\oomega_A(s)}{s}  -\gamma\, s^{-\gamma-1}\mathfrak I_{\oomega_A}(s) =s^{-\gamma-1}\bigl(\oomega_A(s)-\gamma \, \mathfrak I_{\oomega_A}(s)\bigr)\
		\end{split}
	\end{equation}
	
As $M_\oomega<\infty$,  there is $s_0 \in (0,1)$,  such that $\mathfrak I_{\oomega_A}(s) <2 M_\oomega \oomega(s)$ for every $s<s_0$.  Thus, if $\gamma < \frac{1}{2M_\oomega}$,  in light of \eqref{eq:derivative-dini},  we get  that 
$$
F'_\gamma(s ) = s^{-\gamma-1}  (1- \gamma \, 2M_\oomega   ) \oomega_A(s)  >0, \quad \text{ for every } s \in (0, s_0), 
$$ 
 which, in turn, implies  that $F_\gamma$ is increasing in $(0,s_0)$.  Therefore,  $F(s) < F(s_0)$ for every  $s \in (0,s_0)$, and so,  thanks to \eqref{eq:omega<dini},  we conclude that \eqref{eq:holder_bound_integra_oscillation} holds.
\end{proof}

\vvv

\subsection{Uniformly elliptic operators, elliptic measure, and monotonicity formula}\label{sec:elliptic}
Let  $\Omega\subset \R^{n+1}$ be  an open set. Assume that  $A(\cdot)$ is a $(n+1)\times (n+1)$-uniformly elliptic matrix with $L^\infty(\om)$ real  coefficients and $L_A\coloneqq -\div A\nabla$ is the associated second order differential operator in divergence form. If $f\in L^1_{\loc}(\Omega)$, we say that function $u\in W^{1,2}_{\loc}(\Omega)$ is a \textit{weak solution} to the equation $L_A u=f$ in $\Omega$ if 
\begin{equation}\label{eq:def_snd_order_elliptic_sol}
	\int_\Omega A\nabla u \cdot \nabla \Phi = \int_\Omega f\, \Phi, \qquad \text{ for all }\Phi \in C^\infty_0(\Omega).
\end{equation}
If $u$ is a weak solution to $L_Au=0$ in $\Omega$, we also say that it is \textit{$L_A$-harmonic}.

We say that $u\in W^{1,2}_{\loc}(\Omega)$ is a \textit{subsolution} (resp., \textit{supersolution}) to $L_Au=0$ in $\Omega$ if $\int A\nabla u\nabla \Phi\leq 0$ (resp., $\int A\nabla u\nabla \Phi\leq 0$) for all $\Phi \in C^\infty_0(\Omega)$, $\Phi\geq 0$. In this case, we also say that $u$ is \textit{$L_A$-subharmonic} (resp., \textit{$L_A$-superharmonic}), and if the matrix $A$ is clear from the context we often write $L=L_A$.

\vv
If $\Omega\subset \Rn1$, $n\geq 2$, is an open set and $2^*\coloneqq \frac{2(n+1)}{n-1}$, we define $Y^{1,2}(\Omega)$ as the space of all functions  $u\in L^{2^*}(\Omega)$ that are weakly differentiable and  whose weak derivatives belong to $L^2(\Omega)$. We endow $Y^{1,2}(\Omega)$ with the norm
\[
\|u\|_{Y^{1,2}(\Omega)}\coloneqq \|u\|_{L^{2^*}(\Omega)} + \|\nabla u\|_{L^2(\Omega)},\qquad \text{ for } u\in Y^{1,2}(\Omega).
\]	
We denote by $Y^{1,2}_0(\Omega)$ the closure of $C^\infty_c(\Omega)$ in $Y^{1,2}(\Omega)$, and remark that $Y^{1,2}(\Rn1)=Y^{1,2}_0(\Rn1)$ (see for instance \cite[p. 46]{MZ97}).

\vv

Following  \cite[Chapter 9]{HKM93}, we introduce some important notions and results in potential theory for general elliptic operators in divergence form.

Given a domain $\Omega \subset \R^{n+1}$, we denote by $\partial \Omega$ its boundary, with the convention that it includes the point at infinity if $\Omega$ is unbounded.  

\begin{definition}
	For a (possibly unbounded) domain $\Omega \subset \R^{n+1}$ and a function $f\colon \partial \Omega \to [-\infty, \infty]$, we define the {\it upper class} $\mathcal{U}_f$ of $f$ to be the class of functions that are $L_A$-superharmonic, bounded from below, and lower semicontinuous at all points on $\partial \Omega$. Similarly, we define the {\it lower class} $\mathcal{L}_f$ of $f$ to be the class of functions $u$ that are $L_A$-subharmonic, bounded from above, and upper semicontinuous at all points on $\partial \Omega$. We define 
	$$ \overline{H}_f(x) = \inf\{ u(x) : u \in \mathcal{U}_f \} \quad \textup{and} \quad \underline{H}_f(x) = \sup\{ u(x) : u \in \mathcal{L}_f \} $$
	to be the {\it upper} and {\it lower Perron solutions} of $f$ in $\Omega$. If $\mathcal{U}_f = \varnothing$ (resp.\ $\mathcal{L}_f = \varnothing$), then we set $\overline{H}_f = \infty$ (resp.\ $\underline{H}_f = -\infty$).
	
	A function $f\colon \partial \Omega \to [-\infty, \infty]$ is called {\it resolutive} if the upper and lower Perron solutions of $f$ coincide. In this case, we denote $H_f \coloneqq \overline{H}_f = \underline{H}_f$, and have that it is harmonic in $\Omega$.
\end{definition}

Note that if the complement of $\Omega$ has positive capacity, then every continuous function is resolutive.

Let  $f\in C(\partial \Omega)$.   A point $x_0\in \partial \Omega$ is \textit{Wiener regular} if
\[
	\lim_{x\to x_0}H_f(x)= f(x_0).
\]
We say that a domain $\Omega$ is Wiener regular if all the points of $\partial \Omega\setminus \{\infty\}$ are Wiener regular.

\vv
If $\Omega\subset \R^{n+1}$ is a Wiener regular domain and $f\in C(\partial \Omega)$, for every fixed $x\in \Omega$ the map $f\mapsto {H}_f(x)$ defines a bounded linear functional on $C(\partial \Omega)$. Thus,  Riesz representation theorem and the maximum principle yield the existence of a probability measure $\omega^x$ on $\partial \Omega$ such that 
\[
{H}_f(x)=\int_{\partial \Omega}f\, d\omega^x, \qquad \text{ for all }x\in \Omega.
\]
The measure $\omega^x$ is called the \textit{ellipic measure} associated with the operator $L$ and with pole $x$. We also use the notation $\omega^x=\omega^{L, x}_\Omega$ in case we want to specify the operator and the domain explicitly.

\vv

We say that the {\it variational  Dirichlet problem} with data $\varphi \in C_c(\partial \om)$ is  solvable  if for every $\Phi \in Y^{1,2}(\om)\cap C(\overline \om)$ such that $\Phi|_{\partial \om} = \varphi$,   there exists $u \in Y^{1,2}(\om)$ such that $L_A u=0$ and $u-\Phi \in Y_0^{1,2}(\om)$\footnote{By Lax-Milgram,  this problem is always uniquely solvable.  See e.g.  \cite[Subsection 2.4]{AGMT17}.}.

Let $\Sigma\subset \bar \Omega$, and $u\in Y^{1,2}(\Omega)$. We say that $u$ \textit{vanishes on }$\Sigma$ if $u$ is a limit in $Y^{1,2}(\Omega)$ of a sequence of functions in $C^\infty_c(\bar \Omega\setminus \Sigma)$. In this case, we often write that $u=0$ on $\Sigma$.

A point \( x_0 \in \partial \Omega \setminus \{\infty\} \) is said to be \emph{Sobolev \( L_A \)-regular} if, for every \( \Phi \in Y^{1,2}(\Omega) \cap C(\overline{\Omega}) \),  the solution of the variational Dirichlet problem with data $\Phi$, denoted by $h$,   satisfies
\[
\lim_{x \to x_0} h(x) = \Phi(x_0).
\]
We say that \( \Omega \) is \emph{Sobolev \( L_A \)-regular} if every point in \( \partial \Omega \setminus \{\infty\} \) is Sobolev \( L_A \)-regular.

We remark that a point is $L_A$-Sobolev regular if and only if it is Wiener regular (i.e., it satisfies \eqref{eq:Wiener}). 
According to \cite[Theorem 9.20]{HKM93}, see also the remarks in \cite[Lemma 2.3]{AGMT17} regarding the case if $\Omega$ is unbounded, if \( x_0 \in \partial \Omega \setminus \{\infty\} \) is Sobolev \( L_A \)-regular, then it is also {Wiener regular.}

\begin{theorem}\label{theorem:resolutiveHf}
	If $\Rn1 \setminus \om$ has positive capacity and $f \in Y^{1,2}(\Rn1) \cap C(\overline \om)$,  then $f$ is resolutive and $H_f - f \in Y^{1,2}_0(\om)$. 
\end{theorem}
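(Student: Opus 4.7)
The plan is to produce a candidate for the common Perron solution via the variational Dirichlet problem and then identify it with both $\overline H_f$ and $\underline H_f$. First, since $\R^{n+1}\setminus\om$ has positive capacity, the bilinear form $(u,v)\mapsto\int_\om A\nabla u\cdot\nabla v$ is coercive on $Y^{1,2}_0(\om)$ by the $Y^{1,2}$-Sobolev/Poincar\'e inequality. Applying Lax-Milgram with right-hand side $v\mapsto -\int_\om A\nabla f\cdot\nabla v$ produces a unique $u\in Y^{1,2}(\om)$ with $u-f\in Y^{1,2}_0(\om)$ and $L_A u=0$ weakly in $\om$, as recalled in the excerpt right after the definition of the variational Dirichlet problem. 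This $u$ is my candidate for $H_f$; the conclusion $H_f-f\in Y^{1,2}_0(\om)$ will then be immediate by construction.

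Next I would establish that $u$ has the correct boundary limit at almost every boundary point. By the Wiener/Sobolev equivalence referenced in the excerpt, every Wiener regular point $x_0\in\partial\om\setminus\{\infty\}$ is Sobolev $L_A$-regular, hence $\lim_{x\to x_0}u(x)=f(x_0)$ since $f\in C(\overline\om)$. Moreover, the set $N$ of non-regular boundary points has zero variational $2$-capacity, by a Kellogg-type theorem for $L_A$ obtained from the Wiener criterion \eqref{eq:Wiener}. With these two facts, the inequalities $\underline H_f\leq u\leq\overline H_f$ follow by standard comparison: for any $v\in\mathcal U_f$, the function $v-u$ is a bounded-below $L_A$-supersolution with $\liminf_{x\to x_0}(v-u)(x)\geq 0$ at every regular $x_0$, and the generalized minimum principle (which discards boundary sets of capacity zero) yields $v\geq u$ in $\om$; the dual argument with $\mathcal L_f$ gives $\underline H_f\leq u$.

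The harder half is the reverse inequality $\overline H_f\leq u$. My plan is to exhibit, for each $\varepsilon>0$, a specific member of $\mathcal U_f$ close to $u$. Since $\Cap(N,\R^{n+1})=0$, I would choose an open neighborhood $U_\varepsilon\supset N$ with $\Cap(U_\varepsilon,\R^{n+1})<\varepsilon$ and let $p_\varepsilon\in Y^{1,2}(\R^{n+1})$ be (a truncation of) the associated $L_A$-equilibrium potential of $U_\varepsilon$, so that $0\leq p_\varepsilon\leq 1$, $p_\varepsilon\equiv 1$ in a neighborhood of $N$, $p_\varepsilon$ is $L_A$-superharmonic in $\om$, and $p_\varepsilon\to 0$ quasi-everywhere in $\om$ as $\varepsilon\to 0$. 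Setting
\[
v_\varepsilon\coloneqq u+\varepsilon+M\,p_\varepsilon,\qquad M\coloneqq \|f\|_{L^\infty(\partial\om)}+1,
\]
one checks that $v_\varepsilon$ is $L_A$-superharmonic and bounded below, lower semicontinuous at every boundary point (through the continuity of $u$ at regular points and through $p_\varepsilon\equiv 1$ near $N$), and satisfies $\liminf_{x\to x_0}v_\varepsilon(x)\geq f(x_0)$ in both cases, so $v_\varepsilon\in\mathcal U_f$. Hence $\overline H_f\leq v_\varepsilon$ in $\om$; letting $\varepsilon\to 0$ gives $\overline H_f\leq u$. A symmetric construction with $-p_\varepsilon$ yields $\underline H_f\geq u$, so $\overline H_f=\underline H_f=u\eqqcolon H_f$ and resolutivity is established.

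The principal obstacle is the construction in the third step: one needs $p_\varepsilon$ to be simultaneously $L_A$-superharmonic in $\om$, uniformly bounded, lower semicontinuous, identically $1$ near $N$, and vanishing in $\om$ as $\varepsilon\to 0$. Producing this in the $L_A$-setting---where no explicit Newtonian kernel is available---requires Caccioppoli-type energy estimates combined with balayage/truncation at level $1$ and the quantitative Wiener-criterion control of the exceptional set $N$. These are the technical heart of the HKM93 treatment and the main point distinguishing the argument from the classical Laplacian case.
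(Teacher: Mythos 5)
Your outline is essentially correct for the linear operator $L_A$, but it takes a genuinely different route from the paper. The paper's proof is a reduction: extend $f|_{\overline\om}$ to a function continuous on all of $\Rn1$ by Tietze and then run the proof of \cite[Theorem 9.28]{HKM93}, which proves resolutivity of continuous Sobolev data by approximating the data and comparing Perron solutions with variational solutions through the obstacle-problem/supersolution machinery; notably, that argument never needs to know that the irregular boundary points form a set of capacity zero. Your argument is instead the classical ``Kellogg plus polar-set barrier'' proof: identify the candidate with the Lax--Milgram solution, use Sobolev regularity at Wiener-regular points, and absorb the irregular set with capacitary potentials. This is a legitimate and more hands-on alternative in the linear setting, but it requires two inputs you should justify explicitly: (i) the Kellogg property for $L_A$ --- available here because, by Littman--Stampacchia--Weinberger, $L_A$-regular points coincide with Laplacian-regular points (consistently with the coefficient-independent Wiener criterion \eqref{eq:Wiener}), so the classical Kellogg theorem applies, but it is an ingredient the paper's route avoids entirely; and (ii) the unbounded case: the theorem allows $\om$ unbounded, so $\partial\om$ contains the point at infinity, and both your ``generalized minimum principle discarding capacity-zero boundary sets'' and the verification that $v_\varepsilon\in\mathcal U_f$ need a separate argument there, since infinity cannot be discarded as a null set (bounded $L_A$-harmonic functions in an exterior domain vanishing on the finite boundary show the comparison fails without control at infinity); handling this, together with the meaning of the boundary value at infinity for $f\in Y^{1,2}(\Rn1)\cap C(\overline\om)$, is exactly where the HKM93 framework invoked by the paper does real work. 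Two small repairs: take $M\geq \sup_{\partial\om}f-\inf_{\partial\om}f$ (e.g. $M=2\|f\|_{L^\infty(\partial\om)}+1$), since $M=\|f\|_{L^\infty}+1$ is not enough for the liminf at irregular points, and either choose $\varepsilon_j$ with $\sum_j \Cap(U_{\varepsilon_j})<\infty$ or use a single superharmonic function that is $+\infty$ on the irregular set to legitimize the quasi-everywhere passage to the limit.
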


\begin{proof}
	The proof follows from the proof of \cite[Theorem 9.28]{HKM93} by using Tietze extension theorem.
\end{proof}

For $x\in \Omega$, we write $\delta_\Omega(x)\coloneqq \dist(x,\partial \Omega)$,  understanding that $\delta_\Omega(x)=\infty$ if $\Omega=\Rn1$.

\vv
We recall below a classical interior regularity result for solutions to the equation $L_A u = f -\dv g$, where $f \in L^{q/2}$ and $g \in  L^q$ with $q > n+1$. For a more general version, see for instance \cite[Theorem 8.24]{GT01}.
\begin{theorem}\label{thm:C_alpha_interior-inhom}
	Let $\Omega\subset \Rn1$ be a domain, $n\geq 2$. We assume that $A$ is an $(n+1)\times (n+1)$ uniformly elliptic matrix with coefficients in $L^\infty(\Omega)$, and that $f\in L^{\frac{q}{2}}(\Omega)$ and $g \in  L^q(\om)$ with $q > n+1$.  Hence, if $B\subset \Rn1$ is a ball such that $2B\subset\Omega$ and $u\in W^{1,2}(\Omega)$ is a weak solution to $L_Au=f - \div g$ in $2B$, there exists $\alpha\in (0,1)$ depending on $n$ and $\Lambda$ such that we have
	\begin{equation}\label{eq:C_alpha_interior}
		\|u\|_{C^\alpha(\overline{B})}\lesssim \|u\|_{L^2(2B)}+ \|f\|_{L^{\frac{q}{2}}(2B)} + \|g\|_{L^{q}(2B)},
	\end{equation}
	where the implicit constant depends on $n$, $\Lambda$, $q$, and $r(B)$.
\end{theorem}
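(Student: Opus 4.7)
This is the classical inhomogeneous De Giorgi--Nash--Moser estimate, and the plan is to combine Moser iteration for local $L^\infty$ control with H\"older oscillation decay for $L_A$-harmonic functions, carefully tracking the contributions from the right-hand side terms $f$ and $-\div g$.

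The first step is to establish the local boundedness estimate
$$\|u\|_{L^\infty(\tfrac{3}{2}B)} \lesssim \|u\|_{L^2(2B)} + \|f\|_{L^{q/2}(2B)} + \|g\|_{L^{q}(2B)},$$
by testing the weak formulation \eqref{eq:def_snd_order_elliptic_sol} against $\eta^2 u_\pm^{2\beta+1}$ for smooth cutoffs $\eta$ supported between $\tfrac{3}{2}B$ and $2B$, applying the uniform ellipticity of $A$ together with the Sobolev embedding $Y^{1,2}_0 \hookrightarrow L^{2^*}$ to produce an iteration inequality of the form $\|u_\pm\|_{L^{p 2^*/2}} \lesssim C(\beta) \|u_\pm\|_{L^p}$ plus controlled right-hand-side contributions, and then iterating $\beta$ along a geometric sequence \`a la Moser. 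The hypothesis $q > n+1$ ensures that H\"older's inequality produces summable right-hand-side contributions from $f$ and $g$ across the iteration.

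The second step upgrades $L^\infty$ to $C^\alpha$ via a Campanato-type oscillation argument. On any ball $B(x_0, r) \subset \tfrac{3}{2}B$, I would decompose $u = v + w$, where $v \in Y^{1,2}(B(x_0,r))$ solves $L_A v = 0$ with $v = u$ on $\partial B(x_0, r)$ and $w \in Y^{1,2}_0(B(x_0, r))$ solves $L_A w = f - \div g$. The De Giorgi--Nash--Moser theorem applied to $v$ yields oscillation decay
$$\osc_{B(x_0, \theta r)} v \leq (1-\eta)\,\osc_{B(x_0, r)} v$$
for fixed $\theta, \eta \in (0,1)$ depending only on $n$ and $\Lambda$, while energy estimates applied to $w$ combined with Sobolev embedding and Step~1 give
$$\|w\|_{L^\infty(B(x_0, r))} \lesssim r^{2 - \frac{2(n+1)}{q}}\,\|f\|_{L^{q/2}(2B)} + r^{1 - \frac{n+1}{q}}\,\|g\|_{L^{q}(2B)},$$
with strictly positive exponents since $q > n+1$. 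Combining these, for $\alpha' \coloneqq \min\bigl(1, 2 - \tfrac{2(n+1)}{q}, 1-\tfrac{n+1}{q}\bigr)$, one obtains
$$\osc_{B(x_0, \theta r)} u \leq (1-\eta)\,\osc_{B(x_0, r)} u + C\,r^{\alpha'}\bigl(\|f\|_{L^{q/2}(2B)} + \|g\|_{L^{q}(2B)}\bigr),$$
and a standard Campanato iteration along a geometric sequence of radii produces a H\"older modulus of continuity with exponent $\alpha = \min(\alpha_0, \alpha')$, where $\alpha_0 \in (0,1)$ is the intrinsic De Giorgi--Nash--Moser exponent of $L_A$.

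The main obstacle will be the careful bookkeeping of the scaling factors $r^{\,\cdot}$ and their absorption into the implicit constants, since the inhomogeneous terms $f$ and $\div g$ do not possess the natural scaling that preserves the PDE. The essential analytical input is the assumption $q > n+1$, which alone guarantees positivity of the exponents $2 - 2(n+1)/q$ and $1 - (n+1)/q$, thereby allowing the Campanato iteration to converge and produce a strictly positive H\"older exponent. Estimate \eqref{eq:C_alpha_interior} then follows by combining the resulting H\"older seminorm bound with the local $L^\infty$ estimate from Step~1, absorbing the $L^\infty$ norm via Young's inequality into the $L^2$-term.
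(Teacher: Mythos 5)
Your sketch is correct in outline, but note that the paper does not prove this statement at all: it is quoted as a classical result with a pointer to Gilbarg--Trudinger, Theorem 8.24, whose proof runs through the weak Harnack inequality for supersolutions of the \emph{inhomogeneous} equation (applied to $\sup u - u$ and $u - \inf u$ on concentric balls), which yields the oscillation decay directly without any harmonic replacement. Your route --- Moser iteration for local boundedness, then the decomposition $u = v + w$ with $L_A v = 0$, oscillation decay for $v$, an $L^\infty$ bound for $w$, and a Campanato iteration --- is the other standard proof and gives the same conclusion, so the two approaches differ only in how the inhomogeneity is absorbed: Gilbarg--Trudinger carry $f$ and $g$ through the Harnack machinery, while you isolate them in the corrector $w$, which makes the scaling exponents $2 - \tfrac{2(n+1)}{q}$ and $1 - \tfrac{n+1}{q}$ completely explicit. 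Two small points to tighten: the bound $\|w\|_{L^\infty(B(x_0,r))} \lesssim r^{2-\frac{2(n+1)}{q}}\|f\|_{L^{q/2}} + r^{1-\frac{n+1}{q}}\|g\|_{L^q}$ does not follow from energy estimates plus Sobolev embedding alone (that only gives $L^{2^*}$); it requires its own Moser or Stampacchia truncation argument for the zero-boundary-data problem, exactly as in your Step 1. Also, the exponent you produce is $\alpha = \min\bigl(\alpha_0, 1-\tfrac{n+1}{q}\bigr)$ and hence depends on $q$ as well as on $n,\Lambda$; this matches the cited theorem (the paper's phrasing that $\alpha$ depends only on $n$ and $\Lambda$ is slightly loose, but harmless since the implicit constant is allowed to depend on $q$ and $r(B)$).
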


\vv

For $u\colon \Rn1\to \mathbb R$, $x\in \Rn1$, and $r>0$, we define the oscillation
\[
\osc_{B(x,r)}u\coloneqq \sup_{B(x,r)}u - \inf_{B(x,r)}u.
\]
The boundary oscillation of solutions to $L_Au=0$  can be controlled as in the next statement.  
\begin{theorem}\label{thm:boundary oscillation}
	Let $n\geq 2$, $\Omega\subset \Rn1$ be a domain, and let $B_{r_0}$ be a ball of radius $r_0 \in (0, \diam(\om))$ such that $B_{r_0}\cap \Omega\neq \varnothing$. Let $r\leq r_0/2$, and $B_r=B(\xi,r)$ be a ball centered at $\xi\in \partial \Omega$. Let $A$ be a $(n+1)\times (n+1)$ uniformly elliptic matrix with coefficients in $L^\infty (\Omega)$, $f\in L^{\frac{q}{2}}(\Omega)$, and $g \in  L^q(\om)$ with $q > n+1$.  If  $u$ is a solution to $L_Au=f -\dv g$ in $\om$ and $\varphi \in Y^{1,2}(\Omega)\cap C(\bar \Omega)$ is such that $u-\varphi$ vanishes on $\partial \Omega\cap B_r$, then there exists $C>0$ depending on $\Lambda$ such that, for any $0\leq \rho\leq r/2$, we have that 
	\begin{align}\label{eq:boundary oscillation}
		\osc_{B(\xi,\rho)\cap \Omega}u\lesssim \osc_{\partial \Omega\cap B(\xi,\rho)}\varphi &+ \exp\Bigl(-\frac{1}{C}\int^r_{2\rho}\frac{\Cap\bigl(\overline{B(\xi,s)}\setminus \Omega\bigr)}{s^{n-1}}\, \frac{ds}{s}\Bigr)\Bigl(\osc_{B(\xi,r)\cap \partial \Omega}u \Bigr)\\
		&+   \left( \frac{\rho}{r} \right)^{2\left(1-\frac{n+1}{q}\right)} \| f\|_{L^{\frac{q}{2}}(\om) } + \left( \frac{\rho}{r} \right)^{1-\frac{n+1}{q}}  \| g\|_{L^{q}(\om)}.  \notag
	\end{align}
	
	If, additionally,  $\Omega$ satisfies the CDC and $f=g=0$,   then there exists $\alpha\in (0,1)$ depending on $n$, and $\Lambda$ such that, for $0<r<r_0/2$ and $x,y\in B(\xi,r/2)$ we have
	\begin{equation}\label{eq:Holder-boundary-cdc}
		|u(x)-u(y)|\lesssim \Bigl(\frac{|x-y|}{r}\Bigr)^\alpha\Bigl(\frac{1}{r^{n+1}}\int_{\Omega\cap B_r}|u|^2\Bigr)^{1/2}+ \osc_{B(\xi,2|x-y|)\cap \partial \Omega}\varphi,
	\end{equation}
	where the implicit constant depends on the CDC constant, $n$, and $\Lambda$.
	
In fact, if  $\Omega$ satisfies the CDC and $u$ is a  solution to $L_Au=0$ in $B_{2r} \cap \Omega$, continuous on $B_{2r} \cap \overline{\Omega}$,
	and such that $u \equiv 0$ on $\partial \Omega \cap B_{2r}$, then,  there exist constants $\alpha > 0$ and $c>0$ such that
	\begin{equation}
		\label{holder}
		|u(y)| \leq c \left( \frac{\delta_\Omega(y)}{r} \right)^\alpha \sup_{B_{2r}} |u|, 		\quad \text{for all } y \in B_{r},
	\end{equation}
	where $C$ and $\alpha$ depend on $n$, $\Lambda$, the CDC constant, and $\delta_\Omega(y) = \dist(y, \Omega^c)$. In particular, $u$ is locally $\alpha$-H\"older continuous on $ B_r \cap \overline \om$.
\end{theorem}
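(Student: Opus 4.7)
The plan is to prove \eqref{eq:boundary oscillation} by a classical dyadic iteration of a one-step Wiener-type oscillation decay, then specialize to the CDC setting to deduce \eqref{eq:Holder-boundary-cdc} and \eqref{holder}. First I would reduce to the case $\varphi \equiv 0$: setting $v \coloneqq u - \varphi$, the function $v$ lies in $Y^{1,2}$, vanishes on $\partial\Omega\cap B_r$, and solves $L_A v = f - \div(g + A\nabla\varphi)$ weakly in $\Omega$. The extra term $A\nabla\varphi$ is in $L^\infty \subset L^q$, so up to an additive contribution from $\osc_{\partial\Omega \cap B(\xi,\rho)} \varphi$ (carried by the first term on the right-hand side of \eqref{eq:boundary oscillation}), it suffices to prove the estimate for $v$.

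Next I would establish the one-step decay: for $v \in Y^{1,2}(B_{2\rho}(\xi)\cap\Omega)$ solving $L_A v = f - \div g$ with $v \equiv 0$ on $\partial\Omega \cap B_{2\rho}(\xi)$, one has
\begin{align*}
\osc_{B_\rho(\xi)\cap\Omega} v &\leq \Bigl(1 - c\,\tfrac{\Cap(\overline{B_\rho(\xi)}\setminus\Omega)}{\rho^{n-1}}\Bigr) \osc_{B_{2\rho}(\xi)\cap\Omega} v \\
&\quad + C\,\rho^{2(1-(n+1)/q)}\|f\|_{L^{q/2}(\Omega)} + C\,\rho^{1-(n+1)/q}\|g\|_{L^q(\Omega)}.
\end{align*}
Both $M-v$ and $v-m$ (with $M,m$ denoting the sup/inf on $B_{2\rho}\cap\Omega$) extended by $M$, resp.\ $m$, outside $\Omega$ are nonnegative super/subsolutions of the same equation on $B_{2\rho}(\xi)$, and Moser's weak Harnack inequality coupled with the standard capacity lower bound on the measure of the set where these extensions equal $M$ or $m$ yields the contraction factor. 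The $f,g$ terms arise from the Moser iteration with source terms, exactly as in Gilbarg-Trudinger \cite{GT01}. Iterating this inequality across $r_k = 2^{-k} r$ from $k=0$ down to the largest $k$ with $r_k \geq 2\rho$, using $1-t \leq e^{-t}$ to convert the product into the exponential of the Riemann sum approximating $\int_{2\rho}^r \Cap(\overline{B_s}\setminus\Omega)\,s^{-n}\,ds$, and summing the resulting geometric series for the inhomogeneous terms, gives \eqref{eq:boundary oscillation}.

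For \eqref{eq:Holder-boundary-cdc} I would specialize to $f=g=0$ and invoke CDC to bound the capacity integral from below by $c\log(r/2\rho)$, so the exponential factor becomes $(\rho/r)^\alpha$. Combined with the De Giorgi-Nash-Moser interior $L^\infty$ estimate
\[
\osc_{B_r \cap \Omega} u \lesssim \Bigl(r^{-(n+1)}\int_{B_r\cap\Omega} |u|^2\Bigr)^{1/2} + \osc_{B_r\cap\partial\Omega}\varphi,
\]
and applied with centers on $\partial\Omega$ at the scale $\rho = |x-y|$, this yields \eqref{eq:Holder-boundary-cdc}. Finally, \eqref{holder} follows by choosing, for $y \in B_r$, a point $\bar y \in \partial\Omega$ with $|y-\bar y| = \delta_\Omega(y)$ and applying \eqref{eq:Holder-boundary-cdc} at the ball $B(\bar y, 2r)$ with $\varphi \equiv 0$: then $u(\bar y) = 0$, the oscillation term vanishes, and $|u(y)| = |u(y) - u(\bar y)| \lesssim (\delta_\Omega(y)/r)^\alpha \sup_{B_{2r}} |u|$ as required.

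The main technical obstacle is Step 2: carrying out the weak Harnack/contraction estimate in the presence of the inhomogeneity $f - \div g$ while keeping the capacity-based decay constant $c$ and the scaling exponents $2(1-(n+1)/q)$, $1-(n+1)/q$ of the forcing terms cleanly separated. This requires care with the truncation test functions in Moser's scheme, since the natural estimates for $(v-k)_+$ or $(M-v)_+$ involve lower-order terms scaling with $\rho$ that must not contaminate the capacity factor; the choice $q > n+1$ ensures by Morrey-Sobolev embedding that these lower-order contributions are genuinely of lower order compared to $\osc v$, thereby enabling the iteration.
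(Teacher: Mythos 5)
Your overall strategy is the same one the paper relies on: the paper does not give a self-contained argument but cites \cite[pp.~207--208]{GT01} for the CDC case, the iteration scheme of \cite[Theorem 4.14]{Mou23} with the choice $k(r)= r^{2(1-\frac{n+1}{q})}\|f\|_{L^{q/2}(\om)}+r^{1-\frac{n+1}{q}}\|g\|_{L^{q}(\om)}$ for general domains, and \cite[Theorem 4.22]{MZ97} for \eqref{holder}. Your one-step weak Harnack/capacity contraction, the dyadic iteration producing the Wiener exponential, the CDC specialization giving the H\"older exponent, and the distance-to-the-boundary argument for \eqref{holder} are precisely the classical route carried out in those references, so in approach you and the paper coincide.

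There is, however, one genuine flaw in your Step 1. You reduce to $\varphi\equiv 0$ by setting $v=u-\varphi$ and absorbing $A\nabla\varphi$ into the divergence datum, claiming $A\nabla\varphi\in L^\infty\subset L^q$. This is not available: $\varphi$ is only assumed to lie in $Y^{1,2}(\Omega)\cap C(\overline\Omega)$, so $\nabla\varphi$ is merely in $L^2(\Omega)$, and continuity of $\varphi$ gives no pointwise control of its gradient. With a divergence datum only in $L^2$ the De Giorgi--Nash--Moser/weak Harnack machinery does not even yield local boundedness (one needs $g\in L^q$ with $q>n+1$), and in any case the resulting bound would involve norms of $\nabla\varphi$ rather than $\osc_{\partial\Omega\cap B(\xi,\rho)}\varphi$, which is what \eqref{eq:boundary oscillation} asserts. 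The standard remedy --- and what the cited proofs do --- is never to differentiate $\varphi$: apply the weak Harnack inequality to $M_{2\rho}-u$ and $u-m_{2\rho}$ truncated at the levels $\sup_{\partial\Omega\cap B_{2\rho}}\varphi$ and $\inf_{\partial\Omega\cap B_{2\rho}}\varphi$, extended by the corresponding constants across $\partial\Omega$ (this is where the Sobolev-sense vanishing of $u-\varphi$ on $\partial\Omega\cap B_r$ is used to make the extensions admissible super/subsolutions); the boundary oscillation of $\varphi$ then enters additively in each step and your iteration proceeds as described. With this correction the remainder of your argument is sound, including the derivation of \eqref{holder}, where you should note that $u$ is only a solution in $B_{2r}\cap\Omega$, so one needs the local version of \eqref{eq:Holder-boundary-cdc} applied on a ball around $\bar y$ of radius comparable to $r$ chosen so as to stay inside $B_{2r}$, the case $\delta_\Omega(y)\gtrsim r$ being trivial.
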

	
	\begin{proof}
For a proof within the framework of more general elliptic operators with lower-order terms in CDC domains, we refer, for instance, to \cite[pp. 207-208]{GT01}; for general domains,  the iteration argument in  \cite[pp. 207-208]{GT01} is not accurately  written.  One can follow the iteration in  \cite[Theorem 4.14]{Mou23} with 
$$
k(r)\coloneqq  r^{2\bigl(1-\frac{n+1}{q}\bigr)} \| f\|_{L^{\frac{q}{2}}(\om) } +r^{1-\frac{n+1}{q}}  \| g\|_{L^{q}(\om)}.
$$

Finally, a proof of \eqref{holder} can be found in greater generality  e.g.  in \cite[Theorem 4.22]{MZ97}
	\end{proof}

\vv

\begin{lemma}
	\label{l:holdermeasure}
	Let $A$ be a $(n+1)\times (n+1)$ uniformly elliptic matrix with coefficients in $L^\infty(\Omega)$. Suppose $\Omega\subsetneq \bR^{d+1}$ is an open set satisfying the CDC condition \eqref{eq:CDC_definition}, and let $x\in \partial \Omega$. Then, there is $\alpha>0$ so that for all $0<r<\diam   \Omega$ we have 
	\begin{equation}\label{e:wholder}
		\omega_{\Omega}^{L_A,y}(B(x,2r)^c)\leq c\, \Bigl(\frac{\lvert x-y\rvert}{r}\Bigr)^\alpha,\qquad\text{for all }y\in \Omega\cap B(x,r), 
	\end{equation}
	where $c$ and $\alpha$ depend on the CDC constant of $\Omega$ and the ellipticity constants of $L_A$. 
\end{lemma}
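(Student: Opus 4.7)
\medskip

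\noindent\textbf{Proof plan.} Fix $x\in\partial\Omega$ and $0<r<\diam\Omega$, and define the function
\[
u(y) \coloneqq \omega_{\Omega}^{L_A,y}\bigl(B(x,2r)^c\bigr), \qquad y\in\Omega.
\]
By construction $u$ is $L_A$-harmonic in $\Omega$ and $0\leq u\leq 1$. The plan is to dominate $u$ by a function $v$ that vanishes continuously on a slightly smaller portion of the boundary, and then invoke the boundary H\"older estimate \eqref{holder} from Theorem~\ref{thm:boundary oscillation}, using that $\Omega$ satisfies the CDC.

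First, I would choose a continuous cutoff $\varphi\colon\partial\Omega\to[0,1]$ with $\varphi\equiv 1$ on $\partial\Omega\setminus B(x,2r)$ and $\varphi\equiv 0$ on $\partial\Omega\cap B(x,3r/2)$, taking the Lipschitz extension to $\R^{n+1}$, multiplied by a smooth cutoff supported near $B(x,4r)$ so that the resulting extension lies in $Y^{1,2}(\R^{n+1})\cap C(\overline\Omega)$. By Theorem~\ref{theorem:resolutiveHf}, $\varphi$ is resolutive, so the Perron solution $v\coloneqq H_\varphi = \int\varphi\, d\omega_\Omega^{L_A,\cdot}$ is $L_A$-harmonic in $\Omega$ with $0\leq v\leq 1$. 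Since $\chi_{\partial\Omega\setminus B(x,2r)}\leq\varphi$, integrating against $\omega^y$ yields $u(y)\leq v(y)$ for every $y\in\Omega$. Moreover, the CDC forces Wiener regularity at every boundary point, so $v$ extends continuously to $\overline\Omega$ with boundary values $\varphi$; in particular $v\equiv 0$ on $\partial\Omega\cap B(x,3r/2)$.

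Next, I apply \eqref{holder} to $v$ with the ball $B(x,3r/2)$ playing the role of $B_{2r}$ in Theorem~\ref{thm:boundary oscillation} (so the inner ball $B_r$ becomes $B(x,3r/4)$). Since $v$ is $L_A$-harmonic in $B(x,3r/2)\cap\Omega$, continuous on $B(x,3r/2)\cap\overline\Omega$, vanishes on $\partial\Omega\cap B(x,3r/2)$, and is bounded by $1$, we obtain, for every $y\in B(x,3r/4)\cap\Omega$,
\[
u(y)\;\leq\;v(y)\;\leq\; c\,\Bigl(\frac{\delta_\Omega(y)}{r}\Bigr)^{\alpha}\;\leq\; c\,\Bigl(\frac{|x-y|}{r}\Bigr)^{\alpha},
\]
where in the last step I use that $x\in\partial\Omega$, hence $\delta_\Omega(y)\leq|x-y|$. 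The exponent $\alpha>0$ and the constant $c$ depend only on $n$, $\Lambda$, and the CDC constant.

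Finally, for $y\in B(x,r)\setminus B(x,3r/4)$ we have $|x-y|/r\geq 3/4$, so $(|x-y|/r)^{\alpha}\geq (3/4)^{\alpha}$; combined with $u\leq 1$, this yields \eqref{e:wholder} after enlarging $c$ if necessary. The main (minor) technical point is ensuring that the cutoff $\varphi$ has a well-behaved $Y^{1,2}(\R^{n+1})\cap C(\overline\Omega)$ extension when $\Omega$ is unbounded so that Theorem~\ref{theorem:resolutiveHf} applies, but this is routine using a smooth spatial truncation. All the remaining difficulty is absorbed by the boundary H\"older estimate \eqref{holder}, which is precisely where the CDC hypothesis enters.
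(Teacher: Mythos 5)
Your proof is correct and follows essentially the same route as the paper: dominate $\omega_{\Omega}^{L_A,y}(B(x,2r)^c)$ by the Perron solution of a continuous cutoff vanishing on a boundary neighborhood of $x$, then apply the boundary H\"older estimate \eqref{holder} together with $\delta_\Omega(y)\le |x-y|$ and the trivial bound by $1$ away from $x$. The only difference is cosmetic: the paper skips the $Y^{1,2}$ extension, since in a CDC domain the complement has positive capacity and every continuous boundary function is already resolutive, and it absorbs the annulus $B(x,r)\setminus B(x,3r/4)$ case into the constant.
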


\begin{proof} Let $f \in C(\partial \om)$ so that $f=1$ on $\partial \om \setminus B(x,2r)$, $f=0$ on $B(x,r) \cap \partial \om$, and $0 \leq f \leq 1$ on $\partial \om$.  Then, if $u_f(x)= \int_{\partial \om} f \,d\omega_{\om}^{L_A,x}$,  by \eqref{holder}, we have that 
	$$
	\omega_{\Omega}^{L_A,y}(B(x,2r)^c)\leq u_f(y)\leq c\, \Bigl(\frac{\lvert x-y\rvert}{r}\Bigr)^\alpha\, \sup_{z\in B(x,2 r) \cap \om} u(z) \leq c\, \Bigl(\frac{\lvert x-y\rvert}{r}\Bigr)^\alpha,
	$$
	for all $y \in B(x,r)$, since, by maximum principle, $u_f \leq 1$ in $\om$. 
\end{proof}

\vv

The lemma below is often called Bourgain's Lemma, as he proved a similar estimate for harmonic measure in \cite{Bo87}. 

\begin{lemma}[{\cite[Lemma 11.21]{HKM93}}]\label{l:bourgain}
	Let $A$ be a $(n+1)\times (n+1)$ uniformly elliptic matrix with coefficients in $L^\infty(\Omega)$.
	Let $\Omega\subset \bR^{n+1}$ be any domain satisfying the CDC condition \eqref{eq:CDC_definition},  $x_{0}\in \d\Omega$, and $r>0$ so that $\Omega\backslash B(x_{0},2r)\neq\varnothing$. Then 
	\begin{equation}\label{e:bourgain}
		\hm_{\Omega}^{L_A, x}(B(x_{0},2r))\geq c >0 \;\; \mbox{ for all }x\in \Omega\cap B(x_{0},r),
	\end{equation}
	where $c$ depends on $d$ and the CDC constant.
\end{lemma}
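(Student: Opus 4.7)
The plan is to construct an explicit $L_A$-subsolution on $\Omega$ whose boundary values are dominated by $\chi_{B(x_0,2r)\cap\partial\Omega}$ and which admits a quantitative pointwise lower bound on $\Omega\cap B(x_0,r)$; the conclusion will then follow by the maximum principle (i.e., comparison with the Perron solution that defines elliptic measure). Set $\tilde{B}=B(x_0,2r)$ and $K=\overline{B(x_0,r)}\cap\Omega^c$. The CDC assumption \eqref{eq:CDC_definition} applied at $x_0$ at scale $r$ gives $\operatorname{Cap}_{L_A}(K,\tilde{B})\gtrsim r^{n-1}$ (a posteriori up to replacing $2r$ by a slightly smaller number, which is harmless).

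Next I would let $u$ be the $L_A$-capacitary potential of $K$ relative to $\tilde{B}$: $u\in Y^{1,2}_0(\tilde{B})$, $0\le u\le 1$, $u=1$ quasi-everywhere on $K$, and $L_A u=0$ in $\tilde{B}\setminus K$. Extending $u$ by zero outside $\tilde{B}$ gives a function which is $L_A$-subharmonic on $\mathbb{R}^{n+1}\setminus K$, hence on $\Omega$ (since $K\subset\Omega^c$). Its boundary trace satisfies $u\le 1$ on $\partial\Omega\cap\tilde{B}$ and $u=0$ on $\partial\Omega\setminus\tilde{B}$. Because $\Omega$ satisfies the CDC it is Wiener/Sobolev regular, so comparison with the Perron solution yields
\[
u(x)\le \omega^{L_A,x}_{\Omega}\bigl(\partial\Omega\cap\tilde{B}\bigr)=\omega^{L_A,x}_{\Omega}(B(x_0,2r)),\qquad x\in\Omega.
\]

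For the lower bound, I would invoke the Riesz representation of the capacitary potential: there exists a nonnegative measure $\mu$ on $K$ with $\mu(K)=\operatorname{Cap}_{L_A}(K,\tilde{B})\gtrsim r^{n-1}$ such that
\[
u(x)=\int_K G^{L_A}_{\tilde{B}}(x,y)\,d\mu(y).
\]
For $x\in\Omega\cap B(x_0,r)$ and $y\in K\subset\overline{B(x_0,r)}$ we have $|x-y|\le 2r$ together with $\operatorname{dist}(x,\partial\tilde{B})\ge r$ and $\operatorname{dist}(y,\partial\tilde{B})\ge r$. The classical two-sided Grüter--Widman/Hofmann--Kim bounds for the Green's function of $L_A$ in a ball then give
\[
G^{L_A}_{\tilde{B}}(x,y)\gtrsim \frac{1}{|x-y|^{n-1}}\gtrsim \frac{1}{r^{n-1}},
\]
where the implicit constant depends only on $n$ and $\Lambda$. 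Combining this with $\mu(K)\gtrsim r^{n-1}$ yields $u(x)\gtrsim 1$, which together with the comparison step gives \eqref{e:bourgain}.

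The main technical point I expect to have to justify carefully is the Green's function lower bound in the last display: it uses both that $x$ and $y$ are quantitatively interior to $\tilde{B}$ and the standard interior estimates for Green's functions of uniformly elliptic divergence-form operators with bounded measurable coefficients. Once this is in place, all the other ingredients — the existence and representation of the capacitary potential, its zero-extension being a subsolution across $\partial\tilde{B}$, and the comparison with $\omega^{L_A,x}_\Omega$ in CDC domains — are standard and already implicit in the potential-theoretic framework recalled earlier in this section.
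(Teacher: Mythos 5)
Your argument is correct, but note that the paper does not prove this lemma at all: it is quoted directly from \cite[Lemma 11.21]{HKM93}, so there is no internal proof to match. What you give is the standard self-contained capacitary-potential proof of this Bourgain-type estimate, and it is sound: the CDC \eqref{eq:CDC_definition} at $x_0$ at scale $r$ gives mass $\gtrsim r^{n-1}$ to the equilibrium measure of $K=\overline{B(x_0,r)}\cap\Omega^c$ in $\tilde B=B(x_0,2r)$ (for nonsymmetric $A$ the cleanest way to see this is $\mu(K)=\int A\nabla u\cdot\nabla u\ge\Lambda^{-1}\Cap\bigl(K,\tilde B\bigr)$, which is how ellipticity and the CDC enter); the zero extension of $u$ is an $L_A$-subsolution off $K$ (the same fact is used in the paper in the proof of Lemma \ref{eq:Green-repr}); and the comparison with elliptic measure then yields \eqref{e:bourgain}. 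Two points deserve the care you partially flag. First, the Green's function lower bound $G_{\tilde B}(x,y)\gtrsim r^{1-n}$ for $x,y\in\overline{B(x_0,r)}$ is not covered by \eqref{eq:bound_below_green} alone, since that estimate requires $2|x-y|<\dist(\{x,y\},\partial\tilde B)$ while here $|x-y|$ may be comparable to $2r$; one closes the gap with a Harnack chain of boundedly many balls for the positive solution $G_{\tilde B}(\cdot,y)$ in $B(x_0,\tfrac{15}{8}r)\setminus B(y,\tfrac{r}{16})$, reducing to the regime where \eqref{eq:bound_below_green} applies. Second, the comparison step as written only dominates $u$ by the measure of the \emph{closed} ball if you majorize the boundary values by continuous functions; to get the open ball $B(x_0,2r)$ either take the capacitary potential relative to a slightly smaller ball (your parenthetical remark, after which $u\le H_g\le\omega^x(B(x_0,2r))$ for a continuous $g$ supported in $B(x_0,2r)$), or compare directly with the $L_A$-harmonic function $x\mapsto\omega^x(B(x_0,2r))$, using that every boundary point of a CDC domain is Wiener regular so this function tends to $1$ at points of $\partial\Omega\cap B(x_0,2r)$; for unbounded $\Omega$ one also includes the point at infinity in the maximum principle, which is harmless since the extended $u$ vanishes outside $\tilde B$. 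With these routine completions your proof is a valid substitute for the citation, and in fact follows the same capacity-comparison mechanism as the cited reference.
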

\vv

\begin{lemma}
	\label{l:nullboundary}
	Let $\omega$ be the harmonic measure for some domain $\om$ in $\R^{n+1}$, $n \geq 1$. Then there are at most countably many balls $B_{1},B_{2},...$ for which $\omega(\d B_{j})>0$. 
\end{lemma}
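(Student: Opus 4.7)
The plan is a standard pigeonhole decomposition applied to the family of "bad" balls. Set
$\mathcal{F} := \{B \subset \R^{n+1} : \omega(\d B) > 0\}$ and, for each $k \geq 1$, let $\mathcal{F}_k := \{B \in \mathcal{F} : \omega(\d B) > 1/k\}$, so that $\mathcal{F} = \bigcup_{k \geq 1} \mathcal{F}_k$. Since $\omega$ is a finite Borel measure, it suffices to prove that each $\mathcal{F}_k$ is finite, and in fact it is enough to show the \emph{disjointness claim}
\[
\omega(\d B \cap \d B') = 0, \qquad \text{for every pair of distinct balls } B \neq B'.
\]
Indeed, granting this claim, any $N$ distinct elements $B_1, \dots, B_N \in \mathcal{F}_k$ have pairwise $\omega$-essentially disjoint boundaries, and hence $N/k < \sum_j \omega(\d B_j) = \omega\bigl( \bigcup_j \d B_j \bigr) \leq \omega(\R^{n+1})$, forcing $N \leq k\,\omega(\R^{n+1}) < \infty$.

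For the disjointness claim, a direct computation with the sphere equations shows that, when $B = B(x_B, r_B)$ and $B' = B(x_{B'}, r_{B'})$ are distinct, $\d B \cap \d B'$ is contained in the $n$-dimensional affine hyperplane
\[
H_{B,B'} := \bigl\{ z \in \R^{n+1} : |z - x_B|^2 - |z - x_{B'}|^2 = r_B^2 - r_{B'}^2 \bigr\},
\]
and is geometrically either empty, a single point (tangential spheres), or an $(n-1)$-dimensional round sphere lying inside $H_{B,B'}$. In the case $n = 1$ (so $\om \subset \R^2$), this intersection has at most two points, and the disjointness claim reduces to the non-atomicity of harmonic measure on any Wiener-regular planar domain, which is classical: singletons in $\R^2$ have vanishing variational $2$-capacity, and $\omega$ does not charge sets of zero capacity (cf.\ \cite[Chapter~9]{HKM93}).

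The main obstacle lies in the case $n \geq 2$, where the intersection is an $(n-1)$-dimensional sphere. I would handle this by exploiting the potential-theoretic regularity of harmonic measure---$\omega$ vanishes on sets of zero relative $2$-capacity---together with capacity estimates for smooth codimension-two pieces of $\R^{n+1}$. Alternatively, and more uniformly in $n$, one can run the following fibered monotonicity argument: for each fixed center $x$, the map $r \mapsto \omega(\overline{B(x,r)})$ is non-decreasing and bounded, hence has at most countably many jumps, which shows that the set $\{r > 0 : \omega(\d B(x,r)) > 0\}$ is countable; a measurable-selection / Fubini-type upgrade then promotes this one-parameter statement to the full two-parameter claim. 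Once the disjointness claim is established, the lemma follows from the pigeonhole argument at the beginning.
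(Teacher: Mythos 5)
Your main route is correct and is essentially the proof the paper has in mind. Reducing to the claim that $\omega(\partial B\cap\partial B')=0$ for distinct balls, observing that this intersection is empty, a point, or an $(n-1)$-dimensional sphere, noting that such a codimension-two set has zero variational $2$-capacity in $\R^{n+1}$ (for $n=1$, a point is polar in $\R^2$), recalling that harmonic measure does not charge polar sets, and concluding by the pigeonhole/essential-disjointness argument is precisely the scheme of \cite[Lemma 2.10]{AMT20}, to which the paper defers, with the capacity and polarity inputs supplied by \cite[Theorems 2.27, 10.1, 11.15]{HKM93}. The one substantive step you leave as a gesture (``capacity estimates for smooth codimension-two pieces'') is exactly where \cite[Theorem 2.27]{HKM93} should be quoted: a compact set of finite $\mathcal H^{n-1}$ measure in $\R^{n+1}$ has zero $2$-capacity, hence is polar, hence $\omega$-null. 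Note also that the fact that $\omega$ vanishes on polar sets holds for arbitrary domains (for which harmonic measure is defined), so your restriction to Wiener-regular domains in the $n=1$ case is both unnecessary and unavailable, since the lemma assumes no boundary regularity.

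The ``alternative, more uniform'' argument, however, should be dropped: it cannot be made to work. The fibered statement (for each fixed center $x$, the set of radii $r$ with $\omega(\partial B(x,r))>0$ is countable) is true for every finite Borel measure, but no measurable-selection or Fubini-type device can upgrade it to the two-parameter claim, because the two-parameter claim is false for general finite measures. Indeed, for $\mu=\delta_p$ a point mass, every sphere passing through $p$ has positive $\mu$-measure, so there are uncountably many bad spheres, while the fibered statement holds at every center. The countability asserted in the lemma is therefore not a purely measure-theoretic fact: it genuinely uses that harmonic measure does not charge the codimension-two (respectively, zero-dimensional when $n=1$) intersections of distinct spheres, i.e.\ exactly the polarity input of your first route.
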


\begin{proof}
The proof is identical to the one 	of \cite[Lemma 2.10]{AMT20} using \cite[Theorem 2.27]{HKM93}, \cite[Theorem 10.1]{HKM93}, and   \cite[Theorem 11.15]{HKM93}.  We omit the details. 
\end{proof}

\vvv

Finally, we state an elliptic version of the Alt–Caffarelli–Friedman monotonicity formula.

\begin{theorem}\label{theorem:ACF_DMO}
	Let $n\geq 2$ and suppose that $A$ is a uniformly elliptic matrix with coefficients in $L^\infty(\Rn1)$ which are uniformly continuous with modulus of continuity $w$. Let $B\coloneqq B(x,R)$ be a ball in $\Rn1$, and assume that $A(x)=\mathrm{Id}$. Let $u_1, u_2\in W^{1,2}(B(x,R))\cap C(B(x,R))$  be nonnegative $L_A$-subharmonic functions such that $u_1(x)=u_2(x)=0$ and $u_1\cdot u_2\equiv 0$. We set
	\begin{equation}\label{eq:monotonicity_formula}
		J(x,r)\coloneqq \Biggl(\frac{1}{r^2}\int_{B(x,r)}\frac{|\nabla u_1(y)|^2}{|y-x|^{n-1}}\, dy\Biggr) \cdot \Biggl(\frac{1}{r^2}\int_{B(x,r)}\frac{|\nabla u_2(y)|^2}{|y-x|^{n-1}}\, dy\Biggr).
	\end{equation}
	Then $J(x,r)$ is an absolutely continuous function of $r\in (0,R)$ and
	if, in addition, there exists  $\alpha\in (0,1]$ such that 
	\[
	u_i(y)\leq C_1 \Bigl(\frac{|y-x|}{r}\Bigr)^\alpha\, \|u_i\|_{L^\infty(B(x,r))}\, \qquad i=1,2,
	\]
	for all $0<r\leq R$ and $y\in B(x,r)$, then,  there exists a dimensional constant $c$ such that,  for any  $0<r_1 < r_2<R$,  
	\begin{equation}\label{eq:almost monotone}
		J(x,r_1) \leq  J(x,r_2)  \, e^{c \int_{r_1}^{r_2} w(t) \frac{dt}{t}}.
	\end{equation}
\end{theorem}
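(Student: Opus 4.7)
After translating we may assume $x=0$, so $A(0)=\mathrm{Id}$, and write $B_r=B(0,r)$. Set
\[
J_i(r)\coloneqq\frac{1}{r^2}\int_{B_r}\frac{|\nabla u_i(y)|^2}{|y|^{n-1}}\, dy,\qquad i=1,2,
\]
so $J(0,r)=J_1(r)\,J_2(r)$. The strategy is to adapt the Alt--Caffarelli--Friedman monotonicity scheme to variable coefficients, along the lines of the argument in \cite{AGMT17}, and to track the deviation $A(y)-\mathrm{Id}$ through the modulus of continuity $w$. The plan has three steps: (i) verify absolute continuity of $r\mapsto J_i(r)$ on $(0,R)$; (ii) derive a perturbed Rellich-type identity yielding a lower bound on $r\,J_i'(r)/J_i(r)$ in terms of a spherical eigenvalue, up to an error of order $w(r)$; (iii) combine with the Friedman--Beckner spherical inequality applied to the disjoint sets $\{u_1>0\}\cap\partial B_r$, $\{u_2>0\}\cap\partial B_r$, and integrate the resulting differential inequality.

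For (i), Caccioppoli's inequality applied to the nonnegative $L_A$-subsolution $u_i$ together with the H\"older bound on $u_i$ and a dyadic decomposition of $B_r$ yields $\int_{B_\rho}|\nabla u_i|^2\lesssim \rho^{n-1+2\alpha}$, which implies that $r\mapsto\int_{B_r}|\nabla u_i|^2/|y|^{n-1}\,dy$ is finite, monotone and absolutely continuous on $(0,R)$; dividing by $r^2$ preserves this. The co-area formula gives
\[
J_i'(r)=-\frac{2}{r}J_i(r)+\frac{1}{r^{n+1}}\int_{\partial B_r}|\nabla u_i|^2\, d\sigma\qquad \text{for a.e.\ }r\in(0,R).
\]
For (ii), we test the inequality $L_A u_i\leq 0$ against a smooth cut-off version of $u_i\chi_{B_r}/|y|^{n-1}$, regularized near the origin. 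When $A\equiv\mathrm{Id}$, standard Rellich--Pohozaev manipulations on $\partial B_r$ combined with Cauchy--Schwarz give the familiar lower bound
\[
\frac{r\,J_i'(r)}{J_i(r)}\geq 2\bigl(\beta_i(r)-1\bigr),\qquad \beta_i(r)\coloneqq-\tfrac{n-1}{2}+\sqrt{\bigl(\tfrac{n-1}{2}\bigr)^2+\lambda_i(r)},
\]
where $\lambda_i(r)$ is the first Dirichlet eigenvalue of the spherical Laplacian on $\{u_i>0\}\cap\partial B_r$, rescaled to $S^n$. For a general matrix $A$, the same integration by parts produces an extra term of the shape $\int_{B_r}(A(y)-\mathrm{Id})\nabla u_i\cdot\nabla(u_i/|y|^{n-1})\,dy$. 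Using $|A(y)-\mathrm{Id}|=|A(y)-A(0)|\leq w(|y|)\leq\kappa_w\,w(r)$ on $B_r$ together with Cauchy--Schwarz and Hardy's inequality, its absolute value is controlled by $C\,w(r)\,J_i(r)$, whence
\[
\frac{r\,J_i'(r)}{J_i(r)}\geq 2\bigl(\beta_i(r)-1\bigr)-C\,w(r).
\]

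For (iii), since $u_1u_2\equiv 0$ the sets $\{u_i>0\}\cap\partial B_r$ are disjoint, so the Friedman--Beckner spherical inequality yields $\beta_1(r)+\beta_2(r)\geq 2$. Summing the two lower bounds,
\[
\frac{d}{dr}\log J(0,r)=\sum_{i=1}^{2}\frac{J_i'(r)}{J_i(r)}\geq -\frac{2C\,w(r)}{r},
\]
and integrating from $r_1$ to $r_2$ produces the claimed bound \eqref{eq:almost monotone} with $c=2C$. The principal obstacle will be step (ii): making the integration by parts rigorous at the singular point $|y|=0$ and sharpening the perturbation estimate so that the loss is of the exact form $w(r)\,J_i(r)$, without a stray logarithmic factor. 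The H\"older hypothesis $u_i(y)\lesssim(|y|/r)^\alpha\|u_i\|_{L^\infty(B_r)}$ is crucial here, both to guarantee that the contributions of the regularization near the origin vanish in the limit and to give a polynomial control on $|\nabla u_i|^2/|y|^{n-1}$ near $0$; Hardy's inequality then absorbs the singular Jacobian $\nabla(|y|^{1-n})$ into a multiple of the energy density appearing in $J_i(r)$.
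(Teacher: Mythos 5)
The paper does not prove Theorem \ref{theorem:ACF_DMO} at all: it is quoted as a particular case of the elliptic Alt--Caffarelli--Friedman monotonicity formula of \cite{AGMT17} (Theorem 1.5 there, stated for divergence-form operators with lower-order terms), so the paper's ``proof'' is a one-line citation. What you have written is a reconstruction of the argument behind that citation, and your outline is the correct one: it is exactly the Caffarelli--Jerison--Kenig/AGMT perturbation scheme --- absolute continuity of $r\mapsto J_i(r)$ from Caccioppoli plus the H\"older growth, the Rellich/eigenvalue lower bound $rJ_i'/J_i\geq 2(\beta_i(r)-1)-Cw(r)$, the Friedman--Beckner inequality $\beta_1+\beta_2\geq 2$ for the disjoint positivity sets on $\partial B_r$, and integration of the logarithmic derivative. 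In that sense your route differs from the paper only in that you prove rather than cite; it buys self-containedness at the cost of redoing the technical core of \cite{AGMT17}.

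That technical core is precisely where your sketch is still a plan rather than a proof, and where it is somewhat too optimistic. The deviation $A-\mathrm{Id}$ does not enter only through the single bulk term $\int_{B_r}(A-\mathrm{Id})\nabla u_i\cdot\nabla\bigl(u_i|y|^{1-n}\bigr)\,dy$: after the ACF integration by parts it also contaminates the boundary terms on $\partial B_r$ (the conormal derivative $A\nabla u_i\cdot\nu$ must be compared with $\partial_r u_i$ before the spherical eigenvalue and the Cauchy--Schwarz optimization can be applied), and these boundary errors must likewise be absorbed at cost $O(w(r))$ relative to the main terms. Moreover, in the bulk error the piece $u_i\,(A-\mathrm{Id})\nabla u_i\cdot\nabla(|y|^{1-n})$ carries a $|y|^{-n}$ singularity; a plain Cauchy--Schwarz/Hardy bound produces $\bigl(\int_{B_r}u_i^2\,w(|y|)^2|y|^{-n-1}\bigr)^{1/2}$, and controlling this by $w(r)\,J_i(r)^{1/2}$ (with no logarithmic loss) requires combining $w(|y|)\leq \kappa_w\,w(r)$, the hypothesis $A(x)=\mathrm{Id}$, the H\"older growth $u_i(y)\lesssim(|y|/r)^\alpha\|u_i\|_{L^\infty(B_r)}$, and the ACF-type boundary estimates relating $\int_{B_r}|\nabla u_i|^2|y|^{1-n}$ to $\int_{\partial B_r}u_i\partial_r u_i\,r^{1-n}$ --- not Hardy's inequality alone, since $u_i$ does not vanish on $\partial B_r$. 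One also needs the eigenvalue step for a.e.\ $r$ with merely open positivity sets (which Friedman--Beckner does allow). None of this is fatal --- it is exactly what is carried out in \cite{AGMT17} --- but as written your step (ii) defers the genuinely delicate estimates, so the proposal should be regarded as a correct strategy with the main analytic work still to be supplied, whereas for the paper's purposes the citation suffices.
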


\begin{proof}
	This is a particular case of \cite[Theorem 1.5]{AGMT17}, which was formulated in a more general framework of uniformly elliptic operators in divergence form with lower order terms.
\end{proof}

\vv
\subsection{Green's function and fundamental solution}\label{sec:green}

In this subsection, we collect several results concerning the Green's function that will be used throughout the paper.
\begin{definition}[Green's function]
	Let $n\geq 2$, and $\Omega\subseteq \Rn1$ be a domain. Let $A$ be a $(n+1)\times (n+1)$ uniformly elliptic matrix with coefficients in $L^\infty (\Rn1)$, and denote $L_A\coloneqq -\div (A(\cdot)\nabla)$.
	We say that a function $G\colon \Omega\times \Omega\setminus \{(x,y)\in\Omega\times\Omega: x=y\}\to \mathbb R$ is a \textit{Green's function} of $L_A$ in $\Omega$ if it satisfies the following properties:
	\begin{enumerate}
		\item $G(\cdot, y)\in W^{1,1}_{\loc}(\Omega)$ and $L_AG(\cdot,y)=\delta_y$  for all $y\in\Omega$, namely it holds that
		\[
		\int_\Omega A \nabla_1  G (\cdot, y)\cdot \nabla \phi= \phi(y), \qquad \text{ for all } \phi\in C^\infty_c(\Omega).
		\]
		\item $G\in Y^{1,2}(\Omega\setminus B(y,r))$ for all $y\in\Omega$ and $r>0$, and $G(\cdot, y)$ vanishes on $\partial \Omega$.
		\item For any $f\in L^\infty_c(\Omega)$, the function 
		\[
		u(x)\coloneqq \int_\Omega G(y,x) f(y)\, dy
		\]
		belongs to $Y^{1,2}_0(\Omega)$ and satisfies $L^*_A u= f$, namely
		\[
		\int_\Omega A^T\nabla u\cdot \nabla \phi =\int f\, \phi, \qquad \text{ for all }\phi\in C^\infty_c(\Omega).
		\]
	\end{enumerate}
	
	The Green's function may be denoted by \( G_A \) when it is necessary to emphasize the associated elliptic operator, and by \( G_{\Omega}^A \) when it is also important to indicate the domain in which it is defined.

		\vv

	If $\om =\Rn1$, then the Green's function is denoted by $\Gamma(\cdot, \cdot)$ (or $\Gamma_A(\cdot,\cdot)$) and is called the {\bf  fundamental solution} for $A$. 
\end{definition}

\vvv

If $f\colon \Omega\to \mathbb R$ is a measurable function on an open set $\Omega\subset \Rn1$, we define
\[
d_{f,\Omega}(t)\coloneqq \bigl|\{x\in\Omega: |f(x)|>t\}\bigr|, \qquad \text{ for }t>0.
\]
Hence, for $p\in (0,\infty)$ and $q\in (0,\infty)$, we define the $(p,q)$-{\it Lorentz semi-norm} 
\[
\|f\|_{L^{p,q}(\Omega)}\coloneqq p^{\frac{1}{q}}\Bigl(\int^\infty_0 \bigl(t \, d_{f,\Omega}(t)^{\frac{1}{p}}\bigr)^q\, \frac{dt}{t}\Bigr)^{\frac{1}{q}},
\]
and understand that $f$ belongs to the $(p,q)$ {\it Lorentz space} $L^{p,q}(\Omega)$ if $	\|f\|_{L^{p,q}(\Omega)}<\infty$. \vvv

If $f \in L^\infty(\om)$ and it is compactly supported in $\Rn1$, it is clear that 
\begin{equation}\label{eq:lorentz<bounded}
	\| f \|_{L^{\frac{n+1}{2},1}(\om)} \lesssim \diam(\supp(f))^2 \,  \|f\|_{L^\infty(\om)}.
\end{equation}

We now gather the existence results for Green's functions, and some relevant properties.
\begin{lemma}\label{lemgreen*}
	Let $A$ be a $(n+1)\times(n+1)$ uniformly elliptic matrix with coefficients in $L^\infty(\Rn1)$ and let $\Omega\subseteq \R^{n+1}$ be an open, connected set. 
	There exists a unique Green's function $G\colon \Omega\times \Omega\setminus\{(x,y)\in\Omega\times\Omega:x=y\}\to \R$ associated with $L_A$, which
	satisfies the following properties:
	\begin{align}
		\|G(\cdot,y)\|_{Y^{1,2}(\Omega\setminus B(y,r))}&\lesssim r^{1-\frac{n+1}{2}}, \qquad \text{ for } r>0, \label{eq:bd_g_1}\\
		\|G(\cdot,y)\|_{L^p(B(y,r))}&\lesssim_p r^{1-n+\frac{n+1}{p}}, \qquad \text{ for  } r<\delta_\Omega(y), p\in \Bigl[1,\frac{n+1}{n-1}\Bigr],\label{eq:bd_g_2}\\
		\|\nabla G(\cdot,y)\|_{L^p(B(y,r))}&\lesssim_p r^{-n+\frac{n+1}{p}}, \qquad \text{ for  } r<\delta_\Omega(y), p\in \Bigl[1,\frac{n+1}{n}\Bigr].\label{eq:bd_g_3}
	\end{align}
	
	If $f\in L^{\frac{n+1}{2}, 1}(\Omega)$, we have that the function $u(x)=\int_\Omega G(y,x)f(y)\, dy$ satisfies
	\begin{equation}\label{eq:Green_function_global bound}
		\sup_\om |u| \lesssim \|f\|_{L^{\frac{n+1}{2}, 1}(\Omega)}.
	\end{equation}

	Furthermore, there are positive constants $C$ and $c$ depending on $n$ and $\Lambda$ such that for all $x,y\in\Omega$ with $x\neq y$, it holds:
	\begin{equation}\label{eq:bound_above_green}
		0\leq G(x,y)\leq C\,|x-y|^{1-n},
	\end{equation}
	
	\vspace{-4mm}
	\begin{equation}\label{eq:bound_below_green}
		G(x,y)\geq c\,|x-y|^{1-n},  \quad \text{ if } 2  |x-y|  < \dist(\{x,y\}, \pom),
	\end{equation}
	
	\vspace{-3mm}
	$$G(x,\cdot)\in C(\Omega \setminus \{x\}) \cap W^{1,2}_{\loc}(\Omega \setminus \{x\}) 
	\quad \mbox{ and } \quad G(x,\cdot)|_{\partial\Omega} \equiv 0,$$
	
	\vspace{-3mm}
	$$G(x,y) = G^*(y,x),$$
	where $G^*$ is the Green function associated with the operator $L^*_A = -\mathrm{div} A^T\nabla$.

	Finally,  if we assume that $\om$ is Wiener regular,  then $G(x,\cdot)\in C(\overline \Omega \setminus \{x\})$ and  for every $\vphi\in Y^{1,2}(\om) \cap C(\overline \om)$,
	\begin{equation}\label{eqgreen*23}
		\int_{\partial\Omega} \vphi\,d\omega^x - \vphi(x) = - \int_\Omega A^T(y)\nabla_yG(x,y)\cdot \nabla\vphi(y)\,dy,
		\quad\mbox{ for all  $x\in \overline \Omega$.}
	\end{equation}
\end{lemma}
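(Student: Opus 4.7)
\emph{Plan of proof.} The strategy is the standard Grüter--Widman--Hofmann--Kim construction adapted to our Wiener-regular setting. First I would exhaust $\Omega$ by an increasing sequence of bounded, smooth (hence Wiener-regular) subdomains $\Omega_k \Subset \Omega_{k+1}$ with $\bigcup_k \Omega_k = \Omega$. On each $\Omega_k$, for fixed $y \in \Omega_k$ and small $\rho > 0$, consider the weak solution $G_k^\rho(\cdot, y) \in Y^{1,2}_0(\Omega_k)$ of
\[
\int_{\Omega_k} A \nabla_1 G_k^\rho(\cdot, y) \cdot \nabla \phi = \avint_{B(y,\rho)} \phi, \qquad \phi \in C^\infty_c(\Omega_k),
\]
which exists uniquely by Lax--Milgram. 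Testing against $G_k^\rho(\cdot,y)$ itself and using ellipticity together with Sobolev's inequality yields the global estimate $\|G_k^\rho(\cdot, y)\|_{Y^{1,2}(\Omega_k \setminus B(y,r))} \lesssim r^{1-(n+1)/2}$ for $r > \rho$; passing $\rho \to 0$ along a subsequence gives the Green's function $G_k(\cdot, y)$ on $\Omega_k$ satisfying \eqref{eq:bd_g_1}. The $L^p$ bounds \eqref{eq:bd_g_2}--\eqref{eq:bd_g_3} follow by combining the $Y^{1,2}$-bound with Caccioppoli and the pointwise upper bound \eqref{eq:bound_above_green}, obtained by the usual Moser iteration on dyadic annuli around $y$ (the point mass being harmless once one is away from $y$). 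The lower bound \eqref{eq:bound_below_green}, which is purely an \emph{interior} statement under the condition $2|x-y| < \dist(\{x,y\}, \partial\Omega)$, reduces to a Harnack chain applied to the nonnegative $L_A$-harmonic function $G(\cdot, y)$ on the ball $B(y, 2|x-y|) \setminus \{y\}$, starting from the comparison $G \approx |\cdot - y|^{1-n}$ on the sphere $\{|z-y| = |x-y|/2\}$ that follows from the upper bound and from $\int_{\partial B(y,|x-y|/2)} A\nabla_1 G \cdot \nu = -1$.

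For the symmetry $G(x,y) = G^*(y,x)$, I would run the same construction for $L^*_A$ to produce $G^*$, then test the equation for $G_k^\rho(\cdot, y)$ against the smoothed version of $G^*_{k,\sigma}(\cdot, x)$ and vice versa, obtaining $\avint_{B(y,\rho)} G^*_{k,\sigma}(\cdot, x) = \avint_{B(x,\sigma)} G_k^\rho(\cdot, y)$, and sending $\rho, \sigma \to 0$. Having $G$ on each $\Omega_k$, monotonicity (since enlarging the domain increases the Green's function by comparison principle applied to $G_{k+1} - G_k \geq 0$, which is $L_A$-superharmonic and vanishes on $\partial\Omega_k$) lets us define $G(\cdot, y) \coloneqq \lim_k G_k(\cdot, y)$ on $\Omega$; the estimates \eqref{eq:bd_g_1}--\eqref{eq:bound_below_green} are preserved in the limit. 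Uniqueness is straightforward: the difference of two Green's functions lies in $Y^{1,2}_0(\Omega)$, is $L_A$-harmonic, hence vanishes.

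The $L^{(n+1)/2,1}$ bound \eqref{eq:Green_function_global bound} is the most delicate point, and I would argue by Lorentz duality. The estimate \eqref{eq:bd_g_2} with $p = (n+1)/(n-1)$ combined with the layer-cake formula yields the weak-type bound
\[
\bigl|\{z \in \Omega : G(z,x) > t\}\bigr|^{1/p} \lesssim t^{-1},
\]
i.e.\ $G(\cdot, x) \in L^{(n+1)/(n-1), \infty}(\Omega)$ with norm bounded by an absolute constant, uniformly in $x$. Since $L^{(n+1)/(n-1), \infty}$ is the Lorentz dual of $L^{(n+1)/2, 1}$, we obtain $|u(x)| = |\int G(y,x) f(y)\,dy| \lesssim \|G(\cdot,x)\|_{L^{(n+1)/(n-1),\infty}} \|f\|_{L^{(n+1)/2,1}}$, which gives \eqref{eq:Green_function_global bound}.

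Finally, for the Wiener-regular case, the continuity $G(x,\cdot) \in C(\overline\Omega \setminus \{x\})$ together with the vanishing on $\partial\Omega$ follows from the boundary H\"older estimate \eqref{holder}: since $G(x, \cdot)$ is a nonnegative $L^*_A$-solution in $\Omega \setminus \{x\}$ vanishing on $\partial \Omega$ in the $Y^{1,2}$ sense, Wiener regularity (equivalent to Sobolev regularity) of each boundary point forces continuous vanishing. To prove \eqref{eqgreen*23}, I would approximate $\delta_x$ by $\rho^{-(n+1)} \chi_{B(x,\rho)}$: for $\varphi \in Y^{1,2}(\Omega)\cap C(\overline\Omega)$, use $\varphi - H_\varphi \in Y^{1,2}_0(\Omega)$ (by Theorem~\ref{theorem:resolutiveHf}) as a test function in the defining equation for $G(\cdot, y)$ with $y$ near $x$, integrate by parts via definition~(3) of the Green's function applied to $L^*_A$, and pass $\rho \to 0$ using Lebesgue differentiation and the representation of $H_\varphi$ via $\omega^x$. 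The main obstacle throughout is the $L^{(n+1)/2,1}$ bound, which requires the Lorentz-duality detour; all other assertions are either standard energy/Moser arguments or consequences of Perron--Wiener--Brelot theory as summarized earlier in the section.
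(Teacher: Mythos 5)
Your proposal is correct in outline but follows a genuinely different route from the paper: where you reconstruct the Green's function from scratch (exhaustion by smooth subdomains, mollified poles, energy/Moser estimates, Harnack chains for \eqref{eq:bound_below_green}, symmetrization of the mollified kernels), the paper's proof of this lemma is almost entirely a list of citations --- existence for bounded domains from Gr\"uter--Widman, existence in general domains together with \eqref{eq:bd_g_1}--\eqref{eq:bd_g_3} from Hofmann--Kim, the global pointwise bound \eqref{eq:bound_above_green} from Kang--Kim, the lower bound \eqref{eq:bound_below_green} from Gr\"uter--Widman, and \eqref{eq:Green_function_global bound} from \cite{Mou23} --- and its only genuinely new content is the final step: \eqref{eqgreen*23} is quoted from \cite[Lemma 2.6]{AGMT17} for $\vphi\in C^\infty_c(\Rn1)$ and a.e.\ $x$, and then upgraded to every $x\in\overline\om$ by identifying the right-hand side with the $Y^{1,2}_0(\om)$ solution $w$ of $Lw=\dv(A\nabla\vphi)$, which is continuous on $\overline\om$ in a Wiener-regular domain by Theorems \ref{thm:C_alpha_interior-inhom} and \ref{thm:boundary oscillation} together with \eqref{eq:Green_function_global bound} and \eqref{eq:lorentz<bounded}. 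Your Lorentz-duality derivation of \eqref{eq:Green_function_global bound} (pairing a uniform weak-$L^{\frac{n+1}{n-1}}$ bound on $G(\cdot,x)$ against $L^{\frac{n+1}{2},1}$) is a clean self-contained alternative to citing \cite{Mou23}; note only that the global weak-type bound should be drawn from the pointwise estimate \eqref{eq:bound_above_green}, not from \eqref{eq:bd_g_2}, which is a local estimate valid only on balls $B(y,r)$ with $r<\delta_\Omega(y)$.

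Two steps as written have genuine gaps. First, uniqueness: the difference of two Green's functions is a priori only a $W^{1,p}_{\loc}$ distributional solution with $p<\tfrac{n+1}{n}$, and very weak solutions of $L_Au=0$ with merely bounded measurable coefficients need not vanish (Serrin-type examples); you cannot place the difference in $Y^{1,2}_0(\Omega)$, since neither function is in $W^{1,2}$ near the pole and nothing forces the singular parts to cancel in energy. Uniqueness should instead be run through property (3) of the Definition: for each $f\in L^\infty_c(\Omega)$ both candidates produce the unique (Lax--Milgram) $Y^{1,2}_0$ solution of the adjoint problem, so the kernels coincide a.e., and then everywhere by continuity. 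Second, your mollified-pole argument for \eqref{eqgreen*23} naturally yields the identity only for $x\in\Omega$ and for test functions regular enough that $\nabla_yG(x,\cdot)\cdot\nabla\vphi$ is absolutely integrable across the pole; for $\vphi\in Y^{1,2}(\om)\cap C(\overline\om)$ one only has $\nabla\vphi\in L^2$ while $\nabla_yG(x,\cdot)\notin L^2$ near $x$, so the passage to the limit in $\int A^T\nabla G^{*,\rho}(\cdot,x)\cdot\nabla\vphi$ is not justified by \eqref{eq:bd_g_3} alone, and the case $x\in\partial\om$ is not reached at all. This is precisely what the paper's continuity argument supplies: both sides of \eqref{eqgreen*23} are shown to be continuous on $\overline\om$ (the right-hand side via its identification with the bounded solution $w$), so the identity extends from a dense set of $x$ and of test functions. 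Your sketch should either incorporate this continuity step or restrict the direct computation to $x\in\Omega$ and Lipschitz $\vphi$ and then argue by approximation.
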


\begin{proof}
	The existence of a Green's function for bounded domains was proved in \cite{GW82}.
	For the existence of $G$ in general unbounded domains and the bounds \eqref{eq:bd_g_1}, \eqref{eq:bd_g_2}, \eqref{eq:bd_g_3}, we refer to \cite[Lemma 5.4]{HK07}, where existence of Green's functions is studied in the more general case of elliptic systems.  For a proof of \eqref{eq:Green_function_global bound},  we refer to \cite[Theorem 6.1-(2)]{Mou23}, for the global bound \eqref{eq:bound_above_green} to \cite{KK10}, for \eqref{eq:bound_below_green} to \cite{GW82}.   
	
	Finally, \eqref{eqgreen*23} was proved for $\vphi \in C^\infty_c(\Rn1)$ and $\mathcal{L}^{n+1}$-a.e.  $x \in \om$ in \cite[Lemma 2.6]{AGMT17}. However, note that the function  on the right hand-side of \eqref{eqgreen*23} is the weak solution to the inhomogeneous problem $L w =\dv (A \nabla \vphi)$  in $\om$ and $w \in Y^{1,2}_0(\om)$. Thus,  since $\om$ is Wiener regular, by Theorems \ref{thm:C_alpha_interior-inhom} and \ref{thm:boundary oscillation},  \eqref{eq:Green_function_global bound}, and \eqref{eq:lorentz<bounded},  it is  continuous on $\overline \om$.   Since the functions on the left hand side of \eqref{eqgreen*23} are  continuous on $\overline \om$ as well and $\om$ is connected, we have that  \eqref{eqgreen*23} holds everywhere on $\overline \om$.  
\end{proof}

\vv

\begin{lemma}\label{eq:Green-repr-wiener}
Let \( \Omega \subset \mathbb{R}^{n+1} \) be a Wiener regular domain, and let \( A \) be a uniformly elliptic \( (n+1) \times (n+1) \) matrix with coefficients in \( L^\infty(\Omega) \). 
If   $G(\cdot, \cdot)$ and $\Gamma(\cdot, \cdot)$  are the respective Green's function in $\om$ and  fundamental solution    associated with $A$ constructed in Lemma \ref{lemgreen*}, then it holds that
\begin{equation}\label{eq;green ident-Wiener} 
	G(x,y) = \Gamma(x,y) - H_{\Gamma(\cdot,y)|_{\pom}}(x),  \qquad \text{ for all } x, y \in \om \text{ with } x \neq y. 
\end{equation}
\end{lemma}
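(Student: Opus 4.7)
The plan is to show that the function
\[
	w(x) := \Gamma(x,y) - H_{\Gamma(\cdot,y)|_{\partial\Omega}}(x)
\]
verifies the defining properties of the Green's function of $L_A$ in $\Omega$, and then to invoke the uniqueness part of Lemma~\ref{lemgreen*} to conclude $w \equiv G(\cdot,y)$.

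Fix $y \in \Omega$ and choose a cutoff $\eta \in C_c^\infty(\Omega)$ with $\eta \equiv 1$ in a neighborhood of $y$ and compactly supported in $\Omega$. Set $\phi := (1 - \eta)\, \Gamma(\cdot,y)$, extended by $\phi(y) := 0$. Using the continuity of $\Gamma$ away from the diagonal, the pointwise bound $\Gamma(x,y) \lesssim |x-y|^{1-n}$ from \eqref{eq:bound_above_green}, and the global $Y^{1,2}$-type estimates \eqref{eq:bd_g_1}--\eqref{eq:bd_g_3} applied to $\Gamma$ (i.e., Lemma~\ref{lemgreen*} with $\Omega = \Rn1$), one checks that $\phi \in Y^{1,2}(\Rn1) \cap C(\overline{\Omega})$. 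Since $\Omega$ is Wiener regular, $\Rn1 \setminus \Omega$ has positive capacity, so Theorem~\ref{theorem:resolutiveHf} applies: $\phi$ is resolutive, $H_\phi - \phi \in Y^{1,2}_0(\Omega)$, and $H_\phi \in C(\overline{\Omega})$ attains the values of $\phi$ continuously on $\partial\Omega$ by Wiener regularity. Because $\eta$ is supported inside $\Omega$, one has $\phi|_{\partial\Omega} = \Gamma(\cdot,y)|_{\partial\Omega}$, and hence $H_\phi = H_{\Gamma(\cdot,y)|_{\partial\Omega}}$.

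Next, I verify the properties of $w = \Gamma(\cdot,y) - H_\phi$: (i) $L_A w = \delta_y - 0 = \delta_y$ in $\Omega$ as distributions, since $L_A \Gamma(\cdot,y) = \delta_y$ in $\Rn1$ and $L_A H_\phi = 0$ in $\Omega$; (ii) writing $w = -(H_\phi - \phi) + \eta\,\Gamma(\cdot,y)$, both summands lie in $Y^{1,2}(\Omega \setminus B(y,r))$ for every $r > 0$, and outside the support of $\eta$ the function $w$ coincides with the $Y^{1,2}_0(\Omega)$ function $-(H_\phi - \phi)$, so $w$ has trace zero on $\partial\Omega$; (iii) continuity of $H_\phi$ on $\overline{\Omega}$ together with continuity of $\Gamma(\cdot,y)$ away from $y$ yields $w \in C(\overline{\Omega} \setminus \{y\})$ with $w = 0$ pointwise on $\partial\Omega$.

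The remaining task is to deduce $w = G(\cdot,y)$. The cleanest route is to apply the same construction to $L_A^*$, producing $G^*(\cdot,x) = \Gamma^*(\cdot,x) - H^{L_A^*}_{\Gamma^*(\cdot,x)|_{\partial\Omega}}(\cdot)$, and then to use the symmetry $G(x,y) = G^*(y,x)$ from Lemma~\ref{lemgreen*} to transfer the adjoint representation (property (3) in the definition of the Green's function) to $w$, at which point the uniqueness statement in Lemma~\ref{lemgreen*} forces $w = G(\cdot,y)$. The main obstacle lies precisely in this last step: while (i)--(iii) parallel conditions (1)--(2) of the Green's function definition, condition (3) is a global $Y^{1,2}_0$ statement in the \emph{dual} variable and cannot be read off directly from $w(\cdot,y)$ at a single $y$. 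An alternative that avoids invoking the adjoint construction is to analyze $u := w - G(\cdot,y) \in W^{1,1}_{\loc}(\Omega)$, which satisfies $L_A u = 0$ in $\Omega$ distributionally, belongs to $Y^{1,2}(\Omega \setminus B(y,r))$ with zero trace on $\partial\Omega$ for every $r > 0$, and vanishes continuously on $\partial\Omega$; a Meyers--Gehring self-improvement argument then promotes $u$ to $W^{1,2}_{\loc}(\Omega)$, after which classical De Giorgi--Nash--Moser regularity and the weak maximum principle for Wiener regular domains force $u \equiv 0$.
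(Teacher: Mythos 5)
Your construction of $w=\Gamma(\cdot,y)-H_{\Gamma(\cdot,y)|_{\pom}}$ and the verification of properties (1)--(2) (via the cutoff $\phi=(1-\eta)\Gamma(\cdot,y)$, Theorem~\ref{theorem:resolutiveHf}, and Wiener regularity) are fine, and this is a genuinely different strategy from the paper, which never argues by ``characterization plus uniqueness'': instead it tests the weak formulation of the Hofmann--Kim approximating Green's functions $G^x_k$ against variational solutions built from truncated approximate fundamental solutions $\Gamma^y_j\eta^y_\ve$, identifies the Perron solution through Theorem~\ref{theorem:resolutiveHf}, passes to the limits $\ve\to0$, $j\to\infty$, $k\to\infty$, and finally upgrades from a.e.\ $y$ to all $y$ by continuity and the symmetry $G(x,y)=G_{A^T}(y,x)$. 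The reason the paper takes that route is precisely the point where your argument has a genuine gap: the uniqueness asserted in Lemma~\ref{lemgreen*} is uniqueness among functions satisfying \emph{all} of properties (1)--(3), and property (3) --- the adjoint representation, which is a global statement in the dual variable --- is exactly what you cannot verify for $w$ from data at a single pole $y$. Your route (a) does not close this: constructing the analogous formula for $L_A^*$ and citing $G(x,y)=G^*(y,x)$ only transfers information about $G$, not about $w$; to ``transfer property (3) to $w$'' you would already need the identity $w=G(\cdot,y)$ (or a symmetry statement for $w$ itself), so the argument is circular.

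Route (b) is also not sound as stated. The function $u=w-G(\cdot,y)$ is, near the pole, only a $W^{1,p}_{\loc}$ ($p<\tfrac{n+1}{n}$) \emph{distributional} solution of $L_Au=0$; away from $y$ it is a genuine $W^{1,2}_{\loc}$ solution, but the whole issue is the removability of the singularity at $y$. For coefficients that are merely bounded and measurable --- which is the hypothesis of this lemma, with no DMO or continuity assumption --- Serrin's pathological examples show that $W^{1,p}$ distributional solutions with $p<2$ need not be locally bounded, let alone lie in $W^{1,2}_{\loc}$; and the Meyers--Gehring mechanism you invoke only self-improves integrability of solutions that are \emph{already} in $W^{1,2}_{\loc}$, so it cannot perform the promotion you need. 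A priori you only know $|u(x)|\lesssim|x-y|^{1-n}$ near $y$ (both $\Gamma$ and $G$ obey this upper bound, and no cancellation is available without further input from the construction), which is not enough to run a removable-singularity or barrier argument. To close the proof along your lines you would have to either verify property (3) for $w$ directly by a duality argument (essentially what the paper's testing-and-limiting procedure accomplishes at the level of the approximants), or extract from the Hofmann--Kim construction quantitative information showing $\Gamma(\cdot,y)-G(\cdot,y)$ is an honest $W^{1,2}$ solution across the pole; neither is supplied in the proposal.
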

\begin{proof}
Let us fix $x, y \in \om$ and a small constant $\ve>0$ so that $\ve \ll \min\{ \rho_k, s_j  \} \leq \max\{ \rho_k, s_j  \} \ll \min\{ \delta_\om(x), \delta_\om(y), |x-y| \}$.  Let us also denote as $G_k^x(\cdot)\coloneqq G_A^{\rho_k}(x,\cdot)$ and $\Gamma_j^y(\cdot)\coloneqq \Gamma^{s_j}_A(\cdot,y)$ the approximating Green's functions and fundamental solutions as constructed in \cite[(4.3)]{HK07} and \cite[(3.4)]{HK07} respectively.   

Let us  define $\eta_\ve^y(\cdot)\coloneqq \eta(|\cdot-y|/\ve)$ where $\eta=1$ in $\Rn1 \setminus B(0,2)$, $\eta=0$ in $\overline{B(0,1)}$,  $0\leq \eta \leq 1$, and $|\nabla \eta| \lesssim 1$ in $\Rn1$.  Set now $\psi^y_\ve \coloneqq  \Gamma_j^y\eta_\ve^y$ and notice that as $ \Gamma_j^y \in Y^{1,2}(\Rn1) \cap C(\Rn1 \setminus \{y\})$, then $\psi_\ve^y \in Y^{1,2}(\Rn1) \cap C(\Rn1 )$ and it vanishes at infinity.  Solve now the variational Dirichlet problem for $L_A$ in $\om$ and find a $L_A$-solution $u_\ve \in Y^{1,2}(\om) \cap C(\overline \om)$ such that $u_\ve - \psi_\ve^y \in Y^{1,2}_0(\om)$.  Then,  if we set $B_k\coloneqq B(x,\rho_k)$, by \cite[(4.3)]{HK07},  we have that 
\begin{align*}
\int_\om A^T(z)\nabla G_k^x (z) \cdot\nabla (u_\ve - \psi_\ve^y)(z)\,dz = \avint_{B_k} (u_\ve-\psi_\ve^y)(z)\,dz
\end{align*}
Now, using that $G_k^x \in Y^{1,2}_0(\om)$ and that $L_A u_\ve=0$ in $\om$, we obtain that
$$
\int_\om A^T(z)\nabla G_k^x (z) \cdot \nabla u_\ve(z)\,dz  = \int_\om A(z) \nabla u_\ve(z) \cdot \nabla G_k^x (z) \,dz  =0.
$$
Note that 
\begin{align*}
\int_\om A^T(z)\nabla G_k^x (z) \cdot \nabla \psi_\ve^y(z)\,dz  &= \int_\om A^T(z)\nabla G_k^x (z) \cdot \nabla \Gamma_j^y(z) \, \eta_\ve^y(z)\,dz\\
&\qquad  + \int_\om A^T(z)\nabla G_k^x (z)\cdot  \nabla  \eta_\ve^y(z) \,\Gamma_j^y(z) \,dz \eqqcolon  J^\ve_1 +J^\ve_2.
\end{align*}
Since  $G_k^x \in Y^{1,2}_0(\om)$ and  $\Gamma_j^y \in Y^{1,2}(\Rn1)$,  dominated convergence and \cite[(3.5)]{HK07} yield 
$$
\lim_{\ve \to 0} J^\ve_1 = \int_\om A^T(z)\nabla G_k^x (z)\cdot  \nabla \Gamma_j^y(z)\,dz=  \int_\om A(z)\nabla \Gamma_j^y(z)\cdot \nabla G_k^x (z) \,dz= G_k^x(y).
$$
By \cite[(3.19)]{HK07},  if $A(y,\ve,2\ve)\coloneqq B(y,2\ve) \setminus B(y, \ve) $, we have that 
$$
|J^\ve_2| \lesssim \ve^{-n}  \int_{A(y,\ve,2\ve)} |\nabla G_k^x| \lesssim \ve\,\mathcal{M}(|\nabla G_k^x|\, \chi_\om)(y),
$$ 
where $\mathcal M$ stands for the centered Hardy-Littlewood function in $\Rn1$.  Since $G_k^x \in Y^{1,2}_0(\om)$,  it holds that $\mathcal{M}(|\nabla G_k^x|\, \chi_\om)(y)<\infty$ for $\mathcal{L}^{n+1}$-a.e. $y \in \om$, and thus, 
$
\lim_{\ve \to 0}  J^\ve_2 = 0.
$

It remains to prove that 
\begin{equation}\label{eq:green-ident-1}
 \lim_{\ve \to 0}  \avint_{B_k} (u_\ve-\psi_\ve^y)(z)\,dz = \avint_{B_k} \left ( \int_\pom \Gamma_j^y(\xi)\,d\omega^z (\xi)- \Gamma_j^y(z)\right)\,dz.
\end{equation}
By Theorem \ref{theorem:resolutiveHf} and the fact that $\psi_\ve^y(z) = \Gamma_j^y(z)$ for any $z \not\in B(y,2\ve)$,  we have that 
$$
u_\ve(z) = \int_\pom \Gamma_j^y(\xi)\,d\omega^z (\xi) \qquad \text { and } \qquad \avint_{B_k} \psi_\ve^y =  \avint_{B_k} \Gamma_j^y,
$$
which proves  \eqref{eq:green-ident-1}.  We have shown that 
$$
 -G_k^x(y)=\avint_{B_k} \left ( \int_\pom \Gamma_j^y(\xi)\,d\omega^z (\xi)- \Gamma_j^y(z)\right)\,dz.
$$
By picking a  sequence $s_j$ so that $\lim_{j \to \infty} \Gamma_j^y(\cdot) = \Gamma(\cdot, y)$  locally uniformly away from $y$ (thus  in $B_k$) as guaranteed by \cite[(3.41)]{HK07},  we take limits on both sides of the identity above and infer
$$
- G_k^x(y)=\avint_{B_k} \left ( \int_\pom \Gamma(\xi,y)\,d\omega^z (\xi)- \Gamma(z,y)\right)\,dz.
$$
Now we pick a sequence $\rho_k$ so that $G_k^x(y) \to G(x,x)$ locally uniformly away from $y$ (see \cite[(3.41)]{HK07}) and deduce  that 
$$
 G(x,y)= \Gamma(x,y) - \int_\pom \Gamma(\xi,y)\,d\omega^x(\xi),\qquad \text{ for }\mathcal L^{n+1}\text{-a.e. }y\in \Omega \text{ and all }x\in \Rn1. 
$$

 Since $G(x,y)=G_{A^T}(y,x)$, $\Gamma(x,y)=\Gamma_{A^T}(y,x)$,  and $\int_\pom \Gamma(\xi,y)\,d\omega^x(\xi)$  are continuous in $y$ away from a fixed $x \in \om$,  and $\om$ is connected,  we conclude the proof of the lemma.
\end{proof}

\vv

For a real, elliptic $(n+1)\times (n+1)$-matrix $A_0$ with constant coefficients, we denote by $\Theta(x,y; A_0)=\Gamma_{A_0}(x,y)$ the fundamental solution of $L_{A_0}$. In particular, we recall that
\[
\Theta(x,y;A_0)=\Theta(x-y,0;A_0)
\]
and, for $n\geq 2$,
\begin{equation}\label{eq:fund_sol_const_matrix}
	\Theta(z,0;A_0)=\Theta(z,0;A_{0,s})=
		\frac{-\omega_n^{-1}}{(n-1)\sqrt{\det A_{0,s}}}\frac{1}{\langle A_{0,s}^{-1}z, z\rangle^{(n-1)/2}},
\end{equation}
where $A_{0,s}\coloneqq\frac12(A_0+A_0^T)$ is the symmetric part of $A_0$ {and $\omega_n$ is the surface measure of the unit sphere $\mathbb S^n\coloneqq \partial B(0,1)$. Moreover, it holds
	\begin{equation}\label{eq:gradient_fund_sol_const_matrix}
		\nabla_1 \Theta(z,0;A_0)=\frac{\omega_n^{-1}}{\sqrt{\det A_{0,s}}}\frac{A_{0,s}^{-1}\,z}{\langle A_{0,s}^{-1}z, z\rangle^{(n+1)/2}}, \qquad \text{ for }z\in \mathbb R^{n+1}\setminus\{0\}.
	\end{equation}	Therefore, it is straightforward to see that
	\begin{equation}\label{eq:gradient_fund_sol_const_matrix-1}
		\nabla_1 \Theta(y,x;A_0)	=	\nabla_1 \Theta(y-x,0;A_0)=	-\nabla_1 \Theta(x-y,0;A^T_0)=-\nabla_2 \Theta(x,y;A_0^T).
	\end{equation}
	Moreover, one can easily prove that for any  integer $k \geq 0$,
	\begin{equation}\label{eq:gradient_fund_sol_const_matrix-2}
		\bigl|\nabla_1^{(k)} \Theta(z,0;A_0)\bigr| \lesssim_{k,n,\Lambda} |z|^{-(n+k-1)},  \qquad \text{ for }z\in \mathbb R^{n+1}\setminus\{0\}.
	\end{equation}

	\vvv
	Finally, we state a lemma that relates the Green's functions and elliptic measures associated with matrices that agree $\mathcal L^{n+1}$-almost everywhere.
	
	\begin{lemma}\label{lem:reduction_to_UC}
		Let $A$ and $A'$ be two uniformly elliptic matrices with coefficients in $L^\infty(\Rn1)$ such that $A(y)=A'(y)$ for $\mathcal L^{n+1}$-almost every $y\in\Rn1$. Let also $\Omega\subset\R^{n+1}$ be a Wiener regular domain. The following statements hold:
		\begin{enumerate}
			\item If we denote by $G_A$ and $G_{A'}$ the Green functions associated with $L_A$ and $L_{A'}$ relative to  $\Omega$, then $G_A(\cdot, \cdot)=G_{A'}(\cdot, \cdot)$ on $\Omega\times\Omega\setminus \{(x,y)\in\Omega\times\Omega:x=y\}$.
			\item If $p\in \Omega$, then $\omega^{L_A,p}_{\Omega}=\omega^{L_{A'},p}_{\Omega}$.
		\end{enumerate}
	\end{lemma}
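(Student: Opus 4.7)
The proof hinges on the elementary observation that the bilinear form $(u, \phi) \mapsto \int_\Omega A \nabla u \cdot \nabla \phi$ used in the definition of weak solutions depends on $A$ only through its $\mathcal{L}^{n+1}$-equivalence class. Since $A = A'$ a.e., for any $u \in W^{1,2}_{\loc}(\Omega)$ and $\phi \in C^\infty_c(\Omega)$ we have $A(x) \nabla u(x) \cdot \nabla \phi(x) = A'(x) \nabla u(x) \cdot \nabla \phi(x)$ pointwise a.e., and hence the integrals coincide. Consequently, the notions of weak solution, subsolution, and supersolution for $L_A$ and $L_{A'}$ are identical, and likewise the notions of $L_A$- and $L_{A'}$-(sub/super)harmonicity on any open subset of $\Omega$.

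For part (1), I invoke the uniqueness of the Green's function asserted in Lemma~\ref{lemgreen*}. The three defining properties of a Green's function (the distributional identity $L_A G(\cdot,y) = \delta_y$, the global Sobolev regularity with vanishing boundary trace, and the representation $u(x) = \int G(y,x) f(y)\,dy$ solving $L_A^* u = f$) are all formulated through integrals $\int A \nabla_1 G(\cdot,y) \cdot \nabla \phi$ and $\int A^T \nabla u \cdot \nabla \phi$ against compactly supported test functions. By the a.e. identification, $G_A$ satisfies exactly these properties with $A$ replaced by $A'$; hence $G_A$ is a Green's function for $L_{A'}$ in $\Omega$, and uniqueness forces $G_A = G_{A'}$ on $\Omega \times \Omega \setminus \{x = y\}$.

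For part (2), the cleanest route is to apply the representation formula \eqref{eqgreen*23} from Lemma~\ref{lemgreen*}: for every $\vphi \in Y^{1,2}(\Omega) \cap C(\overline\Omega)$ and $x \in \overline\Omega$,
\[
\int_{\partial\Omega} \vphi \, d\omega^{L_A, x}_\Omega - \vphi(x) = -\int_\Omega A^T(y) \nabla_y G_A(x,y) \cdot \nabla \vphi(y) \, dy,
\]
and similarly for $A'$ and $\omega^{L_{A'}, x}_\Omega$. Combining the a.e. identity $A^T = (A')^T$ with the conclusion $G_A = G_{A'}$ from part~(1), the two right-hand sides coincide, so
\[
\int_{\partial\Omega} \vphi \, d\omega^{L_A, x}_\Omega = \int_{\partial\Omega} \vphi \, d\omega^{L_{A'}, x}_\Omega
\]
for all such $\vphi$. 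Restrictions to $\partial\Omega$ of functions in $Y^{1,2}(\Omega) \cap C(\overline\Omega)$ include all restrictions of $C^\infty_c(\mathbb{R}^{n+1})$ functions and therefore form a dense subset of $C(\partial\Omega)$ by Tietze's extension theorem combined with mollification, so Riesz representation yields $\omega^{L_A, p}_\Omega = \omega^{L_{A'}, p}_\Omega$ for $p = x \in \Omega$.

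I expect no serious obstacle here: the entire argument reduces to tracking that every ingredient in the definitions (weak formulation, Perron classes, Green's function axioms, representation formula) is written as a Lebesgue integral against $A$ and is therefore insensitive to null-set modifications. A small but worth-noting point is that uniqueness of the Green's function is the single place where one really uses something non-trivial; a parallel Perron-theoretic argument (the classes $\mathcal{U}_f$ and $\mathcal{L}_f$ coincide for the two operators, hence $\overline H_f = \underline H_f$ agrees, and Riesz representation gives the two measures) would give part~(2) directly without passing through the Green's function identity, and serves as a safety net in case one wished to avoid Lemma~\ref{lemgreen*}.
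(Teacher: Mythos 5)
Your proposal is correct and takes essentially the same route as the paper: part (2) is exactly the paper's argument, applying the representation formula \eqref{eqgreen*23} together with $A^T=(A')^T$ a.e.\ and the Green's function identity from part (1), then concluding by density/Riesz representation. For part (1) the paper argues by subtracting the two Green functions, observing via \eqref{eq_red1} that the difference is a weak $L_A$-solution with vanishing boundary values (hence zero a.e.) and then invoking continuity, whereas you appeal directly to the uniqueness assertion of Lemma~\ref{lemgreen*}; this is the same underlying fact, so the difference is cosmetic.
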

	\begin{proof}
		By \cite[(4.38)]{HK07}, for every $\phi \in C^\infty_c(\Omega)$ and $x \in \Omega$, we have that
		\begin{equation*}
			\phi(x) = \int_\Omega A \nabla_1 G_A(\cdot, x) \cdot \nabla \phi = \int_\Omega A' \nabla_1 G_{A'}(\cdot, x) \cdot \nabla \phi.
		\end{equation*}
		Thus, since $A(y) = A'(y)$ for $\mathcal{L}^{n+1}$-almost every $y$,
		\begin{equation}\label{eq_red1}
			\int_{\Omega} A\,  \nabla_1 \bigl(G_A(\cdot, x) - G_{A'}(\cdot, x)\bigr) \cdot \nabla \phi = 0, \qquad \text{for every } \phi \in C^\infty_c(\Omega).
		\end{equation}
		Hence, the property $G_A(\cdot, x)|_{\partial \Omega} \equiv G_{A'}(\cdot, x)|_{\partial \Omega} \equiv 0$ from Lemma~\ref{lemgreen*} and \eqref{eq_red1} implies that, for every $x \in \Omega$, we have $G_A(\cdot, x) \equiv G_{A'}(\cdot, x)$ $\mathcal{L}^{n+1}$-almost everywhere on $\Omega \setminus \{x\}$.
		The continuity of $G_A(\cdot, \cdot)$ and $G_{A'}(\cdot, \cdot)$ on $\Omega \times \Omega \setminus \{(x,y) \in \Omega \times \Omega : x = y\}$, see Lemma~\ref{lemgreen*}, yields that $G_A(\cdot, \cdot) = G_{A'}(\cdot, \cdot)$.

		We now prove property (2). We observe that, for any $\varphi, \Phi \in C^\infty_c(\mathbb{R}^{n+1})$ such that $\Phi \equiv \varphi$ on $\partial \Omega$ and $\mathcal{L}^{n+1}$-almost every $x \in \Omega$, 
		\begin{equation}\label{eq:red_2}
			\begin{split}
				\int_{\partial \Omega} \varphi\, d\omega^{L_A, p}_{\Omega} - \Phi(x) 	&\overset{\eqref{eqgreen*23}}{=} -\int_{\Omega} A(y) \, \nabla_y G_A(y, p) \cdot \nabla \Phi(y)\, dy \\
				&= -\int_{\Omega} A'(y) \, \nabla_y G_{A'}(y, p) \cdot \nabla \Phi(y)\, dy \overset{\eqref{eqgreen*23}}{=} \int_{\partial \Omega} \varphi\, d\omega^{L_{A'}, p}_{\Omega} - \Phi(x),
			\end{split}
		\end{equation}
		where the second equality holds because of (1) and the fact that $A$ agrees with $A'$ $\mathcal{L}^{n+1}$-almost everywhere. This readily implies that $\omega^{L_A, p}_{\Omega} = \omega^{L_{A'}, p}_{\Omega}$.
	\end{proof}

\vv

\subsection{The gradient of the fundamental solution.}
We first recall a crucial property of solutions to elliptic operators in divergence form associated with $\DMO_s$ matrices.
\begin{remark}[see {\cite[Remark 3.8]{MMPT23}}] \label{remark:rem_grad_bound}
	Let $\eta\in (0,1/2)$ be a proper absolute constant chosen as in \cite[p. 424]{DK17}.
	Let $0<r<R_0$, $x_0\in \Rn1$,  and $g\colon B(x_0, (N +1) R_0)\to \R$, where {$N\coloneqq 3 (\tfrac{4}{3})^{N_\eta}$ $N_\eta$ is such that $2^{-N_\eta-1}\leq \eta< 2^{-N_\eta}$}. For $0<t<r$ and $1\leq k\leq N$, we define  
	\begin{equation*}
		\begin{split}
			\mathring\oomega^{x_0, k r}_g(t) &= \sup_{w\in B(x_0, k  r)} \avint_{B(w, t r)}\bigl|g(x) - \bar g_{w,tr}\bigr|\, dx.
		\end{split}
	\end{equation*}
	
	Let  $A$ be a uniformly elliptic matrix such that  $A\in \DMO_s$ and assume that, for $N$ as above,
	\[
	\int_0^1 \mathring\oomega^{x_0, N R_0}_g(t)\, \frac{dt}{t}<\infty.
	\] 
	If  $u$ is a weak solution to $-\div (A(x)\nabla u)=-\div g$ in $B(x_0, (N+1) R_0)$, we obtain that $u\in C^1(\overline{B(x_0, r)})$ and it satisfies the estimate
	\begin{equation}\label{eq:Linftyest-r}
		\|\nabla u\|_{L^\infty(B(x_0,2r))}\lesssim_{R_0} \avint_{B(x_0, 4 r)} |\nabla u(x)|\, dx+ \int_0^1 \mathring\oomega^{x_0, N r}_g(t)\, \frac{dt}{t}.
	\end{equation}
	
	Furthermore, the implicit constant in \eqref{eq:Linftyest-r} blow up logarithmically as $R_0 \to \infty$. If $0 < R_0 < 1$, it only depends on ellipticity, dimension, and the Dini Mean Oscillation condition.
	
\end{remark}
}

As a consequence of Remark \ref{remark:rem_grad_bound}, we state the Calder\'on-Zygmund-type properties of gradients of fundamental solutions associated with $\DMO_s$ matrices in the following lemma; see \cite[Lemma 3.9]{MMPT23} for the proof.

\begin{lemma}\label{lem:estim_fund_sol}
Let $A(\cdot)=(a_{ij})$	be a uniformly elliptic matrix in $\Rn1$, $n\geq 2$, satisfying $A\in \DMO_s$, and let $R>0$.
There exist $C=C(n,\Lambda,R)>0$ and $\beta=\beta(n)>0$ such that the fundamental solution $\Gamma_A$ satisfies the following properties:
\begin{enumerate}
	\item \(|\nabla_1\Gamma_A(x,y)| + |\nabla_2\Gamma_A(x,y)|\leq C {|x-y|^{-n}}\)  for $x,y\in \Rn1$, $0<|x-y|<R$.
	\item \(|\nabla_1\nabla_2\Gamma_A(x,y)|\leq C |x-y|^{-(n+1)}\)  for $x,y\in \Rn1$, $0<|x-y|<R$.
	\item 
	We have
	\begin{align}\label{eq:continuityGamma}
		|\nabla_1 \Gamma_A(x,y) - \nabla_1\Gamma_A(x,z)| &+ |\nabla_1\Gamma_A(y,x) - \nabla_1\Gamma_A(z,x)| \notag\\
		& \leq C \left( \frac{|y-z|^\beta}{|x-y|^\beta}+ \int_0^{\frac{|y-z|}{|x-y|}} \oomega_A(t)\,\frac{dt}{t}\right)  |x-y|^{-n},
	\end{align}
	for  $2|y-z|\leq |x-y|<R$.
\end{enumerate}
\end{lemma}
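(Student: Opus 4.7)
The plan is to bootstrap from Remark \ref{remark:rem_grad_bound}, the Dong--Kim $L^\infty$-gradient estimate, together with the global pointwise bound \eqref{eq:bound_above_green}, interior Caccioppoli, and the symmetry $\Gamma_A(x,y)=\Gamma_{A^T}(y,x)$ (noting that $A^T\in\DMO_s$ with the same modulus $\oomega_A$). Throughout I would fix $r\coloneqq|x-y|\in(0,R)$ and work on the ball $B(x,r/2)$, where $\Gamma_A(\cdot,y)$ is a classical $L_A$-harmonic function.

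For (1), I would apply Remark \ref{remark:rem_grad_bound} with $g\equiv 0$ to $u\coloneqq\Gamma_A(\cdot,y)$ on $B(x,r/4)$, getting $|\nabla_1\Gamma_A(x,y)|\lesssim \avint_{B(x,r/4)}|\nabla u|$, then combine Caccioppoli with \eqref{eq:bound_above_green} to bound this by $r^{-1}\|u\|_{L^\infty(B(x,r/2))}\lesssim r^{-n}$. The estimate on $\nabla_2\Gamma_A$ follows by applying the same reasoning to $\Gamma_{A^T}(\cdot,x)$. For (2), the crucial observation is that $v(z)\coloneqq\nabla_2\Gamma_A(z,y)$ is itself $L_A$-harmonic on $\mathbb{R}^{n+1}\setminus\{y\}$: differentiating the distributional identity $L_A^z\Gamma_A(z,y)=\delta_y$ in the parameter $y$ annihilates the source away from $y$. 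Since $|v|\lesssim r^{-n}$ on $B(x,r/2)$ by (1) applied to $\Gamma_{A^T}$, a second application of Remark \ref{remark:rem_grad_bound} and Caccioppoli to $v$ yields $|\nabla_1 v(x)|\lesssim r^{-(n+1)}$.

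For (3), I would split the sum into its two summands; let $\rho\coloneqq|y-z|\leq r/2$. For $|\nabla_1\Gamma_A(x,y)-\nabla_1\Gamma_A(x,z)|$ the function $w\mapsto\nabla_1\Gamma_A(x,w)$ is $L_{A^T}$-harmonic on $B(y,r/2)$ (again by differentiating in the pole), so classical De Giorgi--Nash--Moser furnishes an exponent $\beta=\beta(n)>0$ and the oscillation bound $(\rho/r)^\beta\cdot\|g\|_{L^\infty(B(y,r/2))}\lesssim (\rho/r)^\beta\, r^{-n}$, where the $L^\infty$ factor is controlled by (1). For $|\nabla_1\Gamma_A(y,x)-\nabla_1\Gamma_A(z,x)|$ I would apply the quantitative Dong--Kim $C^{1}$-regularity theorem for $\DMO_s$ coefficients to $\Gamma_A(\cdot,x)$, which is $L_A$-harmonic on $B(y,r/2)$: this gives a modulus of continuity for the gradient of the form $\int_0^{\rho/r}\oomega_A(t)\,\tfrac{dt}{t}$ times the local $L^\infty$-norm of the gradient, which by (1) is $\lesssim r^{-n}$. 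Summing the two contributions produces \eqref{eq:continuityGamma}.

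The main obstacle will be the scaling step in the last estimate: to obtain the Dini integral $\int_0^{\rho/r}\oomega_A(t)\,\tfrac{dt}{t}$ with the correct normalization in $r$, one has to rescale $\Gamma_A(\cdot,x)$ to unit size on $B(y,r/2)$ and invoke the Dong--Kim regularity on the rescaled equation, then track how the modulus $\oomega_A$ transforms under the dilation (and observe that the resulting coefficient matrix retains the same $\DMO_s$ constants). Everything else follows by the standard bootstrap once the $L_A$- and $L_{A^T}$-harmonicity structure of $\Gamma_A$ and its parameter derivatives has been identified.
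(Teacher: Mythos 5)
The paper gives no proof of this lemma --- it is quoted verbatim from \cite{MMPT23} (Lemma 3.9) --- and your argument reconstructs essentially that proof: the Dong--Kim $L^\infty$-gradient estimate of Remark \ref{remark:rem_grad_bound} together with Caccioppoli and the pointwise bound \eqref{eq:bound_above_green} for (1); the fact that derivatives of $\Gamma_A$ in the pole variable are again weak solutions away from the pole (justified by difference quotients, using $A^T\in\DMO_s$ and the bounds from (1)) for (2) and for the first difference in (3); and De Giorgi--Nash plus the Dong--Kim $C^1$ modulus-of-continuity estimate for the second difference in (3). This is correct in outline; the only step needing care --- which you yourself flag --- is that the off-the-shelf Dong--Kim modulus is stated at unit scale and involves a H\"older piece plus a tail-augmented (modified) Dini modulus, so recovering the exact two-term bound \eqref{eq:continuityGamma} at scale $|y-z|/|x-y|$ requires rescaling, the doubling and boundedness of $\oomega_A$, and constants that may depend on $R$ and the DMO character of $A$, exactly as in the cited proof.
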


\vv

We present a representation formula for the difference of fundamental solutions associated with two possibly different matrices.

{\begin{lemma}[{\cite[Lemma 3.10]{MMPT23}}]\label{lem:MMPT_lemma_310}
	If $A_1$ and $A_2$	are uniformly elliptic matrices in $\Rn1$, $n\geq 2$, so that  $A_1,  A_2\in \DMO_s$, then for $R>0$ and all $x, y \in \Rn1$ such that $0<|x-y|<R$, it holds that
	\begin{equation}\label{eq:representation-perturbation}
		\Gamma_{A_1}(x,y)-\Gamma_{A_2}(x,y) =\int \nabla_2\Gamma_{A_1}(x,z)\cdot\bigl(A_1(z)-   A_2(z)\bigr)\nabla_1\Gamma_{A_2}(z,y)\, dz.
	\end{equation}
\end{lemma}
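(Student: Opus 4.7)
The identity is a Green's-type representation formula obtained by pairing the two fundamental solutions viewed as solutions of mutually adjoint equations. Setting $u(z)\coloneqq \Gamma_{A_1}(x,z)$ and $v(z)\coloneqq \Gamma_{A_2}(z,y)$, the symmetry relation $\Gamma_A(x,z)=\Gamma_{A^T}(z,x)$ from Lemma \ref{lemgreen*} shows that in the variable $z$, $u$ is the fundamental solution for $L_{A_1^T}$ with pole $x$, while $v$ is the fundamental solution for $L_{A_2}$ with pole $y$. The plan is then to apply Green's second identity to $u$ and $v$: formally,
\[
\Gamma_{A_1}(x,y)-\Gamma_{A_2}(x,y)=u(y)-v(x)=\int \bigl(u\, L_{A_2}^{(z)}v-v\, L_{A_1^T}^{(z)}u\bigr)\,dz,
\]
which, after integration by parts in each of the two terms and using the algebraic identity $(A^T\xi)\cdot\eta=\xi\cdot(A\eta)$, yields the bilinear expression on the right-hand side of the claim (up to sign choices dictated by the convention $L_A=-\div(A\nabla)$).

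To justify this rigorously I would fix $x\neq y$ with $0<|x-y|<R$, pick $\varepsilon>0$ small enough that $\overline{B(x,\varepsilon)}$ and $\overline{B(y,\varepsilon)}$ are disjoint, and a large $\rho>0$, and apply the divergence theorem on the bounded domain
\[
\Omega_{\varepsilon,\rho}\coloneqq B(0,\rho)\setminus\bigl(\overline{B(x,\varepsilon)}\cup\overline{B(y,\varepsilon)}\bigr).
\]
On $\Omega_{\varepsilon,\rho}$ both $u$ and $v$ are classical $C^1$ solutions of their respective homogeneous equations, thanks to Remark \ref{remark:rem_grad_bound} and the $\DMO_s$ assumption on $A_1,A_2$, so the divergence theorem applies and produces the bulk integral of $\nabla u\cdot(A_1-A_2)\nabla v$ on $\Omega_{\varepsilon,\rho}$ plus three surface integrals, on $\partial B(x,\varepsilon)$, $\partial B(y,\varepsilon)$, and $\partial B(0,\rho)$. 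As $\varepsilon\to 0^+$, a standard singularity extraction using $|\nabla_1\Gamma_{A_i}(z_1,z_2)|\lesssim|z_1-z_2|^{-n}$ from Lemma \ref{lem:estim_fund_sol} and the continuity of the other factor at the pole shows that the two inner surface terms converge to $\Gamma_{A_2}(x,y)$ and $\Gamma_{A_1}(x,y)$ with appropriate signs. The bulk integral converges to the full integral on $\R^{n+1}$ by dominated convergence, since the bound $|\nabla u(z)\,\nabla v(z)|\lesssim |z-x|^{-n}|z-y|^{-n}$ is locally integrable near each pole and decays like $|z|^{-2n}$ at infinity, yielding a globally integrable majorant when $n\geq 2$.

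The principal technical obstacle is the control of the outer surface integral on $\partial B(0,\rho)$ as $\rho\to\infty$, since Lemma \ref{lem:estim_fund_sol} supplies the pointwise gradient bounds only at scales below a fixed $R$ and with $R$-dependent constants. To overcome this one would rescale and apply Remark \ref{remark:rem_grad_bound} on balls of radius comparable to $\rho$, combined with the global upper bound $|\Gamma_{A_i}(z_1,z_2)|\lesssim|z_1-z_2|^{1-n}$ from \eqref{eq:bound_above_green}, to obtain $|\nabla\Gamma_{A_i}(z_1,z_2)|\lesssim|z_1-z_2|^{-n}$ at large scales as well. Once this is in place, the outer surface integrand is $O(\rho^{1-2n})$ while the surface measure is $O(\rho^n)$, so the contribution is $O(\rho^{1-n})$ and vanishes as $\rho\to\infty$ for $n\geq 2$, completing the proof.
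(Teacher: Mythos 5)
Your overall strategy is sound and is essentially the computation behind the cited result: the paper itself gives no proof of this lemma (it simply quotes \cite[Lemma 3.10]{MMPT23}), and the argument there is the weak-formulation version of the same identity — one observes that $w\coloneqq\Gamma_{A_1}(\cdot,y)-\Gamma_{A_2}(\cdot,y)$ solves $L_{A_1}w=\div\bigl((A_1-A_2)\nabla_1\Gamma_{A_2}(\cdot,y)\bigr)$ and tests against $\Gamma_{A_1^T}(\cdot,x)=\Gamma_{A_1}(x,\cdot)$, with truncations/approximate fundamental solutions playing the role of your excised balls and large sphere. So there is no difference of substance in the route; the issues are in how you justify three steps.

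First, the outer boundary term: under $\DMO_s$ alone you cannot upgrade Lemma \ref{lem:estim_fund_sol} to a scale-invariant bound $|\nabla\Gamma_{A_i}(z_1,z_2)|\lesssim|z_1-z_2|^{-n}$ at large scales by "rescaling Remark \ref{remark:rem_grad_bound}": after rescaling to unit size the relevant Dini integral becomes $\int_0^{\rho}\oomega_{A_i}(t)\,dt/t$, which is only controlled by $\|A_i\|_{\infty}\log\rho$, and the remark itself warns that the constant blows up logarithmically. This is harmless for your argument, since $\rho^{1-n}\log\rho\to0$ for $n\geq2$ (and the same logarithmic loss is immaterial for the dominated-convergence majorant of the bulk term), or you can bypass pointwise large-scale gradient bounds entirely by averaging the outer term over $\rho\in[\rho_0,2\rho_0]$ and using Caccioppoli together with \eqref{eq:bound_above_green}; but the clean bound you invoke is not available as stated. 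Second, the divergence theorem: $A_1^T\nabla u$ and $A_2\nabla v$ are continuous and weakly divergence-free away from the poles, but not $C^1$, so either appeal to a Gauss--Green theorem for continuous fields with (zero) distributional divergence, or replace the surface-integral bookkeeping by testing the weak formulations against $v$ and $u$ multiplied by Lipschitz cutoffs adapted to $\Omega_{\varepsilon,\rho}$; likewise, the fact that the flux of $A_1^T\nabla u$ through $\partial B(x,\varepsilon)$ equals $-1$ comes from the distributional normalization $L_{A_1^T}u=\delta_x$, not from the pointwise bound $|\nabla u|\lesssim\varepsilon^{-n}$, and should be said. Third, the sign, which you leave "up to convention": with the normalization $L_{A_i}\Gamma_{A_i}(\cdot,y)=\delta_y$ stated in the paper, your computation actually produces $(A_2-A_1)$ in the middle (test with $A_i=a_i\,\mathrm{Id}$); the sign displayed in \eqref{eq:representation-perturbation} corresponds instead to the normalization implicit in \eqref{eq:fund_sol_const_matrix}, and note that the identity is later applied with $(\bar A_B-A(z))$ in \eqref{eq:lem_pointwise_111}. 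Since only absolute values are used downstream this is immaterial for the paper, but a complete proof must carry the sign through rather than defer it.
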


\vvv

\section{Suppressed singular integral operators}\label{sec:aux_operator}

Throughout this section, we fix $d\in \mathbb N$, $d\geq 3$, and $n\in\{1,\ldots, d\}$.
Let $\Xi\colon \R^d \to \mathbb R$ be a non-negative function that satisfies the $1$-Lipschitz condition 
\[
|\Xi(x)-\Xi(y)|\leq |x-y|,\, \qquad \text{ for } x, y\in \R^d,
\]
and  $\mu$ be a Radon measure on $\R^d$.

For any kernel $k(x,y)$, we define the \textit{suppressed kernel} 
	\begin{equation}\label{eq:suppressed_kernel}
		{k}_{\Xi}(x,y)\coloneqq k(x,y) \frac{|x-y|^{2n}}{ |x-y|^{2n}+ \Xi(x)^n\Xi(y)^n},
	\end{equation}
the associated truncated operator
	\begin{equation}\label{eq:SIO operator}
T_{\Xi, \mu, \varepsilon}f(x)\coloneqq \int_{|x-y|>\varepsilon	} k_{\Xi}(x,y)f(y)\, d\mu(y), \qquad \text{ for } \varepsilon>0,
	\end{equation}
and the maximal suppressed operator
\[
T_{\Xi, \mu, *}f(x)\coloneqq \sup_{\varepsilon >0}\bigl|T_{\Xi, \mu, \varepsilon}(x)\bigr|.
\]

We understand that the suppressed operator $T_{\Xi,\mu}$ is bounded on $L^2(\mu)$ if $T_{\Xi, \mu, \varepsilon}$ are bounded operators on $L^2(\mu)$ uniformly on the truncation $\varepsilon>0$, and we set
\[
	\bigl\|T_{\Xi, \mu}\bigr\|_{L^2(\mu)\to L^2(\mu)}\coloneqq \sup_{\varepsilon>0}	\bigl\|T_{\Xi, \mu, \varepsilon}\bigr\|_{L^2(\mu)\to L^2(\mu)}.
\]

We also use the notation $T_{\Xi,\varepsilon}\mu\coloneqq T_{\Xi, \mu, \varepsilon}1$ and $T_{\Xi, *}\mu\coloneqq T_{\Xi, \mu, *}1$.

\vv

For $x,x',y\in \Rn1$, we set
\(
	m(x,x',y)\coloneqq\max(|x-y|,|x'-y|).
\)
\begin{lemma}\label{lemma:CZ_estimates_suppressed}
Let us assume that $k\colon \R^d \times \R^d \to \mathbb R$ satisfies the following bounds:
	There exist $R \in (0, \infty]$,  $c_1>0$, $ c_2>0$, and $\theta\colon[0,\infty) \to [0,\infty)$ such that
	\begin{align}
		|k(x,y)|    &\leq c_1  \, |x-y|^{-n},\qquad\textup{for} \,\,x,y\in \R^d,  0< |x-y|<R \quad \textup{and};\label{eq:CZ_1}\\
		|k(y,x)-k(y,x')|&\leq c_2\, \theta \Big(\frac{|x-x'|}{m(x,x',y)}\Big) {m(x,x',y)^{-n}},\label{eq:CZ_2}
	\end{align}
		for $x,  x', y \in \R^d$ satisfying $2|x-x'|\leq \max(|x-y|,|x'-y|)<R$.

	Then,  for $\tilde\theta(t)\coloneqq \theta(t)+t$, it holds that
	\begin{align}
		|k_{\Xi}(x,y)|    &\leq c_1\,  |x-y|^{-n},\qquad\textup{for} \,\,x,y\in \R^d,  0< |x-y|<R\quad \textup{and};\label{eq:CZ_1_supp}\\
		|k_{\Xi}(y,x)-k_{\Xi}(y,x')|& \leq \left( c_2+ c_1\,  n \, 2^{n+2}\right) \, \tilde\theta \Big(\frac{|x-x'|}{m(x,x',y)}\Big) {m(x,x',y)^{-n}},\label{eq:CZ_2_supp}
	\end{align}
	for $x,  x', y \in \R^d$ satisfying $2|x-x'|\leq \max(|x-y|,|x'-y|)<R$.
	
	Moreover, it holds that
	\begin{equation}\label{eq:bound_t_delta_min_suppressions}
		|k_{\Xi}(x,y)|\leq c_1\,   \min\bigl(\Xi(x)^{-n}, \Xi(y)^{-n}\bigr).
	\end{equation}
\end{lemma}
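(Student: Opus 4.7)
The plan is to handle the three claims in order of difficulty. Bound \eqref{eq:CZ_1_supp} is immediate from \eqref{eq:CZ_1}, because the suppression factor $\frac{|x-y|^{2n}}{|x-y|^{2n}+\Xi(x)^n\Xi(y)^n}$ lies in $[0,1]$. For the pointwise bound \eqref{eq:bound_t_delta_min_suppressions}, with $d=|x-y|$, $t=\Xi(x)$, $s=\Xi(y)$, the desired estimate $|k_{\Xi}(x,y)|\leq c_1 t^{-n}$ reduces to the elementary inequality $t^n(d^n-s^n)\leq d^{2n}$. The inequality is trivial when $d\leq s$ (left-hand side $\leq 0$), while in the complementary case I use the $1$-Lipschitz control $t\leq s+d<2d$ together with the factorization $d^n-s^n=(d-s)\sum_{k=0}^{n-1}d^{n-1-k}s^k$; a direct case-check yields the estimate, possibly up to a harmless dimensional constant that must be absorbed. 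The symmetric bound in $s$ is analogous.

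The substantial work is the smoothness estimate \eqref{eq:CZ_2_supp}. Set $m_1=|y-x|$, $m_2=|y-x'|$, and note that the hypothesis $2|x-x'|\leq m$ forces $m_1,m_2\in[m/2,m]$. Defining
\[
h(z):=\frac{|y-z|^{2n}}{|y-z|^{2n}+\Xi(y)^n\Xi(z)^n}
\]
so that $k_{\Xi}(y,\cdot)=k(y,\cdot)\,h(\cdot)$, I split
\[
k_{\Xi}(y,x)-k_{\Xi}(y,x') \,=\, [k(y,x)-k(y,x')]\,h(x) \,+\, k(y,x')\,[h(x)-h(x')].
\]
The first summand is immediately bounded by $c_2\,\theta(|x-x'|/m)\,m^{-n}$ from \eqref{eq:CZ_2} and $|h|\leq 1$, which will produce the $c_2\,\theta(\cdot)$ contribution to the final bound. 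For the second, \eqref{eq:CZ_1} together with $m_2\geq m/2$ gives $|k(y,x')|\leq 2^n c_1\, m^{-n}$, so it remains to establish the Lipschitz-type bound $|h(x)-h(x')|\lesssim_n |x-x'|/m$, which accounts for the extra linear term in $\tilde\theta(t)=\theta(t)+t$.

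To estimate $h(x)-h(x')$, I introduce $\psi(r,T):=r^{2n}/(r^{2n}+T)$ and $T_i:=\Xi(y)^n\Xi(x_i)^n$ for $x_1=x$, $x_2=x'$, and decompose
\[
h(x)-h(x') \,=\, \bigl[\psi(m_1,T_1)-\psi(m_2,T_1)\bigr] \,+\, \bigl[\psi(m_2,T_1)-\psi(m_2,T_2)\bigr].
\]
The first bracket is controlled by the AM-GM bound $|\partial_r\psi(r,T)|\leq n/(2r)$, coming from $(r^{2n}+T)^2\geq 4r^{2n}T$, yielding a contribution of order $n\,|x-x'|/m$. For the second bracket, the identity
\[
\psi(m_2,T_1)-\psi(m_2,T_2)=\frac{m_2^{2n}(T_2-T_1)}{(m_2^{2n}+T_1)(m_2^{2n}+T_2)}
\]
combined with $|T_2-T_1|\leq n\,\Xi(y)^n\max(\Xi(x),\Xi(x'))^{n-1}|x-x'|$ (from the $1$-Lipschitz hypothesis) delivers the required bound. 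I expect this second bracket to be the main obstacle, since no single denominator estimate is uniformly sharp: I will split into the regime where $\max(\Xi(x),\Xi(x'))\gtrsim m$ (bounding the denominator by $m_2^{2n}\max(T_1,T_2)$ to absorb the large $\Xi$-factors) and the regime where $\max(\Xi(x),\Xi(x'))\ll m$ (bounding the denominator by $m_2^{4n}$, in which case Lipschitz forces $\Xi(y)\lesssim m$, so all $\Xi$-factors are controlled by $m$). Both regimes yield $O_n(|x-x'|/m)$, and combining everything produces \eqref{eq:CZ_2_supp} with the constant arising from the worst case of the dimensional factors above.
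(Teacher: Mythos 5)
Your proposal is correct in substance and, for the smoothness bound \eqref{eq:CZ_2_supp}, takes a route that differs from the paper's in bookkeeping though not in spirit. The paper (adapting \cite[Lemma 8.2]{Vo03}) writes $k_\Xi=k/F_\Xi$ with $F_\Xi(x,y)=1+|x-y|^{-2n}\Xi(x)^n\Xi(y)^n$, assumes without loss of generality $\Xi(x')\leq\Xi(x)$, and splits the difference into three terms $\mathscr A_1+\mathscr A_2+\mathscr A_3$ (the H\"ormander part, the variation of $|x-y|^{-2n}$, and the variation of $\Xi(\cdot)^n$), each handled by casework on $\Xi(x)$ versus $|x-y|$ and on $|x'-y|$ versus $|x-y|$. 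You instead factor $k_\Xi(y,\cdot)=k(y,\cdot)\,h(\cdot)$ and prove the scale-invariant bound $|h(x)-h(x')|\lesssim_n |x-x'|/m$ with $m=m(x,x',y)$, splitting the variation of $h$ into a radial part, controlled by the AM--GM derivative bound $|\partial_r\psi(r,T)|\leq n/(2r)$, and a $T$-part controlled by the exact difference identity together with a two-regime split on $\max(\Xi(x),\Xi(x'))$ versus $m$ (in the small-$\Xi$ regime the Lipschitz property forces $\Xi(y)\lesssim m$, as you note). Both regimes check out, your first summand coincides with the paper's $\mathscr A_1$ since $h(x)=1/F_\Xi(x,y)$, and your version avoids the WLOG ordering of $\Xi(x),\Xi(x')$ and the case distinction $|x'-y|\lessgtr|x-y|$, at the price of cruder constants.

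Two caveats about constants. First, your chain $|k(y,x')|\leq 2^n c_1 m^{-n}$ combined with the bounds $n|x-x'|/m$ (radial part) and roughly $n2^{3n}|x-x'|/m$ (small-$\Xi$ regime of the $T$-part) yields \eqref{eq:CZ_2_supp} with a constant of order $c_2+c_1 n 2^{4n}$ rather than the stated $c_2+c_1 n 2^{n+2}$; this is harmless for every later application, but your bookkeeping does not recover the literal constant in the statement. Second, for \eqref{eq:bound_t_delta_min_suppressions} your hedge about "a harmless dimensional constant" is in fact necessary: the reduced inequality $t^n(d^n-s^n)\leq d^{2n}$ fails for $n\geq 2$ (take $s=d/2$, $t=3d/2$, compatible with $|t-s|\leq d$), so the estimate is only available with an extra factor such as $2^n$, e.g.\ via $t^n(d^n-s^n)\leq t^n d^n\leq (2d)^n d^n$. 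The paper's own proof has the same looseness: in the case $2|x-y|>\max\{\Xi(x),\Xi(y)\}$ it only gives $2^n c_1\min\bigl(\Xi(x)^{-n},\Xi(y)^{-n}\bigr)$ (the factor $2^{-n}$ written there is a slip), so your argument is consistent with what is actually provable and with how the bound is used downstream.
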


\begin{proof}
	The bound \eqref{eq:CZ_1_supp} readily follows from observing that
	\[
		|k_{\Xi}(x,y)|\leq |k(x,y)|\leq c_1\, |x-y|^{-n}.
	\]
	
	To prove the other estimates for $k_{\Xi}$, we adapt the proof of \cite[Lemma 8.2]{Vo03}.  The definition of $k_{\Xi}$ and \eqref{eq:CZ_1} imply that
	\begin{equation}\label{eq:bound_t_delta_Theta_1}
		|k_{\Xi}(x,y)|\leq \frac{|k(x,y)||x-y|^{2n}}{\Xi(x)^n\Xi(y)^n}\leq c_1\,  \frac{|x-y|^n}{\Xi(x)^n\Xi(y)^n}.
	\end{equation}
	If $|x-y|\leq \Xi(x)/2$, by \eqref{eq:bound_t_delta_Theta_1} we obtain
	\begin{equation}\label{eq:bound_t_delta_Theta_2}
	|k_{\Xi}(x,y)|\leq c_1\,  2^{-n} \, \Xi(y)^{-n}.
	\end{equation}
	Moreover, if $|x-y|\leq \Xi(x)/2$,  since $\Xi$ is $1$-Lipschitz, we have that 
	\[
		\Xi(y)\geq \Xi(x)-|x-y|\geq \Xi(x)/2,
	\]
	and so, by \eqref{eq:bound_t_delta_Theta_2}, we infer that
	\begin{equation}\label{eq:bound_t_delta_Theta_3}
		|k_{\Xi}(x,y)|\leq c_1\, \Xi(x)^{-n}.
	\end{equation}
	This concludes the proof of \eqref{eq:bound_t_delta_min_suppressions} in the case $|x-y|\leq \Xi(x)/2$.
	
	Assume now $ |x-y|>\Xi(x)/2$.  If $\Xi(x) < 2|x-y| \leq \Xi(y)$, then by the same argument as above, we obtain \eqref{eq:bound_t_delta_min_suppressions}, while if $2 |x-y|>\max\{\Xi(x),\Xi(y)\}$, the kernel bounds \eqref{eq:CZ_1} yield
	\[
	|k_{\Xi}(x,y)|\leq |k(x,y)|\leq c_1\,  \frac{1}{|x-y|^n}\leq \frac{c_1\,  2^{-n}}{\max\{\Xi(x),\Xi(y)\}^n}, 
	\]
	concluding the proof of \eqref{eq:bound_t_delta_min_suppressions}.

	We are left with the proof of \eqref{eq:CZ_2_supp}. Let $x,x',y\in\R^d$ be such that $|x-x'|\leq |x-y|/2$ and define 
	$$
	F_\Xi(x,y)\coloneqq 1+ |x-y|^{-2n}\Xi(x)^n\Xi(y)^n.
	$$ 
	It holds that
	\begin{equation}\label{eq:bound_F_theta_easy}
		F_\Xi(x,y)\geq \max\bigl(1,|x-y|^{-2n}\Xi(x)^n\Xi(y)^n\bigr).
	\end{equation}
	Without loss of generality, we further assume that $\Xi(x')\leq \Xi(x)$. Then, we write
	\begin{equation}
		\begin{split}
			&\bigl|k_{\Xi}(y,x)-k_{\Xi}(y,x')\bigr|\leq \frac{\bigl|k(y,x)-k(y,x')\bigl|}{F_\Xi(x,y)} \\
			&\qquad\qquad\qquad +\bigl|k_{\Xi}(y,x')\bigr|\frac{\bigl||x-y|^{-2n}\Xi(x)^n-|x'-y|^{-2n}\Xi(x')^n\bigr|\Xi(y)^n}{F_\Xi(x,y)F_\Xi(x',y)}\\
			&\qquad \leq \frac{\bigl|k(y,x)-k(y,x')\bigl|}{F_\Xi(x,y)} + \bigl|k(y,x')\bigr|\frac{\bigl||x-y|^{-2n}-|x'-y|^{-2n}\bigr|\, \Xi(x')^n\Xi(y)^n}{F_\Xi(x,y)\, F_\Xi(x',y)}\\
			&\qquad\qquad \qquad + \bigl|k(y,x')\bigr|\frac{|x-y|^{-2n}\bigl(\Xi(x)^n-\Xi(x')^n\bigr)\Xi(y)^n}{F_\Xi(x,y)F_\Xi(x',y)}\eqqcolon \mathscr A_1 + \mathscr A_2 + \mathscr A_3.
		\end{split}
	\end{equation}
	
	By \eqref{eq:CZ_2} and $F_\Xi(x,y)\geq 1$,  we obtain
	\[
		\mathscr A_1 \leq c_2\,\theta \Big(\frac{|x-x'|}{m(x,x',y)}\Big) \,{m(x,x',y)^{-n}}.
	\]
	
	To bound the term $\mathscr A_2$, we observe that 
	\[
		\bigl||x-y|^{-2n}- |x'-y|^{-2n}\bigr|= \bigl||x-y|^{-n}- |x'-y|^{-n}\bigr|\, \bigl(|x-y|^{-n} +|x'-y|^{-n} \bigr).
	\]
	Hence
	\begin{equation}\label{eq:bd_A_2split}
		\begin{split}
			\mathscr A_2&=\Bigl||x-y|^{-n}- |x'-y|^{-n}\Bigr| \frac{\bigl|k(y,x')\bigr|\, |x'-y|^{-n}\, \Xi(x')^n\Xi(y)^n}{F_\Xi(x,y)\, F_\Xi(x',y)}\\
			&\qquad \qquad   + \bigl||x-y|^{-n}- |x'-y|^{-n}\bigr| \frac{\bigl|k(y,x')\bigr|\, |x-y|^{-n}\, \Xi(x')^n\Xi(y)^n}{F_\Xi(x,y)\, F_\Xi(x',y)}\eqqcolon \mathscr A_{2,1} + \mathscr A_{2,2}.
		\end{split}
	\end{equation}
	The function $\phi(x,y)=|x-y|^{-n}$ satisfies the standard Calder\'on-Zygmund-type bounds

	\begin{equation}\label{eq:CZ_potential}
		|\phi(x,y)-\phi(x',y)|\leq n \, 2^{n+1}  \,\frac{|x-x'|}{m(x,x',y)^{n+1}}
	\end{equation}
		for $2|x-x'|\leq \max(|x-y|,|x'-y|).$
	Moreover, since $F_\Xi(x,y)\geq 1 $ and $F_{\Xi}(x',y)\geq |x'-y|^{-2n}\Xi(x)^n \Xi(y)^n$, we have that 
	\begin{equation}\label{eq:bd_A_21_by_1}
		\frac{\bigl|k(y,x')\bigr|\, |x'-y|^{-n}\, \Xi(x')^n\Xi(y)^n}{F_\Xi(x,y)\, F_\Xi(x',y)}\leq \bigl|k(y,x')\bigr||x'-y|^n \overset{\eqref{eq:CZ_1}}{\leq} c_1.
	\end{equation}
	Gathering \eqref{eq:CZ_potential} and	\eqref{eq:bd_A_21_by_1},   we infer that
	\begin{equation}\label{eq:bound_matshcr_A_2_1_I}
		\mathscr A_{2,1}\leq  c_1\,  n \, 2^{n+1}  \, \frac{|x-x'|}{m(x,x',y)^{n+1}}.
	\end{equation}
	To estimate $\mathscr A_{2,2}$, we first observe that if $|x'-y|\leq |x-y|$,  then	$\mathscr A_{2,2}\leq \mathscr A_{2,1}$. 
	If $|x-y|< |x'-y|$, then \eqref{eq:CZ_1},  \eqref{eq:bound_F_theta_easy},   and the assumption $\Xi(x')\leq \Xi(x)$ yield
	\begin{equation}\label{eq:estim_mathscrA22_II}
		\begin{split}
			\mathscr A_{2,2}&\leq c_1\, \bigl||x-y|^{-n}- |x'-y|^{-n}\bigr| \frac{ |x-y|^{-2n}\, \Xi(x)^n\Xi(y)^n}{F_\Xi(x,y)\, F_\Xi(x',y)}\\
			& \leq c_1\, \ \bigl||x-y|^{-n}- |x'-y|^{-n}\bigr|\overset{\eqref{eq:CZ_potential}}{\leq} c_1\,  n \, 2^{n+1}  \, \frac{|x-x'|}{m(x,x',y)^{n+1}}.
		\end{split}
	\end{equation}
	Hence,  combining \eqref{eq:bd_A_2split}, \eqref{eq:bound_matshcr_A_2_1_I},  and \eqref{eq:estim_mathscrA22_II},   we get that
	\[
	\mathscr A_{2}\leq c_1\,  n \, 2^{n+1}  \,  \frac{|x-x'|}{m(x,x',y)^{n+1}}.
	\]

	Finally, the Lipschitz assumption on $\Xi$ and the hypothesis $\Xi(x')\leq \Xi(x)$ yield
	\begin{equation}\label{eq:estim_A_3}
		\mathscr A_3 \leq n\, |x-x'|\bigl|k(y,x')\bigr|\, \frac{|x-y|^{-2n}\Xi(x)^{n-1}\Xi(y)^n}{F_\Xi(x,y)F_\Xi(x',y)}.
	\end{equation}
		Recall also that the hypothesis $2|x-x'|\leq \max(|x-y|,|x'-y|)$ and the triangle inequality give $\tfrac{1}{2}|x-y|\leq |x'-y|\leq 2 |x-y|.$
	Thus, if $|x-y|\geq \Xi(x)$, the Lipschitz property of $\Xi$ implies that $\Xi(y)\leq 2|x-y|$, so we further bound \eqref{eq:estim_A_3} as
	\begin{equation*}
		\begin{split}
			\mathscr A_3&\leq\frac{ n \,2^n\,  c_1\,|x-x'|}{|x'-y|^{n}|x-y|^{2n}}	|x-y|^{2n-1} \leq \frac{ n \,2^n\,  c_1\, |x-x'|}{|x'-y|^n|x-y|}\lesssim \frac{ n \,2^n\,  c_1\, |x-x'|}{m(x,x',y)^{n+1}},
		\end{split}
	\end{equation*}
	where we also used the bound $|F_\Xi(x,y)|\geq 1$.	If $|x-y|\leq \Xi(x)$,  we readily have
	\begin{equation*}
		\begin{split}
			\mathscr A_3&\leq	n\,\bigl|k(y,x')\bigr|\frac{|x-x'|}{|x-y|}\, \frac{|x-y|^{-2n}\Xi(x)^{n}\Xi(y)^n}{F_\Xi(x,y)F_\Xi(x',y)} 
			\overset{\eqref{eq:bound_F_theta_easy}}{\leq}	\frac{c_1\,n\,|x-x'|}{|x'-y|^n|x-y|}\leq c_1\, 	n \,2^n\,  \frac{|x-x'|}{m(x,x',y)^{n+1}}.
		\end{split}
	\end{equation*}
	Gathering the estimates for $\mathscr A_1, \mathscr A_2$, and $\mathscr A_3$, we  obtain \eqref{eq:CZ_2_supp}.
\end{proof}
\vvv

\begin{lemma}\label{lemma:estimates_suppressed_kernel_2}
	Let $k$ and $k_{\Xi}$ be as in Lemma \ref{lemma:CZ_estimates_suppressed}. The following properties hold:
	\begin{enumerate}
		\item If either $\Xi(x)=0$ or $\Xi(y)=0$,  
		\[
			k_{\Xi}(x,y)=k(x,y).
		\]
		\item If $0< \Xi(x)\leq \varepsilon$, and $\sigma$ is a complex Radon measure on $\R^d$ with compact support so that $\diam(\supp(\sigma))\leq R$, then for every $x \in \supp(\sigma)$,  
		\begin{equation}\label{eq:comp_supp_no_supp_kernel}
			\bigl|T_{\Xi,  \varepsilon}\sigma(x)-T_{\varepsilon}\sigma(x)\bigr|\leq 2^{2n-1}\sup_{r\geq \varepsilon}\frac{|\sigma|(B(x,r))}{r^n}.
		\end{equation}
	\end{enumerate}
\end{lemma}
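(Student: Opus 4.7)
Part (1) is immediate from the definition \eqref{eq:suppressed_kernel}: if either $\Xi(x)=0$ or $\Xi(y)=0$, then $\Xi(x)^n\Xi(y)^n=0$, so the suppression factor reduces to $|x-y|^{2n}/|x-y|^{2n}=1$ and hence $k_{\Xi}(x,y)=k(x,y)$.

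For part (2), my plan is to rewrite the difference as
\[
T_{\varepsilon}\sigma(x) - T_{\Xi,\varepsilon}\sigma(x) = \int_{|x-y|>\varepsilon} k(x,y)\, \frac{\Xi(x)^n\Xi(y)^n}{|x-y|^{2n}+\Xi(x)^n\Xi(y)^n}\, d\sigma(y),
\]
and then estimate the integrand pointwise by combining the hypothesis $\Xi(x)\leq\varepsilon$ with the $1$-Lipschitz property of $\Xi$. Since $|x-y|>\varepsilon\geq\Xi(x)$, the Lipschitz condition gives $\Xi(y)\leq\Xi(x)+|x-y|\leq 2|x-y|$, so that $\Xi(x)^n\Xi(y)^n\leq 2^n\varepsilon^n|x-y|^n$. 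Dropping the $\Xi$-term from the denominator and applying the kernel bound $|k(x,y)|\leq c_1|x-y|^{-n}$ from \eqref{eq:CZ_1} (which applies since $x\in \supp(\sigma)$ and $\diam(\supp(\sigma))\leq R$ force $|x-y|<R$), I obtain
\[
\biggl|k(x,y)\, \frac{\Xi(x)^n\Xi(y)^n}{|x-y|^{2n}+\Xi(x)^n\Xi(y)^n}\biggr| \leq \frac{c_1\cdot 2^n\,\varepsilon^n}{|x-y|^{2n}}.
\]

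The final step is a standard dyadic decomposition of the integration region into annuli $A_k=\{y: 2^k\varepsilon<|x-y|\leq 2^{k+1}\varepsilon\}$ for $k\geq 0$. Setting $M\coloneqq\sup_{r\geq\varepsilon}|\sigma|(B(x,r))/r^n$, one has $|\sigma|(A_k)\leq M(2^{k+1}\varepsilon)^n$ and $|x-y|^{-2n}\leq (2^k\varepsilon)^{-2n}$ on $A_k$, so each annulus contributes a constant multiple of $2^{-kn}M$, and the resulting geometric series converges. A careful bookkeeping of the constants recovers the sharp factor $2^{2n-1}$ stated in \eqref{eq:comp_supp_no_supp_kernel}. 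The only real subtlety is the simultaneous use of the two constraints on $\Xi$: the absolute bound $\Xi(x)\leq\varepsilon$ supplies the factor $\varepsilon^n$, while the Lipschitz estimate $\Xi(y)\leq 2|x-y|$ controls the growth of $\Xi(y)$ away from $x$. Together they yield the crucial factor $\varepsilon^n|x-y|^n$ in the numerator, which, weighed against the $|x-y|^{2n}$ in the denominator, is precisely what produces a summable dyadic tail.
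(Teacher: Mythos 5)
Your argument is correct and follows essentially the same route as the paper's proof: both express $k_{\Xi}(x,y)-k(x,y)$ via the same algebraic identity, exploit the $1$-Lipschitz property through $\Xi(y)\le \Xi(x)+|x-y|$ together with $\Xi(x)\le\varepsilon<|x-y|$, and conclude by integrating terms of the form $\varepsilon^{n}|x-y|^{-2n}$ against $|\sigma|$ using $\sup_{r\ge\varepsilon}|\sigma|(B(x,r))/r^{n}$ (the paper merely keeps the two terms $\varepsilon^{2n}|x-y|^{-3n}+\varepsilon^{n}|x-y|^{-2n}$ where you absorb everything into one, an immaterial difference). The only caveat is the constant: with $c_1$ retained and a crude annulus-by-annulus sum you do not literally recover $2^{2n-1}$, but the paper's own computation tacitly uses $|k(x,y)|\le|x-y|^{-n}$ and is equally informal at this step, so this is a shared bookkeeping matter rather than a genuine gap.
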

\begin{proof}
	Item (1) is a direct consequence of the definition of the suppressed kernel. The proof of \eqref{eq:comp_supp_no_supp_kernel} is an adaptation of \cite[Lemma 8.3]{Vo03} and \cite[Lemma 5.4]{To14}, which we report here for the reader's convenience.
		It holds that
		\begin{equation}\label{eq:pw_diff_suppressed_not_suppressed}
			\begin{split}
				|k_\Xi(x,y)-k(x,y)|&\overset{\eqref{eq:suppressed_kernel}}{=} \Bigl| \frac{k(x,y)|x-y|^{2n}}{ |x-y|^{2n}+ \Xi(x)^n\Xi(y)^n}-k(x,y)\Bigr|\\
				&=\Bigl| \frac{k(x,y)\, \Xi(x)^n\Xi(y)^n}{ |x-y|^{2n}+ \Xi(x)^n\Xi(y)^n}\Bigr| \leq \frac{\Xi(x)^n\bigl(\Xi(x)+|x-y|\bigr)^n}{|x-y|^{3n}}\\
				&\leq 2^{n-1} \,\frac{\Xi(x)^{2n}}{|x-y|^{3n}} +2^{n-1} \,  \frac{\Xi(x)^n}{|x-y|^{2n}}.
			\end{split}
		\end{equation}
	Hence, by the hypothesis on $\Xi$, we obtain
	\begin{equation*}
		\begin{split}
			\bigl|&T_{\Xi,  \varepsilon}\sigma(x)-T_{\varepsilon}\sigma(x)\bigr|\overset{\eqref{eq:pw_diff_suppressed_not_suppressed}}{\leq} 2^{n-1} \,\int_{|x-y|>\varepsilon}\Bigl(\frac{\Xi(x)^{2n}}{|x-y|^{3n}} + \frac{\Xi(x)^n}{|x-y|^{2n}}\Bigr)\, d|\sigma|(y)\\
			&\quad\leq 2^{n-1} \, \int_{|x-y|>\varepsilon}\Bigl(\frac{\varepsilon^{2n}}{|x-y|^{3n}} + \frac{\varepsilon^n}{|x-y|^{2n}}\Bigr)\, d|\sigma|(y)\leq 2^{2n-1}\sup_{r\geq \varepsilon}\frac{|\sigma|(B(x,r))}{r^n}.\qedhere
		\end{split}
	\end{equation*}
\end{proof}

\vvv

In the rest of the section, we present some technical lemmas that are used in the proof of Theorem \ref{thm:big piecesTb-aux}.

\vv

Let $\mu$ be a non-negative finite Borel measure supported on a compact set $F \subset \R^d$ such that $\diam(\supp (\mu)) \leq R$ for some $R>0$,  and let $\nu=b\, \mu$ be a complex measure for some $b\in L^\infty(\R^d)$ with $\|b\|_\infty\leq c_b$.  Assume also that there exist 	two sets $H \subset \R^d$ and $E \subset \R^d$ such that:
\begin{itemize}
\item 	If a ball $B(y,r)$ is such that $\mu(B(y,r))>c_0\, r^n$,  then $B(y,r)\subset H$.
\item 	There exists $\delta_0>0$ such that $\mu(H \cup E)\leq \delta_0\, \mu(F)$, for all $w \in \R^d$.
\item  We have that 
			\[
				\int_{\R^d \setminus H}  T_{ *}\nu\, d\mu\leq c_* \mu(F), \qquad \text{ for all }x\in \R^d,
			\]
\end{itemize}

\vvv

For $\alpha >0$ we define
\begin{equation}\label{eq:S0}
	S_0=\bigl\{x\in \R^d: T_{*}\nu(x)>\alpha \bigr\}.
\end{equation}
We set
\[
	e(x)\coloneqq
	\begin{cases}
		\sup\bigl\{\varepsilon>0: \bigl|T _{\varepsilon}\nu(x)\bigr|>\alpha\bigr\}, \qquad \qquad &\text{ for }x\in S_0,\\
		0, \qquad \qquad &\text{ for }x\not \in S_0,
	\end{cases}
\]
and define the exceptional set
\(
	S= \bigcup_{x\in S_0} B\bigl(x,e(x)\bigr).
\)

\vv

\begin{lemma}\label{lem:lemma_choice_alpha}
	If $y\in S\setminus H$,  then $T_*\nu(y)> \alpha - 2^{n+1}\,c_0\,  c_b\, ( \,c_1+  c_2\, \mathfrak I_{\theta}(1))$, for $c_1,c_2$ as in Lemma \ref{lemma:CZ_estimates_suppressed}. Hence,  if $\alpha > 2^{n+2}\,c_0\,  c_b\, (c_1+  c_2\, \mathfrak I_{\theta}(1))$,  it holds that 
\begin{equation}\label{eq:measure S minus H }
 \mu\bigl(S\setminus H\bigr)\leq \frac{2 \,c_*}{\alpha}\,  \, \mu(F).
	\end{equation}
\end{lemma}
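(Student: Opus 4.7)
The plan is first to establish a pointwise lower bound on $T_*\nu(y)$ whenever $y\in S\setminus H$, and then deduce the measure estimate via Chebyshev's inequality.

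\textbf{Step 1: choosing a good truncation.} For $y\in S\setminus H$, the definition of $S$ provides some $x\in S_0$ with $|x-y|<e(x)$. By the definition of $e(x)$ as a supremum, one can find a scale $r$ with $|x-y|<r\leq e(x)$ such that $|T_r\nu(x)|>\alpha$. The idea is to compare $T_r\nu(x)$ with the truncation $T_{2r}\nu(y)$ of $\nu$ centered at $y$; doubling the truncation scale at $y$ ensures that the two regions of integration are well separated from the perturbation scale $|x-y|$.

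\textbf{Step 2: splitting and kernel estimates.} Writing
\[
T_r\nu(x)-T_{2r}\nu(y)=\int_{A_1}\bigl[k(x,z)-k(y,z)\bigr]\,d\nu(z)+\int_{A_2}k(x,z)\,d\nu(z),
\]
where $A_1=\{|x-z|>r\}\cap\{|y-z|>2r\}$ and $A_2=\{|x-z|>r\}\cap\{|y-z|\leq 2r\}$, while the third region $\{|x-z|\leq r\}\cap\{|y-z|>2r\}$ is empty because $|x-y|<r$. On $A_1$ one has $|x-y|/m(x,y,z)\leq 1/2$, so the smoothness estimate \eqref{eq:CZ_2} applies, and a dyadic decomposition at scales $2^kr$ combined with \eqref{eq:mod_cont_sum} gives an $L^1(|\nu|)$-bound of order $c_0c_bc_2\,\mathfrak I_\theta(1)$. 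On $A_2$ the triangle inequality forces $r<|x-z|\leq 3r$, so $A_2\subset B(x,3r)$, and the kernel bound \eqref{eq:CZ_1} combined with the $n$-growth of $\mu$ on $B(x,3r)$ gives a bound of order $c_0c_bc_1$.

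\textbf{Step 3: availability of $n$-growth.} The $n$-growth at every ball centered at $x$ of radius exceeding $|x-y|$ is the key structural input and uses crucially that $y\notin H$: any such ball contains $y$, hence it cannot be contained in $H$, so by the standing hypothesis on $H$ we must have $\mu(B(x,\rho))\leq c_0\rho^n$ for every $\rho>|x-y|$; the bound $\|b\|_\infty\leq c_b$ then transfers this growth to $|\nu|$. Bookkeeping the constants yields $|T_r\nu(x)-T_{2r}\nu(y)|\leq 2^{n+1}c_0c_b(c_1+c_2\mathfrak I_\theta(1))$, whence
\[
T_*\nu(y)\geq|T_{2r}\nu(y)|>\alpha-2^{n+1}c_0c_b\bigl(c_1+c_2\,\mathfrak I_\theta(1)\bigr),
\]
which is the first assertion. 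The second assertion follows at once: for $\alpha>2^{n+2}c_0c_b(c_1+c_2\mathfrak I_\theta(1))$, the previous estimate gives $T_*\nu(y)>\alpha/2$ for every $y\in S\setminus H$, and Chebyshev's inequality combined with the hypothesis $\int_{\R^d\setminus H}T_*\nu\,d\mu\leq c_*\mu(F)$ yields
\[
\mu(S\setminus H)\leq\tfrac{2}{\alpha}\int_{\R^d\setminus H}T_*\nu\,d\mu\leq\tfrac{2c_*}{\alpha}\mu(F).
\]

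The main technical obstacle is tracking the constants sharply enough to land on the prefactor $2^{n+1}$ announced in the statement: the $3^n$ factor from the annular region $A_2$ and the doubling constant from \eqref{eq:mod_cont_sum} on $A_1$ both have to be absorbed cleanly, which is possible only because the $n$-growth at $y$ is available at \emph{all} scales above $|x-y|$, not merely at a single comparable scale.
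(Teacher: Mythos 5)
Your argument is correct and essentially reproduces the paper's proof (which follows Tolsa's suppressed-kernel argument): compare the truncation at a nearby point $x\in S_0$ where $|T_r\nu(x)|>\alpha$, control the near part by the size bound \eqref{eq:CZ_1} together with the $n$-growth of $\mu$ at all scales—available precisely because $y\notin H$—control the far part by the Dini smoothness bound \eqref{eq:CZ_2} via a dyadic sum, and conclude with Chebyshev; the paper merely organizes the splitting as $\nu|_{B(y,2e(x))}$ versus its complement at a single truncation level instead of your two radii $r$ and $2r$, which is a cosmetic difference. One bookkeeping remark: as written, your near-term bound via $A_2\subset B(x,3r)$ gives $3^n c_0c_1c_b$, which does not fit under the announced prefactor $2^{n+1}$; either use $A_2\subset B(y,2r)$ and the growth of $\mu$ at $y$ (giving $2^n c_0 c_1 c_b$, as in the paper's estimate of $\mathfrak J_1$), or simply note that the precise constant is immaterial, since the lemma is only used to choose $\alpha$ larger than twice the error so that $T_*\nu>\alpha/2$ on $S\setminus H$ before applying Chebyshev.
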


\begin{proof}
	The proof follows the argument of \cite[Lemma 5.2]{To14}, but we include it here to highlight the necessary modifications.

	If \( y \in S \setminus H \), there exists \( x \in S_0 \) such that \( y \in B(x, e(x)) \). We claim that
	\begin{equation}\label{eq:claim_52}
		\begin{split}
			\bigl|T_{\varepsilon_0(x)}\nu(x)-T_{\varepsilon_0(x)}\nu(y)\bigr|\leq 2^n\,c_0\,  c_b\, (2 \,c_1+  c_2\, \mathfrak I_{\theta}(1)).
		\end{split}
	\end{equation}
	
	To prove \eqref{eq:claim_52}, we denote $B_y\coloneqq B(y,2\varepsilon_0(x))$ and we split
	\begin{equation*}
		\begin{split}
			\bigl|T_{\varepsilon_0(x)}\nu(x)-T_{\varepsilon_0(x)}\nu(y)\bigr|&\leq \bigl|T_{\varepsilon_0(x)}(\nu|_{B_y})(x)-T_{\varepsilon_0(x)}(\nu|_{B_y})(y)\bigr|\\
			&\qquad +\bigl|T_{\varepsilon_0(x)}(\nu|_{B^c_y})(x)-T_{\varepsilon_0(x)}(\nu|_{B^c_y})(y)\bigr|\eqqcolon \mathfrak J_1 + \mathfrak J_2.\\
		\end{split}
	\end{equation*}
	
	Since $B_y\not\subseteq H$, we have that $\mu(B_y)\leq c_0 (2\varepsilon_0(y))^n$ and so, by \eqref{eq:CZ_1}, 
	\begin{equation}
			\mathfrak J_1 \leq \bigl|T_{\varepsilon_0(x)}(\nu|_{B_y})(x)|+ |T_{\varepsilon_0(x)}(\nu|_{B_y})(y)\bigr| \leq 2\,c_1\,c_b\,\frac{\mu(B_y)}{\varepsilon_0(x)^n}\leq 2^{n+1}\, c_0 \,c_1\, c_b.
	\end{equation}
	
Using \eqref{eq:CZ_2},  we can estimate $\mathfrak J_2$ as follows.
	\begin{equation}
		\begin{split}
			\mathfrak J_2&\leq \int_{\R^d\setminus B_y} \bigl|k(x,z)- k(y,z)\bigr|\, d|\nu|(z) \leq c_2 \, c_b \int_{\R^d\setminus B_y} \theta \left(\frac{|x-y|}{|y-z|}\right) \frac{1}{|y-z|^n} \, \,d\mu(z)\\
			&=c_2 \, c_b  \sum_{k=1}^\infty \int_{B(y, 2^{k+1}e(x)) \setminus (y, 2^{k}e(x))} \theta \left(\frac{|x-y|}{|y-z|}\right) \frac{1}{|y-z|^n} \, \,d\mu(z)\\
			&\leq c_2 \, c_b \sum_{k=1}^\infty \theta(2^{-k}) \frac{\mu(B(y, 2^{k+1}e(x)) ) }{(2^k e(x))^n} \leq 2^{n}\, c_0 \, c_2\, c_b\, \mathfrak J_\theta(1).
		\end{split}
	\end{equation}
	
	To prove \eqref{eq:measure S minus H },  note that \( T^* \nu(y) > \alpha/2 \) for all \( y \in S \setminus H \) if \( \alpha \geq  2^{n+2}\,c_0\,  c_b\, (c_1+  c_2\, \mathfrak J_{\theta}(1)) \). Then, by Chebyshev's inequality,  
\[
\mu(S \setminus H) \leq 2 \int |T^* \nu| \, d\mu \leq \frac{2c_*}{\alpha}\, \mu(F).\qedhere
\]
\end{proof}

\vv

\begin{lemma}\label{lemma:lem_55_T}
	Assume that  $x\in \R^d$, $r_0>0$, $\delta>0$, and $\varepsilon>0$ are such that
	\[
	\mu(B(x,r))\leq c_0 \, r^n \qquad \text{ for } r\geq r_0
	\] 
	and 
	\[
	\bigl|T_{\varepsilon}\nu(x)\bigr|\leq \alpha \qquad \text{ for } \varepsilon\geq r_0.
	\]
	If $\Xi(x)\geq r_0$,  for all $\varepsilon>0$ we have that
\begin{align}\label{eq:4.24}
	\bigl|T_{\Xi,\varepsilon} \nu(x)\bigr|\leq \alpha + (c_1+ 2^{2n-1} ) \, c_b \, c_0.
\end{align}
\end{lemma}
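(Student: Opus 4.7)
The plan is a clean two-case analysis based on the position of $\varepsilon$ relative to $\Xi(x)$, leveraging the fact that the assumption $\Xi(x)\geq r_0$ is precisely what makes both the $n$-growth bound on $\mu$ and the hypothesis $|T_{\varepsilon}\nu(x)|\leq\alpha$ available at the relevant thresholds. The two building blocks are the comparison between the suppressed and unsuppressed truncated operators from Lemma \ref{lemma:estimates_suppressed_kernel_2}(2) and the pointwise bound \eqref{eq:bound_t_delta_min_suppressions}.

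\textbf{Case 1: $\varepsilon\geq \Xi(x)$.} Then $\varepsilon\geq\Xi(x)\geq r_0$, so the hypothesis yields $|T_{\varepsilon}\nu(x)|\leq \alpha$. Since $0<\Xi(x)\leq\varepsilon$ and $\diam(\supp(\nu))\leq R$, Lemma \ref{lemma:estimates_suppressed_kernel_2}(2) applied to $\sigma=\nu=b\,\mu$ gives
\begin{equation*}
	\bigl|T_{\Xi,\varepsilon}\nu(x)-T_{\varepsilon}\nu(x)\bigr|\leq 2^{2n-1}\sup_{r\geq\varepsilon}\frac{|\nu|(B(x,r))}{r^n}\leq 2^{2n-1}\,c_b\sup_{r\geq\varepsilon}\frac{\mu(B(x,r))}{r^n}\leq 2^{2n-1}\,c_b\,c_0,
\end{equation*}
where the final inequality uses the $n$-growth assumption at scales $r\geq\varepsilon\geq r_0$. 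The triangle inequality now yields \eqref{eq:4.24} in this case (indeed with the better constant $2^{2n-1}$).

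\textbf{Case 2: $\varepsilon<\Xi(x)$.} I will split the truncation at the scale $\Xi(x)$:
\begin{equation*}
	T_{\Xi,\varepsilon}\nu(x)=T_{\Xi,\Xi(x)}\nu(x)+\int_{\varepsilon<|x-y|\leq\Xi(x)}k_{\Xi}(x,y)\,d\nu(y).
\end{equation*}
Since $\Xi(x)\geq r_0$, Case 1 applied with $\varepsilon$ replaced by $\Xi(x)$ produces $\bigl|T_{\Xi,\Xi(x)}\nu(x)\bigr|\leq \alpha+2^{2n-1}\,c_b\,c_0$. For the remaining integral, the pointwise bound \eqref{eq:bound_t_delta_min_suppressions} yields $|k_{\Xi}(x,y)|\leq c_1\,\Xi(x)^{-n}$, whence
\begin{equation*}
	\biggl|\int_{\varepsilon<|x-y|\leq\Xi(x)}k_{\Xi}(x,y)\,d\nu(y)\biggr|\leq c_1\,\Xi(x)^{-n}\,|\nu|(B(x,\Xi(x)))\leq c_1\,c_b\,\Xi(x)^{-n}\,\mu(B(x,\Xi(x)))\leq c_1\,c_b\,c_0,
\end{equation*}
using once more the $n$-growth hypothesis, now at the admissible scale $\Xi(x)\geq r_0$. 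Summing the two contributions gives \eqref{eq:4.24}.

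There is no serious obstacle: the argument is a short case split. The only point worth highlighting is that the assumption $\Xi(x)\geq r_0$ is used in \emph{both} cases to guarantee that the $n$-growth bound and the control on the unsuppressed truncations apply precisely at the scales $\varepsilon$ and $\Xi(x)$ at which the decomposition is made, and that Case 2 is reduced to Case 1 by the self-bootstrapping choice of the intermediate truncation $\Xi(x)$.
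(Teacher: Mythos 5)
Your proof is correct and follows essentially the same route as the paper's: the case $\varepsilon\geq\Xi(x)$ is handled via Lemma \ref{lemma:estimates_suppressed_kernel_2}(2) together with the $n$-growth at scales $\geq r_0$, and the case $\varepsilon<\Xi(x)$ is reduced to the first case by splitting the truncation at $\Xi(x)$ and bounding the near part with \eqref{eq:bound_t_delta_min_suppressions} and $\mu(B(x,\Xi(x)))\leq c_0\,\Xi(x)^n$. This matches the paper's argument step for step, including the final constant $\alpha+(c_1+2^{2n-1})\,c_b\,c_0$.
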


\begin{proof}
The argument closely follows the proof of \cite[Lemma 5.5]{To14}, but we include it here for completeness.  
Applying Lemma \ref{lemma:estimates_suppressed_kernel_2}-(2), we obtain that if \( \varepsilon \geq \Xi(x) \) (and consequently \( \varepsilon \geq r_0 \)), then  
\begin{align*}
|T_{\Xi,\varepsilon} \nu (x)| &\leq | T_{\varepsilon} \nu (x)| + 2^{2n-1} \, \sup_{r \geq \varepsilon} \frac{|\nu|(B(x, r))}{r} \leq \alpha +  2^{2n-1} \, c_b \,c_0.
\end{align*}
In the case where \( \varepsilon < \Xi(x) \), we instead have
\begin{align}\label{eq:4.25}
|T_{\Xi, \varepsilon} \nu (x)| \leq c_b \int_{B(x,\Xi(x))} |k_{\Xi} (x, y)| \, d\mu(y) 
+ \Bigl| \int_{\R^d \setminus B(x,\Xi(x))} k_{\Xi} (x, y) \, d\nu(y) \Bigr|.
\end{align}
We can show that the first integral on the right-hand side is bounded by $c_1 \, c_b \, c_0$  using
\[
|k_{\Xi} (x, y)| \leq c_1 \, \Xi(x)^{-n} \quad \text{and} \quad \mu(B(x, \Xi(x))) \leq c_0\, \Xi(x)^n,
\]
where the latter holds due to the assumption \( \Xi(x) \geq r_0 \).   For the second integral in \eqref{eq:4.25}, note that it coincides with \( |T_{\Xi, \Xi(x)} \nu (x) | \), which was previously shown to be at most \( \alpha +  2^{2n-1} \, c_b c_0 \).  
Combining these estimates, we conclude that \eqref{eq:4.24} follows in this case as well.
\end{proof}

\vvv

Given $x\in \R^d$ we define
	\[
		\mathcal R(x)=\sup\bigl\{r>0: \mu(B(x,r))>c_0 \,r^n\bigr\},
	\]
	where we understand that $\mathcal R(x)=0$ if the set on the right hand side is empty, and we set
	\[
		H_1 \coloneqq \bigcup_{x\in \R^d, \,\mathcal R(x)>0} B\bigl(x,\mathcal R(x)\bigr) \subset H.
	\]
	
Let   \( \Xi \colon \mathbb{R}^{d} \to [0,+\infty) \) be a Lipschitz function with Lipschitz constant 1 such that  
\[
\Xi(x) \geq \dist(x, \mathbb{R}^{d} \setminus (H \cup S))
\]
for all \( x \in \mathbb{R}^{d} \).	In fact, we might choose \( \Xi(x) = \text{dist}(x, C \setminus (H \cup S)) \), for example.  Observe that \( H \cup S \) contains all non-Ahlfors balls and all the balls \( B(x, e(x)) \), where \( x \in F \), and so
\[
\Xi(x) \geq \max \{ R(x), e(x) \}.
\]

From the definition of \( S \) and the preceding lemma with \( r_0 = \max \{ \mathcal R(x), e(x)\}\),  we deduce the following lemma.
\begin{lemma}\label{theorem:suppressed_Tb}
Let   \( \Xi \colon \mathbb{R}^{d} \to [0,+\infty) \) be a Lipschitz function with Lipschitz constant 1 such that  
\[
\Xi(x) \geq \dist(x, \mathbb{R}^{d} \setminus (H \cup S))
\]
for all \( x \in \mathbb{R}^{d} \).  Then,  it holds that   
\[
 |T_{\Xi,*} \nu (x) |\leq\alpha + (c_1+ 2^{2n-1} ) \, c_b \, c_0 \qquad \text{ for } x \in F.   
\]
If there exists an operator $\widetilde T$ such that the operator $\Lambda\coloneqq T_\Xi -  \widetilde T_\Xi$ has kernel $\ell(\cdot,\cdot)$ which satisfies the bound $|\ell(x,y)| \leq c_\Lambda \frac{\varphi(|x-y|)}{|x-y|^n}$, then 
\[
 |\widetilde T_{\Xi,*} \nu (x) |\leq\alpha + (c_1+ 2^{2n-1} ) \, c_b \, c_0 + 2^{n+1}\,c_{\Lambda}  \, c_0 \, \mathfrak I_{\varphi}(R)  \qquad \text{ for } x \in F.   
\]
\end{lemma}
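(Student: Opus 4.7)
The plan is to reduce the first inequality to a direct application of Lemma~\ref{lemma:lem_55_T} with $r_0 = \max\{\mathcal{R}(x), e(x)\}$ for each fixed $x \in F$. Three hypotheses must be checked with this choice of $r_0$. The growth bound $\mu(B(x,r)) \leq c_0 r^n$ for $r \geq r_0$ is immediate from the definition of $\mathcal{R}(x)$ as the supremum of radii at which the $n$-growth fails. The inequality $|T_\varepsilon \nu(x)| \leq \alpha$ for $\varepsilon \geq r_0$ follows from the definition of $e(x)$ as the supremum of scales at which $|T_\varepsilon\nu(x)|$ exceeds $\alpha$ (with the trivial case when $x \notin S_0$). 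The condition $\Xi(x) \geq r_0$ is precisely the inequality recorded in the paragraph preceding the lemma: since $B(x, \mathcal{R}(x)) \subset H_1 \subset H$ whenever $\mathcal{R}(x) > 0$ and $B(x, e(x)) \subset S$ whenever $x \in S_0$, the quantity $\dist(x, \R^d \setminus (H \cup S))$ is bounded below by $\max\{\mathcal{R}(x), e(x)\}$, and $\Xi$ dominates this distance by hypothesis. Lemma~\ref{lemma:lem_55_T} then yields $|T_{\Xi,\varepsilon}\nu(x)| \leq \alpha + (c_1 + 2^{2n-1})c_b c_0$ uniformly in $\varepsilon > 0$, and taking the supremum gives the first conclusion.

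For the second assertion, I would write $\widetilde{T}_{\Xi,\varepsilon}\nu = T_{\Xi,\varepsilon}\nu - \Lambda_\varepsilon\nu$, where $\Lambda_\varepsilon$ denotes the $\varepsilon$-truncated integral operator with kernel $\ell(x,y)\chi_{\{|x-y|>\varepsilon\}}$, so that
\[
|\widetilde{T}_{\Xi,\varepsilon}\nu(x)| \leq |T_{\Xi,\varepsilon}\nu(x)| + |\Lambda_\varepsilon\nu(x)|.
\]
The first summand is controlled by the first part of the lemma. For the second summand, using the pointwise kernel bound $|\ell(x,y)| \leq c_\Lambda \varphi(|x-y|)/|x-y|^n$, the inequality $|\nu| \leq c_b \mu$, and the support condition $\supp(\nu) \subset F$ with $\diam(F) \leq R$, the task reduces to controlling
\[
|\Lambda_\varepsilon \nu(x)| \leq c_\Lambda c_b \int_{\varepsilon < |x-y| \leq R} \frac{\varphi(|x-y|)}{|x-y|^n}\, d\mu(y).
\]
A dyadic decomposition of the annulus $\{\varepsilon < |x-y| \leq R\}$, combined with the $n$-growth of $\mu$ at $x$ (which holds at every scale when $x \in F \setminus H$, since $\mathcal{R}(x) = 0$ in that case) and the doubling property of $\varphi$ converting dyadic sums into Dini integrals via \eqref{eq:mod_cont_sum}, produces a bound of the form $C\, c_\Lambda c_0 \mathfrak{I}_\varphi(R)$, matching the stated constant up to the choice of $C$. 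Taking the supremum in $\varepsilon$ completes the argument.

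The only step that genuinely requires attention, and what I would anticipate as the main obstacle, is the geometric observation $\Xi(x) \geq \max\{\mathcal{R}(x), e(x)\}$, which depends on carefully tracking how $S$ absorbs every ball $B(y, e(y))$ centered at a point of $S_0$ and how $H$ absorbs every non-Ahlfors ball through $H_1$. Once this inclusion is verified, both assertions reduce to a composition of Lemma~\ref{lemma:lem_55_T} with a routine dyadic Dini-integral estimate, with no further technical difficulty.
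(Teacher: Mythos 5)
Your proof of the first inequality is correct and is exactly the paper's route: the paper deduces the lemma by applying Lemma~\ref{lemma:lem_55_T} with $r_0=\max\{\mathcal R(x),e(x)\}$, after observing that $B(x,\mathcal R(x))\subset H$, $B(x,e(x))\subset S$, and hence $\Xi(x)\geq \dist(x,\R^d\setminus(H\cup S))\geq \max\{\mathcal R(x),e(x)\}$; your verification of the three hypotheses is the same.

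The gap is in the second inequality. You bound $|\Lambda_\varepsilon\nu(x)|$ by $c_\Lambda c_b\int_{\varepsilon<|x-y|\leq R}\varphi(|x-y|)\,|x-y|^{-n}\,d\mu(y)$ and then run a dyadic decomposition using the $n$-growth of $\mu$ in balls centered at $x$ at \emph{every} scale, which--as you yourself flag in parentheses--is only available when $x\in F\setminus H$, i.e.\ when $\mathcal R(x)=0$. But the statement asserts the bound for every $x\in F$, and a point $x\in F\cap H$ may well have $\mathcal R(x)>0$; at such a point $\mu(B(x,r))$ can exceed $c_0 r^n$ for all small $r$, and your dyadic sum is then not controlled by $c_0\,\mathfrak I_\varphi(R)$. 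As written, your argument proves the second estimate only on $F\setminus H$.

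The missing ingredient is the suppression, used exactly as in the proof of Lemma~\ref{lemma:lem_55_T}. The kernel of $\Lambda=T_\Xi-\widetilde T_\Xi$ is the suppressed kernel $\ell_\Xi(x,y)=\ell(x,y)\,|x-y|^{2n}/\bigl(|x-y|^{2n}+\Xi(x)^n\Xi(y)^n\bigr)$ (in particular $|\ell_\Xi|\le|\ell|$, so your size bound is still valid, but you discard the extra decay), and $\Xi(x)\geq\mathcal R(x)$ for every $x$. If $\varepsilon\geq\Xi(x)$, all annuli in your decomposition have radii $\geq\mathcal R(x)$, so the growth $\mu(B(x,2^{k+1}\varepsilon))\leq c_0(2^{k+1}\varepsilon)^n$ is available and your computation goes through. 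If $\varepsilon<\Xi(x)$, split
\[
|\Lambda_\varepsilon\nu(x)|\leq \int_{B(x,\Xi(x))}|\ell_\Xi(x,y)|\,d|\nu|(y)+\bigl|\Lambda_{\Xi(x)}\nu(x)\bigr|,
\]
handle the second term as before, and on $B(x,\Xi(x))$ use the $1$-Lipschitz property of $\Xi$ as in \eqref{eq:bound_t_delta_Theta_1}--\eqref{eq:bound_t_delta_Theta_3} to get $|\ell_\Xi(x,y)|\lesssim c_\Lambda\,\varphi(|x-y|)\,\Xi(x)^{-n}$; combined with $\mu(B(x,\Xi(x)))\leq c_0\,\Xi(x)^n$ (valid since $\Xi(x)\geq\mathcal R(x)$) and $\varphi(t)\leq\kappa\,\mathfrak I_\varphi(t)$ from \eqref{eq:omega<dini}, this yields the missing contribution $\lesssim c_\Lambda c_b c_0\,\mathfrak I_\varphi(R)$. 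With this correction your argument gives the statement on all of $F$, which is what the paper intends (it states the lemma without proof as a direct consequence of Lemma~\ref{lemma:lem_55_T}); the absence of $c_b$ in the displayed constant of the statement is immaterial, and your "up to the choice of $C$" is the right attitude there.
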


\vvv

\section{The big pieces ${Tb}$ theorem for ``good" SIOs}\label{big piecesTb} 
The goal of this section is to prove a $L^2$-boundedness criterion for the operator $ T_{\Xi, \mu}$, under proper assumptions on the function $\Xi$. A precise statement requires some additional notation so we defer it to the end of the next subsection, see Theorem \ref{theorem:suppressed_Tb}.

More specifically, we focus on the {\it big pieces} $Tb$ theorem for \textit{Singular Integral Operators} (SIOs) that satisfy the following assumptions.
\begin{definition}[``Good" SIOs]\label{eq:good_SIO_definition}
    Let $k\colon \R^d\times\R^d \to \R$, with $d \geq 2$, be a kernel satisfying the Calderón-Zygmund-type conditions \eqref{eq:CZ_1} and \eqref{eq:CZ_2} for some constants $R>0$,  $c_1, c_2 > 0$ and a function $\theta\colon [0,\infty) \to [0,\infty)$.   Assume that \( \mu \in M^n_+(\mathbb{R}^{d}) \) with growth constant \( c_0 > 0 \) and compact support such that $\diam(\supp(\mu))\leq R$ and  let $\Xi\colon \R^d\to \R$ be a non-negative $1$-Lipschitz function.  We say that the integral operator $T$ with kernel $k$ is a {\it ``good" singular integral operator (SIO)} if the following hold:
       \begin{enumerate} 
 \item  There exist a singular integral operator $\widetilde T$ with kernel $\tilde k$ satisfying \eqref{eq:CZ_1} and \eqref{eq:CZ_2}  for $\theta(t)=t$ and $R' \geq R$,  and an integral operator $K$ such that 
        \begin{equation}\label{eq:adjpoint of T}
   \widetilde{T}^*_{\Xi, \mu}      = -   \widetilde T_{\Xi, \mu}  + K_{\mu},
        \end{equation}
        where $T_{\Xi, \mu}$ is the operator associated with the suppressed kernel $k_\Xi$ (see \eqref{eq:suppressed_kernel}).
   \item  $K_{\mu}$  is bounded on $L^2(\mu)$ and there exists a constant \( c_{K} > 0 \) such that
        \begin{equation}\label{eq:L2 bound of K}
            \|K_{\mu}\|_{L^2(\mu) \to L^2(\mu)} \leq c_{K},
        \end{equation}
     where $c_K$ only depends on $R$ and $c_0$.
        \item If we set $\ell(x,y)\coloneqq k(x,y)-\tilde k(x,y)$,  there exists $\vphi\colon [0,\infty) \to [0,\infty)$ so that  $\mathfrak J_\varphi(1)<\infty$ and  it satisfies the bound 
              \begin{equation}\label{eq:boundedness_kernel_ell_good_SIO}
              	|\ell(x,y)| \leq c_\Lambda \frac{\varphi(|x-y|)}{|x-y|^n}, \qquad \text{ for all } x, y \in \R^d \text{ such that } 0<|x-y|<R.
              \end{equation}
    \end{enumerate}
\end{definition}

\vv
Let $\mathcal D_0$ be the lattice of (standard) dyadic cubes in $\R^d$ and we consider the translated dyadic lattice  
\[
\mathcal D(w) = w + \mathcal  D_0, \qquad \text{ for }w \in \mathbb{R}^{d}.
\]

	The main result of this section is the following ``big pieces" $Tb$ theorem. 
	\begin{theorem}\label{thm:big piecesTb-aux}
Let $T$ be a ``good" SIO and assume that $\mu$ is a finite Borel measure on $\R^d$, $d\geq 2$,  supported on a compact set $F \subset \R^d$ so that $\diam(\supp(\mu))\leq R$.  Suppose also that $\nu=b\, \mu$, for a function $b$ such that $\|b\|_{L^\infty(\mu)}\leq c_b$.  For every $w\in \R^d$,  let $\mathfrak T_{\caD(w)}\subset \R^d$ be the union of the maximal dyadic cubes in $\caD(w)$  for which 
\[
	|\nu(Q)| \leq c_{acc}\,  \mu(Q). 
\]
We are also given a measurable set $H  \subset \R^d$ satisfying the following properties.
\begin{itemize}
	\item 	If a ball $B(y,r)$ is such that $\mu(B(y,r))>c_0\, r^n$,  then $B(y,r)\subset H$.
	\item 	There exists $\delta_0>0$ such that $\mu(H \cup \mathfrak T_{\caD(w)})\leq \delta_0\, \mu(F)$, for all $w \in \R^d$.
	\item	We have that 
			\[
				\int_{\R^d \setminus H}  T_{ *}\nu\, d\mu\leq c_* \mu(F), \qquad \text{ for all }x\in \R^d,
			\]
			where
			\begin{equation}\label{eq:max_aux_truncated}
				T_{ *}\nu(x)=\sup_{\varepsilon>0}\bigl|T_{\nu, \varepsilon}1(x)\bigr|.
			\end{equation}
\end{itemize}
	There exist 
	$$G \subset F \setminus \bigcap_{w \in \R^d} \Big( H \cup \mathfrak T_{\mathcal D(w)} \Big)$$  and  $\hat  c>0$ depending on $n,$ $d,$ $c_0,$ $c_b,$ $c_*,$ $c_{acc},$ $c_K,$ $c_\Lambda,$ $\delta_0$ so that the following hold:
	\begin{enumerate}
	\item $\mu(G)\geq \hat c\, \mu(F)$.
		\item The measure $\mu|_G$ has $n$-growth with constant $c_0$, namely
		\begin{equation}\label{eq:growth_G_B-1}
			\mu(G \cap B(x,r))\leq c_0 \, r^n\qquad \qquad \text{ for }x\in \R^d,  r>0.
		\end{equation}
		\item $ T_{\mu}$ is bounded on $L^2(\mu|_G)$ with constant depending only on $n$, $c_0,$ $c_b,$ $c_*,$ $c_{acc},$ $c_K,$ $c_\Lambda,$  $\delta_0$.
	\end{enumerate} 
\end{theorem}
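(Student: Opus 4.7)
\medskip

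\noindent\textbf{Proof proposal for Theorem~\ref{thm:big piecesTb-aux}.}

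The plan is to follow the Nazarov--Treil--Volberg strategy for a suppressed, almost-antisymmetric operator, taking advantage of the structural decomposition \eqref{eq:adjpoint of T} and the Dini perturbation \eqref{eq:boundedness_kernel_ell_good_SIO}. First, I would fix a translation $w \in \mathbb{R}^d$. Set $\mathfrak{T} \coloneqq \mathfrak{T}_{\mathcal{D}(w)}$, so $\mu(H \cup \mathfrak{T}) \leq \delta_0\,\mu(F)$ by hypothesis, and by Chebyshev
$\mu\bigl(\{T_*\nu > \alpha\}\bigr) \leq c_*\,\mu(F)/\alpha.$
Introduce the exceptional set $S$ of Section~\ref{sec:aux_operator}, built from $S_0 = \{T_*\nu > \alpha\}$ with $\alpha$ large enough that both the bound of Lemma~\ref{lem:lemma_choice_alpha} applies and $\mu(S\setminus H) \leq \frac{1}{4}\,\mu(F)$. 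Defining
\[
G \coloneqq F \setminus \bigl(H \cup \mathfrak{T} \cup S\bigr),
\]
we get $\mu(G) \geq (1 - \delta_0 - \tfrac{1}{4})\,\mu(F)$, which is a definite fraction of $\mu(F)$ provided $\delta_0$ is small enough (otherwise we average in $w$ first and use Fubini to select a good $w$). The $n$-growth bound \eqref{eq:growth_G_B-1} with constant $c_0$ is automatic on $G$, because $G \cap H_1 = \varnothing$ forces $\mu(G \cap B(x,r)) \leq c_0 r^n$ for every ball.

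Next, I would define the suppression
\[
\Xi(x) \coloneqq \dist\bigl(x,\ \mathbb{R}^d \setminus (H \cup S)\bigr),
\]
which is $1$-Lipschitz, vanishes on $G$, and satisfies $\Xi(x) \geq \max\{\mathcal{R}(x), e(x)\}$ on $\mathbb{R}^d$. Lemma~\ref{theorem:suppressed_Tb} then yields the uniform pointwise bound
\[
|T_{\Xi,*}\nu(x)| + |\widetilde{T}_{\Xi,*}\nu(x)| \lesssim \alpha + c_b c_0(c_1 + c_\Lambda \mathfrak{I}_\varphi(R) + 1),
\qquad x \in F.
\]
Lemma~\ref{lemma:CZ_estimates_suppressed} guarantees that $\tilde k_\Xi$ is a standard Calder\'on--Zygmund kernel with a Lipschitz modulus, and \eqref{eq:bound_t_delta_min_suppressions} gives $\|\tilde k_\Xi\|_\infty \lesssim \Xi^{-n}$, so the operator $\widetilde T_{\Xi,\mu|_G}$ is a priori an $L^2$-bounded operator with controlled norm for each fixed small $\varepsilon$; the point is to obtain norms uniform in $\varepsilon$.

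The core step is to apply the $Tb$ theorem of Nazarov--Treil--Volberg type for the suppressed, almost antisymmetric operator $\widetilde{T}_{\Xi,\mu}$ with testing function $b$. Using the decomposition \eqref{eq:adjpoint of T} and the uniform $L^2(\mu)$-boundedness \eqref{eq:L2 bound of K} of $K_\mu$, the bilinear form $\langle \widetilde T_{\Xi,\mu} f, g\rangle_\mu$ is controllable by $-\langle f, \widetilde T_{\Xi,\mu} g\rangle_\mu$ up to an $L^2$-error. Because no cube of $\mathcal{D}(w)$ meeting $G$ belongs to $\mathfrak{T}$, every such cube $Q$ is non-terminal and satisfies the accretivity
\[
|\nu(Q)| > c_{acc}\,\mu(Q),
\]
so $b$ is accretive in the $T_b$ sense on all the relevant scales. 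Running the standard random-grid/good-cube decomposition of NTV for $\widetilde T_{\Xi,\mu|_G}$, the diagonal and paraproduct terms are handled exactly as in \cite{NTrV99,Vo03,MV21}, except that two extra error terms appear: the $K_\mu$ contribution coming from \eqref{eq:adjpoint of T}, which is bounded in $L^2(\mu)$ by $c_K$ and thus absorbed directly; and the contribution from the pointwise bound on $\widetilde T_{\Xi,*}\nu$, which controls the paraproduct on ``good'' cubes via the standard $Tb$ argument.

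Finally, since $\Xi \equiv 0$ on $G$, Lemma~\ref{lemma:estimates_suppressed_kernel_2}(1) gives $\tilde k_\Xi(x,y) = \tilde k(x,y)$ for $x,y \in G$, so the $L^2(\mu|_G)$ boundedness of $\widetilde T_{\Xi,\mu|_G}$ transfers to $\widetilde T_{\mu|_G}$. To pass from $\widetilde T$ to $T$, observe that $\ell = k - \tilde k$ satisfies \eqref{eq:boundedness_kernel_ell_good_SIO} with $\mathfrak I_\varphi(R) < \infty$; Lemma~\ref{lem:lem_estim_dini_integr_growth}(b) and the $n$-growth of $\mu|_G$ then give that the integral operator with kernel $\ell$ is bounded on $L^2(\mu|_G)$, whence $T_{\mu|_G}$ is bounded on $L^2(\mu|_G)$. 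The main obstacle I expect is the rigorous verification of the NTV machinery for $\widetilde T_{\Xi,\mu}$: carefully tracking how the $K_\mu$ remainder interacts with the martingale expansions adapted to $b$, and checking that the ``non-accessible'' cubes inside $\mathfrak T$ contribute only the controlled error coming from the pointwise bound on $\widetilde T_{\Xi,*}\nu$. All constants at the end depend only on $n,d,c_0,c_b,c_*,c_{acc},c_K,c_\Lambda,\delta_0$, as claimed.
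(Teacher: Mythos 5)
Your overall strategy coincides with the paper's: build the exceptional sets $H\cup S\cup\mathfrak T$, run the Nazarov--Treil--Volberg suppressed $Tb$ scheme for the almost-antisymmetric operator $\widetilde T_\Xi$, absorb the remainder $K_\mu$ via \eqref{eq:adjpoint of T} and \eqref{eq:L2 bound of K}, pass from $\widetilde T$ to $T$ using the Dini kernel $\ell$ together with Lemma \ref{lem:lem_estim_dini_integr_growth}, and conclude on a set where the suppression vanishes so that $T_{\Xi,\mu}=T_\mu$ there. The gap lies in the two places where the actual work is. First, your suppression $\Xi(x)=\dist\bigl(x,\mathbb R^d\setminus(H\cup S)\bigr)$ omits the non-accretive set $\mathfrak T_{\mathcal D(w)}$. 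In the NTV/Tolsa machinery the terminal cubes are those contained in $H\cup\mathfrak T_{\mathcal D}$, and the comparable-scale and nested-scale estimates (the analogues of Lemma \ref{lemma:lemma524} and of the terms $S_2,S_3,S_4$) control the cancellation-free pieces $\Delta_Q f$ on terminal children exactly through the bound $|k_\Xi(x,y)|\lesssim \Xi(x)^{-n}$, which is useful only because $\Xi\geq\Phi_{\mathcal D_i}=\dist\bigl(\cdot,\mathbb R^d\setminus(H\cup S\cup\mathfrak T_{\mathcal D_i})\bigr)$ for \emph{both} lattices; this is precisely the hypothesis of Lemma \ref{lem:main_lemma_good_functions}. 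With your smaller $\Xi$ the suppressed kernel need not be small on cubes lying in $\mathfrak T\setminus(H\cup S)$, where accretivity fails, and the pointwise bound on $\widetilde T_{\Xi,*}\nu$ you invoke does not substitute for it: it feeds the paraproduct estimates on transit cubes but says nothing about the operator acting on terminal pieces. You flag this as ``the main obstacle I expect,'' but no mechanism is proposed; in the paper it is resolved by building $\mathfrak T_{\mathcal D(w_1)}\cup\mathfrak T_{\mathcal D(w_2)}$ into the suppression, and by adding the parameter $\varepsilon_0$ so that the suppressed operator is a priori bounded (justifying the martingale expansion), removed at the end as in Remark \ref{rem:Tb} — another point your argument skips.

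Second, your set $G=F\setminus(H\cup\mathfrak T_{\mathcal D(w)}\cup S)$ is built from a single translation $w$, whereas the bilinear estimate of Lemma \ref{lem:main_lemma_good_functions} holds only for functions that are good relative to a pair of independent random lattices; turning it into an operator bound requires the probabilistic argument over $(w_1,w_2)$, and the exceptional set $\mathfrak T_{\mathcal D(w)}$ is lattice-dependent. The suppression one actually controls is therefore the probabilistic $\Phi_{\varepsilon_0}$ of \eqref{eq:Phiepsilon}, and the set where $T_{\Phi,\mu}$ coincides with $T_\mu$ must be taken as $G=\{x\in F: p_0(x)\geq (1-\delta_1)/2\}$, as in \cite[p.~186]{To14}: a point outside $H\cup S\cup\mathfrak T_{\mathcal D(w)}$ for your fixed $w$ may sit deep inside $\mathfrak T_{\mathcal D(w')}$ for most other translations $w'$, so $\Phi$ need not vanish there and the unsuppressed $L^2(\mu|_G)$ bound does not follow for your $G$. (Two minor points: the smallness of $\mu(S\setminus H)$ should be arranged by choosing $\alpha\gtrsim c_*/(1-\delta_0)$ rather than by assuming $\delta_0$ small — ``averaging in $w$'' does not help there; and the accretivity you establish holds only for cubes meeting $G$, while the $Tb$ argument for $T_{\Xi,\mu}$ on $L^2(\mu)$ must also process the cubes inside $\mathfrak T$, which is exactly where the previous objection bites.)
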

 We dedicate the remainder of this section to proving Theorem \ref{thm:big piecesTb-aux}. The proof originates from Nazarov, Treil, and Volberg's article \cite{NTrV99},  but we follow Tolsa's exposition in \cite[Chapter 5]{To14} and adopt its framework to facilitate comparison with his presentation.
In particular, we note that the \textit{suppressed $Tb$ theorem}, Theorem \ref{theorem:suppressed_Tb}, corresponds to \cite[Corollary 5.33]{To14}. The elaborate argument requires some adjustments; therefore, we only provide an outline and emphasize the necessary modifications, with special attention given to handling the lack of antisymmetry of $T_\mu$.

\vv
\subsection{The  suppressed $Tb$ theorem for ``good" SIOs}
We assume without loss of generality that $F\subset \tfrac{1}{8}[0,2^N]^{d}$ for some $N$ big enough. We also set 
\(
	\Omega=[-2^{N-4}, 2^{N-4}]^{d}
\)
and 
\[
	Q^0(w)\coloneqq w + [0,2^N]^{d} \subset \mathcal D (w), \qquad \text{ for } w\in \Omega.
\]
Notice that for each \( w \in \Omega \), we have  
\(
F \subset \frac{1}{4} Q_0(w).
\)
\vvv

We choose 
\[
	\alpha= \max \{4\,c_*/(1-\delta_0) ,2^{n+2}\,c_0\,  c_b\, (c_1+  c_2\, \mathfrak J_{\theta}(1))\}
\]
 in the definition of $S_0$ in \eqref{eq:S0} and so,  by Lemma \ref{lem:lemma_choice_alpha}, we deduce that
 \[
	\mu(H \cup \mathfrak T_{\mathcal D(w)}) +  \mu(S \setminus H) \leq \delta_1\coloneqq\frac{\delta_0+1}{2}, \quad \text{ for all } w \in \R^d.
 \]
We define
$W_{\caD (w)}\coloneqq H\cup S \cup T_{\caD(w)}$ and, for  $w\in \R^d$, 
\[
	\Phi_{\caD(w)}(x)\coloneqq   \dist\bigl(x,\R^d\setminus W_{\caD (w)} \bigr).
\]
It is easy to see that
	\(
		\Phi_{\caD(w)}(x)\geq \max\bigl(e(x), \mathcal R(x)\bigr),\) for $x\in \R^d$.
	
For a fixed \( \varepsilon_0 >  0 \), we also define  
\begin{equation}\label{eq:Phiepsilon}
\Phi_{\ve_0}(x) = \varepsilon_0 + \inf_{\substack{B \subset \Omega_2, \\ P^{\Omega^2}(B) = \beta}} \sup_{(w_1, w_2) \in B} \Phi_{(w_1, w_2)}(x)\eqqcolon \varepsilon_0 + \Phi(x),
\end{equation}
where  $W_{\mathcal D(w_1)}$ and $W_{\mathcal D(w_2)}$ are the total exceptional sets corresponding to two dyadic lattices $\mathcal D(w_1)$ and $\mathcal D(w_2)$, $P^\Omega$ is the uniform probability on $\Omega$, and
\[
\Phi_{(w_1, w_2)}(x) = \text{dist} \left( x, F \setminus \left( W_{\mathcal D(w_1)} \cup W_{\mathcal D(w_2)}\right) \right).
\]
Notice that \( \Phi_{\ve_0} \) is a 1-Lipschitz function such that \( \Phi_{\ve_0}(x) = \varepsilon_0 \) for all \( x \in G \). Moreover,  
\[
\Phi_{\ve_0}(x) \geq \max \left( \text{dist}(x, \mathbb{R}^{d} \setminus H), e(x), \varepsilon_0 \right) \geq \max \left( \mathcal R(x), e(x) \right).
\]

\vv

We have introduced all the necessary objects which are involved in the statement of the main result of this section.
\begin{theorem}\label{theorem:suppressed_Tb}
	Under  the assumptions of Theorem \ref{thm:big piecesTb-aux}, if $\Phi_{\ve_0}\colon \R^d\to \mathbb R$ is the function defined in \eqref{eq:Phiepsilon},  there exists $C_{\mathrm{op}}>0$ such that
	\begin{equation}\label{eq:bd_L2_norm_suppressed}
		\|T_{\Phi_{\ve_0}, \mu}\|_{L^2(\mu)\to L^2(\mu)}\leq C_{\mathrm{op}},
	\end{equation}
	where $C_{\mathrm{op}}$ depends on  $n,  d,  c_0, c_b, c_{acc}, c^*, \delta_0,  \,c_K\),  and  $c_\Lambda$, but not  on ${\ve_0}$.
\end{theorem}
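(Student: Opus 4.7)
The plan is to combine the Nazarov--Treil--Volberg strategy in the formulation of \cite[Chapter~5]{To14} with two ingredients dictated by the ``good SIO'' structure of Definition~\ref{eq:good_SIO_definition}: a separation of the perturbative piece $T-\widetilde T$, whose kernel is Dini-integrable and hence yields a bounded operator by a Schur-type argument; and careful bookkeeping of the error introduced when the classical antisymmetry $\widetilde T^*=-\widetilde T$ is replaced by the almost antisymmetry \eqref{eq:adjpoint of T}. The first reduction goes as follows. Set $L\coloneqq T-\widetilde T$, so the kernel $\ell$ satisfies \eqref{eq:boundedness_kernel_ell_good_SIO}. The suppressed kernel inherits $|\ell_{\Phi_{\ve_0}}(x,y)|\leq |\ell(x,y)|\leq c_\Lambda\,\varphi(|x-y|)/|x-y|^n$, with $\varphi$ Dini integrable. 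Since $\mu$ has $n$-growth with constant $c_0$ and $\diam(\supp\mu)\leq R$, Lemma~\ref{lem:lem_estim_dini_integr_growth}(b) applied with $\theta=\varphi$ gives
\begin{equation*}
\|L_{\Phi_{\ve_0},\mu}\|_{L^2(\mu)\to L^2(\mu)} \lesssim c_\Lambda\, c_0\, \mathfrak I_\varphi(R),
\end{equation*}
uniformly in $\ve_0$. It therefore suffices to prove \eqref{eq:bd_L2_norm_suppressed} for $\widetilde T_{\Phi_{\ve_0},\mu}$ in place of $T_{\Phi_{\ve_0},\mu}$.

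Next I would set up the NTV machinery for $\widetilde T_{\Phi_{\ve_0},\mu}$. Decompose $f,g\in L^2(\mu)$ via martingale-type differences $\Delta_Q^{(i)}$, $Q\in\caD(w_i)$, adapted to the testing function $b$ and to two random dyadic lattices $\caD(w_i)$, $i=1,2$, with $w_i$ uniform on $\Omega$. Split the bilinear form
\begin{equation*}
\bigl\langle \widetilde T_{\Phi_{\ve_0},\mu} f,\,g\bigr\rangle_\mu = \sum_{Q\in\caD(w_1)}\sum_{Q'\in\caD(w_2)} \bigl\langle \widetilde T_{\Phi_{\ve_0},\mu}\,\Delta_Q^{(1)} f,\,\Delta_{Q'}^{(2)} g\bigr\rangle_\mu
\end{equation*}
into diagonal terms (comparable and close cubes), bad terms (cubes meeting the exceptional set $W_{\caD(w_i)}=H\cup S\cup \mathfrak T_{\caD(w_i)}$ or sitting near the boundary of an ancestor), and off-diagonal good pairs. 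The diagonal part is controlled by the weak boundedness property derived from \eqref{eq:CZ_1_supp}--\eqref{eq:CZ_2_supp} (Lemma~\ref{lemma:CZ_estimates_suppressed}) and the $n$-growth of $\mu$. The bad part is absorbed after averaging in $(w_1,w_2)\in\Omega^2$, via the NTV probabilistic estimate that any given cube is bad with small probability, combined with $\mu(W_{\caD(w_i)})\leq \delta_1\mu(F)$ and the pointwise bound \eqref{eq:bound_t_delta_min_suppressions}, which yields $|k_{\Phi_{\ve_0}}(x,y)|\lesssim \min(\Phi_{\ve_0}(x)^{-n},\Phi_{\ve_0}(y)^{-n})$ and so neutralizes the singularity of $k$ on $W_{\caD(w_1)}\cup W_{\caD(w_2)}$.

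The third step treats the off-diagonal good pairs. For such a pair with $\ell(Q)\ll \ell(Q')$, insert \eqref{eq:adjpoint of T} and write
\begin{equation*}
\bigl\langle \widetilde T_{\Phi_{\ve_0},\mu}\,\Delta_Q^{(1)} f,\,\Delta_{Q'}^{(2)} g\bigr\rangle_\mu = -\bigl\langle \Delta_Q^{(1)} f,\,\widetilde T_{\Phi_{\ve_0},\mu}\,\Delta_{Q'}^{(2)} g\bigr\rangle_\mu + \bigl\langle \Delta_Q^{(1)} f,\,K_\mu\,\Delta_{Q'}^{(2)} g\bigr\rangle_\mu.
\end{equation*}
The first piece is handled by the Lipschitz regularity of $\widetilde k$ (recall $\theta(t)=t$), producing the standard geometric decay $(\ell(Q)/\dist(Q,Q'))^{1+\delta}$ on matrix elements, and Cauchy--Schwarz together with Carleson packing bounds close it by $C\|f\|_{L^2(\mu)}\|g\|_{L^2(\mu)}$. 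The aggregate of second pieces, summed over all good pairs, collapses to a single honest bilinear form $\bigl\langle f_{\mathcal G},\,K_\mu g_{\mathcal G}\bigr\rangle_\mu$ on orthogonal projections of $f,g$ onto the good differences, and is thus controlled by $c_K\|f\|_{L^2(\mu)}\|g\|_{L^2(\mu)}$ via \eqref{eq:L2 bound of K}. The residual paraproduct requires a $BMO(\mu)$-estimate for $\widetilde T_{\Phi_{\ve_0},\mu}(b)$, which follows from the pointwise $L^\infty$ bound on the suppressed maximal operator established in the lemma just preceding the theorem, together with a $c_K$-correction from \eqref{eq:adjpoint of T}; the accretivity $|\nu(Q)|>c_{acc}\mu(Q)$ on cubes outside $\mathfrak T_{\caD(w)}$ supplies the inversion needed to close the stopping-time argument.

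The main obstacle, compared to the classical antisymmetric NTV proof, is that each invocation of \eqref{eq:adjpoint of T} produces a $K_\mu$-remainder, and these remainders appear inside a double martingale expansion and an average in $(w_1,w_2)$. The delicate point is to organize the good-pair decomposition so that these remainders genuinely telescope into the single honest pairing $\bigl\langle f_{\mathcal G},\,K_\mu g_{\mathcal G}\bigr\rangle_\mu$, rather than accumulating with a loss depending on the number of scales or on $\ve_0$; commutations between the suppression $\Phi_{\ve_0}$ and the martingale projections must also be tracked, since $\Phi_{\ve_0}$ is only $1$-Lipschitz. Once this telescoping is secured, the $L^2$ bound on $K_\mu$ absorbs the entire residual, and $C_{\mathrm{op}}$ depends only on $n,d,c_0,c_b,c_*,c_{acc},c_K,c_\Lambda,\delta_0$, and not on $\ve_0$.
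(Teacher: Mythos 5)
Your skeleton coincides with the paper's: you first peel off $L\coloneqq T-\widetilde T$, whose suppressed kernel inherits the Dini bound \eqref{eq:boundedness_kernel_ell_good_SIO}, so that Lemma \ref{lem:lem_estim_dini_integr_growth} gives $\|L_{\Phi_{\ve_0},\mu}\|_{L^2(\mu)\to L^2(\mu)}\lesssim c_\Lambda c_0\,\mathfrak I_\varphi(R)$ uniformly in $\ve_0$; you then run the Nazarov--Treil--Volberg scheme of \cite[Chapter 5]{To14} with random lattices, $b$-adapted martingale differences and the suppression bounds of Lemma \ref{lemma:CZ_estimates_suppressed}, compensating the loss of antisymmetry through \eqref{eq:adjpoint of T} and \eqref{eq:L2 bound of K}. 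This is exactly the route the paper takes to prove the good-function estimate \eqref{eq:bound_supp_good_functions_tilde} and then the theorem.

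The gap is precisely at the point you flag as ``to be secured,'' and your proposed resolution is not correct as stated. The $K_\mu$-remainders produced by \eqref{eq:adjpoint of T} appear in a sum over a \emph{restricted} family of pairs (good pairs in a fixed size/position relation), so they do not factor or telescope into a single pairing $\langle f_{\mathcal G}, K_\mu g_{\mathcal G}\rangle$; asserting such a collapse is the whole difficulty, and you provide no mechanism ruling out a loss in the number of scales. The paper closes this locally rather than globally: in the terms $\mathfrak A_2$ and $\mathfrak A_3$ the identity is applied once, giving a single $\langle \cdot, K_\mu\cdot\rangle$ pairing; in the comparable-cube diagonal (the analogue of \cite[Lemma 5.24]{To14}, Lemma \ref{lemma:lemma524}) the martingale differences restrict to constant multiples of $b$ on the overlap of children, so only $\langle \widetilde T_{\Xi,\mu} b, b\rangle=\tfrac12\langle K_\mu b, b\rangle$ enters; and in the nested case (the analogue of \cite[Lemma 5.21]{To14}) the decisive trick is $\Delta_Q=\Delta_Q^2$: one moves $\Delta_Q^*$ onto $\chi_{2S}K_\mu(g_{R,S})$ and uses orthogonality of the adjoint projections,
\begin{equation*}
\sum_{Q\in\caD_1^{tr},\,Q\subset 2S}\bigl\|\Delta_Q^*\bigl(\chi_{2S}K_\mu(g_{R,S})\bigr)\bigr\|_{L^2(\mu)}^2\leq \|K_\mu(g_{R,S})\|_{L^2(\mu)}^2\lesssim \|g_{R,S}\|_{L^2(\mu)}^2,
\end{equation*}
so the sum over all smaller scales inside a fixed Whitney piece costs a single $c_K$, after which the standard packing over $R$ and $S$ applies; a similar direct summation absorbs the $K_\mu$ contributions in the reversed-role sums $S_{1,2}$ and $S_{4,2}$. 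Without a scale-by-scale device of this kind (or an equivalent orthogonality input), your plan does not close, so the proposal as written has a genuine missing step rather than merely omitted routine detail.
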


\vv
\begin{remark}\label{rem:Tb}
Note that \eqref{eq:bd_L2_norm_suppressed} implies that, for any fixed $\ve>0$ and $\ve_0>0$,  
\begin{equation}\label{eq:bd_L2_norm_suppressed-bis}
\int \Big|  \int_{|x-y|>\ve}  k_{\Phi_{\ve_0} }(x,y) f(y) \,d\mu(y)   \Big|^2 \,d\mu(x)  \leq C_{\mathrm{op}}\, \|f\|^2_{L^2(\mu)}
	\end{equation}
with bounds uniform in $\ve_0$ and $\ve$.  If $f \in L^\infty_c(\mu)$,   it is easy to see that, by the kernel bounds of $k_{\Phi_{\ve_0} }(x,y)$ given in Lemma \ref{lemma:CZ_estimates_suppressed} and  dominated convergence, 
$$
\int_{|x-y|>\ve}  k_{\Phi_{\ve_0} }(x,y) f(y) \,d\mu(y) \overset{\ve_0 \to 0}{\longrightarrow} \int_{|x-y|>\ve}  k_{\Phi}(x,y) f(y) \,d\mu(y)
$$
Thus,  by another application of the dominated convergence theorem, we can deduce that 
\[
	\int \Big|  \int_{|x-y|>\ve}  k_{\Phi_{\ve_0} }(x,y) f(y) \,d\mu(y)   \Big|^2 \,d\mu(x) \overset{\ve_0 \to 0}{\longrightarrow}   \int \Big|  \int_{|x-y|>\ve}  k_{\Phi}(x,y) f(y) \,d\mu(y)   \Big|^2 \,d\mu(x),
\]
which, by \eqref{eq:bd_L2_norm_suppressed-bis} and density of $L^\infty_c(\mu)$ in $L^2(\mu)$,  yields that
	\begin{equation}\label{eq:bd_L2_norm_suppressed-0}
		\|T_{\Phi, \mu}\|_{L^2(\mu)\to L^2(\mu)}\leq C_{\mathrm{op}}.
	\end{equation}
\end{remark}

\vvv

\subsection{Martingale difference decomposition}

Given $Q\in \caD$ we denote as $\Ch(Q)$ the collection of its $2^{n+1}$ dyadic children.

\begin{definition}[``Terminal'' and ``transit'' cubes]
	A cube $Q\in \mathcal D$ contained in $Q^0_{\mathcal D}\eqqcolon Q^0_{\mathcal D}(y)$ with $\mu(Q)\neq 0$ is said to be \textit{terminal} if $Q\subset H \cup \mathfrak T_{\mathcal D}$ and \textit{transit} otherwise. We denote the families of terminal and transit cubes as $\mathcal D^{ter}$ and $\mathcal D^{tr}$, respectively.
\end{definition}

For $f\in L^1_{\loc}(\mu)$ and a cube $Q$ in $\R^d$ with $\mu(Q)\neq 0$, we denote
\begin{equation}\label{eq:def_Xi_f}
	\langle f\rangle_Q \coloneqq m_{\mu, Q}(f)=\frac{1}{\mu(Q)}\int_Q f\, d\mu \qquad \text{ and } \quad E f \coloneqq \frac{\langle f\rangle_{Q^0_{\caD}}}{\langle b\rangle_{Q^0_{\caD}}}\, b.
\end{equation}

For $Q\in \caD^{tr}$ and $f\in L^1_{\loc}(\mu)$ we define
\[
	\Delta_Q f(x)\coloneqq 
	\begin{cases}
		0 &\qquad\text{ for } x\in \R^d\setminus \bigcup_{P\in \Ch(Q)} P,  \\
		\Bigl(\frac{\langle f\rangle_P}{\langle b\rangle_P}-\frac{\langle f\rangle_Q}{\langle b\rangle_Q}\Bigr)\,b(x) &\qquad\text{ for } x\in P \text{ for some } P \in \Ch(Q)\cap \caD^{tr},\\
		f(x)-\frac{\langle f\rangle_Q}{\langle b\rangle_Q}\,b(x)&\qquad  \text{ for } x\in P \text{ for some } P \in \Ch(Q)\cap \caD^{ter}.
	\end{cases}
\]

In particular, see \cite[Lemma 5.10 and Lemma 5.11]{To14}, $\Delta_Q^2=\Delta_Q$, $\Delta_Q \Delta_R=0$ if $R\in \caD^{tr}$ with $R\neq Q$, and a function $f\in L^2(\mu)$ can be decomposed as
\begin{equation}\label{eq:dec_f_mart}
	f= E f + \sum_{Q\in \caD^{tr}} \Delta_Q f,
\end{equation}
where the sum is unconditionally convergent in $L^2(\mu)$. Furthermore, there exists a constant $\tilde c=\tilde c(c_b, c_{acc})>0$ such that 
\[
	\tilde c^{-1}\, \|f\|_{L^2(\mu)}\leq \|E f\|^2_{L^2(\mu)} + \sum_{Q\in \caD^{tr}}\|\Delta_Q f\|^2_{L^2(\mu)}\leq \tilde c \, \|f\|_{L^2(\mu)}.
\]

\vvv

Let $M>0$. We say that $V\subset \R^d$ has $M$-thin boundary if 
\[
	\mu\bigl(\{x\in\R^d:\dist(x,\partial V)\leq r\}\bigr)\leq M r\qquad \text{ for all } r>0.	
\]

The main step to prove the $L^2$-boundedness of $T_{\Phi, \mu}$ in Theorem \ref{theorem:suppressed_Tb} is to argue via duality and to test the operator on ``good'' functions.
\begin{definition}[``Good'' and ``bad'' cubes]\label{def:good_bad_cubes}
	Let $x_1, x_2\in \Omega$, $m$ be a positive integer, and $M>0$. We denote $\caD_j=\caD_j(x_j)$, for $j=1,2$. We say that $Q\in \caD_1^{tr}$ is \textit{bad with respect to $\caD_2$} if one of the following two properties holds:
	\begin{enumerate}
		\item There exists $R\in \caD^{tr}_2$ such that $\dist(Q, \partial R)\leq \ell(Q)^{1/4}\ell(R)^{{3/4}}$ and $\ell(R)\geq 2^m \ell(Q)$.
		\item There exists $R\in \caD^{tr}_2$ such that $2^{-m}\ell(Q)\leq \ell(R)\leq 2^m\ell(Q)$, $\dist(Q,R)\leq 2^m \ell(Q)$, and at least one of the children of $R$ does not have $M$-thin boundary.
	\end{enumerate}
	We say that $Q\in \caD_1^{tr}$ is \textit{good with respect to $\caD_2$} if it is not bad with respect to $\caD_2$.
\end{definition}

\vv
\begin{definition}[``Good'' and ``very good'' functions]
	Let $\caD_1$ and $\caD_2$ be as in Definition \ref{def:good_bad_cubes}. We say that $f\in L^2(\mu)$ is a \textit{$\caD_1$-good function with respect to $\caD_2$} if $\Delta_Q f=0$ for every cube $Q\in \caD_1^{tr}$ which is $\caD_1$-bad with respect to $\caD_2$.
	
	We say that $f$ is a \textit{very $\caD_1$-good function with respect to $\caD_2$} if it is a $\caD_1$-good function with respect to $\caD_2$ and $\Delta_Q f=0$ for all but a finite number of cubes $Q\in \caD^{tr}_1$.
\end{definition}

Let $\varepsilon_b\in (0,1)$. In analogy with \cite[Lemma 5.28]{To14}, we choose $m$ and $M$ big enough that, for any $Q\in \caD(x_1)$, it holds that
\[
	\mathcal L^{d}\Bigl(\bigl\{x_2\in \Omega: Q\in \caD(x_1) \text{ is bad with respect to }\caD(x_2)\bigr\}\Bigr)\leq \varepsilon_b\,  \mathcal L^{d}(\Omega).
\]
\vv

Most of the remaining part of the section consists in justifying the following lemma, that should be compared to \cite[Lemma 5.13]{To14}.
\begin{lemma}\label{lem:main_lemma_good_functions}
	Let $w_1, w_2\in \Omega$, and $\varepsilon>0$. Let $\caD_1=\caD(w_1)$ and $\caD_2=\caD(w_2)$ be two dyadic lattices, and assume that $\Xi\colon \R^d\to \mathbb R$ is a $1$-Lipschitz function such that $\Xi (x)\geq \varepsilon$ for all $x\in \R^d$ and 
	\[
		\Xi(x)\geq \max \bigl(\Phi_{\caD_1}(x), \Phi_{\caD_2}(x)\bigr),\qquad \text{ for all }x\in\R^d.
	\]
	
	If $f$ is $\caD_1$-good with respect to $\caD_2$ and $g$ is $\caD_2$-good with respect to $\caD_1$, then there is a positive constant $c=c(c_0,c_b, c_*, \delta_0, \varepsilon_b)$ such that
	\begin{equation}\label{eq:bound_supp_good_functions}
		\bigl|\langle T_{\Xi, \mu}f, g\rangle\bigr|\leq c\, \|f\|_{L^2(\mu)}\|g\|_{L^2(\mu)}.
	\end{equation}
	In particular, the value of $c$ does not depend on $\varepsilon$.
\end{lemma}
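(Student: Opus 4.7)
The plan is to adapt the Nazarov--Treil--Volberg martingale framework, as presented in \cite[Lemma~5.13]{To14}, to the ``good SIO'' structure of Definition~\ref{eq:good_SIO_definition}. As a first step I would decompose $T_{\Xi,\mu} = \widetilde{T}_{\Xi,\mu} + \Lambda_{\Xi,\mu}$, where $\Lambda_{\Xi,\mu}$ has suppressed kernel $\ell_\Xi$. Arguing as in the proof of Lemma~\ref{lemma:CZ_estimates_suppressed}, one obtains the pointwise bound
\[
|\ell_\Xi(x,y)| \leq c_\Lambda \min\Bigl(\frac{\varphi(|x-y|)}{|x-y|^n},\, \Xi(x)^{-n},\, \Xi(y)^{-n}\Bigr).
\]
Splitting $\int |\ell_\Xi(x,y)|\, d\mu(y)$ at the scale $\Xi(x)$, and using that $\mu$ has $n$-growth at scales larger than $\Xi(x) \geq \max(\mathcal R(x), e(x))$, Lemma~\ref{lem:lem_estim_dini_integr_growth} (Schur's test) would yield $\|\Lambda_{\Xi,\mu}\|_{L^2(\mu)\to L^2(\mu)} \lesssim c_\Lambda c_0 (1+\mathfrak I_\varphi(R))$, which disposes of the contribution of $\Lambda$ to \eqref{eq:bound_supp_good_functions}.

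For the principal term $\widetilde T_{\Xi,\mu}$, the key is to apply the almost-antisymmetry~\eqref{eq:adjpoint of T} \textit{globally} before any martingale expansion, writing
\[
\langle \widetilde T_{\Xi,\mu} f, g\rangle = \tfrac{1}{2}\bigl(\langle \widetilde T_{\Xi,\mu} f, g\rangle - \langle f, \widetilde T_{\Xi,\mu} g\rangle\bigr) + \tfrac{1}{2}\langle f, K_\mu g\rangle.
\]
The last term is directly bounded by $\tfrac{c_K}{2}\|f\|_{L^2(\mu)}\|g\|_{L^2(\mu)}$ via~\eqref{eq:L2 bound of K}. Expanding the remaining (now genuinely antisymmetric) bilinear form according to~\eqref{eq:dec_f_mart} then reduces the task to controlling
\[
\sum_{Q \in \caD_1^{tr},\, R \in \caD_2^{tr}} \bigl[\langle \widetilde T_{\Xi,\mu} \Delta_Q f, \Delta_R g\rangle - \langle \Delta_Q f, \widetilde T_{\Xi,\mu} \Delta_R g\rangle\bigr],
\]
plus the boundary contributions involving $Ef$ and $Eg$, which are handled using the $L^\infty$-bound~\eqref{eq:bound_t_delta_min_suppressions} together with the $n$-growth of $\mu$ on $\R^d\setminus H$.

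The pair-by-pair analysis then proceeds as in \cite[Chapter~5]{To14}, with $\widetilde k$ (satisfying~\eqref{eq:CZ_1} and~\eqref{eq:CZ_2} with $\theta(t)=t$) playing the role of an antisymmetric Calder\'on--Zygmund kernel. For each pair I would exploit the cancellation of the martingale difference at the smaller scale against the Lipschitz-type regularity of $\widetilde k_\Xi$ given by~\eqref{eq:CZ_2_supp}. The goodness hypotheses of Definition~\ref{def:good_bad_cubes} kill the dangerous configurations, namely the short-range pairs with $Q$ close to $\partial R$ and the comparable-scale neighbours adjacent to non-thin boundaries; the surviving pairs carry geometric decay, and summation via Cauchy--Schwarz together with the martingale norm equivalence $\sum_Q \|\Delta_Q f\|_{L^2(\mu)}^2 \approx \|f\|_{L^2(\mu)}^2$ closes the estimate. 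Crucially, all bounds depend only on the pointwise estimate~\eqref{eq:bound_t_delta_min_suppressions} in terms of $\Xi(x) \geq \varepsilon$ rather than on the truncation scale, which is what guarantees uniformity in $\varepsilon$.

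The main obstacle is precisely the handling of the failure of exact antisymmetry. Invoking~\eqref{eq:adjpoint of T} \textit{pair by pair} inside the martingale expansion would produce error contributions of the form $\sum_{Q,R} \langle \Delta_Q f, K_\mu \Delta_R g\rangle$, and these do \emph{not} sum to $c_K\|f\|_{L^2(\mu)}\|g\|_{L^2(\mu)}$: bounding them individually via $|\langle \Delta_Q f, K_\mu \Delta_R g\rangle| \leq c_K \|\Delta_Q f\|\|\Delta_R g\|$ and summing gives $c_K(\sum_Q \|\Delta_Q f\|)(\sum_R \|\Delta_R g\|)$, which diverges because the martingale norm equivalence only controls $\ell^2$-sums. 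The global symmetrization performed in the second paragraph circumvents this entirely: $K_\mu$ enters as a single, globally $L^2(\mu)$-bounded bilinear form, while the remaining expression is genuinely antisymmetric and is amenable to the standard Nazarov--Treil--Volberg machinery.
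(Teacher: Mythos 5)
Your reduction of the term $\Lambda_{\Xi,\mu}=T_{\Xi,\mu}-\widetilde T_{\Xi,\mu}$ via the size bound on $\ell$ and Schur's test is fine and is exactly how the paper disposes of it (your use of the suppression to compensate for the lack of global $n$-growth is in fact more careful than the paper's one-line reference). The genuine gap is in the second step, the ``global symmetrization.'' Writing $\langle \widetilde T_{\Xi,\mu}f,g\rangle=\tfrac12\bigl(\langle \widetilde T_{\Xi,\mu}f,g\rangle-\langle f,\widetilde T_{\Xi,\mu}g\rangle\bigr)+\tfrac12\langle K_\mu f,g\rangle$ is algebraically correct, but the antisymmetric bilinear form corresponds to the kernel $\tfrac12\bigl(\tilde k_\Xi(x,y)-\tilde k_\Xi(y,x)\bigr)$, and this kernel does \emph{not} satisfy the hypotheses needed to run the Nazarov--Treil--Volberg scheme. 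Definition~\ref{eq:good_SIO_definition} only provides the Lipschitz-type regularity \eqref{eq:CZ_2} of $\tilde k$ in the \emph{second} variable, while $K_\mu$ is merely an $L^2(\mu)$-bounded operator with no kernel smoothness; since for an antisymmetric kernel regularity in one variable is equivalent to regularity in the other, your symmetrized kernel inherits only the worse of the two moduli. Abstractly that is no controlled modulus at all, and even in the concrete application $t(x,y)=\nabla_1\Theta(x-y,0;\mathcal A(x))$ the first-variable regularity is only Dini, cf.\ \eqref{eq:t-kernel-cont-2}. Consequently the claim that the symmetrized form ``is amenable to the standard NTV machinery'' fails precisely where you need it: whenever the smaller cube sits on the $g$-side, the cancellation of $\Delta_R g$ must be played against regularity in the first variable, and a merely Dini (or uncontrolled) modulus is exactly the regime in which no $Tb$ theorem is available --- avoiding this is the entire raison d'\^etre of the good-SIO structure.

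The paper resolves the asymmetry differently: it keeps $\widetilde T_{\Xi,\mu}$ intact, uses the second-variable regularity \eqref{eq:CZ_2_supp} whenever the small-scale cancellative factor is $\Delta_Q f$, and invokes \eqref{eq:adjpoint of T} only in structured blocks --- the terms involving $Ef$ and $Eg$, the sums $S_{1,2}$ and $S_{4,2}$ where $\ell(R)\leq\ell(Q)$, and the identical-children term in Lemma~\ref{lemma:lemma524}. Your objection that pairwise use of \eqref{eq:adjpoint of T} produces $K_\mu$-errors that cannot be summed term by term is legitimate in spirit, but the paper does not bound them individually over all pairs: in the nested case the $K_\mu$-contributions are resummed through the adjoint martingale projections, using $\sum_{Q\subset 2S}\|\Delta_Q^* K_\mu(g_{R,S})\|_{L^2(\mu)}^2\leq\|K_\mu(g_{R,S})\|_{L^2(\mu)}^2\lesssim\|g_{R,S}\|_{L^2(\mu)}^2$ by orthogonality and \eqref{eq:L2 bound of K}, and in the remaining blocks the flip is applied to whole restricted families rather than to arbitrary pairs. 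So the correct repair of the difficulty you identified is this orthogonality-based resummation within the paper's case analysis, not a global antisymmetrization, which trades the summability problem for a kernel-regularity problem that the framework cannot absorb.
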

\vv

\subsection{The proof of Lemma \ref{lem:main_lemma_good_functions}}
It suffices to prove \eqref{eq:bound_supp_good_functions} under the further assumptions that $f$ is very $\caD_1$-good with respect to $\caD_2$ and that $g$ is very $\caD_2$-good with respect to $\caD_1$, in the sense of Definition \ref{def:good_bad_cubes}. This reduction can be done provided that the estimates do not depend on the number of non-zero terms $\Delta_Q f$ and $\Delta_R g$, for $Q\in \caD_1^{tr}$ and $R\in \caD_2^{tr}$. For more details, we refer to \cite[p. 154]{To14}.

We claim that \eqref{eq:bound_supp_good_functions} holds for $\widetilde T$, namely, under the assumptions of Lemma \ref{lem:main_lemma_good_functions}, we have
	\begin{equation}\label{eq:bound_supp_good_functions_tilde}
		\bigl|\langle \widetilde T_{\Xi, \mu}f, g\rangle\bigr|\lesssim c\, \|f\|_{L^2(\mu)}\|g\|_{L^2(\mu)}.
	\end{equation}
The bound \eqref{eq:bound_supp_good_functions_tilde} readily implies \eqref{eq:bound_supp_good_functions}: since $T$ is a good SIO, in the sense of Definition \ref{eq:good_SIO_definition}, if $\Lambda =T_{\Xi}- \widetilde T_{\Xi}$ denotes the operator with kernel $\ell(x,y)$, we obtain
\[
	\bigl|\langle  T_{\Xi, \mu}f, g\rangle\bigr|\leq \bigl|\langle \widetilde T_{\Xi, \mu}f, g\rangle\bigr| +\bigl|\langle \Lambda f, g\rangle\bigr|\lesssim \|f\|_{L^2(\mu)}\|g\|_{L^2(\mu)},
\]
where the $L^2(\mu)$-boundedness of $\Lambda$ follows from property \eqref{eq:boundedness_kernel_ell_good_SIO} and Lemma \ref{lem:lem_estim_dini_integr_growth}. Hence, we devote the rest of the subsection to the proof of \eqref{eq:bound_supp_good_functions_tilde}.

The use of the suppression function $\Xi$ in Lemma \ref{lem:main_lemma_good_functions} implies the $L^2(\mu)$-boundedness of $\widetilde T_{\Xi, \mu}$, with a norm that may depend on $\varepsilon$. Hence, the decomposition \eqref{eq:dec_f_mart} yields
\[
\widetilde	T_{\Xi, \mu}f = \widetilde T_{\Xi, \mu}(E f)+ \sum_{Q\in \caD_1^{tr}} \widetilde T_{\Xi, \mu}(\Delta_Q f),
\]
to be interpreted again it the $L^2(\mu)$-sense.

Thus,
\begin{equation}\label{eq:eq_split_main}
	\begin{split}
		\langle &\widetilde  T_{\Xi, \mu}f, g\rangle=
		 \langle  \widetilde T_{\Xi, \mu}(E f), g \rangle + \sum_{Q\in \caD_1^{tr}}\langle  \widetilde T_{\Xi, \mu}(\Delta_Q f), g\rangle	 \\
			&=\langle  \widetilde T_{\Xi, \mu}(E f),   g \rangle + \langle \widetilde T_{\Xi, \mu}( f),E g \rangle -\langle \widetilde T_{\Xi, \mu}(Ef), E g \rangle  + \sum_{Q\in \caD_1^{tr},\, R\in \caD_2^{tr}}\langle  \widetilde T_{\Xi, \mu}(\Delta_Q f), \Delta_R g\rangle\\
			&\eqqcolon \mathfrak A_1 + \mathfrak A_2 + \mathfrak A_3 + \mathfrak A_4.
	\end{split}
\end{equation}

The term $\mathfrak A_1$ can be estimated as for the Cauchy transform (see \cite[p.155,  (5.29)]{To14}), deducing
\begin{equation}\label{eq:bound_A_1}
	\mathfrak A_1 \lesssim \|f\|_{L^2(\mu)} \|g\|_{L^2(\mu)}.
\end{equation}

In light of \eqref{eq:adjpoint of T}, we can write
\[
	\mathfrak A_2= - \bigl\langle f, \widetilde T_{\Xi, \mu}(E g) \bigr\rangle +\bigl\langle E f, K_{\mu}(E g) \bigr\rangle,
\]
so \eqref{eq:bound_A_1} and \eqref{eq:L2 bound of K}  yield that
\[
	|\mathfrak A_2|\lesssim \|f\|_{L^2(\mu)} \|g\|_{L^2(\mu)}.
\]

\vvv
To estimate $\mathfrak A_3$, we observe that the  inclusion $\supp(\mu)\subset S^0\cap Q^0_{\caD_1}\cap Q^0_{\caD_2}$ entails
\[
	\langle b\rangle_{Q^0_{\caD_1}}=\langle b\rangle_{Q^0_{\caD_2}}=\langle b\rangle_{S^0},
\]
so \eqref{eq:def_Xi_f} and \eqref{eq:adjpoint of T} give
\begin{equation*}
	\begin{split}
		-\mathfrak A_3&=\frac{\langle f\rangle_{Q^0_{\caD_1}}\langle g\rangle_{Q^0_{\caD_2}}}{\langle b\rangle^2_{S^0}}\,\langle \widetilde T_{\Xi, \mu}b, b \rangle  =\frac{\langle f\rangle_{Q^0_{\caD_1}}\langle g\rangle_{Q^0_{\caD_2}}}{\langle b\rangle^2_{S^0}}\,\langle K_{ \mu} b, b \rangle 
	\end{split}
\end{equation*}
and

\begin{equation}
	|\mathfrak A_3|\lesssim \frac{\bigl|\langle f\rangle_{Q^0_{\caD_1}}\langle g\rangle_{Q^0_{\caD_2}}\bigr|}{\langle b\rangle^2_{S^0}}\,  \|b\|^2_{L^2(\mu)}\leq \|f\|_{L^2(\mu)} \|g\|_{L^2(\mu)}.
\end{equation}

\vvv
We are left with the estimate of $\mathfrak A_4$. We say that $Q\in \caD_1$ and $R\in \caD_2$ are \textit{distant} cubes if
\[
	\dist(Q,R)\geq \min \bigl(\ell(Q), \ell(R)\bigr)^{1/4}\cdot \max\bigl(\ell(Q), \ell(R)\bigr)^{3/4}.
\]
As in \cite[(5.32)]{To14}, we split the sum in the definition of $\mathfrak A_4$ as
\begin{equation}
	\begin{split}
		\mathfrak A_4 &= \sum_{Q\in \caD_1^{tr}, R\in \caD_2^{tr}}\langle \widetilde T_{\Xi, \mu}(\Delta_Q f), \Delta_R g\rangle\\
		&= \sum_{\substack{Q\in \caD_1^{tr}, R\in \caD_2^{tr},\\ Q,R \text{ distant }}} + \sum_{\substack{Q\in \caD_1^{tr}, R\in \caD_2^{tr},\\ Q\cap R = \varnothing\\ Q,R \text{ not distant }}} + \sum_{\substack{Q\in \caD_1^{tr}, R\in \caD_2^{tr},\\ Q\cap R \neq \varnothing\\ 2^{-m}\ell(R)\leq \ell(Q)\leq 2^m \ell(R)}} + \sum_{\substack{Q\in \caD_1^{tr}, R\in \caD_2^{tr},\\ Q\cap R \neq  \varnothing\\ \ell(Q)<2^{-m}\ell(R) \text{ or }\ell(Q)<2^{m}\ell(R)}}\\
		&\eqqcolon S_1 + S_2 + S_3 + S_4.
	\end{split}
\end{equation}

\subsubsection*{Estimate of $S_1+S_4$}
The triangle inequality gives
\begin{equation*}
	\begin{split}
		|S_1|&\leq \sum_{\substack{Q\in \caD_1^{tr}, R\in \caD_2^{tr},\\ Q,R \text{ distant }\\ \ell(Q)\leq \ell(R)}}  \bigl|\langle \widetilde  T_{\Xi, \mu}(\Delta_Q f), \Delta_R g\rangle\bigr| + \sum_{\substack{Q\in \caD_1^{tr}, R\in \caD_2^{tr},\\ Q,R \text{ distant }\\ \ell(R)\leq \ell(Q)}} \bigl|\langle  \widetilde T_{\Xi, \mu}(\Delta_Q f), \Delta_R g\rangle\bigr|\eqqcolon S_{1,1} + S_{1,2}.
	\end{split}
\end{equation*}

The estimate
\begin{equation}\label{eq:S_1_1}
	S_{1,1}\lesssim \|f\|_{L^2(\mu)} \|g\|_{L^2(\mu)}
\end{equation}
can be obtained via an argument identical to the Cauchy transform's. The term $S_{1,2}$ can be bounded observing that  \eqref{eq:L2 bound of K} yields
\begin{equation}\label{eq:estim_s12_s11}
	\begin{split}
		S_{1,2}&\leq S_{1,1} + \sum_{\substack{Q\in \caD_1^{tr}, R\in \caD_2^{tr},\\ Q,R \text{ distant }\\ \ell(Q)\leq \ell(R)}} \bigl|\langle K_{\mu}(\Delta_Q f), \Delta_R g\rangle\bigr|\\
		&\overset{\eqref{eq:S_1_1}}{\lesssim} \|f\|_{L^2(\mu)} \|g\|_{L^2(\mu)} + \sum_{\substack{Q\in \caD_1^{tr}, R\in \caD_2^{tr},\\ Q,R \text{ distant }\\ \ell(Q)\leq \ell(R)}} \|\Delta_Q f\|_{L^2(\mu)} \|\Delta_R g\|_{L^2(\mu)}.
	\end{split}
\end{equation}

Thus, by orthogonality, we have
\begin{equation*}
	\begin{split}
		\sum_{\substack{Q\in \caD_1^{tr}, R\in \caD_2^{tr},\\ Q,R \text{ distant }\\ \ell(Q)\leq \ell(R)}} \|\Delta_Q f\|_{L^2(\mu)} \|\Delta_R g\|_{L^2(\mu)}&\leq \sum_{Q\in \caD_1^{tr},\, R\in \caD_2^{tr}}\|\Delta_Q f\|_{L^2(\mu)} \|\Delta_R g\|_{L^2(\mu)}\leq \|f\|_{L^2(\mu)}\, \|g\|_{L^2(\mu)}
	\end{split}
\end{equation*}
which, together with \eqref{eq:estim_s12_s11}, implies that
\(
	S_{1,2}\lesssim \|f\|_{L^2(\mu)}\, \|g\|_{L^2(\mu)}.
\)
\vv

To bound $S_4$, we split the sum as
\begin{equation*}
	\begin{split}
		S_4&=\sum_{\substack{Q\in \caD_1^{tr}, R\in \caD_2^{tr},\\ Q\cap R \neq  \varnothing\\ \ell(Q)<2^{-m}\ell(R) \text{ or }\ell(Q)<2^{m}\ell(R)}}\langle  \widetilde T_{\Xi, \mu}(\Delta_Q f), \Delta_R g\rangle\\
			&=\sum_{\substack{Q\in \caD_1^{tr}, R\in \caD_2^{tr},\\ Q\cap R \neq  \varnothing \text{ and }  \ell(Q)<2^{-m}\ell(R) }}+ \sum_{\substack{Q\in \caD_1^{tr}, R\in \caD_2^{tr},\\ Q\cap R \neq  \varnothing \text{ and } \ell(R)<2^{m}\ell(Q)}}\eqqcolon S_{4,1} + S_{4,2}.
	\end{split}
\end{equation*}

The term $S_{4,1}$ can be analyzed as in \cite[Sections 5.8.2 and 5.8.3]{To14}. The first subsection applies directly to our setting without modification. The only adjustment needed in Section 5.8.3 is in Lemma 5.21, as the equality in (5.58) relies on the antisymmetry of the operator.

	The absence of this property for $\widetilde T_{\Xi, \mu}$ can be addressed by using \eqref{eq:adjpoint of T} and the identity $\Delta_Q = \Delta_Q^2$.  To see this, let $R\in \caD_2^{ter}$, denote as $\widehat R$ the dyadic parent of $R$, and by $W(R)$ a suitable Whitney decomposition of $R$ into dyadic cubes in $\caD_2$, see \cite[p.168]{To14}. For $R\in \caD_2^{ter}$ and $S\in W(R)$ we define $g_{R,S}\coloneqq \chi_{2S}\, \Delta_{\widehat R}g$.
	If $z_Q$ is the center of a cube $Q$, we have that
	\begin{equation}
		\begin{split}
			&\sum_{\substack{Q\in \caD_1^{tr},\\ \ell(Q)\leq 2^{-m}\ell(R)\\ Q\subset R, \,z_Q\in S}}\bigl|\bigl\langle \widetilde T_{\Xi, \mu}(\Delta_Q f), g_{R,S}\bigr\rangle\bigr|\\
			&\leq \sum_{\substack{Q\in \caD_1^{tr},\\ \ell(Q)\leq 2^{-m}\ell(R)\\ Q\subset R,\, z_Q\in S}}\bigl|\bigl\langle \Delta^2_Q f, \widetilde T_{\Xi, \mu}(g_{R,S})\bigr\rangle\bigr| + \sum_{\substack{Q\in \caD_1^{tr},\\ \ell(Q)\leq 2^{-m}\ell(R)\\ Q\subset R, \,z_Q\in S}}\bigl|\bigl\langle \Delta^2_Q f, K_{\mu}(g_{R,S})\bigr\rangle\bigr|\eqqcolon\mathcal S_T + \mathcal S_K.
		\end{split}
	\end{equation}
	
	The arguments in \cite[pp. 170-171]{To14} give
	\[
		\mathcal S_T \lesssim \Biggl(\sum_{\substack{Q\in \caD_1^{tr},\, Q\subset 2S}}\|\Delta_Q f\|^2_{L^2(\mu)}\Biggr)^{1/2}\, \|g_{R,S}\|_{L^2(\mu)}.
	\]
	To prove the analogous estimate for $\mathcal S_K$ it is enough to observe that we have 
	\begin{equation*}
		\begin{split}
			\mathcal S_K \leq \Biggl(\sum_{\substack{Q\in \caD_1^{tr},\, Q\subset 2S}}\|\Delta_Q f\|^2_{L^2(\mu)}\Biggr)^{1/2}\Biggl(\sum_{\substack{Q\in \caD_1^{tr}, \, Q\subset 2S}}\|\Delta^*_Q K_{\mu}(g_{R,S})\|^2_{L^2(\mu)}\Biggr)^{1/2}
		\end{split}
	\end{equation*}
	and that orthogonality and the $L^2(\mu)$-boundedness of $K$ imply that
	\begin{equation*}
		\begin{split}
			&\sum_{\substack{Q\in \caD_1^{tr},  \,Q\subset 2S}}\|\Delta^*_Q K_{\mu}(g_{R,S})\|^2_{L^2(\mu)} = \sum_{\substack{Q\in \caD_1^{tr},  \,Q\subset 2S}}\bigl\|\Delta^*_Q \bigl(\chi_{2S}K_{\mu}(g_{R,S})\bigr)\bigr\|^2_{L^2(\mu)}\\
			&\qquad\qquad\leq \|\chi_{2S}K_{\mu}(g_{R,S})\|^2_{L^2(\mu)}\leq \|K_{\mu}(g_{R,S})\|^2_{L^2(\mu)}\lesssim \|g_{R,S}\|^2_{L^2(\mu)},
		\end{split}
	\end{equation*}
	where in the last inequality we used \eqref{eq:L2 bound of K}.  The rest of the proof works unchanged. Hence, we can conclude that
\[
	S_{4,1} + S_{4,2}\lesssim \|f\|_{L^2(\mu)} \|g\|_{L^2(\mu)}.
\]
where the bound of $S_{4,2}$ follows using the same arguments as in \eqref{eq:estim_s12_s11}. 

\vv

We have motivated how to prove that
\[
	S_1 + S_4 \lesssim \|f\|_{L^2(\mu)} \|g\|_{L^2(\mu)}.
\]
\vv

\subsubsection*{Estimate of $S_2+S_3$} The calculations in \cite[Section 5.9]{To14} repeat nearly unchanged, except for Lemma 5.24, which requires additional considerations.

\begin{lemma}\label{lemma:lemma524}
	Let $Q\in \caD^{tr}_1$, $R\in \caD^{tr}_2$, and assume that every cube in $\Ch(Q)$ and $\Ch(R)$ has $M$-thin boundary. Then
	\begin{equation}\label{eq:lemma524_1}
		\bigl|\langle \widetilde T_{\Xi, \mu}(\Delta_Q f), \Delta_R g \rangle\bigr|\lesssim \|\Delta_Q f\|_{L^2(\mu)} \|\Delta_R g\|_{L^2(\mu)},
	\end{equation}
	so 
	\[
		S_2 + S_3 \lesssim \|f\|_{L^2(\mu)}\|g\|_{L^2(\mu)}.
	\]
\end{lemma}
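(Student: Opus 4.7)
The plan is to mimic the proof of the analogous Lemma 5.24 in \cite{To14} for the Cauchy transform, with the modifications needed to handle the lack of exact antisymmetry of $\widetilde T_{\Xi,\mu}$. The argument splits naturally into two stages: first, the bilinear estimate \eqref{eq:lemma524_1} for a fixed pair of cubes; second, the summation over $(Q,R)$ to obtain the bound on $S_2+S_3$.

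For \eqref{eq:lemma524_1}, I would consider subcases based on the relative position of $Q$ and $R$. In the $S_3$ case ($Q\cap R\neq\varnothing$, comparable sizes) and the ``close'' portion of the $S_2$ case (disjoint but not distant), the standard estimates of Tolsa use the smoothness of the kernel in one variable together with the vanishing mean properties of $\Delta_Q f$ and $\Delta_R g$, and the $M$-thin boundary of the children of $Q$ and $R$ to control the contribution of the strips $\{x:\dist(x,\partial P)\le\lambda\ell(P)\}$ where kernel smoothness is weak. Since $\tilde k$ satisfies the standard Lipschitz-type bound \eqref{eq:CZ_2} with $\theta(t)=t$, all such estimates transfer verbatim. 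The only places in Tolsa's argument that use the antisymmetry of the Cauchy kernel are dealt with by \eqref{eq:adjpoint of T}: whenever one wants to write
\[
\langle \widetilde T_{\Xi,\mu}(\Delta_Q f),\Delta_R g\rangle = -\langle \Delta_Q f,\widetilde T_{\Xi,\mu}(\Delta_R g)\rangle,
\]
we instead write
\[
\langle \widetilde T_{\Xi,\mu}(\Delta_Q f),\Delta_R g\rangle = -\langle \Delta_Q f,\widetilde T_{\Xi,\mu}(\Delta_R g)\rangle + \langle \Delta_Q f, K_\mu(\Delta_R g)\rangle,
\]
and the extra term is bounded by $c_K\,\|\Delta_Q f\|_{L^2(\mu)}\|\Delta_R g\|_{L^2(\mu)}$ via Cauchy--Schwarz and \eqref{eq:L2 bound of K}, which is compatible with the conclusion \eqref{eq:lemma524_1}. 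The same trick already appeared in the treatment of $\mathfrak A_2$, $\mathfrak A_3$, and $S_{1,2}$ earlier in this section.

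Once \eqref{eq:lemma524_1} is established, the summation is routine. In $S_3$, the constraints $Q\cap R\neq\varnothing$ and $2^{-m}\ell(R)\le\ell(Q)\le 2^m\ell(R)$ imply that for each $Q\in\mathcal D_1^{tr}$ there are at most $C(m,d)$ cubes $R\in\mathcal D_2^{tr}$ appearing, and symmetrically. In $S_2$, after further splitting by the ratio $\ell(Q)/\ell(R)$ into dyadic scales $2^j$ with $|j|\le m$ (the distant/non-distant dichotomy already removed the large-ratio tail and is handled by $S_1$ and $S_4$), the same bounded-overlap property holds. Hence, applying \eqref{eq:lemma524_1} pairwise and Cauchy--Schwarz,
\[
S_2+S_3 \lesssim \sum_{(Q,R)\text{ admissible}} \|\Delta_Q f\|_{L^2(\mu)}\|\Delta_R g\|_{L^2(\mu)} \lesssim \Bigl(\sum_{Q}\|\Delta_Q f\|_{L^2(\mu)}^2\Bigr)^{1/2}\Bigl(\sum_{R}\|\Delta_R g\|_{L^2(\mu)}^2\Bigr)^{1/2},
\]
which, by the martingale norm equivalence, is at most a constant multiple of $\|f\|_{L^2(\mu)}\|g\|_{L^2(\mu)}$.

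The main obstacle I expect is bookkeeping rather than conceptual: tracking carefully every place in Tolsa's proof where antisymmetry is invoked and verifying that the corresponding perturbation term produced by $K_\mu$ is indeed controlled by $\|\Delta_Q f\|_{L^2(\mu)}\|\Delta_R g\|_{L^2(\mu)}$ (and not by some quantity that would fail to sum, e.g.\ one involving $\ell(Q)/\ell(R)$ weights without decay). Since $K_\mu$ is bounded on $L^2(\mu)$ with a uniform constant and the martingale differences are orthogonal in $L^2(\mu)$, these perturbation contributions always collapse to the desired form after a single application of Cauchy--Schwarz, so no additional decay in $\ell(Q)/\ell(R)$ is needed beyond what \eqref{eq:lemma524_1} itself provides.
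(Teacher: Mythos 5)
Your proposal is correct and follows essentially the same route as the paper: reduce to Tolsa's argument for the pieces coming from the children of $Q$ and $R$ (valid since $\tilde k$ satisfies \eqref{eq:CZ_2} with $\theta(t)=t$), and replace each use of antisymmetry by the relation \eqref{eq:adjpoint of T} together with \eqref{eq:L2 bound of K}. This is exactly what the paper does, the only antisymmetry-dependent step being the diagonal term $\langle \widetilde T_{\Xi,\mu}(\chi_{P\cap S}\,c_P b), \chi_{P\cap S}\,c_S b\rangle$ for transit children, which \eqref{eq:adjpoint of T} converts into $\tfrac12\, c_P c_S\,\langle K_{\mu}(\chi_{P\cap S}b), \chi_{P\cap S}b\rangle$ and Cauchy--Schwarz bounds by $\|\Delta_Q f\|_{L^2(\mu)}\|\Delta_R g\|_{L^2(\mu)}$.
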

\begin{proof}
	For $P\in \Ch(Q)$ and $S\in \Ch(R)$ we set
	\[
		f_{Q,R}\coloneqq \chi_P \Delta_Q f \qquad \text{ and }\qquad g_{R,S}\coloneqq \chi_S \Delta_R g,
	\]
	so that
	\begin{equation}\label{eq:lem524_dec}
		\begin{split}
			\bigl\langle \widetilde T_{\Xi, \mu}(\Delta_Q f), \Delta_R g \bigr\rangle = \sum_{P\in \Ch(Q), S\in \Ch(R)}\bigl\langle \widetilde T_{\Xi, \mu}f_{P,Q}, g_{R,S} \bigr\rangle.
		\end{split}
	\end{equation}

	The argument for the Cauchy transform applies verbatim to the estimate in \eqref{eq:lem524_dec} except for the final paragraph of \cite[p.175]{To14}, which relies on antisymmetry. However, this part can be easily adapted to the current case.
	Indeed, assume  that the cubes in $\Ch(Q)$ and $\Ch(R)$ have $M$-thin boundary and that $P\in \Ch(Q)$ and $S\in \Ch(S)$ are transit cubes. By definition of the martingale difference, there are $c_P, c_S\in \mathbb C$ such that 
	\[
		\Delta_Q f (x)= \biggl(\frac{\langle f\rangle_P}{\langle b\rangle_P }- \frac{\langle f\rangle_Q}{\langle b\rangle_Q}\biggr)\, b (x)\eqqcolon c_P\,  b(x) \qquad \text{ for } x\in P
	\]
	and
	\[
		\Delta_R g (x)= \biggl(\frac{\langle g\rangle_S}{\langle b\rangle_S }- \frac{\langle g\rangle_R}{\langle b\rangle_R}\biggr)\, b (x)\eqqcolon c_S\, b(x)\qquad \text{ for } x\in S.
	\]
	 
	Using \eqref{eq:adjpoint of T} once more,  we obtain 
	\begin{equation}\label{eq:lem_524_e1}
		\begin{split}
			\bigl\langle \widetilde T_{\Xi, \mu}(\chi_{P\cap S} \, f_{Q,P}), \chi_{P\cap S}\, g_{R,S}\bigr\rangle &= c_P \, c_S\, \bigl\langle  \widetilde T_{\Xi, \mu}b, b\bigr\rangle=\frac{c_P \,c_S}{2}\, \bigl\langle  K_{\mu} b, b\bigr\rangle= \frac{1}{2}\bigl\langle  K_{\mu}(\Delta_Q f), \Delta_Rg\bigr\rangle.
		\end{split}
	\end{equation}
	
	Thus, \eqref{eq:lem_524_e1} and \eqref{eq:L2 bound of K} yield
	\[
	 	\bigl\langle \widetilde T_{\Xi, \mu}(\chi_{P\cap S} \, f_{Q,P}), \chi_{P\cap S}\, g_{R,S}\bigr\rangle\lesssim \|\Delta_Q f\|_{L^2(\mu)} \|\Delta_Q g\|_{L^2(\mu)},
	\]
	which is the only modification needed in the proof of \eqref{eq:lemma524_1}.
\end{proof}

\vv

We have made all the necessary adjustments to prove \eqref{eq:bound_supp_good_functions_tilde}, and thus Lemma \ref{lem:main_lemma_good_functions}, by adapting the proof of Nazarov, Treil, and Volberg as presented in \cite{To14}. For the remainder of the argument and the derivation of Theorem \ref{theorem:suppressed_Tb} from Lemma \ref{lem:main_lemma_good_functions}, we refer to \cite[pp. 176-192]{To14}.

\vvv

We are now ready to  prove  Theorem \ref{thm:big piecesTb-aux}.

\vv

\begin{proof}[Proof of  Theorem \ref{thm:big piecesTb-aux}]
	By the arguments of \cite[p.186]{To14}, there exists a  set 
	\[
		G \subset F \setminus \bigcap_{w \in \R^d} \Big( H \cup \mathfrak T_{\mathcal D(w)} \Big)
	\] 
	so that  $\mu(G)\geq \hat c \,\mu(F)$, which satisfies the property \eqref{eq:growth_G_B-1} and such that $\Phi(x)=0$ for any $x\in G$.   Since,  by Lemma \ref{lemma:estimates_suppressed_kernel_2},  for any $x \in G$,  it holds that  $T_{\Phi, \mu}f(x) = T_{\mu} f(x)$,   by \eqref{eq:bd_L2_norm_suppressed-0},  we deduce that
	\[
		\|  T_{\mu} f \|_{L^2(\mu|_G)} \lesssim \|f\|_{L^2(\mu|_G)},
	\]
	concluding the proof of the theorem.
\end{proof}

\vv

{
\begin{remark}
	The construction of the set $G$ in Theorem \ref{thm:big piecesTb-aux} can be carried out so that it does not depend on $\delta$, provided that this parameter is sufficiently small.
		
		We refer to \cite[Section 5.11.2]{To14}, where $G$ is defined in display (5.84) as
		\[
			G\coloneqq \bigl\{x\in F: p_0(x)\geq (1-\delta_1)/2\bigr\},
		\]
		where $p_0(x)$ is defined in terms of a suitable probability, and $\delta_1$ is a constant that depends only on the parameter $\delta_0$.
		We note that $\delta_1$, which was defined in \cite[p.140]{To14} as $\delta_1=(1+\delta_0)/2$, can instead be chosen sufficiently close to $\delta_0$. 
		
		If we define $\alpha\coloneqq \max\bigl(16 c_0 c_b, (2c_*/(\delta_1-\delta_0))\bigr)$, the bound \cite[(5.6)]{To14} still holds, and we additionally have
		\[
			 C_{\delta_1}\mu(F)\leq \mu(G)\leq \mu(F),
		\]
		where $C_{\delta_1}\coloneqq (1-\delta_1)/(1+\delta_1)$. Thus,  if $\delta_0$ is small enough,  we can pick $\delta_1$ also such that $(1-C_{\delta_1})$ is as small as we want.
\end{remark}
}
\vv

{
	Finally, we present a result which is crucial for the application to free-boundary problems.
\begin{theorem}\label{theorem:thm_bd_op_T_G}
		Let $\mu$ be a Radon measure on $\Rn1$ with compact support, and let $E\subset\Rn1$ be a $\mu$-measurable set with $\mu(E)>0$ such that
		\[
			E\subset \bigl\{x\in \Rn1: M_R\mu(x)<\infty \quad \text{ and }\quad T_*\mu(x)<\infty\bigr\}.
		\]
		Then, there exists a Borel set $G\subset E$ with $\mu(G)>0$ such that $\mu|_G\in M^n(\Rn1)$ and the operator $T_{\mu|_G}$ is bounded on $L^2(\mu|_G)$.
\end{theorem}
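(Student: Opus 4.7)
The plan is to apply the big pieces $Tb$ theorem (Theorem \ref{thm:big piecesTb-aux}) to a suitably chosen restriction of $\mu$ with the testing function $b \equiv 1$. First I would use inner regularity of $\mu$ together with Chebyshev's inequality applied to the hypotheses that $M_R\mu$ and $T_*\mu$ are finite on $E$ to extract a compact set $F \subset E$ with $\mu(F) \geq \tfrac{1}{2}\mu(E) > 0$ and constants $C_0, C_* > 0$ so that $M_R\mu \leq C_0$ and $T_*\mu \leq C_*$ on $F$. Setting $\mu^* := \mu|_F$, a standard covering argument shows that $\mu^*$ has $n$-growth globally with constant $2^n C_0$: if $B(x,r) \cap F \neq \varnothing$ and $y \in B(x,r) \cap F$, then $B(x,r) \subset B(y,2r)$, so $\mu^*(B(x,r)) \leq \mu(B(y,2r)) \leq 2^n C_0 r^n$. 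Before the final step I would further arrange $F$ to have thin boundary with respect to $\mu$, meaning $\mu(\{z : \operatorname{dist}(z, \partial F) \leq \lambda\}) \leq C\lambda$ for every $\lambda > 0$, which can be achieved by a standard construction using Lemma \ref{l:nullboundary} to avoid boundaries of positive $\mu$-measure.

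Next I apply Theorem \ref{thm:big piecesTb-aux} with the theorem's measure taken to be $\mu^*$, with $b \equiv 1$ (so that $\nu = \mu^*$ and $c_b = 1$) and $c_{acc} \in (0,1)$, which makes $\mathfrak{T}_{\mathcal D(w)}$ $\mu^*$-null for every $w$. Choosing $c_0 := 2^{n+1}C_0$, the set $H_1 := \{M_R\mu^* \geq c_0/2^n\}$ is disjoint from $F$ by the growth bound and contains every ball $B(y,r)$ with $\mu^*(B(y,r)) > c_0 r^n$. Defining
\[
H := H_1 \cup \{x \in F : T_*\mu^*(x) > M\}
\]
for some $M > 0$ to be fixed, the integral hypothesis $\int_{\mathbb{R}^{n+1} \setminus H} T_*\mu^* \, d\mu^* \leq M \mu^*(F)$ holds automatically with $c_* := M$, and it remains to arrange $\mu^*(H) \leq \delta_0 \mu^*(F)$, which (since $\mu^*(H_1) = 0$) reduces to $\mu^*(\{T_*\mu^* > M\}) \leq \delta_0 \mu^*(F)$ for $M$ large enough.

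The main obstacle is thus to show that $T_*\mu^*$ is finite $\mu^*$-a.e.\ on $F$. Writing
\[
T_\varepsilon \mu^*(x) = T_\varepsilon \mu(x) - T_\varepsilon(\mu|_{F^c})(x)
\]
and using $|T_\varepsilon \mu(x)| \leq C_*$ for $x \in F$, it suffices to control $\sup_{\varepsilon>0} |T_\varepsilon(\mu|_{F^c})(x)|$, which by monotone convergence and the kernel bound from Lemma \ref{lem:estim_fund_sol} is dominated by $c_1 \int_{F^c} |x-y|^{-n}\, d\mu(y)$. Tonelli's theorem combined with the $n$-growth of $\mu^*$ yields
\[
\int_F \sup_{\varepsilon>0} |T_\varepsilon(\mu|_{F^c})| \, d\mu^* \lesssim C_0 \int_{F^c} \log\frac{R}{\operatorname{dist}(y, F)}\, d\mu(y),
\]
and the thin-boundary property of $F$, via the layer-cake formula, renders the right-hand side finite. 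Hence $T_*\mu^* < \infty$ $\mu^*$-a.e.\ on $F$, and for $M$ sufficiently large the remaining hypothesis of Theorem \ref{thm:big piecesTb-aux} is satisfied. The theorem then produces a Borel set $G \subset F \subset E$ with $\mu(G) > 0$, $\mu|_G \in M^n(\mathbb{R}^{n+1})$, and $T_{\mu|_G}$ bounded on $L^2(\mu|_G)$, completing the proof.
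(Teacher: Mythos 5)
Your overall strategy (Chebyshev plus inner regularity to get a compact $F\subset E$ with $M_R\mu\leq C_0$ and $T_*\mu\leq C_*$ on $F$, then Theorem \ref{thm:big piecesTb-aux} applied with $b\equiv 1$, $c_{acc}<1$, and $H$ containing the non-Ahlfors balls together with a level set of the maximal operator) is the right one and is in the spirit of the paper's proof, which adapts \cite[Theorem 8.13]{To14}. The steps concerning the growth of $\mu|_F$, the nullity of the accretivity carpet, and the disjointness of the non-Ahlfors balls from $F$ are all fine. The gap is in the passage from $T_*\mu$ to $T_*\mu^*$ with $\mu^*=\mu|_F$. You reduce it to showing $\int_{F^c}\log\bigl(R/\dist(y,F)\bigr)\,d\mu(y)<\infty$ and claim this follows from a thin-boundary property $\mu(\{z:\dist(z,\partial F)\leq\lambda\})\leq C\lambda$ that "can be achieved by a standard construction using Lemma \ref{l:nullboundary}". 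This is not available: Lemma \ref{l:nullboundary} only says that at most countably many balls have boundaries of positive measure, and the thin-boundary construction used in the paper (\cite[Lemma 9.43]{To14}) produces thin boundaries for \emph{concentric balls} by varying the radius; neither gives any decay rate for $\mu$-neighborhoods of the boundary of an arbitrary compact set $F$ produced by inner regularity, and you cannot replace $F$ by a ball because the uniform bounds $M_R\mu\leq C_0$, $T_*\mu\leq C_*$ live only on $F$. Without a Dini-type rate for $\lambda\mapsto\mu(\{0<\dist(\cdot,F)\leq\lambda\})$ the Fubini estimate can be infinite, and even $\mu$-a.e.\ finiteness of $T_*(\chi_{F^c}\mu)$ on $F$ is not guaranteed (the bounds on $M_R\mu$ and $T_*\mu$ give no control of the size of the complement mass accumulating on $F$), so the hypothesis $\int_{\Rn1\setminus H}T_*\nu\,d\mu^*\leq c_*\mu^*(F)$ with $\mu^*(H)\leq\delta_0\mu^*(F)$ is not verified and the argument does not close.

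The standard repair, which is also how the argument of \cite[Theorem 8.13]{To14} that the paper invokes avoids this issue, is to enlarge the support of the measure rather than regularize $\partial F$: choose $\delta>0$ so small that the closed neighborhood $F_\delta\coloneqq\{\dist(\cdot,F)\leq\delta\}$ satisfies $\mu(F_\delta\setminus F)\leq\eta\,\mu(F)$ (possible since $\mu(F_\delta)\downarrow\mu(F)$ as $\delta\downarrow 0$), and apply Theorem \ref{thm:big piecesTb-aux} to $\widehat\mu\coloneqq\mu|_{F_\delta}$ with $b\equiv 1$, taking $H$ to be the union of the non-Ahlfors balls (which miss $F$ once $c_0>2^nC_0$) together with $F_\delta\setminus F$, so that $\widehat\mu(H)\leq\eta\,\mu(F)\leq\delta_0\,\widehat\mu(F_\delta)$. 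Then for $x\in F$ one writes $T_\varepsilon\widehat\mu(x)=T_\varepsilon\mu(x)-T_\varepsilon(\mu|_{\Rn1\setminus F_\delta})(x)$, and the second term is bounded by $c_1\delta^{-n}\|\mu\|$ uniformly on $F$ by the kernel bound of Lemma \ref{lem:estim_fund_sol}, since all of that mass lies at distance greater than $\delta$ from $x$; hence $T_*\widehat\mu\leq C_*+c_1\delta^{-n}\|\mu\|$ on $\Rn1\setminus H$ and the integral hypothesis holds trivially. The set $G$ produced by the theorem avoids $H$, hence $G\subset F\subset E$, and $\mu|_G=\widehat\mu|_G$ has $n$-growth with $T_{\mu|_G}$ bounded on $L^2(\mu|_G)$, as desired. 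With this modification your proof becomes correct; as written, the thin-boundary step is a genuine gap.
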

\begin{proof}
For the proof, it suffices to repeat verbatim the arguments of \cite[Theorem 8.13]{To14}, along with the pointwise estimates for suppressed kernels in Section \ref{sec:aux_operator}.
\end{proof}
}

\vvv

\section{The gradient of the single layer potential is a ``good" SIO }\label{sec:gradient_SLP_good}

{
	
	\subsection{Pointwise estimates for auxiliary kernels}
	The article \cite{MMPT23} relies on a three-step perturbation argument for the gradient of fundamental solutions. In the following lemmas, we quote the estimate corresponding to the first step and later adapt the second and third ones to the method used in the present paper.

	\begin{lemma}\label{lem:main_pw_estimate}
		Let  $A$ be a uniformly elliptic matrix in $\Rn1$, $n\geq 2$, satisfying $A\in \widetilde \DMO_{n-1}$.
		For $R_0>0$,  there exists $C=C(n, \Lambda, R_0)>0$ such that, for $x,  y \in \Rn1$ such that $0<|x-y|< R<R_0 $, and
		\begin{align*}
			\mathcal K^1_{\Theta}(x,y) &\coloneqq \nabla_1\Gamma_A(x,y)- \nabla_1\Theta\bigl(x,y; \bar A_{x,|x-y|/2}\bigr)\\
			\mathcal K^{1,*}_{\Theta}(x,y) &\coloneqq \nabla_2\Gamma_{A^T}(x,y)- \nabla_2 \Theta\bigl(x,y; (\bar A_{x,|x-y|/2})^T\bigr),
		\end{align*}
		we have
		\begin{equation}\label{eq:estimate_fund_sol_average_old}
			|\mathcal K^1_\Theta(x,y)| + |\mathcal K^{1,*}_{\Theta}(x,y)| \leq C \, \frac{\tau_A(r)}{r^{n}} + C \frac{\widehat \tau_A(R)}{R^n}\qquad \text{ for } r\coloneqq|x-y|/2,
		\end{equation}
		where  
		\begin{equation}\label{eq:definition_tau_A_new}
			\tau_A(r) \coloneqq  \mathfrak I_{\oomega_A}(r) + \mathfrak L^n_{\oomega_A}(r)=\int_0^r \oomega_A(t)\, \frac{dt}{t} +  r^{n}\int_r^\infty \oomega_A(t)\, \frac{dt}{t^{n+1}}
		\end{equation}
		and
		\[
		\widehat \tau_A(R)= \mathfrak I_{\oomega_A}(R) + \mathfrak L^{n-1}_{\oomega_A}(R) =  \int_0^R\oomega_A(t)\, \frac{dt}{t} + R^{n-1}\int_R^\infty \oomega_A(t) \, \frac{dt}{t^{n}}.
		\]
		{ In particular, if $R_0=1$, the constant $C$ only depends on ellipticity, dimension, and the Dini Mean Oscillation condition.}
	\end{lemma}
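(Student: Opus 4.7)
The plan is to adapt the three-step perturbation argument of \cite{MMPT23} for the gradient of fundamental solutions. The starting point is the representation formula of Lemma~\ref{lem:MMPT_lemma_310} applied to $A_1=A$ and $A_2=A_0:=\bar{A}_{x,r}$, where $r=|x-y|/2$:
\[
\Gamma_A(x,y)-\Theta(x,y;A_0)=\int_{\Rn1}\nabla_2\Gamma_A(x,z)\cdot(A(z)-A_0)\,\nabla_1\Theta(z,y;A_0)\,dz.
\]
The goal is to differentiate this identity in the $x$-variable and estimate the resulting integral. The deviation $A(z)-A_0$ is controlled by \eqref{eq:pw_diff_average_unif_cont}, which gives $|\mathcal{A}(z)-A_0|\lesssim \mathfrak{I}_{\oomega_A}(|z-x|+r)$ for the uniformly continuous representative $\mathcal{A}$ of $A$, while the two kernels are controlled by Lemma~\ref{lem:estim_fund_sol} and \eqref{eq:gradient_fund_sol_const_matrix-2}.

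The estimate then proceeds by decomposing $\Rn1$ into three regions adapted to the singular set $\{x,y\}$: the local piece $U_1:=B(x,r/2)\cup B(y,r/2)$, the intermediate annulus $U_2:=B(x,2R)\setminus U_1$, and the far region $U_3:=\Rn1\setminus B(x,2R)$. In $U_1$, say in $B(x,r/2)$, the factor $\nabla_1\Theta(\cdot,y;A_0)$ is smooth of order $r^{-n}$, and one subtracts its value at $z=x$ so that the product with $\nabla_1\nabla_2\Gamma_A(x,z)$ acquires an extra factor of order $|x-z|/r^{n+1}$; combined with the $\mathfrak{I}_{\oomega_A}$-bound on $A(z)-A_0$ and the mean-zero property $\int_{B(x,r)}(A-A_0)=0$, this yields a contribution of order $\mathfrak{I}_{\oomega_A}(r)/r^n$ via Lemma~\ref{lem:lem_estim_dini_integr_growth}. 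The symmetric argument near $y$ uses the smoothness of $\nabla_2\Gamma_A(x,\cdot)$ there, together with \eqref{eq:continuityGamma}. On $U_2$, direct pointwise bounds and integration in dyadic annuli between $r$ and $R$ yield a total contribution bounded by $\mathfrak{I}_{\oomega_A}(R)/r^n\lesssim \tau_A(r)/r^n$. Finally, on $U_3$ both kernels exhibit their far-field decay, and the dyadic sum formula \eqref{eq:mod_cont_sum_2} applied with $d=n-1$ converts the tail integral of $\mathfrak{I}_{\oomega_A}$ into $\mathfrak{L}^{n-1}_{\oomega_A}(R)/R^n$, producing the term $\widehat{\tau}_A(R)/R^n$.

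The main obstacle is twofold: (i) rigorously interchanging $\nabla_1$ with the integral despite the non-integrable singularity $|x-z|^{-(n+1)}$ of $\nabla_1\nabla_2\Gamma_A(x,\cdot)$, and (ii) the cancellation argument near the singular points. Both are handled by inserting a smooth cutoff $\chi_\varepsilon$ vanishing on $B(x,\varepsilon)\cup B(y,\varepsilon)$ and exploiting the mean-zero property of $A-A_0$ on $B(x,r)$ to rewrite the problematic contribution as an absolutely convergent integral over the annulus $r/2<|z-x|<r$ plus a boundary term controlled by the $C^1$-regularity of $\nabla_1\Gamma_A(\cdot,y)$ away from $y$, which in turn follows from Remark~\ref{remark:rem_grad_bound}. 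Passing $\varepsilon\to 0$ yields the claimed pointwise bound. The adjoint estimate for $\mathcal{K}^{1,*}_\Theta$ follows by repeating the same analysis with $A$ replaced by $A^T$ and invoking \eqref{eq:gradient_fund_sol_const_matrix-1}.
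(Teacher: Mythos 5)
Your overall architecture (the representation formula of Lemma~\ref{lem:MMPT_lemma_310}, a local/intermediate/far decomposition, the bound \eqref{eq:pw_diff_average_unif_cont} for $\mathcal A-\bar A_{x,r}$) is reasonable, but the decisive step --- differentiating the representation formula in $x$ and controlling the resulting integral near $z=x$ --- does not work as described. After differentiation the local integrand is $\nabla_1\nabla_2\Gamma_A(x,z)\,\bigl(A(z)-\bar A_{x,r}\bigr)\,\nabla_1\Theta(z,y;\bar A_{x,r})$. Subtracting $\nabla_1\Theta(x,y;\bar A_{x,r})$ only tames the smooth factor; the remaining term $\nabla_1\Theta(x,y;\bar A_{x,r})\int_{B(x,r/2)}\nabla_1\nabla_2\Gamma_A(x,z)\bigl(A(z)-\bar A_{x,r}\bigr)\,dz$ is \emph{not} absolutely convergent: writing $\mathcal A(z)-\bar A_{x,r}=(\mathcal A(z)-\mathcal A(x))+(\mathcal A(x)-\bar A_{x,r})$, the constant piece, of size up to $\mathfrak I_{\oomega_A}(r)$, paired against the non-integrable kernel $|x-z|^{-(n+1)}$ produces a logarithmic divergence. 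The mean-zero property of $A-\bar A_{x,r}$ cannot rescue this, because it is being paired against a non-constant singular kernel rather than a constant (and your integration runs over $B(x,r/2)$, not the ball $B(x,r)$ on which the mean vanishes); exploiting cancellation of $\nabla_1\nabla_2\Gamma_A(x,\cdot)$ over annuli would require second-derivative comparison estimates for $\Gamma_A$ at least as hard as the lemma itself, and the interchange of $\nabla_1$ with the integral is likewise not justified by the cutoff alone. A secondary flaw: your intermediate-region estimate rests on the inequality $\mathfrak I_{\oomega_A}(R)/r^n\lesssim\tau_A(r)/r^n$, which is false in general (take $\oomega_A$ essentially constant on $[r,R]$); that region can be handled, but only by keeping $\mathfrak I_{\oomega_A}(2^kr)$ paired with $(2^kr)^{-n}$ in the dyadic sum and then invoking \eqref{eq:split_I_L_theta_merged}.

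For comparison, the paper does not argue this way at all: it simply quotes \cite[Lemma~3.12]{MMPT23} for $\mathcal K^1_\Theta$ and observes that $\mathcal K^{1,*}_\Theta$ follows by a simpler variant of the argument given for $\mathfrak K^*_A$ in Lemma~\ref{lemma:hormander_perturbation}. The cited proof avoids differentiating the representation formula: since $u\coloneqq\Gamma_A(\cdot,y)-\Theta(\cdot,y;\bar A_{x,r})$ is a weak solution of $L_Au=\div\bigl((A-\bar A_{x,r})\nabla_1\Theta(\cdot,y;\bar A_{x,r})\bigr)$ in $B(x,r)$ (which does not contain $y$), one applies the Dini-mean-oscillation gradient estimate \eqref{eq:Linftyest-r} of Remark~\ref{remark:rem_grad_bound} with $g=(A-\bar A_{x,r})\nabla_1\Theta(\cdot,y;\bar A_{x,r})$, combined with Caccioppoli, and uses the representation formula only at the level of $u$ itself, where the integral is absolutely convergent; that zeroth-order global bound is precisely where $\tau_A(r)$, $\widehat\tau_A(R)$, the $\DMO_\ell$ hypothesis, and the $R_0$-dependence of $C$ enter. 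If you want a self-contained proof, this PDE route is the one to follow; your direct kernel-differentiation route would need a genuinely new cancellation mechanism near the diagonal that the proposal does not supply.
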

	
	\begin{proof}
		The proof of \eqref{eq:estimate_fund_sol_average_old} for $\mathcal{K}^1_\Theta$ can be found in \cite[Lemma 3.12]{MMPT23}. A similar but simpler argument establishes \eqref{eq:estimate_fund_sol_average_old} for $\mathcal{K}^{1,*}_\Theta$. In particular, we refer to the proof of \eqref{eq:estimate_fund_sol_average} for ${\mathfrak K_A^*}$, where we outline the main differences between the proof of a similar but more difficult estimate for ${\mathfrak K_A}$ and ${\mathfrak K_A^*}$, and address the only non-trivial difficulty.  We omit the details.
	\end{proof}
	
	\vvv
	Let $A$ be a uniformly elliptic matrix in $\Rn1$, $n\geq 2$, satisfying $A\in  \DMO_s$.
	If $\mathcal A$ is the uniformly continuous representative that agrees $\mathcal L^{n+1}$-a.e. with  $A$ with modulus of continuity $\mathfrak I_{\oomega_A}(\cdot)$ (see \cite[Appendix A]{HwK20}),  then,  by the Lebesgue differentiation theorem,  it holds that 
	\begin{equation}\label{eq:limit_matrix}
		\begin{split}
			\lim_{\delta \to 0}  \bar A_{x,\delta}&=\lim_{\delta \to 0} \avint_{B(x,\delta)}  A(y)\,dy= \lim_{\delta \to 0} \avint_{B(x,\delta)} \mathcal A(y)\,dy = \mathcal A(x), \qquad \textup{for all}\,\, x\in \R^{n+1}.
		\end{split}
	\end{equation}
	We also observe that, if  $A \in \widetilde \DMO_{n-1}$, the matrix $\mathcal {A}$ is uniformly elliptic and $\mathcal {A} \in \widetilde{\DMO}_{n-1}$.
	
	The explicit expression \eqref{eq:fund_sol_const_matrix} allows us to obtain pointwise estimates for the auxiliary kernels associated with $\mathcal {A}$.
	
	\begin{lemma}\label{lem:bound_av_diff_point}
		Let $A$ be a uniformly elliptic matrix in $\Rn1$, $n\geq 2$, satisfying $A\in  \DMO_{s}$, and let $\mathcal A$ be its uniformly continuous representative. 
		Let $r>0$, and define
		\begin{align*}
			\mathcal K^2_{\Theta}(x,y) &\coloneqq \nabla_1 \Theta \bigl(x, y;\bar A_{x,r/2}\bigr)-\nabla_1 \Theta\bigl(x, y;\mathcal A(x) \bigr)\\
			\mathcal K^{2,*}_{\Theta}(x,y) &\coloneqq \nabla_2 \Theta \bigl(x,y; (\bar A_{x,r/2})^T\bigr)-\nabla_2 \Theta\bigl(x, y; (\mathcal A(x))^T \bigr).
		\end{align*}
		For all $z\in \mathbb R^{n+1}\setminus\{0\}$, it holds
		\begin{equation}\label{eq:estim_diff_kernels_av}
			\bigl|\mathcal K^2_{\Theta}(z,0)\bigr| + \bigl|\mathcal K^{2,*}_{\Theta}(z,0)\bigr|\lesssim_{n,\Lambda} \frac{1}{|z|^{n}} \,\mathfrak I_{\oomega_A}(r).
		\end{equation}
		Moreover,  there exists $C=C(n,\Lambda)>0$ such that, for every $x, x'\in \Rn1$, we have
		\begin{equation}\label{eq:bound_av_diff_point}
			\bigl|\nabla_1\Theta\bigl(z,0;\mathcal A(x)\bigr)- \nabla_1\Theta\bigl(z,0;\mathcal A(x') \bigr)\bigr|\leq C \, \frac{\mathfrak I_{\oomega_A}(|x-x'|)}{|z|^n}, \qquad z\in\Rn1\setminus \{0\}.
		\end{equation}
	\end{lemma}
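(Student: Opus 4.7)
\textbf{Proof plan for Lemma \ref{lem:bound_av_diff_point}.}

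The plan is to reduce both estimates to a single \emph{Lipschitz-in-the-matrix} bound for the map $B \mapsto \nabla_1 \Theta(z,0;B)$, defined on the open set of uniformly elliptic matrices with fixed constants $\Lambda$. Concretely, I will show that for any such matrix $B$,
\begin{equation}\label{eq:planbound}
\bigl|\nabla_B \nabla_1 \Theta(z,0;B)\bigr| \;\lesssim_{n,\Lambda}\; |z|^{-n}, \qquad z\in\Rn1\setminus\{0\},
\end{equation}
where $\nabla_B$ denotes the gradient with respect to the entries of $B$. Once this is in hand, the mean value theorem along suitable line segments in the space of matrices immediately yields both \eqref{eq:estim_diff_kernels_av} and \eqref{eq:bound_av_diff_point} after estimating the appropriate matrix differences.

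The derivation of \eqref{eq:planbound} is based on the explicit formula \eqref{eq:gradient_fund_sol_const_matrix}, which expresses $\nabla_1 \Theta(z,0;B)$ as a rational function of the entries of the symmetric part $B_s$ whose denominators involve $\det B_s$ and $\langle B_s^{-1} z, z\rangle^{(n+1)/2}$. For $B$ uniformly elliptic with constants $\Lambda$, these denominators are bounded away from zero (independently of $B$, and with the correct $z$-scaling $|z|^{n+1}$); standard formulas for the derivatives of $\det$ and of matrix inversion then give \eqref{eq:planbound}. An alternative, more conceptual derivation is to combine the homogeneity $\nabla_1 \Theta(z,0;\lambda B) = \lambda^{-1} \nabla_1 \Theta(z,0;B)$ with the spatial homogeneity $\nabla_1 \Theta(\lambda z,0;B) = \lambda^{-n} \nabla_1 \Theta(z,0;B)$: the Euler-type identities that follow force the matrix derivative to obey the claimed $|z|^{-n}$ scaling.

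With \eqref{eq:planbound} established, the remainder is routine. For \eqref{eq:estim_diff_kernels_av} I note that the averages $\bar A_{z,r/2}$ and the representative $\mathcal A(z)$ (and any convex combination of the two) are uniformly elliptic with the same constants as $A$, since uniform ellipticity is preserved under averaging and under taking the uniformly continuous representative. Applying the mean value theorem along the segment $B_t = (1-t)\bar A_{z,r/2} + t\,\mathcal A(z)$ yields
\[
\bigl|\mathcal K^2_\Theta(z,0)\bigr| \;\lesssim_{n,\Lambda}\; |z|^{-n}\,\bigl|\bar A_{z,r/2} - \mathcal A(z)\bigr|,
\]
and \eqref{eq:pw_diff_average_unif_cont} applied to $\Omega = B(z,r/2)$ gives $|\bar A_{z,r/2} - \mathcal A(z)| \lesssim \mathfrak I_{\oomega_A}(r)$. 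For the adjoint kernel, I will use that $\Theta(x,y;B)=\Theta(x-y,0;B)$ together with $\Theta(z,0;B)=\Theta(z,0;B_s)$ (i.e.\ $\Theta$ depends only on the symmetric part, \eqref{eq:fund_sol_const_matrix}) to obtain $\nabla_2\Theta(z,0;B^T) = -\nabla_1\Theta(z,0;B)$, so that $\mathcal K^{2,*}_\Theta(z,0) = -\mathcal K^2_\Theta(z,0)$ and the bound is inherited. Finally, \eqref{eq:bound_av_diff_point} follows by the same mean value argument applied along the segment $B_t = (1-t)\mathcal A(x) + t\,\mathcal A(x')$, combined now with \eqref{eq:modulus_continuity_mathcal_A}.

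The only step that requires any genuine work is \eqref{eq:planbound}; everything else is an application of the mean value theorem with a matrix difference controlled by $\mathfrak I_{\oomega_A}$. I do not anticipate a serious obstacle, since \eqref{eq:planbound} reduces to a direct calculation from \eqref{eq:gradient_fund_sol_const_matrix} using uniform ellipticity to keep denominators under control.
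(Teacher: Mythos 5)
Your proposal is correct and follows essentially the same route as the paper: the paper also reduces both estimates to the bound $|\bar A_{x,r}-\mathcal A(x)|\leq \mathfrak I_{\oomega_A}(r)$ (resp.\ $|\mathcal A(x)-\mathcal A(x')|\leq \mathfrak I_{\oomega_A}(|x-x'|)$) combined with a Lipschitz-in-the-matrix estimate for $B\mapsto\nabla_1\Theta(z,0;B)$ with constant $\lesssim_{n,\Lambda}|z|^{-n}$, the only difference being that the paper imports this last step by quoting the proof of (3.69) in \cite[Lemma 3.13]{MMPT23}, while you derive it directly from the explicit formula \eqref{eq:gradient_fund_sol_const_matrix}. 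Your observation that $\mathcal K^{2,*}_\Theta(z,0)$ coincides (up to sign) with $\mathcal K^2_\Theta(z,0)$, via the symmetric-part dependence and evenness of $\Theta(\cdot,0;B)$, is a legitimate shortcut for the starred kernel.
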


	\begin{proof}
		Since $\mathcal A$ is the uniformly continuous representative of $A$ with modulus of continuity $\mathfrak I_{\oomega_A}(\cdot)$ and we have $\bar A_{x,r}=\bar {\mathcal A}_{x,r}$ for every $x\in\Rn1$ and $r>0$, we obtain
		\begin{equation}\label{eq:bound_diff_A_delta_hat}
			\begin{split}
				\bigl|\bar A_{x,r} - \mathcal A(x)\bigr| &=\bigl|\bar {\mathcal A}_{x,r} - \mathcal A(x)\bigr|=\biggl|\avint_{B(x,r)}\bigl(\mathcal A(z)- \mathcal A(x)\bigr)\, dz\biggr|\leq  \avint_{B(x,r)} \mathfrak I_{\oomega_{ A}}(|z-x|)\, dz \leq \mathfrak I_{\oomega_{A}}(r).
			\end{split}
		\end{equation}
		In order to prove \eqref{eq:estim_diff_kernels_av} it is enough to repeat verbatim the proof of (3.69) in \cite[Lemma 3.13]{MMPT23} with \eqref{eq:bound_diff_A_delta_hat} replacing \cite[(3.68)]{MMPT23}.
		The same proof leads to \eqref{eq:bound_av_diff_point}, replacing \cite[(3.68)]{MMPT23} with the uniform continuity of $\mathcal {A}$, and, in particular, with
		\begin{equation*}
			\bigl|\mathcal {A}(x')-\mathcal {A}(x)\bigr|\leq \mathfrak{I}_{\oomega_A}(|x-x'|).\qedhere
		\end{equation*}
	\end{proof}
	
	\vv
	A variation of the same proof allows us to obtain a pointwise bound for the kernel associated with the next step of the perturbation argument.

	\begin{lemma}\label{lem:lem_2_prep_spherical_harmonics_new}
		Let $A$ be a uniformly elliptic matrix in $\Rn1$, $n\geq 2$, satisfying $A\in  \DMO_{s}$, and let $\mathcal  A$ be its uniformly continuous representative.  Assume that  $B \subset \Rn1$ is a ball.
		If we denote  $\bar A_{B}\coloneqq\avint_{B} A$ then, if $x \in B$,
		it holds
		\begin{equation}\label{eq:estim_diff_averages2_new}
			\bigl|\bar A_{B} - \mathcal A(x)\bigr|\lesssim \mathfrak{I}_{\oomega_A}(r(B))\leq \tau_A(r(B)).
		\end{equation}
		Furthermore, if we define
		\begin{align}\label{eq:definition_K_3}
			\mathcal K^3_\Theta(z,0)\coloneqq \nabla_1 \Theta\bigl(z,0;\mathcal A(x)\bigr)- \nabla_1 \Theta(z,0;\bar A_{B}),
		\end{align}
		for all $z\in \mathbb R^{n+1}\setminus\{0\}$, we have
		\begin{equation}\label{eq:estim_diff_kernels2_new}
			\bigl|\mathcal K^3_\Theta(z,0)\bigr|\lesssim_{n,\Lambda} \frac{1}{|z|^{n}} \mathfrak{I}_{\oomega_A}(r(B)).
		\end{equation}
	\end{lemma}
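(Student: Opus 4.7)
The plan is to reduce both claims to the combination of two facts: (i) the identity $\bar A_{B}=\bar{\mathcal A}_{B}$, which holds because $A=\mathcal A$ outside a set of Lebesgue measure zero; and (ii) the uniform continuity estimate $|\mathcal A(z)-\mathcal A(w)|\lesssim \mathfrak I_{\oomega_A}(|z-w|)$ established in Remark \ref{rem:continuous representative}. Given these, the structure mirrors what is already carried out in the proof of Lemma \ref{lem:bound_av_diff_point}.

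For \eqref{eq:estim_diff_averages2_new}, I would fix $x\in B$ and write
\[
\bigl|\bar A_{B}-\mathcal A(x)\bigr|=\bigl|\bar{\mathcal A}_{B}-\mathcal A(x)\bigr|=\Bigl|\avint_{B}\bigl(\mathcal A(z)-\mathcal A(x)\bigr)\,dz\Bigr|\leq \avint_{B}\mathfrak I_{\oomega_A}(|z-x|)\,dz.
\]
Since $|z-x|\leq 2r(B)$ for all $z\in B$ and $\mathfrak I_{\oomega_A}$ is nondecreasing (and $\kappa$-doubling, by the doubling property of $\oomega_A$ and the discussion in Subsection 2.2), the right-hand side is bounded, up to a dimensional constant, by $\mathfrak I_{\oomega_A}(r(B))$. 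The inequality $\mathfrak I_{\oomega_A}(r(B))\leq \tau_A(r(B))$ is immediate from the definition \eqref{eq:definition_tau_A_new} and the nonnegativity of $\mathfrak L^n_{\oomega_A}$.

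For \eqref{eq:estim_diff_kernels2_new}, the idea is to view $\nabla_1\Theta(z,0;A_0)$ as a smooth function of the matrix $A_0$ in the open set of uniformly elliptic matrices (with ellipticity controlled by $\Lambda$), via the explicit formula \eqref{eq:gradient_fund_sol_const_matrix}. A direct computation of the $A_0$-derivative shows that, uniformly in the ellipticity class,
\[
\bigl|\nabla_1\Theta(z,0;A_0)-\nabla_1\Theta(z,0;A_1)\bigr|\lesssim_{n,\Lambda}\frac{|A_0-A_1|}{|z|^n}, \qquad z\neq 0.
\]
This is exactly the bound used (implicitly) to derive \eqref{eq:bound_av_diff_point} in Lemma \ref{lem:bound_av_diff_point}. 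Applying it with $A_0=\mathcal A(x)$ and $A_1=\bar A_B$, both of which remain in a fixed ellipticity class depending only on $\Lambda$, and plugging in \eqref{eq:estim_diff_averages2_new} gives
\[
\bigl|\mathcal K^3_\Theta(z,0)\bigr|\lesssim_{n,\Lambda}\frac{|\mathcal A(x)-\bar A_B|}{|z|^n}\lesssim \frac{\mathfrak I_{\oomega_A}(r(B))}{|z|^n},
\]
which is \eqref{eq:estim_diff_kernels2_new}.

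The only mildly delicate point, and where I would spend the most care, is justifying the Lipschitz dependence of $\nabla_1\Theta(z,0;\cdot)$ on $A_0$ with the correct homogeneity $|z|^{-n}$. This is essentially the computation already performed in \cite{MMPT23} (and recycled in the proof of Lemma \ref{lem:bound_av_diff_point}): differentiating \eqref{eq:gradient_fund_sol_const_matrix} in $A_0$ produces an expression that, after bounding $A_{0,s}^{-1}$ and $\det A_{0,s}$ uniformly in terms of $\Lambda$, scales exactly as $|z|^{-n}$. Given that this computation is already recorded in the paper, no new obstacle arises, and the lemma follows.
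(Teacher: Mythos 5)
Your argument is correct and follows essentially the same route as the paper: for \eqref{eq:estim_diff_averages2_new} you average the uniform-continuity bound $|\mathcal A(z)-\mathcal A(x)|\lesssim \mathfrak I_{\oomega_A}(|z-x|)$ over $B$ (the paper instead passes through $\bar A_{x,\delta}$ and lets $\delta\to 0$, which is the same estimate in disguise, cf. \eqref{eq:pw_diff_average_unif_cont}), and for \eqref{eq:estim_diff_kernels2_new} you reduce to the Lipschitz dependence of $\nabla_1\Theta(z,0;\cdot)$ on the matrix, which is exactly the ingredient the paper imports from \cite[Lemmas 3.13--3.14]{MMPT23}. Your direct verification of that Lipschitz bound from \eqref{eq:gradient_fund_sol_const_matrix} (mean value theorem along the segment in the convex ellipticity class, with $|z|^{-n}$ homogeneity) is sound, so no gap remains.
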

	
	\begin{proof}
		Let $\delta \in \bigl(0, \sqrt{n+1}\ell(Q)\bigr)$. The triangle inequality yields
		\begin{equation}\label{eq:bound_diff_prep_lemma}
			\bigl|\bar A_{B} - \mathcal A(x)\bigr|\leq \bigl|\bar A_{B} - \bar A_{x,\delta}\bigr| +\bigl|\bar A_{x,\delta} - \mathcal A(x)\bigr|\leq \mathfrak I_{\oomega_A}(r(B)) + \mathfrak I_{\oomega_A}(\delta),
		\end{equation}
		where the latter bound follows from \cite[(3.74)]{MMPT23} together with the properties of $\mathcal  A$.
		Hence, passing to the limit in \eqref{eq:bound_diff_prep_lemma} as $\delta\to 0$, we obtain \eqref{eq:estim_diff_averages2_new}.
		Using the pointwise bounds of Lemma \ref{lem:bound_av_diff_point}, the proof is analogous to \cite[Lemma 3.13]{MMPT23}; see also \cite[Lemma 3.14]{MMPT23}.
	\end{proof}
	
	\vv

	\begin{lemma}\label{lem:truncatedL2}
		Let $A$ be a uniformly elliptic matrix in $\Rn1$, $n\geq 2$, satisfying  $A\in \widetilde \DMO_{n-1}$.  Let $Q$ be a ball in $\Rn1$ with center $x_B$ and side-length $r(B) \lesssim 1$,  and let $\mu$  be a non-negative Radon measure such that  $\supp (\mu) \subset B$ and 
		\[
		\Theta^n_\mu(B(x,r)) \leq \mathfrak C_0 \,\Theta^n_\mu(B), \qquad  \text{ for every } x \in \supp(\mu) \textup{ and } r \in (0, r(B)).
		\]  
		Then $ T^{j}_{\mu}\colon L^2(\mu) \to L^2(\mu)$, for   $j=1,2$,  satisfies
		\begin{align*}
			\bigl\|  T^{1}_{\mu}  \bigr\|_{{L^2(\mu)}\to{L^2(\mu)} }& \lesssim \mathfrak C_0 \,\Theta^n_\mu(B) \, \bigl[ \mathfrak I_{\tau_A}(r(B) ) +\widehat \tau_A(r(B) ) \bigr], \\
			\bigl\|  T^{2}_{\mu}  \bigr\|_{{{L^2(\mu)}\to{L^2(\mu)} }} 	& \lesssim \mathfrak C_0 \,\Theta^n_\mu(B) \,  \mathfrak I_{\tau_A}(r(B)),
		\end{align*}
		where the implicit constants depend only on ellipticity, dimension, and the $\DMO$ condition\footnote{In fact, $A\in  \DMO_{s}$ is enough to obtain $ T^{2}_{\mu}\colon L^2(\mu) \to L^2(\mu)$.}.
	\end{lemma}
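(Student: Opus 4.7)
The strategy is a direct application of the Schur-type $L^2(\mu)$-boundedness result contained in Lemma \ref{lem:lem_estim_dini_integr_growth}(b), fed with the pointwise kernel bounds provided by Lemmas \ref{lem:main_pw_estimate} and \ref{lem:bound_av_diff_point}. The crucial observation is that the density hypothesis $\Theta^n_\mu(B(x,r))\leq \mathfrak C_0 \Theta^n_\mu(B)$ is precisely an $n$-growth bound for $\mu|_B$, with constant $\mathfrak C_0 \Theta^n_\mu(B)$, valid at scales $r\in(0,r(B))$. Since $\supp(\mu)\subset B$, we have $\diam(\supp(\mu))\leq 2r(B)\lesssim 1$, so that the constants appearing in Lemma \ref{lem:main_pw_estimate} depend only on ellipticity, dimension, and the Dini mean oscillation condition. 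The interpretation I adopt is the standard one coming from the three-step perturbation decomposition: the kernel of $T^2_\mu$ is $\mathcal{K}^2_\Theta(x,y)$ with the running scale $r=|x-y|$, so that it bridges the kernels produced by Lemma \ref{lem:main_pw_estimate} (with $r=|x-y|/2$) and the constant-coefficient kernel $\nabla_1\Theta(\cdot,\cdot;\bar A_B)$ handled by Lemma \ref{lem:lem_2_prep_spherical_harmonics_new}.

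For $T^1_\mu$, Lemma \ref{lem:main_pw_estimate} applied with $R= 2r(B)$ gives for $x,y\in\supp(\mu)$ the kernel bound
\[
|\mathcal K^1_\Theta(x,y)|\lesssim \frac{\tau_A(|x-y|/2)}{|x-y|^n}+\frac{\widehat\tau_A(r(B))}{r(B)^n},
\]
where we used the doubling of $\widehat\tau_A$ (from Lemma \ref{lem:mod_cont_large_DS1}) to replace $2r(B)$ by $r(B)$. I would split $T^1_\mu = T^{1,a}_\mu + T^{1,b}_\mu$ along this decomposition. The second piece has a kernel bounded pointwise by the constant $\widehat\tau_A(r(B))/r(B)^n$ on $\supp(\mu)\times\supp(\mu)$, and Schur's test with the test function $1$ immediately yields $\|T^{1,b}_\mu\|_{L^2(\mu)\to L^2(\mu)}\leq \widehat\tau_A(r(B))\,\Theta^n_\mu(B)$. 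For the first piece, the function $\tau_A=\mathfrak I_{\oomega_A}+\mathfrak L^n_{\oomega_A}$ is doubling (by Lemma \ref{lem:mod_cont_large_DS1}, as both summands are) and belongs to $\DS$, because the assumption $A\in \widetilde{\DMO}_{n-1}\subset \DDMO_s\cap\DMO_\ell(n)$ together with \eqref{eq:split_I_L_theta_merged} yields $\mathfrak I_{\tau_A}(r(B))=\mathfrak I_{\mathfrak I_{\oomega_A}}(r(B))+\tfrac{1}{n}(\mathfrak I_{\oomega_A}(r(B))+\mathfrak L^n_{\oomega_A}(r(B)))<\infty$. Hence the kernel of $T^{1,a}_\mu$ is a $(\tau_A,n)$-kernel in the sense of Definition \ref{def:theta_d_kernel}, and Lemma \ref{lem:lem_estim_dini_integr_growth}(b) applied to $\mu|_B$ (with $n$-growth constant $\mathfrak C_0\Theta^n_\mu(B)$) produces the bound $\|T^{1,a}_\mu\|_{L^2(\mu)\to L^2(\mu)}\lesssim \mathfrak C_0\Theta^n_\mu(B)\,\mathfrak I_{\tau_A}(r(B))$. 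Summing the two contributions gives the asserted estimate for $T^1_\mu$.

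For $T^2_\mu$, Lemma \ref{lem:bound_av_diff_point} with the running scale $r=|x-y|$ yields
\[
|\mathcal K^2_\Theta(x,y)|\lesssim \frac{\mathfrak I_{\oomega_A}(|x-y|)}{|x-y|^n}.
\]
Since $\oomega_A$ is doubling, so is $\mathfrak I_{\oomega_A}$, and under $A\in \DDMO_s$ the function $\mathfrak I_{\oomega_A}$ lies in $\DS$. The kernel of $T^2_\mu$ is therefore a $(\mathfrak I_{\oomega_A},n)$-kernel, and a second application of Lemma \ref{lem:lem_estim_dini_integr_growth}(b) gives
\[
\|T^2_\mu\|_{L^2(\mu)\to L^2(\mu)}\lesssim \mathfrak C_0\Theta^n_\mu(B)\,\mathfrak I_{\mathfrak I_{\oomega_A}}(r(B))\leq \mathfrak C_0\Theta^n_\mu(B)\,\mathfrak I_{\tau_A}(r(B)),
\]
where the last inequality uses $\mathfrak I_{\oomega_A}\leq \tau_A$. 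The footnote remark that $A\in \DMO_s$ already suffices for the mere $L^2(\mu)$-boundedness (without the quantitative bound) corresponds to the fact that under just $\DMO_s$ the quantity $\mathfrak I_{\oomega_A}(t)$ is well-defined for every $t>0$ and the Schur argument still closes on any fixed compact scale, although the numerical constant may then depend on the scale rather than on the Dini integral.

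There is no serious obstacle beyond careful bookkeeping. The only non-obvious point is the need to interpret the parameter $r$ in the kernel $\mathcal K^2_\Theta$ as the running distance $|x-y|$ (this is the natural scale that couples $\mathcal K^1_\Theta$ and $\mathcal K^3_\Theta$ in the perturbation triangle) and to verify that the relevant moduli $\tau_A$, $\widehat\tau_A$, and $\mathfrak I_{\oomega_A}$ are doubling and belong to $\DS$ at the scale $r(B)$, which all follow from $A\in\widetilde{\DMO}_{n-1}$ together with Lemma \ref{lem:mod_cont_large_DS1}. The assumption $r(B)\lesssim 1$ is exactly what is needed for the constant in Lemma \ref{lem:main_pw_estimate} to be absorbed into a constant depending only on ellipticity, dimension, and the Dini mean oscillation data.
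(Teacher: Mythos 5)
Your proposal is correct and follows exactly the route the paper intends (the paper omits the proof as a direct consequence of the pointwise bounds in Lemmas \ref{lem:main_pw_estimate} and \ref{lem:bound_av_diff_point} together with the Schur-type bound of Lemma \ref{lem:lem_estim_dini_integr_growth}(b), as in \cite{MMPT23}): dominating $|\mathcal K^1_\Theta|$ by a $(\tau_A,n)$-kernel plus the constant $\widehat\tau_A(r(B))r(B)^{-n}$, and $|\mathcal K^2_\Theta|$ (with $r=|x-y|$) by a $(\mathfrak I_{\oomega_A},n)$-kernel, then using the growth hypothesis (trivially extended to all centers and radii) gives precisely the two stated estimates. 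The only inaccuracy is your gloss on the footnote: under mere $\DMO_s$ the function $\mathfrak I_{\oomega_A}$ need not itself be Dini, so the Schur argument with the $(\mathfrak I_{\oomega_A},n)$-bound does not ``close on any fixed compact scale''; this remark, however, is an aside and does not affect the two asserted norm bounds, which are stated under $A\in\widetilde\DMO_{n-1}$.
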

	\vv
	
	\begin{lemma}\label{lem:estimate_norm_K3_new}
		Let $A$ be a uniformly elliptic matrix in $\Rn1$, $n\geq 2$, satisfying  $A\in  \DMO_{s}$.
		Let $B$ be a ball in $\Rn1$ with center $x_B$ and side-length $r(B) \lesssim 1$,
		and let $\mu$  be a non-negative Radon measure such that  $\supp (\mu) \subset B$.
		For $\mathcal K^3_\Theta(x,z)$ given by \eqref{eq:definition_K_3}, we define 
		\[
		T^3_{\mu,\delta}f(x)\coloneqq \int_{|x-y|>\delta} \mathcal K^3_\Theta(x,x-y)f(y)\, d\mu(y),\qquad f\in L^1_{\loc}(\mu).
		\]
		Then, there exists a positive constant $C''=C''(n,\Lambda)$ such that
		\begin{align}\label{eq:main_lemma_estim2_old}
			\|  T^{3}_{\mu}  \|_{L^2(\mu)\to L^2(\mu)}&\leq  C'' \mathfrak I_{\oomega_A}(r(B))^{1/2}\|\mathcal R_{\mu}\|_{L^2(\mu)\to L^2(\mu)}.
			\end{align}
	If, in addition, it holds that
				\begin{equation}\label{eq:growth_mu_k3}
			\Theta^n_\mu(B(x,r)) \leq \mathfrak C_0 \,\Theta^n_\mu(B), \qquad  \text{ for every } x \in \supp(\mu) \textup{ and } r \in (0,  2 r(B)),
		\end{equation}
		then, there exist constants $c_s>0$, depending on $\mathfrak C_0$,  and $\tilde c_{s}>0$,  depending on $C''$,  such that 
			\begin{align}
			\| T^3_{\mu,*} \|_{L^2(\mu)\to L^2(\mu)}&\leq  c_s  \Theta^n_\mu(B) + \tilde c_s \mathfrak I_{\oomega_A}(r(B))^{1/2}\|\mathcal R_{\mu}\|_{L^2(\mu)\to L^2(\mu)}.\label{eq:main_lemma_estim2_new}
		\end{align}				
	\end{lemma}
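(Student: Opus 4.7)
The strategy is to parallel the proof of \cite[Lemma 3.14]{MMPT23}, with $\bar A_{x,\delta}$ replaced by the uniformly continuous representative $\mathcal{A}(x)$. Two ingredients drive the argument: the smallness $|\mathcal{A}(x)-\bar A_{B}|\lesssim \mathfrak{I}_{\oomega_A}(r(B))$ established in \eqref{eq:estim_diff_averages2_new}, and the explicit representation \eqref{eq:gradient_fund_sol_const_matrix} of $\nabla_1\Theta(z,0;A_0)$ for symmetric positive-definite $A_0$, which reduces the resulting operator to a superposition of generalized Riesz-type singular integrals.

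Concretely, the first step is to write
\[
\mathcal{K}^3_\Theta(x,z)=\int_0^1 \bigl\langle \mathcal{A}(x)-\bar A_{B},\,(D_A \nabla_1\Theta)(z,0;A_t(x))\bigr\rangle\, dt, \qquad A_t(x)=\bar A_{B}+t(\mathcal{A}(x)-\bar A_{B}),
\]
so that for each fixed $t$ and $x$ the integrand is an odd, $(-n)$-homogeneous function of $z$ that is smooth on $\mathbb{S}^n$, with bounds depending only on the ellipticity constants. Expanding this function in spherical harmonics of odd degree on $\mathbb{S}^n$ and invoking the standard reduction (see the decompositions in \cite[Lemma 3.14]{MMPT23}), for every odd $k\geq 1$ the component of degree $k$ defines a translation-invariant Calder\'on--Zygmund operator whose $L^2(\mu)$-norm is controlled by $C(n,k)\,\|\mathcal{R}_\mu\|_{L^2(\mu)\to L^2(\mu)}$, with $C(n,k)$ polynomial in $k$ and the coefficients in front of each harmonic decaying sufficiently fast in $k$ to guarantee summability. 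Using the uniform smallness of the multiplication factor $\mathcal{A}(x)-\bar A_{B}$ together with the pointwise kernel bound from Lemma \ref{lem:lem_2_prep_spherical_harmonics_new}, a Cauchy--Schwarz / $TT^*$ interpolation between the $L^\infty$-bound on the kernel and the Riesz-transform-based bound produces the geometric-mean factor $\mathfrak{I}_{\oomega_A}(r(B))^{1/2}$ and yields \eqref{eq:main_lemma_estim2_old}.

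For the maximal truncation \eqref{eq:main_lemma_estim2_new}, the plan is to invoke a Cotlar-type inequality. By Lemma \ref{lem:lem_2_prep_spherical_harmonics_new} and the growth hypothesis \eqref{eq:growth_mu_k3}, for $x\in\mathrm{supp}(\mu)$ the difference $T^3_{\mu,\varepsilon}1(x)-T^3_\mu 1(x)$ is uniformly bounded by a constant multiple of $\mathfrak{I}_{\oomega_A}(r(B))\,\Theta^n_\mu(B)$, which, after absorbing the Dini factor, yields the $c_s\,\Theta^n_\mu(B)$ term. Combining this truncation estimate with the non-maximal bound \eqref{eq:main_lemma_estim2_old}, via the standard Cotlar argument for $L^2(\mu)$-bounded Calder\'on--Zygmund operators with measures of $n$-growth (cf.\ \cite[Chapter 2]{To14}), produces \eqref{eq:main_lemma_estim2_new}.

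The main obstacle is the quantitative step that produces the precise $\mathfrak{I}_{\oomega_A}(r(B))^{1/2}$ factor in front of $\|\mathcal{R}_\mu\|$ rather than the first-power bound that a naive Taylor-expansion argument would give: one must balance the uniform pointwise bound from Lemma \ref{lem:lem_2_prep_spherical_harmonics_new} against the Riesz-transform-based bounds on the individual spherical-harmonic components and sum over $k$ with uniform control on all constants in terms of $n$ and $\Lambda$. This is precisely where the reduction to the scalar framework of \cite[Lemma 3.14]{MMPT23} must be carried out carefully, as the present non-translation-invariant dependence through $\mathcal{A}(x)$ has to be disentangled from the $z$-variable before invoking the spherical-harmonic reduction.
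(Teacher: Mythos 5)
Your proposal is correct and follows essentially the same route as the paper, which proves \eqref{eq:main_lemma_estim2_old} by the spherical-harmonics perturbation argument of \cite[Lemma 3.16]{MMPT23} (the smallness $|\mathcal A(x)-\bar A_B|\lesssim \mathfrak I_{\oomega_A}(r(B))$ from \eqref{eq:estim_diff_averages2_new} balanced against per-harmonic bounds in terms of $\|\mathcal R_\mu\|$, summed in the degree to produce the exponent $1/2$), and then deduces \eqref{eq:main_lemma_estim2_new} from \eqref{eq:main_lemma_estim2_old}, the growth hypothesis \eqref{eq:growth_mu_k3}, and Cotlar's inequality \cite[Theorem 2.21]{To14}, just as you outline.
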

	
	\begin{proof}
		The proof of \eqref{eq:main_lemma_estim2_old} is  the same as that of \cite[Lemma 3.16]{MMPT23}, while \eqref{eq:main_lemma_estim2_new} follows from \eqref{eq:main_lemma_estim2_old}, \eqref{eq:growth_mu_k3}, and Cotlar's inequality (see \cite[Theorem 2.21]{To14}). We omit the details. 
	\end{proof}
	
	\vvv

\subsection{The gradient of the single layer potential is ``good''}

\begin{lemma}\label{lem:pw_bounds_kernel_t}
	Let $A$ be a uniformly elliptic matrix in $\Rn1$, $n\geq 2$, satisfying $A\in  \DMO_{s}$, and let $\mathcal A$ be its uniformly continuous representative. We define
	\begin{equation}\label{eq:def_t_delta}
		t(x,y)\coloneqq \nabla_1 \Theta\bigl(x-y, 0; \mathcal A(x)\bigr)\, \qquad \text{ for } x,y\in \Rn1, x\neq y.
	\end{equation}
	There exists $C=C(n,\Lambda)>0$ such that
	\begin{align}
		|t(x,y)|    &\leq C|x-y|^{-n},\qquad\text{for} \,\,x,y\in \Rn1,  x\neq y; \label{eq:t-kernel-point}\\
		|t(y,x)-t(y,x')|&\leq C \frac{|x-x'|}{\max(|x-y|,|x'-y|)^{n+1}}, 
		\quad \text{and} \label{eq:t-kernel-cont-1}
		\\
		|t(x,y)-t(x',y)| &\leq C   \frac{|x-x'|}{\max(|x-y|,|x'-y|)^{n+1}} + C  \frac{\mathfrak I_{\oomega_A}(|x-x'|)}{\max(|x-y|,|x'-y|)^{n}}, \label{eq:t-kernel-cont-2}
	\end{align}
	for $x,  x', y \in \Rn1$ satisfying $2|x-x'|\leq \max(|x-y|,|x'-y|)$.
\end{lemma}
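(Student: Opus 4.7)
The plan is to reduce each inequality to an estimate on the kernel $\nabla_1 \Theta(\cdot, 0; A_0)$ for a constant coefficient matrix $A_0$, exploiting the explicit formula \eqref{eq:gradient_fund_sol_const_matrix} and the homogeneity \eqref{eq:gradient_fund_sol_const_matrix-2}. The crucial feature to keep in mind is that $t(x,y) = \nabla_1\Theta(x-y,0;\mathcal A(x))$ depends on $x$ through two channels: the spatial variable $x-y$ and the ``frozen'' matrix $\mathcal A(x)$. The second dependence carries a modulus of continuity $\mathfrak I_{\oomega_A}$ by Remark~\ref{rem:continuous representative}, while in \eqref{eq:t-kernel-cont-1} only the spatial dependence is active.

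For \eqref{eq:t-kernel-point}, since $\mathcal A$ is uniformly elliptic with the same ellipticity constants as $A$ (the average of uniformly elliptic matrices is uniformly elliptic with the same constants, and passing to the pointwise limit preserves this), the explicit expression \eqref{eq:gradient_fund_sol_const_matrix} gives
\[
|t(x,y)| = |\nabla_1\Theta(x-y,0;\mathcal A(x))| \leq C(n,\Lambda)\,|x-y|^{-n},
\]
which is the case $k=0$ of \eqref{eq:gradient_fund_sol_const_matrix-2}.

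For \eqref{eq:t-kernel-cont-1}, the matrix $\mathcal A(y)$ does not change as $x, x'$ vary, so only the spatial argument is perturbed. Writing $z=x-y$, $z'=x'-y$ and applying the mean value theorem to $\nabla_1 \Theta(\cdot,0;\mathcal A(y))$ along the segment joining $z$ to $z'$ (which avoids the origin because $2|x-x'|\leq m(x,x',y)$, so both $|z|$ and $|z'|$ are comparable to $m(x,x',y)$), we obtain
\[
|t(y,x)-t(y,x')| \leq |x-x'|\, \sup_{\xi \in [z,z']}\bigl|\nabla^{(2)}_1\Theta(\xi,0;\mathcal A(y))\bigr| \lesssim \frac{|x-x'|}{m(x,x',y)^{n+1}},
\]
using the case $k=1$ of \eqref{eq:gradient_fund_sol_const_matrix-2} for a uniformly elliptic matrix.

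The main point is \eqref{eq:t-kernel-cont-2}, where both channels contribute. We split
\begin{align*}
t(x,y)-t(x',y) &= \bigl[\nabla_1\Theta(x-y,0;\mathcal A(x))-\nabla_1\Theta(x'-y,0;\mathcal A(x))\bigr]\\
&\qquad + \bigl[\nabla_1\Theta(x'-y,0;\mathcal A(x))-\nabla_1\Theta(x'-y,0;\mathcal A(x'))\bigr].
\end{align*}
The first bracket is controlled exactly as in \eqref{eq:t-kernel-cont-1} (with the roles interchanged), yielding a contribution of order $|x-x'|/m(x,x',y)^{n+1}$. The second bracket is precisely the quantity estimated in \eqref{eq:bound_av_diff_point} of Lemma~\ref{lem:bound_av_diff_point}, applied with $z = x'-y$, giving a contribution bounded by $C\,\mathfrak I_{\oomega_A}(|x-x'|)/|x'-y|^n \lesssim \mathfrak I_{\oomega_A}(|x-x'|)/m(x,x',y)^n$, since $|x'-y|\approx m(x,x',y)$. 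Adding the two contributions yields \eqref{eq:t-kernel-cont-2}. No step presents a serious obstacle; the only subtle point is recognizing that the second term in \eqref{eq:t-kernel-cont-2} arises solely from the frozen matrix $\mathcal A(x)$ and is supplied ready-made by \eqref{eq:bound_av_diff_point}.
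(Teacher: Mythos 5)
Your proof is correct and follows essentially the same route as the paper: the pointwise bound comes from the explicit constant-coefficient formula, \eqref{eq:t-kernel-cont-1} from differentiating $\nabla_1\Theta(\cdot,0;\mathcal A(y))$ in the spatial slot, and \eqref{eq:t-kernel-cont-2} from splitting into a spatial perturbation plus a frozen-matrix perturbation controlled by \eqref{eq:bound_av_diff_point}, the paper merely inserting the intermediate term in the opposite (symmetric) order. One trivial slip: the Hessian bound you invoke is the $k=2$ case of \eqref{eq:gradient_fund_sol_const_matrix-2}, not $k=1$.
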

\begin{proof}
	The estimate \eqref{eq:t-kernel-point} readily follows from \eqref{eq:gradient_fund_sol_const_matrix} and the fact that the uniform ellipticity of $\mathcal A(x)$ is bounded uniformly on $x$.  In order to prove \eqref{eq:t-kernel-cont-2}, we write
	\begin{equation*}
		\begin{split}
			|t(x,y)-t(x',y)|&\leq \bigl|t(x,y) - \nabla_1\Theta\bigl(x-y, 0;\mathcal A(x')\bigr) \bigr|\\
			&\qquad \quad + \bigl|\nabla_1\Theta\bigl(x-y, 0; \mathcal A(x')\bigr)-t(x',y)\bigr|\eqqcolon I + II.
		\end{split}
	\end{equation*}
	By \eqref{eq:bound_av_diff_point} and the identity \eqref{eq:gradient_fund_sol_const_matrix},  we have that
	\[
	I + II\ \lesssim \frac{\mathfrak I_{\oomega_A}(|x-x'|)}{|x-y|^{n}} + \frac{|x-x'|}{\max(|x-y|,|x'-y|)^{n+1}},
	\]
	where the implicit constant depends on $n$ and the uniform ellipticity of $A$.	By analogous (but simpler) estimates, we can prove \eqref{eq:t-kernel-cont-1}, since the matrix  $\mathcal A(y)$ does not depend on $x$ and $x'$,  concluding the proof of the lemma.
\end{proof} 

\vvv

Given $\varepsilon >0$ and a Radon measure $\mu$ on $\Rn1$ we define the operators
\begin{equation}\label{eq:def_operator_T_delta}
	\widetilde{\mathcal T}_{\mu, \varepsilon} f(x)\coloneqq  \int_{|x-y|>\varepsilon} t(x,y)f (y)\,  d\mu(y), \qquad \text{ for } f\in L^1_{\loc}(\mu),
\end{equation}
and we denote by $\widetilde{\mathcal T}_{\Xi, \mu}$ the associated suppressed operator with kernel $t_\Xi$, see \eqref{eq:suppressed_kernel}.  It is easy to see that its adjoint operator is given by
\begin{equation}\label{eq:def_operator_T_delta-adjoint}
\widetilde{\mathcal T}^*_{\mu, \varepsilon} f(x)\coloneqq  \int_{|x-y|>\varepsilon} \nabla_2\Theta\bigl(x,y; (\mathcal{A}(y))^T\bigr) \, f (y)\,  d\mu(y), \qquad \text{ for } f\in L^1_{\loc}(\mu).
\end{equation}

\vvv

\begin{lemma}\label{lemma:quasi_antisymm_suppressed}
	Let $A$ be a uniformly elliptic matrix in $\Rn1$, $n\geq 2$, satisfying $A\in \DMO_{s}$, $\delta>0$, and $\mu \in M^n_+(\R^{n+1})$ with constant $c_0$ and $\diam(\supp(\mu))\leq R$. Then there is a bounded operator $ K_\mu\colon L^2(\mu)\to L^2(\mu)$ such that, for $f\in L^2(\mu)$, we have
	\begin{equation}\label{eq:quasi_antisymm_suppressed}
	\widetilde{\mathcal T}^{*}_{\Xi, \mu}f	= -\widetilde{\mathcal T}_{\Xi, \mu}f +  K_\mu f.
	\end{equation}
	More specifically, for any $f\in L^2(\mu)$, we have
	\begin{equation}
		\| K_\mu\|_{L^2(\mu)}\lesssim c_0\, \mathfrak I_{\oomega_A}(R)\, \|f\|_{L^2(\mu)}.
	\end{equation}
\end{lemma}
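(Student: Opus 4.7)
The plan is to identify the operator $K_\mu \coloneqq \widetilde{\mathcal{T}}^*_{\Xi,\mu} + \widetilde{\mathcal{T}}_{\Xi,\mu}$ as an integral operator whose kernel decays in a Dini fashion, and then to invoke a Schur-type test to deduce the operator-norm bound.

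First, the suppression factor $s(x,y) \coloneqq |x-y|^{2n}/\bigl(|x-y|^{2n}+\Xi(x)^n\Xi(y)^n\bigr)$ is symmetric in its arguments, and the adjoint kernel of $\widetilde{\mathcal{T}}_{\Xi,\mu}$ at the point $(x,y)$ is $t_\Xi(y,x) = s(x,y)\,t(y,x)$. Hence $K_\mu$ has kernel $s(x,y)\bigl[t(x,y)+t(y,x)\bigr]$. From the explicit formula \eqref{eq:gradient_fund_sol_const_matrix}, the gradient $\nabla_1\Theta(z,0;A_0)$ depends only on the symmetric part of $A_0$ and is an odd function of $z$. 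Consequently,
\[
t(y,x) = \nabla_1\Theta(y-x,0;\mathcal{A}(y)) = -\nabla_1\Theta(x-y,0;\mathcal{A}(y)),
\]
so that the leading singular contributions cancel and
\[
t(x,y)+t(y,x) = \nabla_1\Theta(x-y,0;\mathcal{A}(x)) - \nabla_1\Theta(x-y,0;\mathcal{A}(y)).
\]
This is a pure matrix-perturbation difference to which \eqref{eq:bound_av_diff_point} of Lemma \ref{lem:bound_av_diff_point} applies with $z=x-y$ and $x'=y$. Combined with the trivial bound $0\leq s(x,y)\leq 1$, one obtains
\[
\bigl|\,t_\Xi(x,y)+t_\Xi(y,x)\,\bigr| \lesssim \frac{\mathfrak{I}_{\oomega_A}(|x-y|)}{|x-y|^n}.
\]

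Setting $K_\mu$ to be the integral operator with the above kernel gives, by construction, the operator identity \eqref{eq:quasi_antisymm_suppressed}. For the $L^2$-norm bound, one applies Lemma \ref{lem:lem_estim_dini_integr_growth}(b) (a Schur-type test) to the resulting kernel, using the $n$-growth of $\mu$ with constant $c_0$ and the confinement $\diam(\supp\mu)\leq R$. The crucial feature is that the resulting estimate is uniform in the suppression function $\Xi$ (since the suppression factor only contributes through $0\leq s\leq 1$), which is indispensable for the subsequent applications of this lemma within the framework of the suppressed $Tb$ theorem.

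The proof is essentially a direct computation, and the only nontrivial algebraic observation is the cancellation in step two: swapping $x\leftrightarrow y$ in $t(x,y)$ simultaneously reverses $z = x-y$ (through the oddness of $\nabla_1\Theta(\cdot,0;A_0)$) and swaps the matrices $\mathcal{A}(x)\leftrightarrow\mathcal{A}(y)$, so that $t(x,y)+t(y,x)$ collapses to a difference of two constant-coefficient gradients evaluated at the \emph{same} vector $z=x-y$ but with two different matrices---which is precisely the quantity controlled by Lemma \ref{lem:bound_av_diff_point} in terms of the modulus $\mathfrak{I}_{\oomega_A}$ of the uniformly continuous representative $\mathcal{A}$. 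Once this pointwise cancellation is in place, the remaining $L^2$-estimate follows routinely from a standard Schur bound on the Dini-type kernel, so no substantial obstacle remains.
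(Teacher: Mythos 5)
Your proposal is correct and takes essentially the same route as the paper's own proof: the symmetric suppression factor, the cancellation $t(x,y)+t(y,x)=\nabla_1\Theta(x-y,0;\mathcal A(x))-\nabla_1\Theta(x-y,0;\mathcal A(y))$ (the paper phrases this via \eqref{eq:gradient_fund_sol_const_matrix-1}, you via oddness and dependence on the symmetric part), the pointwise bound from Lemma \ref{lem:bound_av_diff_point}, and the Schur-type Lemma \ref{lem:lem_estim_dini_integr_growth}(b). The only caveat — shared with the paper's own write-up — is that applying that Schur lemma to a kernel dominated by $\mathfrak I_{\oomega_A}(|x-y|)\,|x-y|^{-n}$ literally yields the constant $c_0\,\mathfrak I_{\mathfrak I_{\oomega_A}}(R)$ rather than $c_0\,\mathfrak I_{\oomega_A}(R)$, a harmless discrepancy in the stated constant rather than a gap in your argument.
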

\begin{proof}
	Let $t^*(x,y)\coloneqq \nabla_2\Theta(x,y;(\mathcal A(y))^T)$ be the kernel of the operator $\widetilde{\mathcal T}^*_{\mu}$, and let $K_\mu$ be the integral operator associated with the kernel $\tilde k(x,y)\coloneqq t_\Xi(x,y)+t^*_\Xi(x,y)$.
	In particular, we have that
	\begin{equation*}\label{eq:quasi_antisym_sup1}
		\begin{split}
			|\tilde k(x,y)|&=\bigl|t_{\Xi}(x,y)+ t^*_{\Xi}(x,y)\bigr|\overset{\eqref{eq:suppressed_kernel}}{=}\biggl|\frac{t(x,y)+ t^*(x,y)}{1+ |x-y|^{-2n} \Xi(x)^n \Xi(y)^n} \biggr|\\
			&  \leq \bigl|t(x,y)+ t^*(x,y)\bigr|=\bigl|\nabla_1 \Theta(x,y; \mathcal A(x))+\nabla_2\Theta(y,x;\mathcal A(y)^T)\bigr|\\
			&=\bigl|\nabla_1 \Theta(x,y; \mathcal A(x))-\nabla_1\Theta\bigl(x,y;\mathcal A(y)\bigr)\bigr|\lesssim \frac{\mathfrak I_{\oomega_A}(|x-y|)}{|x-y|^n},
		\end{split}
	\end{equation*}
	where in the last line we use \eqref{eq:gradient_fund_sol_const_matrix-1} and \eqref{eq:bound_av_diff_point}.
	Therefore, by Lemma \ref{lem:lem_estim_dini_integr_growth},  we deduce that $\| K_\mu \|_{L^2(\mu) \to L^2(\mu)} \lesssim c_0\, \mathfrak I_{\oomega_A}(R)$, concluding the proof.
\end{proof} 
\vv

In the following lemma, we introduce the auxiliary operators that allow us to prove that $T_\mu$ is a ``good'' SIO, in the sense of Definition \ref{eq:good_SIO_definition}.
\begin{lemma}\label{lem:aux_lambda_operators}
	Let $A$ be a uniformly elliptic matrix in $\Rn1$, $n\geq 2$, satisfying $A\in \widetilde \DMO_{n-1}$, $\delta>0$, and $\mu \in M^n_+(\Rn1)$ with growth constant $c_0$ and compact support. Furthermore, for  $f\in L^1_{\loc}(\mu)$, we define
	\[
		\Lambda_\mu f \coloneqq {\mathcal T}_{\Xi, \mu}f- \widetilde{\mathcal T}_{\Xi, \mu}f  \qquad \text{ and }\qquad \Lambda^*_\mu f \coloneqq {\mathcal T}^{*}_{\Xi, \mu}f -\widetilde{\mathcal T}^*_{\Xi, \mu}f.
	\]
	The operators $\Lambda_\mu$ and $\Lambda^*_\mu$ have kernels $K^\Lambda_\Xi$ and $K^{\Lambda^*}_\Xi$, respectively, such that, for $x,y\in\Rn1$ with $0<|x-y|<R$,
	\begin{equation}\label{eq:pw_bound_kernels_lambda_lambda_star}
			|K^\Lambda_\Xi(x,y)|\lesssim \frac{\tau_A(|x-y|)}{|x-y|^{n}} +  \frac{\widehat \tau_A(R)}{R^n}\quad \text{ and }\quad \bigl|K^{\Lambda^*}_\Xi(x,y)\bigr|\lesssim \frac{\tau_A(|x-y|)}{|x-y|^{n}} +  \frac{\widehat \tau_A(R)}{R^n}.
	\end{equation}
	Moreover,  if  $\diam (\supp(\mu)) \leq R$, then,  for any $f\in L^2(\mu)$,
	\begin{equation}
		\|\Lambda_\mu f\|_{L^2(\mu)}+ \|\Lambda^*_\mu f\|_{L^2(\mu)}\lesssim c_0\, \bigl(\mathfrak F_{\oomega_A}(R) + \mathfrak L^{n-1}_{\oomega_A}(R) \bigr)\|f\|_{L^2(\mu)}. 
	\end{equation}
\end{lemma}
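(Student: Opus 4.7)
The proof falls naturally into two independent tasks: establishing the pointwise kernel bounds \eqref{eq:pw_bound_kernels_lambda_lambda_star} and deducing the $L^2(\mu)$ estimate. My plan would be to handle the kernel analysis first by a three-step perturbation, then deduce the operator norm via a Schur-type argument (Lemma \ref{lem:lem_estim_dini_integr_growth}(b)), supplemented with a separate bound for the ``constant'' residual term.

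For the kernel of $\Lambda_\mu$, I would begin from the identity
\[
\nabla_1\Gamma_A(x,y)-t(x,y)=\mathcal K^1_\Theta(x,y)+\mathcal K^2_\Theta(x,y),
\]
where $\mathcal K^2_\Theta$ is understood with $r=|x-y|$. Lemma \ref{lem:main_pw_estimate} controls $\mathcal K^1_\Theta$ by $\tau_A(|x-y|)|x-y|^{-n}+\widehat\tau_A(R)R^{-n}$, and Lemma \ref{lem:bound_av_diff_point} (estimate \eqref{eq:estim_diff_kernels_av}) bounds $\mathcal K^2_\Theta$ by $C\,\mathfrak I_{\oomega_A}(|x-y|)|x-y|^{-n}\leq C\,\tau_A(|x-y|)|x-y|^{-n}$. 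Since the suppression factor $|x-y|^{2n}/(|x-y|^{2n}+\Xi(x)^n\Xi(y)^n)$ lies in $[0,1]$, both bounds transfer immediately to $K^\Lambda_\Xi$. For the adjoint kernel there is one extra subtlety: the natural telescoping leaves a residual term of the form $\nabla_2\Theta(x,y;(\mathcal A(x))^T)-\nabla_2\Theta(x,y;(\mathcal A(y))^T)$ arising from the mismatch between $\mathcal A(x)$ (appearing after applying $\mathcal K^{1,*}_\Theta$ and $\mathcal K^{2,*}_\Theta$) and $\mathcal A(y)$ (appearing in the definition of the kernel of $\widetilde{\mathcal T}^*_\mu$). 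Using \eqref{eq:gradient_fund_sol_const_matrix-1} to rewrite this as $\nabla_1\Theta(x-y,0;\mathcal A(x))-\nabla_1\Theta(x-y,0;\mathcal A(y))$ and then applying \eqref{eq:bound_av_diff_point} with $x'=y$ yields the bound $C\,\mathfrak I_{\oomega_A}(|x-y|)|x-y|^{-n}$, which is again absorbed into $\tau_A(|x-y|)|x-y|^{-n}$. This buffering step is the only place where one has to leverage the uniformly continuous representative $\mathcal A$ nontrivially, and it will be the mild technical obstacle in the argument.

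For the $L^2(\mu)$ bound, I would split $K^\Lambda_\Xi$ (and analogously $K^{\Lambda^*}_\Xi$) into the $(\tau_A,n)$-kernel part and the constant-on-$\supp\mu$ part $\widehat\tau_A(R)/R^n$. The first piece is handled directly by Lemma \ref{lem:lem_estim_dini_integr_growth}(b), giving operator norm $\lesssim c_0\,\mathfrak I_{\tau_A}(R)$; the second piece, being bounded by a constant on $\supp\mu$, has operator norm $\leq \frac{\widehat\tau_A(R)}{R^n}\,\mu(\supp\mu)\leq c_0\,\widehat\tau_A(R)$ by Schur's test applied with weight one (using $\diam(\supp\mu)\leq R$ and the $n$-growth of $\mu$). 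It then remains to verify the combinatorial identity $\mathfrak I_{\tau_A}(R)+\widehat\tau_A(R)\lesssim \mathfrak F_{\oomega_A}(R)+\mathfrak L^{n-1}_{\oomega_A}(R)$. Unwinding the definitions and applying \eqref{eq:split_I_L_theta_merged} from Lemma \ref{lem:mod_cont_large_DS1},
\[
\mathfrak I_{\tau_A}(R)=\mathfrak I_{\mathfrak I_{\oomega_A}}(R)+\tfrac{1}{n}\bigl(\mathfrak I_{\oomega_A}(R)+\mathfrak L^n_{\oomega_A}(R)\bigr),
\]
so the sum in question equals
\[
\mathfrak I_{\mathfrak I_{\oomega_A}}(R)+\bigl(1+\tfrac{1}{n}\bigr)\mathfrak I_{\oomega_A}(R)+\tfrac{1}{n}\mathfrak L^n_{\oomega_A}(R)+\mathfrak L^{n-1}_{\oomega_A}(R).
\]
Using the elementary pointwise comparison $\mathfrak L^n_{\oomega_A}(R)\leq \mathfrak L^{n-1}_{\oomega_A}(R)$ (which follows from $t^{-(n+1)}\leq R^{-1}t^{-n}$ for $t\geq R$) and the definition $\mathfrak F_{\oomega_A}=\mathfrak I_{\oomega_A}+\mathfrak I_{\mathfrak I_{\oomega_A}}$ in \eqref{eq:logdini1}, this collapses to $\lesssim \mathfrak F_{\oomega_A}(R)+\mathfrak L^{n-1}_{\oomega_A}(R)$, completing the proof.
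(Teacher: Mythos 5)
Your proposal is correct and follows essentially the same route as the paper: the same decomposition $K^\Lambda=\mathcal K^1_\Theta+\mathcal K^2_\Theta$ (with the suppression factor bounded by $1$), the same triangle-inequality treatment of $K^{\Lambda^*}$ with the residual $\nabla_2\Theta(x,y;\mathcal A(x)^T)-\nabla_2\Theta(x,y;\mathcal A(y)^T)$ controlled by \eqref{eq:bound_av_diff_point}, and the same Schur-type $L^2$ estimate combined with \eqref{eq:split_I_L_theta_merged} and $\mathfrak L^{n}_{\oomega_A}\leq \mathfrak L^{n-1}_{\oomega_A}$. The only cosmetic difference is that for $\Lambda_\mu$ the paper invokes Lemma \ref{lem:truncatedL2} while you apply Lemma \ref{lem:lem_estim_dini_integr_growth}(b) directly (as the paper itself does for $\Lambda^*_\mu$), which is immaterial.
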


\begin{proof}
			Let $\mathcal K^1_{\Theta}, \mathcal K^2_{\Theta}, \mathcal K^{1,*}_{\Theta}$ and $\mathcal K^{2,*}_{\Theta}$ be as defined in Lemmas \ref{lem:main_pw_estimate} and  \ref{lem:bound_av_diff_point}, for $r=|x-y|$. Hence, for $K^{\Lambda}\coloneqq \mathcal K^1_{\Theta}+\mathcal K^2_{\Theta}$, the operator $\Lambda_\sigma$ has kernel $K^{\Lambda}_\Xi$ and, for any $x,y\in\Rn1$ with $0<|x-y|<R$,
			\[
				|K^{\Lambda}_\Xi(x,y)|\leq |K^{\Lambda}(x,y)|\leq |\mathcal K^1_{\Theta}(x,y)|+|\mathcal K^2_{\Theta}(x,y)|\lesssim \frac{\tau_A(|x-y|)}{|x-y|^{n}} + C \frac{\widehat \tau_A(R)}{R^n},
			\]
			where the latter bound holds because of Lemmas \ref{lem:main_pw_estimate} and  \ref{lem:bound_av_diff_point}. 
			By Lemma \ref{lem:truncatedL2}, for $f\in L^2(\mu)$, we have that $\|\Lambda_\sigma f\|_{L^2(\mu)}\lesssim c_0 \bigl(\mathfrak I_{\tau_A}(R) + \hat \tau_A(R)\bigr)\|f\|_{L^2(\mu)}$, where we recall that 
			\[
				\mathfrak I_{\tau_A}(R)= \mathfrak I_{\mathfrak I_{\oomega_A}}(R)+ \mathfrak I_{\mathfrak L^{n}_{\oomega_A}}(R)\overset{\eqref{eq:split_I_L_theta_merged}}{=}  \mathfrak I_{\mathfrak I_{\oomega_A}}(R) +\frac{1}{n}\bigl(\mathfrak I_{\oomega_A}(R) + \mathfrak L^{n}_{\oomega_A}(R)\bigr).
			\]
			Thus, since $\mathfrak L^{n}_{\oomega_A}(R) \leq \mathfrak L^{n-1}_{\oomega_A}(R)$, 
			\[
				\|\Lambda_\mu f\|_{L^2(\mu)}\lesssim c_0\,  \bigl(\mathfrak F_{\oomega_A}(R) + \mathfrak L^{n-1}_{\oomega_A}(R)\bigr)\|f\|_{L^2(\mu)}.
			\]
					
		To prove the $L^2(\mu)$-boundedness of the $\Lambda^*_\mu$, we first observe that it is the operator associated with the suppression of the kernel $K^{\Lambda^*}(x,y)\coloneqq \nabla_2 \Gamma_{A^T}(x,y)-\nabla_2\Theta\bigl(x,y; \mathcal {A}(y)^T\bigr)$. 
		Thus, via triangle inequality we obtain
		\begin{equation}\label{eq:pw_bound_perturbation_Lambda_2_1}
			\begin{split}
				\bigl|K^{\Lambda^*}(x,y)&\bigr|\leq \bigl|\nabla_2 \Gamma_{A^T}(x,y)-\nabla_2\Theta\bigl(x,y; \mathcal {A}(x)^T\bigr)\bigr| + \bigl|\nabla_2\Theta\bigl(x,y; \mathcal {A}(x)^T\bigr)-\nabla_2\Theta\bigl(x,y; \mathcal{A}(y)^T\bigr)\bigr|\\
				&\leq |\mathcal K^{1,*}_{\Theta}(x,y)|+ |\mathcal K^{2,*}_{\Theta}(x,y)| + \bigl|\nabla_2\Theta\bigl(x,y; \mathcal{A}(x)^T\bigr)-\nabla_2\Theta\bigl(x,y; \mathcal {A}(y)^T\bigr)\bigr|.
			\end{split}
		\end{equation}
		By \eqref{eq:bound_av_diff_point},  we have that
		\begin{equation}\label{eq:pw_bound_perturbation_Lambda_2}
			\bigl|\nabla_2\Theta\bigl(x,y; \mathcal {A}(x)^T\bigr)-\nabla_2\Theta\bigl(x,y; \mathcal {A}(y)^T\bigr)\bigr|\lesssim \frac{\mathfrak I_{\oomega_A}(|x-y|)}{|x-y|^n},
		\end{equation}
		which, together with  Lemmas \ref{lem:main_pw_estimate} and  \ref{lem:bound_av_diff_point}, yields the second bound in \eqref{eq:pw_bound_kernels_lambda_lambda_star}.
		Finally, gathering \eqref{eq:pw_bound_perturbation_Lambda_2_1}, \eqref{eq:pw_bound_perturbation_Lambda_2}, and Lemmas \ref{lem:lem_estim_dini_integr_growth},  we obtain
		\[
			\|\Lambda^*_\mu f\|_{L^2(\mu)}\lesssim c_0 \, \bigl(\mathfrak F_{\oomega_A}(R) + \mathfrak L^{n-1}_{\oomega_A}(R)\bigr)\|f\|_{L^2(\mu)},
		\]
		which concludes the proof of the lemma.
\end{proof}

\vv

\section{Gradient of the  single layer potential and  quantitative rectifiability}\label{sec:mean_oscillation_perturbation}

Let $A$ be a uniformly elliptic matrix in $\Rn1$, $n\geq 2$, satisfying $A\in \widetilde \DMO_{n-1}$ and let $\mathcal A$ be its uniformly continuous representative.  For a ball \( B \subset \mathbb{R}^{n+1} \), we define the kernels  
\begin{equation*}
	\begin{split}
		\mathfrak{K}(x,y) &\coloneqq \nabla_1 \Gamma_A(x,y) - \nabla_1 \Theta(x,y; \bar{A}_B), \\  
		\mathfrak{K}^*(x,y) &\coloneqq \nabla_2 \Gamma_{A^T}(x,y) - \nabla_2 \Theta\bigl(x,y; \bar{A}^T_B\bigr).
	\end{split}
\end{equation*}  

For a non-negative Radon measure \( \mu \) in \( \mathbb{R}^{n+1} \), we define the operator  
\[
\mathcal{S}_\mu f(x) = \int \mathfrak{K}(x,y) f(y)\, d\mu(y), \qquad \text{for } f \in L^1_{\mathrm{loc}}(\mu),
\]  
and its adjoint  
\(
\mathcal{S}^*_\mu \vec{g} \coloneqq \int \mathfrak{K}^*(x,y) \cdot \vec{g}(y)\, d\mu(y),
\)  
for \( \vec{g} \in L^1_{\mathrm{loc}}(\mu; \mathbb{R}^{n+1}) \).

\vv

Let $\mathcal{K}^i_\Theta(x,y)$ and $\mathcal{K}^{i,*}_\Theta(x,y)$, for $i=1,2,3$, denote the kernels defined in Lemma \ref{lem:main_pw_estimate}, Lemma \ref{lem:bound_av_diff_point}, and Lemma \ref{lem:lem_2_prep_spherical_harmonics_new}, respectively, with the choice $r = |x - y|$.
 In particular, we have
\begin{align}\label{eq:k=k*}
	\mathfrak K(x,y)&=\mathcal K^1_\Theta(x,y) + \mathcal K^2_\Theta(x,y) + \mathcal K^3_\Theta(x,y),\\
	\mathfrak K^*(x,y)&=\mathcal K^{1,*}_\Theta(x,y) + \mathcal K^{2,*}_\Theta(x,y) - \mathcal K^{3}_\Theta(x,y).\label{eq:k=k*_2}
\end{align}

For $\tau_A$ and $\widehat \tau_A$ as in Lemma \ref{lem:main_pw_estimate}, we define
\begin{equation}\label{eq:definition_alpha_modulus}
	\alpha(t) \coloneqq \tau_A(t) + \widehat{\tau}_A(r(B)), \qquad \text{for } t > 0,
\end{equation} 
and observe that
\begin{equation}\label{eq:comparability_alpha_tau}
	\widehat \tau_A(r(B))\leq \alpha(r(B))\leq 2 \, \widehat \tau_A(r(B)).
\end{equation}
Then, for \( R_0 > 0 \),  
\eqref{eq:estimate_fund_sol_average_old}, \eqref{eq:estim_diff_kernels_av}, \eqref{eq:estim_diff_kernels2_new}, \eqref{eq:k=k*}, and \eqref{eq:k=k*_2}  
imply the pointwise bound  
\begin{equation}\label{eq:pointwise_bound_kernel_frak_K}
	|\mathfrak{K}(x,y)| + |\mathfrak{K}^*(x,y)| \lesssim_{n,\Lambda,R} \frac{\alpha(|x - y|)}{|x - y|^n},  
	\qquad \text{for } 0 < |x - y| < R.
\end{equation}  

If we further assume that \( \supp(\mu) \subset B \) and that there exists \( \mathfrak{C}_0 > 0 \) such that  
\[
\Theta^n_\mu(B(x,r)) \leq \mathfrak{C}_0 \,\Theta^n_\mu(B),  
\qquad \text{for every } x \in B \text{ and } r \in (0, r(B)),
\]  
then Lemmas \ref{lem:truncatedL2} and \ref{lem:estimate_norm_K3_new} yield  
\begin{equation}\label{eq:L2Smu}
	\begin{split}
		\| \mathcal{S}_\mu \|_{L^2(\mu) \to L^2(\mu)} +	\| \mathcal{S}^*_\mu \|_{L^2(\mu) \to L^2(\mu)} \lesssim_{n, \Lambda, R, \mathfrak{C}_0}& \mathfrak{I}_{\oomega_A}(r(B))^{1/2} \, \|\mathcal{R}_{\mu}\|_{L^2(\mu) \to L^2(\mu)}\\
		& + \Theta^n_\mu(B) \bigl[ \mathfrak{I}_{\tau_A}(r(B)) + \widehat{\tau}_A(r(B)) \bigr].  
	\end{split}
\end{equation}

\vvv

\subsection{The H\"ormander smoothness  condition for the perturbation kernel}

\vv

\begin{lemma}\label{lemma:hormander_perturbation}
	Let  $A$ be a uniformly elliptic matrix in $\Rn1$, $n\geq 2$, satisfying $A\in  \widetilde \DMO_{1-\gamma}$, for some $\gamma \in (0,1)$.  If $B$ is a ball in $\Rn1$,  we set $\bar A_B=\avint_B A$,
	\[
	\mathfrak K_A(x,y) \coloneqq \nabla_1\Gamma_A(x,y)- \nabla_1\Theta\bigl(x,y; \bar A_B\bigr), \qquad \qquad \text{ for }x\neq y,
	\]
	and
	\[
	{\mathfrak K_A^*}(x,y) \coloneqq \nabla_1\Gamma_A(y,x)- \nabla_1\Theta\bigl(y,x; \bar A_B\bigr), \qquad \qquad \text{ for }x\neq y.
	\]
	Let $\mu$ be a non-negative Radon measure whose support is contained in a compact set and let $B \subset \Rn1$ be a ball centered at $y_B \in \supp(\mu)$ with radius $r(B)$.  Let us also assume that there exist $\gamma\in (0,1)$ and $c_{db}>0$ such that $B$ is $c_{db}$-$P_{\gamma,\mu}$-doubling.
	Then,  there exists $C_\gamma=C_\gamma(n, \Lambda, \gamma, c_{db})>0$ such that, for any $y\in B$, it holds that
	\begin{align}\label{eq:estimate_fund_sol_average}
		\int_{\R^{n+1} \setminus 2B}	\Bigl(\bigl| \mathfrak K_A(x,y)- \mathfrak K_A(x,y_B)\bigr| &+\bigl| \mathfrak K^*_A(x,y)- \mathfrak K^*_A(x,y_B)\bigr| \Bigr)\, d\mu(x)\\
		& \leq C_\gamma\, \Theta^n_\mu(B)\bigl(\mathfrak I_{\oomega_A}(r(B))+ \mathfrak L^{1-\gamma}_{\oomega_A}(r(B))\bigr).\notag
	\end{align}
\end{lemma}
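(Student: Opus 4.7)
The plan is to combine the three-step perturbation decomposition $\mathfrak K_A = \mathcal K^1_\Theta + \mathcal K^2_\Theta + \mathcal K^3_\Theta$ (and $\mathfrak K^*_A = \mathcal K^{1,*}_\Theta + \mathcal K^{2,*}_\Theta - \mathcal K^3_\Theta$) from Lemmas \ref{lem:main_pw_estimate}, \ref{lem:bound_av_diff_point}, and \ref{lem:lem_2_prep_spherical_harmonics_new} with the annular partition $\Rn1 \setminus 2B = \bigsqcup_{k\geq 1} U_k$, where $U_k \coloneqq 2^{k+1}B \setminus 2^k B$. For each $x \in U_k$ and $y \in B$ one has $|x-y_B| \approx |x-y| \approx 2^k r(B)$ and $|y-y_B| \leq r(B)$, so the smoothness ratio $|y-y_B|/|x-y_B|$ is bounded by $2^{-k}$. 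The aim is to produce, for every piece $\mathcal K^i_\Theta$, a pointwise bound
\[
|\mathcal K^i_\Theta(x,y) - \mathcal K^i_\Theta(x,y_B)| \lesssim \frac{2^{-k}\, \mathfrak I_{\oomega_A}(2^{k+1} r(B))}{(2^k r(B))^n}
\]
that couples the smoothness factor $2^{-k}$ with a perturbation factor matching the size of the kernel itself, and then to sum these contributions against $\mu(U_k)$ using the $P_{\gamma,\mu}$-doubling hypothesis.

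The bound for $\mathcal K^3_\Theta$ will follow at once from the mean value theorem in $y$ applied to $M\mapsto \nabla_1 \Theta(x,\cdot;M)$, together with $|\mathcal A(x) - \bar A_B| \lesssim \mathfrak I_{\oomega_A}(2^{k+1} r(B))$ from \eqref{eq:pw_diff_average_unif_cont}. For $\mathcal K^2_\Theta$ the analogous estimate is obtained by a joint mean-value argument in both $y$ and the matrix argument, controlling the $y$-dependent averaging scale via the identity
\[
\bar A_{x,s_1} - \bar A_{x,s_2} = \frac{1}{|B(x,s_1)|}\int_{B(x,s_1)\setminus B(x,s_2)}\bigl(A(z) - \bar A_{x,s_2}\bigr)\,dz,
\]
which for $|s_1 - s_2|\lesssim r(B)$ and $s_1, s_2 \approx 2^k r(B)$ gives $|\bar A_{x,s_1} - \bar A_{x,s_2}| \lesssim 2^{-k}\mathfrak I_{\oomega_A}(2^{k+1} r(B))$. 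The most delicate case $\mathcal K^1_\Theta$ is handled by combining the Hölder-type smoothness of $\nabla_1 \Gamma_A$ from Lemma \ref{lem:estim_fund_sol}(3) with the smoothness of the constant-coefficient kernel $\nabla_1 \Theta(\cdot,\cdot; \bar A_{x,|x-y|/2})$ so that the internal cancellation of $\mathcal K^1$ pairs the Hölder residue $(|y-y_B|/|x-y|)^\beta$ against a factor of order $\mathfrak I_{\oomega_A}$; an alternative, cleaner route is to work directly with the PDE satisfied by the perturbation and to invoke the Dong--Kim gradient estimate of Remark \ref{remark:rem_grad_bound}. For $\mathfrak K^*_A$ the PDE route is the most efficient: for fixed $x \notin 2B$ the scalar $W(y) \coloneqq \Gamma_A(y,x) - \Theta(y,x;\bar A_B)$ satisfies $L_{A,y} W = \div_y F$ on $2B$ with $F(y) \coloneqq -(A(y) - \bar A_B)\nabla_y \Theta(y,x;\bar A_B)$, so that $\mathfrak K^*_A(x,\cdot) = \nabla_y W$ and Remark \ref{remark:rem_grad_bound} applied on $B$ transfers the Dini modulus of $A$ into the required $y$-regularity of $\mathfrak K^*_A(x,\cdot)$.

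Summation of the annular estimates reduces to controlling
\[
\sum_{k\geq 1} 2^{-k}\,\mathfrak I_{\oomega_A}(2^{k+1} r(B))\, \Theta^n_\mu(2^{k+1}B).
\]
Writing $2^{-k} = 2^{-\gamma k}\cdot 2^{-(1-\gamma)k}$, the $P_{\gamma,\mu}$-doubling hypothesis absorbs $\sum_k 2^{-\gamma k}\Theta^n_\mu(2^{k+1}B)$ into $\Theta^n_\mu(B)$, while decomposing $\mathfrak I_{\oomega_A}(2^k r(B)) = \mathfrak I_{\oomega_A}(r(B)) + \int_{r(B)}^{2^k r(B)} \oomega_A(t)\,dt/t$ and interchanging summation with integration in the tail produces the two desired terms $\mathfrak I_{\oomega_A}(r(B))$ (small-scale contribution) and $r(B)^{1-\gamma}\int_{r(B)}^\infty \oomega_A(t)\,dt/t^{2-\gamma} = \mathfrak L^{1-\gamma}_{\oomega_A}(r(B))$ (large-scale tail, where the factor $(r(B)/t)^{1-\gamma}$ arises as $\sum_{k:\, 2^k r(B) \geq t} 2^{-(1-\gamma)k}$). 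The main obstacle I anticipate is orchestrating the $\mathcal K^1$ estimate so that the $(|y-y_B|/|x-y|)^\beta$ Hölder residue of Lemma \ref{lem:estim_fund_sol}(3) is genuinely paired with a small $\mathfrak I_{\oomega_A}$ factor through the internal cancellation of $\mathcal K^1$; without such pairing one only recovers $\Theta^n_\mu(B)$ on the right-hand side, without the vanishing multiplier essential for the applications in Theorem \ref{theorem:elliptic_GSTo}.
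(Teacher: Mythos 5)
Your treatment of the constant-coefficient pieces and of the final summation is sound: the difference-of-differences bounds for $\mathcal K^2_\Theta$ and $\mathcal K^3_\Theta$ via \eqref{eq:pw_diff_average_unif_cont}, the identity for $\bar A_{x,s_1}-\bar A_{x,s_2}$, and the splitting $2^{-k}=2^{-\gamma k}2^{-(1-\gamma)k}$ combined with $P_{\gamma,\mu}$-doubling and \eqref{eq:split_I_L_theta_merged} all work and match the bookkeeping the paper itself performs at the summation stage. The problem is the piece you yourself flag at the end: you never actually prove the estimate for $\mathcal K^1_\Theta(x,\cdot)$ (nor for $\mathcal K^{1,*}_\Theta$), and this is not a technical loose end but the entire content of the lemma. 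The tools you point to do not deliver it. Estimating $\nabla_1\Gamma_A(x,y)-\nabla_1\Gamma_A(x,y_B)$ by \eqref{eq:continuityGamma} and the constant-coefficient kernel difference by its Lipschitz bound gives, on the $k$-th annulus, only $\bigl(2^{-\beta k}+\mathfrak I_{\oomega_A}(2^{-k})+2^{-k}\bigr)(2^kr(B))^{-n}$; after summation this yields $C\,\Theta^n_\mu(B)$ with no factor vanishing as $r(B)\to 0$, which is useless for \eqref{eq:hormander_intro} and Theorem \ref{theorem:elliptic_GSTo}. Your fallback via Remark \ref{remark:rem_grad_bound} also does not close the gap as stated: \eqref{eq:Linftyest-r} is an $L^\infty$ bound for $\nabla u$, not a modulus-of-continuity estimate, so it cannot by itself convert the smallness of the data $(A-\bar A_B)\nabla_1\Theta(\cdot,x;\bar A_B)$ into a bound on the oscillation of $\mathfrak K^*_A(x,\cdot)$ between $y$ and $y_B$ carrying the factor $\mathfrak I_{\oomega_A}$ at the correct scales; even invoking the full Dong--Kim continuity theory one would still have to show that the Hölder-type residue coming from the homogeneous part of the solution is multiplied by an oscillation factor rather than by the (non-small) size of $\nabla W$ on $2B$.

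The paper resolves precisely this point by a different device, which you never use: it applies the representation formula of Lemma \ref{lem:MMPT_lemma_310} \emph{to the difference itself}, writing
\begin{equation*}
\mathfrak K_A(\cdot,y)-\mathfrak K_A(\cdot,y_B)=\int \nabla_1\nabla_2\Gamma(\cdot,z)\,\bigl(\bar A_B-A(z)\bigr)\,\bigl(\nabla_1\Theta(z,y)-\nabla_1\Theta(z,y_B)\bigr)\,dz,
\end{equation*}
so that the smallness factor $\bar A_B-A(z)$ (controlled in averages by $\mathfrak I_{\oomega_A}$) and the smoothness factor $\nabla_1\Theta(z,y)-\nabla_1\Theta(z,y_B)$ appear as a product inside one integral; the borderline singularity of $\nabla_1\nabla_2\Gamma(x,z)$ near $z=x$ is then handled by a four-case dyadic decomposition in $z$, using Caccioppoli together with \eqref{eq:Linftyest-r} for the solution $v_j$ of the homogeneous equation on far annuli, and, on the annulus containing $x$, the inhomogeneous $L^\infty$-gradient estimate with a careful computation of the Dini mean oscillation of the data plus the $L^2$-boundedness of $\nabla L_A^{-1}\div$ (and a Lorentz-space bound for the adjoint kernel). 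Without this cancellation-inside-the-integral mechanism, or a genuinely new pointwise estimate of the type $|\mathcal K^1_\Theta(x,y)-\mathcal K^1_\Theta(x,y_B)|\lesssim 2^{-\epsilon k}\,\mathfrak I_{\oomega_A}(2^kr(B))\,(2^kr(B))^{-n}$ proved from scratch, your decomposition into $\mathcal K^1+\mathcal K^2+\mathcal K^3$ cannot produce the right-hand side of \eqref{eq:estimate_fund_sol_average}, so the proof as proposed is incomplete.
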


\begin{proof}
	We assume  that $\bigl(\Rn1 \setminus 2B \bigr)\cap \supp (\mu) \neq \varnothing$  since, otherwise,  \eqref{eq:estimate_fund_sol_average} trivially holds. 
	If we set $\Theta(z,y)\coloneqq \Theta(z,y;\bar A_B)$, then,  if we identify $\bar{\mathcal A}_B$ with $\bar A_B$, by Lemma \ref{lem:MMPT_lemma_310},  we can write
	\begin{equation}\label{eq:lem_pointwise_111}
		\begin{split}
			& \mathfrak K_A(\cdot, y)- \mathfrak K_A(\cdot, y_B) =\int \nabla_1\nabla_2\Gamma(\cdot ,z)\bigl(\bar A_B-  A(z)\bigr)\Bigl(\nabla_1\Theta(z,y)- \nabla_1\Theta(z,y_B)\Bigr)\, dz\\
			&\qquad \qquad = \int \nabla_1\nabla_2\Gamma(\cdot ,z)\bigl(\bar{\mathcal A}_B -  \mathcal A(z)\bigr)\Bigl(\nabla_1\Theta(z,y)- \nabla_1\Theta(z,y_B)\Bigr)\, dz\\
			&\qquad \qquad  \eqqcolon \int \Phi(\cdot,y, y_B,z)\, dz = \int_B \Phi(\cdot,y, y_B,z)\, dz + \sum_{j\geq 0}\int_{2^{j+1}B\setminus 2^j B}\Phi(\cdot,y, y_B,z)\, dz \\
			&\qquad \qquad \eqqcolon I(\cdot)+\sum_{j\geq 0} I_j(\cdot).
		\end{split}
	\end{equation}
	
	By Lemma \ref{lem:estim_fund_sol}, for $z\neq y$ and $z\neq y_B$, we get
	\begin{equation}\label{eq:cz_diff_Thetas}
		\bigl|\nabla_1\Theta(z,y)-\nabla_1\Theta(z,y_B)\bigr|\leq\bigl|\nabla_1\Theta(z,y)\bigl|+\bigr|\nabla_1\Theta(z,y_B)\bigr|\lesssim \frac{1}{|y-z|^n} + \frac{1}{|y_B-z|^n}.
	\end{equation}
	For $x\in \Rn1\setminus 2B$ we use again Lemma \ref{lem:estim_fund_sol}, \eqref{eq:cz_diff_Thetas}, and  obtain
	\begin{equation*}
		\begin{split}			
			|I(x)|&\lesssim \int_{B}\bigl| \bar {\mathcal A}_B -  \mathcal A(z) \bigr|\frac{\bigl|\nabla_1\Theta(z,y)- \nabla_1\Theta(z,y')|}{|x-z|^{n+1}}\, dz\\
			&\lesssim \int_{B}\frac{\bigl|\bar {\mathcal A}_B -  \mathcal A(z)\bigr|}{|x-z|^{n+1}|y_B-z|^n}\, dz +\int_{B}\frac{\bigl|\bar {\mathcal A}_B -  \mathcal A(z)\bigr|}{|x-z|^{n+1}|y-z|^n}\, dz \eqqcolon J_{1}  + J_{2}.
		\end{split}
	\end{equation*}
	
	If we set 
	\begin{equation}\label{eq:annulus}
		\mathcal C_k(B)\coloneqq 2^{k+1}B \setminus 2^{k} B,\qquad \textup{for} \,\,k\in \mathbb Z,
	\end{equation}
	and $N\geq 2$ is such that $x \in  \mathcal C_N(B)$,  then
	\begin{equation}
		\begin{split}
			\bigl(2^N r(B)\bigr)^{n+1}	J_1&\lesssim \int_{B}\frac{\bigl|\bar{\mathcal A}_B- \mathcal A(z)\bigr|}{|y_B-z|^n}\, dz
			=\sum_{j=1}^{\infty}\int_{\mathcal C_{-j}(B)}\frac{\bigl|\bar{\mathcal A}_B- \mathcal A(z)\bigr|}{|y_B-z|^n}\, dz.
		\end{split}
	\end{equation}
	
	Thus, by \eqref{eq:pw_diff_average_unif_cont} and \eqref{eq:omega<dini}, we obtain
	\begin{equation}\label{eq:J_1_estimate}
		\begin{split}
			J_1&\lesssim \frac{r(B)}{(2^Nr(B))^{n+1}}\sum_{j\geq 0}{2^{-j}}\, \mathfrak I_{\oomega_A}(r(B))\approx \frac{r(B)}{(2^Nr(B))^{n+1}}\, \mathfrak I_{\oomega_A}(r(B)). 
		\end{split}
	\end{equation}

	Let us now analyze $J_2$.	
	Let $\widetilde B\coloneqq B(y, 2r(B))$. In particular, $B\subset \widetilde B$, $r(\widetilde B) \approx r(B)$, and
	\begin{equation}\label{eq:I_r2_estimate}
		\begin{split}
			J_2&\lesssim \frac{1}{(2^N r(B))^{n+1}}\int_{B}\frac{\bigl|\bar{\mathcal A}_B- \mathcal A(z)\bigr|}{|y-z|^n}\, dz\\
			&\lesssim \frac{1}{(2^N r(B))^{n+1}}\int_{\widetilde B}\Biggl(\frac{\bigl|\bar{\mathcal A}_{\widetilde B}- \mathcal A(z)\bigr|}{|y-z|^n} + \frac{\bigl|\bar{\mathcal A}_B- \bar{\mathcal A}_{\widetilde B}\bigr|}{|y-z|^n}\Biggr)\, dz.
		\end{split}
	\end{equation}
	
	Following the proof of \eqref{eq:J_1_estimate},  by \eqref{eq:pw_diff_average_unif_cont} and the doubling property of $\mathfrak I_{\oomega_A}$,  we get that
	\begin{equation}\label{eq:J_21_estimate}
		\begin{split}
			&\int_{\widetilde B}\frac{\bigl|\bar{\mathcal A}_{\widetilde B}- \mathcal A(z)\bigr|}{|y-z|^n} \, dz \lesssim {r(B)}\, \mathfrak I_{\oomega_A}(r(\widetilde B))\lesssim {r(B)}\, \mathfrak I_{\oomega_A}(r(B)).
		\end{split}
	\end{equation}
	
	Moreover, we have
	\begin{equation*}
		\begin{split}
			\bigl|\bar{\mathcal A}_B-\bar{\mathcal A}_{\widetilde B}\bigr|&\leq \avint_{B}|\mathcal A(z)-\bar{\mathcal A}_{\widetilde B}|\, dz \lesssim \avint_{\widetilde B}\bigl|\mathcal A(z)-\bar{\mathcal A}_{\widetilde B}\bigr|\, dz\lesssim \mathfrak I_{\oomega_A}(r(\widetilde B))\lesssim \mathfrak I_{\oomega_A}(r(B)).
		\end{split}
	\end{equation*}
	Thus
	\begin{equation*}
		\begin{split}
			&\int_{\widetilde B}\frac{\bigl|\bar{\mathcal A}_B-\bar{\mathcal A}_{\widetilde B}\bigr|}{|y-z|^n} \, dz\lesssim {\mathfrak I_{\oomega_A}(r(B))}\, \int_{\widetilde B}\frac{1}{|y-z|^n} \, dz\lesssim {r(B)}\, \mathfrak I_{\oomega_A}(r(B))
		\end{split}
	\end{equation*}
	which, together with \eqref{eq:I_r2_estimate} and \eqref{eq:J_21_estimate}, implies that
	\begin{equation}\label{eq:final_estimate_J_2}
		J_2\lesssim \frac{r(B)}{(2^N r(B))^{n+1}}\, \mathfrak I_{\oomega_A}(r(B)).
	\end{equation}
	
	We gather \eqref{eq:J_1_estimate}, \eqref{eq:final_estimate_J_2}, and obtain 
	\begin{equation}
		|I(x)|\lesssim \frac{r(B)}{(2^N r(B))^{n+1}}\, \mathfrak I_{\oomega_A}(r(B)).
	\end{equation}

	Since $P_{1, \mu}(B) \leq P_{\gamma,\mu}(B)$, we obtain
	\begin{equation}\label{eq:OD_bound_I}
		\begin{split}
			\int_{\Rn1\setminus 2B}|I(x)|\, d\mu(x)&= \sum_{N\geq 2}\int_{\mathcal C_N(B)}|I(x)|\, d\mu(x)\lesssim \sum_{N\geq 0}  \frac{r(B)}{(2^N r(B))^{n+1}}\, \mathfrak I_{\oomega_A}(r(B))\, \mu(2^NB)\\
			&=\mathfrak I_{\oomega_A}(r(B))\sum_{N\geq 0} 2^{-N}\Theta^n_\mu(2^NB) 
			= P_{1,\mu}(B) \mathfrak I_{\oomega_A}(r(B))\\
			&\leq P_{\gamma,\mu}(B) \mathfrak I_{\oomega_A}(r(B))\lesssim \Theta^n_\mu(B) \mathfrak I_{\oomega_A}(r(B)),
		\end{split}
	\end{equation}
	where in the last inequality we used the fact that $B$ is $P_{\gamma,\mu}$-doubling.
	
	The estimate of the sum on the right-hand side of \eqref{eq:lem_pointwise_111} is in turn more delicate. We denote by $\mathfrak j_0$ the only integer such that
	\[
	2^{\mathfrak j_0}r(B)\leq R\coloneqq \diam (\supp (\mu)) <2^{\mathfrak j_0 +1}r(B),
	\]
	and write
	\begin{equation}\label{eq:master_ineq_sums}
		\begin{split}
			\sum_{j\geq 0}\int_{\Rn1\setminus 2B}&|I_j(x)|\, d\mu(x)\lesssim \sum_{N=1}^{\mathfrak j_0}\sum_{j\geq 0}\int_{\mathcal C_N(B)}|I_j(x)|\, d\mu(x)\\
			&\leq \sum_{N=1}^{\mathfrak j_0}\sum^{N-2}_{j=0} + \sum_{N=1}^{\mathfrak j_0}\sum^{N+1}_{j=N-1} + \sum_{N=1}^{\mathfrak j_0}\sum^{\mathfrak j_0-3}_{j=N+2} + \sum_{N=1}^{\mathfrak j_0}\sum_{j\geq \mathfrak j_0-2}\\
			&\eqqcolon \mathfrak S_1 + \mathfrak S_2 + \mathfrak S_3 + \mathfrak S_4.
		\end{split}
	\end{equation}
	
	We study each term $ \mathfrak S_1, \ldots,  \mathfrak S_4$ separately.
	\vvv
	
	\textbf{Case 1. } Let us consider $\mathfrak S_1$. Let $N\geq 1$.
	If $0\leq j \leq  N-2$, for $w\in \Rn1$ we define
	\begin{equation}\label{eq:definition_v_j}
		v_j(w)\coloneqq \int_{\mathcal C_j(B)}\nabla_2\Gamma(w,z)\bigl(\mathcal A(z)-\bar{\mathcal A}_B\bigr)\bigl[\nabla_1\Theta(z,y)-\nabla_1\Theta(z,y_B)\bigr]\, dz.
	\end{equation}
	
	For $z\in \mathcal C_j(B)$ and $w\in \mathcal C_N(B)$,  it holds that $|w-z|\approx 2^{N}r(B)$ and $|z-y_B|\approx 2^jr(B)$, so by Lemma \ref{lem:estim_fund_sol}, we have
	\begin{equation}\label{eq:pw_bound_v_j_1}
		\begin{split}
			|v_j(w)|&\lesssim \int_{\mathcal C_j(Q)}\frac{\bigl|\mathcal A(z)-\bar{\mathcal A}_B\bigr|}{|w-z|^n}\frac{|y-y_B|}{|z-y_B|^{n+1}}\, dz\\
			&\lesssim \frac{r(B)}{(2^Nr(B))^{n}(2^jr(B))^{n+1}}\, \int_{\mathcal C_j(B)} \bigl|\mathcal A(z)-\bar{\mathcal A}_B\bigr|\, dz\\
			&\lesssim  \frac{r(B)}{(2^Nr(B))^{n}}\, \avint_{2^{j+1} B}\bigl|\mathcal A(z)-\bar{\mathcal A}_B\bigr|\, dz\overset{\eqref{eq:pw_diff_average_unif_cont}}{\lesssim} \frac{r(B)}{(2^Nr(B))^{n}}\mathfrak I_{\oomega_A}(2^jr(B)).
		\end{split}
	\end{equation}
	
	Let $x\in \mathcal C_N(B)$. Since $v_j$ is a weak solution to $L_Av_j=0$ in $2^jB$ for all $j\geq 2$, by \eqref{eq:Linftyest-r} and Caccioppoli's inequality we obtain
	\begin{equation}\label{eq:pw_bound_v_j_2}
		\begin{split}
			|\nabla v_j(x)|&\leq \sup_{w\in B(x, 2^{N-6}r(B))}|\nabla v_j(w)|\leq \biggl(\avint_{B(x, 2^{N-5}r(B))}|\nabla v_j|^2\biggr)^{1/2}\\
			&\lesssim \frac{1}{2^N r(B)}\biggl(\avint_{B(x, 2^{N-4}r(B))}| v_j|^2\biggr)^{1/2}{\lesssim} \frac{r(B)}{(2^Nr(B)))^{n+1}}\mathfrak I_{\oomega_A}(2^jr(B)),
		\end{split}
	\end{equation}
	where the last inequality follows from \eqref{eq:pw_bound_v_j_1}.
	The inequality \eqref{eq:pw_bound_v_j_2} implies
	\begin{equation}\label{eq:OD_bound_S_1}
		\begin{split}
			\mathfrak S_1&\leq\sum_{N=1}^{\mathfrak j_0} \sum_{j=0}^{N-2}\mu(\mathcal  C_N(B))|\nabla v_j(x)|\lesssim \sum_{N=1}^{\mathfrak j_0} \sum_{j=0}^{N-2}\frac{\mu(\mathcal C_N(B))\, r(B)}{(2^Nr(B))^{n+1}}\mathfrak I_{\oomega_A}(2^jr(B)).
		\end{split}
	\end{equation}
	
	By hypothesis, the ball $B$ is $P_{\gamma, \mu}(B)$ doubling for $\gamma \in (0,1)$ as in the statement,  so
	\begin{equation}\label{eq:P_doubling_S_1}
		\begin{split}
			\sum_{N=1}^\infty 2^{-N\gamma}\frac{\mu(\mathcal C_N(B))}{(2^Nr(B))^n}&\leq \sum_{N\geq 0} 2^{-N\gamma}\Theta^n_\mu(2^NB)=P_{\gamma,\mu}(B)\lesssim \Theta^n_\mu(B).
		\end{split}
	\end{equation}
	Hence, changing the order of summation in \eqref{eq:OD_bound_S_1}, we obtain
	\begin{equation}\label{eq:final_estimate_S_1}
		\begin{split}
			\mathfrak S_1&\lesssim \sum_{j=0}^{\mathfrak j_0-2}\sum_{N=j+2}^{\mathfrak j_0}\frac{\mu(\mathcal C_N(B))}{(2^Nr(B))^{n}}\frac{1}{2^N}\mathfrak I_{\oomega_A}\bigl(2^jr(B)\bigr)=\sum_{j=0}^{\mathfrak j_0-2}\sum_{N=j+2}^{\mathfrak j_0}\frac{\mu(\mathcal C_N(B))}{(2^Nr(B))^{n}}\frac{2^{-N\gamma}}{2^{N(1-\gamma)}}\mathfrak  I_{\oomega_A}\bigl(2^jr(B)\bigr)\\
			&\overset{\eqref{eq:P_doubling_S_1}}{\lesssim} \Theta^n_\mu(B)\sum_{j=0}^{\mathfrak j_0-2} \frac{r(B)^{1-\gamma}}{(2^jr(B))^{1-\gamma}}\mathfrak I_{\oomega_A}\bigl(2^jr(B)\bigr)\overset{\eqref{eq:mod_cont_sum_2}}{\lesssim} \Theta^n_\mu(B) \, \mathfrak L^{1-\gamma}_{\mathfrak I_{\oomega_A}}(r(B)).
		\end{split}
	\end{equation}
	
	\textbf{Case 2.}
	Let us study $\mathfrak S_2$.  We assume that $N\geq 1$ and $x\in \mathcal C_N(B)$. Hence, if we set $\wt {\mathcal C}_N(B)\coloneqq\bigcup_{j=N-1}^{N+1}\mathcal C_j(B)$,  it holds that $B(x, 2^{N-4}r(B)) \subset \wt {\mathcal C}_N(B)$.  Thus, we write
	\begin{equation}\label{eq:split_j_1_j_2}
		\begin{split}
			\wt I_N(x)&\coloneqq 	\sum_{j=N-1}^{N+1} I_j(x)=\int_{\wt {\mathcal C}_N(B)} \nabla_1\nabla_2\Gamma(x ,z)\bigl(\bar{\mathcal A}_B-  \mathcal A(z)\bigr)\Bigl(\nabla_1\Theta(z,y)- \nabla_1\Theta(z,y_B)\Bigr)\, dz\\
			&=\int_{B(x, 2^{N-4}r(B))} + \int_{\wt {\mathcal C}_N(B)\setminus  B(x, 2^{N-4}r(B))} \eqqcolon \widetilde{\mathcal J}_{N,1} (x)+ \widetilde{\mathcal J}_{N,2}(x).
		\end{split}
	\end{equation}
	
	To estimate $\widetilde{\mathcal J}_{N,1}(x)$, we define
	\[
	\varepsilon_{x,B}(z)=\bigl(\bar{\mathcal A}_B - \mathcal A(z)\bigr)\chi_{B(x,2^{N-4}r(B))}(z),\qquad \qquad \text{ for }z\in \Rn1,
	\]
	so that 
	\[
	\widetilde{\mathcal J}_{N,1}(x)= \int_{\wt {\mathcal C}_N(B)} \nabla_1\nabla_2\Gamma(x ,z)\, \varepsilon_{x,B}(z)\, \Bigl(\nabla_1\Theta(z,y)- \nabla_1\Theta(z,y_B)\Bigr)\, dz.
	\]
	
	Let $\eta\in (0,1/2)$ be a parameter chosen as in Remark \ref{remark:rem_grad_bound}, $K_\eta>0$ be the only integer such that $2^{-K_\eta-1}\leq \eta<2^{-K_\eta}$, $K\coloneqq 3(4/3)^{K_\eta}$, $r=2^{N-4}r(B)$, and $\rho\coloneqq 2^{N-3}r(B)/(K+1)$. 
	We remark  that $\rho\approx 2^{N}r(B)$. 
	
	Let $t\in (0,1]$ and $w\in B(x,t\rho)$. We define $F(z)\coloneqq \nabla_1\Theta(z,y)-\nabla_1\Theta(z,y_B)$ and remark that Lemma \ref{lem:estim_fund_sol} yields that 
	\begin{equation}\label{eq:Fbound}
		|F(z)|\lesssim r(B)\bigl(2^Nr(B)\bigr)^{-(n+1)}, \qquad  \textup{for all} \,\,z\in B(w,t\rho).
	\end{equation}
	Hence,  as $B(w, t \rho) \subset B(x, 2^{N-4}r(B))$ for any $w\in B(x,t\rho)$ and $ t \in (0,1]$,  we have that 
	\begin{equation}\label{eq:mrg_omega_1}
		\begin{split}
			&\avint_{B(w,t\rho)}\Bigl|(\mathcal A-\bar{\mathcal A}_B)F-\avint_{B(w,t\rho)}(\mathcal A-\bar{\mathcal A}_B) F\, \Bigr|\\
			&\qquad =\avint_{B(w,t\rho)}\bigl|\mathcal AF - \bar{\mathcal A}_B F - \overline{(\mathcal AF)}_{B(w,t\rho)} +  \bar{\mathcal A}_B\bar F_{B(w,t\rho)}\bigr|\\
			&\qquad =\avint_{B(w,t\rho)}\bigl| \left(\mathcal AF -\bar{\mathcal A}_{B(w,t\rho)}F \right) + \bigl(\bar{\mathcal A}_{B(w,t\rho)}-\bar{\mathcal A}_B\bigr)\bigl(F-\bar F_{B(w,t\rho)}\big)\\
			&\qquad\qquad\qquad\qquad\qquad+ \bar{\mathcal A}_{B(w,t\rho)} \bar F_{B(w,t\rho)} - \overline{(\mathcal AF)}_{B(w,t\rho)} \bigr|\\
			&\qquad \leq 2\avint_{B(w,t\rho)} \bigl|\bigl(\mathcal A-\bar{\mathcal A}_{B(w,t\rho)}\bigr)F \bigr| + \avint_{B(w, t\rho)}\Bigl|\bigl(\bar{\mathcal A}_{B(w,t\rho)}-\bar{\mathcal A}_B\bigr)\bigl(F-\bar F_{B(w,t\rho)}\big)\Bigr|\\
			&\qquad\lesssim \frac{r(B)}{(2^Nr(B))^{n+1}}\, \oomega_A(t\rho)+ \bigl|\bar{\mathcal  A}_{B(w,t\rho)}-\bar{\mathcal A}_B\bigr|\, \avint_{B(w,t\rho)}\bigl|F(z)-\bar F_{B(w,t\rho)}\bigr|\, dz\\
			&\qquad\overset{\eqref{eq:pw_diff_average_unif_cont}}{\lesssim} \frac{r(B)}{(2^Nr(B))^{n+1}}\,\oomega_A(t\rho)+ \mathfrak I_{\oomega_A}(2^Nr(B))\, \avint_{B(w,t\rho)}\bigl|F(z)-\bar F_{B(w,t\rho)}\bigr|\, dz,
		\end{split}	
	\end{equation}
	where the penultimate inequality is a consequence of \eqref{eq:Fbound}. We now apply Lemma \ref{lem:estim_fund_sol} and obtain
	\begin{equation}\label{eq:mrg_omega_2}
		\begin{split}
			\avint_{B(w,t\rho)}\bigl|&F(z)-\bar F_{B(w,t\rho)}\bigr|\, dz\lesssim \avint_{B(w,t\rho)}\avint_{B(w,t\rho)} |F(z)-F(u)|\,du\, dz\\
			&	\lesssim \avint_{B(w,t\rho)}\avint_{B(w,t\rho)} |z-u|\, \max_{v\in B(w,2^{N-2}r(B))}|\nabla F(v)|\, du\, dz\\
			&\lesssim \frac{t\rho \, r(B)}{(2^Nr(B))^{n+2}}\lesssim \frac{t \, r(B)}{(2^Nr(B))^{n+1}},
		\end{split}
	\end{equation}
	where the last inequality yields because $\rho\approx 2^Nr(B)$. Therefore, by  \eqref{eq:mrg_omega_1}  and \eqref{eq:mrg_omega_2},
	\begin{equation}\label{eq:final_bound_mathring_omega_g}
		\int^1_0\mathring\oomega^{x,K\rho}_g(t)\, \frac{dt}{t}\lesssim \frac{r(B)}{(2^Nr(B))^{n+1}}\, \mathfrak I_{\oomega_A}(2^Nr(B)).
	\end{equation}
	
	The function $w=L^{-1}_A \nabla\cdot \bigl(\varepsilon_{x,B} F\bigr)$ is a weak solution to $L_Aw=\div \bigl( (\mathcal A-\bar{\mathcal A}_B)F\bigr)$ in $B(x,(N+1)\rho)$ so, by Remark \ref{remark:rem_grad_bound}, we have that
	\begin{equation}\label{eq:tilde_J_1_top}
		\begin{split}
			|\widetilde{\mathcal J}_{N,1}(x)|&\leq \sup_{\zeta \in B(x,2\rho)}\Biggl|\int \nabla_1\nabla_2\Gamma(\zeta,z) \varepsilon_{x,B}(z)\bigl(\nabla_1\Theta(x,y)-\Theta(x,y_B)\bigr)\, dz\Biggr|\\
			&\lesssim \biggl(\frac{1}{\rho^{n+1}}\int_{B(x,4\rho)} \bigl| \nabla w(z)|^2\, dz\biggr)^{1/2} + \int_0^1 \mathring \oomega_g^{x,K\rho}(t)\, \frac{dt}{t}.
		\end{split}
	\end{equation}
	Moreover,  since the operator $(\nabla L_A^{-1}\nabla\cdot)$ is bounded from  $L^2(\mathcal L^{n+1})$ to $L^2(\mathcal L^{n+1})$ (see \cite[(3.10)-(3.11)]{HK07}) with constants depending only on the ellipticity and dimension, we obtain  
	\begin{equation}\label{eq:tilde_J_1_top2}
		\begin{split}
			\| \nabla w\|_{L^2(\mathcal L^{n+1})}&\lesssim \biggl(\int_{B(x,r)} |\varepsilon_{x,B} (z)|^2\bigl|\nabla_1\Theta(x,y)-\nabla_1\Theta(x,y_B)\bigr|^2\, dz\biggr)^{1/2}\\
			&\lesssim \frac{r(B)}{(2^Nr(B))^{n+1}} \, \biggl(\int_{B(x,r)} \bigl|\mathcal A(z)-\bar{\mathcal  A}_B\bigr|^2\biggr)^{1/2}\overset{\eqref{eq:pw_diff_average_unif_cont}}{\lesssim} \frac{r(B)}{(2^Nr(B))^{(n+1)/2}} \, \mathfrak I_{\oomega_A}(2^N r(B)).
		\end{split}
	\end{equation}
	
	By \eqref{eq:tilde_J_1_top} and \eqref{eq:tilde_J_1_top2},  as $\rho \approx 2^N \ell(Q)$, we  infer that
	\begin{equation}\label{eq:bound_J_j_1}
		|\widetilde{\mathcal J}_{N,1}(x)|\lesssim \frac{r(B)}{(2^Nr(B))^{n+1}}\mathfrak I_{\oomega_A}(2^N r(B)).
	\end{equation}
	
	We bound the term $\widetilde{\mathcal J}_{N,2}(x)$ in \eqref{eq:split_j_1_j_2} via Lemma \ref{lem:estim_fund_sol}, which yields
	\begin{equation}\label{eq:bound_tilde_mathcal_J_2}
		\begin{split}
			\bigl|\widetilde{\mathcal J}_{N,2}(x)\bigr|&\lesssim \int_{\wt{\mathcal C}_N(B)\setminus B(x, 2^{N-4}r(B))}\frac{\bigl|\mathcal A(z)-\bar{\mathcal A}_B\bigr|}{|x-z|^{n+1}}\frac{r(B)}{|z-y_B|^{n+1}}\, dz\\
			&\lesssim \frac{r(B)}{(2^Nr(B))^{n+1} (2^N r(B))^{n+1}}\int_{\wt{\mathcal C}_N(B)\setminus B(x, 2^{N-4}r(B))}\bigl|\mathcal A(z)-\bar{\mathcal A}_B\bigr|\, dz\\
			&\lesssim \frac{r(B)}{(2^N r(B))^{n+1}}\,  \avint_{2^N B}\bigl|\mathcal A(z)-\bar{\mathcal A}_B\bigr|\, dz	\overset{\eqref{eq:pw_diff_average_unif_cont}}{\lesssim} \frac{r(B)}{(2^Nr(B))^{n+1}}\,  \mathfrak I_{\oomega_A}(2^Nr(B)).
		\end{split}
	\end{equation}
	
	Therefore,  gathering \eqref{eq:bound_J_j_1} and \eqref{eq:bound_tilde_mathcal_J_2},  we get
	\begin{equation}\label{eq:final_pw_bound_I_j_case_2}
		\sup_{x\in \wt{\mathcal{C}}_N(B)} |\wt I_N(x)|\lesssim \frac{r(B)}{(2^Nr(B))^{n+1}}\,  \mathfrak I_{\oomega_A}(2^N(B)).
	\end{equation}
	
	We remark that, for $N-1 \leq j \leq  N+1$, it holds that $2^j r(B)\approx 2^Nr(B)$,  so we finally deduce that
	
	\begin{equation}\label{eq:OD_bound_S2}
		\begin{split}
			\mathfrak S_2&\leq \sum_{N=1}^{\mathfrak j_0} \sum_{j=N-1}^{N+2}\mu\bigl(\mathcal C_N(B)\bigr)\sup_{x\in \wt{\mathcal{C}}_N(B)} | \wt I_N(x)|\\
			&\overset{\eqref{eq:final_pw_bound_I_j_case_2}}{\lesssim}\sum_{N=1}^{\mathfrak j_0} \sum_{j=N-1}^{N+2}\mu\bigl(\mathcal C_N(B)\bigr)\frac{r(B)}{(2^Nr(B))^{n+1}}\,  \mathfrak I_{\oomega_A}(2^Nr(B))\\
			&\approx\sum_{N=1}^{\mathfrak j_0} \frac{\mu\bigl(\mathcal C_N(B)\bigr)}{(2^Nr(B))^n}\sum_{j=N-1}^{N+2}\frac{r(B)}{(2^jr(B))}\,  \mathfrak I_{\oomega_A}(2^Nr(B))\\
			&\overset{\eqref{eq:P_doubling_S_1}}{\lesssim} \sum_{N=0}^{\mathfrak j_0}2^{-N\gamma} \Theta^n_\mu(2^{N} B)\sum_{j=0}^\infty\frac{r(B)^{1-\gamma}}{(2^jr(B))^{1-\gamma}}\,  \mathfrak I_{\oomega_A}(2^jr(B))\\
			&\overset{\eqref{eq:mod_cont_sum_2}}{\lesssim}  P_{\gamma,\mu}(B)\mathfrak L^{1-\gamma}_{\mathfrak I_{\oomega_A}}(B)\lesssim \Theta^n_\mu(B)\mathfrak L^{1-\gamma}_{\mathfrak I_{\oomega_A}}(B),
		\end{split}
	\end{equation}
	where the last bound  follows from the fact that $B$ is  $P_{\gamma, \mu}$-doubling.
	
	\vvv
	
	\textbf{Case 3.}
	We now bound $\mathfrak S_3$, namely the case $N+2 \leq j <\mathfrak j_0-2$. By Lemma \ref{lem:estim_fund_sol} and \eqref{eq:pw_diff_average_unif_cont}, for $w\in B(x,2^{j-4}r(B))$  and $v_j$ as in \eqref{eq:definition_v_j} it holds that
	\begin{equation}\label{eq_pw_bd_v_j}
		\begin{split}
			|v_j(w)|&\lesssim \int_{\mathcal C_j(B)}\frac{|\mathcal A(z)-\bar{\mathcal A}_B|}{(2^jr(B))^n}\frac{r(B)}{(2^jr(B))^{n+1}}\, dz\lesssim \frac{r(B)}{(2^jr(B))^n}\, \mathfrak I_{\oomega_A}(2^jr(B)).
		\end{split}
	\end{equation}
	Hence, by Remark \ref{remark:rem_grad_bound}, Caccioppoli inequality, and \eqref{eq_pw_bd_v_j}, we have that
	\begin{equation}\label{eq:bound_nabla_v_j_2}
		\begin{split}
			|\nabla v_j(x)|&\leq \sup_{w\in B(x, 2^{j-6}r(B))}|\nabla v_j(w)|\lesssim  \biggl(\avint_{B(x, 2^{j-5}r(B))}|\nabla v_j|^2\biggr)^{1/2}\\
			&\lesssim \frac{1}{2^jr(B)}\, \biggl(\avint_{B(x, 2^{j-4}r(B))}| v_j|^2\biggr)^{1/2}\overset{\eqref{eq_pw_bd_v_j}}{\lesssim}\frac{r(B)}{(2^jr(B))^{n+1}}\, \mathfrak I_{\oomega_A}(2^jr(B)).
		\end{split}
	\end{equation}

	Thus, we can bound $\mathfrak S_3$ as 
	\begin{equation}\label{eq:OD_bound_S_3}
		\begin{split}
			\mathfrak S_3\leq&\sum_{N=1}^{\mathfrak j_0} \sum_{j=N+2}^{\mathfrak j_0-2}
			\mu(\mathcal C_N(B))\, \sup_{x\in \mathcal C_N(B)}|\nabla v_j(x)|\\
			&\overset{\eqref{eq:bound_nabla_v_j_2}}{\lesssim} \sum_{N=1}^{\mathfrak j_0} \sum_{j=N+2}^{\mathfrak j_0-2}\mu(\mathcal C_N(B))\frac{r(B)}{(2^jr(B))^{n+1}}\, \mathfrak I_{\oomega_A}(2^jr(B)) \\
			&\lesssim \sum_{N=1}^{\mathfrak j_0} \sum_{j=N+2}^{\mathfrak j_0-2}2^{-j\gamma}\Theta^n_\mu\bigl(2^NB\bigr)\, 2^{-j(1-\gamma)}\, \mathfrak I_{\oomega_A}(2^jr(B))\\
			&\leq \sum_{N\geq 0} \Theta^n_\mu\bigl(2^NB\bigr)\, 2^{-\gamma N} \sum_{j=N+2}^{\mathfrak j_0-2}\frac{r(B)^{1-\gamma}}{(2^jr(B))^{1-\gamma}} \mathfrak I_{\oomega_A}(2^jr(B))\\
			&\overset{\eqref{eq:mod_cont_sum_2}}{\lesssim}P_{\gamma,\mu}(B)\, \mathfrak L^{1-\gamma}_{\mathfrak I_{\oomega_A}}(B)\lesssim \Theta^n_\mu(B)\, \mathfrak L^{1-\gamma}_{{\mathfrak I_{\oomega_A}}}(B),
		\end{split}
	\end{equation}
	where the last inequality relies on the $P_{\gamma,\mu}$-doubling assumption on $B$.
	
	\vvv
	
	\textbf{Case 4.}
	Let us study $\mathfrak S_4$, which corresponds to the range $j\geq \mathfrak j_0-2$. 
	For $w\in B\bigl(x,2^{j-4}r(B)\bigr)$, Lemma \ref{lem:estim_fund_sol} yields
	\begin{equation*}
		\begin{split}
			|v_j(w)|&\lesssim \int_{\mathcal C_j} \frac{\bigl|\mathcal A(z)-\bar{\mathcal A}_B\bigr|}{(2^j r(B))^n}\, \frac{r(B)}{(2^jr(B))^{n+1}}\, dz\lesssim \frac{r(B)}{(2^jr(B))^n} \avint_{2^jB}\bigl|\mathcal A(z)-\bar{\mathcal A}_B\bigr|\, dz\\
			&\overset{\eqref{eq:pw_diff_average_unif_cont}}{\lesssim} \frac{r(B)}{(2^jr(B))^n}\, \mathfrak I_{\oomega_A}\bigl(2^jr(B)\bigr)
		\end{split}
	\end{equation*}
	and, by estimates analogous to \eqref{eq:bound_nabla_v_j_2}, we have that
	\begin{equation*}
		\begin{split}
			|\nabla v_j(x)|&\lesssim \frac{1}{R}\Bigl(\avint_{B(x,R/10)}|v_j|^2\Bigr)^{1/2}\lesssim \frac{\ell(Q)}{R\, (2^j\ell(Q))^n}\, \mathfrak I_{\oomega_A}\bigl(2^jr(B)\bigr).
		\end{split}
	\end{equation*}
	For $j\geq \mathfrak j_0-2 \geq N-2$ we also have that $2^{N}r(B)  \lesssim R \approx 2^{j_0}r(B) \lesssim 2^jr(B)$ hence, the estimates above imply that
	\begin{equation}\label{eq:OD_bound_S4}
		\begin{split}
			\mathfrak S_4&\leq \sum_{N=1}^{\mathfrak j_0}\sum_{j=\mathfrak j_0 -2}^\infty \frac{r(B)}{R\, (2^jr(B))^n}\, \mathfrak I_{\oomega_A}\bigl(2^jr(B)\bigr)\, \mu(\mathcal C_N(B))\\
			& \lesssim \sum_{N=1}^{\mathfrak j_0}\frac{\mu(\mathcal C_N(B))}{R}\sum_{j=\mathfrak j_0 -2}^\infty  \frac{r(B)\mathfrak I_{\oomega_A}\bigl(2^jr(B)\bigr)}{\bigl(2^jr(B)\bigr)^n}\\
			&\lesssim \sum_{N=1}^{\mathfrak j_0}\frac{\mu(\mathcal C_N(B))}{(2^Nr(B))^n}\sum_{j=\mathfrak j_0 -2}^\infty  \frac{1}{2^j}\mathfrak I_{\oomega_A}\bigl(2^jr(B)\bigr)\\
			&= \sum_{N=1}^{\mathfrak j_0}{2^{-N\gamma}}\frac{\mu(\mathcal C_N(B))}{(2^Nr(B))^n}\sum_{j=\mathfrak j_0 -2}^\infty  \frac{r(B)^{1-\gamma}}{(2^jr(B))^{1-\gamma}}\mathfrak I_{\oomega_A}\bigl(2^jr(B)\bigr)\\
			&\overset{\eqref{eq:mod_cont_sum_2}}{\lesssim}P_{\gamma,\mu}(B)\, \mathfrak L^{1-\gamma}_{\mathfrak I_{\oomega_A}}(r(B))\lesssim \Theta^n_\mu(B)\,  \mathfrak L^{1-\gamma}_{\mathfrak I_{\oomega_A}}(r(B)),
		\end{split}
	\end{equation}
	where the last inequality follows from the $P_{\gamma, \mu}$-doubling assumption on $B$.
	
	Finally, we gather \eqref{eq:OD_bound_I}, \eqref{eq:final_estimate_S_1}, \eqref{eq:OD_bound_S2}, \eqref{eq:OD_bound_S_3}, and \eqref{eq:OD_bound_S4}, and conclude the proof of \eqref{eq:estimate_fund_sol_average} for $\mathfrak K_A$ together with  \eqref{eq:split_I_L_theta_merged}.
	
	\vv
	Establishing \eqref{eq:estimate_fund_sol_average} for \( \mathfrak{K}^* \) follows a similar but simpler approach. Therefore, we will only outline the proof, emphasizing the key aspects that require a different treatment. We first observe that \( \mathfrak{K}^*_A(x, y) = \mathfrak{K}_{A^T} (x,y) \). Thus, if we define  
	\[
	\Theta(x,y) \coloneqq \Theta(x - y; \bar{A}^T_B),
	\]
	by Lemma \ref{lem:MMPT_lemma_310}, we obtain  
	\begin{multline}
		\mathfrak{K}^*_A(\cdot, y) - \mathfrak{K}^*_A(\cdot, y_B)  
		= \int \nabla_2 \Gamma_{A^T}(\cdot ,z) \bigl(\bar A^T_B - A^T(z)\bigr)  
		\Bigl(\nabla_1 \nabla_2 \Theta(z,y) - \nabla_1 \nabla_2 \Theta(z,y_B) \Bigr)\, dz.
	\end{multline}
	
	If \( A \in \widetilde{\DMO}_{1-\gamma} \), then clearly \( A^T \in \widetilde{\DMO}_{1-\gamma} \) as well and $ \mathfrak I_{\oomega_A}=  \mathfrak I_{\oomega_{A^T}}$. Now,  if we define
	\[
	F(z)\coloneqq \nabla_1 \nabla_2 \Theta(z,y) - \nabla_1 \nabla_2 \Theta(z,y_B),
	\]
	it is easy to verify that $F$ can be estimated using \eqref{eq:gradient_fund_sol_const_matrix-2} by either applying the pointwise bounds or using its Lipschitz regularity via the mean value theorem.   Following the previous argument closely, we can now derive the desired estimate. The main difference is that applying \eqref{eq:Linftyest-r} is no longer necessary.  However,   estimating \( \widetilde{\mathcal{J}}_{N,1}(x) \) is not trivial so we provide the argument for clarity. 
	
	We first recall that 	
	\[\varepsilon_{x,B}(z)=\bigl(\bar{\mathcal A}_B - \mathcal A(z)\bigr)\chi_{B(x,2^{N-4}r(B))}(z),\qquad \qquad \text{ for }z\in \Rn1.
	\]
	As the function $w=L^{-1}_A \nabla\cdot \bigl(\varepsilon_{x,B} \, F\bigr)$ is a weak solution to $L_{ A}w=\div \bigl( \varepsilon_{x,B}\, F \bigr)$  in  $\Rn1$,   we can use \cite[Theorem 6.1]{Mou23} applied to $\Omega=\Rn1$ and obtain
	\begin{align*}
		|w(x)| \leq \|w\|_{L^\infty(\Rn1)} \lesssim \| \varepsilon_{x,B}  F \|_{L^{n+1,1}(\Rn1)}.
	\end{align*}
	It remains to prove that
	\[
	\| \varepsilon_{x,B}  F \|_{L^{n+1,1}(\Rn1)}\\ \lesssim \frac{r(B)}{(2^{N}r(B))^{n+1}}   \, \mathfrak I_{\oomega_A}(2^Nr(B)).
	\] 
	To this end,  note that 
	\[
	| \varepsilon_{x,B}  F(z) | \leq  C\, \frac{r(B)}{(2^{N}r(B))^{n+2}} \,\bigl|\mathcal A^T(z)-\bar{\mathcal A}^T_B\bigr| \, \chi_{B(x,2^{N-4}r(B))}(z),
	\]
	which implies that
	\begin{align*}
		(n+1)^{-1} &\| \varepsilon_{x,B} F \|_{L^{n+1,1}(\Rn1)} =  \int_0^\infty \Bigl| \bigl\{z \in \Rn1:| \varepsilon_{x,B}(z)  F(z)|>t \bigr\} \Bigr|^{\frac{1}{n+1}} \,dt\\
		&\leq \int_0^\infty \Bigl| \Bigl\{z \in B\bigl(x,2^{N-4}r(B)\bigr) : \bigl|\mathcal A^T(z)-\bar {\mathcal A}^T_B\bigr| > (Cr(B))^{-1}\,(2^{N}r(B))^{n+2} t \Bigr\} \Bigr|^{\frac{1}{n+1}} \,dt\\
		&= \frac{C\, r(B)}{(2^{N}r(B))^{n+2}} \int_0^\infty \Bigl| \Bigl\{z \in B\bigl(x,2^{N-4}r(B)\bigr) : \bigl| {\mathcal A}^T(z)-\bar{\mathcal A}^T_B\Bigr| > t \Bigr\} \Bigr|^{\frac{1}{n+1}} \,dt\\
		&\leq \frac{C' \, r(B)}{(2^{N}r(B))^{n+2}}  \int_0^\infty \Bigl| \Bigl\{z \in B\bigl(x,2^{N-4}r(B)\bigr) : \mathfrak I_{\oomega_A}(2^Nr(B)) > t \Bigr\} \Bigr|^{\frac{1}{n+1}} \,dt\\
		&\lesssim \frac{r(B)}{(2^{N}r(B))^{n+1}}  \, \mathfrak I_{\oomega_A}(2^Nr(B)).\qedhere
	\end{align*}
\end{proof}

\vv

\subsection{The David-Mattila lattice}
The following lemma presents the properties of the David-Mattila lattice of dyadic cubes adapted to a Radon measure $\sigma$ on $\Rn1$.
\begin{lemma}[\cite{DM00}, Theorem 3.2]\label{lemma:DM_cubes}
	Let $\sigma$ be a Radon measure on $\mathbb{R}^{n+1}$ with compact support, and denote $W\coloneqq \supp(\sigma)$. For $K_0 > 1$ and $A_0 > 5000 K_0$, there exists a sequence of partitions $(\mathcal D_{\sigma,k})_{k\geq 0}$ of $W$ into Borel subsets $Q$, which we will refer to as $\sigma$-cubes, with the following properties:
	\begin{itemize}
		\item  For each integer $k\geq 0$, $W$ is the disjoint union of the $\sigma$-cubes $Q$, for $Q\in\mathcal D_{\sigma,k}$. If $k < \ell$,
		$Q \in \mathcal D_{\sigma,\ell}$, and $R\in\mathcal D_{\sigma,k}$ , then either $Q\cap R=\varnothing$ or $Q\subset R$.
		\item 
		For each $k\geq 0$ and each $\sigma$-cube $Q \in \mathcal D_{\sigma,k}$, there is a ball $B(Q) = B(z_Q ,r(Q))$ such that
		\begin{equation*}
			\begin{split}
				z_Q\in W, \,\,\qquad  A_0^{-k}\,\diam(W) \leq r(Q)\leq K_0\,A_0^{-k} \,\diam(W),\\
				W\cap B(Q)\subset Q \subset W\cap 28 B(Q)= W\cap B(z_Q,28 r(Q)),
			\end{split}
		\end{equation*}
		and the balls $5B(Q),Q\in\mathcal D_{\sigma,k}$ are disjoint.
		\item The $\sigma$-cubes $Q\in\mathcal D_{\sigma,k}$ have small boundaries. Namely, for $Q\in\mathcal D_{\sigma,k}$ and $\ell\geq 0$, if we denote
		\begin{equation*}
			\begin{split}
					N_\ell^{\textup{int}}\coloneqq   \bigl\{x\in Q: \dist (x,W\setminus Q)<A_0^{-k-\ell}\,\diam(W)\bigr\},\\
				N_\ell^{\ext}(Q)\coloneqq   \bigl\{x\in W\setminus Q:\dist(x,Q)<A_0^{-k-\ell}\,\diam(W)\bigr\},
			\end{split}
		\end{equation*}
		and
		\(
		N_\ell(Q)\coloneqq   N_\ell^{\textup{int}}(Q)\cup N_\ell^{\ext}(Q),
		\)
		we have
		\begin{equation*}
			\sigma(N_\ell(Q))\leq \bigl(C^{-1}K_0^{-3(n+1)-1}A_0\bigr)^{-\ell}\sigma\big(90B(Q)\big).
		\end{equation*}
		\item Denote by $\mathcal{D}^{db}_{\sigma,k}$ the family of $\sigma$-cubes \( Q \in \mathcal{D}_{\sigma,k} \) for which  
		\begin{equation*}
			\sigma\big(100B(Q)\big) \leq K_0 \sigma(B(Q)).
		\end{equation*}  
		For \( Q \in \mathcal{D}_{\sigma,k} \setminus \mathcal{D}^{db}_{\sigma,k} \), we have that \( r(Q) = A_0^{-k}\,\diam(W) \).  
		Moreover, for any \( \ell \geq 1 \) with \( 100^\ell \leq K_0 \) and \( Q \in \mathcal{D}_{\sigma,k} \setminus \mathcal{D}^{db}_{\sigma,k} \), we have  
		\begin{equation}\label{ndb}
			\sigma\big(100B(Q)\big) \leq K_0^{-1} \sigma\big(100^{\ell+1}B(Q)\big).
		\end{equation}
	\end{itemize}
\end{lemma}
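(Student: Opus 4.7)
The plan is to follow the original David--Mattila construction, which is an inductive packing-and-partitioning scheme, with the key twist being the careful choice of the \emph{ball radius} $r(Q) \in [A_0^{-k}\diam(W), K_0 A_0^{-k}\diam(W)]$ via a pigeonhole argument to ensure the small-boundary condition. First, I would fix $k \geq 0$ and construct the centers: choose a maximal collection $\{z_{Q}\}_{Q \in \mathcal{D}_{\sigma,k}} \subset W$ such that the balls $B(z_Q, 5 A_0^{-k}\diam(W))$ are pairwise disjoint (this is achieved by Zorn's lemma / greedy selection). Maximality forces $W \subset \bigcup_Q B(z_Q, 10 A_0^{-k}\diam(W))$. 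This automatically guarantees the disjointness of $5B(Q)$ stated in the second bullet once $r(Q) \geq A_0^{-k}\diam(W)$ is fixed.

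Second, for each center $z_Q$ I would select $r(Q)$ by the following stopping-time/pigeonhole procedure. If there exists $r \in [A_0^{-k}\diam(W), K_0 A_0^{-k}\diam(W)]$ with $\sigma(100 B(z_Q,r)) \leq K_0\, \sigma(B(z_Q,r))$ (equivalently, the doubling condition defining $\mathcal{D}_{\sigma,k}^{db}$), I take the smallest such $r$ and declare $Q \in \mathcal{D}_{\sigma,k}^{db}$. Otherwise, I set $r(Q) = A_0^{-k}\diam(W)$, which forces the telescoping estimate \eqref{ndb}: chaining the failure of doubling across the scales $100^j$ for $1 \leq j \leq \ell$ (all still trapped below $K_0 A_0^{-k}\diam(W)$ because $A_0 > 5000 K_0$) gives $\sigma(100 B(Q)) \leq K_0^{-1}\sigma(100^{\ell+1}B(Q))$. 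The partition itself is then defined inductively on $k$: assign each point $x \in W$ to the cube $Q$ whose center $z_Q$ is closest, with ties broken by a fixed ordering, and intersect with the parent cube from generation $k-1$ so that nesting is preserved. A short argument using $A_0 \gg K_0$ shows $W \cap B(Q) \subset Q \subset W \cap 28 B(Q)$.

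Third --- and this is the main technical obstacle --- I need to prove the small-boundary estimate
\[
\sigma(N_\ell(Q)) \leq (C^{-1} K_0^{-3(n+1)-1}A_0)^{-\ell}\,\sigma(90 B(Q)).
\]
The strategy is to show that a \emph{choice} of $r(Q)$ in the admissible interval produces thin annuli. Cover $[A_0^{-k}\diam(W), K_0 A_0^{-k}\diam(W)]$ by geometrically spaced subintervals of the form $[\rho, 2\rho]$; in each, by Chebyshev, there exists $r$ for which $\sigma$ of the annulus $B(z_Q, r + \delta) \setminus B(z_Q, r - \delta)$ is small, with geometric decay in $\delta$. For doubling cubes I would refine my choice of $r(Q)$ in step two to simultaneously satisfy the doubling condition and this ``thin annulus'' property; the abundance of admissible radii (the interval has ratio $K_0$) makes this pigeonhole feasible. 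For non-doubling cubes the fixed radius $A_0^{-k}\diam(W)$ already gives thin boundaries because $\sigma(100 B(Q))$ is enormously larger than $\sigma(B(Q))$, so $N_\ell(Q)$ is a sparse shell by comparison. Summing over the standard partition of unity on concentric annuli, weighted by the geometric factor $(K_0^{-3(n+1)-1}A_0)^{-\ell}$, closes the estimate, using $A_0 \gg K_0^{3(n+1)+1}$.

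Finally, I would verify that the properties of the construction pass to the limit and are compatible across generations: the nesting $Q \subset R$ whenever $Q \in \mathcal{D}_{\sigma,\ell}$, $R \in \mathcal{D}_{\sigma,k}$, $\ell > k$, $Q \cap R \neq \varnothing$, follows from the inductive definition of the partition; the containment $Q \subset 28 B(Q)$ uses the maximality of the packing and the gap $A_0 > 5000 K_0$ to prevent a point from being ``stolen'' by a cube whose center is far away. The hardest bookkeeping is consistently enforcing the small-boundary condition while preserving nesting, since restricting to a parent cube could in principle spoil the annular estimate; this is handled by noting that the parent's own small-boundary estimate absorbs the error incurred when truncating a child.
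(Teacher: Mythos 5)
First, note that the paper does not actually prove this lemma: it is quoted verbatim from David--Mattila \cite{DM00}, Theorem~3.2, and used as a black box. So your proposal is being measured against the original construction, and as a reconstruction of that construction it has concrete gaps. The most tangible one is in your first step: choosing centers so that the balls $B(z_Q, 5A_0^{-k}\diam(W))$ are pairwise disjoint only separates the centers by $10A_0^{-k}\diam(W)$, whereas the radii $r(Q)$ you pick afterwards may be as large as $K_0A_0^{-k}\diam(W)$ with $K_0$ arbitrarily large; hence the balls $5B(Q)=B(z_Q,5r(Q))$ need not be disjoint at all, contrary to your claim that disjointness is ``automatic''. In the actual construction the order is reversed: one first selects an admissible radius for (essentially) every point of $W$, and only then extracts a Vitali-type maximal subfamily whose chosen balls have disjoint $5$-dilates; the covering constant $28$ and the constant $90$ in the small-boundary bound come out of that bookkeeping, which your sketch cannot reproduce. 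Relatedly, your partition rule (``nearest center, then intersect with the parent cube'') does not obviously produce a partition: a point whose nearest center lies in a different parent is orphaned, and truncating by the parent can destroy the inclusion $W\cap B(Q)\subset Q$. Saying that ``the parent's small-boundary estimate absorbs the error'' is not an argument; the original construction builds each generation inside the cubes of the previous one precisely to avoid this.

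The second genuine gap is the small-boundary estimate itself. A single Chebyshev/pigeonhole choice of $r(Q)$ in an interval of bounded ratio gives smallness of \emph{one} annulus at \emph{one} width, but the stated bound $\sigma(N_\ell(Q))\leq\bigl(C^{-1}K_0^{-3(n+1)-1}A_0\bigr)^{-\ell}\sigma(90B(Q))$ must hold for \emph{all} $\ell\geq 0$ simultaneously, with geometric decay in $\ell$; this requires an iterative selection of the radius adapted to all the scales $A_0^{-k-\ell}$ at once (this is the technical heart of \cite{DM00}), not a one-shot Chebyshev per dyadic subinterval. Moreover, your claim that non-doubling cubes ``already give thin boundaries because $\sigma(100B(Q))$ is enormously larger than $\sigma(B(Q))$'' is unsupported: failure of doubling controls how mass distributes across large dilates, but says nothing about the mass of thin shells around $\partial Q$ at scales $A_0^{-k-\ell}$ relative to $\sigma(90B(Q))$; in \cite{DM00} the small-boundary property is proved for all cubes, doubling or not, through the radius selection and the assignment procedure. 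The parts of your outline that are sound --- the dichotomy defining $\mathcal D^{db}_{\sigma,k}$, with $r(Q)=A_0^{-k}\diam(W)$ and the telescoped non-doubling chain giving \eqref{ndb} when no admissible radius is doubling, using $A_0>5000K_0$ to keep the iterated dilates in range --- do match the original argument, but the two issues above are exactly where the real work lies.
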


\vvv
We denote \( \mathcal{D}_\sigma \coloneqq \bigcup_k \mathcal{D}_{\sigma, k} \), and choose \( A_0 \) large enough so that  
\begin{equation}\label{assumption_a_0}
	C^{-1} K_0^{-3(n+1)-1} A_0 > A_0^{1/2} > 10.
\end{equation}

Here, we list some quantities associated with $Q\in\mathcal D_\sigma$:  
\begin{itemize}  
	\item \( J(Q) \coloneqq k \), which we refer to as the \textit{generation} of \( Q \).  
	\item \( \ell(Q) \coloneqq 56 K_0 A_0^{-k}\,\diam(W) \), that we call \textit{side length}. We note that  
	\begin{equation*}
		\frac{1}{28} K_0^{-1} \ell(Q) \leq \diam(28 B(Q)) \leq \ell(Q),
	\end{equation*}  
	and that \( r(Q) \approx \diam(Q) \approx \ell(Q) \).  
	\item Denoting by \( z_Q \) the \textit{center} of \( Q \), we define \( B_Q \coloneqq 28 B(Q) = B(z_Q, 28r(Q)) \), so that  
	\begin{equation*}
		Q \cap \frac{1}{28} B_Q \subset Q \subset B_Q.
	\end{equation*}
	\item For $Q\in\mathcal D_\sigma,$ we define $\mathcal D_\sigma(Q)\coloneqq \{P\in\mathcal D_\sigma: P\subseteq Q\}$.
\end{itemize}

\vv

\subsection{$L^1$-buffered estimates for the perturbation operator}
The proof of the next lemma is inspired by \cite[p.~1262]{MMT18}, which was originally formulated for (true) cubes and we adapt to balls via David-Mattila cubes.

{
\begin{lemma}\label{lemma:MMT}
Let $\mu$ be a non-negative Radon measure on $\Rn1$ and let  $B\subset \Rn1$ be a ball centered at $\supp(\mu)$ of diameter $d(B)$ with 
	Let us define
	\begin{equation}\label{eq:def_F_Q}
		\mathcal F(B)\coloneqq \bigl\{Q\in \mathcal D_{\mu|_B}:\, 29.5 B(Q) \cap \partial B \neq \varnothing\bigr\}.
	\end{equation}
	Then, for any $y\in B\setminus \partial B$, it holds that
	\begin{equation}\label{eq:itnegral_estimate_MMT}
		\int_{2B\setminus B} \frac{d\mu(x)}{|x-y|^n}\lesssim \sum_{Q\in \mathcal F(B)}\frac{\mu(29 B(Q))}{r(Q)^n}\chi_Q(y).
	\end{equation}
\end{lemma}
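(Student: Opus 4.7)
\medskip

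The plan is to perform a dyadic decomposition of the buffer $2B \setminus B$ around $y$ and to match each shell with the appropriate David--Mattila cube in $\mathcal F(B)$ that contains $y$. Throughout we implicitly restrict to $y \in B \setminus \partial B$ (in applications, $y \in \supp(\mu|_B)$, so $y$ lies in a unique cube of each generation; otherwise the right-hand side vanishes trivially and there is nothing to prove beyond the bound for the corresponding $\mu$-null point). Set $\delta\coloneqq\dist(y,\partial B)>0$. The basic geometric observation is that any segment from $y \in B$ to $x \in 2B \setminus B$ must cross $\partial B$, so $|x-y|\geq \delta$, and also $|x-y|\leq 3r(B)$. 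Hence the annular decomposition
\[
A_j \coloneqq \{x \in 2B\setminus B : 2^{j-1}\delta < |x-y| \leq 2^j \delta\}, \qquad 1 \leq j \leq J\lesssim \log_2(r(B)/\delta),
\]
yields
\[
\int_{2B\setminus B}\frac{d\mu(x)}{|x-y|^n} \;\leq\; \sum_{j=1}^{J}\frac{\mu(A_j)}{(2^{j-1}\delta)^n}.
\]

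Next, let $Q_k$ be the (unique) cube of generation $k$ of $\mathcal D_{\mu|_B}$ containing $y$, and let $K^*$ be the largest generation for which $r(Q_k) \geq \delta$; note $r(Q_{K^*}) \approx \delta$ and $r(Q_0) \approx r(B)$. I would first verify that $Q_k \in \mathcal F(B)$ for all $0 \leq k \leq K^*$. Indeed, since $y \in Q_k \subset 28 B(Q_k)$, we have $|y-z_{Q_k}|\leq 28 r(Q_k)$, and therefore
\[
\dist(z_{Q_k},\partial B) \;\leq\; \delta + 28 r(Q_k) \;\leq\; 29 r(Q_k),
\]
so $B(z_{Q_k},29.5r(Q_k)) = 29.5B(Q_k)$ meets $\partial B$, giving $Q_k \in \mathcal F(B)$.

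The final step is to match scales. For each $k\in\{0,\dots,K^*\}$, consider the shells $A_j$ with $A_0^{-k-1}d(B) < 2^{j}\delta \leq A_0^{-k}d(B)$ (a bounded number, $O(\log_2 A_0)$, for each $k$). For such $j$ and any $x \in A_j$,
\[
|x-z_{Q_k}| \;\leq\; |x-y|+|y-z_{Q_k}| \;\leq\; 2^j\delta + 28r(Q_k) \;\leq\; 29 r(Q_k),
\]
so $A_j \subset 29 B(Q_k)$ and $\mu(A_j)\leq \mu(29B(Q_k))$. Summing the geometric tail $\sum_j (2^{j-1}\delta)^{-n}$ over the $j$'s matched with generation $k$ gives a multiple of $r(Q_k)^{-n}$. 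Therefore
\[
\int_{2B\setminus B}\frac{d\mu(x)}{|x-y|^n} \;\lesssim\; \sum_{k=0}^{K^*}\frac{\mu(29B(Q_k))}{r(Q_k)^n} \;=\; \sum_{Q\in\mathcal F(B)}\frac{\mu(29B(Q))}{r(Q)^n}\,\chi_Q(y),
\]
since the cubes of $\mathcal F(B)$ which contain $y$ are precisely $Q_0\supset Q_1\supset\cdots\supset Q_{K^*}$ and any other $Q \in \mathcal F(B)$ contributes $0$ through $\chi_Q(y)$.

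The main technical nuisance, and the only nontrivial bookkeeping, is the mismatch between the dyadic scale $2^j\delta$ used to control $|x-y|$ from below and the $A_0$-adic scale of the David--Mattila cubes; this is resolved by the grouping just described and introduces only a constant factor depending on $A_0$ and $K_0$. The rest is elementary geometry --- in particular the verification that each $Q_k$ with $r(Q_k)\geq \delta$ lies in $\mathcal F(B)$ --- and a straightforward geometric-series computation.
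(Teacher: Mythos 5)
Your argument is essentially the paper's: the paper also decomposes the buffer $2B\setminus B$ into annuli around $y$ at geometrically decreasing scales (it works directly with the $A_0$-adic radii $\tfrac32 A_0^{-k}d(B)$ rather than dyadic shells regrouped into $A_0$-adic bins), bounds the contribution of each annulus by $\mu$ of a fixed dilate of $B(Q)$ for the generation-$k$ cube $Q\ni y$, using exactly your observation $|y-z_Q|\le 28\,r(Q)$, and then notes that only cubes of $\mathcal F(B)$ can contribute. So the route is the same; your explicit verification that $Q_k\in\mathcal F(B)$ via $\dist(z_{Q_k},\partial B)\le \delta+28r(Q_k)$ replaces the paper's implicit one (off $\mathcal F(B)$ one has $29.5B(Q)\subset B$, so $\mu(29.5B(Q)\setminus B)=0$), which is fine.

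There is, however, one concrete slip in your bookkeeping. Your matching rule only assigns shells with $2^j\delta\le d(B)$ to a generation $k\ge 0$, but since $|x-y|$ can be as large as $3r(B)=\tfrac32 d(B)$, the outermost one or two shells (those with $2^j\delta\in(d(B),3d(B))$) are matched to nothing, so that portion of the integral is never estimated. The paper avoids this by letting its coarsest annulus have outer radius $\tfrac32 d(B)$ and paying with the dilate $29.5B(Q)$ rather than $29B(Q)$ (note the constant $29$ in the statement is not what the paper's own proof yields either); if you assign these shells to $Q_0$ you get roughly $31B(Q_0)$, harmless for the $\lesssim$-statement but not literally the displayed constant. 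Two further remarks: the cubes of $\mathcal F(B)$ containing $y$ need not be \emph{precisely} $Q_0\supset\cdots\supset Q_{K^*}$ (cubes a few generations below $K^*$ can still satisfy the defining condition), but since all terms are nonnegative you only need ``$\le$'' there, so this is harmless; and your inequality $2^j\delta\le r(Q_k)$ tacitly uses $r(Q_k)\gtrsim A_0^{-k}d(B)$, i.e.\ that $\diam(\supp(\mu|_B))$ is comparable to $d(B)$ --- the paper's proof makes exactly the same tacit identification (its inclusion $B(y,\tfrac32 A_0^{-k}d(B))\subset 29.5B(Q)$ needs it too), so this is not a defect of your argument relative to the paper, but you should be aware both proofs rely on it.
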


\vv

\begin{proof}
For $y \in B$, it holds that 
	\begin{equation}
		\begin{split}
			\int_{2B \setminus B} \frac{d\mu(x)}{|x-y|^n}&\leq \sum_{k\geq 0}\int_{\bigl\{x\in\Rn1\setminus B: \,\frac{3}{2}A_0^{-k-1}d(B) \leq |x-y|< \frac{3}{2}A_0^{-k}d(B) \bigr\}} \frac{d\mu(x)}{|x-y|^n}\\
			&\leq \sum_{k\geq 0} \frac{\mu\bigl(B(y,\frac{3}{2}A_0^{-k} d(B))\setminus B \bigr)}{(\frac{3}{2}A_0^{-k-1}d(B))^n}\\
			&\leq  A_0^n\,\sum_{k\geq 0}\sum_{Q\in \mathcal D_{\mu, k}(B)} \chi_Q(y)\, \frac{\mu\bigl(B(z_Q,29.5\,r(Q))\setminus B\bigr)}{r(Q)^n}\\
			&\leq A_0^n\,\sum_{Q\in \mathcal F(B)} \frac{\mu(29.5 B(Q)) }{r(Q)^n}\, \chi_Q(y).\qedhere
		\end{split}
	\end{equation}
\end{proof}

We  define
\(
L^2(\mu,B)\coloneqq \bigl\{f\in L^2(\mu):\supp f\subset B \bigr\},
\)
and denote by $L^2(\mu,B;\Rn1)$ its vector-valued analogue.

\vvv

In the following lemma, we bound the $L^1$-norm of $\mathcal S^*_\mu$ and $\mathcal S_\mu$  in ``buffer regions'' of the type $2B\setminus B$, under appropriate assumptions on the underlying measure.

\begin{lemma}\label{lem:L1 buffer}
Let $A$ be a uniformly elliptic matrix in $\Rn1$, $n\geq 2$, satisfying  $A\in \widetilde \DMO_{n-1}$. 	Let $\mu$  be a non-negative Radon measure in $\Rn1$,  $n\geq 2$, whose support is contained in a compact set of $\Rn1$ such that $\diam(\supp(\mu)) \leq R$,  and let  $B\subset \Rn1$ be a $(2, \mathfrak C_1)$-doubling ball centered at $\supp(\mu)$ of radius $r(B)$ with $(2,\mathfrak C_3)$-thin boundary.  Assume that there exists a constant $\widetilde{\mathfrak C}_2>0$ such that
	\begin{equation}\label{eq:growth_F_Q}
		\Theta^n_\mu(B(x,r)) \leq \widetilde{\mathfrak C}_2 \,\Theta^n_\mu(B), \qquad  \text{ for every } x \in B \text{ and }  r\in (0, 60 r(B)).
	\end{equation}
	If $\vec g\in L^2(\mu, B;\Rn1)$, $\sigma\coloneqq \mu(B)^{-1}\,\mu|_B$, and for $\widehat \tau_A$ as in Lemma \ref{lem:main_pw_estimate}, we have that
	\begin{equation}\label{eq:lemma_L1_buffer_zone}
		\int_{2B\setminus B}\bigl|\mathcal S^*_\sigma \vec g(x)\bigr|\, d\mu(x)\lesssim_{n, \Lambda, R} A_0^n\,   {\widetilde{\mathfrak C}_2} \, (K_0\,\mathfrak C_1\,\mathfrak C_3)^{\frac{1}{2}}\, \Theta^n_\mu(B)\, \widehat \tau_A(r(B))) \, \| \vec g \|_{L^2(\sigma)}
	\end{equation}
	and, for $f\in L^2(\mu,B)$, it holds
	\begin{equation}\label{eq:lemma_L1_buffer_zone_bis}
		\int_{2B\setminus B}\bigl|\mathcal S_\sigma f(x)\bigr|\, d\mu(x)\lesssim_{n, \Lambda, R} A_0^n\, {\widetilde{\mathfrak C}_2} \, (K_0\,\mathfrak C_1\,\mathfrak C_3)^{\frac{1}{2}}\, \Theta^n_\mu(B)\, \widehat \tau_A(r(B)) \, \| f \|_{L^2(\sigma)}.
	\end{equation}
\end{lemma}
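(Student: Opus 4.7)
The plan is to reduce the estimate to a Riesz-type integral on the annulus $2B \setminus B$ and then control it using the David--Mattila cube partition of $\supp(\mu|_B)$ together with the thin boundary hypothesis on $B$. The first simplification is that for $x \in 2B\setminus B$ and $y \in B$ one has $|x-y|\leq 3r(B)$, so, exploiting the monotonicity and doubling of $\tau_A$ (which is the sum of two doubling functions $\mathfrak I_{\oomega_A}$ and $\mathfrak L^n_{\oomega_A}$, as provided by Lemma~\ref{lem:mod_cont_large_DS1} and the standard bounds on $\mathfrak I$) together with the pointwise inequality $\tau_A \leq \widehat\tau_A$, the bound \eqref{eq:pointwise_bound_kernel_frak_K} collapses to
\[
|\mathfrak K(x,y)| + |\mathfrak K^*(x,y)|\lesssim \frac{\widehat\tau_A(r(B))}{|x-y|^n}, \qquad x \in 2B\setminus B,\;\; y\in B.
\]
This simultaneously treats $\mathcal S_\sigma$ and $\mathcal S^*_\sigma$ and reduces \eqref{eq:lemma_L1_buffer_zone} and \eqref{eq:lemma_L1_buffer_zone_bis} to establishing the kernel-free estimate
\[
\mathcal I(\vec g) \coloneqq \int_{2B\setminus B} \int_B \frac{|\vec g(y)|}{|x-y|^n}\, d\sigma(y)\, d\mu(x) \lesssim A_0^n\, \widetilde{\mathfrak C}_2 (K_0 \mathfrak C_1 \mathfrak C_3)^{1/2}\, \|\vec g\|_{L^2(\sigma)}.
\]

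After swapping the order of integration by Fubini, the inner $x$-integral is exactly the quantity controlled by Lemma~\ref{lemma:MMT}, which gives
\[
\mathcal I(\vec g)\lesssim A_0^n \sum_{Q\in\mathcal F(B)} \frac{\mu(29.5 B(Q))}{r(Q)^n}\int_Q |\vec g|\,d\sigma.
\]
I then split $\mathcal F(B) = \bigsqcup_{k\geq 0}\mathcal F_k(B)$ according to the generation of the David--Mattila cubes, and exploit two geometric features: at each fixed generation $k$, the cubes in $\mathcal F_k(B)$ are pairwise disjoint (being part of the $k$-th partition of $\supp(\mu|_B)$), and their union is contained in the thin tube
\[
\mathcal T_k \coloneqq \bigl\{z \in 2B: \dist(z,\partial B)\leq c K_0 A_0^{-k} r(B)\bigr\},
\]
as follows from the defining condition $29.5 B(Q)\cap \partial B\neq\varnothing$, the inclusion $Q\subset 28 B(Q)$, and the size bound $r(Q)\leq K_0 A_0^{-k}\diam(\supp(\mu|_B))\lesssim K_0 A_0^{-k} r(B)$.

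Within a single generation, I apply Cauchy--Schwarz:
\[
\sum_{Q\in\mathcal F_k(B)}\frac{\mu(29.5 B(Q))}{r(Q)^n}\int_Q|\vec g|\,d\sigma \leq \Sigma_k \,\|\vec g\|_{L^2(\sigma)}, \quad \Sigma_k^2 \coloneqq \sum_{Q\in\mathcal F_k(B)}\Bigl(\tfrac{\mu(29.5 B(Q))}{r(Q)^n}\Bigr)^{\!\!2} \sigma(Q).
\]
The growth assumption \eqref{eq:growth_F_Q}, valid up to radius $60 r(B)$ and hence applicable to every $B(z_Q, 29.5 r(Q))$ with $z_Q\in B$, gives the pointwise bound $\mu(29.5 B(Q))/r(Q)^n \lesssim \widetilde{\mathfrak C}_2\,\Theta^n_\mu(B)$, while the thin-boundary hypothesis \eqref{eq:small boundary} combined with the $(2,\mathfrak C_1)$-doubling of $B$ forces
\[
\sigma\!\Bigl(\bigcup_{Q\in\mathcal F_k(B)} Q\Bigr) \leq \frac{\mu(\mathcal T_k)}{\mu(B)} \lesssim K_0\mathfrak C_1\mathfrak C_3 A_0^{-k}.
\]
Combining the two estimates yields $\Sigma_k \lesssim \widetilde{\mathfrak C}_2 (K_0 \mathfrak C_1 \mathfrak C_3)^{1/2}\, \Theta^n_\mu(B)\, A_0^{-k/2}$, and summing the geometric series $\sum_k A_0^{-k/2}<\infty$ (since $A_0>1$) produces the claimed bound after multiplying back by $\widehat\tau_A(r(B))$.

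The main technical obstacle is the bookkeeping with the David--Mattila constants: the factor $K_0$ in the final estimate traces back to the range $r(Q)\in[A_0^{-k}\diam(W), K_0 A_0^{-k}\diam(W)]$, and the boundedly many ``large-scale'' cubes (those with $r(Q)\gtrsim r(B)$, corresponding to small generations $k$) must be handled as a separate finite contribution, since for such cubes $29.5 B(Q)$ need not sit inside $2B$ and the clean thin-boundary bound does not apply. These large-scale terms are harmless because of the growth and doubling hypotheses, but isolating them cleanly from the small-scale cubes is the step that requires the most care.
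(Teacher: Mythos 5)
Your proposal is correct and follows essentially the same route as the paper's own proof: reduce via the pointwise bound \eqref{eq:pointwise_bound_kernel_frak_K} (noting $\alpha(|x-y|)\lesssim\widehat\tau_A(r(B))$ for $|x-y|\leq 3r(B)$) to a Riesz-type integral, apply Fubini and Lemma~\ref{lemma:MMT}, and then combine Cauchy--Schwarz with the growth bound \eqref{eq:growth_F_Q}, the thin-boundary condition, and the $(2,\mathfrak C_1)$-doubling of $B$; your generation-by-generation Cauchy--Schwarz with the series $\sum_k A_0^{-k/2}$ is merely a reorganization of the paper's single Cauchy--Schwarz with the weights $(r(Q)/r(B))^{\pm 1/2}$ and yields the same dependence on the constants. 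The only remark is that the separate treatment of the large-scale cubes you flag as the main obstacle is largely unnecessary: the thin-boundary condition \eqref{eq:small boundary} is assumed for all $\lambda>0$, so the tube estimate applies uniformly in the generation $k$ (the paper makes no such case distinction), and the top-generation contribution is controlled because the masses actually arising from Lemma~\ref{lemma:MMT} come from balls of radius at most comparable to $r(B)$ centered in $B$, where \eqref{eq:growth_F_Q} applies.
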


\begin{proof}
	Let $\alpha(\cdot)$ be as in \eqref{eq:definition_alpha_modulus}.
	By inequality \eqref{eq:pointwise_bound_kernel_frak_K}, the doubling property of $\alpha(\cdot)$, \eqref{eq:comparability_alpha_tau}, and Lemma \ref{lemma:MMT},
	\begin{equation*}
		\begin{split}
			\int_{2B\setminus B}&\bigl|\mathcal S^*_\sigma \vec g(x)\bigr|\, d\mu(x){\lesssim}\int_{2B\setminus B} \int_B \alpha(2r(B))\frac{|\vec g(y)|}{|x-y|^n}\, d\sigma(y)\, d\mu(x)\\
			&\overset{\eqref{eq:itnegral_estimate_MMT}}{\lesssim } A_0^n\,\frac{\alpha(r(B))}{\mu(B)}\int_B |\vec g(y)| \sum_{Q\in \mathcal F(B)}\frac{\mu(29.5B(Q))}{r(Q)^n}\chi_Q(y)\, d\mu(y)\\
			&\leq A_0^n\, \frac{\widehat \tau_A(r(B))}{\mu(B)}\sum_{Q\in \mathcal F(B)}\frac{\mu(30B(Q))}{r(Q)^n}\biggl(\int_{Q}|\vec g|\, d\mu\biggr)\\
			&\leq  \widetilde{\mathfrak C}_2(30A_0)^n \Theta^n_\mu(B)\,\frac{\widehat \tau_A(r(B))}{\mu(B)}\biggl[\sum_{Q\in\mathcal F(B)}\Bigl(\frac{r(Q)}{r(B)}\Bigr)^{\frac{1}{2}} \,\int_{Q}|\vec g|^2\, d\mu \biggr]^{\frac{1}{2}}\biggl[\sum_{Q\in\mathcal F(B)}\mu(Q)\Bigl(\frac{r(B)}{r(Q)}\Bigr)^{\frac{1}{2}} \biggr]^{\frac{1}{2}}\\
			&\eqqcolon \widetilde{\mathfrak C}_2\,  (30A_0)^n\, \Theta^n_\mu(B)\,\frac{\widehat \tau_A(r(B))}{\mu(B)} \,\mathcal I_1 \cdot \mathcal I_2,
		\end{split}
	\end{equation*}
	where the latter bound holds because of \eqref{eq:growth_F_Q} and the Cauchy-Schwarz inequality.
	
	To estimate $\mathcal I_1$,  we write
	\begin{equation}\label{eq:bound_Iprime_final}
		\begin{split}
			(\mathcal I_1)^2 &\leq\sum_{Q\in\mathcal D(B)}\Bigl(\frac{r(Q)}{r(B)}\Bigr)^{\frac{1}{2}} \int_{Q}|\vec g|^2\, d\mu\leq (56\, K_0)^{1/2} \,\sum_{k=0}^\infty\sum_{Q\in\mathcal D_k(B)}A_0^{-\frac{k}{2}}\biggl(\int_{Q}|\vec g|^2\, d\mu\biggr)\\
			&\leq (56\, K_0)^{1/2} \,\frac{A_0^{1/2}}{A_0^{1/2}-1} \, \int_B|\vec g|^2\, d\mu \leq 2 \,(56\, K_0)^{1/2} \, \int_B|\vec g|^2\, d\mu,
		\end{split}
	\end{equation}
	where, in the penultimate inequality, we used that $A^{1/2}_0-1\geq A^{1/2}/2$.
	
	For $k\in\mathbb N$, we define
	\(
	\mathcal F_k(B)\coloneqq  \mathcal F(B) \cap \mathcal{D}_{\mu|_B,k}
	\)
	and we estimate 
	\begin{equation}\label{eq:bound_Idbprime_2}
		\begin{split}
			(\mathcal I_2)^2&=\sum_{k=0}^\infty \sum_{Q\in\mathcal F_k(B)} \mu(Q)\Bigl(\frac{r(B)}{\ell(Q)}\Bigr)^{\frac{1}{2}}= (56\, K_0)^{-1/2} \,\sum_{k=0}^\infty A_0^{\frac{k}{2}}  \sum_{Q\in\mathcal F_k(B)} \mu(Q).
		\end{split}
	\end{equation}
	Notice that, for any $Q \in \mathcal F_k(B)$, we have that if $y \in Q$, then
	\begin{align*}
	\dist(y, \partial B) \leq |y-z_Q|+	\dist(z_Q, \partial B) \leq 28r(Q)+ 29.5 r(Q) \leq 58 r(Q)
	\end{align*}
	and so 
		\begin{displaymath}
		Q \subset \bigl\{y \in B \colon\, \dist(y, \partial B) \le 58 K_0\,A_0^{-k}d(B)\bigr\}.
	\end{displaymath}
	Using that $B$ has $(2,  \mathfrak C_3)$-small boundary with respect to $\mu$ we can now deduce that
	\begin{align}\label{eq:mu_buffer}
		\begin{split}
		{\sum_{Q \in \mathcal F_k(R)}} \mu(Q) & \leq \mu\bigl( \bigl\{y \in 2B\colon\, \dist(y, \partial B) \le 58K_0\, A_0^{-k}d(B) \bigr\}\bigr) \leq  116   K_0 \,    \mathfrak   C_3\,A_0^{-k}\mu(2B).
							\end{split}
	\end{align}
	If we plug \eqref{eq:mu_buffer} in 	\eqref{eq:bound_Idbprime_2} and use that $B$ is $(2,\mathfrak C_2)$-doubling, we obtain
	\begin{equation}\label{eq:I_pp_final}
		\begin{split}
				(\mathcal I_2)^2 &\leq  116   K_0 \,   \mathfrak   C_1  \mathfrak   C_3 \,   (56\, K_0)^{-1/2} \,\sum_{k=0}^\infty A_0^{-\frac{k}{2}}   \mu(B)\leq 232  K_0 \,   \mathfrak   C_1  \mathfrak   C_3 \,   (56\, K_0)^{-1/2}  \, \mu(B).	
					\end{split}
	\end{equation}
By \eqref{eq:bound_Iprime_final} and \eqref{eq:I_pp_final},  we obtain 
$$
(\mathcal I_1 \cdot \mathcal I_2)^2 \leq 464 K_0 \,   \mathfrak   C_1  \mathfrak   C_3 \,   \mu(B)  \, \int_B|\vec g|^2,
$$
 which readily implies \eqref{eq:lemma_L1_buffer_zone}. The proof of \eqref{eq:lemma_L1_buffer_zone_bis} is analogous.
\end{proof}
}
\vvv

\subsection{$L^2$-mean oscillation estimates for the perturbation operator}

We now apply the results of this section to derive the main bound for the $L^2$-mean oscillation of the operators $\mathcal{S}_\mu$ and $\mathcal{S}^*_\mu$.

\begin{lemma}\label{lem:mean oscillation perturb}
Let $A$ be a uniformly elliptic matrix in $\Rn1$, $n\geq 2$, satisfying  $A\in \widetilde \DMO_{1-\gamma}$, for some $\gamma \in (0,1)$.  	Let $\mu$  be a non-negative Radon measure in $\Rn1$,  $n\geq 2$, whose support is contained in a compact set of $\Rn1$ such that $\diam(\supp(\mu)) \leq R$,  and let  $B\subset \Rn1$ be a $\mathfrak C_1$-$P_{\gamma, \mu}$-doubling ball centered at $\supp(\mu)$ of radius $r(B)$ with $(2,\mathfrak C_3)$-thin boundary.  Assume that there exists a constant $\widetilde{\mathfrak C}_2>0$ such that
	\begin{equation}\label{eq:growth_F_Q-bis}
		\Theta^n_\mu(B(x,r)) \leq \widetilde{\mathfrak C}_2 \,\Theta^n_\mu(B), \qquad  \text{ for every } x \in B \text{ and }  r\in (0, 60 r(B)).
	\end{equation}
	If we set
	\[
	\tilde \alpha(t)\coloneqq \mathfrak F_{\oomega_A}(t) + \mathfrak L^{1-\gamma}_{\oomega_A}(t),	\qquad \text{ for } t>0,
	\]
	then, we have that
	\begin{equation}\label{eq:main_estimate_error_term_S}
		\begin{split}
			\avint_B \bigl|\mathcal S_\mu 1 - \avint_B \mathcal S_\mu 1\, d\mu\bigr|^2\, d\mu &\lesssim \mathfrak I_{\oomega_A}(r(B)) \,  \|\mathcal R_{\mu|_B}\|^2_{L^2(\mu|_B)\to L^2(\mu|_B)} +\Theta^n_\mu(B)^2\, \tilde \alpha(r(B))^2
		\end{split}
	\end{equation}
	and
	\begin{equation}\label{eq:main_estimate_error_term_S_adjoint}
		\begin{split}
			\avint_B \bigl|\mathcal S^*_\mu 1 - \avint_B \mathcal S^*_\mu 1\, d\mu\bigr|^2\, d\mu &\lesssim \mathfrak I_{\oomega_A}(r(B)) \,  \|\mathcal R_{\mu|_B}\|^2_{L^2(\mu|_B)\to L^2(\mu|_B)} +\Theta^n_\mu(B)^2\, \tilde \alpha(r(B))^2,
		\end{split}
	\end{equation}
	where the implicit constants in \eqref{eq:main_estimate_error_term_S} and \eqref{eq:main_estimate_error_term_S_adjoint} depend on $n, \Lambda, \mathfrak C_1,\mathfrak C_2, \mathfrak C_3, R,$ and $\gamma$.
\end{lemma}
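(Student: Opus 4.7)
The plan is to prove the estimate for $\mathcal{S}_\mu$ via a duality argument; the corresponding statement for $\mathcal{S}^*_\mu$ will then follow by running the same scheme with the roles of $\mathfrak{K}$ and $\mathfrak{K}^*$ interchanged. Writing $m \coloneqq \avint_B \mathcal{S}_\mu 1\, d\mu$, I would use the standard characterization of the $L^2$-distance to the mean to obtain
\begin{equation*}
	\avint_B|\mathcal S_\mu 1 - m|^2\, d\mu \;=\; \mu(B)^{-1}\,\sup_{\vec g}\,\Bigl|\int_B \vec g \cdot \mathcal S_\mu 1\, d\mu\Bigr|^2,
\end{equation*}
where the supremum runs over vector-valued test functions $\vec g \in L^2(\mu|_B; \Rn1)$ satisfying $\int_B \vec g\, d\mu = 0$ and $\|\vec g\|_{L^2(\mu|_B)} = 1$. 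Fubini then rewrites the inner pairing as $\int_{\Rn1} h(y)\, d\mu(y)$, where $h(y) \coloneqq \int_B \mathfrak K(x,y) \cdot \vec g(x)\, d\mu(x)$, and I would split the outer $\mu$-integration into the three regions $B$, $2B\setminus B$, and $\Rn1\setminus 2B$, estimating each separately.

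The local piece $\int_B h\, d\mu$ I would handle via Cauchy--Schwarz and \eqref{eq:L2Smu}: recognising $h = \mathcal{S}^*_{\mu|_B}\vec g$ on $B$, the bound $|\int_B h\, d\mu| \leq \mu(B)^{1/2}\,\|\mathcal{S}^*_{\mu|_B}\|_{L^2(\mu|_B)\to L^2(\mu|_B)}$ together with \eqref{eq:L2Smu} yields both the $\mathfrak{I}_{\oomega_A}(r(B))\,\|\mathcal{R}_{\mu|_B}\|^2$ term and a $\Theta^n_\mu(B)^2\bigl(\mathfrak{I}_{\tau_A}(r(B))^2 + \widehat{\tau}_A(r(B))^2\bigr)$ contribution, both absorbed into $\Theta^n_\mu(B)^2\tilde{\alpha}(r(B))^2$ via \eqref{eq:split_I_L_theta_merged} and the monotonicity $\mathfrak{L}^d_{\oomega_A}\leq \mathfrak{L}^{1-\gamma}_{\oomega_A}$ for $d\geq 1-\gamma$. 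For the far-away piece $\int_{\Rn1\setminus 2B}h\,d\mu$, the mean-zero property of $\vec g$ allows me to subtract a free term and write
\[
h(y) \;=\; \int_B \bigl[\mathfrak{K}(x,y) - \mathfrak{K}(y_B,y)\bigr]\cdot \vec g(x)\, d\mu(x);
\]
transposing via the identity $\mathfrak{K}(x,y) = \mathfrak{K}^*(y,x)$ to replace the difference by $\mathfrak{K}^*(y,x) - \mathfrak{K}^*(y,y_B)$, swapping the order of integration, and applying the second bound of Lemma~\ref{lemma:hormander_perturbation} in the variable $y\in\Rn1\setminus 2B$ with $x\in B$ fixed, produces, after one more Cauchy--Schwarz step, a contribution of order $\mu(B)^{1/2}\Theta^n_\mu(B)\bigl(\mathfrak{I}_{\oomega_A}(r(B))+\mathfrak{L}^{1-\gamma}_{\oomega_A}(r(B))\bigr)$. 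For the buffer piece $\int_{2B\setminus B}h\,d\mu$, I would invoke Lemma~\ref{lem:L1 buffer} directly: normalising to the probability measure $\sigma = \mu(B)^{-1}\mu|_B$ shows $h = \mu(B)\,\mathcal{S}^*_\sigma\vec g$, and \eqref{eq:lemma_L1_buffer_zone} then delivers an admissible contribution of order $\mu(B)^{1/2}\Theta^n_\mu(B)\widehat{\tau}_A(r(B))$; the $P_{\gamma,\mu}$-doubling and thin-boundary assumptions on $B$ are exactly what is required to verify the hypotheses of that lemma (the $P_{\gamma,\mu}$-doubling implies ordinary $(2,C)$-doubling with $C\lesssim \mathfrak C_1$).

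The main technical obstacle is the far-away estimate: the pointwise bound \eqref{eq:pointwise_bound_kernel_frak_K} on $\mathfrak{K}$ gives only $|x-y|^{-n}$-decay, which is not $\mu$-integrable at infinity without an additional cancellation, and a classical modulus of continuity for $\mathfrak{K}$ is not available in the present $\widetilde\DMO$ framework. The H\"ormander-type inequality of Lemma~\ref{lemma:hormander_perturbation}, whose proof is itself the deepest of the three auxiliary ingredients and leans on the $P_{\gamma,\mu}$-doubling of $B$ together with the interior gradient estimate in Remark~\ref{remark:rem_grad_bound}, is precisely what resolves this difficulty. Once the three regional bounds are assembled, squaring, dividing by $\mu(B)$, and taking the supremum over admissible $\vec g$ produces \eqref{eq:main_estimate_error_term_S}. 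The adjoint estimate \eqref{eq:main_estimate_error_term_S_adjoint} follows by re-running the same three-region argument verbatim with $\mathfrak{K}$ replaced by $\mathfrak{K}^*$, again using $\mathfrak{K}^*(x,y) = \mathfrak{K}(y,x)$ to convert the smoothness needed in the first argument into the form supplied by the first bound of Lemma~\ref{lemma:hormander_perturbation}, and invoking \eqref{eq:lemma_L1_buffer_zone_bis} in place of \eqref{eq:lemma_L1_buffer_zone}.
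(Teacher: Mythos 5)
Your argument is correct and follows essentially the same route as the paper: a duality reduction with mean-zero vector test functions, a split of the outer integration into $B$, $2B\setminus B$, and $\Rn1\setminus 2B$, and then the $L^2$ bound \eqref{eq:L2Smu} for the local term, Lemma \ref{lem:L1 buffer} for the buffer zone, and the H\"ormander-type estimate of Lemma \ref{lemma:hormander_perturbation} (exploiting the mean-zero cancellation and the identity $\mathfrak K^*(x,y)=\mathfrak K(y,x)$) for the far-away term, with the same bookkeeping reducing $\mathfrak I_{\tau_A}$, $\widehat\tau_A$, and $\mathfrak L^{1-\gamma}_{\oomega_A}$ to $\tilde\alpha$. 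The only differences from the paper are cosmetic (you normalize in $L^2(\mu|_B)$ rather than working with the probability measure $\sigma=\mu(B)^{-1}\mu|_B$).
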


\begin{proof}
	For brevity, for $f\in L^1_{\loc}(\mu)$, we write 
	\[
	m_Q(f, \mu)=\avint_B f\,d\mu. 
	\]
	We also define
	\[
	L^2_0(\mu,B)\coloneqq \bigl\{f\in L^2(\mu):\supp f\subset B \quad\text{ and } \quad m_B(f,\mu)=0\bigr\}
	\]
	and denote by $L^2_0(\mu,B;\Rn1)$ its vector-valued analogue.
	The space $L^2_0(\mu,B)$ endowed with the norm $\|\cdot\|_{L^2(\mu)}$ is a Hilbert space whose Banach dual is the space of functions in $L^2(\mu)$ 	modulo an additive constant and equipped with the norm $\|\cdot\|_{L^2(\mu)}$
	(see e.g. \cite[1.2.2, p. 143]{St93}). Moreover, for $f\in L^2(\mu,B)$ it holds
	\begin{equation}\label{eq:duality_L_0}
		\begin{split}
			\|f-m_B(f,\mu)\|_{L^2(\mu,B)}&\approx \sup_{g\in L^2_0(\mu,B),\atop \|g\|_{L^2(\mu)}=1}\int (f-m_B(f,\mu))g\, d\mu=	\sup_{g\in L^2_0(\mu,B),\atop\|g\|_{L^2(\mu)}=1}\int fg\, d\mu,		
		\end{split}
	\end{equation}
	where the second identity follows from $m_B(g,\mu)=0$.

	We denote $\sigma\coloneqq \mu(B)^{-1}\mu|_B$, and observe that, by duality,
	\begin{equation}
		\begin{split}
			\Biggl(\int_B \biggl|\mathcal S_\mu 1 - \int_B \mathcal S_\mu 1\, d\sigma\biggr|^2\, d\sigma\Biggr)^{1/2}\approx \sup_{\vec g\in L^2_0(\sigma; \Rn1),\atop \| \vec g\|_{L^2(\sigma; \Rn1)}=1} \biggl|\int \mathcal S_\mu 1\cdot \vec g\, d\sigma\biggr|.
		\end{split}
	\end{equation}
	
	Let $\vec g\in L_0^2(\sigma; \Rn1)$ such that  $\| \vec g\|_{L^2(\sigma; \Rn1)}=1$.  It holds that
	\begin{equation}\label{eq:lem_L2_perturb_1}
		\begin{split}
			\int \mathcal S_\mu 1\cdot \vec g\, d\sigma &=\int \mathcal S^*_\sigma \vec g\, d\mu
		\end{split}
	\end{equation}
	and we split
	\begin{equation*}
		\begin{split}
			\biggl|\int \mathcal S^*_\sigma \vec g\, d\mu\biggr|&\leq  \biggl|\int_B \mathcal S^*_\sigma \vec g\, d\mu\biggr|+\biggl|\int_{2B\setminus B} \mathcal S^*_\sigma \vec g\, d\mu\biggr|+\biggl|\int_{\Rn1\setminus 2B} \mathcal S^*_\sigma \vec g\, d\mu\biggr|\eqqcolon \mathcal I_1+ \mathcal I_2+ \mathcal I_3.
		\end{split}
	\end{equation*}
	In light of \eqref{eq:L2Smu}, duality,   \eqref{eq:split_I_L_theta_merged}, and the fact that $\mathfrak{L}^{d}_{\oomega_A}(B) \leq \mathfrak{L}^{1-\gamma}_{\oomega_A}(B)$ for any $d \leq 1-\gamma$,  we infer that
	\begin{equation}\label{eq:final_estim_I_1_auxS}
		\begin{split}
			\mathcal I_1 &\lesssim \mathfrak I_{\oomega_A}(r(B))^{1/2} \,  \|\mathcal R_{\mu|_B}\|_{L^2(\mu|_B)\to L^2(\mu|_B)} +\Theta^n_\mu(B)\,\bigl[ \mathfrak I_{\tau_A}(r(B)) +\widehat \tau_A( r(B) )\bigr]\\
			&\lesssim   \mathfrak I_{\oomega_A}(r(B))^{1/2} \,  \|\mathcal R_{\mu|_B}\|_{L^2(\mu|_B)\to L^2(\mu|_B)} +\Theta^n_\mu(B)\,\bigl[\mathfrak F_{\oomega_A}(R)   + \mathfrak{L}^{1-\gamma}_{\oomega_A}(B)  \bigr],
		\end{split}
	\end{equation}
	while,  by Lemma \ref{lem:L1 buffer},  we obtain
	\begin{equation}\label{eq:final_estim_I_2_auxS}
		\begin{split}
			\mathcal I_2 &\lesssim \Theta^n_\mu(B)\, \widehat \tau_A(r(B)) \leq  \Theta^n_\mu(B)\,\bigl[\mathfrak F_{\oomega_A}(R)   + \mathfrak{L}^{1-\gamma}_{\oomega_A}(B)  \bigr].
		\end{split}
	\end{equation}
	Finally,  since $\supp \vec g\subset B$ and $\int \vec g\,d\mu =0$,  if $y_B$ is the center of $B$,   it holds that 
	\begin{equation*}
		\mathcal I_3 \leq  \int_{\Rn1 \setminus 2B}  \int_B \left| \mathfrak K(y,x) - \mathfrak K(y_B,x) \right| \, |\vec g(y)|\,d\sigma(y)\,d\mu(x).
	\end{equation*}
	By Fubini's theorem,   Lemma \ref{lemma:hormander_perturbation}, the  Cauchy-Schwarz inequality,  and the facts that $\sigma(B)=1$ and  $\|\vec g\|_{L^2(\sigma; \Rn1)}=1$, we deduce that 
	\begin{equation}\label{eq:final_estim_I_3_auxS}
		\begin{split}
			\mathcal I_3 &\lesssim \Theta^n_\mu(B)\bigl[\mathfrak I_{\oomega_A}(r(B))+ \mathfrak 	L^{1-\gamma}_{\oomega_A}(B)\bigr] \leq \Theta^n_\mu(B)\,\bigl[\mathfrak F_{\oomega_A}(R)   + \mathfrak{L}^{1-\gamma}_{\oomega_A}(B)  \bigr].
		\end{split}
	\end{equation}

	We finally gather \eqref{eq:final_estim_I_1_auxS}, \eqref{eq:final_estim_I_2_auxS}, \eqref{eq:final_estim_I_3_auxS}, and conclude the proof of \eqref{eq:main_estimate_error_term_S}. The bound \eqref{eq:main_estimate_error_term_S_adjoint} can be proved analogously.
\end{proof}

\vvv

\subsection{Quantitative rectifiability for Radon measures}
The $L^2$-mean oscillation estimate of the perturbation operator allows us to reduce Theorem \ref{theorem:elliptic_GSTo} to its Riesz transform analogue \cite[Theorem 1.1]{GT18}.
\begin{proof}[Proof of Theorem \ref{theorem:elliptic_GSTo}]
	We first observe that, since $B$ is $\mathfrak C_1$-$P_{\gamma, \mu}$-doubling, it follows hypothesis (3) implies that there exists a constant $\widetilde{\mathfrak C}_2>\mathfrak C_2$ depending on $n, \gamma, \mathfrak C_1,$ and $\mathfrak C_2$ such that 
	\[
		\Theta^n_\mu(B(x,r))\leq \widetilde{\mathfrak C}_2\,  \Theta^n_\mu(B), \qquad \text{ for every }x\in B \text{ and }r\in (0,60 r(B)).
	\]
	
	The bound \eqref{eq:maineq} implies that there exists $C_{\mathcal R}>0$ depending on $n$, $\Lambda$, $\mathfrak C_2$, and $\diam(\supp(\mu))$, such that 
	\begin{equation}\label{eq:precise_bound_Riesz}
		\begin{split}
			\|\mathcal R_{\mu|_{B}}\|_{L^2(\mu|_B)\to L^2(\mu|_B)}&\leq C_{\mathcal R}  \,  \bigl(\Theta^n_\mu(B) + \|T_{\mu|_{B}}\|_{L^2(\mu|_B)\to L^2(\mu|_B)}\bigr)\overset{\eqref{eq:L^2_bd_2n_B}}{\leq} C_{\mathcal R}\, (1+\mathfrak C_4)   \, \Theta^n_\mu(B).
		\end{split}
	\end{equation}
	Thus, by triangle inequality, \eqref{eq:mean_osc_small_theorem}, and \eqref{eq:main_estimate_error_term_S}, we have
	\begin{equation*}
		\begin{split}
			\avint_B\big|&\mathcal R_\mu1(x)-\avint_B\mathcal R_\mu 1(x)\big|^2\, d\mu(x)\lesssim_n \avint_B \biggl|T_\mu 1 - \avint_B T_\mu 1\, d\mu\biggr|^2\, d\mu + \avint_B \biggl|\mathcal S_\mu 1 - \avint_B \mathcal S_\mu 1\, d\mu\biggr|^2\, d\mu\\
			&\qquad\qquad \lesssim \tau\,\Theta^n_\mu(B)^2+ \mathfrak I_{\oomega_A}(r(B)) \,  \|\mathcal R_{\mu}\|^2_{L^2(\mu|_B)\to L^2(\mu|_B)} +\tilde \alpha(r(B))^2\,\Theta^n_\mu(B)^2\\
			&\qquad\qquad\overset{\eqref{eq:precise_bound_Riesz}}{\lesssim} \Bigl(\tau + C_{\mathcal R}\,(1+\mathfrak C_4)\, \mathfrak I_{\oomega_A}(r(B)) + \tilde \alpha(r(B))^2\Bigr)\,\Theta^n_\mu(B)^2\eqqcolon (\tau+\varepsilon(B))\,\Theta^n_\mu(B)^2.
		\end{split}
	\end{equation*}
	
	We remark that $\bigl(\mathfrak I_{\oomega_A}(t)+ \tilde \alpha(t)\bigr)\to 0$ as $t\to 0$.
	Thus, for every $\tau_{\mathcal R}>0$, we can pick $\tau=\tau_\mathcal{R}/2$ and $r(B)$  small enough  so that $\varepsilon(B)\leq  \tau_{\mathcal R}/2$. This, in turn, yields
	\begin{equation}\label{eq:precise_bound_Riesz_oscillation}
		\avint_B\big|\mathcal R_\mu1(x)-\avint_B\mathcal R_\mu 1(x)\big|^2\, d\mu(x)\leq \tau_{\mathcal R} \,\Theta^n_\mu(B)^2.
	\end{equation}

	The bounds \eqref{eq:precise_bound_Riesz} and \eqref{eq:precise_bound_Riesz_oscillation} allow us to apply \cite[Theorem 1.1]{GT18} and find  $\theta \in (0,1)$ and a uniformly $n$-rectifiable set $\Gamma$ such that $\mu(B\cap\Gamma)\geq \theta\,\mu(B)$.
\end{proof}

\vvv

For the applications to free boundary problems for elliptic measure, we need the following variant of Theorem \ref{theorem:elliptic_GSTo}.

\begin{theorem} \label{teo1}
	Let	$A$ be a uniformly elliptic matrix satisfying  $A\in \widetilde \DMO_{1-\gamma}$,  for some $\gamma \in (0,1)$. Let $\mu$ be a Radon measure with compact support in $\Rn1$, $n\geq 2$, so that $\diam(\sup(\mu)) \leq R$,  and denote by $T_\mu$ the gradient of the single layer potential associated with $L_A$ and $\mu$.  Let also $B\subset \Rn1$ be a ball centered at $\supp(\mu)$ of radius $r(B)$.  Assume that  there is $G_B \subset B$	and some positive real numbers $ \mathfrak C_1,  \mathfrak C_2,   \mathfrak C_3,  \tau, \eta$, $\gamma, \delta$, and $\lambda$, the following properties hold:
	\begin{enumerate}
		\item $r(B)\leq\lambda$.
		\item $\mu\bigl(B\setminus G_B\bigr)\leq \eta \, \mu (B)$.
		\item $P_{\gamma, \mu}(B)\leq \mathfrak C_1 \,\Theta^n_\mu(B)$.
		\item $\Theta^n_\mu(B(x,r)) \leq \mathfrak C_2  \,\Theta^n_\mu(B)$,   for every  $x \in G_B$ and $r \in (0, 30 r(B))$.
		\item $B$ has $\mathfrak C_3$-thin boundary.
		\item We have that
		\[
		T_*(\chi_{2B}\,\mu)(x)\leq  \mathfrak C_4\,\Theta^n_\mu(B)\quad \mbox{ for all $x\in G_B$}
		\]
		\item There exists some $n$-plane $L$ passing through the center of $B$ such that $\beta^L_{\mu,1}(B)\leq \,\delta\Theta^n_\mu(B).$
		\item It holds that
		\[
		\int_{G_B}\bigl|T\mu(x)-m_{\mu,G_B}(T\mu)\bigr|^2\, d\mu(x)\leq \tau \Theta^n_\mu(B)^2\mu(B).
		\]
	\end{enumerate}
	Then, there exists $\theta \in (0,1)$ such that if $\delta$, $\eta$, and $\tau$ are small enough, possibly depending on $n$, $\Lambda$, $\mathfrak C_1$,  $\mathfrak C_2$,  $ \mathfrak C_3$,  $\mathfrak C_4$,  $\diam(\supp (\mu))$, ,  and if  $\lambda$ is small enough, depending on $n$, $\Lambda$, $\mathfrak C_1$,  $\mathfrak C_2$,  $ \mathfrak C_3$,  $\mathfrak C_4$,  $\diam(\supp (\mu))$, and $\tau$,    there is  a uniformly $n$-rectifiable set $\Gamma \subset G_B$ such that
	\[
	\mu(B\cap\Gamma)\geq \theta\mu(B).
	\]
	The UR constants of $\Gamma$ depend on all the constants above.  The same result holds for the adjoint operator $T^*$ as well. 
\end{theorem}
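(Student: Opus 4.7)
The plan is to reduce Theorem \ref{teo1} to Theorem \ref{theorem:elliptic_GSTo} by extracting, via the big pieces $Tb$ theorem of Section \ref{big piecesTb}, a subset $G \subset G_B$ of comparable $\mu$-measure on which the gradient of the single layer potential is bounded on $L^2(\mu|_G)$. The pointwise bound on $T_*(\chi_{2B}\mu)$ on $G_B$ provided by hypothesis (6) is precisely the right input for this upgrade, and the remaining geometric hypotheses of Theorem \ref{teo1} are then reshuffled into those of Theorem \ref{theorem:elliptic_GSTo} applied to $\mu|_G$.

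First I would set up the $Tb$ theorem. Take $F \coloneqq B \cap \supp(\mu)$ (so $\mu(F) = \mu(B)$), and choose $b \equiv 1$ with $c_{acc} < 1$, so that the accretivity condition $|\nu(Q)| \leq c_{acc}\mu(Q)$ fails for every cube and $\mathfrak{T}_{\mathcal{D}(w)} = \varnothing$. Define
\[
H \coloneqq (B \setminus G_B) \cup \bigl\{x : \mathcal{M}\mu(x) > C_*\,\Theta^n_\mu(B)\bigr\},
\]
for $C_*$ large enough that the ``$c_0$'' requirement of Theorem \ref{thm:big piecesTb-aux_intro} holds. By hypothesis (2) and the weak-$(1,1)$ inequality for $\mathcal{M}$, $\mu(H) \leq \delta_0\, \mu(F)$ for any prescribed $\delta_0$ once $\eta$ is small and $C_*$ is large. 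For $x \in F \setminus H \subset G_B$, hypothesis (6) together with the smoothness of the kernel outside $2B$ and hypothesis (3) yield
\[
T_*\mu(x) \leq T_*(\chi_{2B}\mu)(x) + C\, P_{\gamma,\mu}(B) \lesssim \bigl(\mathfrak{C}_4 + \mathfrak{C}_1\bigr)\,\Theta^n_\mu(B),
\]
so $\int_{\R^{n+1}\setminus H} T_*\mu\, d\mu \leq c_* \mu(F)$. Applying Theorem \ref{thm:big piecesTb-aux_intro} gives a set $G \subset G_B \setminus H$ with $\mu(G) \geq \hat c\, \mu(B)$, on which $\mu|_G$ has $n$-growth and $T_{\mu|_G}$ is bounded on $L^2(\mu|_G)$; by tuning $\eta$, $\delta_0$ and the Remark at the end of Section \ref{big piecesTb}, we may ensure $\hat c$ is as close to $1$ as desired, so that $\mu(B\setminus G) \leq \eta'\, \mu(B)$ for any chosen $\eta'$.

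Next I would verify the hypotheses of Theorem \ref{theorem:elliptic_GSTo} for $\mu|_G$. Items (1), (2), (3), (5), (6) of Theorem \ref{theorem:elliptic_GSTo} are inherited directly from those of Theorem \ref{teo1}, using $\Theta^n_{\mu|_G}(B) \approx \Theta^n_\mu(B)$ to compare constants and $\beta^L_{\mu|_G,1}(B) \leq \beta^L_{\mu,1}(B) \leq \delta\, \Theta^n_\mu(B) \lesssim \delta\, \Theta^n_{\mu|_G}(B)$; the thin-boundary condition (4) is obtained by replacing $B$ with a concentric $B' \subset 1.1 B$ with $\mu|_G$-thin boundary via \cite[Lemma 9.43]{To14}. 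The delicate point is the $L^2$-mean oscillation condition (7) for $T(\mu|_G)$. Writing
\[
T(\mu|_G) = T\mu - T(\mu|_{B\setminus G}) - T(\mu|_{\R^{n+1}\setminus B}),
\]
the oscillation of $T\mu$ on $B$ with respect to $\mu|_G$ is bounded by hypothesis (8) of Theorem \ref{teo1} combined with the inclusion $G \subset G_B$ and the elementary inequality $\int |f - m(f)|^2 \leq \int |f - c|^2$; the oscillation of $T(\mu|_{\R^{n+1}\setminus B})$ is controlled by a H\"ormander-type argument in the spirit of Lemma \ref{lemma:hormander_perturbation}, producing a contribution of order $\Theta^n_\mu(B)^2\bigl(\mathfrak I_{\oomega_A}(r(B)) + \mathfrak L^{1-\gamma}_{\oomega_A}(r(B))\bigr)$ which vanishes as $r(B) \to 0$; the remaining term, $T(\mu|_{B\setminus G})$, is handled by combining the $L^2(\mu|_G)$-boundedness of $T_{\mu|_G}$ furnished by the $Tb$ theorem with the smallness of $\mu(B\setminus G)\leq \eta'\, \mu(B)$ via a Cauchy--Schwarz/duality argument using $T^*$ and an $L^1$-estimate analogous to Lemma \ref{lem:L1 buffer}.

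With all hypotheses in place, Theorem \ref{theorem:elliptic_GSTo} applied to $\mu|_G$ produces a uniformly $n$-rectifiable set $\Gamma \subset B$ with
\[
\mu(B\cap \Gamma) \geq \mu|_G(B\cap \Gamma) \geq \theta'\, \mu(G) \gtrsim \theta'\, \hat c\, \mu(B),
\]
yielding the desired $\theta > 0$. The main obstacle is the mean oscillation transfer, specifically the control of $T(\mu|_{B\setminus G})$: this requires simultaneously squeezing $\eta$, $\delta_0$ and the $Tb$-parameters so that $\mu(B\setminus G)$ is small enough that its contribution does not spoil the smallness of $\tau$ needed in the application of Theorem \ref{theorem:elliptic_GSTo}. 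The statement for $T^*$ follows by the same argument, using the adjoint version of the $Tb$ theorem (which applies thanks to the ``good'' SIO structure established in Section \ref{sec:gradient_SLP_good}) and the analog of Lemma \ref{lemma:hormander_perturbation} for $\mathfrak{K}^*$.
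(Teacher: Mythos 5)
Your overall scheme (extract $G\subset G_B$ with $\mu(G)\gtrsim\mu(B)$ via the big pieces $Tb$ theorem of Section \ref{big piecesTb}, then reduce to Theorem \ref{theorem:elliptic_GSTo}) is indeed the route the paper intends, following \cite[Theorem 3.3]{AMT17a}, but the step in which you transfer the mean--oscillation hypothesis contains a genuine gap. You claim that the oscillation over $B$ of $T(\mu|_{\Rn1\setminus B})$ is of order $\Theta^n_\mu(B)\bigl(\mathfrak I_{\oomega_A}(r(B))+\mathfrak L^{1-\gamma}_{\oomega_A}(r(B))\bigr)$ ``by a H\"ormander-type argument in the spirit of Lemma \ref{lemma:hormander_perturbation}''. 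That lemma concerns the perturbation kernel $\mathfrak K_A=\nabla_1\Gamma_A-\nabla_1\Theta(\cdot,\cdot;\bar A_B)$, whose Riesz-type main part has been subtracted; for the full kernel $\nabla_1\Gamma_A$ the far-field oscillation is generically of size $P_{1,\mu}(B)\approx\Theta^n_\mu(B)$, and it does not tend to $0$ as $r(B)\to0$. Concretely, already for the Laplacian take $\mu=\mathcal H^n$ restricted to a very large disc of an $n$-plane through the center of $B$ and $G_B=B\cap\supp(\mu)$: all hypotheses (1)--(8) of Theorem \ref{teo1} hold with $\tau$ as small as you wish (by symmetry $\mathcal R\mu$ is essentially constant on $G_B$), yet both $\mathcal R(\mu|_{\Rn1\setminus B})$ and $\mathcal R(\mu|_B)=\mathcal R(\mu|_G)$ oscillate across $B$ by an amount comparable to $\Theta^n_\mu(B)$, because the smallness in (8) comes from a cancellation between the interior and exterior contributions. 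Hence hypothesis (7) of Theorem \ref{theorem:elliptic_GSTo} applied to $\mu|_G$ simply fails in this situation, and no choice of $\eta,\delta,\tau,\lambda$ repairs it: the quantitative criterion cannot be applied to $\mu|_G$ alone. The reduction must be arranged so that the exterior mass is retained in the measure fed to Theorem \ref{theorem:elliptic_GSTo} (e.g.\ working with $\mu-\mu|_{B\setminus G}$, after adjusting $B$ slightly), so that the exterior contribution appears identically in both oscillations and cancels; the price is that the density hypothesis (3) and the buffer region $2B\setminus B$ then have to be controlled using the thin-boundary hypothesis and estimates in the spirit of Lemmas \ref{lemma:MMT} and \ref{lem:L1 buffer}. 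This is precisely why Theorem \ref{theorem:elliptic_GSTo} keeps its oscillation hypothesis for the full measure and why the paper stresses that the suppression method ``enables the formulation of the relevant estimates over full balls''.

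Two further, smaller defects occur in the same part of your argument. First, your control of $T(\mu|_{B\setminus G})$ on $G$ cannot come from the $L^2(\mu|_G)$-boundedness of $T_{\mu|_G}$: you need a bound for the operator acting on functions supported on $B\setminus G$ and evaluated on $G$, and the correct tool is the suppressed operator $T_{\Phi,\mu|_B}$, bounded on $L^2(\mu|_B)$ by Theorem \ref{theorem:suppressed_Tb}, which coincides with $T$ on $G$ because $\Phi$ vanishes there; Cauchy--Schwarz together with $\mu(B\setminus G)\le\eta'\mu(B)$ then gives the smallness. Second, in the $Tb$ step the measure fed to Theorem \ref{thm:big piecesTb-aux_intro} should be $\mu|_B$ (otherwise $\mu(F)$ is the total mass and the conclusion does not produce a big piece of $B$); then $T_*(\mu|_B)\le T_*(\chi_{2B}\mu)+T_*(\chi_{2B\setminus B}\mu)$, and the second term is not pointwise bounded near $\partial B$ even under hypothesis (4) --- its integrated bound over $G_B$, which is what the third bullet of the $Tb$ theorem asks for, requires the thin-boundary hypothesis (5) (or Lemma \ref{lemma:MMT}), which your sketch never invokes at this point. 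These are fixable, but together with the oscillation issue above they show the proposal, as written, does not yet constitute a proof.
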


\begin{proof}
	Theorem \ref{teo1} follows from Theorem \ref{theorem:elliptic_GSTo} by adapting the proof of \cite[Theorem 3.3]{AMT17a} and incorporating the results from Sections \ref{sec:aux_operator} and \ref{big piecesTb}, which are essential to the proof. We leave the details to the interested reader.
\end{proof}

\vvv

\section{Limits of sequences of domains and elliptic measures} \label{section:limits}

In this section, we study the compactness argument that leads to Proposition~\ref{limlem}. We first present two lemmas, which are used in its proof.

\begin{lemma}\label{eq:Green-repr}
	Let \( \{ \Omega_j \}_{j \geq 1} \subset \mathbb{R}^{n+1} \) be a sequence of CDC domains with the same CDC constants  and \( \inf_j \operatorname{diam}(\partial \Omega_j) > 0 \). Suppose there exists a fixed ball \( B = B(y , r) \subset \Omega_j \) so that $\delta_{\om_j}(y) \approx r$ for all \( j \geq 1 \).  	Assume further that \( \{A_j\}_{j \in \mathbb{N}} \) is a sequence of Borel measurable, uniformly elliptic coefficient matrices, all with the same ellipticity constants,  for which  there exists a  uniformly elliptic matrix $A$ (with not necessarily constant coefficients) such that
	$$
	A_j \to A  \quad \text{ in } L^1_{\loc}(\Rn1) \text{ as } j \to \infty.
	$$ 
	Define
\[
h_j(x) =
\begin{cases}
	H_{\Gamma_{A_j}(\cdot, y)|_{\partial \Omega_j}}(x) & \text{if } x \in \Omega_j, \\
	\Gamma_{A_j}(x, y) & \text{if } x \in \mathbb{R}^{n+1} \setminus \Omega_j,
\end{cases}
\]
where \( \Gamma_{A_j}(\cdot, \cdot) \) is the fundamental solution associated with \( A_j \) and $H$ denotes Perron's solution.   Then there exists a $L_A$-solution \( h_y \in Y_{\loc}^{1,2}(\mathbb{R}^{n+1}) \cap C^\alpha_{\mathrm{loc}}(\mathbb{R}^{n+1}) \) such that, after passing to a subsequence, \( h_j \to h_y \) locally uniformly and weakly in \( Y_{\loc}^{1,2}(\mathbb{R}^{n+1}) \).
\end{lemma}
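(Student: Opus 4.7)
My plan is to establish compactness of $\{h_j\}$ in $C^\alpha_{\loc} \cap Y^{1,2}_{\loc}$ and then pass to the limit in the weak formulation. First, I would extract a uniform $L^\infty_{\loc}(\Rn1)$ bound on $\{h_j\}$. Inside $\Omega_j$, $h_j$ is the Perron solution with boundary data $\Gamma_{A_j}(\cdot,y)|_{\partial\Omega_j}$; by the global upper bound \eqref{eq:bound_above_green} and the hypothesis $\delta_{\Omega_j}(y) \gtrsim r$, this data is uniformly bounded by $C\, r^{1-n}$, so the maximum principle yields $\|h_j\|_{L^\infty(\Omega_j)} \lesssim r^{1-n}$. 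Outside $\Omega_j$ one has $h_j = \Gamma_{A_j}(\cdot,y)$, uniformly bounded on compact subsets of $\Rn1 \setminus \{y\}$, again by \eqref{eq:bound_above_green}. Since $B(y,r) \subset \Omega_j$ for all $j$, any compact $K$ containing $y$ meets $\Omega_j^c$ only at points with $|x-y| \gtrsim r$, so the two estimates combine into a uniform $L^\infty(K)$ bound. Equivalently, I could use the representation $h_j = \Gamma_{A_j}(\cdot,y) - G^{\Omega_j}_{A_j}(\cdot,y)\chi_{\Omega_j}$ from Lemma~\ref{eq:Green-repr-wiener} to isolate where each term is singular.

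Next, uniform H\"older continuity of $\{h_j\}$ follows from interior De Giorgi--Nash estimates applied separately on $\Omega_j$ and on $\Int(\Omega_j^c)$, together with the boundary H\"older estimate \eqref{eq:Holder-boundary-cdc}, which is valid under the uniform CDC assumption and transfers boundary behavior across $\partial\Omega_j$. This gives $\{h_j\}$ uniformly bounded in $C^\alpha_{\loc}(\Rn1)$ for some $\alpha > 0$ depending only on $n$, $\Lambda$, and the CDC constants. Caccioppoli's inequality, coupled with the $L^\infty$ bound above, produces uniform $Y^{1,2}(K)$ bounds on any compact $K \subset \Rn1$ meeting $\Omega_j$ and $\Int(\Omega_j^c)$, and near $y$ uniform interior regularity of the Perron solution supplies the same control. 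Arzel\`a--Ascoli followed by Banach--Alaoglu yields a subsequence (not relabeled) with $h_j \to h_y$ locally uniformly and $\nabla h_j \warrow \nabla h_y$ weakly in $L^2_{\loc}$, so $h_y \in Y^{1,2}_{\loc}(\Rn1) \cap C^\alpha_{\loc}(\Rn1)$.

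Finally, I would pass to the limit in the identity $\int A_j \nabla h_j \cdot \nabla \varphi\, dx = 0$ for $\varphi \in C^\infty_c$ supported in an open set eventually contained in $\Omega_j$ (respectively, in $\Int(\Omega_j^c)$). The key point is that uniform ellipticity gives $\|A_j\|_{L^\infty(\Rn1)} \leq \Lambda$, so by dominated convergence the $L^1_{\loc}$ convergence $A_j \to A$ upgrades to $L^p_{\loc}$ convergence for every $p \in [1,\infty)$; in particular $A_j \nabla \varphi \to A \nabla \varphi$ strongly in $L^2_{\loc}$. Combining this strong convergence with the weak convergence $\nabla h_j \warrow \nabla h_y$ in $L^2_{\loc}$ yields $\int A \nabla h_y \cdot \nabla \varphi\, dx = 0$, so $h_y$ is $L_A$-harmonic on every such open set.

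The main obstacle is that $h_j$ is generally \emph{not} $L_{A_j}$-harmonic across $\partial\Omega_j$ (the conormal trace typically jumps), so obtaining $L_A h_y = 0$ on sets straddling the limit boundary is nontrivial. To conclude, one uses the Hausdorff-type convergence $\partial\Omega_j \to \partial\Omega_\infty^{x_0}$ provided in the proof of Proposition~\ref{limlem}: for any $\varphi \in C^\infty_c(U)$ with $U$ compactly contained in $\Omega_\infty^{x_0}$ or in $\Int((\Omega_\infty^{x_0})^c)$, $\supp\varphi$ lies inside $\Omega_j$ (respectively $\Int(\Omega_j^c)$) for all $j$ large, reducing to the previous step. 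Together with the uniform H\"older estimates above, this establishes both the regularity of $h_y$ and its $L_A$-harmonicity on the relevant open region needed in Proposition~\ref{limlem}.
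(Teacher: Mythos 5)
Your overall strategy (uniform $L^\infty$ bound via the maximum principle, uniform local H\"older continuity via interior De Giorgi--Nash plus the CDC boundary estimate, uniform $Y^{1,2}_{\loc}$ bounds, then Arzel\`a--Ascoli and Banach--Alaoglu) is the same as the paper's, but there is a genuine gap at the step where you claim the uniform gradient bounds. You write that ``Caccioppoli's inequality, coupled with the $L^\infty$ bound above, produces uniform $Y^{1,2}(K)$ bounds on any compact $K$ meeting $\Omega_j$ and $\Int(\Omega_j^c)$.'' Caccioppoli requires $h_j$ to be a (sub/super)solution on the ball where it is applied, and on balls straddling $\partial\Omega_j$ the function $h_j$ satisfies no equation --- this is exactly the obstruction you yourself flag later when passing to the limit in the weak formulation. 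Nor can you circumvent it by covering $K$ with balls contained in $\Omega_j$ or in $\Int(\Omega_j^c)$: those radii degenerate near $\partial\Omega_j$, and since $h_j$ does not vanish on $\partial\Omega_j$ the summed Caccioppoli estimates are not controlled by the $L^\infty$ bound alone. The paper's fix is the structural observation you only gesture at for the $L^\infty$ bound: by Lemma \ref{eq:Green-repr-wiener}, $h_j$ differs from $\Gamma_{A_j}(\cdot,y)$ by the zero extension of $G^{A_j}_{\Omega_j}(\cdot,y)$, which is a nonnegative subsolution in any ball $B$ with $y\notin 2B$ (the paper phrases this as $h_j$ being a nonnegative subsolution there). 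This is what licenses a Caccioppoli-type inequality \emph{across} $\partial\Omega_j$, yielding $\int_B|\nabla h_j|^2\lesssim r(B)^{n-1}r^{2(1-n)}$, which together with the $Y^{1,2}$ bounds for $\Gamma_{A_j}$ and for the Green's function away from the pole (from \eqref{eq:bd_g_1} and \eqref{eq;green ident-Wiener}) gives the uniform $Y^{1,2}_{\loc}(\Rn1)$ bound. Without this ingredient your compactness step is not justified.

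Two smaller points. First, your H\"older argument is essentially the paper's (the paper carries out the case analysis: both points outside, one inside and one outside via the boundary oscillation estimate, both inside but near $\partial\Omega_j$, both inside and far), so that part is fine as a sketch. Second, in the harmonicity step your claim that a test function supported in a compact subset of $\Int((\Omega_\infty^{x_0})^c)$ is eventually supported in $\Int(\Omega_j^c)$ does not follow from the locally uniform convergence $u_j\to u_\infty^{x_0}$ (that convergence only gives the inclusion for compact subsets of $\Omega_\infty^{x_0}=\{u_\infty^{x_0}>0\}$; the sets $\Omega_j$ may still intersect any neighborhood of a zero of $u_\infty^{x_0}$). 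This extra claim is not needed: the paper only establishes, and only uses, that $h_y$ is an $L_A$-solution in $\Omega_\infty^{x_0}$, via test functions eventually supported in $\Omega_j$ together with the strong $L^2_{\loc}$ convergence $A_j\to A$ and the weak convergence of $\nabla h_j$, exactly as in your final paragraph.
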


\begin{proof}
Since $\Gamma_{A_j}(\cdot,y) \geq 0$, by  the  maximum principle,  $h_j  \geq 0$ in $\om_j$. Moreover,  thanks to \eqref{eq:bound_above_green},  
$$
\sup_{\Rn1 \setminus \om_j} \Gamma_{A_j}(\cdot,y) \lesssim  r^{1-n},
$$
 and, by maximum principle,  it holds that
	\begin{equation*}
		\sup_{  \om_j } h_j \leq \sup_{ \partial \om_j} \Gamma_{A_j} (\cdot,y)  \lesssim r^{1-n}.
	\end{equation*}
Therefore,
		\begin{equation}\label{eq:F-rho global bound}
		\sup_{ x\in \Rn1 } h_j(x)  \lesssim r^{1-n}.
	\end{equation}
	Thus,  by Caccioppoli's inequality,  as $B(y,2r) \subset \om_j$,  it holds that
	\begin{align*}
		\int_{B(y,r)} | \nabla h_j(x)|^2\,dx &\lesssim \frac{1}{r^{2}}\int_{B(y,2r)} h_j(x)^2\,dx 
		\lesssim r^{1-n}.
	\end{align*}

	The latter bound along with \eqref{eq;green ident-Wiener} and \eqref{eq:bd_g_1} (both for the Green's function and the fundamental solution),   implies that 
	$$
	\int_{\om_j} | \nabla h_j(x)|^2\,dx \leq \int_{\om_j  \setminus B(y,r)} | \nabla h_j(x)|^2\,dx+\int_{B(y,r)} | \nabla h_j(x)|^2\,dx\lesssim r^{1-n}.
	$$
	Similarly, we can prove that 
	\[
	\| h_j\|_{L^{2^*}(\om_j)} \lesssim r^{\frac{1-n}{2}}.
	\]
	Using once again  \eqref{eq:bd_g_1}, we obtain $\| \Gamma_{A_j}(\cdot,y)\|_{Y^{1,2}(\Rn1 \setminus B(y, r))} \lesssim  r^\frac{1-n}{2}$ and so 
	\begin{equation}\label{eq:Y12-Frho-almost}
		\| h_j\|_{Y^{1,2}(\Rn1 \setminus \partial \om_j)} \lesssim r^\frac{1-n}{2}.  
	\end{equation}

Additionally,  since the extension of the Green's function by zero in $\om_j^c$ is a subsolution in any ball $B$ so that $y \not \in 2B$ and $\Gamma_{A_j}$ is a solution in  $2B$ as well,  by Lemma \ref{eq:Green-repr-wiener}, we have that $h_j$ is also a non-negative subsolution in  $2B$. Thus,  by Caccioppoli's inequality and \eqref{eq:F-rho global bound},	we have that
$$
	\int_B | \nabla h_j(x)|^2\,dx \leq r(B)^{-2} \int_{2B} | h_j(x)|^2\,dx \lesssim r(B)^{n-1} \,r^{2(1-n)}.
	$$
	The latter inequality combined with \eqref{eq:Y12-Frho-almost} readily implies that $h_j \in Y^{1,2}_{\loc}(\Rn1)$ with constants uniform in $j$.
	
	Let us assume that $K \subset \Rn1$ is a compact set such that $y \not \in K$.  Our first goal is to show that $h_j$ is H\"older continuous in $K$ uniformly in $j$.  Thanks to the compactness of $K$ we can cover it with a finite number of balls $B$ so that $y\not\in 36B$  and $r(B)<\frac{1}{4} \diam \d\Omega$.  We claim that, without loss of generality, the sequence of functions $h_j$ is  H\"older continuous on such balls $B$ with constants independent of $j$.  To prove it, we distinguish some cases.
	\begin{enumerate}
		\item[Case 1:] Let $x,w\in B\setminus \Omega_j$. Then since $y \not \in 36B$, we have that $\Gamma_{A_j}(\cdot,y) $ is $L_{A_j}$-solution in $4B$ and so,  by local H\"older continuity  away from $y$ and \eqref{eq:bound_above_green}, 
		$$
		|h_j(x)-h_j(w)|= | \Gamma_{A_j}(x,y) -\Gamma_{A_j}(w,y)| \lesssim \frac{|x-w|^\alpha}{r(B)^{\alpha}} \sup_{z \in 2B}   \Gamma_{A_j}(z,y) \lesssim \frac{|x-w|^\alpha}{r(B)^{n-1+\alpha}}.
		$$
		\item[Case 2:]  Let  $x\in B\cap \Omega_j$ and $w \in B\setminus \Omega_j$. If $z\in [x,w]\cap \d\Omega_j \subset B$ and $s=|x-z|$, then by Theorem \ref{thm:boundary oscillation} and arguing as in Case 1 for $\Gamma_{A_j}(\cdot, y)$,  we have that 
		\begin{align*}
			|h_j(x) &-h_j(w)|= | h_j(x) - \Gamma_{A_j}(w,y)| \\
			&\leq 	|h_j(x)-h_j(z)| + | \Gamma_{A_j}(z,y) -\Gamma_{A_j}(w,y)| \\
			& \lesssim  \osc_{B(z, 2s) \cap \Omega_j} h_j +  \frac{|z-w|^\alpha}{r(B)^{n-1+\alpha}}\\
			& \lesssim  \frac{|x-z|^\alpha}{r(B)^{\alpha}}\sup_{B(z,4 r(B)) \cap \Omega_j} h_j +  \osc_{B(z, 2s) \cap \Omega_j} \Gamma_{A_j}(\cdot,y) +  \frac{|z-w|^\alpha}{r(B)^{n-1+\alpha}}\\
			& \lesssim  \frac{|x-z|^\alpha}{r(B)^{\alpha}} [ r^{1-n} + r(B)^{1-n}] +  \frac{|z-w|^\alpha}{r(B)^{n-1+\alpha}} \\
			&\leq  |x-w|^\alpha  [ r^{1-n} + r(B)^{1-n}] \, r(B)^{-\alpha},
		\end{align*}
		where the last inequality follows from the fact that $ \max(|x-z|,|z-w| ) \leq |x-w|$.
		\item[Case 3:] Let $x, w \in \Omega_j\cap B$ and suppose there is $z\in \Omega_j^{c}$ with $\dist(z,[x,w])<4|x-w|$.  Thus, if $z'\in [x,y]$ is the point that realizes $\dist(z,[x,w])$, then 
		\begin{equation}
			\label{e:y-z}
			|w-z|\leq |w-z'|+|z'-z|
			< |x-w|+4|x-y|=5|x-w|\leq 10r(B).
		\end{equation}
		The same argument shows that $|x-z|\leq 10r(B)$. Using the latter inequalities and that $y \not\in 25B$, if we denote by $c_B$ the center of $B$, we have 
		\begin{align*}
			|y-z| &\geq  |y-c_B| -|z-w| -|w-c_B|        \\&>36 r(B) - 10r(B) - r(B) =25 r(B).
		\end{align*}
		In particular, if $B'=B(z,5r(B))$, then $y \not\in 5 B'$. Thus, as in the estimate of Case 2 we have 
		\begin{align*}
			|h_j(x) -h_j(w)| \lesssim  |x-w|^\alpha  [ r^{1-n} + r(B)^{1-n}] \, r(B)^{-\alpha}.
		\end{align*}
		\item[Case 4:] Let $x, w \in \Omega_{j}\cap B$ and suppose that $\dist(z,[x,w])\geq4 |x-y|$ for every $z\in \Omega_j^{c}$. Covering the segment $[x,w]$ with balls of radius comparable to $\lvert x-w\rvert$ and using the interior regularity of solutions to elliptic equations,  see Theorem \ref{thm:C_alpha_interior-inhom},  concludes the proof.
	\end{enumerate}

	Furthermore,  since $h_j$ is a $L_{A_j}$-solution in $\om_j$, by interior regularity estimates,  it is also locally H\"older continuous in any ball $B$ such that $2B \subset \om_j$ and $y \in B$.  Thus, 
	$$
	h_j \in C^\alpha_{\loc}(\Rn1) \quad \text{  uniformly in } j.
	$$
	
	To summarize, we have shown that   $h_j\in Y_{\loc}^{1,2}(\Rn1) \cap C^\alpha_{\loc}(\Rn1)$ with uniform bounds in $j$. Therefore,  by Arzelà-Ascoli,   Alaoglu theorem,  Eberlein–\v Smulian theorem, and   a diagonalization  argument,  we can show that there exists  a non-negative function $h_y \in Y^{1,2}(\Rn1) \cap C^\alpha_{\loc}(\Rn1)$ such that, up to a non-relabeled subsequence,
	\[
		h_j (\cdot,y) \to h_y \quad \text{ weakly in } Y_{\loc}^{1,2}(\Rn1) \text{ and locally uniformly in } \Rn1.\qedhere
	\]
\end{proof}

\vv

In the following lemma, we identify the limits of the fundamental solutions \( \Gamma_{A_j} \) with the fundamental solution associated with the limiting matrix.

\begin{lemma}\label{lem:limits fundamental}
	Assume  that \( \{A_j\}_{j \in \mathbb{N}} \) is a sequence of Borel measurable, uniformly elliptic coefficient matrices, all with the same ellipticity constants,  for which  there exists a uniformly elliptic matrix $A$ such that
	$$
	A_j \to A  \quad \text{ in } L^1_{\loc}(\Rn1) \text{ as } j \to \infty.
	$$
	Then,  for any $y \in \Rn1$,  if $\Gamma_{A_j}(\cdot, y)$ is the fundamental solution associated with $A_j$ and $p \in [1, n+1/n)$,   there exists 
	$$
	\Gamma_y \in C^\alpha_{\loc}(\Rn1 \setminus \{y\}) \cap Y^{1,2}(\Rn1 \setminus B(y, r)) \cap W^{1,p}(B(x_0, s)),
	$$
	for every $r > 0$ and $s > 0$,  such that, up to a non-relabeled subsequence, 
	$$
	\Gamma_{A_j}(\cdot, y) \to \Gamma_y(\cdot)
	$$
	locally uniformly on $\mathbb{R}^{n+1} \setminus \{x_0\}$, and also weakly in  $W^{1,p}(B(x_0, s))$ and in  $Y^{1,2}(\mathbb{R}^{n+1} \setminus B(x_0, r))$ for every $r > 0$ and $s > 0$.  Moreover, 
	$$
	\Gamma_y(x) = \Gamma_A(x,y), \quad \text{ for all } x, y \in \Rn1 \text{ with } x\neq y.
	$$
\end{lemma}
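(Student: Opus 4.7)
The plan is to adapt the compactness argument of Lemma \ref{eq:Green-repr} to the sequence $\{\Gamma_{A_j}(\cdot,y)\}_j$: first extract uniform bounds that yield compactness in the appropriate topologies, then identify every subsequential limit with $\Gamma_A(\cdot,y)$ by passing to the limit in the distributional defining identity.

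From Lemma \ref{lemgreen*} applied to $\Omega=\Rn1$ we have $j$-independent estimates: the pointwise bound $|\Gamma_{A_j}(x,y)|\lesssim |x-y|^{1-n}$, the global Sobolev bound $\|\Gamma_{A_j}(\cdot,y)\|_{Y^{1,2}(\Rn1\setminus B(y,r))}\lesssim r^{(1-n)/2}$ for every $r>0$, and the local bound $\|\nabla \Gamma_{A_j}(\cdot,y)\|_{L^p(B(y,r))}\lesssim r^{-n+(n+1)/p}$ for $p\in[1,(n+1)/n]$, all with constants depending only on $n$ and the common ellipticity constants. Since $\Gamma_{A_j}(\cdot,y)$ is a weak solution of $L_{A_j}u=0$ on $\Rn1\setminus\{y\}$, Theorem \ref{thm:C_alpha_interior-inhom} yields uniform local H\"older estimates on compacta of $\Rn1\setminus\{y\}$. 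Arzel\`a--Ascoli on such compacta, combined with Banach--Alaoglu and Eberlein--\v{S}mulian in the reflexive Sobolev spaces, together with a diagonalization argument, produce a subsequence and a non-negative limit $\Gamma_y$ in the spaces asserted in the statement along which the claimed convergences hold.

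The core step is to identify $\Gamma_y=\Gamma_A(\cdot,y)$. For every $\varphi\in C^\infty_c(\Rn1)$ the fundamental solution identity reads
\[
\int_{\Rn1}A_j(x)\,\nabla_x\Gamma_{A_j}(x,y)\cdot\nabla\varphi(x)\,dx=\varphi(y).
\]
Since $\{A_j\}$ is uniformly bounded in $L^\infty$ and converges in $L^1_{\loc}$, a standard interpolation upgrades this to convergence in $L^q(K)$ for every compact $K$ and every finite $q$. Writing $\varphi=\varphi_1+\varphi_2$ with $\varphi_1\in C^\infty_c(\Rn1\setminus\{y\})$ and $\varphi_2$ supported in a small ball around $y$, the $\varphi_1$-piece is handled by pairing the weak $L^2_{\loc}(\Rn1\setminus\{y\})$-convergence of $\nabla\Gamma_{A_j}(\cdot,y)$ with the strong $L^2$-convergence of $A_j^T\nabla\varphi_1$, while the $\varphi_2$-piece is handled by weak $L^p$/strong $L^{p'}$ duality for some $p\in(1,(n+1)/n)$ and its conjugate exponent. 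This yields
\[
\int_{\Rn1}A(x)\,\nabla_x\Gamma_y(x)\cdot\nabla\varphi(x)\,dx=\varphi(y),
\]
so $\Gamma_y$ is a distributional fundamental solution of $L_A$ at $y$. The remaining Green's-function properties of Lemma \ref{lemgreen*} transfer to $\Gamma_y$ by lower semicontinuity of the Sobolev norms, and the uniqueness assertion of that lemma forces $\Gamma_y=\Gamma_A(\cdot,y)$; since every subsequential limit coincides with $\Gamma_A(\cdot,y)$, the full original sequence converges.

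The step I anticipate as the main obstacle is precisely the weak-strong passage to the limit in the product $A_j\nabla\Gamma_{A_j}(\cdot,y)$ near the singularity at $y$, where $\nabla\Gamma_{A_j}(\cdot,y)$ is only uniformly bounded in $L^p_{\loc}$ for $p<(n+1)/n<2$. The splitting of $\varphi$ described above, combined with the upgrade of the $L^1_{\loc}$-hypothesis on $A_j$ to strong $L^q_{\loc}$-convergence for every finite $q$ via the uniform $L^\infty$-bound, is the device that circumvents this integrability mismatch cleanly.
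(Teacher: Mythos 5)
Your compactness step and the passage to the limit in the identity $\int A_j\nabla_1\Gamma_{A_j}(\cdot,y)\cdot\nabla\varphi=\varphi(y)$ follow the paper's route (same uniform bounds from Lemma \ref{lemgreen*}, interior H\"older estimates, Arzel\`a--Ascoli, Alaoglu/Eberlein--\v{S}mulian, diagonalization; your splitting of the test function versus the paper's splitting of the integrand into $I_j+II_j+III$ is only a cosmetic difference, and upgrading $L^1_{\loc}$-convergence of $A_j$ to $L^q_{\loc}$ via the uniform $L^\infty$ bound is exactly what the paper does inside $I_j$). The problem is the last step. You conclude by saying that the remaining Green's-function properties ``transfer to $\Gamma_y$ by lower semicontinuity of the Sobolev norms'' and then invoke the uniqueness assertion of Lemma \ref{lemgreen*}. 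But the uniqueness in that lemma is uniqueness of an object satisfying \emph{all} the defining properties, including property (3) of the definition: for every $f\in L^\infty_c$ the function $x\mapsto\int G(y,x)f(y)\,dy$ lies in $Y^{1,2}_0$ and solves the adjoint equation. That property is a statement about the dependence on the pole variable and is not a norm bound, so it does not pass to the limit by lower semicontinuity; your limit $\Gamma_y$ is constructed for a fixed pole only, and nothing you have proved gives it this adjoint representation property. What you do have for $\Gamma_y$ — the distributional identity $L_A\Gamma_y=\delta_y$ plus membership in $Y^{1,2}(\Rn1\setminus B(y,r))\cap W^{1,p}(B(y,s))$ with $p<\tfrac{n+1}{n}$ — does not characterize $\Gamma_A(\cdot,y)$ when $A$ is merely bounded and measurable: the difference $v=\Gamma_y-\Gamma_A(\cdot,y)$ is then only a $W^{1,p}_{\loc}$ \emph{very weak} solution ($p<2$) near the pole, and Serrin-type examples show such solutions of equations with $L^\infty$ coefficients can be nontrivial; one cannot test with $v$ itself, nor remove the singularity, without first knowing cancellation of the singular parts, which is precisely what is at stake.

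The paper closes this gap with a duality argument that your proposal is missing: for $f\in C^\infty_c(\Rn1)$ set $w_j(y)\coloneqq\int\Gamma_{A_j}(x,y)f(x)\,dx$, note that $L_{A_j^T}w_j=f$ with bounds $\|w_j\|_{Y^{1,2}(\Rn1)}+\|w_j\|_{L^\infty}\lesssim\|f\|_\infty$ uniform in $j$ (via \eqref{eq:Green_function_global bound} and \eqref{eq:lorentz<bounded}) and uniform local H\"older continuity, extract a limit $w$ solving $L_{A^T}w=f$ in $Y^{1,2}(\Rn1)$, and use uniqueness of this variational solution (Lax--Milgram) to identify $w(y)=\int\Gamma_A(x,y)f(x)\,dx$; on the other hand the weak convergences already established give $w(y)=\int\Gamma_y(x)f(x)\,dx$. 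Since $f$ is arbitrary and both kernels are continuous away from $y$, this yields $\Gamma_y(\cdot)=\Gamma_A(\cdot,y)$. You would need to add this (or an equivalent argument verifying property (3) for $\Gamma_y$) to make the identification — and hence the lemma — complete.
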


\begin{proof}
	
	For fixed $y \in \Rn1$ and  for any ball $B$ so that $y \not \in 2B$, we have that 
	$$
	\Gamma_{A_j}(x, y)\lesssim |x-y|^{1-n} \lesssim r(B)^{1-n}, \quad \text{ if } x \in B,
	$$
	with implicit constants independent of $j$.  Since the functions $\Gamma_{A_j}(\cdot, y)$ have uniformly bounded $C^\alpha$ seminorm, where $\alpha$ depends only on the ellipticity constants,  by Arzelà-Ascoli theorem, there exists a $C^\alpha$ function $\Gamma_y$ such that $\Gamma_{A_j}(\cdot, y)$ converges (up to a non-relabeled subsequence) locally uniformly to $\Gamma_y$ in $B$.  
	
	Also, thanks to \eqref{eq:bd_g_1},  \eqref{eq:bd_g_2},  and \eqref{eq:bd_g_3} (when $\om =\Rn1$),   we know that 
	\begin{equation}
		\begin{split}
			&\lVert \Gamma_{A_j}(\cdot, y) \rVert_{Y^{1,2}(\R^{n+1}\setminus B(y,r))}\leq C r^{1-\frac{n+1}{2}}\qquad\text{for every }r>0,\\
			&\lVert \Gamma_{A_j}(\cdot,y)\rVert_{W^{1,p}(B(y,s))}\leq C s^{1-\frac{n+1}{p'}}, \qquad\text{for every } s>0. 
			\label{stimeGammanorme}
		\end{split}
	\end{equation}
	where the constant $C$ depends only on the ellipticity constants and $n$.  Thus, by Alaoglu theorem, Eberlein–\v Smulian theorem, and   a diagonalization argument,  after passing to a subsequence,  we have that $\Gamma_{A_j}$ converges   to $\Gamma_y$ weakly  in $Y^{1,2}(\R^{n+1}\setminus B(y,r))$ and in $W^{1,p}(B(y,s))$ thanks to the uniqueness of the limit for every $r>0$ and $s>0$, and satisfies the same  bounds as in \eqref{stimeGammanorme}.

	Finally, let us note that for every $\phi \in C^\infty_c(\Rn1)$,
	\begin{equation}\label{eq:fund-sol-weak}
		\begin{split}
			\phi(y)=&\lim_{j\to \infty} \int A_j(z) \nabla \Gamma_{A_j}(z,y)\cdot\nabla \phi(z)\, dz.
		\end{split}
	\end{equation}
	
	We write
	\begin{align*}
		\int A_j(z) & \nabla \Gamma_{A_j}(z,y) \cdot \nabla \phi(z)\, dz = \int (A_j(z)-A(z)) \nabla \Gamma_{A_j}(z,y) \cdot \nabla \phi(z)\, dz  \\
		&+ \int A(z) ( \nabla \Gamma_{A_j}(z,y) - \nabla \Gamma_y(z)) \cdot \nabla \phi(z)\, dz +  \int A(z) \nabla \Gamma_{y}(z) \cdot \nabla \phi(z)\, dz  \\
		&\eqqcolon I_j + II_j + III.
	\end{align*}
	
	By H\"older's inequality,  if $\supp \phi \subset B(y,M)$ for some $M>1$ large enough, by \eqref{stimeGammanorme},
	\begin{align*}
		| I_j| &\leq  \| A_j -A \|_{L^{p'}(B(y,1))} \| \nabla \Gamma_{A_j}(z,y)\|_{L^{p}(B(y,1))}  \| \nabla \phi\|_{L^\infty(\Rn1)}\\
		&\qquad+  \| A_j -A \|_{L^{2}(B(y,M))} \| \nabla \Gamma_{A_j}(z,y)\|_{L^{2}(B(y,M) \setminus B(y,1) )}   \| \nabla \phi\|_{L^\infty(\Rn1)}\\
		&\lesssim_M  \Bigl[ \Lambda^\frac{p'-1}{p'} \, \| A_j -A \|_{L^{1}(B(y,1))}^{1/p'}  + \Lambda^\frac{1}{2} \, \| A_j -A \|_{L^{1}(B(y,M))}^{1/2} \Bigr]   \| \nabla \phi\|_{L^\infty(\Rn1)} \\
		& \to 0, \quad \text{ as } j \to \infty.
	\end{align*}
	
	The fact that $II_j \to 0$ as $j \to \infty$ follows from the  weak convergence of $\nabla \Gamma_{A_j}(\cdot,y) \to \nabla \Gamma_y(\cdot)$ in $L^1(B(y,1))$ and $L^2(\R^{n+1} \setminus B(y,1))$.  Thus,  by \eqref{eq:fund-sol-weak}, we have that 
	\[
		\phi(y)= \int A(z) \nabla \Gamma_{y}(z) \cdot \nabla \phi(z)\, dz.
	\]
	
	It remains to prove that $\Gamma_y(x) = \Gamma(x,y)$ for all $x \in \Rn1$ such that $x \neq y$.  To this end, let $f \in C^\infty_c(\Rn1)$ and note that, by  the weak convergence of $ \Gamma_{A_j}(\cdot,y) \to \Gamma_y(\cdot)$ in $L^1(B(y,1))$ and $L^{2^*}(\R^{n+1} \setminus B(y,1))$, it holds that
	\[
		\lim _{j \to \infty} w_j(y)\coloneqq \lim_{j \to \infty} \int \Gamma_{A_j}(x,y) f(x) \,dx =  \int \Gamma_{y}(x) f(x) \,dx.
	\]

	As  $\| w_j \|_{Y^{1,2}(\Rn1)} \lesssim \|f\|_{L^\infty(\Rn1)}$, with implicit constants depending only on the diameter of the support of $f$,  there exists $w \in Y^{1,2}(\Rn1)$ with  $\| w \|_{Y^{1,2}(\Rn1)} \lesssim \|f\|_{L^\infty(\Rn1)}$, such that, after passing to a subsequence, $ w_j \to w$ weakly in $Y^{1,2}(\Rn1)$.  Moreover, $w_j$ is locally H\"older continuous whose H\"older seminorm in each compact set is uniformly bounded in $j$ (see Theorem \ref{thm:boundary oscillation}).   Thus,  since, by \eqref{eq:Green_function_global bound} and \eqref{eq:lorentz<bounded}, it holds that  
	$$
	\sup_{z \in \Rn1} |w_j(z)| \lesssim \|f\|_{L^\infty(\Rn1)},
	$$  we may pass to a subsequence, so that $w_j \to w$ locally uniformly as $j \to \infty$. This implies that 
	\[
		w(y)=\int \Gamma_{y}(x) f(x) \,dx
	\]
	and, as it is easy to see that  $L_{A^T_j} w= f$ in $\Rn1$,  we infer  that 
	\[
		\int \Gamma_{y}(x) f(x) \,dx =  \int \Gamma_{A}(x,y) f(x) \,dx.
	\]

The latter  identity is true  for any $f \in C^\infty_c(\Rn1)$ and using that both $\Gamma_{y}(\cdot)$ and $\Gamma_{A}(\cdot,y)$  are continuous away from $y$,  we deduce that $ \Gamma_{y}(x) = \Gamma_{A}(x,y) $ for any $x \in \Rn1$ with $x \neq y$, concluding the proof of the lemma. 
\end{proof}

\vv

Using the previous lemmas, we are now ready to prove the main result of this section.

\begin{proof}[Proof of Proposition \ref{limlem}]
	We split the proof of the proposition into several steps.
	
	\vspace{2mm}
	\noindent \textbf{Convergence of Green's functions.} 
	
	\vspace{1mm}
	Let $u_j$ be defined by 
	\[
	u_j(x) \coloneqq 
	\begin{cases}
		G^{A^T_j}_{\Omega_{j}} (x, x_0) & \text{ if } x \in \Omega_j, \\
		0 & \text{ if } x \in \mathbb{R}^{n+1} \setminus \Omega_j.
	\end{cases}
	\]

	By Lemma~\ref{lemgreen*}, for any ball $B$ such that $2B \subset \mathbb{R}^{n+1} \setminus B_0$, it holds that
	\[
	\sup_{z \in B} u_j(z) \lesssim r_0^{1-n}.
	\]
	Following the argument of \cite[Lemma 2.9]{AMT20}, we obtain that $u_j \in C^{\alpha}_{\mathrm{loc}}(\mathbb{R}^{n+1})$ with Hölder constants uniform in $j$. Therefore, by Arzelà-Ascoli and a diagonalization argument, there exists a non-negative function $u_\infty^{x_0} \in C^\alpha_{\mathrm{loc}}(\mathbb{R}^{n+1})$ such that, up to a subsequence,
	\[
	u_j \to u_\infty^{x_0} \quad \text{uniformly on compact subsets of } \mathbb{R}^{n+1},
	\]
	and 
	\[
	\sup_{z \in \mathbb{R}^{n+1} \setminus B_0} u_j(z) \lesssim r_0^{1-n}.
	\]
	
	Furthermore, since each $u_j$ is a positive subsolution to $L_{A^T_j} w = 0$ in $B$, we may apply Caccioppoli's inequality to obtain
	\[
	\int_B |\nabla u_j|^2 \lesssim \frac{1}{r^2} \int_{2B} |u_j|^2 \lesssim \Bigl( \frac{r(B)}{r_0} \Bigr)^{n-1}.
	\]
	This implies that $\|u_j\|_{Y^{1,2}(B)} \lesssim \bigl( \frac{r(B)}{r_0} \bigr)^{n-1}$ uniformly in $j$. By the Banach-Alaoglu theorem and the Eberlein–\v{S}mulian theorem, we can extract a further subsequence such that
	\[
	u_j \rightharpoonup u_\infty^{x_0} \quad \text{weakly in } Y^{1,2}_{\mathrm{loc}}(\mathbb{R}^{n+1} \setminus B_0).
	\]
	
	In addition, by \eqref{eq:bd_g_2},  and \eqref{eq:bd_g_3}, we know that 
	$$
	\|  u_j \|_{L^p(2B_0)} \lesssim r_0^{2-\frac{n+1}{p'}} \quad \text{ and } \quad \| \nabla u_j \|_{L^p(2B_0)} \lesssim r_0^{1-\frac{n+1}{p'}}
	$$
	for $p \in [1, \frac{n+1}{n})$,  and hence, up to a further subsequence,
	\[
	u_j \rightharpoonup u_\infty^{x_0} \quad \text{weakly in } W^{1,p}(2 B_0).
	\]
	
	\vspace{2mm}
	\noindent \textbf{Definition of $\Omega_{\infty}^{x_{0}}$.}
	
	\vspace{1mm}
	We now define the limiting open set to be
	\[
		\Omega_{\infty}^{x_{0}} \coloneqq \{ x \in \mathbb{R}^{n+1} : u_\infty^{x_0}(x) > 0 \}.
	\]
	
	We claim that for every $\vphi \in C^\infty_c(\om_\infty^{x_0})$, there exists $j_0$ large enough, so that   $\vphi \in C^\infty_c(\om_j)$, for every $j \geq j_0$. Indeed, if this was not the case, there would exist $x \in \om^{x_0}_\infty$ so that $x \in \pom_j$ for every $j$.  In that case,  it would hold that $u_j(x)=0$ for every $j$ while $u_\infty^{x_0}(x)>0$.  However,  $0= \lim_{j\to\infty} u_j(x) = u(x)>0$, which leads to a contradiction and the claim follows.
	\vspace{2mm}
	
	\noindent {\bf 	Convergence of elliptic measures.}
	
	\vspace{1mm}
	
	Recall that the measures $\omega_j\coloneqq\omega_{\Omega_j}^{A_j, x_0}$ are probability measures, and form a tight sequence by Lemma \ref{l:holdermeasure}. Indeed, let $\xi_j \in \partial\Omega_j$ be a point realizing $\delta_{\Omega_j}(x_0)$, and note that $\delta_{\Omega_j}(x_0) \approx r_0$ uniformly in $j$. Then, given any $\varepsilon > 0$, we can choose $r \approx \varepsilon^{-1/\alpha} r_0$ and apply \eqref{e:wholder} to obtain
	\[
		\omega_j( B(\xi_j, 2r)^c )\lesssim  \Bigl(   \frac{|x_0 - \xi_j | }{r}\Bigr)^{\alpha} \approx  \Bigl(   \frac{r_0 }{r}\Bigr)^{\alpha}< \ve,
	\]
	where the implicit constants are  independent of $j$.  Thus,  we may pass to a subsequence,  so that $\omega_j$ converges weakly to a Radon measure $\omega_{\infty}$.

	Note that  $\supp (\omega_j)=\d\Omega_{j}$ simply because of Lemma \ref{l:bourgain}, the mutual absolute continuity of elliptic measures with different poles together, and  the fact that the sets $\Omega_j$ satisfy the CDC condition for every $j\in \mathbb N$.

	We will now prove that  $\supp (\omega_\infty)=\d\Omega_{\infty}^{x_{0}}$.  Let us first notice that $\supp (\omega_\infty)\subseteq \overline{\Omega_\infty^{x_0}}$. This is of course due to the fact that the sets $\{u_j>0\}$ converge in the Hausdorff metric to $\{u_\infty^{x_0}>0\}$ since $u_j$ converges  to $u_\infty^{x_0}$  locally uniformly. 
	
	Now, let $x\in \supp (\omega_\infty) \cap \Omega_\infty^{x_0}$ and let $r>0$ be so small that $B\coloneqq B(x,r)\subseteq \Omega_\infty^{x_0}$. Let $\phi$ be a positive smooth function supported on $B$ such that $\phi(x)>0$. Notice that for $j\in \mathbb N$ big enough we further have that $B\subseteq \Omega_j$ arguing as above. This however results in a contradiction since
	\[
	0<\int \phi \, d\omega_\infty=\lim_{j\to \infty}\int \phi\,  d\omega_j=0.
	\]
	Thus such a point cannot exist and so $\supp(\omega_\infty^{x_0})\subseteq \partial \Omega_\infty^{x_0}$. 
	
	Conversely, let $x \in \partial \Omega_\infty^{x_0} \setminus \supp (\omega_\infty)$, and let $r > 0$ be such that $B(x, r) \cap \supp \omega_\infty = \varnothing$, with the additional assumption that $x_0 \notin B(x, r)$. Then, for every smooth function $\phi$ supported in $B(x, r)$, we may apply Lemma~\ref{lemgreen*} to obtain that
	\begin{equation*}
		\begin{split}
			0=&\int \phi \, d\omega_\infty=\lim_{j\to \infty} \int \phi\,  d\omega_j=\lim_{j\to \infty} \int A^T_j \nabla u_j(y) \cdot \nabla\phi(y)\,  dy\\
			=&\lim_{j\to \infty} \int ( A^T_j  - A^T ) \nabla u_j(y) \cdot \nabla\phi(y)\,  dy + \int A^T \nabla u_j(y) \cdot\nabla \phi(y)\, dy = \lim_{j\to \infty} ( I_j +II_j).  
		\end{split}
	\end{equation*}
	By H\"older's inequality and the fact that $\| \nabla u_j \|_{L^2(B(x,r))} \lesssim  (r/r_0)^{n-1}$ and that $A_j  \to  A $  in $L^{2}(B)$-norm, (as $A_j$   and $A$ are in $L^\infty(\Rn1)$ with uniform  bounds),  we obtain
	$$
	|I_j| \lesssim \| A^T_j  - A^T\|_{L^{2}(B(x,r))} \| \nabla u_j \|_{L^2(B(x,r))} \| \nabla \phi\|_{L^\infty(B(x,r))} \to 0, \quad \text{ as } j \to \infty.
	$$
	Moreover, by the local weak $L^2$ convergence of $u_j $ to $u_\infty^{x_0}$ in $B(x,r)$, we get that 
	$$
	\lim_{j\to \infty} II_j = \int A \nabla u^{x_0}_\infty(y) \cdot\nabla \phi(y)\, dy.
	$$
	This implies that $L_{A^T}u^{x_0}_\infty=0$ in $B(x,r)$.  Thus,  since $u^{x_0}_\infty(x)=0$, by  maximum principle, $u^{x_0}_\infty=0$ in $B(x,r)$, which contradicts the fact that $x \in \partial \Omega_\infty^{x_0}$.

	\vspace{2mm}

	\noindent {\bf 	$\Omega_{\infty}^{x_{0}}$ satisfies the CDC.}	
	
	\vspace{1mm}
	
	Thanks to the  argument above and the fact that $\omega_j \warrow \omega_\infty$, we know that for any $\xi \in \pom^{x_0}_\infty$, there exists a sequence $\{\xi_j \}_{j\geq 1}$ such that  $\xi_j \in \d \om_j$ and $\xi_j \to \xi$.  The exact same argument as in the proof of \cite[Lemma 2.9]{AMT20} proves that $\Omega_{\infty}^{x_{0}}$ satisfies the CDC.

	\vspace{2mm}

	\noindent {\bf $u_\infty^{x_0}$  is the Green function for the operator  $L_{A^T}$ in $\Omega_\infty^{x_0}$ with pole at $x_0$.}  	
	
	\vspace{1mm}

	If $\phi \in C^\infty_c(\Omega_\infty^{x_0})$,  then it also holds that $\phi \in C^\infty_c(\Omega_j)$ for $j$ large enough and so
	$$
	\phi(x_0)= \int_{\om_j}  A^T_j \nabla G^{A^T_j}_{\Omega_{j}} (x,x_0)\cdot \nabla \phi(x) \,dx.
	$$
	Since $u_j= G_{A^T_j, \om_j}(\cdot,x_0)$ in $\Omega_j$,  by the weak convergence of $\nabla  u_j$ to $\nabla u_\infty^{x_0}$ in $L^p(B(x_0, 2r_0)$ and $ L^2( \Rn1 \setminus B(x_0,r_0) )$,  arguing as in the proof of \eqref{eq:fund-sol-weak}, we can show that
	$$
	\phi(x_0)= \lim_{j \to \infty} \int A^T_j \nabla u_j(x) \cdot\nabla \phi(x) \,dx= \int A^T \nabla u_\infty^{x_0}\cdot \nabla \phi(x) \,dx.
	$$
	
	Recall that, by Lemma \ref{eq:Green-repr-wiener},  for $x \in \mathbb{R}^{n+1} \setminus \{x_0\}$, we have
	\begin{equation}\label{eq:green-repr-j}
	u_j(x) = \Gamma_{A^T_j}(x, x_0) - h_j(x),
	\end{equation}
	where $h_j = H_{\Gamma_{A^T_j}(\cdot,x_0)}$ in ${\Omega}_j$ for the operator $L_{A^T_j}$ and $h_j = \Gamma_{A^T_j}(\cdot,x_0)$ in $\mathbb{R}^{n+1} \setminus \overline{\Omega}_j$.
	
	Moreover, by Lemma \ref{eq:Green-repr}   there exists $h_{x_0} \in Y_{\loc}^{1,2}(\mathbb{R}^{n+1}) \cap C^\alpha_{\mathrm{loc}}(\mathbb{R}^{n+1})$ such that, up to a subsequence, $h_j \to h_{x_0}$ locally uniformly and weakly in $Y^{1,2}(\mathbb{R}^{n+1})$.  Owing to   \ref{lem:limits fundamental}, we know that $\Gamma_{A^T_j}(x, x_0) \to \Gamma_{A^T}(x, x_0)$ locally uniformly on compact subsets of $\mathbb{R}^{n+1} \setminus \{x_0\}$.  
	
	We claim that $ L_{A^T} h_{x_0}= 0$. Indeed,  if $\phi \in C^\infty_c(\om^{x_0}_\infty)$, there exists $j_0$, so that  $\phi \in C^\infty_c(\om_j)$ for all $j \geq j_0$. Therefore,  as $L_{A_j^T} h_j =0$ in $\om_j$,  by the weak covergence of $h_j $ to $h_{x_0}$ in $Y^{1,2}_{\loc}(\Rn1)$, 
	\begin{align*}
0= \lim_{j \to \infty} \int A_j^T \nabla h_j \cdot\nabla \phi = \int A^T \nabla h_{x_0} \cdot\nabla \phi,
	\end{align*}
	proving our claim. 
	
We take limits in \eqref{eq:green-repr-j} as \( j \to \infty \) and obtain
\[
u_\infty^{x_0}(x) = \Gamma_{A^T}(x, x_0) - h_\infty(x) \quad \text{for all } x \in \mathbb{R}^{n+1} \text{ with } x \neq x_0.
\]
By the very definition of \( \Omega_\infty^{x_0} \), we have \( \Gamma_{A^T}(x, x_0) = h_\infty(x) \) for every \( x \in \mathbb{R}^{n+1} \setminus \Omega_\infty^{x_0} \). Hence, since \( \Omega_\infty^{x_0} \) is Wiener regular,
\[
h_\infty(x) = H_{\Gamma_{A^T}(\cdot, x_0)}(x) \quad \text{for all } x \in \Omega_\infty^{x_0},
\]
and an application of Lemma~\ref{eq:Green-repr} yields
\[
u_\infty^{x_0}(x) = G^{A^T}_{\Omega_\infty^{x_0}}(x, x_0) \quad \text{for all } x \in \Omega_\infty^{x_0} \text{ with } x \neq x_0.
\]

We remark that this immediately implies that \( \Omega_\infty^{x_0} \) is connected.

	\vspace{2mm}

	\noindent{\bf $\omega_{\infty}^{x_{0}}$ is the elliptic measure for $L_A$ in $\Omega_\infty^{x_0}$ with pole at $x_0$.}
	
	\vspace{1mm}

	Let $\phi\in C_{c}^{\infty}(\bR^{n+1})$ be such that $\phi(x_{0})=0$.   By the weak convergence of $u_j$ to $u_\infty^{x_0}$ in $Y^{1,2}_{\loc}(\Rn1)$, we have 
	\begin{equation}\label{phiom}
		\begin{split}
				\int \phi \,d\omega_{\infty} 
			&= \lim_{j\rightarrow\infty} \int \phi \,d\omega_j=\lim_{j\rightarrow\infty}\int A^T_j \nabla u_j\cdot \nabla \phi(y)\, dy \\
			&= \int_{\om^{x_0}_\infty} A^T(y)\nabla G_{\om_\infty^{x_0} }^{A^T}(x, x_0) \cdot \nabla \phi(y)\, dy = \int \phi \,d\omega_{\om^{x_0}_\infty}^{A, x_{0}}.
		\end{split}
	\end{equation}
	This concludes the proof by showing that $\omega_\infty$ is the elliptic measure of $L_A$ relative to the domain $\Omega_\infty^{x_0}$.   
\end{proof}

To conclude this section, we now prove an auxiliary lemma that is useful to implement the compactness argument under the hypothesis \eqref{eq:small_BMO_condition}.
\begin{lemma}\label{lem:BMOmatrix}
	Assume  that \( \{A_j\}_{j \in \mathbb{N}} \) is a sequence of Borel measurable, uniformly elliptic coefficient matrices, all with the same ellipticity constants.  For each \( j \in \mathbb{N} \), suppose there exists an infinitesimal sequence \( \{ \varepsilon_j \}_{j \in \mathbb{N}} \) such that
	\begin{equation}
		\sup_{0< r < 1} \sup_{x \in \mathbb{R}^{n+1}} \, \avint_{B(x,r)} \left| A_j(y) - \avint_{B(x,r)} A_j \right| \, dy \leq \varepsilon_j.
	\end{equation}
	Then there exists  a constant coefficients uniformly elliptic matrix $A$,  such that, after passing to a subsequence, 
	$$
	A_j \to A  \quad \text{ in } L^1_{\loc}(\Rn1) \text{ as } j \to \infty.
	$$
\end{lemma}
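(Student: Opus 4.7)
\textbf{Proof plan for Lemma \ref{lem:BMOmatrix}.}

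The idea is to first extract a subsequence so that the averages of $A_j$ over a countable dense family of balls converge, then use the small BMO hypothesis to force the limiting averages to be the same constant matrix regardless of the ball, and finally upgrade this to $L^1_{\loc}$ convergence.

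\emph{Step 1: subsequence for averages.} Since every $A_j$ is uniformly bounded (by the ellipticity bound $\Lambda$), for any ball $B$ the matrix-valued sequence $\{\bar{(A_j)}_{B}\}_{j}$ lies in a compact set. Enumerate a countable dense family $\{B_k\}_{k\ge 1}$ of balls in $\mathbb{R}^{n+1}$ (say with rational centers and rational radii in $(0, 1/10)$, and later also balls of larger radii if needed). By a standard diagonal extraction, pass to a subsequence (still denoted $\{A_j\}$) such that $\bar{(A_j)}_{B_k} \to c_{B_k}$ as $j\to\infty$ for every $k$, where each $c_{B_k}$ is some matrix satisfying the same uniform ellipticity bounds as the $A_j$.

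\emph{Step 2: the constants $c_{B_k}$ coincide.} The key observation is that if $B', B''$ are two balls of radius $\le 1/10$ whose union is contained in a ball $\widetilde B$ of radius $\le 1/2 < 1$, then the small BMO hypothesis applied to $\widetilde B$ yields
\begin{equation*}
\bigl|\bar{(A_j)}_{B'} - \bar{(A_j)}_{\widetilde B}\bigr| \le \avint_{B'} \bigl|A_j - \bar{(A_j)}_{\widetilde B}\bigr|\,dy \le \frac{|\widetilde B|}{|B'|}\,\avint_{\widetilde B}\bigl|A_j - \bar{(A_j)}_{\widetilde B}\bigr|\,dy \lesssim \varepsilon_j,
\end{equation*}
and similarly for $B''$. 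Therefore $|\bar{(A_j)}_{B'} - \bar{(A_j)}_{B''}| \lesssim \varepsilon_j$, and passing to the limit gives $c_{B'} = c_{B''}$. Since any two balls in the dense family can be connected by a finite chain of small balls with consecutive pairs contained in a common ball of radius $<1$, the chain argument shows $c_{B_k} \equiv A$ for a single constant matrix $A$, which inherits the uniform ellipticity.

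\emph{Step 3: $L^1_{\loc}$ convergence.} Fix a compact set $K \subset \mathbb{R}^{n+1}$ and cover it by finitely many balls $B^{(1)},\dots,B^{(N)}$ of radius $1/10$ drawn from the dense family. For each index $\ell$,
\begin{equation*}
\int_{B^{(\ell)}} |A_j - A| \,dy \le \int_{B^{(\ell)}} \bigl|A_j - \bar{(A_j)}_{B^{(\ell)}}\bigr|\,dy + |B^{(\ell)}|\,\bigl|\bar{(A_j)}_{B^{(\ell)}} - A\bigr| \le |B^{(\ell)}|\,\varepsilon_j + |B^{(\ell)}|\,\bigl|\bar{(A_j)}_{B^{(\ell)}} - A\bigr|,
\end{equation*}
and both terms tend to zero as $j\to\infty$ by Step 1 and the hypothesis $\varepsilon_j \to 0$. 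Summing over $\ell$ gives $\int_K |A_j - A|\,dy \to 0$, completing the proof.

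The only mildly delicate point is Step 2 (the chain argument and comparability of averages), but it is essentially forced by the scale-invariant nature of the small BMO hypothesis below scale $1$; no serious obstacle is expected.
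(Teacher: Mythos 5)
Your proposal is correct and follows essentially the same route as the paper: diagonal extraction of convergent averages over a countable dense family of balls, identification of a single constant limiting matrix, and then the local $L^1$ estimate combining the small-BMO bound with convergence of the averages. The only (harmless) variation is in the identification step, where you compare averages of nearby balls quantitatively through a common containing ball of radius below $1$ and chain, whereas the paper passes to a.e.\ convergent subsequences on overlapping balls; both mechanisms are valid and rest on the same hypothesis.
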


\begin{proof}

	Observe that for any ball \(B \subset \mathbb{R}^{n+1}\), since the matrices \(A_j\) are uniformly bounded and elliptic, by compactness and passing to a (non-relabeled) subsequence, we may assume the existence of a matrix \(C(B)\), with the same ellipticity constants, such that
	\[
	\lim_{j \to \infty} \avint_B A_j = C(B).
	\]
	From the vanishing mean oscillation condition in all balls, we also have
	\[
	\lim_{j \to \infty} \avint_B \left| A_j - \avint_B A_j \right| = \lim_{j \to \infty} \avint_B |A_j - C(B)| = 0.
	\]
	
	Now consider a countable, dense collection of balls \(\{B_k\}_{k \in \mathbb{N}}\) in \(\mathbb{R}^{n+1}\),  constructed using a countable dense set of centers and radii. By a diagonalization argument, we may extract a non-relabeled subsequence such that
	\[
	\lim_{j \to \infty} \avint_{B_k} A_j = C(B_k), \quad \text{for all } k \in \mathbb{N}.
	\]
	
	Next, take any two balls \(B_1, B_2\) from the collection such that \(  \mathcal{L}^{n+1}(B_1 \cap B_2)> 0 \), and let \(B_3 \subseteq B_1 \cap B_2\) be another ball from the collection. Since the mean of \(A_j\) converges to the same value on \(B_1\), \(B_2\), and \(B_3\), and because convergence in mean implies convergence almost everywhere up to subsequence, we deduce that
	\[
	C(B_1) = C(B_2) = C(B_3).
	\]
	
	Repeating this argument for all overlapping balls \(B_k\), we conclude that all the limits \(C(B_k)\) must coincide. Hence, there exists a single constant matrix \(A\) such that \(C(B_k) = A\) for all \(k\). It follows that \(A_j \to A\) in \(L^1_{\mathrm{loc}}(\mathbb{R}^{n+1})\), as desired.
\end{proof}
\vvv

\section{Applications to free boundary problems} \label{section:main_lemma}

\subsection{The proof of Theorem \ref{theorem:main_one_phase}.}

The results from Sections \ref{sec:aux_operator} and \ref{big piecesTb}, combined with the strategy used in the harmonic measure setting in \cite{AHMMMTV16}, allow us to prove Theorem~\ref{theorem:main_one_phase}. In particular, the proof of rectifiability in the qualitative one-phase problem for elliptic measure can be reduced to the following key lemma.

Before stating it, we introduce the \emph{maximal radial operator}
\[
\mathcal{M}_n \mu(x) \coloneqq \sup_{r>0} \frac{|\mu|(B(x,r))}{r^n},
\]
where $\mu$ is a Radon measure on $\mathbb{R}^{n+1}$.

\begin{lemma}\label{label:key_lemma_qualit_one-phase}
	Let $\Omega$, $p$, $A$, and $E$ be as in Theorem~\ref{theorem:main_one_phase}. Then, for $\omega \coloneqq \omega^{L_A, p}_\Omega$, it holds that
	\[
		\mathcal{M}_n \omega(x) + T_* \omega(x) < \infty, \qquad \text{for } \omega\text{-a.e.\ } x \in E.
	\]
\end{lemma}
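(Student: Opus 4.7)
The lemma splits into two independent estimates, handled by separate techniques.

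\emph{Part~I: the maximal function.} By Radon--Nikodym, the hypothesis gives $\omega|_E = f\, \mathcal H^n|_E$ with $f \in L^1(\mathcal H^n|_E)$. Standard differentiation theorems for Radon measures (Besicovitch) yield $\Theta^{*,n}(x, \omega|_E) := \limsup_{r\to 0^+} \omega|_E(B(x,r))/r^n < \infty$ for $\mathcal H^n$-a.e.\ $x \in E$, and hence for $\omega$-a.e.\ $x \in E$. Lebesgue differentiation for the Radon measure $\omega$ further shows that $\omega$-a.e.\ $x \in E$ is an $\omega$-density point of $E$, so the upper $n$-density of the full measure $\omega$ at $x$ coincides with that of $\omega|_E$ and is therefore finite. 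Since $\omega$ is a probability measure, $\sup_{r\ge r_0}\omega(B(x,r))/r^n \leq r_0^{-n}$, which combined with the preceding small-scale bound yields $\mathcal M_n\omega(x) < \infty$ $\omega$-a.e.\ on $E$.

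\emph{Part~II: the operator.} This follows the scheme used for harmonic measure in \cite{AHMMMTV16}, adapted to the present $\widetilde\DMO$-setting. By Part~I we may fix $x \in E$ with $\omega(B(x,r)) \le M r^n$ for all small $r > 0$ and some $M < \infty$. For each such $r$ we select a reference point $\tilde x$ with $|\tilde x - x|\approx r$ and $\delta_\Omega(\tilde x) \approx r$ (a Whitney-type point relative to $\partial\Omega$; its existence at $\omega$-a.e.\ $x\in E$ follows from Wiener regularity together with the density properties established in Part~I). We decompose
\[
T_r\omega(x) = T\omega(\tilde x) + \int_{|y-x|>r}\bigl[\nabla_1\Gamma_A(x,y) - \nabla_1\Gamma_A(\tilde x,y)\bigr]\, d\omega(y) - \int_{|y-x|\le r}\nabla_1\Gamma_A(\tilde x,y)\, d\omega(y).
\]
The single-layer potential $u(\tilde x) := \int \Gamma_A(\tilde x, y)\, d\omega(y)$ is $L_A$-harmonic on $\R^{n+1}\setminus\partial\Omega$ and, via the Perron representation and the Green's function identity of Lemma~\ref{eq:Green-repr-wiener}, is expressible (up to an $L^\infty$ correction arising in the non-symmetric case) as $\Gamma_A(\tilde x, p) - G^A(\tilde x, p)$; consequently $|T\omega(\tilde x)| = |\nabla u(\tilde x)|$ is bounded in terms of $|\nabla_1\Gamma_A(\tilde x, p)|$ plus $|\nabla_1 G^A(\tilde x, p)|$. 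The former is controlled by Lemma~\ref{lem:estim_fund_sol}(1) on compact subsets of $\R^{n+1}\setminus\{p\}$, while the latter, via \eqref{holder} combined with Remark~\ref{remark:rem_grad_bound}, contributes at worst an $r^{\alpha-1}$ growth for some $\alpha > 0$. The third integral is handled by the pointwise bound $|\nabla_1\Gamma_A(\tilde x, y)| \lesssim |\tilde x - y|^{-n} \lesssim r^{-n}$ on its support together with $\omega(B(x,r)) \le Mr^n$, yielding $\le CM$; the middle integral is handled by a dyadic decomposition of $\{|y-x|>r\}$ into annuli and the Hölder-type kernel continuity of Lemma~\ref{lem:estim_fund_sol}(3), yielding a convergent geometric series bounded by $CM$. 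The decomposition is arranged so that the $r^{\alpha-1}$ contribution in $T\omega(\tilde x)$ cancels (up to a bounded remainder) against the singular parts of the truncation errors, giving a bound on $T_r\omega(x)$ uniform in $r$, whence $T_*\omega(x) < \infty$.

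\emph{Main obstacle.} The central difficulty is precisely this cancellation: individually, neither $T\omega(\tilde x)$ nor the truncation errors are uniformly bounded as $r \to 0$, and only their combination is. Extracting it requires (i) the correct identification of the remainder $u(\tilde x) - [\Gamma_A(\tilde x, p) - G^A(\tilde x, p)]$ when $A \neq A^T$, which involves comparing the $L_A$- and $L_{A^T}$-Perron extensions of the boundary data $\Gamma_A(\tilde x, \cdot)|_{\partial\Omega}$; and (ii) a careful dyadic bookkeeping that exploits the uniform $n$-density bound $\omega(B(x,\rho)) \le M\rho^n$ from Part~I to absorb the $r^{\alpha-1}$ divergence of $\nabla_1 G^A(\tilde x, p)$. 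Both ingredients are available in the $\widetilde\DMO$ framework developed in Sections~\ref{section:preliminaries_and_notation} and~\ref{sec:mean_oscillation_perturbation}.
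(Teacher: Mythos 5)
Your Part I is fine and coincides with what the paper does: the bound $\mathcal M_n\omega(x)<\infty$ $\omega$-a.e.\ on $E$ follows from differentiation theory and $\omega|_E\ll\mathcal H^n|_E$ (plus finiteness of $\omega$ for the large scales). The problems are in Part II, and they are genuine.

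The central gap is your comparison point $\tilde x$ with $|\tilde x-x|\approx r$ and $\delta_\Omega(\tilde x)\approx r$. Theorem~\ref{theorem:main_one_phase} assumes only that $\Omega$ is open, connected and Wiener regular; there is no corkscrew condition, and Wiener regularity (a capacity lower bound on the \emph{complement}) gives no interior thickness at all. Near $\omega$-a.e.\ $x\in E$ the boundary may be $\varepsilon r$-dense in $B(x,Cr)$ at every small scale, so no point at distance $\approx r$ from $\partial\Omega$ need exist, and neither the Wiener criterion nor the density information of Part I (which concerns $\omega$ and $\mathcal H^n|_E$, not the geometry of $\partial\Omega\setminus E$) produces one. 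This is exactly why the paper, following \cite{PPT21} (Section 12) and \cite{AHMMMTV16} (Lemma 4.3), never compares with an interior Whitney point: it works at the boundary point $x$ itself with a smoothly regularized truncation $\widetilde T_r\omega$, uses the representation of Lemma~\ref{eq:Green-repr-wiener} to rewrite the relevant potential as $\Gamma_A(p,\cdot)-G_A(p,\cdot)$ (with $G$ extended by zero), and bounds the resulting averages of the Green function over $B(x,Cr)$ by $\mathcal M_n\omega(x)\le k$ via the maximum-principle estimate $r^{n-1}G(z,p)\lesssim\omega^p(B(x,Cr))$ together with Caccioppoli and the DMO gradient estimate \eqref{eq:Linftyest-r}. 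Two further steps of your scheme fail for related reasons: the estimate \eqref{holder} of Theorem~\ref{thm:boundary oscillation} requires the CDC, which is not assumed here; and the claimed cancellation of the $r^{\alpha-1}$-divergent contributions between $T\omega(\tilde x)$ and the truncation errors is asserted, not proved — in the correct argument no such divergence ever appears, because one estimates $r^{-1}\sup_{B(x,Cr)}G(\cdot,p)$, never $\nabla G$ pointwise at an interior point.

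There is also a structural error in your use of the Green identity. Lemma~\ref{eq:Green-repr-wiener} gives $\int_{\partial\Omega}\Gamma_A(\xi,z)\,d\omega^{p}(\xi)=\Gamma_A(p,z)-G_A(p,z)$, i.e.\ the integration is in the \emph{first} slot of $\Gamma_A$; differentiating in $z$ produces the kernel $\nabla_2\Gamma_A(\xi,z)$, which is the kernel the paper's proof actually uses. Your potential $u(\tilde x)=\int\Gamma_A(\tilde x,y)\,d\omega(y)$ has the opposite orientation: it is the $L_A$-Perron extension at $p$ of the data $\Gamma_{A^T}(\cdot,\tilde x)$, and converting it into a $\Gamma-G$ expression would require the elliptic measure of $L_{A^T}$, not of $L_A$. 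The discrepancy is not an ``$L^\infty$ correction'': after differentiation it amounts to testing $\nabla_1\Gamma_A(\tilde x,\cdot)$ against $\omega^{L_A,p}-\omega^{L_{A^T},p}$, a quantity of exactly the same nature as the one you are trying to bound, and $\omega^{L_{A^T},p}$ enjoys none of the hypotheses on $E$. The mismatch between the two kernel orientations can be repaired once Part I is in place, since by the frozen-coefficient comparisons (Lemmas~\ref{lem:main_pw_estimate} and \ref{lem:bound_av_diff_point}, together with \eqref{eq:gradient_fund_sol_const_matrix}, which shows the constant-coefficient kernel depends only on the symmetric part of the matrix) the kernels $\nabla_1\Gamma_A(x,y)$ and $\nabla_2\Gamma_A(y,x)$ differ by a Dini-type kernel that is integrable against any measure with $\mathcal M_n\omega(x)\le k$; but that is a kernel-level argument, not the Perron-extension comparison you invoke. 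As written, Part II does not go through.
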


\begin{proof}[Sketch of the proof.]
	Instead of the original proof, we hereby follow the more direct proof in \cite[Section 12]{PPT21}.
	Without loss of generality, we assume that $\Omega$ is a bounded domain. The fact that $\mathcal{M}_n\omega(x)<\infty$ follows directly from the Lebesgue differentiation theorem and the absolute continuity of $\omega|_E$ with respect to $\mathcal{H}^n|_E$, exactly as in \cite{PPT21}.
	
	To show that $T_*\omega(x)<\infty$ for $\omega$-a.e.\ $x\in E$, for each $k\geq 1$, we define
	\[
	E_k \coloneqq \left\{x \in E : \mathcal{M}_n\omega(x) \leq k \right\},
	\]
	and observe that $\omega\bigl(E \setminus \bigcup_{k\geq 1} E_k \bigr) = 0$.
	Let $\varphi$ be a smooth radial cut-off function satisfying $0\leq \varphi \leq 1$, $\varphi \equiv 1$ on $B(0,1)$, and $\varphi \equiv 0$ on $\mathbb{R}^{n+1} \setminus B(0,2)$. For $r>0$, define $\varphi_r(\cdot) \coloneqq r^{-1} \varphi(\cdot/r)$ and consider the truncated operator
	\[
	\widetilde{T}_r\omega(z) \coloneqq \int \nabla_2 \Gamma_A(y,z) \, \varphi_r(z - y) \, d\omega(y).
	\]
	
	Standard Calderón–Zygmund theory (which applies thanks to the pointwise bounds in Lemma \ref{lem:estim_fund_sol}) and elliptic PDE estimates, together with \eqref{eq:Linftyest-r} and  the representation formula \eqref{eq;green ident-Wiener} , can be invoked exactly as in \cite[Lemma 4.3]{AHMMMTV16} to conclude that $|\widetilde{T}_r\omega(x)| < \infty$ for $\omega$-a.e.\ $x \in E_k$ and all $r > 0$, which concludes the lemma by standard regularized kernel estimates.
\end{proof}

Applying Theorem~\ref{thm:big piecesTb-aux_intro}, we conclude that there exists a subset $G_0 \subset E$ with $\omega^p(G_0) > 0$ such that the operator $T_{\omega^p|_{G_0}}$ is bounded on $L^2(\omega^p|_{G_0})$. Since $\omega^p$ is absolutely continuous with respect to $\mathcal{H}^n|_{G_0}$, we may invoke \cite[Corollary 1.4]{MMPT23} to deduce that $G_0$ is $n$-rectifiable.
Hence, we can conclude the proof of Theorem \ref{theorem:main_one_phase} by a standard exhaustion argument.
\vv
\subsection{The proof of Theorem \ref{theorem:elliptic-one-phase_quant}.}

The proof of the quantitative one-phase result for elliptic measure follows almost verbatim the argument of \cite[Theorem 1.1]{MT20}. That result is based on the proof of a Main Lemma \cite[Main Lemma 4.1]{MT20}, which asserts that, under suitable assumptions on a $\mu$-doubling ball $B$ centered at $\supp(\mu)$ with thin boundary, if \eqref{eq:quant_mutual_ac} holds for a sufficiently small constant $\kappa$, then either there exists a properly rescaled sub-ball $B(x_B, \eta r)$ with $\mu(B(x_B, \eta r)) \gtrsim \mu(B)$, or there is a subset $G \subset B$ such that $\mu(G) \gtrsim \mu(B)$ and $T_{\mu|_G}$ is bounded on $L^2(\mu|_G)$.

This argument generalizes to the operators $L_A$, $A\in \widetilde \DMO_{n-1}$, with the minor modifications outlined below:

\begin{enumerate}
	\item The proof of \cite[Key Lemma 7.1]{MT20} carries over via the estimates for the gradient of the single layer potential, specifically the Calder\'on–Zygmund-type bounds provided in Lemma~\ref{lem:estim_fund_sol}, which replace the classical bounds for the Riesz kernel used in the harmonic case. 
	
	\item As a consequence, \cite[Main Lemma 4.1]{MT20} extends to elliptic operators $L_A$ with $A \in \widetilde{\DMO}_{n-1}$ using a $T1$ theorem analogous to \cite[Theorem 8.1]{MT20}, formulated in terms of the gradient of the single layer potential. The extension to the $\widetilde \DMO_{n-1}$ framework is made possible by our novel big pieces $Tb$ theorem, Theorem \ref{thm:big piecesTb-aux_intro}.
\end{enumerate}

With these tools in place, the remainder of the proof follows via a Corona decomposition and a good-$\lambda$ inequality argument, in the spirit of \cite[Section 9]{MT20}.  This part of the argument relies again only on the Calder\'on–Zygmund nature of the operator and does not require additional properties of the Riesz transform.

\vvv

\subsection{The proof of Theorem \ref{theorem:quantitative_two_phase}.}
For the proof, we combine the approach of \cite{AMT20} with our main results concerning the gradient of the single layer potential. We sketch the argument for the reader’s convenience and highlight the main modifications required.

We now assume that $\Omega_i$, $F$, and $\omega_i$, $i=1,2$, are as in the statement of Theorem \ref{theorem:quantitative_two_phase}, and that $A\in \widetilde \DMO_{1-\alpha}$ for some $\alpha\in (0,1)$.
{By Lemma \ref{lem:reduction_to_UC} and \cite[Appendix A]{HwK20}, if $\Omega_i$ are connected and we denote by $\mathcal A$ the uniformly continuous representative of $A$, we have that the elliptic measures $\omega^{L_A, x_i}_{\Omega_i}$ and $\omega^{L_{\mathcal  A}, x_i}_{\Omega_i}$ coincide. Thus, without loss of generality,  we may and will assume that the matrix $A$ in the statement of Theorem \ref{theorem:quantitative_two_phase} is \textit{uniformly continuous} and has modulus of continuity $\mathfrak I_{\oomega_A}$.

	\vvv
		In the following lemma, we merge the upper and lower bounds for the densities of elliptic measures. We remark that the upper bound is a consequence of the monotonicity formula \eqref{eq:monotonicity_formula}, while the lower bound holds, for $\mathfrak{C}_1$-$P_{\gamma,\omega_1}$-doubling balls, by Lemma \ref{lemma:control_density}. 
		\begin{lemma}\label{lemma:bound_density_two_phase_problem}
			Let $\Omega_1, \Omega_2\subset \Rn1$ be disjoint Wiener regular domains. Let also $$0<R<\min\bigl(\dist(x_1, \partial \Omega_1), \dist(x_2, \partial \Omega_2)\bigr).$$ For $0<r<R/4$, $\xi \in \partial \Omega_1\cap \partial \Omega_2$, and $J(\xi, r)$ as in Theorem \ref{theorem:ACF_DMO} it holds that
			\begin{equation}\label{eq:bound_density_two_phase_problem}
				\frac{\omega_1(B(\xi, r))}{r^{n}}\frac{\omega_2(B(\xi, r))}{r^{n}}\lesssim J(\xi, 2r)^{1/2}.
			\end{equation}
			Furthermore, if $B(x,r)$ is a $\mathfrak C_1$-$P_{\gamma,\omega_1}$-doubling ball, for $r_x>0$ as in Lemma \ref{lemma:lemma_balls_Hauss}, we have
			\begin{equation}\label{eq:lower bound_density_two_phase_problem}
			J(\xi, r)^{1/2}\lesssim \frac{\omega_1(B(\xi, 32 r))}{r^n}\frac{\omega_2(B(\xi, 32 r))}{r^n}, \qquad \text{ for }\quad 0<r< R/16.
			\end{equation}
		\end{lemma}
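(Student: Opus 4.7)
The plan is to apply the monotonicity quantity $J(\xi, \cdot)$ from Theorem~\ref{theorem:ACF_DMO} to the specific subsolution pair
\[
u_i(y) \coloneqq G^{A}_{\Omega_i}(y, x_i)\, \chi_{\Omega_i}(y), \qquad i = 1, 2,
\]
where $G^{A}_{\Omega_i}(\cdot, x_i)$ is the Green's function of $L_A$ in $\Omega_i$ with pole at $x_i$. Since $\Omega_1 \cap \Omega_2 = \varnothing$, the zero extensions $u_i$ are nonnegative $L_A$-subsolutions in $B(\xi, R)$ (thanks to the fact that the extension of the Green's function by zero across a Wiener regular portion of the boundary preserves the subsolution property, see Lemma~\ref{lemgreen*}), with $u_1 \cdot u_2 \equiv 0$ and $u_i(\xi) = 0$. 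Both inequalities will be obtained by separately controlling each factor in
\[
J(\xi, r)^{1/2} = \Bigl( \tfrac{1}{r^2}\!\int_{B(\xi, r)} \tfrac{|\nabla u_1|^2}{|y-\xi|^{n-1}}\,dy \Bigr)^{1/2} \Bigl( \tfrac{1}{r^2}\!\int_{B(\xi, r)} \tfrac{|\nabla u_2|^2}{|y-\xi|^{n-1}}\,dy \Bigr)^{1/2}
\]
in terms of the densities $\omega_i(B(\xi, \rho))/\rho^n$ at appropriate scales $\rho \approx r$.

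For the upper bound \eqref{eq:bound_density_two_phase_problem}, I would chain two standard estimates. First, taking a cutoff $\varphi \in C^\infty_c(\Rn1)$ with $\chi_{B(\xi, r)} \leq \varphi \leq \chi_{B(\xi, 2r)}$ and $|\nabla \varphi|\lesssim r^{-1}$, applying the Green representation \eqref{eqgreen*23} to $\varphi$, and controlling the weighted gradient integral by Caccioppoli's inequality on $u_i$ yields
\[
\omega_i(B(\xi, r)) \leq \int \varphi\, d\omega_i \lesssim r^{n-1} \sup_{B(\xi, 2r)} u_i.
\]
Second, since $u_i(\xi) = 0$ and $u_i$ is H\"older continuous at $\xi$ via \eqref{holder}, namely $u_i(y)\lesssim (|y-\xi|/r)^\alpha \sup_{B(\xi, 4r)} u_i$ on $B(\xi, 2r)$, a dyadic decomposition of $B(\xi, 2r)$ into annuli $A_k = B(\xi, 2^{-k+1}r) \setminus B(\xi, 2^{-k}r)$, followed by Caccioppoli on each annulus, produces
\[
\int_{B(\xi, 2r)} \frac{|\nabla u_i|^2}{|y-\xi|^{n-1}}\, dy \lesssim \sum_{k} 2^{-2k\alpha} \sup_{B(\xi, 4r)} u_i^2 \lesssim \sup_{B(\xi, 2r)} u_i^2,
\]
where the geometric summability comes from the H\"older decay. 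Combining the two gives $\bigl(\omega_i(B(\xi, r))/r^n\bigr)^2 \lesssim J_i(\xi, 2r)$, and taking the product for $i = 1, 2$ yields \eqref{eq:bound_density_two_phase_problem}.

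For the lower bound \eqref{eq:lower bound_density_two_phase_problem}, both steps must be reversed. The $\mathfrak{C}_1$-$P_{\gamma, \omega_1}$-doubling hypothesis on $B(x, r)$, together with Lemma~\ref{lemma:lemma_balls_Hauss} (producing the scale $r_x$) and Lemma~\ref{lemma:control_density}, gives a lower density bound $\omega_1(B(\xi, 32 r)) \gtrsim r^n$ at a suitable scale, while Bourgain's lemma (Lemma~\ref{l:bourgain}) gives the analogous lower bound for $\omega_2$. Combined with the pointwise Green's function bound from below \eqref{eq:bound_below_green} at a corkscrew-type point $y_i$ for $B(\xi, 32 r)$, these translate into $\sup_{B(y_i, c r)} u_i \gtrsim \omega_i(B(\xi, 32r))/r^{n-1}$. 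Applying Caccioppoli's inequality in the standard direction on a sub-ball of $B(\xi, r)$ well inside $\Omega_i$ (where the weight $|y-\xi|^{1-n}$ is comparable to $r^{1-n}$), and using Harnack's inequality to propagate these pointwise values into an $L^2$ gradient lower bound, yields $\int_{B(\xi, r)} |\nabla u_i|^2/|y-\xi|^{n-1}\, dy \gtrsim r^{n-1}\, \bigl(\omega_i(B(\xi, 32r))/r^{n-1}\bigr)^2/r^2 \cdot r^2$, which after multiplication and simplification produces \eqref{eq:lower bound_density_two_phase_problem}.

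The main obstacle will be the lower-bound direction: to obtain an integral lower bound on $|\nabla u_i|^2/|y-\xi|^{n-1}$ from a pointwise lower bound on $u_i$ at a corkscrew point, one cannot use Caccioppoli in reverse; the argument must instead use either a Poincar\'e-type inequality on an interior sub-ball (exploiting the vanishing of $u_i$ on $\partial \Omega_i$) combined with Harnack, or an auxiliary test-function argument. The $P_{\gamma, \omega_1}$-doubling is essential here, as it is what ensures a suitable separation of scales between the pole of the Green's function, the corkscrew point, and the reference point $\xi$, thereby making the comparison estimates quantitative and scale-invariant.
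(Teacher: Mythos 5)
Your proposal has the right objects (the zero-extended Green's functions as the ACF pair, the representation formula \eqref{eqgreen*23} with a cutoff, Caccioppoli, dyadic annuli, boundary decay, doubling for small scales), but in both halves the estimates are chained in a direction that does not yield the stated inequality. For \eqref{eq:bound_density_two_phase_problem}, your two steps are $\omega_i(B(\xi,r))\lesssim r^{n-1}\sup_{B(\xi,2r)}u_i$ and $\int_{B(\xi,2r)}\frac{|\nabla u_i|^2}{|y-\xi|^{n-1}}\,dy\lesssim \sup_{B(\xi,2r)}u_i^2$: both are upper bounds by the same quantity $\sup u_i$, so ``combining'' them cannot give $\bigl(\omega_i(B(\xi,r))/r^n\bigr)^2\lesssim J_i(\xi,2r)$ --- that would require a reverse bound of the weighted Dirichlet integral from below by $\sup u_i^2$, which you neither state nor have. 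The correct (and short) step, which is what the paper's citation to \cite[Theorem 3.3]{KPT09} amounts to, is to stop after $\omega_i(B(\xi,r))\lesssim r^{-1}\int_{B(\xi,2r)}|\nabla u_i|$ and apply Cauchy--Schwarz with the weight $|y-\xi|^{n-1}$: since $\int_{B(\xi,2r)}|y-\xi|^{n-1}dy\approx r^{2n}$, one gets $\Theta^n_{\omega_i}(B(\xi,r))\lesssim \bigl(r^{-2}\int_{B(\xi,2r)}\frac{|\nabla u_i|^2}{|y-\xi|^{n-1}}dy\bigr)^{1/2}$ directly, and the product over $i=1,2$ is \eqref{eq:bound_density_two_phase_problem}. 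Note also that your appeal to \eqref{holder} is unavailable here: the domains are only assumed Wiener regular, not CDC, and in fact no boundary H\"older decay is needed for this half.

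For \eqref{eq:lower bound_density_two_phase_problem} you have the direction backwards: the claim is an \emph{upper} bound on $J(\xi,r)$ by the product of densities, so what is needed is an upper bound on each factor $r^{-2}\int_{B(\xi,r)}\frac{|\nabla u_i|^2}{|y-\xi|^{n-1}}dy$. Your plan (corkscrew points, Bourgain's lemma, the lower Green's bound \eqref{eq:bound_below_green}, Harnack, and ``Caccioppoli in the standard direction'') produces lower bounds on the densities and attempts a lower bound on the gradient integral; combining lower bounds on both sides of the target inequality proves nothing, and, as you yourself observe, the reverse Caccioppoli you would need does not exist. The mechanism that works is precisely the annuli estimate you placed in the first half: decompose $B(\xi,r)$ into $A_k=B(\xi,2^{-k}r)\setminus B(\xi,2^{-k-1}r)$, apply Caccioppoli to the nonnegative subsolutions $u_i$ on each $A_k$, replace the CDC-based decay \eqref{holder} by the subsolution estimate $\sup_{B(\xi,s)}u_i\lesssim \omega_i(B(\xi,Cs))/s^{n-1}$, and you are left with a series of the form $\sum_k 2^{-2k}\,\Theta^n_{\omega_i}(B(\xi,2^{-k+3}r))^2$. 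The whole point of the $\mathfrak C_1$-$P_{\gamma,\omega_1}$-doubling hypothesis, of Lemmas \ref{lemma:lemma_balls_Hauss} and \ref{lemma:control_density}, and of the already-proved bound \eqref{eq:bound_density_two_phase_problem} together with the almost-monotonicity \eqref{eq:almost monotone}, is to control these small-scale densities so that the series is summable and dominated by $\Theta^n_{\omega_i}(B(\xi,32r))^2$ --- not to furnish lower bounds. This is the argument of the references the paper cites (\cite[Lemma 4.11]{AMT17a}, \cite[Section 12]{Pu19}); as written, your proof of the second inequality would fail at the reversal step.
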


The proof of \eqref{eq:bound_density_two_phase_problem} can be found in \cite[Theorem 3.3]{KPT09}, while for \eqref{eq:lower bound_density_two_phase_problem},  we refer to \cite[Lemma 4.11]{AMT17a} and the comments after \cite[Lemma 12.4]{Pu19}.

}

\vv

Let us first recall a general definition.

\begin{definition}[$\textrm{weak-}A_\infty$]
	Let $\mu$ be a Radon measure on $\Rn1$ and $B\subset \Rn1$ be a ball. We say that a Radon measure $\nu$ belongs to \textit{$\textrm{weak-}A_\infty(\mu,B)$} if there exist $\varepsilon, \varepsilon'\in(0,1)$ such that, for all $E\subset B$ we have that
	\[
	\mu(E)<\varepsilon\, \mu(B)\qquad \implies\qquad \nu(E)<\varepsilon'\, \nu(2B).
	\]
\end{definition}
\vv

If $B$ is such that $\mu(B)\approx\mu(2B)$ and $\nu(B)\approx\nu(2B)$ then, by \cite[Lemma 2.12]{AMT20}, for $N=N(\varepsilon,\varepsilon')$ large enough, there is a set $G\subset B$ such that $\nu(B\setminus G)<2\varepsilon'\, \nu(B)$, and
\begin{equation}\label{eq:properties_of_G}
	N^{-1}\frac{\mu(B)}{\nu(B)}\leq \frac{\mu(B(x,r))}{\nu(B(x,r))}\leq N \frac{\mu(B)}{\nu(B)}, \qquad \text{ for all }x\in G, 0<r<r(B).
\end{equation}

By the same argument of \cite[Lemma 3.1]{AMT20}, we can assume that the poles $x_i$, $i=1,2$ in the statement of Theorem \ref{theorem:quantitative_two_phase} are also corkscrew points in $\Omega_i$ for the ball $B$.

Hence, we can apply Proposition~\ref{limlem}, which, together with Lemma~\ref{lem:BMOmatrix},  Lemma~\ref{l:nullboundary}, and Lemma \ref{l:nullboundary},  can be used in the same way as in~\cite[Lemma 3.2]{AMT20} to show that, for any $\eta, \tau > 0$, there exists a ball $B' \subset B$ with $\omega_1(\partial B')=\omega_2(\partial B')=0$  such that:
\begin{itemize}
	\item [(a)] $r(B')\leq \tau r(B)$.
	\item [(b)] $r(B')\gtrsim_{\eta, \tau} r(B)$.
	\item [(c)] $\omega_i (B')\gtrsim_{\tau,\eta} \omega_i(B)$, for $i=1,2$. 
	\item [(d)] $B'$ satisfies the flatness condition $\beta^n_{\omega_2,1}(B')<\eta \Theta^n_{\omega_2}(B')$.
\end{itemize}
It readily follows that $\omega_2\in\textrm{weak-}\tilde A_\infty(\omega_1,B')$ (see \cite[Lemma 3.3]{AMT20}).  Moreover,  by \cite[Remark 2.13]{AMT20} and (c),  we have that $\omega_i(B')\approx \omega_i(B)\approx 1$.

\begin{remark}
Possibly by adjusting the multiplicative constants in (a)-$\ldots$-(d), we may assume without loss of generality that the ball \( B' \) constructed above has a \textit{thin boundary} with respect to both \( \omega_1 \) and \( \omega_2 \).

Indeed, suppose that \( B' \) is a ball satisfying (a)-$\ldots$-(d), and that \( \omega_2 \in \text{weak-} \widetilde{A}_\infty(\omega_1, B') \) with constants \( \varepsilon, \varepsilon' \). By the proof of \cite[Lemma 9.43]{To14}, for \( 0 < \lambda \ll 1 \) sufficiently small (to be chosen later), there exist \( \mathfrak{C}_3 > 0 \) and a ball \( B'' = (1 + \lambda) B' \), such that \( B' \subset B'' \subset 1.1\, B' \), with \( \mathfrak{C}_3 \)-thin boundary with respect to both \( \omega_1 \) and \( \omega_2 \). It is immediate to see that:
\begin{itemize}
	\item[(a')] \( r(B'') \leq 1.1 \tau\, r(B) \).
	\item[(b')] \( r(B'') \geq r(B') \gtrsim_{\eta, \tau} r(B) \).
	\item[(c')] \( \omega_i(B'') \geq \omega_i(B') \gtrsim_{\tau, \eta} \omega_i(B)  \), for \( i = 1, 2 \).\\
	Moreover, since by \cite[Remark 2.13]{AMT20}, \( \omega_i(2B') \approx \omega_i(B') \approx 1 \), there exists \( C > 1 \) such that
	\begin{equation}\label{eq:wek_a_infty_new_0}
		\omega_i(2 B'') \leq 1 \leq C\, \omega_i(B'').
	\end{equation}
\end{itemize}

\begin{itemize}
	\item We now claim that \( \omega_2 \in \text{weak-} \widetilde{A}_\infty(\omega_1, B'') \). To prove this, let \( E \subset B'' \) be such that
	\begin{equation}\label{eq:wek_a_infty_new_hp}
		\omega_1(E) \leq \tilde{\varepsilon}\, \omega_1(B''),
	\end{equation}
	for some \( \tilde{\varepsilon} > 0 \) to be chosen below. Hence, by \eqref{eq:wek_a_infty_new_0} and \eqref{eq:wek_a_infty_new_hp}, we obtain
	\begin{equation*}
		\omega_1(E \cap B') < \tilde{\varepsilon}\, \omega_1(B'') \leq C\, \tilde{\varepsilon}\, \omega_1(B') < \varepsilon\, \omega_1(B'),
	\end{equation*}
	provided that \( \tilde{\varepsilon} < C^{-1} \varepsilon \). Thus, since \( \omega_2 \in \text{weak-} \widetilde{A}_\infty(\omega_1, B') \) with constants \( \varepsilon \) and \( \varepsilon' \),
	\begin{equation}\label{eq:weak_a_infty_new}
		\omega_2(E \cap B') < \varepsilon'\, \omega_2(B') \leq \varepsilon'\, \omega_2(B'').
	\end{equation}

		Since $\omega_2(\partial B')=0 $,  \( B'' \) has $\mathfrak C_3$-thin boundary,  and \( B' \) is doubling with respect to \( \omega_2 \), we have
	\begin{equation}\label{eq:flat_B_d_1}
		\omega_2(B'' \setminus B') \leq \omega_2\left(\left\{ x \in 2B' \setminus B' : \dist(x, \partial \Omega) < \lambda r(B') \right\}\right)
		\leq \mathfrak{C}_3\, \lambda\, \omega_2(2B') \leq C\, \mathfrak{C}_3\, \lambda\, \omega_2(B').
	\end{equation}
	
For \( \lambda \leq (C \mathfrak{C}_3)^{-1} \varepsilon' \),  we obtain
	\begin{equation*}
		\omega_2(E \setminus B') \leq \omega_2(B'' \setminus \overline{B'} ) \leq \varepsilon'\, \omega_2(B''),
	\end{equation*}
which, 	 combined with \eqref{eq:weak_a_infty_new},  concludes that \( \omega_2(E) < 2\, \varepsilon'\, \omega_2(B'') \).
	
	Hence, \( \omega_2 \in \text{weak-} \widetilde{A}_\infty(\omega_1, B'') \) with constants \( C^{-1} \varepsilon \) and \( 2\varepsilon' \).
	
	\item We are left with justifying that \( \omega_2 \) is flat at the level of \( B'' \); that is,
	\[
	\beta_{\omega_2,1}(B'') < \tilde{\eta}\, \Theta^n_{\omega_2, 1}(B''),
	\]
	for some constant \( \tilde{\eta} > 0 \) depending on \( \eta \) and \( n \), and for a suitable choice of \( \lambda \).

	If \( \lambda \leq \eta (C \mathfrak{C}_3)^{-1} \), then property (c) and \eqref{eq:flat_B_d_1} imply
	\begin{equation*}\label{eq:flat_B_d_2}
		\beta_{\omega_2, 1}(B'') \leq \beta_{\omega_2, 1}(B') + \frac{\omega_2(B'' \setminus B')}{r(B')^n}
		< \eta\, \Theta^n_{\omega_2}(B') + \frac{\omega_2(B'' \setminus B')}{r(B')^n}
		\leq 2\eta\, \Theta^n_{\omega_2}(B').
	\end{equation*}
	
	Thus, since \( B' \subset B'' \subset 2B' \), we conclude that
	\begin{equation}\label{eq:new flat}
		\beta_{\omega_2, 1}(B'') \leq 2^{n+1} \eta\, \Theta^n_{\omega_2}(B'').
	\end{equation}
\end{itemize}

Now observe that, since $\ve'$ and $\eta$ are sufficiently small, we may choose them such that $2\ve'$ and $\tilde{\eta}$ are also sufficiently small. For notational convenience, and without loss of generality, we will denote the \(\text{weak-} \widetilde{A}_\infty(\omega_1, B'')\) constants associated with $\omega_2$ by $\ve$ and $\ve'$, and the constant in~\eqref{eq:new flat} by~$\eta$.
\end{remark}

\vv

Let $\eta, \tau>0$ to be chosen later, and assume that $B=B(c_B,r(B))$. Hence, there exists $\tilde c>0$ such that the following properties hold:
\begin{enumerate}
	\item $\displaystyle \omega_i(B)\geq \tilde c$, for $i=1,2$.
	\item $\displaystyle r(B)\lesssim \tau |x_i-c_B|$, for $i=1,2$.
	\item $\displaystyle \omega_2$ belongs to $\text{weak-}\tilde A_\infty(\omega_1, B)$.
	\item $\displaystyle \beta_{\omega_2,1}(B)<\eta\, \Theta^n_{\omega_2}(B).$
\end{enumerate}
Thus, (1) yields that for all $\gamma \in [0,1]$,
\[
P_{\gamma,\omega_i}(B)\leq \sum_{j\geq 0} 2^{-j\gamma}\frac{1}{2^{jn}\, r(B)^n}\lesssim_{n,\gamma}\tilde {\mathfrak c}^{-1}\Theta^n_{\omega_i}(B), \qquad \text{ for } i=1,2.
\]
Furthermore, we denote by $G$ the subset of $B$ obtained as in \eqref{eq:properties_of_G}. The following lemma is an immediate generalization of \cite[Lemma 4.1]{AMT20} and  \cite[Lemma 4.2]{AMT20}.

\begin{lemma}\label{lemma:rect_assumptions_two_phase}
	There exist $\mathfrak c_1, \mathfrak c_2,\mathfrak c_3, \mathfrak c_4>0$ depending on $\tau, \eta,$ and $N$ such that the following properties hold:
	\begin{enumerate}
		\item $\displaystyle \Theta^n_{\omega_2}(B(x,r))\leq \mathfrak c_1 \Theta^n_{\omega_2}(B)$ for all $x\in G$ and $0<r\leq 2r(B)$.
		\item $\displaystyle T_*(\chi_{2B}\omega_i)(x)\leq \mathfrak c_2\Theta^n_{\omega_i}(B)$ for all $x\in G$ and $i=1,2$.
		\item $\displaystyle \omega_i|_G(B(x,r))\leq \mathfrak c_3 \Theta^n_{\omega_i}(B)r^n$ for all $x\in \Rn1$, all $r>0$, and $i=1,2$.
		\item For $i=1,2$, the operator $T_{\omega_i|_G}$ is bounded on $L^2(\omega_i|_G)$ and 
		\[
		\displaystyle \|T_{\omega_i|_G}\|_{L^2(\omega_i|_G)\to L^2(\omega_i|_G)}\leq \mathfrak c_4\Theta^n_{\omega_i}(B).
		\]
	\end{enumerate}
\end{lemma}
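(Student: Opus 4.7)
The plan is to derive (1)--(3) from the two-phase monotonicity of $J$ combined with the geometric setup of $B$ and the definition of $G$, and to obtain (4) as an application of the big-pieces $Tb$ theorem (Theorem \ref{thm:big piecesTb-aux_intro}).

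For (1), I would apply Lemma \ref{lemma:bound_density_two_phase_problem} at any point $x \in G$ and scale $r \leq 2r(B)$ to get
\[
\Theta^n_{\omega_1}(B(x,r)) \cdot \Theta^n_{\omega_2}(B(x,r)) \lesssim J(x,2r)^{1/2}.
\]
Since $A \in \widetilde{\textup{DMO}}_{1-\alpha}$ implies $\mathfrak{I}_{\oomega_A}(2r(B))<\infty$, the almost-monotonicity \eqref{eq:almost monotone} yields $J(x,2r) \lesssim J(x, 2r(B))$ uniformly for $r \leq 2r(B)$, and hence the product of densities is uniformly controlled by $\Theta^n_{\omega_1}(B) \cdot \Theta^n_{\omega_2}(B)$. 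Since $x \in G$, the property \eqref{eq:properties_of_G} applied with $\mu = \omega_1$ and $\nu = \omega_2$ gives $\Theta^n_{\omega_1}(B(x,r)) \approx \Theta^n_{\omega_2}(B(x,r))$ at all scales $r \leq r(B)$, and combining the two bounds yields $\Theta^n_{\omega_2}(B(x,r))^2 \lesssim \Theta^n_{\omega_2}(B)^2$, that is, (1). Item (3) is then a straightforward consequence of (1) combined with a covering argument: for any ball $B(x,r)$ meeting $G$ I would pick $y \in G \cap B(x,r)$ and bound $\omega_i|_G(B(x,r)) \leq \omega_i(B(y,2r))$ using either (1) (for $r \lesssim r(B)$) or the ACF-derived scale-invariant bound at larger scales.

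For (2), I would fix $x \in G \subset \partial\Omega_1 \cap \partial\Omega_2$ and $\varepsilon>0$, and rewrite the truncated integral $T_{\omega_i,\varepsilon}(\chi_{2B})(x)$ in terms of $\nabla_1 G^{A}_{\Omega_i}(x,\cdot)$ via the Green representation \eqref{eqgreen*23}, plus a smooth error coming from $H_{\Gamma_A(\cdot,y)|_{\partial\Omega_i}}$. The pointwise Calder\'on--Zygmund bounds in Lemma \ref{lem:estim_fund_sol} together with the $n$-growth from (3) then yield $T_*(\chi_{2B}\omega_i)(x) \lesssim \Theta^n_{\omega_i}(B)$ uniformly for $x\in G$, via the same elliptic adaptation of the argument in \cite[Lemma 4.1]{AMT20} as implemented in \cite[Section 12]{Pu19}.

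Finally, for (4), I would apply Theorem \ref{thm:big piecesTb-aux_intro} to $\mu = \omega_i|_G$, $\nu = \omega_i$, $b\equiv 1$, with $H$ chosen to be the non-Ahlfors exceptional set (which is empty once the $n$-growth of (3) is absorbed into $c_0$). Items (1) and (3) supply the growth hypothesis \eqref{eq:growth_G_B-1}, item (2) provides the pointwise bound on $T_*\nu$ and hence the integrated bound on $\int T_*\nu\,d\mu$, and the smallness of the stopping-time region $\mathfrak T_{\mathcal D(w)}$ follows from the comparison $\omega_i(Q)\approx \omega_i|_G(Q)$ on dyadic cubes hitting $G$ that is built into the construction of $G$ via \eqref{eq:properties_of_G}. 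The conclusion is a set $G' \subset G$ with $\omega_i(G') \gtrsim \omega_i(G)$ on which $T_{\omega_i|_{G'}}$ is $L^2$-bounded; redefining $G \coloneqq G'$ yields (4). The main obstacle is precisely this $L^2$-boundedness step: in the $\widetilde{\textup{DMO}}$ setting the kernel $\nabla_1 \Gamma_A$ has only a Dini, not H\"older, modulus of continuity, which places it outside the scope of classical $Tb$ theorems such as \cite{MV21}. Our Theorem \ref{thm:big piecesTb-aux_intro}, based on the ``good SIO'' structure (Definition \ref{eq:good_SIO_definition}) verified in Section \ref{sec:gradient_SLP_good}, is designed precisely to close this gap.
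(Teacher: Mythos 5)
Your treatment of items (1)--(3) follows the same route as the paper, which simply invokes \cite[Lemmas 4.1 and 4.2]{AMT20}: the product bound from Lemma \ref{lemma:bound_density_two_phase_problem} plus the almost-monotonicity \eqref{eq:almost monotone}, combined with the ratio comparability \eqref{eq:properties_of_G} and $\omega_i(B)\approx 1$, does give (1), and (2)--(3) follow by the standard elliptic adaptation (Green representation, Lemma \ref{lem:estim_fund_sol}, covering/total-mass at large scales). Two small points you gloss over: you still need the upper bound of $J(x,2r(B))$ by the product of densities at the top scale (this uses the CDC, subharmonicity of the Green functions and the sup-of-$u_i$ versus elliptic-measure comparison, i.e.\ the mechanism behind \eqref{eq:lower bound_density_two_phase_problem}), and the application of Theorem \ref{theorem:ACF_DMO} requires the normalization $\mathcal A(x)=\mathrm{Id}$ at the center (change of variables) and the H\"older decay hypothesis, supplied by \eqref{holder}.

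Item (4) is where there is a genuine gap. First, your application of Theorem \ref{thm:big piecesTb-aux_intro} is inconsistent: with $\mu=\omega_i|_G$ and $b\equiv 1$ one has $\nu=\mu=\omega_i|_G$, and the hypothesis you must verify is a bound on $T_*(\omega_i|_G)$ --- which is essentially what you are trying to prove; item (2) only controls $T_*(\chi_{2B}\omega_i)$, and the difference $T_*(\chi_{2B\setminus G}\,\omega_i)$ is not controlled (maximal truncations are not monotone in the measure). Taking instead $\nu=\omega_i$ or $\chi_{2B}\omega_i$ is not allowed, since $\nu$ must be of the form $b\,\mu$. Also, \eqref{eq:properties_of_G} compares $\omega_1$ with $\omega_2$ on balls centered at $G$; it does not give $\omega_i(Q)\approx \omega_i|_G(Q)$ on dyadic cubes. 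Second, even if the hypotheses held, the big-pieces theorem only yields a subset $G'\subset G$ with $\omega_i(G')\geq \hat c\,\omega_i(G)$ on which $T$ is bounded; ``redefining $G\coloneqq G'$'' does not prove the lemma as stated and, more importantly, destroys the property $\omega_2(B\setminus G)\lesssim \varepsilon'\,\omega_2(B)$, which is exactly what is needed later to verify hypothesis (2) of Theorem \ref{teo1} on $G^{sd}$ (and one would also need a single set working simultaneously for $i=1,2$). The correct mechanism --- the one behind \cite[Lemma 4.2]{AMT20} and the reason the paper insists on the \emph{suppressed} formulation --- is to apply Theorem \ref{theorem:suppressed_Tb} (together with Remark \ref{rem:Tb}) to the full measure $\chi_{2B}\omega_i$, normalized by $\Theta^n_{\omega_i}(B)$, with $b\equiv 1$: items (1) and (3) (via \eqref{eq:properties_of_G}) show that no non-Ahlfors ball meets $G$, item (2) shows $G$ avoids the exceptional set $S$ where the maximal transform exceeds $\alpha\,\Theta^n_{\omega_i}(B)$, and $\omega_i(2B\setminus G)\leq \delta_0\,\omega_i(2B)$ for some $\delta_0<1$ since $\omega_i(G)\gtrsim\omega_i(2B)$. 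One may then choose the suppression function $\Phi$ vanishing on $G$, so that by Lemma \ref{lemma:estimates_suppressed_kernel_2}(1) the suppressed operator coincides with $T$ for functions supported on $G$ at points of $G$, and the $L^2$ bound for $T_{\Phi,\chi_{2B}\omega_i}$ restricts to the bound for $T_{\omega_i|_G}$ on \emph{all} of $G$, with constant $\lesssim \Theta^n_{\omega_i}(B)$ after undoing the normalization.
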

\vv

To prove the next result, we use the equivalence of the $L^2$-boundedness of the gradient of the single layer potential and of the Riesz transform, for $A\in \widetilde \DMO_{1-\alpha}$.

\begin{theorem}\label{theorem:aux_thm_NTV_pubmat}
	Let $n \geq 2$, and let $A$ be a uniformly elliptic matrix satisfying $A \in \widetilde \DMO_{1-\alpha}$ for $\alpha\in(0,1)$. There exists a constant $C > 0$ depending on $n$ such that, for all $p, q > 1$, the following holds.
	
	Let $\nu \in M^n_+(\Rn1)$ whose support is contained in a ball $B \subset \Rn1$, and define
	\[
	F_p \coloneqq \bigl\{x \in \supp(\nu) : \nu(B(x, r)) \geq p^{-1} r^n, \quad\text{ for } 0 < r \leq \diam(\supp(\nu)) \bigr\}
	\]
	and
	\[
	F_{p, q} \coloneqq \bigl\{x \in F_p : \nu(B(x, r) \cap F_p) \geq (pq)^{-1} r^n, \quad\text{ for } 0 < r \leq \diam(\supp(\nu)) \bigr\}.
	\]
	
	Then, if $T_\nu$ is bounded on $L^2(\nu)$, there exists a $C/(pq)$-Ahlfors regular measure $\sigma$ such that $\sigma|_{F_{p, q}} = \nu|_{F_{p, q}}$ and $T_\sigma$ is bounded on $L^2(\sigma)$.
\end{theorem}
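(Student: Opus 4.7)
The plan is to reduce Theorem \ref{theorem:aux_thm_NTV_pubmat} to its Riesz-transform counterpart via the $L^2$-equivalence \eqref{eq:maineq} from \cite{MMPT23}, which is available precisely because $A \in \widetilde{\DMO}_{n-1} \supset \widetilde{\DMO}_{1-\alpha}$ (for $n\geq 2$ and $\alpha\in(0,1)$) and all measures appearing below will have compact support and $n$-growth.

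First, I would observe that $\nu$ has $n$-growth. Indeed, the definition of $F_p$ forces an upper $n$-growth estimate for $\nu$ on $F_p$ (indeed, any point $x\in F_p$ outside $F_p$ would contradict a lower $n$-growth via covering arguments), but more importantly, the hypothesis that $T_\nu$ is bounded on $L^2(\nu)$ combined with standard truncation arguments gives $n$-growth of $\nu$ at all points. Consequently, the equivalence \eqref{eq:maineq} applies to $\nu$ and yields
\[
\|\mathcal R_\nu\|_{L^2(\nu)\to L^2(\nu)} \lesssim 1 + \|T_\nu\|_{L^2(\nu)\to L^2(\nu)},
\]
with implicit constant depending on $n$, $\Lambda$, $\diam(\supp\nu)$, and the growth constant. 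In particular, $\mathcal R_\nu$ is bounded on $L^2(\nu)$.

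Next, I would apply the Riesz-transform analogue of the present statement due to Nazarov, Tolsa, and Volberg (see \cite[Proposition 4.3]{NToV14} or the presentation in \cite[Chapter 8]{To14}): under the hypothesis that $\mathcal R_\nu$ is bounded on $L^2(\nu)$, there exists a $C/(pq)$-Ahlfors regular measure $\sigma$, supported in a controlled enlargement of $B$, such that $\sigma|_{F_{p,q}} = \nu|_{F_{p,q}}$ and such that $\mathcal R_\sigma$ is bounded on $L^2(\sigma)$ with norm controlled by $\|\mathcal R_\nu\|_{L^2(\nu)\to L^2(\nu)}$ and $pq$. This is the classical ``fill-in'' or ``completion'' construction, which adds to $\nu|_{F_{p,q}}$ a controlled amount of flat pieces on a Whitney-type decomposition of the complement.

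Finally, since the resulting $\sigma$ has $n$-growth (automatic from Ahlfors regularity) and compact support, I would invoke the equivalence \eqref{eq:maineq} in the opposite direction to conclude
\[
\|T_\sigma\|_{L^2(\sigma)\to L^2(\sigma)} \lesssim 1 + \|\mathcal R_\sigma\|_{L^2(\sigma)\to L^2(\sigma)} < \infty,
\]
thereby proving the theorem. The main (and essentially only) obstacle is tracking that all the implicit constants in both applications of \eqref{eq:maineq} depend only on $n$, $\Lambda$, the ellipticity and $\widetilde{\DMO}$ data of $A$, and on $\diam(\supp\sigma)$ and the (Ahlfors-regularity) growth of $\sigma$, so that the final statement is quantitative; this is guaranteed since the NTV construction produces $\sigma$ supported in a bounded enlargement of $B$ with growth constant $\approx C/(pq)$.
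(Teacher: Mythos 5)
Your proposal is correct and follows essentially the same route as the paper: apply the equivalence \eqref{eq:maineq} to pass from $T_\nu$ to $\mathcal R_\nu$, invoke the Nazarov--Tolsa--Volberg ``fill-in'' construction (the paper cites \cite[Theorem 2.1]{NToV14b}, detailed in \cite[Theorem 4.3]{AMT20}) to obtain the $C/(pq)$-Ahlfors regular $\sigma$ with $\sigma|_{F_{p,q}}=\nu|_{F_{p,q}}$ and $\mathcal R_\sigma$ bounded, and then apply \eqref{eq:maineq} once more to transfer back to $T_\sigma$. Your opening paragraph deriving the $n$-growth of $\nu$ is unnecessary (and the reasoning there about $F_p$ is off), since $\nu\in M^n_+(\Rn1)$ is already part of the hypotheses, but this does not affect the validity of the argument.
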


\begin{proof}
	If $T_\nu$ is bounded on $L^2(\nu)$, \eqref{eq:maineq} yields that the Riesz transform $\mathcal{R}_\nu$ is bounded on $L^2(\nu)$. Hence, \cite[Theorem 2.1]{NToV14b} implies that there exists a $C/(pq)$-Ahlfors regular measure $\sigma$ such that $\sigma|_{F_{p, q}} = \nu|_{F_{p, q}}$; for a detailed statement, we also refer to \cite[Theorem 4.3]{AMT20}. Thus, we apply \eqref{eq:maineq} again and conclude that $T_\sigma$ is bounded on $L^2(\sigma)$, which proves the theorem.
\end{proof}

Furthermore, in the setting of Theorem \ref{theorem:aux_thm_NTV_pubmat}, \cite[Lemma 4.4]{AMT20} shows that, if $\nu(F_p) \geq \delta \nu(B)$ for some $\delta \in (0, 1/2)$, then there exists $q$ depending on $n,p$, and $\delta$ such that $\nu(F_{p, q}) \geq \nu(F_p)/2$.

\vv

We now split $G$ into a component where $\omega_1$ has \textit{big density} and another where it has \textit{small density}.
More specifically, let $0<\rho \ll 1$ be chosen later, and define
\begin{equation}\label{eq:definition_G_bd}
	G^{bd} \coloneqq \bigl\{x \in G : \Theta^n_{\omega_1}(x, r) \geq \rho\,  \Theta^n_{\omega_2}(B) \quad \text{for all } r \in (0, 2r(B)] \bigr\}
\end{equation}
and $G^{sd} \coloneqq G \setminus G^{bd}$.

\vv

\subsubsection{Study of $G^{bd}$} 
We assume that $\delta\in (0,1)$, to be chosen later, is such that
\begin{equation}\label{eq:big_density_big}
	\omega_1(G^{bd})\geq \delta\, \omega_1(B).
\end{equation}
Let $\nu =\frac{\omega_1|_G}{C\Theta^n_{\omega_1|_G}(B)}$, $F_p$ and $F_{p,q}$ be the set in Theorem \ref{theorem:aux_thm_NTV_pubmat}, and $\sigma$ the associated Ahlfors-regular measure such that $\sigma|_{F_{q,p}}=\nu|_{F_{q,p}}$ and the gradient of the single layer potential $T_\sigma$ is bounded on $L^2(\sigma)$.
Hence, we claim that $\sigma$ is uniformly $n$-rectifiable and $\Sigma_B\coloneqq \supp(\sigma)$ is such that
\begin{equation}\label{eq:covering_big_density}
	\omega_1(\Sigma_B\cap F\cap B)\geq \frac{\delta}{4}\omega_1(B).
\end{equation}
The proof of this result can be carried out using Theorem \ref{theorem:aux_thm_NTV_pubmat} and the argument in \cite[Section 6]{AMT20}. The only difference is that, in order to infer uniform rectifiability from the $L^2(\sigma)$-boundedness of $T_\sigma$, we use \cite[Corollary 1.3]{MMPT23} instead of \cite{NToV14}.
\vv

\subsubsection{Study of $G^{sd}$} Let us assume that \eqref{eq:big_density_big} does not hold, namely that
\[
\omega_1(G^{bd}) < \delta\, \omega_1(B),
\]
which, if $\delta < \varepsilon$, by \eqref{eq:hypothesis_A_infinity_type}, implies that $\omega_2(G^{bd}) < \varepsilon' \omega_2(B)$. So, by \eqref{eq:properties_of_G} we obtain
\[
\omega_2(B\setminus G)\leq 2 \varepsilon' \omega_2(B),
\]
which in turn implies that
\[
\omega_2(B\setminus G^{sd})\leq \omega_2(B\setminus G)+ \omega_2(B\setminus G^{bd})\leq 3\varepsilon' \omega_2(B).
\]
In the following lemma, we gather the main estimates for the gradient of the single layer potential.
\begin{lemma}\label{lemma:estimates_T_two_phase}
	Let $N$ be as in \eqref{eq:properties_of_G} and $\rho$ as in \eqref{eq:definition_G_bd}. There exist $\mathfrak C',\mathfrak C''$ depending on $n$, $a$, $N$, and the CDC constants  such that
	\begin{equation}\label{eq:bound_T_nabla}
		|T\omega_2(x)-\nabla_1\Gamma_A(x,x_2)|\leq \mathfrak C' \rho \, \Theta^n_{\omega_2}(B), \qquad \text{ for }x\in G^{sd},
	\end{equation}
	and, for $\beta\in(0,1)$ as in \eqref{eq:continuityGamma},
	\begin{equation}\label{eq:bound_L2_mean_osc_sd}
		\int_{G^{sd}}\bigl|T\omega_2(x)-m_{\omega_2,G^{sd}}(T\omega_2)\bigr|^2\, d\omega_2(x)\leq \mathfrak C'' \bigl(\rho+\tau^\beta + \mathfrak I_{\oomega_A}(\tau)\bigr)^2\Theta^n_{\omega_2}(B)^2\omega_2(B).
	\end{equation}
\end{lemma}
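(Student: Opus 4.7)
Plan: The strategy is to adapt to the elliptic $\widetilde{\mathrm{DMO}}$ setting the corresponding estimates for harmonic measure in \cite[Lemma 4.2]{AMT20} (and the H\"older-elliptic version from \cite{Pu19}), replacing the Riesz kernel by $\nabla_1\Gamma_A$ and invoking the Calder\'on–Zygmund-type bounds of Lemma \ref{lem:estim_fund_sol} in place of the classical Riesz-kernel estimates. The $\widetilde{\mathrm{DMO}}$ modulus of continuity is what converts the $r^{\alpha}$ tails of the H\"older case into the Dini integral $\mathfrak I_{\oomega_A}(\tau)$ appearing on the right-hand side of \eqref{eq:bound_L2_mean_osc_sd}.

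For the pointwise bound \eqref{eq:bound_T_nabla}, my starting point is the representation
\[
T\omega_2(\tilde x) \;=\; \nabla_1 \Gamma_A(\tilde x, x_2) \;+\; E(\tilde x),\qquad
E(\tilde x)\;=\;-\int_{\Omega_2} A^T(y)\,\nabla_y G^A_{\Omega_2}(x_2, y)\cdot \nabla_{\tilde x}\nabla_y \Gamma_A(\tilde x, y)\, dy,
\]
valid for $\tilde x \in \mathbb{R}^{n+1}\setminus \overline{\Omega_2}$, which I would obtain by inserting $\varphi=\Gamma_A(\tilde x,\cdot)$ into the Green representation \eqref{eqgreen*23} (with pole $x_2\in\Omega_2$) and then differentiating in $\tilde x$. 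Given $x\in G^{sd}$, the definition \eqref{eq:definition_G_bd} furnishes a scale $r_x\in (0,2r(B)]$ with $\omega_1(B(x,r_x))\leq \rho\,r_x^n\,\Theta^n_{\omega_2}(B)$, and the weak-$\tilde A_\infty$ property (3) together with the two-sided comparison on $G$ from \eqref{eq:properties_of_G} transfers this into $\omega_2(B(x,cr_x))\lesssim \rho\,(cr_x)^n\,\Theta^n_{\omega_2}(B)$ for a small universal $c$. I would then choose a corkscrew point $\tilde x\in \Omega_1$ at distance $\approx r_x$ from $x$ (available by the CDC of $\Omega_1$), bound $|E(\tilde x)|$ by splitting $\Omega_2$ into annuli centered at $x$, using Lemma \ref{lem:estim_fund_sol}(2) to control $|\nabla_{\tilde x}\nabla_y\Gamma_A(\tilde x,y)|\lesssim |\tilde x-y|^{-(n+1)}$, Caccioppoli plus CFMS-type inequalities to dominate $|\nabla_y G^A_{\Omega_2}(x_2,y)|$ by densities of $\omega_2$, and finally the above smallness of $\omega_2(B(x,cr_x))$ (for the dominant local annulus) to bring out the factor $\rho\,\Theta^n_{\omega_2}(B)$. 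To pass from $\tilde x$ to the boundary point $x\in G^{sd}$ I would write $T\omega_2(x)-T\omega_2(\tilde x)$ as the sum of a local piece $T(\omega_2|_{B(x,C r_x)})(\tilde x)-T(\omega_2|_{B(x,Cr_x)})(x)$ (handled by the smallness of $\omega_2$ on $B(x,Cr_x)$ together with \eqref{eq:CZ_1}) and a far piece controlled via \eqref{eq:continuityGamma} applied between $x$ and $\tilde x$.

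For the oscillation estimate \eqref{eq:bound_L2_mean_osc_sd}, I would split, for $x\in G^{sd}$,
\[
T\omega_2(x)-m_{\omega_2,G^{sd}}(T\omega_2)\;=\;\bigl[T\omega_2(x)-\nabla_1\Gamma_A(x,x_2)\bigr]\;-\;m_{\omega_2,G^{sd}}\!\bigl[T\omega_2-\nabla_1\Gamma_A(\cdot,x_2)\bigr]\;+\;\bigl[\nabla_1\Gamma_A(x,x_2)-m_{\omega_2,G^{sd}}\!(\nabla_1\Gamma_A(\cdot,x_2))\bigr].
\]
Part (a) controls the first two brackets pointwise by $\mathfrak C'\rho\,\Theta^n_{\omega_2}(B)$, producing the $\rho^2$ contribution. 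For the last bracket, property~(2) gives $r(B)\lesssim \tau |z-x_2|$ for every $z\in B$, so \eqref{eq:continuityGamma} with $|y-z|\leq r(B)$ and $|z-x_2|\gtrsim r(B)/\tau$ yields
\[
|\nabla_1\Gamma_A(x,x_2)-\nabla_1\Gamma_A(z,x_2)|\;\lesssim\;\bigl(\tau^\beta+\mathfrak I_{\oomega_A}(\tau)\bigr)\,|x-x_2|^{-n}\;\lesssim\;\bigl(\tau^\beta+\mathfrak I_{\oomega_A}(\tau)\bigr)\,\Theta^n_{\omega_2}(B),
\]
where in the last step we use property~(1) together with $r(B)\lesssim \tau |x-x_2|$. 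Squaring the three contributions and integrating over $G^{sd}$ with respect to $\omega_2$ produces \eqref{eq:bound_L2_mean_osc_sd} with the advertised constant $\rho+\tau^\beta+\mathfrak I_{\oomega_A}(\tau)$.

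The main obstacle is the bound on the error integral $E(\tilde x)$: it requires sharp boundary decay for $\nabla_y G^A_{\Omega_2}(x_2,\cdot)$ in a CDC domain, together with a careful annular decomposition of $\Omega_2$ so that the contribution from the region $\Omega_2\cap B(x,cr_x)$ is controlled exactly by the weak-$\tilde A_\infty$-transferred smallness of $\omega_2$, while the contribution from far-away annuli is absorbed into lower-order error terms. A secondary difficulty is the transfer from the interior corkscrew point to the principal value at $x\in\partial\Omega_2\cap G^{sd}$, where the Dini-type regularity \eqref{eq:continuityGamma} — weaker than the H\"older-type decay exploited in \cite{Pu19} — forces the appearance of $\mathfrak I_{\oomega_A}(\tau)$ in the final bound.
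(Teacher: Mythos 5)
Your argument for \eqref{eq:bound_L2_mean_osc_sd} is exactly the paper's: split off $\nabla_1\Gamma_A(\cdot,x_2)$, use \eqref{eq:bound_T_nabla} for the first two pieces, and control the kernel oscillation over $B$ by \eqref{eq:continuityGamma} together with $r(B)\lesssim\tau|x_2-c_B|$ and $\omega_2(B)\approx 1$. The issue is \eqref{eq:bound_T_nabla}, which the paper proves by repeating the harmonic-measure argument of \cite[Lemma 6.3]{AMTV19} and \cite[Lemma 6.1]{AMT20}, while you attempt a self-contained proof whose central step does not work. The backbone of the cited argument is that at points of $\Omega_1\subset\mathbb{R}^{n+1}\setminus\overline{\Omega_2}$ the single layer potential of $\omega_2$ coincides \emph{exactly} with the fundamental-solution term, so there is no bulk error to estimate, and the parameter $\rho$ enters only through terms that each carry the small density. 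You instead keep a genuine error term $E(\tilde x)=-\int_{\Omega_2}A^T\nabla_yG(x_2,y)\cdot\nabla_{\tilde x}\nabla_y\Gamma_A(\tilde x,y)\,dy$ (obtained by differentiating \eqref{eqgreen*23}) and claim it is $\lesssim\rho\,\Theta^n_{\omega_2}(B)$. It is not: decomposing $\Omega_2$ into annuli around $x$ and using Caccioppoli/CFMS-type bounds, the annulus at distance $t$ contributes $\approx\Theta^n_{\omega_2}(x,Ct)$, and the definition \eqref{eq:definition_G_bd} of $G^{sd}$ only furnishes \emph{one} scale $r_x$ at which the density is below $\rho\,\Theta^n_{\omega_2}(B)$; at the scales between $r_x$ and $r(B)$ you only have the bound of Lemma \ref{lemma:rect_assumptions_two_phase}(1), so the sum is $O(\Theta^n_{\omega_2}(B))$ with no factor $\rho$. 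There is also a structural point you miss: in the non-symmetric elliptic case $E(\tilde x)$ does not vanish outside $\overline{\Omega_2}$, because $\xi\mapsto\Gamma_A(\tilde x,\xi)$ is $L_{A^T}$-harmonic rather than $L_A$-harmonic, so its $\omega^{L_A,x_2}$-mean is not $\Gamma_A(\tilde x,x_2)$; the exact exterior identity that powers the harmonic proof holds for the adjoint pairing (kernel $\nabla_2\Gamma_A$, i.e.\ $T^*$), and getting this pairing right is precisely the delicate point in transferring the argument "verbatim".

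A second, independent defect is the transfer from the interior point $\tilde x$ to the boundary point $x$: your far piece, estimated through \eqref{eq:continuityGamma}, gives $\sum_{k\geq 0}\bigl(2^{-k\beta}+\mathfrak I_{\oomega_A}(2^{-k})\bigr)\,\Theta^n_{\omega_2}(x,2^kr_x)\lesssim\Theta^n_{\omega_2}(B)$, again with no factor $\rho$, so even granting the exact cancellation at $\tilde x$ your decomposition only yields a bound of order $\Theta^n_{\omega_2}(B)$, not the claimed $\mathfrak C'\rho\,\Theta^n_{\omega_2}(B)$. The harmonic-measure proof you are meant to adapt is organized (through the Green function of $\Omega_2$, CFMS-type gradient bounds, and the density at the good scale $r_x$) so that \emph{every} error term inherits the smallness $\rho$; your splitting does not achieve this, and that is the missing idea rather than a technicality.
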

\begin{proof}
	For the proof of \eqref{eq:bound_T_nabla}, it suffices to repeat verbatim the argument in \cite[Lemma 6.3]{AMTV19}, as done in \cite[Lemma 6.1]{AMT20}. For the proof of \eqref{eq:bound_L2_mean_osc_sd} we observe that, if $x\in G^{sd}\subset B$, as $\omega_2(B)\approx 1$, we have that
	\begin{equation*}
		\begin{split}
			\bigl|T\omega_2(x)&-m_{\omega_2,G^{sd}}(T\omega_2)\bigr|\overset{\eqref{eq:bound_T_nabla}}{\lesssim}\rho\,  \Theta^n_{\omega_2}(B)+ \bigl|\nabla_1\Gamma_A(x,x_2)-m_{\omega_2,G^{sd}}(\nabla_1\Gamma_A(\cdot, x_2))\bigr|\\
			&\leq \rho\,  \Theta^n_{\omega_2}(B)+\sup_{y\in G^{sd}}|\nabla_1\Gamma_A(x,x_2)-\nabla_1\Gamma_A(y,x_2)|\\
			&\overset{\eqref{eq:continuityGamma}}{\lesssim} \rho \, \Theta^n_{\omega_2}(B) + \frac{\omega_2(B)}{|x_2-c_B|}\Bigl(\Bigl(\frac{r_B}{|x_2-c_B|}\Bigr)^\beta+\mathfrak I_{\oomega_A}\Bigl(\frac{r_B}{|x_2-c_B|}\Bigr)\Bigr)\\
			&\overset{}{\lesssim} \rho\,  \Theta^n_{\omega_2}(B) + \bigl(\tau^\beta + \mathfrak I_{\oomega_A}(\tau)\bigr)\Theta^n_{\omega_2}(B)=\bigl(\rho+\tau^\beta + \mathfrak I_{\oomega_A}(\tau)\bigr)\Theta^n_{\omega_2}(B),
		\end{split}
	\end{equation*}
	which proves \eqref{eq:bound_L2_mean_osc_sd}.
\end{proof}

Finally, we can choose $\eta$ and $\tau$ small enough so that the estimates in Lemma \ref{lemma:rect_assumptions_two_phase} and Lemma \ref{lemma:estimates_T_two_phase} imply that the assumptions of Theorem \ref{teo1} are satisfied for $\mu = \omega_2$, which we apply to find a uniformly $n$-rectifiable set $\Sigma_B$ such that $\omega_2(\Sigma_B \cap G^{sd}) \gtrsim \omega_2(B)$.

Furthermore, since $\frac{d\omega_2}{d\omega_1} \approx 1$ on $G^{sd}$, we have that
\[
\omega_1(\Sigma_B \cap F \cap B) \geq \omega_1(\Sigma_B \cap G^{sd}) \gtrsim \omega_1(B)
\]
which, together with \eqref{eq:covering_big_density}, implies \eqref{eq:thm13_a}. The final part of Theorem \ref{theorem:quantitative_two_phase}, if $x_1$ is a corkscrew point for $\frac{1}{4}B$, follows as in \cite[Sections 5 and 6]{AMT20}, which concludes the proof of the theorem.

}

\vvv
\subsection{The proof of Theorem \ref{theorem:main_two_phase_qualitative}.}

We assume without loss of generality that the set $E$ in Theorem \ref{theorem:main_two_phase_qualitative} is such that 
\[
\diam(E) \leq  \min\bigl(\diam(\Omega_1), \diam(\Omega_2)\bigr)/10.
\]
Let $\widetilde{B}$ be a ball centered at $E$, $i=1,2$, and choose the poles $x_i \in \Omega_i$ such that 
\(
x_i \in \Omega_i \cap 2 \widetilde{B} \setminus \widetilde{B}.
\)

Possibly by interchanging $\Omega_1$ and $\Omega_2$, we also assume that
	\[
	\mathcal H^{n+1}(\widetilde B\cap \Omega_1)\approx r(\widetilde B).
	\]
	By Bourgain's estimate Lemma \ref{l:bourgain},
	the argument at the beginning of \cite[Section 6]{AMTV19}, and \cite[Theorem 4.13]{AM19}, which repeats verbatim, this implies that 
	\begin{equation}\label{eq:omega_p_approx_1_s_delta_1}
		\omega_1\bigl(4\widetilde B\big)\approx 1.
	\end{equation}
	
	For $i=1,2$, by $u_i(\cdot)\coloneqq G^i(x_i, \cdot)$ the Green's functions with poles at $x_1$, extended by zero to $\Rn1\setminus \Omega_i$. By the proof of  \cite[Theorem 4.13]{AM19}, we obtain the following estimates.
	
	\begin{lemma}\label{lemma:lemma_balls_Hauss}
		Let $B$ be a ball centered at $E$ such that $x_1\not \in 10 B$, $\omega_1(8B)\leq C\omega_1(\tfrac{1}{2}B)$, and $\mathcal H^{n+1}\geq C^{-1}r(B)^{-1}$. Then we have
		\[
		\mathcal H^{n+1}\bigl(\Omega_1\cap 2 B\bigr)\gtrsim r(B)^{n+1}
		\]
		and
		\[
		\mathcal H^{n+1}\bigl( B\setminus \Omega_1\bigr)\approx \mathcal H^{n+1}\bigl(B\cap \Omega_2\bigr)\approx r(B)^{n+1}.
		\]
	\end{lemma}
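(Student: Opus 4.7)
The overall strategy is a blow-up/compactness argument that mirrors the one carried out in \cite[Theorem 4.13]{AM19}, adapted to the present elliptic setting via Proposition \ref{limlem}. Rescale $B$ to the unit ball by the map $x \mapsto (x - x_B)/r(B)$, producing a sequence of normalized domains $\widetilde{\Omega}_i^{(B)}$ with associated rescaled elliptic measures and rescaled matrices $A^{(B)}(y) = A(x_B + r(B)y)$. Since $r(B) \leq \mathrm{diam}(\Omega_i)/10$ and $A \in \widetilde{\DMO}_{1-\alpha}$ is uniformly continuous with modulus $\mathfrak{I}_{\oomega_A}$ (after passing to the continuous representative via Lemma \ref{lem:reduction_to_UC}), on bounded sets the rescaled matrices converge in $L^1_{\loc}$ to the constant matrix $\mathcal{A}(x_B)$. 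The hypothesis $x_1 \notin 10B$ guarantees that after rescaling the poles are bounded away from the origin, so Proposition \ref{limlem} applies and (passing to subsequences) yields limit CDC domains $\Omega_1^\infty, \Omega_2^\infty$ together with weak convergence of the rescaled elliptic measures to the elliptic measure in the limit.

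The doubling hypothesis $\omega_1(8B) \leq C\,\omega_1(\tfrac{1}{2}B)$ is scale invariant, and persists in the limit together with weak convergence (cf.\ Lemma \ref{l:nullboundary} to handle boundary sets of measure zero). Consequently $\omega_1^\infty$ is a non-degenerate doubling measure on $\partial \Omega_1^\infty \cap \overline{B(0,1)}$. Combined with the density assumption $\mathcal{H}^{n+1}(\widetilde{B}\cap \Omega_1) \approx r(\widetilde{B})^{n+1}$ (which is what plays the role of the second standing hypothesis after being propagated down to scale $r(B)$ via doubling and the CDC), we obtain that $\Omega_1^\infty$ has positive Lebesgue density at $0$. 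From here the lower bound $\mathcal{H}^{n+1}(\Omega_1 \cap 2B) \gtrsim r(B)^{n+1}$ follows by transferring the non-degeneracy back to the original scale: if it failed along a sequence of such balls, the limit domain would have zero Lebesgue density at $0$, contradicting the CDC and the doubling of $\omega_1^\infty$.

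For the second assertion, the disjointness $\Omega_1 \cap \Omega_2 = \varnothing$ gives $B \cap \Omega_2 \subset B \setminus \Omega_1$, so it suffices to establish $\mathcal{H}^{n+1}(B \cap \Omega_2) \gtrsim r(B)^{n+1}$ and the matching upper bound $\mathcal{H}^{n+1}(B \setminus \Omega_1) \lesssim r(B)^{n+1}$ (the latter being trivial). For the lower bound, we argue symmetrically: since $B$ is centered at $E \subset \partial\Omega_1 \cap \partial\Omega_2$, any blow-up limit at a point of $E$ ``sees'' both $\Omega_1^\infty$ and $\Omega_2^\infty$. Using Bourgain's estimate (Lemma \ref{l:bourgain}) for $\omega_2$ together with the CDC for $\Omega_2$, one produces interior corkscrew points of $\Omega_2$ inside $B$ at a uniform scale; the CDC for $\Omega_2$ then yields the lower Lebesgue density, giving $\mathcal{H}^{n+1}(B \cap \Omega_2) \gtrsim r(B)^{n+1}$. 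Combined with the previous paragraph, both $\mathcal{H}^{n+1}(\Omega_1 \cap B)$ and $\mathcal{H}^{n+1}(\Omega_2 \cap B)$ are comparable to $r(B)^{n+1}$, and the conclusion follows.

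The main obstacle is the passage to the limit in the density estimates, since Hausdorff/Lebesgue measure is only lower semicontinuous under weak convergence of domains (in the Hausdorff sense of their boundaries). To make the argument quantitative and avoid an indirect contradiction, one has to combine the CDC on both $\Omega_1$ and $\Omega_2$ with the doubling property of $\omega_1$ to localize the estimate to the scale of $B$ without losing a factor. This is where the assumption that $\widetilde{B}$ (the ``parent'' ball where the density is quantitative) can be reached from $B$ via the doubling chain becomes essential, and explains the precise form of the hypotheses $\omega_1(8B) \leq C\,\omega_1(\tfrac{1}{2}B)$ and $\mathcal{H}^{n+1}(\widetilde{B}\cap\Omega_1) \approx r(\widetilde{B})^{n+1}$.
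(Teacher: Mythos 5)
Your route is genuinely different from the paper's, and it does not go through. The paper proves this lemma by simply repeating the direct potential-theoretic argument of \cite[Theorem 4.13]{AM19}: one works with the Green functions $u_i=G^i(x_i,\cdot)$ extended by zero outside $\Omega_i$ (which are subsolutions away from the poles), and combines Bourgain's estimate (Lemma \ref{l:bourgain}), the representation formula \eqref{eqgreen*23} tested against a cutoff adapted to $B$, Caccioppoli's inequality, and the doubling hypothesis $\omega_1(8B)\leq C\omega_1(\tfrac12 B)$ to obtain the Lebesgue-measure bounds at the scale of $B$ directly; no limiting domain is involved. Your compactness scheme, by contrast, rests on tools that are not available here and on implications that are false. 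In this part of the paper (the qualitative two-phase Theorem \ref{theorem:main_two_phase_qualitative}) the domains are only Wiener regular; the CDC is a hypothesis of the quantitative Theorem \ref{theorem:quantitative_two_phase}, so Proposition \ref{limlem} and every step of yours that invokes the CDC for $\Omega_1$ or $\Omega_2$ is unjustified. Moreover, even where the CDC holds it is a capacity lower bound on the complement: it does not produce interior corkscrew points, nor positive Lebesgue density of a domain or of its complement (capacity-thick sets can be Lebesgue null, e.g.\ subsets of hyperplanes), and doubling of the limiting elliptic measure likewise does not force the limit domain to occupy positive measure in $B(0,2)$. Hence both of your key steps --- ``the limit has positive Lebesgue density at $0$, contradiction'' and ``Bourgain plus the CDC for $\Omega_2$ yield corkscrews of $\Omega_2$ in $B$, hence $\mathcal H^{n+1}(B\cap\Omega_2)\gtrsim r(B)^{n+1}$'' --- assert exactly the kind of measure lower bound the lemma is supposed to prove, without a mechanism that delivers it.

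There are two further structural problems. First, the hypotheses of Proposition \ref{limlem} are not verified after rescaling: the proposition needs a fixed interior ball $B_0\subset\Omega_j$ with $\delta_{\Omega_j}(x_0)\approx r_0$ at the working scale and the pole inside it, whereas $x_1\notin 10B$ only bounds the rescaled pole away from the origin; along a contradiction sequence $|x_1-x_B|/r(B)$ may blow up, and no interior ball of $\Omega_1$ of size $\approx r(B)$ near $B$ is known a priori --- producing a definite chunk of $\Omega_1$ in $2B$ is precisely the first conclusion, so using it to set up the compactness is circular. Second, your ``propagation'' of the measure hypothesis from $\widetilde B$ down to the scale of $B$ ``via doubling and the CDC'' is unsubstantiated: the lemma's third hypothesis is a bound at the scale of $B$ itself, and neither doubling of $\omega_1$ nor any capacity condition transfers an $(n+1)$-dimensional measure bound from $\widetilde B$ to a much smaller ball. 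If you want a self-contained proof, follow the paper's (i.e.\ \cite{AM19}'s) route: bound $u_1\lesssim \omega_1(8B)\,r(B)^{1-n}$ on a dilate of $B$ by the maximum principle and Lemma \ref{l:bourgain}, estimate $\omega_1(\tfrac12 B)$ from above through \eqref{eqgreen*23} and Caccioppoli by an integral of $u_1$ over $\Omega_1\cap 2B$, and absorb using the doubling to get $\mathcal H^{n+1}(\Omega_1\cap 2B)\gtrsim r(B)^{n+1}$; the remaining assertions then follow from the lemma's measure hypothesis, the trivial upper bounds, and the inclusion $B\cap\Omega_2\subset B\setminus\Omega_1$.
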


}

{\vv

Let $\Omega\subset \Rn1$ be domain and $\mu$ a Radon measure on $\partial \Omega$. We define 
\begin{equation*}
\begin{split}
	\dim \mu &= \inf\bigl\{s>0:\text{ there is } F\subset \partial \Omega \text{ so that }\mathcal H^s(F)=0 \text{ and }\\
	&\qquad\quad \qquad \qquad \mu(F\cap K)=\mu(\partial \Omega \cap K) \text{ for all compact sets }K\subset \Rn1\}.
\end{split}
\end{equation*}

We denote by 
\[
\mathscr F\coloneqq\bigl\{c\, \mathcal H^n|_V: \, c>0\text{ and }V \text{ is an }n\text{-plane through the origin}\bigr\}
\]
the class of \textit{flat} measures. Given $a\in\Rn1$ and $r>0$, we define the map
\[
T_{a,r}(x)=\frac{x-a}{r}, \qquad \text{for } x\in\Rn1,
\]
and, for a Radon measure $\mu$ on $\Rn1$, we denote by $T_{a,r}[\mu]$ its image measure via $T_{a,r}$, namely,
\[
T_{a,r}[\mu](A)\coloneqq \mu(rA+ a), \qquad \text{ for }A\subset \Rn1.
\]
Finally, we say that a non-zero Radon measure $\nu$ on $\Rn1$ is a \textit{tangent measure} of $\mu$ at $\bar x\in \Rn1$ if there exist sequences $c_i>0$ and $r_i\searrow 0$ such that $c_iT_{\bar x, r_i}[\mu]$ converges weakly to $\nu$ as $i\to \infty$, and we write $\nu\in \Tan(\mu, \bar x)$.
A fine blow-up analysis by Azzam and the second named author made possible the proof of the following result for elliptic measure associated with uniformly elliptic matrices whose coefficients belong to Sarason's class of functions of vanishing mean oscillation, see \cite[Definition 1.5]{AM19}.

\begin{theorem}[\cite{AM19}, Theorem I]\label{theorem:AzzamMourgoglou_19}
Let $n\geq 2$, $\Omega_1, \Omega_2\subset \Rn1$ be two disjoint domains, and $L_A$ be an elliptic operator on $\Omega_1\cup\Omega_2$ such that $A$ is a uniformly elliptic $(n+1)\times(n+1)$-matrix with $A\in\VMO(\Omega_1\cup\Omega_2, \xi)$ at $\omega_1$-almost every $\xi \in E\subset\partial \Omega_1\cap \partial \Omega_2$ with respect to either $\Omega_i$, $i=1,2$. For $i=1,2$, denote $\omega_i=\omega^{L_A,x_i}_{\Omega_i}$ for some $x_i\in\Omega_i$. If $\omega_1$ and $\omega_2$ are mutually absolutely continuous on $E$, then for $\omega^i$-almost every $\xi \in E$, we have $\Tan(\omega_i, \xi)\subset \mathscr F$ and  $\dim\omega_i|_E=n$.
\end{theorem}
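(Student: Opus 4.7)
The strategy is a blow-up analysis at $\omega_1$-generic points of $E$, reducing the problem to the classical two-phase theorem for harmonic measure on complementary domains via a change of variables. I would proceed in four steps.

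First, I would establish quantitative density estimates on $E$. Applying the elliptic Alt--Caffarelli--Friedman monotonicity formula of Theorem~\ref{theorem:ACF_DMO} (or its VMO analogue, valid under the assumption that $A$ has vanishing mean oscillation at $\xi$ after a local change of variables centering $A(\xi)$ to the identity) to the Green's functions $u_i = G^i(x_i,\cdot)$ on $\Omega_i$, together with the linkage between $J(\xi,r)$ and the product of elliptic measure densities in Lemma~\ref{lemma:bound_density_two_phase_problem}, one deduces that for $\omega_1$-a.e.\ $\xi \in E$ both upper densities $\Theta^{*,n}(\omega_i,\xi)$ are finite. Combined with $\omega_1|_E \ll \omega_2|_E \ll \omega_1|_E$ and the Lebesgue--Radon--Nikodym differentiation of $f_i = d\omega_j/d\omega_i$ along balls centered at $\xi$, this transfers to comparability $\Theta^n(\omega_1,\xi,r) \approx \Theta^n(\omega_2,\xi,r)$ up to an $\omega_i$-measurable positive factor, and by standard properties of tangent measures this passes to every $\nu \in \Tan(\omega_i,\xi)$.

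Second, I would perform the blow-up. Fix a generic $\xi \in E$ and $r_j \searrow 0$, set $A_j(y) \coloneqq A(\xi + r_j y)$, $\Omega_i^{(j)} \coloneqq T_{\xi,r_j}(\Omega_i)$, and consider the normalized push-forwards $c_{i,j}\,T_{\xi,r_j}[\omega_i]$. The VMO hypothesis at $\xi$ forces the averages of $A_j$ on every fixed ball to converge to a common symmetric positive definite constant matrix $A_\infty$, and hence $A_j \to A_\infty$ in $L^1_{\loc}(\Rn1)$. An adaptation of Proposition~\ref{limlem} to this unbounded setting (and to each of $\Omega_1$ and $\Omega_2$ separately, using Lemma~\ref{lemma:lemma_balls_Hauss} and a Bourgain-type non-degeneracy to guarantee uniform CDC constants along the rescalings) produces, after passing to a subsequence, connected CDC limit domains $U_1^\xi, U_2^\xi$ and the weak convergence $c_{i,j}\,T_{\xi,r_j}[\omega_i] \warrow \nu_i$, where each $\nu_i$ is the elliptic measure in $U_i^\xi$ associated with the \emph{constant-coefficient} operator $L_{A_\infty}$. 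In particular $\nu_i \in \Tan(\omega_i,\xi)$, $U_1^\xi \cap U_2^\xi = \varnothing$, and $0 \in \partial U_1^\xi \cap \partial U_2^\xi$.

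Third, I would reduce to the harmonic case. Let $S = A_\infty^{-1/2}$; the linear bijection $y \mapsto Sy$ conjugates $L_{A_\infty}$ to a constant multiple of $-\Delta$ and sends $(U_1^\xi, U_2^\xi)$ to a pair of disjoint CDC domains $(\widetilde U_1,\widetilde U_2)$ with a common boundary point at the origin. Since $S$ is a linear bijection, it pushes $\nu_i$ forward to harmonic measures $\widehat \nu_i$ on $\widetilde U_i$ (up to multiplicative constants), and the density comparability established in Step~1 is preserved, so that $\widehat\nu_1$ and $\widehat\nu_2$ are mutually absolutely continuous at $0$. At this point I would invoke the two-phase theorem for harmonic measure in CDC domains, obtained in \cite{AMT17a} as the culmination of the Kenig--Preiss--Toro analysis from \cite{KPT09}, which asserts that under these hypotheses every tangent measure of $\widehat\nu_i$ at $0$ is flat. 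Pulling back via $S^{-1}$, which preserves the class $\mathscr F$ of flat measures up to rotation and dilation, yields $\Tan(\omega_i,\xi) \subset \mathscr F$ for $\omega_i$-a.e.\ $\xi \in E$. The dimension identity $\dim \omega_i|_E = n$ then follows from the standard principle (due to Mattila) that a Radon measure whose tangent measures at $\mu$-a.e.\ point are all $n$-flat has Hausdorff dimension exactly $n$.

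The principal obstacle is Step~2: ensuring that the rescaled measures converge to the correct elliptic measure for $L_{A_\infty}$ on the blow-up domain. The compactness machinery of Proposition~\ref{limlem} must be extended to potentially unbounded limit domains; this requires uniform-in-$j$ control on Green's function representations \eqref{eq;green ident-Wiener}, a uniform CDC constant for the rescaled boundaries $\partial \Omega_i^{(j)}$ in every fixed ball, and a careful passage $A_j \to A_\infty$ inside the weak formulation that uses VMO rather than a pointwise or a.e.\ convergence. Once this compactness step is secured, the identification of the blow-up limits as elliptic measures of a constant-coefficient operator is what makes the VMO hypothesis indispensable and reduces the remaining analysis to the already-established harmonic-measure two-phase theory.
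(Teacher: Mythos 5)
First, note that the paper does not actually prove this statement: it is quoted verbatim from \cite{AM19} (Theorem I), so the benchmark is the proof in that reference. That proof is a Kenig--Preiss--Toro type blow-up of the \emph{pair of Green's functions} $u_i=G^i(x_i,\cdot)$ at $\omega_1$-Lebesgue points of the Radon--Nikodym derivative $d\omega_2/d\omega_1$: the VMO hypothesis is used to show that the rescaled functions converge to solutions of a constant-coefficient equation, a suitable combination of the two limits extends to a globally $L_{A_\infty}$-harmonic function of controlled growth, and the ACF monotonicity formula forces this function to be a degree-one polynomial, so that the common blow-up measure is flat; the dimension statement is then extracted from the flat-tangent structure. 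Your Step 1 (finite upper densities via ACF and the density product, plus differentiation of $d\omega_2/d\omega_1$) and your final dimension step are in the right spirit, but your Steps 2--3 replace the Green-function blow-up by domain compactness plus an appeal to the harmonic two-phase theorem, and this route has genuine gaps.

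Concretely: (i) mutual absolute continuity does not pass to weak limits of blow-ups, so the assertion that $\widehat\nu_1$ and $\widehat\nu_2$ are ``mutually absolutely continuous at $0$'' is neither meaningful as a hypothesis nor obtainable from $c_{i,j}T_{\xi,r_j}[\omega_i]\warrow \nu_i$; the correct device (used in \cite{KPT09}, \cite{AMTV19}, \cite{AM19}) is to blow up at $\omega_1$-Lebesgue points of $d\omega_2/d\omega_1$, which makes the two limits coincide up to a positive constant, $\nu_2=c\,\nu_1$. (ii) Even granting a two-phase statement for the limit pair, the theorems of \cite{AMT17a}/\cite{AMTV19} are almost-everywhere statements whose conclusion would concern $\Tan(\nu_i,0)$, i.e.\ tangents of tangents; by Preiss's principle these lie in $\Tan(\omega_i,\xi)$, but this does not show that $\nu_i$ itself is flat, hence it cannot yield $\Tan(\omega_i,\xi)\subset\mathscr F$. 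What is needed is that \emph{every} blow-up limit is flat, and that is exactly what the degree-one harmonic polynomial argument delivers and what your scheme does not. (iii) The theorem assumes only that $\Omega_1,\Omega_2$ are disjoint domains: there is no CDC hypothesis, and no interior ball at the blow-up scale with $\delta_{\Omega_i}(x)\approx r$, so the compactness machinery of Proposition \ref{limlem} (and Lemma \ref{lemma:lemma_balls_Hauss}) cannot be invoked with uniform constants along the rescalings; the Green-function blow-up of \cite{AM19} is designed precisely to avoid any such quantitative boundary or interior assumptions.
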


Using Theorem \ref{theorem:AzzamMourgoglou_19} and \cite[Theorem 9.43]{To14},  it is possible to find a sequence of arbitrarily small $P_{\gamma, \omega_1}$-doubling balls with thin boundaries with respect to $\omega_1$, at $\omega_1$-almost every point of $E$. Namely, a variation of \cite[Lemma 6.1]{AMT17a}, which uses Bourgain's bounds on the dimension of harmonic measure, allows us to prove that, under the assumptions of Theorem \ref{theorem:quantitative_two_phase}, for $\gamma\in(0,1)$, there exists a sufficiently large constant $\mathfrak{C}_1 = \mathfrak{C}_1(\gamma,n)>0$ such that, for $\omega_1|_E$-almost every $x\in E$, we can find a sequence of $\mathfrak{C}_1$-$P_{\gamma, \omega_1}$-doubling balls $B(x,r_j)$ with $r_j\searrow 0$ as $j\to \infty$.  
For further details, we refer the reader to \cite[Lemma 12.1]{Pu19}.
}
\vv

The proof of \cite[Section 6]{AM19} also implies that, under our assumptions, for $\omega_1$-almost every point of $E$, we have that 
\[{\beta_{\omega_1,1}(B(\xi,r))}/{\Theta^n_{\omega_1}(B(\xi,r))} \to 0, \qquad \text{ as } r \searrow 0,\] see also \cite[Lemma 5.11]{AMTV19}. Hence, we can perform a decomposition of the set $E$, analogously to \cite[Lemma 5.1]{AMTV19}.
Namely, for $\varepsilon \in (0,1/100)$, we denote by $E_m$ the set of $\xi \in E$ such that, for all $0<r<1/m$, the following properties hold for $i=1,2$:
\begin{itemize}
\item $\displaystyle\omega_i(B(\xi, 2r)) \leq m\, \omega_i(B(\xi, r))$.
\item $\displaystyle\mathcal H^{n+1}\bigl(B(\xi, r)\cap \Omega_i\bigr) \geq m^{-1}r^{n+1}$.
\item $\displaystyle\beta_{\omega_1, 1}(B(\xi, r)) < \varepsilon\, r^{-n} \omega_1(B(\xi, r))$.
\end{itemize}
Then
\[
\omega_i \biggl(E \setminus \bigcup_{m \geq 1} E_m\biggr) = 0.
\]

In particular, we choose $m$ such that $\omega_1(E_m) > 0$. The proof of \cite[Lemma 6.2]{AMT17a} applies without modification and yields the following result.

\begin{lemma}\label{lemma:control_density}
Let  $m\geq 1$.	For $\omega_1$-almost every $x\in E_m$ there is $r_x>0$ such that, for any $\mathfrak C_1$-$P_{\gamma, \omega_1}$-doubling ball $B(x,r)$ with $r\leq r_x$, there exists $G_m(x,r)\subset E_m\cap B(x,r)$ such that
\[
\Theta^n_{\omega_1}(B(x,t))\lesssim \Theta^n_{\omega_1}(B(x,r)),\qquad \text{ for all } z\in G_m(x,r), \, 0<t\leq 2r,
\]
which yields
\begin{equation}\label{eq:bd_G_m}
\omega_1\bigl(B(x,r)\setminus G_m(x,r)\bigr)\leq \delta \, \omega_1(B(x,r)).
\end{equation}
Moreover, if we denote by $\widetilde E_{m, \delta}$ the set of points $x\in E_m$ for which \eqref{eq:bd_G_m} holds, then
\(
\omega_1\bigl(\widetilde E_{m, \delta}\setminus E_m\bigr)=0.
\)
\end{lemma}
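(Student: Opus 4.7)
The plan is to follow the proof of \cite[Lemma 6.2]{AMT17a} essentially verbatim. That argument uses only three ingredients: (a) the $\omega_1$-doubling of balls centered on $E_m$ at scales smaller than $1/m$, which is built into the definition of $E_m$; (b) the $P_{\gamma,\omega_1}$-doubling of $B(x,r)$, which controls $\omega_1$ at scales at least $r$; and (c) standard Vitali-type covering. None of these uses harmonicity, so the argument transfers to our elliptic setting without modification.

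For a parameter $M = M(\delta, m, \mathfrak C_1, n, \gamma)$ to be fixed below, I would set
\[
G_m(x, r) \coloneqq \bigl\{ z \in E_m \cap B(x, r) : \Theta^n_{\omega_1}(B(z, t)) \leq M\, \Theta^n_{\omega_1}(B(x, r)) \text{ for all } 0 < t \leq 2r \bigr\},
\]
which makes the density bound immediate (I read $B(z,t)$ in place of the printed $B(x,t)$, which appears to be a typographical slip since the quantifier is over $z \in G_m$). To establish $\omega_1(B(x, r) \setminus G_m(x, r)) \leq \delta\, \omega_1(B(x, r))$, I would cover the bad complement by balls $B(z, t_z)$ violating the defining inequality, extract via the $5r$-covering lemma a disjoint subfamily $\{B(z_i, t_i)\}$ whose $5$-fold dilations cover the bad set, and impose $r_x < 1/(10m)$ so that all scales $5t_i$ remain within the $E_m$-doubling regime; this gives $\omega_1(5B(z_i, t_i)) \leq m^3 \omega_1(B(z_i, t_i))$. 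Disjointness and the $P_{\gamma, \omega_1}$-doubling then provide $\sum_i \omega_1(B(z_i, t_i)) \leq \omega_1(B(x, 3r)) \lesssim \omega_1(B(x, r))$, while the density lower bound at each scale yields $\sum_i t_i^n \lesssim r^n/M$. A telescoping upper comparison of $\omega_1(5B(z_i, t_i))$ to $t_i^n \cdot \Theta^n_{\omega_1}(B(x,r))$, obtained by iterating the $E_m$-doubling from scale $r$ down to $t_i$, then produces $\omega_1(B(x,r) \setminus G_m(x,r)) \lesssim \omega_1(B(x,r))/M$, and choosing $M$ large in terms of $\delta$ closes the first assertion.

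For the final statement (which I read as $\omega_1(E_m \setminus \widetilde E_{m,\delta}) = 0$, the printed formula seemingly containing a typographical inversion), observe that $\widetilde E_{m,\delta} \subset E_m$ is Borel measurable by definition. It therefore suffices to show that $\omega_1$-almost every $x \in E_m$ admits the $r_x > 0$ required, which follows from the Lebesgue--Besicovitch differentiation theorem applied to $\omega_1$ (available since $\omega_1$ is doubling on $E_m$ at small scales). The principal obstacle throughout is the $M^{-1}$-dependent refinement of the Vitali estimate: the naive covering bound combined with doubling alone gives only a uniform constant estimate of $\omega_1(B(x,r) \setminus G_m(x,r))$ independent of $M$, and extracting the smallness requires using simultaneously the disjointness, the density lower bound on each selected ball, and the telescoping $E_m$-doubling and $P_{\gamma, \omega_1}$-doubling comparison; this is the technical core of \cite[Lemma 6.2]{AMT17a} and the main reason why the proof is non-trivial.
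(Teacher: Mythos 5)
Your overall strategy coincides with the paper's, which simply states that the proof of \cite[Lemma 6.2]{AMT17a} applies without modification; so deferring to that argument is legitimate. The problem is that your reconstruction of that argument is not correct, and the flaw is visible at the step you yourself identify as the technical core. Your plan is to cover the bad set by Vitali balls $B(z_i,t_i)$ with $\Theta^n_{\omega_1}(B(z_i,t_i))>M\,\Theta^n_{\omega_1}(B(x,r))$, use the lower density bound to get $\sum_i t_i^n\lesssim r^n/M$, and then convert this into $\omega_1$-measure smallness via a ``telescoping upper comparison of $\omega_1(5B(z_i,t_i))$ to $t_i^n\,\Theta^n_{\omega_1}(B(x,r))$ obtained by iterating the $E_m$-doubling from scale $r$ down to $t_i$.'' Doubling iterated downward only yields \emph{lower} bounds for the measure of small balls (equivalently $\omega_1(B(z,r))\leq m^N\omega_1(B(z,t))$), never an upper bound of the form $\omega_1(B(z_i,t_i))\lesssim t_i^n\,\Theta^n_{\omega_1}(B(x,r))$; and indeed such an upper bound is flatly contradicted by the selection itself, since by construction $\omega_1(B(z_i,t_i))>M\,t_i^n\,\Theta^n_{\omega_1}(B(x,r))$. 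What the Vitali sum actually controls is $\sum_i t_i^n$, i.e.\ the Hausdorff content of the bad set, not its $\omega_1$-measure, and with only disjointness plus doubling you get, as you note, a constant bound $\omega_1(\mathrm{bad})\lesssim_m\omega_1(B(x,r))$ with no gain in $M$.

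The deeper issue is your premise that only the three ingredients (a)--(c) are needed and that no property of harmonic/elliptic measure enters. The statement is false for general measures satisfying (a)--(c): take $\omega_1=\mathcal H^s|_V$ for an $s$-Ahlfors regular set $V$ with $s<n$. This measure is doubling, every ball centered on $V$ is $P_{\gamma,\omega_1}$-doubling, yet $\Theta^n_{\omega_1}(B(z,t))=t^{s-n}\to\infty$ as $t\to 0$ at every $z$, so no set $G_m(x,r)$ with the stated density control can carry most of the measure. Hence any correct proof must use the two-phase structure: the mutual absolute continuity of $\omega_1$ and $\omega_2$ (through Besicovitch differentiation of $d\omega_2/d\omega_1$, as in \eqref{eq:properties_of_G}), the doubling of \emph{both} measures encoded in the definition of $E_m$, and the ACF-monotonicity bounds on the product of densities, i.e.\ Lemma~\ref{lemma:bound_density_two_phase_problem} (via Theorem~\ref{theorem:ACF_DMO}), which is exactly how one bounds $\Theta^n_{\omega_1}(B(z,t))$ for $z$ in the good set: one trades a single density for the product of the two densities and uses the lower bound at the scale of the $P$-doubling ball. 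These are the ingredients that the elliptic extension must (and, in this paper, does) supply in the $\widetilde\DMO$ setting; they are absent from your sketch, so the proposal as written does not prove the lemma.
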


\vvv

We partition $G_m(x_0, r_0)$ into points of \textit{zero density} and \textit{positive density}. Specifically, for $m \geq 1$ and $x_0 \in \tilde{E}_{m,\delta}$, we define  
\[
G^{zd}_m(x_0, r_0) \coloneqq \bigl\{ x \in G_m(x_0, r_0) : \lim_{r \to 0} \Theta^n_{\omega_1}(B(x, r)) = 0 \bigr\},
\]
and  
\[
G^{pd}_m(x_0, r_0) \coloneqq \bigl\{ x \in G_m(x_0, r_0) : \limsup_{r \to 0} \Theta^n_{\omega_1}(B(x, r)) > 0 \bigr\}.
\]

\vv
In the following lemma, we gather the approach to study of these two sets. 

\begin{lemma}\label{lemma:key_lemma_pd_zd}
Let $m\geq 1$.  Let also  $x_0\in \tilde E_{m, \delta}$ and 
\[
0<r_0\leq \min\bigl(r_{x_0}, 1/m, c_1\, \dist(p_1, \partial\Omega_1)\bigr),
\]
for some $c_1>0$ small enough. If $\beta$ is as in \eqref{eq:continuityGamma}, $\gamma$ is as in \eqref{eq:extra_cond_A_DMO-gen},  $\bar \gamma \coloneqq\min\{\beta, \gamma\}$\footnote{or as in Lemma \ref{lem:holder_bound_I_omega} if we work under the hypothesis $M_{\oomega}<\infty$ as in \eqref{eq:extra_cond_A_DMO}.}, and we assume that $B=B(x_0,r_0)$  is $\mathfrak C_1$-$P_{\bar \gamma/2,\omega_1}$-doubling, then:
\begin{enumerate}
\item [(a)]	For any $x\in G_m(x_0,r_0)$
\begin{equation*}
	T_*\bigl(\chi_{2B}\,\omega_1\bigr)\lesssim \Theta^n_{\omega_1}(B).
\end{equation*} 
\item [(b)]	There is an $n$-rectifiable set $F(x_0,r_0)\subset G^{pd}_m(x_0,r_0)$ such that
\[
\omega_1\bigl(G^{pd}_m(x_0,r_0)\setminus F(x_0,r_0)\bigr)=0,
\]
and that $\omega_1|_{F(x_0,r_0)}$ is mutually absolutely continuous with respect to  $\mathcal H^n|_{F(x_0,r_0)}$.
\item [(c)] We have
\begin{equation}\label{eq:oscillation_zero_density}
	\begin{split}
		\int_{G^{zd}_m(x_0, r_0)}\bigl|&T\omega_1(x)-m_{\omega_1, G^{zd}_m(x_0,r_0)}(T\omega_1)\bigr|^2\, d\omega_1(x) \lesssim \Bigl(\frac{r_0}{|x_0-x_1|}\Bigr)^{\bar \gamma}\,  \Theta^n_{\omega_1}(B)^2\, \omega_1(B).
	\end{split}	
\end{equation}
\end{enumerate}
\end{lemma}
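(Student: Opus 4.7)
My plan is to address the three parts in order, with (c) being by far the most delicate since it is the only place where the extra hypothesis \eqref{eq:extra_cond_A_DMO-gen} enters.

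\textbf{Part (a).} For $x\in G_m(x_0,r_0)$, I would exploit the $n$-growth bound $\Theta^n_{\omega_1}(B(x,t))\lesssim \Theta^n_{\omega_1}(B)$ for $0<t\leq 2r_0$ provided by Lemma \ref{lemma:control_density}. Combined with the pointwise kernel estimate $|\nabla_1\Gamma_A(x,y)|\lesssim |x-y|^{-n}$ from Lemma \ref{lem:estim_fund_sol}-(1), a standard annular decomposition of $2B$ into dyadic shells centered at $x$ shows that, uniformly in the truncation $\varepsilon>0$, $|T_\varepsilon(\chi_{2B}\omega_1)(x)|\lesssim \Theta^n_{\omega_1}(B)$. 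Taking the supremum in $\varepsilon$ gives (a).

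\textbf{Part (b).} Once (a) is established, we have that $T_{*}\omega_1(x)<\infty$ at every $x\in G^{pd}_m(x_0,r_0)$, and by construction $\omega_1$ has $n$-growth there with $\limsup_{r\to 0}\Theta^n_{\omega_1}(B(x,r))>0$. Theorem \ref{theorem:thm_bd_op_T_G} then produces a Borel subset $G_0\subset G^{pd}_m(x_0,r_0)$ of positive $\omega_1$-measure such that $\omega_1|_{G_0}\in M^n_+(\Rn1)$ and $T_{\omega_1|_{G_0}}$ is bounded on $L^2(\omega_1|_{G_0})$. By \eqref{eq:maineq} the Riesz transform is likewise $L^2$-bounded on $\omega_1|_{G_0}$, and \cite[Corollary 1.3]{MMPT23} (the $\widetilde{\DMO}$ David--Semmes theorem) gives that $\omega_1|_{G_0}$ is $n$-rectifiable. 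A standard exhaustion argument produces an $n$-rectifiable set $F(x_0,r_0)\subset G^{pd}_m(x_0,r_0)$ with $\omega_1(G^{pd}_m(x_0,r_0)\setminus F(x_0,r_0))=0$. Mutual absolute continuity with $\mathcal{H}^n|_{F(x_0,r_0)}$ then follows from the strict positivity of the upper density of $\omega_1$ on $F(x_0,r_0)$, exactly as in \cite[Section 6]{AMTV19}.

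\textbf{Part (c).} The strategy is to show that $T\omega_1(x)$ is close to $\nabla_1\Gamma_A(x,x_1)$ on $G^{zd}_m(x_0,r_0)$ and then exploit the H\"older-type continuity of $y\mapsto \nabla_1\Gamma_A(y,x_1)$ given by Lemma \ref{lem:estim_fund_sol}-(3). For the first step, arguing as in \cite[Lemma 5.4]{AMTV19} and \cite[Section 12]{Pu19}, I would split $T\omega_1(x)$ into a local piece on $B(x,r)$ (controlled by $\Theta^n_{\omega_1}(B(x,r))$, which tends to $0$ by definition of $G^{zd}_m$) and a far piece which, via the Green's identity \eqref{eqgreen*23} applied outside $B(x,r)$ together with the local $L^\infty$-gradient bound from Remark \ref{remark:rem_grad_bound}, reproduces $\nabla_1\Gamma_A(x,x_1)$ up to errors that vanish as $r\to 0$; the $P_{\bar\gamma/2,\omega_1}$-doubling hypothesis on $B$ is what allows one to propagate these small-density errors across scales with acceptable loss. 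In the second step, since $|x-x_1|\approx|x_0-x_1|$ for $x\in B$ and $r_0\ll|x_0-x_1|$, Lemma \ref{lem:estim_fund_sol}-(3) yields, for every $x,y\in B$,
\[
|\nabla_1\Gamma_A(x,x_1)-\nabla_1\Gamma_A(y,x_1)|\lesssim \Bigl(\Bigl(\tfrac{r_0}{|x_0-x_1|}\Bigr)^\beta+\mathfrak{I}_{\oomega_A}\bigl(\tfrac{r_0}{|x_0-x_1|}\bigr)\Bigr)\frac{1}{|x_0-x_1|^n}.
\]
Hypothesis \eqref{eq:extra_cond_A_DMO-gen} gives $\oomega_A(t)\lesssim t^\gamma$ for small $t$, hence $\mathfrak{I}_{\oomega_A}(t)\lesssim t^\gamma$, and the bracket is bounded by $(r_0/|x_0-x_1|)^{\bar\gamma}$. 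Finally, to convert $|x_0-x_1|^{-n}$ into $\Theta^n_{\omega_1}(B)$, I would invoke the Bourgain-type lower bound of Lemma \ref{l:bourgain} together with $x_1$ lying at distance $\approx|x_0-x_1|$ from $\partial\Omega_1$, which yields $\omega_1(B)\gtrsim(r_0/|x_0-x_1|)^n$ and therefore $|x_0-x_1|^{-n}\lesssim\Theta^n_{\omega_1}(B)$. Integrating the resulting pointwise oscillation bound over $G^{zd}_m(x_0,r_0)\subset B$ and absorbing a power of two into the definition of $\bar\gamma$ yields \eqref{eq:oscillation_zero_density}.

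\textbf{Main obstacle.} The hardest step is the first step of part (c): making rigorous, in the $\widetilde{\DMO}_{1-\alpha}$ setting, the comparison $T\omega_1(x)\approx\nabla_1\Gamma_A(x,x_1)$ for $x\in G^{zd}_m$. This requires balancing the singularity of $\nabla_1\Gamma_A$ against the zero-density hypothesis at $x$ and the $P_{\bar\gamma/2,\omega_1}$-doubling at $B$, replacing the Riesz-transform manipulations used in the harmonic case by PDE estimates valid under Dini mean oscillation; in particular, the gradient bound of Remark \ref{remark:rem_grad_bound} and the kernel regularity of Lemma \ref{lem:estim_fund_sol} must be combined with enough care that the accumulated error is summable against the doubling sequence and produces a gain of order $(r_0/|x_0-x_1|)^{\bar\gamma}$ rather than just a qualitative vanishing.
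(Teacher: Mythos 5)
Your part (b) follows the paper's intended route (a variant of \cite[Lemma 6.4]{AMTV19}, via Theorem \ref{theorem:thm_bd_op_T_G}, \eqref{eq:maineq}, and \cite{MMPT23}), but parts (a) and (c) contain genuine gaps.

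For (a), the ``standard annular decomposition'' using only the size bound $|\nabla_1\Gamma_A(x,y)|\lesssim|x-y|^{-n}$ and the growth $\Theta^n_{\omega_1}(B(x,t))\lesssim\Theta^n_{\omega_1}(B)$ does \emph{not} give a bound uniform in the truncation: summing over the dyadic annuli between $\varepsilon$ and $4r_0$ produces roughly $\log(r_0/\varepsilon)$ terms each of size $\Theta^n_{\omega_1}(B)$, so the bound blows up as $\varepsilon\to0$. Uniform boundedness of maximal truncations is never a consequence of kernel size plus growth alone; it requires cancellation coming from the PDE structure of elliptic measure (the Green's function identity behind \eqref{eq:bound_T_nabla}, Bourgain's estimate, boundary H\"older continuity), which is exactly how the paper handles (a), as a variant of \cite[Lemma 6.3]{AMTV19}.

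For (c), your first step (the comparison $T\omega_1(x)\approx\nabla_1\Gamma_A(x,x_1)$ at zero-density points) is in the right spirit and is also implicit in the paper's proof, and the use of \eqref{eq:continuityGamma} together with \eqref{eq:extra_cond_A_DMO-gen} to bound the oscillation of $\nabla_1\Gamma_A(\cdot,x_1)$ by $(r_0/|x_0-x_1|)^{\bar\gamma}|x_0-x_1|^{-n}$ matches the paper. The fatal step is the conversion $|x_0-x_1|^{-n}\lesssim\Theta^n_{\omega_1}(B)$ ``by Lemma \ref{l:bourgain}'': Bourgain's lemma only gives $\omega_1^{x}(B(\xi,2r))\gtrsim1$ when the pole lies in $B(\xi,r)$; it does not yield a lower bound $\omega_1(B(x_0,r_0))\gtrsim(r_0/|x_0-x_1|)^n$ for $r_0\ll|x_0-x_1|$. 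Such a scale-invariant lower $n$-density bound for elliptic measure is false in general (and the domains here are merely Wiener regular, not even CDC), and if it were true the $P$-doubling hypothesis in the statement would be superfluous. The paper instead proves $\omega_1(B(x_0,20|x_0-x_1|))\approx1$ (so $|x_0-x_1|^{-n}\approx\Theta^n_{\omega_1}(B(x_0,20|x_0-x_1|))$) and then transfers this density from the large ball down to $B=B(x_0,r_0)$ by writing, with $\tau=r_0/(20|x_0-x_1|)$, the bound $\tau^{\bar\gamma}\,\Theta^n_{\omega_1}(B(x_0,20|x_0-x_1|))\leq\tau^{\bar\gamma/2}\,P_{\bar\gamma/2,\omega_1}(B)\leq\tau^{\bar\gamma/2}\,\mathfrak C_1\,\Theta^n_{\omega_1}(B)$; this is precisely where the $\mathfrak C_1$-$P_{\bar\gamma/2,\omega_1}$-doubling assumption is used and where the exponent $\bar\gamma/2$ (squared to $\bar\gamma$ in \eqref{eq:oscillation_zero_density}) comes from. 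Your argument never uses the $P$-doubling for this purpose, so the claimed inequality has no valid justification as written; replacing the Bourgain step by the $P_{\bar\gamma/2,\omega_1}$-doubling transfer across scales repairs the proof.
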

\begin{proof}
The proof of (a) and (b) is in turn a variant of Lemmas 6.3 and 6.4 in \cite{AMTV19}, respectively.
Let us prove \eqref{eq:oscillation_zero_density}. 
The same argument as in \cite[Lemma 6.5]{AMTV19} prove that
\begin{equation}\label{eq:omega_P_delta_approx_1}
\omega_1\bigl(B(x_0, 20 |x_0-x_1|)\bigr)\approx 1.
\end{equation}

By \eqref{eq:extra_cond_A_DMO-gen} there exists $s_0 \in (0,1)$ small enough and $C_\gamma>0$ such that
\(
	\oomega_A(s)\leq C_{\gamma}\, s^{\gamma}
\)
for all  $0< s\leq s_0,$
which implies that, for $0<s\leq s_0$,
\begin{equation}\label{eq:bd_I_hold_appl}
	\mathfrak I_{\oomega_A}(s)\leq C_\gamma\gamma^{-1}s^\gamma\eqqcolon \bar C_\gamma s^\gamma.
\end{equation}
In particular, we assume that $\tau\coloneqq r_0/20|x_0-x_1|\ll  (\mathfrak C_1  (1+ \bar C_{\gamma}))^{-1/\bar \gamma}$.
Hence, for $x\in G^{zd}_m(x_0,r_0)$, we have that
\begin{equation*}
\begin{split}
	\bigl|T\omega_1(x)&-m_{\omega_1, G^{zd}_m(x_0,r_0)}(T\omega_1)\bigr|\leq \sup_{y\in G^{zd}_m(x_0,r_0)}\bigl|\nabla_1 \Gamma_A(x, x_1)-\nabla_1\Gamma_A(y,x_1)\bigr|\\
	&\quad \overset{\eqref{eq:continuityGamma}}{\lesssim}\bigl(\tau^\beta + \mathfrak I_{\oomega_A}(\tau)\bigr)\frac{1}{|x_0-x_1|^n}\overset{\eqref{eq:omega_P_delta_approx_1}}{\approx}\bigl(\tau^\beta + {\mathfrak I_{\oomega_A}(\tau)	}\bigr) \Theta^n_{\omega_1}\bigl(B(x_0, 20 |x_0-x_1|)\bigr)\\
	&\quad \overset{\eqref{eq:bd_I_hold_appl}}{\leq } (1+ \bar C_{\gamma})\,  \tau^{\bar \gamma} \Theta^n_{\omega_1}\bigl(B(x_0, 20 |x_0-x_1|)\bigr)\leq (1+ \bar C_{\gamma})\,  \tau^{\bar \gamma/2} \, P_{\bar \gamma/2, \omega_1}(B)\\
	&\quad\leq   \tau^{{\bar \gamma}/2}\, (1+ \bar C_{\gamma})\,  \mathfrak C_1 \,  \Theta^n_{\omega_1}(B),
\end{split}
\end{equation*}
where the penultimate bound holds because of the definition of $P_{\bar \gamma/2, \omega_1}$, and the last inequality follows because of the $P_{\bar \gamma/2, \omega_1}$-doubling property of $B$.
Hence, we readily get \eqref{eq:oscillation_zero_density}.
\end{proof}

\vv
To complete the proof of Theorem \ref{theorem:main_two_phase_qualitative}, it suffices to use the rectifiability of the elliptic measure on the set of positive density, together with the quantitative rectifiability result for the set of zero density.

Indeed, zero-density points can be ruled out by combining Theorem \ref{teo1} with the same argument as in \cite[Section 6]{AMT17a}, which implies that $\omega_1\big(G^{zd}_m(x_0,r_0)\big) = 0$.
Finally, by Theorem \ref{lemma:key_lemma_pd_zd}-(c), we can apply Theorem \ref{theorem:main_one_phase} and thereby conclude the proof of Theorem \ref{theorem:main_two_phase_qualitative}.
\vvv

\frenchspacing
\bibliographystyle{alpha}

\newcommand{\etalchar}[1]{$^{#1}$}
\def\cprime{$'$}

\end{document}